\documentclass{article}
\usepackage[top=0.8in, bottom=0.9in, left=1in, right=1in]{geometry}
\usepackage{titlesec}
\usepackage{silence}
\WarningFilter{pdflatex}
  {Hyper Reference }                      
\WarningFilter{latex}
  {Reference }                      
\WarningFilter{latex}
  {There were undefined references} 
\usepackage{url}
\usepackage{upgreek}
\usepackage{amsfonts,amsmath,amssymb,amsthm}           
\usepackage{scalerel}
\usepackage[backref,colorlinks=true,linkcolor=blue]{hyperref}
\usepackage{footmisc}
\usepackage{verbatim}
\usepackage{calrsfs}
\usepackage{dsfont}
\usepackage{xspace}
\usepackage[overload]{empheq}
\usepackage{chngcntr}
\usepackage{etoolbox}
\usepackage{totcount}
\usepackage{multirow}
\usepackage{stmaryrd}
\usepackage{tikz}
\usepackage{wrapfig}
\usetikzlibrary{decorations.markings}
\usetikzlibrary{arrows}

\setcounter{secnumdepth}{4}
\titleformat{\paragraph}
{\normalfont\normalsize\bfseries}{\theparagraph}{1em}{}
\titlespacing*{\paragraph}
{0pt}{3.25ex plus 1ex minus .2ex}{1.5ex plus .2ex}
\makeatletter
\renewcommand*{\eqref}[1]{%
  \hyperref[{#1}]{\textup{\tagform@{\ref*{#1}}}}%
}
\newtheorem{theorem}{Theorem}
\newtheorem{corollary}[theorem]{Corollary}
\newtheorem{proposition}[theorem]{Proposition}
\newtheorem{remark}[theorem]{Remark}
\newtheorem{definition}[theorem]{Definition}
\newtheorem{example}[theorem]{Example}
\newtheorem{lemma}{Lemma}
\numberwithin{lemma}{theorem}

\counterwithin*{equation}{section}
\counterwithin*{equation}{subsection}
\counterwithin*{theorem}{section}
\counterwithin*{theorem}{subsection}
\newtotcounter{eqtest}
\renewcommand\theequation{\ifnumgreater{\value{subsection}}{0}{\thesubsection.}{\thesection.}\arabic{equation}}%
\renewcommand\theeqtest{\ifnumgreater{\value{subsection}}{0}{\thesubsection.}{\thesection.}\total{eqtest}}%

\newcounter{testfoot}
\newcounter{testfoot2}
\newtotcounter{testfoot7}
\newtotcounter{testfoot3}
\newtotcounter{testfoot4}
\newtotcounter{testfoot5}
\newtotcounter{testfoot8}
\newtotcounter{testfoot6}
\newcommand{\codefootnote}{\arabic{testfoot}}
\newcommand{\codefootnotesec}{\arabic{testfoot2}}
\newcommand{\codefootnotetri}{\total{testfoot3}}

\newcommand{\ie}{\textit{i.e.}}
\newcommand{\eg}{\textit{e.g.}}

\def\sos{SOS\xspace}
\DeclareMathOperator*{\sprod}{\scalerel*{\cdot}{\bigodot}}
\def\nnabla{\nabla}
\def\R{\mathbb R}
\def\P{\mathbb P}
\def\N{\mathbb N}
\def\C{\mathbb{C}}
\def\u{\mathbf u}
\def\s{\mathbf s}
\def\a{\mathbf a}
\def\b{\mathbf b}
\def\c{\mathbf c}
\def\cc{\overline{\mathbf c}}
\def\d{\mathbf d}
\def\dd{{\overline{\mathbf d}}}
\def\ddd{\overline{d}}
\def\pp{{\overline{p}}}
\let\originalv\v
\def\v{\mathbf v}
\def\w{\mathbf w}
\def\e{\mathbf e}
\def\ee{\overline{\mathbf e}}
\def\ees{\overline{e}}
\def\x{\mathbf x}
\def\xx{\overline{\x}}
\def\xxx{\overline{x}}
\def\z{\mathbf z}
\def\y{\mathbf y}
\def\yy{\overline{\mathbf y}}
\def\r{\mathbf r}
\def\CC{\mathcal{C}}
\def\L{\mathcal{L}}

\def\MM{\overline{M}}
\DeclareMathAlphabet{\pazocal}{OMS}{zplm}{m}{n}
\def\CCC{\pazocal{C}}
\def\CCCC{\overline{C}}
\def\PP{\mathcal{P}}
\def\NN{\pazocal{N}}
\def\K{\pazocal{K}}
\def\AA{\pazocal{A}}
\def\AAA{\overline{\pazocal{A}}}

\def\A{\overline{A}}
\def\W{\overline{W}}
\def\B{\overline{B}}
\def\Y{\overline{Y}}
\def\YY{\pazocal{Y}}
\def\zeros{{\mathbf{0}}}
\def\ones{{\scalebox{1.2}{$\mathds{1}$}}}

\def\sx{{\scalebox{1.2}{$\x$}}}
\def\sX{{\scalebox{1.2}{$X$}}}
\def\LL{{\scalebox{1.4}{$\L$}}}
\def\segal{{\scalebox{0.55}{=}}}
\def\mumu{{\boldsymbol{\mu}}}
\def\mumumu{\overline{\boldsymbol{\mu}}}
\def\mumumus{\overline{{\mu}}}
\def\betabeta{{\boldsymbol{\beta}}}
\def\alphaalpha{{\boldsymbol{\alpha}}}
\def\lambdalambda{{\boldsymbol{\lambda}}}
\def\inter{\texttt{interior}}
\def\closure{\texttt{closure}}

\newcommand\vandenbergheWrong{ (see Section 3.1.4 of the book ``Convex
Optimization'' by Stephen Boyd and Lieven Vandenberghe, p.~71, Cambridge University
Press, 2004) }

\tikzstyle{every picture}+=[baseline]                 
\tikzstyle{every picture}+=[remember picture]         
\title{Demystifying the characterization of SDP matrices in mathematical programming}
\author{Daniel Porumbel}
\begin{document}
\maketitle
\begin{center}
{\bf Argument}

\end{center}

{\small \noindent
This manuscript was written because I found no other introduction to SDP
programming  that targets
the same audience.
This work is intended to be accessible to anybody who does not hate
maths, who knows what a derivative is and accepts (or has a proof of) 
results like $\det(AB)=\det(A)\det(B)$. If you know this, I think 
you can understand most of this text without buying other 
books; my goal is not to remind/enumerate a list of results but 
to (try to) make the reader examine (the proofs of) these results to get full insight into them.

A first difference compared to other existing introductions to SDP
is that this work 
comes out of a mind
that was itself struggling to understand.
{\it This may seem to be only a weakness, but, 
paradoxically, it is both a weakness and a strength. }
First, I did {\it not} try to overpower the reader, but
I tried to minimize the distance between the author and the reader
as much as possible, even hoping to achieve a small level of mutual empathy.
This enabled me avoid a quite common pitfall:
many long-acknowledged experts tend to forget the
difficulties of beginners.
Other
experts try to make all proofs as short as possible and to dismiss as
unimportant certain key results they have seen thousands of time in their career.
I also avoided this, even if I did shorten a few proofs when I revised
this manuscript two years after it was first written. However, I also kept
certain proofs that seem longer than necessary because I feel they offer more
insight; an important goal is to capture 
the ``spirit'' of each proven result instead of reducing it to a flow
of formulae.

The very first key step towards mastering SDP programming is
to get full insight into
the eigen-decomposition of real symmetric matrices.
It is enough to see the way many other introductions to SDP programming
address this eigen-decomposition 
to understand 
that their target audience is different from mine.
They often list the eigen-decomposition without proof, while I give two proofs
to really familiarize the reader with it.%
\footnote{
In
``Handbook of Semidefinite Programming
Theory, Algorithms, and Applications''
by H.~Wolkowicz, R.~Saigal and L.~Vandenberghe,
the eigen-decomposition (called spectral theorem) is listed with no proof in 
Chapter 2 ``Convex Analysis
on Symmetric Matrices''. 
The introduction of
the ``Handbook on Semidefinite, Conic and Polynomial Optimization'' by
M.~Anjost and J.B.~Lasserre  refers the reader
to the (700 pages long) book
``Matrix analysis'' by Horn and Johnson.
As a side remark, the introductions of both these handbooks are rather short (14
or resp.~22 pages)
and they mainly remind/enumerate different key results pointing to other books
for proofs.
In 
``Semidefinite Programming for Combinatorial Optimization'',
by
C.~Helmberg,
the eigen-decomposition  is presented in an appendix and redirects
the reader to the same ``Matrix analysis'' book.
The slides of the course ``Programmation linéaire et optimisation combinatoire'' of 
Fr\'ed\'eric~Roupin 
for the 
``Master Parisien de Recherche Opérationnelle'' 
(\url{lipn.univ-paris13.fr/~roupin/docs/MPROSDPRoupin2018-partie1.pdf})
provide many results from my manuscript but no 
proof is given.
The MIT course ``Introduction to Semidefinite Programming '' by R. Freund does
not even provide the SDP definition or the eigen-decomposition.
The  book ``Convex Optimization'' by
S.~Boyd and L.~Vandenberghe 
starts using SDP
matrices from the beginning (\eg, to define ellipsoids in Section 2.2.2) without defining the concept of SDP matrix, not even
in appendix. 
The argument could extend to other non-trivial concepts that
are taken as pre-requisite in above works.
For instance, the above ``Convex Optimization'' book introduces  the square root of an
SDP matrix (in five lines in Appendix A.5.2), without showing the uniqueness -- the proof takes
half a page in Appendix~\ref{appSquareRoot} of this manuscript.
}

If you can say
``Ok, I still vaguely remember the eigen-decomposition (and other key SDP
properties) from my (under-)graduate
studies some $n\geq 5$ years ago; I don't need a proof",
then you do not belong to my target audience. I am highly
skeptical that such approach can lead to anything but superficial learning.
Anyhow, my brain functions in the most opposite manner.
\textit{I like to learn by checking all proofs by myself and
I can't stand taking things for granted}.
The only unproven facts from this manuscript are the fundamental theorem of algebra
and two results from Section~\ref{secsoshierarchy}.
But I do provide 
complete proofs, for instance, for \textit{the Cholesky decomposition of SDP matrices,
the strong duality theorem for linear conic programming (including SDP
programming), six  equivalent
formulations of the Lov\'asz theta number, the copositive formulation of the
maximum stable, a few convexification results for quadratic programs and many
others}.
I tried to prove everything by myself, so that certain proofs are original although
this introduction was not intended to be research work;
of course I got help multiple times from the internet, articles and books, the
references being indicated as footnote citations. 

The most essential building blocks are presented in the first part.
One should really master
this first part before jumping to the second one; the essentials from
the first part may even be generally useful for reading other SDP work. In fact,  
the goal of this manuscript is to give you all the tools needed to move to the next level
 and carry out research work.


}
\noindent\line(1,0){100}
\setcounter{tocdepth}{2}
\tableofcontents

\newpage

\addtocontents{toc}{{\bf PART 1 THE ESSENTIAL BUILDING BLOCKS} \par}
\begin{center}
\line(1,0){400}

\Large
PART 1: THE ESSENTIAL BUILDING BLOCKS

\line(1,0){400}

\end{center}

\section{Characterization of semidefinite positive (SDP) matrices}

\subsection{Real symmetric matrices, eigenvalues and the
eigendecomposition\label{sec11}}
This light introduction aims at familiarizing the reader with the main concepts of real symmetric matrices,
eigenvalues and the eigenvalue decomposition. 
Experts on this topic can skip to Section~\ref{secEquivDefs} or even to further
sections. 
Absolute beginners should first consult 
Appendix~\ref{appBasic} to get the notion
of matrix rank, 
(sub-)space dimension,
(principal) minor, or to recall how
$\det(A)=0 \iff \exists \x$ s.~t.~$A\x=\zeros$.
To familiarize with such introductory concepts it is also useful to solve a few
exercises, but the only exercise I propose is to ask the reader to prove by
himself all theorems whose proof does not exceed half a page.

Given (real symmetric) matrix $A$, we say $\lambda$ is a (real) \textit{eigenvalue} of $A$ if
there exists \textit{eigenvector} $\v$ such that $A\v=\lambda \v$. 
Notice that by multiplying the eigenvector with a constant we obtain another eigenvector. An
eigenvector is called \textit{unitary} if it has a norm of 1, \ie, $|\v|^2=\v^\top\v=\sum_{i=1}^n v_i^2=1$.
We say $\v$ is an eigenvector if it satisfies 
$\lambda I_n \v - A\v=\zeros$, equivalent to $(\lambda I_n -A)\v=\zeros$, which
also means
$\det(\lambda I_n - A)=0$. 

We can also define the eigenvalues as the roots of the \textit{characteristic
polynomial} $\det(xI_n-A)$.
Indeed, if $\det(\lambda_1 I_n-A)=0$, there exists a (real) eigenvector $\v_1$ such that 
$(\lambda_1 I_n -A)\v_1=0$, equivalent to $A\v_1=\lambda_1\v_1$. The fact that $\v_1$ needs
to exist is formally proven in Prop~\ref{lindep};
this Prop~\ref{lindep} proves the statement in the 
more
general
context
of
complex matrices 
because we should not yet take for granted that root $\lambda_1$ is
not complex, \ie, 
$\lambda_1 I_n -A$ could have complex entries in principle.
However, it \textit{is} possible to show that a real symmetric matrix has only real eigenvalues
and real proper eigenvectors. The proof is not really completely obvious, it takes
a third of a page and it is given in appendix (Prop.~\ref{propEigenReal}).
By developing the characteristic polynomial, one can also prove that the determinant is the
product of the eigenvalues (Prop.~\ref{propdetiseigenprod}).

The characteristic polynomial has degree $n$, and so, it has $n$ roots (eigenvalues), 
but some of them can have multiplicities greater than 1, \ie, some eigenvalues can 
appear more than once. 
However, each eigenvalue is associated to at least one eigenvector. An eigenvalue with
multiplicity greater than 1 can have more than one eigenvector. The following result
is called the \textit{eigendecomposition} of the real symmetric matrix $A$.

\begin{align}
&\hspace{1.26em}\text{unitary eigenvectors of $A$}
\notag
\\
&\hspace{2.7em}
\begin{matrix}
\downarrow&
~~~\downarrow&
\hspace{0.4em}\dots&
~\downarrow
\end{matrix}
\notag
\\
A&=
\begin{bmatrix}
        v_{11}     &        v_{21}   &  \dots & v_{n  ,1}                 \\
        v_{21}     &        v_{22}   &  \dots & v_{n  ,2}                 \\
        \vdots     &        \vdots   &  \ddots& \vdots                    \\
        v_{n  ,1}  &        v_{n  ,2}&  \dots & v_{n  ,n  }               \\
\end{bmatrix}
\begin{bmatrix}
   \lambda_1^{\tikz\coordinate(lambda1);}      &         0       &  \dots &  0                        \\
        0          &      \lambda_2^{\tikz\coordinate(lambda2);} &  \dots &  0                        \\
        \vdots     &        \vdots   &  \ddots& \vdots                    \\
        0          &        0        &  \dots & \lambda_n^{^{\tikz\coordinate(lambdan);}}                \\
\end{bmatrix}
\begin{bmatrix}
        v_{11}     &        v_{12}   &  \dots & v_{1,n}                 \\
        v_{12}     &        v_{22}   &  \dots & v_{2,n}                 \\
        \vdots     &        \vdots   &  \ddots& \vdots                    \\
        v_{1,n  }  &        v_{2,n  }&  \dots & v_{n,n}               \\
\end{bmatrix}
\begin{matrix}
\leftarrow \\
\leftarrow \\
\vdots\\
\leftarrow \\
\end{matrix}
\notag \\
&=\Big[\v_1~\v_2~\dots \v_n\Big] \texttt{diag}(\lambda_1,~\lambda_2,~\dots
\lambda_n)\begin{bmatrix}\v_1^\top \\ \v_2^\top \\ \vdots \\
\v_n^\top\end{bmatrix}= U\Lambda U^\top
\label{firstEqDecomp}
\\
&=
\sum_{i=1}^n \lambda_i \v_i\v_i^\top ,\notag
\end{align}%
\begin{tikzpicture}[overlay]
\node(ignore) at (14.6,4.7) {\rotatebox{90}{transposed}};
\node(ignore2) at (15,4.7) {\rotatebox{90}{eigenvectors}};
\node(start) at (8.37,6.61) {\scalebox{1}{eigenvalues of $A$}};
\draw[darkgray, decoration={markings,mark=at position 1 with {\arrow[thick]{>}}}, postaction={decorate} ]
                                                    (start)  to  (lambda1.north);
\draw[darkgray, decoration={markings,mark=at position 1 with {\arrow[thick]{>}}}, postaction={decorate} ]
                                                    (start)  to  (lambda2.north);
\draw[darkgray, decoration={markings,mark=at position 1 with {\arrow[thick]{>}}}, postaction={decorate} ]
                                                    (start)  to  (lambdan.east);
\end{tikzpicture}%
where $\v_1,\v_2,\dots \v_n$ are the unitary (column) eigenvectors of $A$ associated to
(some repeated) eigenvalues $\lambda_1,~\lambda_2,\dots \lambda_n$
and $\Lambda=\texttt{diag}(\lambda_1,~\lambda_2~\dots
,\lambda_n)$ is the diagonal matrix with $\Lambda_{ii}=\lambda_i~\forall i\in[1..n]$.
The eigenvectors are unitary and orthogonal, meaning that $\v_i^\top \v_j=0,~\forall
i,j\in[1..n],~i\neq j$ and $\v_i^\top \v_i=1~\forall i\in[1..n]$. This directly
leads to
\begin{equation}\label{eqOrthoNormal}
[\v_1~\v_2~\dots \v_n]^\top [\v_1~\v_2~\dots \v_n]=I_n
\textnormal{,~equivalent to~}
[\v_1~\v_2~\dots \v_n][\v_1~\v_2~\dots \v_n]^\top=I_n,
\end{equation}
using the very well known property $XY=I\implies YX=I$
(Prop.~\ref{propInverseComutes}).

\subsubsection*{The simple case of distinct eigenvalues}

It is important to familiarize with this decomposition. For this,  let us first examine a proof that works
for symmetric matrices $A$ with distinct eigenvalues (all with multiplicity one). 
We will first show that the eigenvectors of symmetric $A$ are orthogonal.
Let $\v_1,~\v_2,\dots \v_n$ be the unitary eigenvectors of resp.~$\lambda_1,~\lambda_2,\dots\lambda_n$.
For any $i,j\in[1..n],~i\neq j$,
we can write 
$\v_i^\top A\v_j=\v_i^\top\lambda_j\v_j=\lambda_j\v_i^\top \v_j$ and also
$\v_i^\top A\v_j=\lambda_i\v_i^\top \v_j$ based on $\v_i^\top A={(\v_i^\top
A)^\top}^\top=(A^\top \v_i)^\top =(A\v_i)^\top=\lambda_i\v_i^\top$.
This leads to $\lambda_j\v_i^\top \v_j=\lambda_i\v_i^\top \v_j$, and, using $\lambda_i\neq \lambda_j$, 
we obtain $\v_i^\top \v_j=0~\forall i,j\in[1..n],~i\neq j$. 

\begin{sloppypar}
We now construct the eigen-decomposition.
Using $\lambda_i \v_i=A\v_i~\forall i\in[1..n]$, 
we obtain
$A[\v_1~\v_2~\dots \v_n]=[\lambda_1\v_1~\lambda_2\v_2~\dots \lambda_n\v_n]$,
equivalent to 
$A [\v_1~\v_2~\dots \v_n] = [\v_1~\v_2~\dots \v_n] \texttt{diag}(\lambda_1,~\lambda_2,\dots
\lambda_n) $. We now multiply both sides at right by
$[\v_1~\v_2~\dots \v_n]^\top$ which is equal to $[\v_1~\v_2~\dots \v_n]^{-1}$
by virtue
\eqref{eqOrthoNormal}; we thus obtain
$A = [\v_1~\v_2~\dots \v_n] \texttt{diag}(\lambda_1,~\lambda_2,\dots
\lambda_n)[\v_1~\v_2~\dots \v_n]^\top$,
which is exactly \eqref{firstEqDecomp}.
\end{sloppypar}

\subsubsection*{Generalizing the proof for the case of
eigenvalues with different multiplicities}

We now prove the eigen-decomposition in the general case by extending the above
construction. We need two steps

\begin{description}
\leftskip-0em
    \item [Step.~1]
We first show that the above construction for eigenvalues with
unitary multiplicities can be readily generalized to the case where
there is a different 
eigenvector for each of
the $k_i$ repetitions of each
root $\lambda_i$ of the characteristic
polynomial
(\ie, for any eigenvalue $\lambda_i$).
In this case, 
there are
$k_i$ linearly independent eigenvectors associated to $\lambda_i$; we say that
the eigenspace dimension $k'_i$ of $\lambda_i$ (the geometric multiplicity
$k'_i$)
is equal to the algebraic multiplicity $k_i$.
This step is relatively straightforward and it is proved in full detail (and slightly generalized) 
in Prop.~\ref{propGeoAlgeMultResultsEigendecomp}.

   \item [Step.~2]

We need to show that the eigenspace of $\lambda_i$ has dimension $k_i$.
To show this, we use the notion of similar matrices: $X$ and $Y$ are similar if there exists $U$
such that $X=UYU^{-1}$;
we say that $Y$ is the
representation of $X$
in the basis composed by the columns of $U$.
Two simple properties (Prop.~\ref{simMatSameEigenVals}
and~\ref{propSameGeoMultForSimMat}) show that:
(i) two similar matrices have the same
characteristic polynomial  so that any
common eigenvalue $\lambda_i$ is repeated $k_i$ times 
and
(ii) similar matrices have the same eigenspace dimension $k'_i$ for any common eigenvalue
$\lambda_i$.
Assuming $k'_i<k_i$ for symmetric $A\in\R^{n\times n}$, 
we can construct $U\in\R^{n\times n}$ by putting $k'_i$ orthogonal unitary eigenvectors of
$\lambda_i$ on the first $k'_i$ columns and by filling the other columns
(introducing $n-k'_i$ unitary vectors perpendicular on the first $k'_i$ columns) such that $U^\top U=I_n$.
We obtain
$
U^\top A U=
\left[
\begin{smallmatrix}
\lambda_i I_{k'_i}           & \zeros     \\
\zeros &      D
\end{smallmatrix}
\right].
$
Since similar matrices have the same characteristic polynomial and $k'_i<k$, $\lambda_i$ needs to be
a root of the characteristic polynomial of $D$, and so, $\lambda_i$ needs to have an eigenvector in
$D$. This means that the eigenspace of $\lambda_i$ in
$\left[
\begin{smallmatrix}
\lambda_i I_{{k'_i}}           & \zeros     \\
\zeros &      D
\end{smallmatrix}
\right]
$
has dimension at least $k'_i+1$, contradicting the fact that similar matrices have the same
eigenspace dimension for $\lambda_i$. As such, the assumption $k'_i<k_i$ was false and we
obtain $k'_i=k_i$, \ie, $\lambda_i$ 
has the same algebraic and geometric multiplicity $k_i$.
For the skeptical, 
this Step 2
 is proven in full detail in Prop.~\ref{propGeoAlgeEqual}.
\end{description}

\subsection{\label{secEquivDefs}Equivalent SDP definitions}
\begin{definition} \label{def1}A symmetric matrix $A\in \R^{n \times n}$ is positive
semidefinite (SDP) if the following holds for any vector $\x \in \R^n$ 
\begin{equation}\label{sdpbase}
\hspace{-4em}
\x^\top A \x = \x \sprod A\x = A \sprod \x \x^\top =
\texttt{trace}(X \x\x^\top)=
\sum_{i=1}^n\sum_{j=1}^n
a_{ij}x_ix_j\geq 0,\hspace{-4em}
\end{equation}
where $\sprod$ stands for the scalar product.  If the above inequality is always
strict, the matrix is positive definite. If the inequality is reversed, the
matrix is negative semidefinite.
\end{definition}

\begin{proposition}\label{propEigenSDP}A symmetric matrix $A\in \R^{n \times n}$ is 
positive semidefinite (resp.~definite) if and only if all eigenvalues $\lambda_i$
verify $\lambda_i \geq 0$ (resp.~$>0$).
\end{proposition}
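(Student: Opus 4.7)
The plan is to reduce everything to the eigendecomposition $A = U\Lambda U^\top = \sum_{i=1}^n \lambda_i \v_i \v_i^\top$ established in the previous subsection, with orthonormal eigenvectors $\v_1,\dots,\v_n$ and (real) eigenvalues $\lambda_1,\dots,\lambda_n$. Since both the SDP property (Def.~\ref{def1}) and the eigenvalue condition are statements about the quadratic form $\x^\top A \x$, the decomposition $A = \sum_i \lambda_i \v_i \v_i^\top$ turns both sides of the ``iff'' into a common expression, making the proof almost mechanical. There is no real obstacle here beyond correctly invoking the decomposition; the hardest work was already done to establish \eqref{firstEqDecomp}.

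For the forward direction, I would plug each eigenvector $\v_i$ into the SDP inequality. Since $A\v_i = \lambda_i \v_i$ and $\v_i^\top \v_i = 1$ (unitary), one gets $\v_i^\top A \v_i = \lambda_i \v_i^\top \v_i = \lambda_i$. If $A$ is SDP, Definition~\ref{def1} forces this quantity to be $\geq 0$, hence $\lambda_i \geq 0$. For the positive definite case, the inequality is strict and we similarly get $\lambda_i > 0$ (the $\v_i$'s are nonzero since they are unitary, so the strict inequality applies).

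For the backward direction, I would use the sum-of-rank-one form $A = \sum_{i=1}^n \lambda_i \v_i \v_i^\top$ from \eqref{firstEqDecomp}. For an arbitrary $\x \in \R^n$, compute
\begin{equation*}
\x^\top A \x \;=\; \sum_{i=1}^n \lambda_i\, \x^\top \v_i \v_i^\top \x \;=\; \sum_{i=1}^n \lambda_i\, (\v_i^\top \x)^2.
\end{equation*}
Each $(\v_i^\top \x)^2 \geq 0$, so if every $\lambda_i \geq 0$ the whole sum is $\geq 0$, yielding the SDP property.

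For the strict (positive definite) case, one further observation is needed: if all $\lambda_i > 0$ and $\x \neq \zeros$, then the sum $\sum_i \lambda_i (\v_i^\top \x)^2$ must be strictly positive, because otherwise every coefficient $\v_i^\top \x$ would have to vanish, meaning $\x$ is orthogonal to every column of $U$; but the columns of $U$ form an orthonormal basis of $\R^n$ (by \eqref{eqOrthoNormal}), so $\x$ orthogonal to all of them forces $\x = \zeros$, contradicting $\x \neq \zeros$. This closes the equivalence in both the semidefinite and the definite cases.
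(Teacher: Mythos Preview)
Your proof is correct and rests on the same core computation as the paper's: both directions exploit the eigendecomposition and the identity $\x^\top A \x = \sum_i \lambda_i (\v_i^\top \x)^2$. The forward direction is identical (the paper phrases it by contradiction, you state it directly, but it is the same one-line calculation $\v_i^\top A \v_i = \lambda_i$). The backward direction differs only in packaging: the paper isolates an intermediate result, Lemma~\ref{lemmaMinRayleigh} (that $\min_{|\x|=1} \x^\top A \x$ equals the smallest eigenvalue $\lambda_1$), and then deduces nonnegativity from $\lambda_1 \geq 0$; you instead go straight to the sum $\sum_i \lambda_i (\v_i^\top \x)^2 \geq 0$. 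Your route is shorter and perfectly sufficient for this proposition; the paper's detour through the Rayleigh-quotient lemma is an investment that pays off because that lemma is reused several times later in the manuscript (e.g.\ in Lemmas~\ref{lemmaIntertwinedMinorEigenvals}--\ref{lemmaIntertwinedMinorEigenvals2} and in the eigenvalue formulations of the theta number).
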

\begin{proof}
~\\
$\Longrightarrow$~\\
We consider $A$ is positive semidefinite (resp.~definite).
Assume there exist an eigenvalue $\lambda<0$ (resp $\lambda\leq 0$) associated
to eigenvector $\v$. We have $\v^\top A\v=\v^\top \lambda \v=\lambda
(v^2_1+v^2_2+\dots v^2_n)$. If $\lambda<0$ (resp $\lambda\leq 0$), then
$\v^\top A\v<0$ (resp
$\v^\top A\v\leq 0$). Thus, $A$ is not positive semidefinite (resp. definite).
This is a contradiction, and so, the assumption $\lambda<0$ (resp $\lambda\leq
0$) was false. We need to have $\lambda\geq 0$ (resp $\lambda>0$).\\
$\Longleftarrow$~\\
Without loss of generality, we suppose
$$\lambda_1\leq \lambda_2\leq \dots \lambda_n$$
We consider $\lambda_1$ satisfies $\lambda_1\geq 0$ (resp $\lambda_1>0$). We
consider the eigenvalue decomposition of symmetric matrix $A$, as constructed in
\eqref{firstEqDecomp} -- see also \eqref{eigendecompnice} of Proposition
\ref{propEigenDecomp}.

\begin{equation}\label{eigendecomp}
A=\sum_{i=1}^n \lambda_i \v_i\v^\top_i,
\end{equation}
where $\v_1,~\v_2,\dots,\v_n$ are the unitary orthonormal eigenvectors of $A$.
We consider the following minimization problem over all unitary vectors $\x\in
\R^n$  and we will show it is non-negative (resp.~strictly positive):
\begin{equation}\label{eq13}
\min_{|\x|=1} \x \sprod A\x
\end{equation}

The above formula $\x \sprod A\x$ with unitary $\x$ is actually a
particular case of the Rayleigh ratio/quotient usually written under the
form $R(A,\x)=\frac{\x \sprod A\x}{\x\sprod\x}$.

\begin{lemma} \label{lemmaMinRayleigh} The minimum value of \eqref{eq13} above is the smallest eigenvalue
$\lambda_1$. This minimum is attained by the unit eigenvector $\v_1$ of
$\lambda_1$. This lemma actually holds for any real symmetric matrix $A$.
This also implies that the maximum eigenvalue can be determined by calculating the minimum eigenvalue of $-A$, 
\ie, 
$\lambda_n=-\lambda_{min}(-A)=-\min_{|\x|=1} -\x \sprod A\x=
\max_{|\x|=1} \x \sprod A\x$.
\end{lemma}
\begin{proof}
We can write $\x$ in basis $\v_1,~\v_2,\dots \v_n$. This is always possible
because the equation $[\v_1~\v_2~\dots \v_n]\a=\x$ always has the 
solution $\a=[\v_1~\v_2~\dots \v_n]^\top\x$ simply using 
\eqref{eqOrthoNormal}. We can write

\begin{equation}\label{developx}
\x=a_1\v_1+a_2\v_2+\dots a_n\v_n
\end{equation}
Since $|\x|=1$, we have 
$\sum_{i=1}^n a^2_i\v_i\sprod\v_i
+2\sum_{i=1}^n\sum_{j=1,j\neq i}^n a_ia_j\v_i\sprod \v_j = 1$. Since $\v_i\sprod
\v_j=0~\forall i\neq j$ and $\v_i\sprod \v_i=1$, we obtain $1=\sum_{i=1}^n
a^2_i$.
We now replace this in \eqref{eq13} and we obtain 
\begin{align}
\min_{|\x|=1} \x \sprod A\x &=
\min_{\sum_{i=1}^n a^2_i = 1} 
    \left(\sum_{i=1}^n a_i\v_i\right)\sprod
    A
    \left(\sum_{i=1}^n a_i\v_i\right)\notag\\
    &=
\min_{\sum_{i=1}^n a^2_i = 1} 
    \left(\sum_{i=1}^n a_i\v_i\right)\sprod
    \left(\sum_{i=1}^n a_i\lambda_i\v_i\right)\notag\\
    &=
    \min_{\sum_{i=1}^n a^2_i = 1} 
    \sum_{i=1}^n \lambda_i a^2_i \notag\\
    &\geq 
    \min_{\sum_{i=1}^n a^2_i = 1} 
    \sum_{i=1}^n \lambda_1 a^2_i \notag \\
    &=\lambda_1,
\end{align}
The inequality is not strict. Using $\x=\v_1$ we obtain $\v_1\sprod
A\v_1=\lambda_1\v_1\sprod \v_1=\lambda_1$. The proof has not used the fact that
$\lambda_1\geq 0$, and so, it can be applied for any symmetric matrix $A$.
\end{proof}

We recall that any
$\y\in \R^n$ can be written as $\y=\alpha \x$ such that $|\x|=1$ (technically
$\alpha=|\y|=\sqrt{\sum_{i=1}^ny^2_i}$). We obtain $\y^\top A \y=\alpha^2
\x^\top A \x=\alpha^2\x\sprod A\x\geq \alpha^2\lambda_1$. 
Observe this minimum $\alpha^2\lambda_1$ can always be attained by
$\y=\alpha\v_1$.
If $\lambda_1 \geq 0$
(resp.~$\lambda_1>0$), the matrix $A$ is positive semidefinite (resp.~definite).
\end{proof}

\begin{proposition}\label{propCongruent} 
We say that matrices $S$ and $T$ are {\tt congruent} if there is a non-singular $Q$
such that $T=Q^\top SQ$. Two congruent matrices have the same SDP status, \ie,
$S$ is semidefinite positive if and only if $T$ is semidefinite positive
and
$S$ is definite positive if and only if $T$ is definite positive:
$$S\succeq \zeros \iff T\succeq \zeros \textnormal{ and }
S\succ \zeros \iff T\succ \zeros.$$
\end{proposition}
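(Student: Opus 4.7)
The plan is to prove this directly from Definition~\ref{def1} using the quadratic form characterization of SDP matrices, rather than going through eigenvalues (which would force us to relate the spectra of $S$ and $T$, and congruent matrices are not in general similar). The key observation is a simple change of variables: for any $\x \in \R^n$, one has
\[
\x^\top T \x \;=\; \x^\top Q^\top S Q \x \;=\; (Q\x)^\top S (Q\x).
\]
This identity is the entire engine of the proof.

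First I would note that congruence is a symmetric relation: because $Q$ is non-singular, $Q^{-1}$ exists, and the equation $T = Q^\top S Q$ can be rewritten as $S = (Q^{-1})^\top T (Q^{-1})$. This means it suffices to prove just one implication ($S \succeq \zeros \Rightarrow T \succeq \zeros$, and $S \succ \zeros \Rightarrow T \succ \zeros$); the reverse implications then follow by applying the same argument with the roles of $S$ and $T$ swapped and $Q$ replaced by $Q^{-1}$.

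For the semidefinite case, assume $S \succeq \zeros$. Then, using the displayed identity, for every $\x \in \R^n$, $\x^\top T \x = (Q\x)^\top S (Q\x) \geq 0$, since $S$ applied to the vector $Q\x \in \R^n$ is non-negative by assumption. Hence $T \succeq \zeros$. For the definite case, assume $S \succ \zeros$ and take any $\x \neq \zeros$. Here I would use that $Q$ is non-singular: this implies $Q\x \neq \zeros$, so $(Q\x)^\top S (Q\x) > 0$, giving $\x^\top T \x > 0$ and hence $T \succ \zeros$.

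There isn't really a hard step; the only place where some care is needed is in the definite case, where the non-singularity of $Q$ is essential — without it, $Q\x$ could be $\zeros$ for some nonzero $\x$, and the strict inequality would collapse. This is also precisely the place where the hypothesis ``$Q$ non-singular'' (rather than just ``$Q$ arbitrary'') is used; for the semidefinite implication alone, any $Q$ would do, but to get equivalence in both directions one genuinely needs invertibility so that the congruence relation is symmetric.
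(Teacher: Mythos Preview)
Your proof is correct and essentially identical to the paper's own argument: both use the change of variables $\x^\top T\x = (Q\x)^\top S(Q\x)$, invoke non-singularity of $Q$ to ensure $Q\x \neq \zeros$ for the strict case, and reduce the converse to the forward direction via $S = (Q^{-1})^\top T Q^{-1}$. Your added remark that non-singularity is what makes congruence symmetric (and hence what lets the equivalence go both ways) is a nice clarifying touch.
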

\begin{proof}
~\\
\noindent $\Longrightarrow$\\
If $S$ is SDP, then $\x^\top T \x=\x^\top Q^\top S Q \x = (Q \x)^\top S (Q
\x)\geq 0\implies T\succeq\zeros$. Using the fact that $Q$ is non-singular, 
we also obtain that $Q\x$ is zero only when $\x$ is zero, and so, 
$S\succ \zeros \implies (Q \x)^\top S (Q \x) >0\forall \x\neq \zeros~\implies
T\succ \zeros$.

\noindent $\Longleftarrow$\\
The converse proof is identical, because we can
write  $S=\left(Q^{-1}\right)^\top T Q^{-1}$ and apply the 
above two lines argument on swapped $S$ and
$T$.
We simply used $(Q^\top)^{-1}=(Q^{-1})^\top$, which 
is equivalent to 
$Q^\top (Q^{-1})^\top=I_n$, which
follows from
transposing $Q^{-1}Q=I_n$.
%
\end{proof}

We now use the above result\footnote{The same result appears in Section 6.9.1~(p.~91) of
the lecture notes of Maur\' icio de Oliveira, available on-line as of 2019
at 
\url{http://maecourses.ucsd.edu/~mdeolive/mae280b/lecture/lecture6.pdf}.}
to introduce a (very practical) remark on how certain well-known elementary row/column operations preserve the
SDP status.
\begin{proposition}\label{propRowColOpers}
It is well known that
the operations below preserve the determinant; we now prove that they also
preserve the SDP status: (a) the
initial matrix $A$ is SDP if only if the transformed matrix $A'$ is SDP and (b)
 $A$ is positive definite if and only if the $A'$ is positive definite.
Finally, $A$ and $A'$ are also congruent.
\begin{itemize}
\leftskip-1em
    \item[(i)] add row $i$ multiplied by $z_{ji}\in \R$ to row $j$ and then column
$i$ multiplied by $z_{ji}$ to column $j$
    \item[(ii)] perform a sequence of row operations as above and then the
corresponding (transposed) column operations
    \item[(iii)] permute the rows of $A$ and then permute in the same manner (apply the
same permutation)
on the columns of $A$
\end{itemize}
\end{proposition}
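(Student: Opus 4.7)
The plan is to reduce each of the three operations to a congruence $A' = Q^\top A Q$ and then appeal to Proposition~\ref{propCongruent} for the SDP and positive-definite equivalences, which also delivers the concluding statement that $A$ and $A'$ are congruent.

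For (i), I would encode the row operation ``add $z_{ji}$ times row $i$ to row $j$'' as left-multiplication by the elementary matrix $E = I_n + z_{ji}\e_j\e_i^\top$. The paired column operation ``add $z_{ji}$ times column $i$ to column $j$'' is then right-multiplication by $I_n + z_{ji}\e_i\e_j^\top = E^\top$. Hence $A' = E A E^\top$, i.e.~$A' = Q^\top A Q$ with $Q = E^\top$. Since $i \neq j$, $E$ is triangular with all ones on the diagonal, so $\det E = 1$ and $Q$ is non-singular; Proposition~\ref{propCongruent} then gives (a) and (b) for this case.

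For (ii), I would simply iterate the argument: a sequence of row operations corresponds to $M A$ with $M = E_k E_{k-1}\cdots E_1$, and the ``corresponding (transposed) column operations'' right-multiply by $M^\top$. So $A' = M A M^\top$ is again a congruence, with $M$ non-singular as a product of non-singular matrices. For (iii), a row permutation is left-multiplication by a permutation matrix $P$, and the same column permutation is right-multiplication by $P^\top$, giving $A' = P A P^\top$, a congruence with non-singular $Q = P^\top$ (recall $\det P = \pm 1$).

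I don't foresee any real obstacle; the three claims follow uniformly once each operation is written as a congruence, and Proposition~\ref{propCongruent} does all the SDP/definite bookkeeping. The one point worth verifying carefully is that the right-multiplication factor in (i) is genuinely $E^\top$ rather than $E$ itself, so that pairing a row operation with its matching column operation produces the $E A E^\top$ congruence pattern and not something like $E A E$; once this is checked, (ii) follows by induction on the number of elementary steps and (iii) is immediate from $P^{-1} = P^\top$.
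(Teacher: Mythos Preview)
Your proposal is correct and matches the paper's approach almost exactly for (i) and (ii): both write the paired row/column operation as $A' = (I_n + z_{ji}\e_j\e_i^\top)A(I_n + z_{ji}\e_j\e_i^\top)^\top$, note the unit determinant of the elementary matrix, and invoke Proposition~\ref{propCongruent}, then iterate for (ii).

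The only (minor) difference is in (iii). The paper does not write $A' = PAP^\top$ and appeal to congruence; instead it argues directly that permuting rows and columns of $A$ corresponds to permuting the entries of the test vector $\x$, so $A\sprod\x\x^\top = A'\sprod\x'\x'^\top$ and the SDP status is unchanged. Your route is more uniform with (i)--(ii) and has the advantage of delivering the ``$A$ and $A'$ are congruent'' conclusion for (iii) explicitly, whereas the paper's direct argument for (iii) handles only the SDP equivalence and leaves the congruence implicit. Either argument is fine here.
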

\begin{proof}
The row operation from (i) amounts to performing 
$\left(I_n+Z_{j\swarrow i}\right)A,$ where $Z_{j\swarrow i}$ is a matrix that
contains only one non-zero element: put $z_{ji}$ at row $j$ and column $i$.
The column operation from (i) amounts to multiplying
at right by
$\left(I_n+Z_{j\swarrow i}\right)^\top
= I_n+Z_{i\swarrow j}$.
The final matrix resulting from
operation (i) is:
$$A'=\left(I_n+Z_{j\swarrow i}\right)A\left(I_n+Z_{j\swarrow i}\right)^\top$$
As such, $A$ and $A'$ become congruent (notice $\det\left(I_n+Z_{j\swarrow
i}\right)=1$), finishing the proof by
Prop~\ref{propCongruent} above.

\noindent The operation (ii) simply consists of applying (i) several times, 
leading to the following congruent matrices:
$$A'=
\left(I_n+Z_{j_{1}\swarrow i_{1}}\right)
\left(I_n+Z_{j_{2}\swarrow i_{2}}\right)
\dots
\left(I_n+Z_{j_{p}\swarrow i_{p}}\right)
A
\left(I_n+Z_{j_{p}\swarrow i_{p}}\right)^\top
\dots
\left(I_n+Z_{j_{2}\swarrow i_{2}}\right)^\top
\left(I_n+Z_{j_{1}\swarrow i_{1}}\right)^\top
$$

\noindent The operation (iii) does not change the SDP status because
the permutation (reordering)
that transforms $A$ into $A'$ can be applied to transform $\x$ into
some $\x'$ so that 
$A \sprod \x \x^\top
=A' \sprod \x' \x'^\top$, and this operation can also be reversed.
\end{proof}

\begin{proposition}\label{propMinorsNonneg} A symmetric matrix $A\in\R^{n\times n}$ is positive
semidefinite if and only if all principal minors (recall Def~\ref{defMinor}) are non-negative. 
This is equivalent to stating
$A\succeq\zeros \iff det([A]_J)\geq
0~\forall J\subseteq [1..n]$,
where the operator $[\cdot]_J$ represents the principal minor obtained by selecting
rows $J$ and columns $J$.

We will later see that $A$ is positive \texttt{definite} if and only if all
leading principal minors are strictly positive (Sylvester criterion, see
Prop.~\ref{propSylvester}), which is also equivalent to 
$det([A]_J)> 0~\forall J\subseteq [1..n]$ using
Prop.~\ref{propNullMinorNullDet}.
\end{proposition}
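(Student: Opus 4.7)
My plan has two directions to handle. The easier one is $\Longrightarrow$: for any $J\subseteq[1..n]$ and any $\y\in\R^{|J|}$, I extend $\y$ by zeros to $\x\in\R^n$ supported on $J$; then $\y^\top [A]_J\, \y = \x^\top A \x \geq 0$, so $[A]_J$ inherits positive semidefiniteness from $A$. By Prop.~\ref{propEigenSDP} all its eigenvalues are non-negative, and by Prop.~\ref{propdetiseigenprod} the determinant of $[A]_J$ is the product of those eigenvalues, so $\det([A]_J)\geq 0$.

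The real work is in $\Longleftarrow$. My plan is to tie the principal minors to the coefficients of the characteristic polynomial $p(x)=\det(xI_n-A)$ via the identity
\begin{equation}\label{planPrincipalMinorsChar}
p(x)=\sum_{k=0}^n (-1)^k S_k(A)\,x^{n-k}, \qquad S_k(A):=\sum_{\substack{J\subseteq[1..n]\\ |J|=k}} \det([A]_J),
\end{equation}
with the convention $S_0(A):=1$. To prove \eqref{planPrincipalMinorsChar} I would expand $\det(xI_n-A)$ column by column using multilinearity: the $j$-th column is $x\e_j - A_{\cdot j}$, so the determinant splits into $2^n$ pieces indexed by the set $S\subseteq[1..n]$ of columns that contribute the $x\e_j$ part. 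In each such piece, expanding along the rows of $S$ leaves only the complementary principal block of $-A$, producing a factor $x^{|S|}(-1)^{n-|S|}\det([A]_{S^c})$. Regrouping by $k=n-|S|$ yields \eqref{planPrincipalMinorsChar}.

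With \eqref{planPrincipalMinorsChar} in hand, I just evaluate $p$ on the negative real axis: for any $\mu>0$,
$$p(-\mu) = (-1)^n \sum_{k=0}^n S_k(A)\,\mu^{n-k}.$$
Under the hypothesis $S_k(A)\geq 0$ for every $k$, together with $S_0(A)=1$, the sum is at least $\mu^n>0$, so $p(-\mu)\neq 0$. Hence no negative real number is an eigenvalue of $A$. Since $A$ is symmetric its eigenvalues are all real (Prop.~\ref{propEigenReal}), so every eigenvalue is non-negative, and Prop.~\ref{propEigenSDP} yields $A\succeq\zeros$.

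The main obstacle I expect is the sign and index bookkeeping in the multilinear expansion that produces \eqref{planPrincipalMinorsChar}: one must be careful to track which $(-1)$'s come from the $-A_{\cdot j}$ columns, which factor of $x^{|S|}$ comes from the $e_j$ columns, and how the complementary principal submatrix appears after collapsing the identity rows/columns indexed by $S$. Once \eqref{planPrincipalMinorsChar} is written correctly, the rest of the argument is a one-line sign check plus the two eigenvalue results (Prop.~\ref{propEigenReal} and Prop.~\ref{propEigenSDP}) already established in the excerpt.
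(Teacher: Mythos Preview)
Your proof is correct, and it takes a genuinely different route from the paper's.

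For $\Longrightarrow$ you do essentially what the paper does: restrict to coordinates $J$, inherit semidefiniteness, and use Prop.~\ref{propEigenSDP} together with Prop.~\ref{propdetiseigenprod}.

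For $\Longleftarrow$ the approaches diverge. The paper argues by congruence and induction: it locates a full-rank principal submatrix via Prop.~\ref{propPrincipalMinorTheorem}, uses the row/column operations of Prop.~\ref{propRowColOpers} to block-diagonalize, and then peels off one dimension at a time by clearing the last row and column, checking along the way that the reduced matrix still has non-negative principal minors. Your argument instead uses the classical identity expressing the coefficients of $\det(xI_n-A)$ as alternating sums of principal minors, and then observes that under the hypothesis the polynomial cannot vanish on the negative reals; combined with Prop.~\ref{propEigenReal} this forces all eigenvalues to be non-negative.

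Your route is shorter and more algebraic; it leans only on the eigenvalue facts already established (Prop.~\ref{propEigenReal} and Prop.~\ref{propEigenSDP}) plus one determinant expansion, and as a bonus it actually uses less than the full hypothesis (only the non-negativity of the sums $S_k(A)$, i.e., of the elementary symmetric functions of the eigenvalues). The paper's route avoids the characteristic-polynomial coefficient identity entirely and stays within the congruence framework (Prop.~\ref{propCongruent}, Prop.~\ref{propRowColOpers}) it has just built, which is consistent with its pedagogical goal of deriving everything from elementary row/column manipulations; that machinery is then reused for the Schur complement and Cholesky results that follow.
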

\begin{proof}
~\\
$\Longrightarrow$\\
Take any subset of indices $J\subseteq [1..n]$ and
consider any vector $\overline{\x}_J\in\R^n$ that contains non-zero elements
only on positions $J$. Let $\x_J$ be the $|J|$-dimensional vector obtained
by extracting/keeping only the positions $J$ of $\overline{\x}_J$. Using the SDP
definition~\eqref{sdpbase}, the following needs to hold:
\begin{equation}\label{reduction1}A \sprod \overline{\x}_J \overline{\x}_J^\top\geq
0\end{equation}
Since $\overline{\x}_J \overline{\x}_J^\top$ contains non-zero elements only on
lines $J$ and columns $J$, we can re-write above formula as:
\begin{equation}\label{reduction2}[A]_J \sprod {\x}_J {\x}_J^\top\geq
0\end{equation}
Since this holds for any $\x_J$, the principal minor $[A]_J$ is SDP. This means
that the eigenvalues of $[A]_J$ are non-negative (Prop.~\ref{propEigenSDP}), and
so, the determinant of $[A]_J$ is non-negative because it is equal to the
product of its eigenvalues (Prop.~\ref{propdetiseigenprod}).\\
~\\
$\Longleftarrow$\\
Let $r$ be the rank of $A$. 
Based on Prop.~\ref{propPrincipalMinorTheorem}, $A$ has at least a non-zero {\it principal} minor of order $r$.
We can reorder the rows and columns
of $A$ 
to move this principal minor 
in the upper-left corner; we
obtain a (permuted) matrix
$A'=
\left[\begin{smallmatrix}
\AAA & B^\top \\
B    &           C 
\end{smallmatrix}\right]$,
where $\AAA$ is non singular; $A$, $A'$ and $\AAA$ have rang $r$.
Prop~\ref{propRowColOpers}.(iii), 
certifies that $A'$ has the same SDP status as $A$.
Since $A'$ has rang $r$, the bottom $n-r$ rows 
(\ie $\left[\begin{smallmatrix}
B    &           C 
\end{smallmatrix}\right]$), 
can be written as a linear combination of the first $r$ rows
(\ie, $\left[\begin{smallmatrix}
\AAA & B^\top \\
\end{smallmatrix}\right]$).
We can  subtract this linear combination of the first $r$ rows from the last $n-r$ rows 
to cancel them (make them zero). After performing the transposed
operation on the columns, we obtain a matrix 
$A''=
\left[\begin{smallmatrix}
\AAA & \zeros \\
\zeros    &           \zeros
\end{smallmatrix}\right]$ that has the same SDP status as $A'$ and $A$,
using
Prop.~\ref{propRowColOpers}.(ii). 
To prove $A,A',A''\succeq \zeros$, it is enough to
solve the following (sub-)problem:

\begin{equation} \tag{*}
\textnormal{Any {\it non-singular} symmetric~}
\AAA\in\R^{r\times r}
\textnormal{ that satisfies }
\det([\AAA]_J)\geq 0~\forall J\subseteq [1..r]
\textnormal{ is SDP}
\end{equation}

First, notice
$\AAA_{r,r}>0$ because
$\AAA_{r,r}=0$ would lead to
$\AAA_{i,r}=0~\forall i\in[1..r-1]$ (otherwise the
$2\times 2$ minor of $\AAA$ selecting rows/columns $i$ and $r$
would be negative) and if the last column of $\AAA$ is zero then
$\det(\AAA)=0$, contradiction. 
We can now subtract
the last row $r$
from each other row $i\in[1..r-1]$ in such a manner
(\ie, after multiplying it by $\frac{\AAA_{i,r}}{\AAA_{r,r}}$)
to cancel all elements on the last column above $\AAA_{r,r}$.
We then apply the same row operations but transposed (generating column
operations). This leads to
a matrix
$\left[\begin{smallmatrix}
\AAA_{r-1} & \zeros \\
\zeros    &  \AAA_{r,r}
\end{smallmatrix}\right]$
with the same
SDP status as $\AAA$ by virtue of 
Prop.~\ref{propRowColOpers}.(ii).
Also, any principal minor $\AAA_J$ of $\AAA_{r-1}$ is non-singular because
it corresponds to a principal minor 
$\left[\begin{smallmatrix}
\AAA_J & \zeros \\
\zeros    &  \AAA_{r,r}
\end{smallmatrix}\right]$ whose determinant is $\det(\AAA_J)\cdot \AAA_{r,r}$
where $\AAA_{r,r}>0$.
Thus, to prove (*), it is enough to prove
$\AAA_{r-1}\succeq \zeros$. But since $\AAA_{r-1}$ satisfies
all conditions of (*), 
we have actually obtained the same problem (*) 
on a dimension reduced by one. We can repeat this
until we remain with a matrix of size 1 and this proves $\AAA\succeq 0$, enough
to certify $A\succeq 0$.
\end{proof}

\subsection{Schur complements, the self-duality of the SDP cone and related properties}

\begin{proposition}\label{propSchurPart}(Schur complements particular case) \label{propschur1}
The $(n+1)\times (n+1)$ matrix 
$\bigl[\begin{smallmatrix}
        1     &    \b^\top        \\
       \b     &       C
\end{smallmatrix} \bigr]$ 
is SDP if and only if $C-\b\b^\top$ is SDP.
\end{proposition}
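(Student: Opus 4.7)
The plan is to reduce the matrix $\bigl[\begin{smallmatrix} 1 & \b^\top \\ \b & C \end{smallmatrix}\bigr]$ to a block-diagonal matrix by congruence-type row/column operations, and then read off the SDP status directly from the diagonal blocks. This will let me invoke Prop.~\ref{propRowColOpers}.(ii), which guarantees that a sequence of row operations followed by the transposed column operations preserves the SDP status.

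Concretely, I would use the top row to clear out the column $\b$. For each $i=1,\dots,n$, I multiply the first row by $-b_i$ and add it to the $(i+1)$-th row; this turns the $\b$ below the pivot $1$ into zeros, and simultaneously replaces the lower-right $C$ block by $C-\b\b^\top$ (since the $(i+1,j+1)$ entry becomes $C_{ij}-b_ib_j$). After this, I apply the transposed sequence of column operations, which leaves the lower-right block untouched but zeroes out $\b^\top$ in the first row. The resulting matrix is
\[
\begin{bmatrix} 1 & \zeros^\top \\ \zeros & C-\b\b^\top \end{bmatrix},
\]
which by Prop.~\ref{propRowColOpers}.(ii) is SDP if and only if $\bigl[\begin{smallmatrix} 1 & \b^\top \\ \b & C \end{smallmatrix}\bigr]$ is SDP.

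The last step is the easy verification that this block-diagonal matrix is SDP iff $C-\b\b^\top$ is SDP. For the ``if'' direction, any $\x=(x_0,\y)\in\R^{n+1}$ gives $\x^\top \bigl[\begin{smallmatrix} 1 & \zeros^\top \\ \zeros & C-\b\b^\top \end{smallmatrix}\bigr]\x = x_0^2+\y^\top(C-\b\b^\top)\y\geq 0$. For the ``only if'' direction, plugging in $\x=(0,\y)$ directly yields $\y^\top(C-\b\b^\top)\y\geq 0$ for every $\y\in\R^n$.

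I do not expect any real obstacle: the only thing to be careful about is the correct bookkeeping of the congruence transformation, namely verifying that the single elementary transformation $M\mapsto (I+Z)M(I+Z)^\top$ with $Z$ encoding ``subtract $b_i$ times row $1$ from row $i+1$'' indeed produces exactly $C-\b\b^\top$ in the bottom-right block. Alternatively, one can bypass the row/column-operations machinery entirely and give a direct proof: write any $\x=(x_0,\y)$ and observe
\[
\x^\top\! \begin{bmatrix} 1 & \b^\top \\ \b & C \end{bmatrix}\!\x = x_0^2 + 2x_0\b^\top\y + \y^\top C\y = (x_0+\b^\top\y)^2 + \y^\top(C-\b\b^\top)\y,
\]
from which both directions drop out by a suitable choice of $x_0$ (use $x_0=-\b^\top\y$ for ``$\Longrightarrow$'' and arbitrary $x_0,\y$ for ``$\Longleftarrow$''). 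Either route gives the result in a few lines; the congruence route has the pedagogical advantage of fitting cleanly into the apparatus just developed.
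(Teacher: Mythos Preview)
Your primary argument is correct and matches the paper's Proof~1 essentially line for line: clear $\b$ by row operations, transpose to clear $\b^\top$, invoke Prop.~\ref{propRowColOpers}.(ii), and finish with the block-diagonal check. The paper also records a Proof~2 that merely rewrites those row/column operations as an explicit congruence $\bigl[\begin{smallmatrix}1&\zeros^\top\\-\b&I_n\end{smallmatrix}\bigr]\bigl[\begin{smallmatrix}1&\b^\top\\\b&C\end{smallmatrix}\bigr]\bigl[\begin{smallmatrix}1&-\b^\top\\\zeros&I_n\end{smallmatrix}\bigr]$; your completing-the-square alternative is not in the paper but is a perfectly valid and arguably more self-contained route.
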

\begin{proof}
We will give two proofs. The first one produces a congruent matrix
using 
row/column operations. The second one is actually a formalization
of
the first, but it uses a ``magical'' short decomposition.\\

\noindent {\it Proof 1)} Let us subtract the first row of 
$\bigl[\begin{smallmatrix}
        1     &    \b^\top        \\
       \b     &       C
\end{smallmatrix} \bigr]$ from all other rows $i+1$ ($\forall i\in[1..n]$) premultiplying
$[1~\b^\top]$ with $b_i$. We then perform the transposed operation on the resulting matrix. The two
operations lead to 

\begin{equation}\label{eqRowColOp}
\begin{bmatrix}
        1     &    \b^\top        \\
       \b     &       C
\end{bmatrix}
\to 
\begin{bmatrix}
        1     &    \b^\top        \\
 \zeros_n     &   C-\b\b^\top
\end{bmatrix}
\to 
\begin{bmatrix}
        1     &    \zeros_n^\top     \\
 \zeros_n     &   C-\b\b^\top
\end{bmatrix}
\end{equation}

\noindent Using Prop~\ref{propRowColOpers}, the above two operations together lead to a
congruent matrix (at right) with the same SDP status as the initial
one (at left). The second matrix is SDP if and only if $C-\b\b^\top$ is SDP
(the ``$\Rightarrow$'' implication follows from performing a scalar product with
any $\x\x^\top$ with $x_0=0$ and the ``$\Leftarrow$'' implication follows
from the fact that the sum of two SDP matrices is SDP).

\noindent {\it Proof 2)} We formalize \eqref{eqRowColOp} using matrix multiplications. The first
transformation can be realized by:
\begin{equation}\label{eqfirsttran}
\begin{bmatrix}
        1     &    \b^\top        \\
 \zeros_n     &   C-\b\b^\top
\end{bmatrix}
=
\begin{bmatrix}
        1     &    \zeros_n^\top     \\
      -\b     &       I_n
\end{bmatrix}
\begin{bmatrix}
        1     &    \b^\top     \\
 \b           &   C
\end{bmatrix}
\end{equation}
and the second one by:
\begin{equation}\label{eqfirsttran2}
\begin{bmatrix}
        1     &    \zeros_n^\top     \\
 \zeros_n     &   C-\b\b^\top
\end{bmatrix}
=
\begin{bmatrix}
        1     &    \b^\top        \\
 \zeros_n     &   C-\b\b^\top
\end{bmatrix}
\begin{bmatrix}
        1     &    -\b^\top     \\
      \zeros_n&       I_n
\end{bmatrix}
\end{equation}
Combining~\eqref{eqfirsttran}-\eqref{eqfirsttran2}, we obtain:
\begin{equation*}
\begin{bmatrix}
        1     &    \zeros_n^\top     \\
 \zeros_n     &   C-\b\b^\top
\end{bmatrix}
=
\begin{bmatrix}
        1     &    \zeros_n^\top     \\
      -\b     &       I_n
\end{bmatrix}
\begin{bmatrix}
        1     &    \b^\top     \\
 \b           &   C
\end{bmatrix}
\begin{bmatrix}
        1     &    -\b^\top     \\
      \zeros_n&       I_n
\end{bmatrix}
\end{equation*}
We obtain again that
$\left[\begin{smallmatrix}
        1     &    \zeros_n^\top     \\
 \zeros_n     &   C-\b\b^\top
\end{smallmatrix} \right]$ 
is congruent to 
$\left[\begin{smallmatrix}
        1     &    \b^\top        \\
       \b     &       C
\end{smallmatrix} \right]$, because
$\det\left[\begin{smallmatrix}
       1     &    \zeros_n^\top     \\
     -\b     &       I_n
\end{smallmatrix} \right]=1$.
This finishes the proof by virtue of
Prop~\ref{propCongruent}.
\end{proof}

\begin{proposition}\label{propSchurGen}(Schur complements general case) 
Given positive definite $A\in\R^{m\times m}$, the $(n+m)\times (n+m)$ matrix 
$\bigl[\begin{smallmatrix}
        A     &    B^\top        \\
       B      &       C
\end{smallmatrix} \bigr]$ 
is SDP if and only if $C-B A^{-1}B^\top$ is SDP.
\end{proposition}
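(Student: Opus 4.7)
The plan is to mirror Proof 2 of Proposition~\ref{propSchurPart}, replacing the rank-one elimination by a block elimination. Since $A$ is positive definite, it is invertible, and since $A$ is symmetric, so is $A^{-1}$ (because $(A^{-1})^\top=(A^\top)^{-1}=A^{-1}$). This lets me take $-BA^{-1}$ as the block multiplier that zeroes out $B$ in the $(2,1)$-block, and its transpose $-A^{-1}B^\top$ as the corresponding column multiplier that zeroes out $B^\top$ in the $(1,2)$-block.

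First I would write down the block identity
\begin{equation*}
\begin{bmatrix} I_m & \zeros \\ -BA^{-1} & I_n \end{bmatrix}
\begin{bmatrix} A & B^\top \\ B & C \end{bmatrix}
\begin{bmatrix} I_m & -A^{-1}B^\top \\ \zeros & I_n \end{bmatrix}
=
\begin{bmatrix} A & \zeros \\ \zeros & C - BA^{-1}B^\top \end{bmatrix},
\end{equation*}
which is verified by a direct block multiplication: the left multiplication sends the $(2,1)$-block to $B - BA^{-1}A = \zeros$ and the $(2,2)$-block to $C - BA^{-1}B^\top$, and the right multiplication symmetrically clears the $(1,2)$-block. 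The right-hand elimination matrix is the transpose of the left-hand one precisely because $A^{-1}$ is symmetric, and both have determinant $1$. Hence the original block matrix and the block-diagonal matrix on the right are \emph{congruent}, so by Proposition~\ref{propCongruent} they have the same SDP status.

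It then suffices to observe that the block-diagonal matrix $\bigl[\begin{smallmatrix} A & \zeros \\ \zeros & C - BA^{-1}B^\top \end{smallmatrix}\bigr]$ is SDP if and only if each diagonal block is SDP: the forward direction comes from testing the quadratic form on vectors supported only in one block, and the converse from summing the two non-negative contributions. Since $A$ is positive definite (hence SDP), this block-diagonal matrix is SDP if and only if $C - BA^{-1}B^\top$ is SDP, yielding the claimed equivalence.

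The main obstacle, if any, is conceptual rather than computational: one must recognize that the scalar ``$1$'' in the particular case plays the role of an invertible symmetric block, so the generalization requires $A$ to be invertible — which is exactly what the positive-definiteness hypothesis (all eigenvalues $>0$, so $\det A\neq 0$) guarantees, and it is also what makes $A^{-1}$ symmetric so that the left/right elimination matrices remain transposes of each other. The rest is routine block algebra.
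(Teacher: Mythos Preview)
Your proof is correct and follows essentially the same approach as the paper: both establish the same block congruence identity via the unit-triangular factor $\bigl[\begin{smallmatrix} I_m & \zeros \\ -BA^{-1} & I_n \end{smallmatrix}\bigr]$ and its transpose, then invoke Proposition~\ref{propCongruent}. You spell out the block-diagonal SDP equivalence a bit more explicitly than the paper, which simply defers to the argument of Proposition~\ref{propSchurPart}, but the substance is identical.
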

\begin{proof}

As in previous Prop.~\ref{propschur1}, we want to subtract from the (bottom) $n$
rows that cover $B\in\R^{n\times m}$ a
linear combination of the top $m$ rows so as to cancel (make zero) all terms of
$B$. We look for a matrix $X\in \R^{n\times m}$ such that $XA=-B$;
incidentally, $X$ and $B$ have the same size because the multiplication
with a square matrix conserves the size (of $A$).
By this operation, each row $i$ of $X$ generates a linear
combination of the rows of $A$ that equals the negative of row $i$ of $B$. 
The transpose of this operation is applied on columns to cancel $B^\top$.
It is clear
that $X=-BA^{-1}$. This explains the bottom-left term of 
matrix $U$ below.
\begin{equation*}
\begin{bmatrix}
        A                &    \zeros_{m\times n}     \\
 \zeros_{n\times m}      &  C-B A^{-1}B^\top
\end{bmatrix}
=
\underbrace{
\begin{bmatrix}
        I_m   &    \zeros_{m\times n}     \\
-B A^{-1}     &       I_n
\end{bmatrix}
}_
{U}
\begin{bmatrix}
        A     &    B^\top        \\
       B      &       C
\end{bmatrix}
\underbrace{
\begin{bmatrix}
        I_m        &    -A^{-1}B^\top           \\
\zeros_{n\times m} &       I_n
\end{bmatrix}
}_{U^\top}
\end{equation*}
\begin{tikzpicture}[overlay]
\node(ignore) at (14.6,0.72) {\rotatebox{-30}{\color{gray}$=(A^{-1})^\top B^\top$}};
\end{tikzpicture}

Notice that $U^\top$ is written after applying the simplification  
$(A^{-1})^\top=A^{-1}$ (which follows from $I=(A^{-1}A)^\top=A^\top \left(A^{-1}\right)^\top \implies
\left(A^{-1}\right)^\top=\left(A^\top\right)^{-1}=A^{-1}$).
Since $\det(U)=1$,
we obtain 
that
$\bigl[\begin{smallmatrix}
        A                &    \zeros_{m\times n}     \\
 \zeros_{n\times m}      &  C-B A^{-1}B^\top
\end{smallmatrix} \bigr]$ 
and
$\bigl[\begin{smallmatrix}
        A     &    B^\top        \\
       B      &       C
\end{smallmatrix} \bigr]$
are congruent.
We finish with Prop~\ref{propCongruent} as for
Prop~\ref{propSchurPart} above.
\end{proof}
\begin{proposition} \label{propSDPSelfDual}A symmetric matrix $A\in\R^{n\times n}$ is SDP if and only if $A\sprod B\geq 0$ for 
any SDP matrix $B$. We say that the cone $S_n^+$ of SDP matrices is self-dual.
\end{proposition}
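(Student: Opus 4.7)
The plan is to handle the two implications separately, leveraging the eigendecomposition \eqref{firstEqDecomp} and Prop.~\ref{propEigenSDP} for one direction, and using rank-one test matrices for the other. I expect the reverse direction to be essentially immediate, while the forward direction will need the spectral decomposition to reduce the question to a sum of scalar products against rank-one SDP matrices.

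For the easy direction ($\Leftarrow$), I would assume $A\sprod B\geq 0$ for every SDP $B$ and then specialize $B$ to a rank-one SDP matrix. Given any $\x\in\R^n$, the matrix $\x\x^\top$ is SDP because $\y^\top \x\x^\top \y=(\x^\top \y)^2\geq 0$ for all $\y$. Plugging this $B=\x\x^\top$ into the hypothesis and using the identity $A\sprod \x\x^\top=\x^\top A\x$ already displayed in~\eqref{sdpbase}, we get $\x^\top A\x\geq 0$ for every $\x$, which is exactly Definition~\ref{def1}.

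For the forward direction ($\Rightarrow$), I would assume $A$ is SDP and take an arbitrary SDP $B$. Apply the eigendecomposition~\eqref{firstEqDecomp} to $B$ to write $B=\sum_{i=1}^n \mu_i\,\u_i\u_i^\top$, where the $\u_i$ are unitary orthogonal eigenvectors and the $\mu_i$ are the eigenvalues of $B$. By Prop.~\ref{propEigenSDP}, the SDP status of $B$ gives $\mu_i\geq 0$ for every $i$. Then, using linearity of the scalar product and again the identity $A\sprod \u_i\u_i^\top=\u_i^\top A\u_i$,
\begin{equation*}
A\sprod B=\sum_{i=1}^n \mu_i\,(A\sprod \u_i\u_i^\top)=\sum_{i=1}^n \mu_i\,\u_i^\top A\u_i\geq 0,
\end{equation*}
since each $\mu_i\geq 0$ and each $\u_i^\top A\u_i\geq 0$ by the SDP status of $A$ (Definition~\ref{def1}).

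The only subtle point is the interplay between the two equivalent characterizations already established: I use Prop.~\ref{propEigenSDP} to extract non-negativity of the eigenvalues of $B$, and Definition~\ref{def1} applied both to $A$ (to conclude $\u_i^\top A\u_i\geq 0$) and later, in reverse, to recover SDP-ness of $A$ from rank-one test vectors. There is no real obstacle; the whole proof is a two- or three-line application of the spectral theorem combined with the fact that rank-one matrices $\x\x^\top$ are dense enough in the SDP cone to detect membership (in fact they even span it, which is implicit in the argument).
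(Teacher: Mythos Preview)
Your proof is correct and follows essentially the same approach as the paper: rank-one test matrices $\x\x^\top$ for the $\Leftarrow$ direction and the eigendecomposition of $B$ for the $\Rightarrow$ direction. Your version is in fact slightly more economical than the paper's, which eigendecomposes \emph{both} $A$ and $B$ and passes through an auxiliary lemma $(\u\u^\top)\sprod(\v\v^\top)=(\u\sprod\v)^2$; by contrast you only decompose $B$ and invoke Definition~\ref{def1} directly on each term $\u_i^\top A\u_i$, which avoids that detour.
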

\begin{proof}
We apply the eigen-decomposition \eqref{firstEqDecomp} on $A$ and $B$:
:
\begin{equation} \label{eqEigenDecomTwice}
A=\sum_{i=1}^n \lambda_i \v_i\v^\top_i,~~~
B=\sum_{i=1}^n \lambda'_i \u_i\u^\top_i,
\end{equation}
\begin{lemma} \label{lemmaprod}Given vectors $\u,\v\in\R^n$, we have 
$$(\u \u^\top)\sprod (\v\v^\top)=(\u\sprod\v)^2$$
\end{lemma}
\begin{proof}
$$(\u \u^\top)\sprod (\v\v^\top) = \v^\top (\u\u^\top ) \v 
= (\v^\top\u)(\u ^\top \v )
= (\u^\top \v)^\top(\u ^\top \v )=(\u\sprod\v)^2$$
\end{proof}

\noindent$\Longrightarrow$~\\
If $A$ is SDP, then $\lambda_i\geq 0\forall i\in[1..n]$. 
Using substitution 
\eqref{eqEigenDecomTwice}, we obtain
$$A\sprod B=\sum_{i=1}^n\sum_{j=1}^n \lambda_i\lambda'_j (\v_i\v^\top_i)\sprod (\u_j\u^\top_j)
           =\sum_{i=1}^n\sum_{j=1}^n \lambda_i\lambda'_j (\v_i\sprod\u_j)^2,$$
where we used Lemma \ref{lemmaprod} for the last equality. This shows that $A\sprod B\geq 0$.

\noindent$\Longleftarrow$~\\
Let us take some $i\in[1..n]$ and consider $B=\v_i^\top \v_i$, where recall that $\v_i$ is a unit
eigenvector of $A$. Since $A\sprod B\geq 0$, we deduce
$A\sprod \v_i^\top \v_i\geq 0$, or
$\v_i^\top A \v_i\geq 0$, equivalent to
$\v_i^\top \lambda_i \v_i\geq 0$. This means that $\lambda_i\geq 0$. All eigenvalues of $A$ are
non-negative, and so, $A$ is SDP.
\end{proof}

\begin{proposition}\label{propABprod} If $A,B$ are SDP, then $A\sprod B\geq 0$ and 
$A \sprod B = 0 \Longleftrightarrow AB=\zeros_{n\times n}$.
\end{proposition}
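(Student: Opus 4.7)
The plan is to reuse the key identity already derived in the proof of Prop~\ref{propSDPSelfDual}, namely
$$A\sprod B = \sum_{i=1}^n\sum_{j=1}^n \lambda_i\lambda'_j (\v_i\sprod\u_j)^2,$$
where $A=\sum_i\lambda_i\v_i\v_i^\top$ and $B=\sum_j\lambda'_j\u_j\u_j^\top$ are eigen-decompositions. Since $A,B$ are SDP, Prop~\ref{propEigenSDP} gives $\lambda_i,\lambda'_j\geq 0$, so every term on the right is non-negative and the first claim $A\sprod B\geq 0$ is immediate (this is, of course, exactly what self-duality already tells us).

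For the easy direction $AB=\zeros\Rightarrow A\sprod B=0$: since $A$ is symmetric, $A\sprod B=\texttt{trace}(A^\top B)=\texttt{trace}(AB)=0$.

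For the hard direction $A\sprod B=0\Rightarrow AB=\zeros$: since the sum displayed above has only non-negative summands, $A\sprod B=0$ forces every summand to vanish, \ie,
$$\lambda_i\lambda'_j(\v_i\sprod\u_j)^2=0\quad\forall i,j\in[1..n].$$
Thus whenever both $\lambda_i>0$ and $\lambda'_j>0$, one must have $\v_i^\top\u_j=0$. I then expand the product using the two eigen-decompositions:
$$AB=\left(\sum_{i=1}^n\lambda_i\v_i\v_i^\top\right)\left(\sum_{j=1}^n\lambda'_j\u_j\u_j^\top\right)=\sum_{i=1}^n\sum_{j=1}^n\lambda_i\lambda'_j(\v_i^\top\u_j)\v_i\u_j^\top.$$
In each $(i,j)$ summand, either one of $\lambda_i,\lambda'_j$ is zero, or both are strictly positive and then $\v_i^\top\u_j=0$ by the previous paragraph; in either case the summand is $\zeros_{n\times n}$. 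Hence $AB=\zeros_{n\times n}$.

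The main obstacle is essentially bookkeeping: recognising that the same non-negativity argument that yields the first half of the statement (via Prop~\ref{propSDPSelfDual}) can be sharpened to a term-by-term vanishing, and then correctly re-assembling $AB$ through the eigen-decompositions so that the scalar $\v_i^\top\u_j$ factors out of the outer-product $\v_i\v_i^\top\u_j\u_j^\top=(\v_i^\top\u_j)\v_i\u_j^\top$.
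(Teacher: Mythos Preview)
Your proof is correct and follows essentially the same route as the paper: eigen-decompose both matrices, use the identity $A\sprod B=\sum_{i,j}\lambda_i\lambda'_j(\v_i\sprod\u_j)^2$ to get non-negativity and term-by-term vanishing, then expand $AB$ through the same decompositions. Your treatment of the easy direction $AB=\zeros\Rightarrow A\sprod B=0$ via $A\sprod B=\texttt{trace}(AB)$ is exactly what the paper does, just phrased more compactly (the paper sums the diagonal entries of $AB$ explicitly, which is the trace).
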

\begin{proof}
We apply the eigen-decomposition \eqref{firstEqDecomp} 
listing the only terms with non-zero eigenvalues:
\begin{equation} \label{eqEigenDecomTwiceBis}
A=\sum_{i=1}^r \lambda_i \v_i\v^\top_i,~~~~
B=\sum_{i=1}^{r'} \lambda'_i \u_i\u^\top_i
\end{equation}
where $r$ and $r'$ are the ranks of $A$ and resp.~B
(the number of non-zero eigenvalues, see Prop~\ref{propeigenmult}).
We now use Lemma \ref{lemmaprod} to calculate
$$A\sprod B=\sum_{i=1}^n\sum_{j=1}^r \lambda_i\lambda'_j (\v_i\v^\top_i)\sprod (\u_j\u^\top_j)
           =\sum_{i=1}^n\sum_{j=1}^{r'} \lambda_i\lambda'_j
(\v_i\sprod\u_j)^2\geq 0$$

If $A\sprod B=0$, then all terms
$\v_i\sprod\u_j$ with $i\in[1..r]$ and $j\in[1..r']$ need to be zero (recall we only use strictly
positive eigenvalues).
Now observe
$$AB=\sum_{i=1}^r\sum_{j=1}^{r'} \lambda_i\lambda_j (\v_i\v^\top_i)(\u_j\u^\top_j)
=\sum_{i=1}^r\sum_{j=1}^{r'} \lambda_i\lambda_j \v_i(\v^\top_i\u_j)\u^\top_j
=\sum_{i=1}^r\sum_{j=1}^{r'} \lambda_i\lambda_j \v_i\cdot 0\cdot \u^\top_j=\zeros_{n\times n}$$

We still need to show the converse: $AB=\zeros_{n\times n }\Longrightarrow A\sprod B=0$. Taking any
$k\in[1..n]$, the
$k^{th}$ diagonal element of $AB$ is $0=\sum_{\ell = 1}^n a_{k\ell}b_{k\ell}$ (we used the symmetry of
$B$). Summing up for all $k$ we obtain $0=\sum_{k=1}^n \sum_{\ell = 1}^n a_{k\ell}b_{k\ell} =
A\sprod B$.
\end{proof}

\subsection{Three easy ways to generate (semi-)definite positive matrices}
There are at least three easy ways of generating semidefinite (or definite)
positive matrices. 
\begin{enumerate}
\item Generate $A\in \R^{n\times n}$ by taking $A=V^\top V$ for any 
$V\in\R^{p\times n}$. It is easy to verify $\x^\top V^\top V\x=(V\x)^\top
(V\x)=|V\x|^2\geq 0~\forall \x\in\R^n$. If $V$ has rank $n$, then $V\x$ is
non-zero for any non-zero $\x$, and so, $|V\x|^2>0~\forall \x\in
\R^n-\{\zeros\}$, meaning that $A=V^\top V$ is positive definite. 
If $S\succ\zeros$ and $V$ has rank $n$, we also have $V^\top S V\succ \zeros.$
As a side
remark, notice $rank(A)=rank(V)$ using Prop.~\ref{propranktransprod}
(based on the rank--nullity theorem)
\item Take a diagonally dominant
matrix such that $A_{ii}\geq
r_i=\sum\limits_{\substack{j=1\\j\neq i}}^n \left|A_{ij}\right|~\forall i\in[1..n]$.
Any eigenvalue $\lambda$ of such matrices verify 
$\left|\lambda-A_{ii}\right|\leq r_i$ for some $i\in[1..n]$ by virtue of the
(relatively easy to prove) Gershgorin circle
Theorem~\ref{thGershgorin}.%
\footnote{I found this approach at 
page 4 of the 
Habilitation thesis (\textit{Habilitationsschrift}) 
of
Christoph Helmberg 
``Semidefinite Programming for Combinatorial Optimization'',
Technical University of Berlin (\textit{%
Technische Universit\" at Berlin}),
The Zuse Institute Berlin (\textit{Konrad-Zuse-Zentrum f\" ur
Informationstechnik Berlin}),
ZIB-report ZR-00-34,
available on-line as of 2017 at
\url{http://opus4.kobv.de/opus4-zib/files/602/ZR-00-34.pdf}.
}
If $A_{ii}\geq r_i~\forall i\in[1..n]$, we need to
have $\lambda\geq 0$. The matrix $A+\varepsilon I_n$ is {\it positive definite} for any
$\varepsilon>0$.
\item Take $A=|X|I_n + X$, where $|X|=\sqrt{X\sprod X}$. This follows from the fact
that the minimum eigenvalue of $X$ is greater than or equal to $-|X|$, by virtue of Prop.~\ref{propFrobenius} below.
\end{enumerate}
\begin{proposition}\label{propFrobenius} Given symmetric $X\in \R^{n\times n}$,
the Frobenius norm $|X|=\sqrt{\sum\limits_{i,j=1}^n X_{ij}^2}=\sqrt{X\sprod X}$ is equal to 
$\sqrt{\lambda_1^2+\lambda_2^2+\dots \lambda_n^2}$, where 
$\lambda_1,~\lambda_2,\dots \lambda_n$ are the eigenvalues of $X$. This means that the maximum
eigenvalue of $X$ is at most $|X|$ and the minimum eigenvalue is at least $-|X|$.
\end{proposition}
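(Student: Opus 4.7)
The plan is to apply the eigendecomposition \eqref{firstEqDecomp} to the symmetric matrix $X$ and expand the Frobenius inner product $X\sprod X$ directly, using tools already proved in the excerpt. Writing $X=\sum_{i=1}^n \lambda_i \v_i\v_i^\top$ with unitary orthonormal eigenvectors $\v_1,\dots,\v_n$, I would first compute
$$|X|^2 = X\sprod X = \Bigl(\sum_{i=1}^n \lambda_i \v_i\v_i^\top\Bigr)\sprod \Bigl(\sum_{j=1}^n \lambda_j \v_j\v_j^\top\Bigr) = \sum_{i=1}^n\sum_{j=1}^n \lambda_i\lambda_j\,(\v_i\v_i^\top)\sprod (\v_j\v_j^\top).$$

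Next I would invoke Lemma \ref{lemmaprod}, which gives $(\v_i\v_i^\top)\sprod(\v_j\v_j^\top)=(\v_i\sprod\v_j)^2$. By orthonormality of the eigenvectors, $\v_i\sprod\v_j=0$ when $i\neq j$ and $\v_i\sprod\v_i=1$, so every off-diagonal term of the double sum vanishes and each diagonal term contributes $\lambda_i^2$. This immediately yields $|X|^2=\sum_{i=1}^n \lambda_i^2$, which is the main claim. It only remains to match this with the elementary identity $|X|^2=\sum_{i,j}X_{ij}^2$, which is how the Frobenius norm was defined in the statement.

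For the bound on individual eigenvalues, I would simply observe that for any index $k$, the identity $|X|^2=\sum_i\lambda_i^2$ implies $\lambda_k^2\leq |X|^2$, hence $|\lambda_k|\leq |X|$, i.e.~$-|X|\leq \lambda_k\leq |X|$. In particular, the largest eigenvalue is at most $|X|$ and the smallest is at least $-|X|$, which justifies (as promised in item 3 of the preceding list) that $|X|I_n+X\succeq \zeros$.

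I do not foresee a real obstacle: once the eigendecomposition and Lemma \ref{lemmaprod} are in hand, the calculation is essentially one line. The only point to be careful about is to use orthonormality of the eigenvectors (guaranteed because $X$ is real symmetric, see Section \ref{sec11}) to kill all cross terms in the double sum; a reader who tried to prove the result without the eigendecomposition would find this the main obstacle, but here it is already available.
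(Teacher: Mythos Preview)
Your proof is correct. Both you and the paper start from the eigendecomposition, but the mechanics differ slightly. The paper rewrites $|X|^2=X\sprod X=\texttt{trace}(XX)$, computes $XX=U\Lambda U^\top U\Lambda U^\top=U\Lambda^2 U^\top$, and then invokes Prop.~\ref{propdetiseigenprod} (trace equals sum of eigenvalues, via Prop.~\ref{simMatSameEigenVals}) to read off $\texttt{trace}(XX)=\sum_i\lambda_i^2$. You instead expand the sum form $X=\sum_i\lambda_i\v_i\v_i^\top$ and apply Lemma~\ref{lemmaprod} together with orthonormality to kill the cross terms directly. Your route is a touch more elementary in that it avoids the detour through similar matrices and the trace identity; the paper's route has the small advantage of exhibiting the eigenvalues of $XX$ explicitly. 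Either way the eigenvalue bound follows exactly as you say, from $\lambda_k^2\leq\sum_i\lambda_i^2$.
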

\begin{proof}Standard calculations can confirm $|X|=\sqrt{X\sprod
X}=\sqrt{\texttt{trace}(XX)}$;
and more generally we have $\texttt{trace}{(XY)}=X\sprod Y$.
We apply the 
eigendecomposition 
\eqref{firstEqDecomp}
to write symmetric $X$ in the form $X = U 
\texttt{diag}(\lambda_1,~\lambda_2,~\dots \lambda_n)
 U^\top $, where $U^{\top}=U^{-1}$. We obtain $XX=
U
\texttt{diag}(\lambda^2_1,~\lambda^2_2,~\dots \lambda^2_n)
U^\top$. This means that the eigenvalues of $XX$
are $\lambda^2_1,~\lambda^2_2,~\dots \lambda^2_n$, see also
Prop~\ref{simMatSameEigenVals}. Since the trace is the sum of the eigenvalues
(see Prop.~\ref{propdetiseigenprod}), we obtain 
\begin{equation}\label{eqFrobenius}|X|=\sqrt{\texttt{trace}(XX)}
=\sqrt{\lambda^2_1 + \lambda^2_2 + \dots +\lambda^2_n}.
\end{equation} 
Is is clear $X$ can have no eigenvalue strictly larger than $|X|$ or strictly lower than $-|X|$, because this would violate
\eqref{eqFrobenius}.%
\footnote{I first found this result in Section 10.1 of the lecture notes of Robert Freund
``Introduction to Semidefinite Programming (SDP)'', available on-line as of 2017 at
\url{https://ocw.mit.edu/courses/electrical-engineering-and-computer-science/6-251j-introduction-to-mathematical-programming-fall-2009/readings/MIT6_251JF09_SDP.pdf}.
}
\end{proof}
\subsection{Positive definite matrices: unique Cholesky factorization and
Sylvester criterion}

\begin{proposition}\label{propCholeskyDP} (Cholesky factorization of positive definite matrices) A 
real symmetric 
matrix $A$ is positive definite if and only if it can be factorized as:
\begin{equation*}
A=RR^\top=
\begin{bmatrix}
r_{11}   &  0     &   0  & \dots &   0   \\
r_{21}   & r_{22} &   0  & \dots &   0   \\
r_{31}   & r_{32} &r_{33}& \dots &   0   \\
\vdots   & \vdots &\vdots&\ddots & \vdots\\
r_{n1}   & r_{n2} &r_{n3}& \dots &r_{nn}  \\
\end{bmatrix}
\begin{bmatrix}
r_{11}   &r_{21}  & r_{31}& \dots&r_{n1}  \\
0        &r_{22}  & r_{32}& \dots&r_{n2}  \\
0        & 0      & r_{33}& \dots&r_{n3}  \\
\vdots   & \vdots &\vdots&\ddots & \vdots \\
0        &  0     &  0    &\dots &r_{nn}  \\
\end{bmatrix},
\end{equation*}
where the diagonal terms are strictly positive. The factorization is unique.

\noindent \texttt{Practical hint:} It could be useful to interpret $A=RR^\top$
in the sense that $A_{ij}$ is the product of rows $i$ and $j$ of $R$;
only the first $i$ (resp.~$j$) components of row $i$ (resp.~$j$) 
are non-zero.
\end{proposition}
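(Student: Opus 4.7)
The plan is to prove both directions and uniqueness by induction on $n$, peeling off one row/column at each step through a Schur-complement argument. The easy direction ($\Leftarrow$) is immediate from Method~1 of the previous subsection: if $A = RR^\top$ with $R$ lower triangular and strictly positive diagonal, then $\det R = \prod_i r_{ii} > 0$, so $R^\top$ has rank $n$, and $A = (R^\top)^\top R^\top \succ \zeros$ by Method~1.

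For the harder direction together with uniqueness, I would induct on $n$. The base $n=1$ is trivial: $a_{11} = \e_1^\top A \e_1 > 0$ forces $r_{11} = \sqrt{a_{11}}$ uniquely. For the inductive step, I partition
$$A = \begin{bmatrix} a_{11} & \b^\top \\ \b & C \end{bmatrix}$$
and look for $R$ in block form $R = \bigl[\begin{smallmatrix} r_{11} & 0 \\ \r & R' \end{smallmatrix}\bigr]$, where $R'$ is a lower-triangular $(n{-}1)\times(n{-}1)$ block with positive diagonal. Expanding $RR^\top$ block-by-block forces
$$r_{11}^2 = a_{11}, \qquad r_{11}\r = \b, \qquad R'R'^\top = C - \b\b^\top/a_{11}.$$
The sign constraint together with the first two identities uniquely pins down $r_{11} = \sqrt{a_{11}}$ and $\r = \b/r_{11}$, so everything collapses to producing a unique $R'$ that factorizes the Schur complement.

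The induction hypothesis can be invoked as soon as I know $C - \b\b^\top/a_{11} \succ \zeros$. This is the positive-definite strengthening of Prop.~\ref{propSchurGen} (specialized to $m=1$), and it falls out for free from the very congruence exhibited in its proof: the block-diagonal matrix $\texttt{diag}(a_{11},~C - \b\b^\top/a_{11})$ is congruent to $A$, so by Prop.~\ref{propCongruent} it inherits strict positive definiteness from $A$, which in turn forces both diagonal blocks to be positive definite. I expect this strict-definite variant of the Schur complement to be the only genuine subtlety; once it is settled, the induction delivers a unique admissible $R'$ and closes both existence and uniqueness at dimension $n$. Worth noting in passing: the uniqueness claim genuinely relies on the sign convention $r_{ii}>0$, since otherwise each column of $R$ could be independently negated without altering $RR^\top$.
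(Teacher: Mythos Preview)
Your proof is correct and takes a genuinely different route from the paper's. Both argue by induction on $n$, but they peel in opposite directions.

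The paper assumes the leading principal minor $[A]_{n-1}$ has already been factorized as $[R]_{n-1}[R]_{n-1}^\top$ and then solves explicitly for the last row of $R$ entry by entry, using $a_{ni}=\r_{n,\times}\sprod\r_{i,\times}$ (this gives the familiar sequential Cholesky formulas). The delicate step is showing that $r_{nn}^2=a_{nn}-\sum_{i=1}^{n-1}r_{ni}^2$ is strictly positive; the paper does this by producing a vector $[\x^\top\,-1]$ with $\x^\top[R]_{n-1}=[r_{n1}\dots r_{n,n-1}]$ and checking that $[\x^\top\,-1]A[\x^\top\,-1]^\top=r_{nn}^2>0$ by positive definiteness of $A$.

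You instead strip off the \emph{first} row and column and recurse on the Schur complement $C-\b\b^\top/a_{11}$. The positive-definite step you flag as the ``only genuine subtlety'' is indeed handled exactly as you say: the congruence written out in the proof of Prop.~\ref{propSchurGen} together with Prop.~\ref{propCongruent} gives the strict version immediately. Your approach is cleaner because it reuses machinery the paper has already built, and the block algebra makes uniqueness transparent. The paper's approach, on the other hand, yields the explicit entry-by-entry recurrences one would actually implement, which fits its pedagogical emphasis on concrete computation.
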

\begin{proof}~\\
$\Longleftarrow$\\
Take any non-zero $\x\in\R^n-\{\zeros_n\}$ and observe that $\x^\top A \x = \x^\top R R^\top \x = 
(R^\top \x)^\top (R^\top \x)$. Writing $\y=R^\top \x$, this value is equal to $\sum_{i=1}^n
y^2_i\geq 0$. This inequality is strict because the only $\y\in \R^n$ such that 
$\sum_{i=1}^n y^2_i= 0$ is $\y=\zeros_n$ and because $\y=R^\top \x$ can not be $\zeros_n$
for any non-zero $\x$ (since $det(R^\top)=r_{11}r_{22}\dots r_{nn}>0$).\\
$\Longrightarrow$\\
We proceed by induction. The implication is obviously true for $n=1$. We suppose that 
there exists a unique factorization:
$$[A]_{n-1}=
[R]_{n-1}
[R]_{n-1}^\top 
=
\begin{bmatrix}
r_{11}   &  0      & \dots &   0       \\
r_{21}   & r_{22}  & \dots &   0       \\
\vdots   & \vdots  &\ddots & \vdots    \\
r_{n-1,1}&r_{n-1,2}& \dots &r_{n-1,n-1}\\
\end{bmatrix}
\begin{bmatrix}
r_{11}   &r_{21}  & \dots&r_{n-1,1}  \\
0        &r_{22}  & \dots&r_{n-1,2}  \\
\vdots   & \vdots &\ddots & \vdots \\
0        &  0     &\dots &r_{n-1,n-1}  \\
\end{bmatrix},
$$
where $r_{11},~r_{22},~\dots r_{n-1,n-1}>0$. We will prove that this decomposition can be extended
to a $n\times n$ decomposition for matrix $A$. 
The values $r_{1n},~r_{2n},\dots r_{n-1,n}$ are set to zero by definition to preserve the decomposition of $[A]_{n-1}$.
We can exactly determine $r_{n,1},~r_{n,2},\dots r_{n,n}$ 
using the following calculations:
    \begin{enumerate}
        \item[(a)] $\displaystyle r_{n1}=\frac{a_{n1}}{r_{11}}$, based on $a_{n1}=\r_{n,\times}\sprod \r_{1,\times}$, where
$\r_{i,\times}$ is the $i^\text{th}$ line of $R$.
        \item[(b)] $\displaystyle r_{n2}=\frac{a_{n2}-r_{21}r_{n1}}{r_{22}}$, based on $a_{n2}=\r_{n,\times}\sprod
\r_{2,\times}$.
        \item[(c)] $\displaystyle
            r_{ni}=\frac{a_{ni}-r_{i1}r_{n1}-r_{i2}r_{n2}-\dots-r_{i,i-1}r_{n,i-1}}{r_{ii}}$ for any
$i\leq n-1$, based on $a_{ni}=\r_{n,\times}\sprod \r_{i,\times}$.
        \item[(d)] $r_{nn}=\sqrt{a_{nn}-\sum_i^{n-1}{r_{ni}^2}}$, so
that the value of $r_{nn}$ that makes the factorisation $A=RR^\top$ work can be potentially
non-real, \eg, we could have $r_{nn}=i$ so that $i^2=-1$.
    \end{enumerate}

We only still need to show $r_{nn}$ is real. 
Since $[R]_{n-1}$ is non-singular by the induction hypothesis, there exists
$\x\in\R^{n-1}$ so that $\x^\top [R]_{n-1}=[r_{n,1}~r_{n,2},\dots r_{n,n-1}]$.
By bordering $\x$ with a $n^{th}$ component of value -1,
we obtain
$[\x^\top\,~-1] R=[\zeros_{n-1}\,~-r_{nn}]$. This means that
$
[\x^\top\,~-1] A 
\left[\begin{smallmatrix}
\x \\
- 1
\end{smallmatrix}\right]
=
[\x^\top\,~-1] RR^\top
\left[\begin{smallmatrix}
\x \\
- 1
\end{smallmatrix}\right]
=
[\zeros_{n-1}\,~-r_{nn}]
[\zeros_{n-1}\,~-r_{nn}]^\top
=r_{nn}^2$.
We thus have $r_{nn}^2>0$ because $A\succ \zeros$; recalling how
$r_{nn}$ was determined at point (d) above, it is clear that $r_{nn}$ can not be
an imaginary number, \ie, we can only have $r_{nn}\in\R_+$.
\end{proof}

\begin{proposition}\label{propSylvester}(Sylvester criterion) A symmetric matrix $A\in \R^{n \times
n}$ is positive definite if and only all leading principal minors are strictly
positive. By symmetrically permuting the rows and columns, this is equivalent to
the fact that any nested sequence of principal minors contains only strictly
positive minors. Principal minors $A''$ and $A'$ 
corresponding to rows/columns $J''$ and resp.~$J'$ are nested if and only if
$J''\subsetneq J'$ and $|J''|+1=|J'|$.
\end{proposition}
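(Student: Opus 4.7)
My plan is to prove the two directions separately, then handle the nested-sequence generalization by reducing it to the leading case via a symmetric permutation of rows and columns.

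For the ``$\Longrightarrow$'' direction, I would observe that if $A \succ \zeros$, then restricting the quadratic form $\x^\top A \x$ to vectors supported on the leading indices $J = [1..k]$ shows, exactly as in the proof of Prop.~\ref{propMinorsNonneg}, that the leading principal minor $[A]_J$ is itself positive definite. By Prop.~\ref{propEigenSDP} its eigenvalues are strictly positive, and by Prop.~\ref{propdetiseigenprod} its determinant (being the product of those eigenvalues) is strictly positive. This gives strict positivity of every leading principal minor.

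For the ``$\Longleftarrow$'' direction, I would proceed by induction on $n$ and leverage the Cholesky construction from Prop.~\ref{propCholeskyDP}. The base case $n=1$ is immediate. For the inductive step, I note that the leading principal minors of $[A]_{n-1}$ are a subset of those of $A$ and are therefore strictly positive; by induction $[A]_{n-1} \succ \zeros$, so by Prop.~\ref{propCholeskyDP} it admits a unique Cholesky factor $[R]_{n-1}$ with strictly positive diagonal. I then run the extension formulas (a)--(d) from that proof to obtain candidate entries $r_{n,1},\dots,r_{n,n-1}$ and $r_{nn}^2 = a_{nn} - \sum_{i<n} r_{ni}^2$. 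These formulas are constructed so that the identity $A = RR^\top$ holds \emph{as an algebraic equation in $r_{nn}^2$}, regardless of the sign of $r_{nn}^2$. Taking determinants yields
\begin{equation*}
\det(A) \;=\; \det(R)\det(R^\top) \;=\; \bigl(r_{11}r_{22}\cdots r_{n-1,n-1}\bigr)^2 \cdot r_{nn}^2.
\end{equation*}
Since the hypothesis gives $\det(A) = \det([A]_n) > 0$ and the other diagonal entries are strictly positive, $r_{nn}^2 > 0$, so $r_{nn}$ is real and can be chosen positive. This produces a genuine Cholesky factorization $A = RR^\top$ with strictly positive diagonal, and Prop.~\ref{propCholeskyDP} ($\Longleftarrow$) concludes $A \succ \zeros$. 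The main subtlety is precisely this last point: the extension formulas only yield a \emph{real} Cholesky factor when the hypothesis on $\det(A)$ is available to force $r_{nn}^2 > 0$, and without that observation one might worry about an imaginary $r_{nn}$.

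Finally, for the nested-sequence generalization, I would invoke Prop.~\ref{propRowColOpers}(iii): given any complete nested chain $J_1 \subsetneq J_2 \subsetneq \cdots \subsetneq J_n = [1..n]$ with $|J_k|=k$, there is a permutation $\pi$ of $[1..n]$ that sends each $J_k$ to $[1..k]$. Applying the corresponding simultaneous row/column permutation to $A$ yields a matrix $A'$ with the same SDP status as $A$ whose leading principal minors coincide with the principal minors of $A$ indexed by $J_1,\dots,J_n$. Applying the already-proved equivalence to $A'$ gives the statement for the chosen nested chain, so positive definiteness is equivalent to strict positivity along any such chain.
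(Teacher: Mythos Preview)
Your proof is correct. The forward direction is exactly the paper's Proof~3 ($\Rightarrow$), and your treatment of the nested-sequence variant via Prop.~\ref{propRowColOpers}(iii) is what the paper has in mind in the statement.

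Your backward direction, however, is genuinely different from all three of the paper's arguments. The paper's Proof~1 uses row/column elimination to block-diagonalize $A$ against $[A]_{n-1}$; Proof~2 argues by contradiction that a non-positive eigenvalue would force a second negative eigenvalue (from $\det A>0$) and then builds a vector with $x_n=0$ contradicting $[A]_{n-1}\succ\zeros$; Proof~3 invokes an eigenvalue-interlacing lemma. You instead \emph{construct} the Cholesky factor of $A$ directly from that of $[A]_{n-1}$ and use the determinant hypothesis to certify that the last diagonal entry is real. This is essentially the ``reverse'' of the paper's Proof~2 ($\Rightarrow$), which starts from an existing Cholesky factorization to read off the minors. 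Your route is clean because it recycles the extension formulas (a)--(d) of Prop.~\ref{propCholeskyDP} verbatim and avoids any eigenvalue machinery; the paper's proofs, in exchange, do not need to reason about the formal identity $A=RR^\top$ holding over $\C$ before the sign of $r_{nn}^2$ is known.
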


\noindent This theorem can be tackled from many different angles. The interested reader 
may be able to find a proof by himself (in less than an hour) if all material presented 
until here (including Appendix~\ref{appBasic}) has been acquired.  We present below three proofs, so as to gain an insight from every
possible angle.

\noindent\textit{Proof 1}
~\\
We proceed by induction. Both implications are obviously true for $n=1$. 
We now show how to move from $n-1$ to $n$.
We can use the fact that the
$(n-1)\times (n-1)$ leading principal minor $[A]_{n-1}$ has a non-null determinant. Since $\det
[A]_{n-1}\neq 0$, there exists $\x\in\R^{n-1}$ such that $\x^\top [A]_{n-1}$ is equal
to the first $n-1$ positions of the last row of $A$. After subtracting this linear 
combination of the first $n-1$ rows of $A$ from the last row followed by the
transposed operation on columns, we obtain a matrix
of the following form:
$$A'=
\begin{bmatrix}
                  &       &       &  0  \\
      \multicolumn{3}{r}{\smash{\raisebox{-0.5\normalbaselineskip}{\scalebox{1.5}{$[A]_{n-1}$}}}} & 0 \\
                  &       &       &      \vdots \\
0                 &  0    &\dots  &  a'_{n,n}  \\
\end{bmatrix}.
$$

We finish by applying Prop~\ref{propRowColOpers}: the above subtraction does not
change the SDP status or the determinant, and so, it is enough the prove the 
Sylvester criterion for a matrix of the form of $A'$ above. And this is obvious.
For the direct implication, simply check that
$A'\succ\zeros\implies a'_{n,n}>0\implies \det(A')=\det([A]_{n-1})\cdot
a'_{n,n}>0$. For the converse, we use
that $\det(A')>0\implies a'_{n,n}>0$. This proves $A'\succ\zeros$ because $A'\cdot \x\x^\top
=[x_1~x_2\dots x_{n-1}]
[A]_{n-1}
[x_1~x_2\dots x_{n-1}]^\top+a'_{n,n}x_n^2$, which
is strictly positive for any $\x\neq \zeros$ because $[A]_{n-1}\succ \zeros$ and
$a'_{n,n}>0$.\qed

\noindent\textit{Proof 2}
~\\
$\Longrightarrow$\\
Using the above Cholesky factorization of positive definite matrices
(Prop.~\ref{propCholeskyDP}), $A$ can be written $A=RR^\top$, where $R$ is a
lower triangular matrix with strictly positive diagonal elements
$r_{11},~r_{22},\dots r_{kk}>0$. One can simply
verify that $[A]_k=[R]_k[R]^\top_k$, where the operator $[\cdot]_k$ represents
the leading principal minor of size $k$. We obtain that 
$det([A]_k)=det([R]_k)det([R]^\top_k)=\left(r_{11}r_{22}\dots
r_{kk}\right)^2>0$, by virtue of $r_{11},r_{22},\dots r_{kk}>0$.
~\\
$\Longleftarrow$\\
We proceed by induction. The implication is obviously true for $n=1$. Suppose it
is true for $n-1$, so that the
$(n-1)\times (n-1)$ leading principal minor $[A]_{n-1}$ is positive definite. 

We need to show that $A$ is positive definite as well. Assume the contrary: $A$
has an eigenvalue $\lambda_u\leq 0$ with unit eigenvector $u$. Since $det(A)>0$ is the
product of the eigenvalues of $A$, $\lambda_u$ can not be zero, and so, $\lambda_u<0$.
Using again the fact that $det(A)$ is the product of the eigenvectors, we obtain that $A$ needs to
have (at least) another negative eigenvalue $\lambda_v<0$ with unit eigenvector $\v$.

There exist $a_u,a_v\in \R$ (not both 0) such that $a_uu_n+a_vv_n=0$, \ie, if
$u_n=0$ take $a_u=1,~a_v=0$ and if $u_n\neq 0$ take
$a_u=\frac{-v_n}{u_n},~a_v=1$. We define $\x=a_u\u + a_v\v$, so that $x_n=0$. We compute
\begin{align}
\x^\top A \x 
    = \x\sprod A\x
    &= \left(a_u\u + a_v\v\right) \sprod A \left(a_u\u + a_v\v\right)\notag \\
    &= \left(a_u\u + a_v\v\right) \sprod \left(\lambda_u a_u \u +\lambda_v a_v\v\right)\notag \\
    &= \lambda_u a^2_u\u\sprod \u + \lambda_v a^2_v\v\sprod \v  \notag\\
    &= \lambda_u a^2_u + \lambda_v a^2_v  \notag\\
    &<0\notag
\end{align}
The last inequality follows from $\lambda_u,\lambda_v<0$ and from the fact that $a_u$ and 
$a_v$ are not both zero. We now develop $\x^\top A\x=A\sprod \x\x^\top$
and notice that matrix $\x\x^\top$ has non-zero elements only on the
first $n-1$ rows and columns. 
As such, $A\sprod \x\x^\top<0$ simplifies to $[A]_{n-1}\sprod [x_1~x_2\dots
x_{n-1}] [x_1~x_2\dots x_{n-1}]^\top<0$, which
violates the induction hypothesis that $[A]_{n-1}$ is positive definite. We obtained a contradiction 
on the existence of an eigenvalue $\lambda_u\leq 0$. This means that all eigenvalues of $A$ are
strictly positive. By simply applying Prop.~\ref{propEigenSDP}, we obtain that $A$ is positive
definite. \qed
~\\
\noindent\textit{Proof 3}
~\\
$\Longrightarrow$\\
We can use the proof of the ``$\Longrightarrow$'' implication of
Prop.~\ref{propMinorsNonneg}, but the inequalities
\eqref{reduction1}-\eqref{reduction2} become strict. This means that every
principal minor of $A$ is positive definite, and, using
Prop.~\ref{propEigenSDP}, the principal minor has only strictly positive
eigenvalues. The determinant of the minor is strictly positive, as it is the
product of the eigenvalues (Prop.~\ref{propdetiseigenprod}).
~\\
$\Longleftarrow$
~\\
We proceed by induction. The statement is true for $n=1$. Suppose it
is true for $n-1$. The $(n-1)\times(n-1)$ leading principal minor is
positive definite, and so, its minimum eigenvalue
$\lambda_{\text{min}}([A]_{n-1})$ is strictly 
positive. We then apply lemma below to show that
$A$ has at least $n-1$
strictly positive eigenvalues, \ie, $0< \lambda_{\text{min}}([A]_{n-1}) \leq \lambda_2\leq \lambda_3\leq \dots \lambda_n$.
This ensures that $\lambda_1\lambda_2\dots\lambda_n = \det(A) > 0 $
(recall Prop.~\ref{propdetiseigenprod}) can only hold because $\lambda_1>0$,
so that actually all eigenvalues need to be positive which proves $A\succ\zeros$ (via
Prop~\ref{propEigenSDP}). 

\begin{lemma}\label{lemmaIntertwinedMinorEigenvals} Consider symmetric matrix $A$ 
with eigenvalues $\lambda_1\leq \lambda_2\leq \dots \leq \lambda_n$.
Any principal minor $A'$ of order
$n-1$  with eigenvalues 
$\lambda'_1\leq \lambda'_2\leq \dots \leq \lambda'_{n-1}$ verifies
$\lambda'_1\leq \lambda_2$.%
\end{lemma}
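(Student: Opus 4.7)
The plan is to combine the Rayleigh quotient characterization of the smallest eigenvalue (Lemma~\ref{lemmaMinRayleigh}) with a dimension-counting argument. Without loss of generality, by Prop.~\ref{propRowColOpers}.(iii) we may permute so that $A'$ is the principal minor obtained by deleting the last row and column of $A$ (the same permutation of $A$ preserves its eigenvalues, since the permuted matrix is congruent via an orthogonal matrix and hence similar). Then for any $\x'\in\R^{n-1}$, padding a zero at the bottom gives $\x=\left[\begin{smallmatrix}\x'\\0\end{smallmatrix}\right]\in\R^n$ with $\x\sprod A\x=\x'\sprod A'\x'$ and $|\x|=|\x'|$. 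Applying Lemma~\ref{lemmaMinRayleigh} to $A'$ gives
\begin{equation*}
\lambda'_1 \;=\; \min_{|\x'|=1}\, \x'\sprod A'\x' \;=\; \min\bigl\{\,\x\sprod A\x \;:\; |\x|=1,\; x_n = 0\,\bigr\}.
\end{equation*}

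Next I would exhibit a specific unit vector $\x\in\R^n$ with $x_n=0$ that lies in the two-dimensional subspace spanned by the first two eigenvectors $\v_1,\v_2$ of $A$ (the unit eigenvectors of $\lambda_1$ and $\lambda_2$). A vector $a_1\v_1+a_2\v_2$ has its $n$-th coordinate equal to $a_1 v_{1,n}+a_2 v_{2,n}$, so the single linear constraint ``$n$-th coordinate equals $0$'' cuts out a one-dimensional subspace of the two-dimensional span of $\{\v_1,\v_2\}$; thus there is a nonzero $(a_1,a_2)$ with $a_1 v_{1,n}+a_2 v_{2,n}=0$, and after rescaling we may assume $a_1^2+a_2^2=1$. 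By orthonormality of $\v_1,\v_2$ the resulting $\x=a_1\v_1+a_2\v_2$ is itself a unit vector (so it is a feasible test vector for the min above).

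Finally, the computation is the same one used inside the proof of Lemma~\ref{lemmaMinRayleigh}: expanding in the eigenbasis,
\begin{equation*}
\x\sprod A\x \;=\; a_1^2\lambda_1 + a_2^2\lambda_2 \;\leq\; a_1^2\lambda_2 + a_2^2\lambda_2 \;=\; \lambda_2,
\end{equation*}
using $\lambda_1\leq\lambda_2$ and $a_1^2+a_2^2=1$. Combining this with the Rayleigh characterization of $\lambda'_1$ displayed above yields $\lambda'_1\leq \x\sprod A\x\leq \lambda_2$, which is the claimed inequality.

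I do not expect any real obstacle: the two-dimensional span of $\{\v_1,\v_2\}$ is exactly large enough to absorb the single linear constraint ``kill the last coordinate,'' and once one builds the test vector the estimate is immediate. The only thing to be careful about is the reduction to ``delete the last row/column,'' which is why I would invoke Prop.~\ref{propRowColOpers}.(iii) at the very start so that no generality is lost.
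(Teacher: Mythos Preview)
Your proof is correct and essentially identical to the paper's: both pick a nonzero vector in $\operatorname{span}\{\v_1,\v_2\}$ whose deleted coordinate vanishes, verify it is a unit vector by orthonormality, compute its Rayleigh quotient as $a_1^2\lambda_1+a_2^2\lambda_2\leq\lambda_2$, and conclude via Lemma~\ref{lemmaMinRayleigh}. The only cosmetic difference is that the paper works directly with an arbitrary deleted index $i$, whereas you first permute so that $i=n$; your justification for this reduction (permutation matrices are orthogonal, hence the permuted matrix is similar to $A$ and has the same eigenvalues) is correct, though strictly speaking Prop.~\ref{propRowColOpers}.(iii) only states congruence and SDP-status preservation, so it is good that you added the ``hence similar'' remark.
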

\begin{proof}
We consider $A'$ is obtained from $A$ by removing row $i$ and column $i$. Denote
by $\u_1$ and $\u_2$ the unit orthogonal eigenvectors of $A$ 
corresponding to
$\lambda_1$ and resp.~$\lambda_2$, recall \eqref{firstEqDecomp}.
One can surely find $\alpha_1,~\alpha_2\in \R$ (not both zero) such that vector 
$\u=\alpha_1\u_1+\alpha_2\u_2$ satisfies $u_i=0$. Furthermore, using an
appropriate scaling, the values of $\alpha_1$ and $\alpha_2$ can be chosen such
that $\alpha_1^2+\alpha_2^2=1$. Notice that $|\u|^2=
(\alpha_1\u_1+\alpha_2\u_2)\sprod
(\alpha_1\u_1+\alpha_2\u_2)=\alpha_1^2 \u_1\sprod \u_1+
                            \alpha_2^2 \u_2\sprod \u_2+
                            2\alpha_1\alpha_2 \u_1\sprod \u_2=
                            \alpha_1^2 |\u_1|^2+
                            \alpha_2^2 |\u_2|^2=
                            \alpha_1^2+\alpha_2^2=1$, where we used that $\u_1$
and $\u_2$ are unitary orthogonal eigenvectors.

Let us calculate $\u\sprod A\u$ (a particular form of the Rayleigh ratio
$R(A,\u)=\frac{\u \sprod A\u}{\u\sprod\u}$). We have
$\u\sprod A\u=\u\sprod \left(\alpha_1\lambda_1\u_1+\alpha_2\lambda_2\u_2\right)=
               \left(\alpha_1\u_1+\alpha_2\u_2\right)\sprod 
            \left(\alpha_1\lambda_1\u_1+\alpha_2\lambda_2\u_2\right)=
             \alpha^2_1\lambda_1 + \alpha^2_2\lambda_2\leq 
                \lambda_2$, where we used again that $\u_1$
and $\u_2$ are unitary orthogonal eigenvectors, followed by $\lambda_1\leq
\lambda_2$. Let
$\u'$ be  
$\u$ without component $i$ and we obtain
$\u\sprod A\u= 
A\sprod \u\u^\top=
A'\sprod \u'\u'^\top=
\u'\sprod A'\u'$, 
where we used the fact that row and column $i$ of $\u\u^\top$ are zero
based on $u_i=0$.
We obtained that the unitary $\u'$ yields $\u'\sprod A'\u'\leq \lambda_2$. Using
Lemma~\ref{lemmaMinRayleigh}, the smallest eigenvalue of $A'$ is less than or
equal to $\u'\sprod A'\u'\leq \lambda_2$, \ie, $\lambda'_1\leq \lambda_2$.
\end{proof}
~\\
The following lemma could be generally useful, although it is not used for other proofs
in this document.
\begin{lemma}\label{lemmaIntertwinedMinorEigenvals2} Consider symmetric matrix $A$ 
with eigenvalues $\lambda_1\leq \lambda_2\leq \dots \leq \lambda_n$.
Any principal minor $A'$ of order
$n-1$  with eigenvalues 
$\lambda'_1\leq \lambda'_2\leq \dots \leq \lambda'_{n-1}$ satisfies:%
\footnote{It is possible to prove that
$$\lambda_1\leq \lambda'_1\leq\lambda_2\leq \lambda'_2\leq\dots
\lambda'_{n-2}\leq \lambda_{n-1}\leq \lambda'_{n-1}\leq \lambda_n$$
The proof lies outside the scope of this document, 
but it could use the following argument by contradiction. Assume 
$\lambda'_1,~\lambda'_2,\dots \lambda'_k<\lambda_k$ and we can derive a contradiction.
All vectors of the subspace $S'$ generated by the first $k$
eigenvectors of $A'$ 
have a Rayleigh ratio $\leq \lambda'_k$.
All vectors of the subspace $S$ generated by the last $n-k+1$
eigenvectors of $A$
have a Rayleigh ratio $\geq \lambda_k$.
But the
subspaces $S$ and $S'$ need to have an intersection of at least dimension 1,
because $dim(S)+dim(S')=n-k+1+k = n+1$. We obtained a contradiction: the Rayleigh ratio over this
intersection needs to be both $\geq \lambda_k$ and $\leq
\lambda'_{k}$. An analogous reversed argument could be used to show that
$\lambda'_k\leq \lambda_{k+1}$.
Different proofs can be found by searching key words ``Interlacing eigenvalues''
on the internet, the one from
the David Williamson's course
(\url{http://people.orie.cornell.edu/dpw/orie6334/lecture4.pdf}) is the most
related to the above ideas.
Another proof can be found at least in
[Ikramov H., Recueil de probl\` emes d'alg\' ebre lineaire, publisher 
MIR - Moscou, 1977], exercise 7.4.35.}%
\begin{subequations}
\begin{align}
\lambda_1&\leq \lambda'_1\leq \lambda_2\label{eqIntertwined1}\\
\lambda_{n-1}&\leq \lambda'_{n-1}\leq \lambda_n\label{eqIntertwined2}
\end{align}
\end{subequations}
\end{lemma}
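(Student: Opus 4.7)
The plan is to combine the previously proven Lemma~\ref{lemmaIntertwinedMinorEigenvals} with the Rayleigh-ratio characterization of the smallest/largest eigenvalue (Lemma~\ref{lemmaMinRayleigh}), using the zero-padding trick that already appeared at the end of that lemma's proof. Inequality \eqref{eqIntertwined1} has two halves: $\lambda'_1\leq \lambda_2$ is exactly Lemma~\ref{lemmaIntertwinedMinorEigenvals} and requires no extra work, so the only new content there is $\lambda_1\leq \lambda'_1$. Similarly, \eqref{eqIntertwined2} splits into $\lambda_{n-1}\leq \lambda'_{n-1}$ and $\lambda'_{n-1}\leq \lambda_n$.

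For $\lambda_1\leq \lambda'_1$ and $\lambda'_{n-1}\leq \lambda_n$ I would use the padding argument. If $A'$ is obtained from $A$ by deleting row and column $i$, then any unit $\x'\in\R^{n-1}$ lifts to a unit $\x\in\R^n$ by inserting a $0$ in position $i$, and $\x\sprod A\x = A\sprod \x\x^\top = A'\sprod \x'\x'^\top = \x'\sprod A'\x'$ since row and column $i$ of $\x\x^\top$ vanish. Applying the minimum form of Lemma~\ref{lemmaMinRayleigh} to $A$ yields $\lambda_1=\min_{|\x|=1}\x\sprod A\x \leq \min_{|\x'|=1}\x'\sprod A'\x'=\lambda'_1$, and applying it to $-A$ (or, equivalently, using the maximum form noted in the statement of Lemma~\ref{lemmaMinRayleigh}) yields $\lambda'_{n-1}=\max_{|\x'|=1}\x'\sprod A'\x' \leq \max_{|\x|=1}\x\sprod A\x=\lambda_n$.

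The only remaining piece is $\lambda_{n-1}\leq \lambda'_{n-1}$, which I would get \emph{for free} by applying Lemma~\ref{lemmaIntertwinedMinorEigenvals} to $-A$. The eigenvalues of $-A$ in increasing order are $-\lambda_n\leq -\lambda_{n-1}\leq \dots \leq -\lambda_1$, and the eigenvalues of the principal minor $-A'$ are $-\lambda'_{n-1}\leq \dots\leq -\lambda'_1$. Lemma~\ref{lemmaIntertwinedMinorEigenvals} (smallest eigenvalue of the minor $\leq$ second smallest of the full matrix) then gives $-\lambda'_{n-1}\leq -\lambda_{n-1}$, i.e.\ $\lambda_{n-1}\leq \lambda'_{n-1}$.

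I do not anticipate a real obstacle: the whole argument reduces to one application of Lemma~\ref{lemmaIntertwinedMinorEigenvals} (once as stated, once with $A$ replaced by $-A$) plus the trivial Rayleigh-ratio observation that restricting to vectors with a zero in coordinate $i$ can only raise the minimum and lower the maximum of $\x\sprod A\x$. The sharper interlacing $\lambda_k\leq \lambda'_k\leq \lambda_{k+1}$ for intermediate $k$ mentioned in the footnote would require a genuine dimension-counting / subspace-intersection argument, but that is explicitly placed outside the scope here.
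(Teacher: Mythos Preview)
Your proposal is correct and essentially identical to the paper's proof: the paper also cites Lemma~\ref{lemmaIntertwinedMinorEigenvals} for $\lambda'_1\leq\lambda_2$, uses the zero-padding Rayleigh-ratio argument for $\lambda_1\leq\lambda'_1$, and then obtains \eqref{eqIntertwined2} by applying \eqref{eqIntertwined1} to $-A$ and $-A'$. The only cosmetic difference is that the paper packages the second line as a single ``apply \eqref{eqIntertwined1} to $-A$'' rather than treating its two halves separately as you do.
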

\begin{proof}

We have already proved in Lemma \ref{lemmaIntertwinedMinorEigenvals} that $\lambda'_1\leq
\lambda_2$. To show that $\lambda_1\leq \lambda'_1$, let us consider
the unitary eigenvector $\v'$ of $\lambda'_1$ in $A'$ and notice
$\v' \sprod A'\v'= \lambda'_1$. If $A'$ is obtained from $A$ by removing row $i$ and column $i$, we
can say $\v'$ is obtained by removing position $i$ from an $\v\in\R^n$ such
that $v_i=0$. We can write $\lambda'_1=\v' \sprod A'\v'=
\v \sprod A\v$. We recall Lemma~\ref{lemmaMinRayleigh} that states
that $\lambda_1$ is the minimizer of the 
Rayleigh ratio, and so, we obtain:
$\lambda_1= \min\limits_{|\x|=1} \x \sprod A\x \leq \v \sprod A\v=\lambda'_1$.

To prove \eqref{eqIntertwined2} it is enough to apply \eqref{eqIntertwined1}
on matrices $-A$ and $-A'$.
\end{proof}

\subsection{Cholesky decomposition of \textit{semidefinite} positive matrices}
We provide two proofs. The first one is much shorter, but the
factorization arises somehow out of the blue. The second one is longer
and a bit more general, discussing along the way several 
properties that
are generally useful (Prop.~\ref{propNullMinorNullDet} and
\ref{propRankRectangular}), leading the reader to a deeper insight into the factorization.

\subsubsection{A short proof using the square root and the QR decompositions}
\paragraph{\label{secsquareroot}Square root factorisations of SDP matrices}

Given matrix $A\succeq\zeros$, we are looking for matrices $K$ such 
that $KK=A$. We apply the 
eigendecomposition 
\eqref{firstEqDecomp} and write $A = U \Lambda U^\top $, where 
$\Lambda=\texttt{diag}(\lambda_1, \lambda_2, \dots \lambda_n)$ contains the non-negative
eigenvalues of $A$. 
Take $D=\texttt{diag}\left( \sqrt{\lambda_1},  \sqrt{\lambda_2},\ldots \sqrt{\lambda_n}\right)$
and consider $K= U D U^\top$.
One can easily check
that this $K$ is a square root of $A$:
 $KK={U D U^\top} {U D U^\top}= UDD U^\top=
U\Lambda U^\top = A$, where we used $U^\top U=I_n$ from \eqref{eqOrthoNormal}.
This $K$ is called the
principal square root of $A$ and it is SDP, because it is
similar and congruent to $\texttt{diag}
\left( \sqrt{\lambda_1},  \sqrt{\lambda_2},\ldots \sqrt{\lambda_n}\right)$.
It is the only SDP square root of A (Appendix~\ref{appSquareRoot}).

\begin{remark}\label{remarkSymmetricSquareRoot}
There are \texttt{multiple symmetric} matrices $K\in\R^{n \times n}$ such that $A=KK$ that can be found
by taking $K=UDU^\top$ for any
\texttt{symmetric}
$D$ such that $DD=\Lambda$. Examples of such symmetric matrices $D$ can be found by taking 
$D=\texttt{diag}\left(\pm \sqrt{\lambda_1}, \pm \sqrt{\lambda_2},\ldots \pm\sqrt{\lambda_n}\right)$.
\end{remark}

Non-symmetric matrices $K$ such that $A=KK$ do exist; in fact, they can be
infinitely many, because there can be infinitely many matrices $D\in\R^{n\times n}$ such that
$DD=\texttt{diag}(\lambda_1, \lambda_2, \dots \lambda_n)$. To see this, notice that 
$I_2=
\left[\begin{smallmatrix}
\frac st & \frac rt\\
\frac rt & -\frac st
\end{smallmatrix}\right]
\left[\begin{smallmatrix}
\frac st & \frac rt\\
\frac rt & -\frac st
\end{smallmatrix}\right]$
for infinitely many Pythagorean triples $(r,s,t)$ such that $r^2+s^2=t^2$. If we have
$\lambda_1=\lambda_2$, the above $2\times 2 $ construction can be extended to a $n\times n$ matrix in which
the leading principal minor of size $2\times 2$ is given by this $2\times 2$ construction.

As a side remark, we can prove that $rank(A)=rank(K)$ if $K$ is symmetric by
applying Prop.~\ref{propranktransprod} on
$A=K^\top K$. This is no longer true with non-symmetric
matrices, \eg, $\zeros = 
\left[\begin{smallmatrix} 1 & -1 \\ 1&-1 \end{smallmatrix}\right]
\left[\begin{smallmatrix} 1 & -1 \\ 1&-1 \end{smallmatrix}\right]
$ has rank zero while the square root factor has rank 1.

\paragraph{The Cholesky factorization proof}

\begin{proposition} \label{propCholeskyViaQR}(Cholesky factorization of positive \textit{semidefinite} matrices) 
Any real SDP matrix $A$ can be factorized as
$A=LL^\top$
where $L$ is a lower triangular matrix with non-negative diagonal elements.
This proposition is slightly weaker than Prop~\ref{propCholeskySDP}
where we will show (using a longer proof)
that there always exists a factorization in which the value of $L_{nn}$ depends
on the ranks
of $A$ and of the leading principal minor of size $n-1$ of $A$.
\end{proposition}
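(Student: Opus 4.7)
The plan is to combine the principal square root construction from Section~\ref{secsquareroot} with a QR-type decomposition, so the proof ends up very short. Given $A\succeq \zeros$, the preceding paragraph already produces a symmetric SDP square root $K=UDU^\top$ with $D=\texttt{diag}(\sqrt{\lambda_1},\ldots,\sqrt{\lambda_n})$ satisfying $KK=A$. Since $K$ is symmetric, this identity can equivalently be written as $A=K^\top K$, which is the form I need in order to feed $K$ into a QR-style decomposition.

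Next, I would invoke the QR decomposition of $K$: there exist an orthogonal matrix $Q$ (meaning $Q^\top Q=I_n$) and an upper triangular matrix $R$ such that $K=QR$. Substituting,
\begin{equation*}
A \;=\; K^\top K \;=\; R^\top Q^\top Q R \;=\; R^\top R,
\end{equation*}
so setting $L:=R^\top$ yields a lower triangular matrix with $A=LL^\top$, exactly the desired factorization.

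It then remains to force the diagonal of $L$ to be non-negative. If some $R_{ii}<0$, I would multiply row $i$ of $R$ by $-1$ and simultaneously column $i$ of $Q$ by $-1$: this preserves both the identity $K=QR$ and the orthogonality $Q^\top Q=I_n$, while flipping the sign of $R_{ii}$. Processing each negative diagonal entry of $R$ in turn produces a triangular factor whose diagonal is $\geq 0$, and correspondingly $L=R^\top$ has non-negative diagonal.

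The main obstacle is that the QR decomposition itself has not been established in the excerpt. I would prove it in a separate short lemma via Gram--Schmidt applied to the columns of $K$: the orthonormalized columns form $Q$, while the coefficients expressing each original column as a linear combination of the already orthonormalized ones (with the remaining norm placed on the diagonal) form the upper triangular $R$, whose diagonal is automatically non-negative. When $K$ has a linearly dependent column, the corresponding Gram--Schmidt residual vanishes and one chooses any unit vector orthogonal to the previously produced $Q$-columns to continue; the resulting diagonal entry of $R$ is $0$, which is still compatible with the statement. This is exactly the place where the slight weakness mentioned in the statement shows up: the finer control over the diagonal entry $L_{nn}$ as a function of the ranks of $A$ and of its leading $(n-1)\times(n-1)$ minor is deferred to the longer proof of Proposition~\ref{propCholeskySDP}.
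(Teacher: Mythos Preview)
Your proposal is correct and follows essentially the same route as the paper: take a symmetric square root $K$ of $A$, apply a QR decomposition to $K$, use $Q^\top Q=I$ to collapse $A=K^\top K$ to $R^\top R$, set $L=R^\top$, and finally adjust signs to make the diagonal non-negative. The only cosmetic differences are that the paper uses the thin QR factorization $K=QR$ with $Q\in\R^{n\times p}$, $R\in\R^{p\times n}$ (established in its Prop.~\ref{propQR}) and then pads $R^\top$ with $n-p$ zero columns to obtain a square $L$, whereas you work directly with a full square QR and handle rank deficiency inside Gram--Schmidt by inserting an arbitrary orthonormal filler column; and the paper fixes signs by negating columns of $L$ alone rather than simultaneously adjusting $Q$ and $R$.
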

\begin{proof}
~\\
\noindent \begin{minipage}{0.6\linewidth}
We know there exists a symmetric $K\in\R^{n\times n}$ such that $A=KK$
using Remark~\ref{remarkSymmetricSquareRoot} above. We now apply the QR
decomposition on $K$
and write $K=QR$ where $Q\in\R^{n\times p}$ satisfies $Q^\top Q=I_p$ and $R\in \R^{p\times n}$ is upper 
triangular for some $p\leq n$---see the proof in Prop.~\ref{propQR}. 
 We can develop:
\end{minipage}
~
\scalebox{0.8}{
\begin{minipage}{0.48\linewidth}
\vspace{-1.5em}
$$n\left\{
\begin{bmatrix}
~ &~ &~ & ~ & ~ \\
~ &~ &~ & ~ & ~ \\

\multicolumn{5}{c}{\smash{\raisebox{-0.1\normalbaselineskip}{\scalebox{1.8}{$K$}}}}\\
~ &~ &~ & ~ & ~ \\
~ &~ &~ & ~ & ~ \\
\end{bmatrix}
\right.
=
\underbrace{
\begin{bmatrix}
~ & ~ & ~ \\
~ & ~ & ~ \\
\multicolumn{3}{c}{\smash{\raisebox{-0.1\normalbaselineskip}{\scalebox{1.8}{$Q$}}}}\\
~ & ~ & ~ \\
~ & ~ & ~ \\
\end{bmatrix}}_{p}
\begin{bmatrix}
{\tikz\coordinate(lambda1);} ~ &~ &~ & ~ & ~ \\
\multicolumn{5}{c}{\smash{\raisebox{-0.1\normalbaselineskip}{\scalebox{1.8}{$R$}}}}\\
{\tikz\coordinate(lambda0);} ~
&\hspace{-0.8em}\raisebox{0.3\normalbaselineskip}{\scalebox{1.3}{$\zeros$}}
&
\hspace{0.4em}
{\tikz\coordinate(lambda2);} 
\hspace{0.4em}
& ~ & ~ \\
\end{bmatrix}
$$

\begin{tikzpicture}[overlay]
\draw[-,darkgray] (lambda0.south west) to (lambda1.south west);
\draw[-,darkgray] (lambda0.south west) to (lambda2.south west);
\draw[-,darkgray] (lambda1.south west) to (lambda2.south west);
\end{tikzpicture}
\vspace{-2em}
\end{minipage}
}

\vspace{0.5em}

$$A=KK=K^\top K=(QR)^\top QR = R^\top Q^\top Q R = R^\top I_p R =R^\top R,$$
which is very close to a Cholesky decomposition, because $R^\top$ is lower triangular.
However, $R^\top$ has only $p\leq n$ columns, but we can extended to a $n\times n$ matrix
by adding $n-p$ zero columns. This way, $R^\top$ is transformed into a lower triangular
\textit{square} matrix ${L}$ such that
${L}~{L}^\top = R^\top R=A$. The last point to address is the fact that the
$QR$ decomposition from Prop.~\ref{propQR} does not state that the diagonal elements of $R$ are non-negative. There might
exist multiple $i\in[1..p]$ such that ${L}_{ii}=R_{ii}<0$. We can overcome
this with a simple trick. The product $LL^\top$ does not change if
we negate all columns $i$ of $L$ satisfying ${L}_{ii}<0$, because
$A_{jj'}$ is the dot product of rows $j$ and $j'$ of $L$ which
does not change if both rows $j$ and $j'$ negate some column(s) $i$.
This leads to a factorisation in which the factors  have a non-negative diagonal. \footnote{I first found this proof in an answer of user \textit{loup blanc}
on the on-line forum
\url{https://math.stackexchange.com/questions/1331451/how-to-prove-cholesky-decomposition-for-positive-semidefinite-matrices}.
It also appear in Corollary 7.2.9 of the book ``Matrix Analysis'' by Roger Horn and Charles Johnson,
second edition, Cambridge University Press, 2013.}
\end{proof}

\subsubsection{A longer Cholesky proof providing more insight into the
properties of SDP matrices}

\begin{proposition} \label{propNullMinorNullDet}
If SDP matrix $A\in\R^{n\times n}$ has some null principal minor (\ie, $\exists J\subset[1..n]$ such that
$det([A]_J)=0$), then $A\nsucc\zeros$ and $det(A)=0$.
\end{proposition}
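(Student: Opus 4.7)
The plan is to exploit $\det([A]_J)=0$ to produce an explicit nonzero vector on which the quadratic form vanishes, then use positive semidefiniteness to upgrade this into a genuine null vector of $A$.

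First I would pick $J\subset [1..n]$ with $\det([A]_J)=0$. By the characterization of singularity (from Appendix~\ref{appBasic}, via $\det=0 \iff \exists \x: M\x=\zeros$), there exists $\x_J\in\R^{|J|}\setminus\{\zeros\}$ with $[A]_J\x_J=\zeros$. Extend $\x_J$ to $\overline{\x}_J\in\R^n$ by placing the entries of $\x_J$ on the positions indexed by $J$ and zeros elsewhere, exactly as in the proof of Prop.~\ref{propMinorsNonneg}. The same reduction \eqref{reduction1}--\eqref{reduction2} used there yields
\begin{equation*}
\overline{\x}_J^\top A\, \overline{\x}_J \;=\; A\sprod \overline{\x}_J\overline{\x}_J^\top \;=\; [A]_J \sprod \x_J\x_J^\top \;=\; \x_J^\top [A]_J \x_J \;=\; 0.
\end{equation*}
Since $\overline{\x}_J\neq \zeros$ and $\overline{\x}_J^\top A\,\overline{\x}_J=0$, the SDP definition (Def.~\ref{def1}) immediately forbids strict positivity, so $A\nsucc \zeros$.

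To upgrade this to $\det(A)=0$, I would invoke the eigendecomposition \eqref{firstEqDecomp}. Because $A\succeq\zeros$, all eigenvalues satisfy $\lambda_i\geq 0$ (Prop.~\ref{propEigenSDP}). Expanding $\overline{\x}_J = \sum_{i=1}^n a_i \v_i$ in the orthonormal eigenbasis (as in the proof of Lemma~\ref{lemmaMinRayleigh}) gives
\begin{equation*}
0 \;=\; \overline{\x}_J^\top A\, \overline{\x}_J \;=\; \sum_{i=1}^n \lambda_i a_i^2.
\end{equation*}
Every summand is non-negative, so each one must vanish: whenever $\lambda_i>0$ we must have $a_i=0$. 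Consequently
\begin{equation*}
A\overline{\x}_J \;=\; \sum_{i=1}^n \lambda_i a_i \v_i \;=\; \zeros,
\end{equation*}
with $\overline{\x}_J\neq \zeros$, so $A$ is singular and $\det(A)=0$ (which also matches the fact that $\det(A)$ is the product of the $\lambda_i$, Prop.~\ref{propdetiseigenprod}, forcing at least one zero eigenvalue).

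There is no real obstacle here; the only subtlety is the one-line observation that for an SDP matrix the vanishing of the quadratic form at a nonzero point forces that point to lie in the kernel of $A$, which is precisely why the null principal minor ``propagates'' up to a null determinant of the full matrix. Everything else is bookkeeping of the indices in $J$ and appealing to results already proved earlier in this section.
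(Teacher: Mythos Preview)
Your proof is correct and follows essentially the same route as the paper's first proof: extend a null vector of $[A]_J$ by zeros, observe the full quadratic form vanishes, and conclude both $A\nsucc\zeros$ and $\det(A)=0$ via the eigenvalues. Your version is a bit more explicit in showing that the vanishing of the quadratic form on an SDP matrix forces the vector into $\ker A$, whereas the paper simply invokes Prop.~\ref{propEigenSDP} and Prop.~\ref{propdetiseigenprod} to say some eigenvalue must be zero; but this is an elaboration of the same idea, not a different approach.
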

\noindent \textit{Proof 1.}
One of the eigenvalues of $[A]_J$ need to be zero so that
$[A]_J\nsucc\zeros$ and $[A]_J\sprod \x_J\x_J^\top=0$ for some
vector $\x_J$ with $|J|$ components. Construct $\x\in\R^n$ by
keeping the values of $\x_J$ on the positions $J$ of $\x$ and by
filling the rest with zeros. It is not hard to check that
$A\sprod \x\x^\top = [A]_J\sprod \x_J\x_J^\top=0$, so that
then $A\nsucc\zeros$ and $det(A)$ is zero as the product
of the eigenvalues.\qed

\noindent \textit{Proof 2.} Re-order the rows and columns of $A$ so that the minor 
$[A]_J$ becomes a leading principal minor; the Sylvester criterion is
thus violated, which shows $A\nsucc\zeros$\qed.
%
%

\begin{proposition}\label{propRankRectangular}
If we are given an SDP matrix $A\in\R^{n\times n}$ written under the form
$$A=
\begin{bmatrix}
                  &       &       &  b_{1}  \\
      \multicolumn{3}{r}{\smash{\raisebox{-0.5\normalbaselineskip}{\scalebox{1.5}{$[A]_{n-1}$}}}} & b_{2} \\
                  &       &       &      \vdots \\
b_{1}             &  b_2  &\dots  &  b_n  \\
\end{bmatrix},
$$
then $[b_1~b_2~\dots b_{n-1}]$ can be written as a linear combination of the rows of $[A]_{n-1}$.
This combination uses only rows $J$ where $J\subset[1..n-1]$ such that $[A]_J$ (matrix obtained by
selecting rows $J$ and columns $J$) is a non-null principal minor of maximum order (the rank of $[A]_{n-1}$).
\end{proposition}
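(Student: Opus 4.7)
The plan is to realize the Gram structure of $A$ and interpret the claim geometrically as an orthogonal projection. By the square-root construction from Section~\ref{secsquareroot}, there exists a symmetric $K$ with $A=KK=K^\top K$. Writing $K=[\v_1~\v_2~\dots~\v_n]$ for its columns, we have $A_{ij}=\v_i\sprod\v_j$, so $[A]_{n-1}$ is the Gram matrix of $\v_1,\dots,\v_{n-1}$. In particular, $\operatorname{rank}[A]_{n-1}$ equals the dimension $r$ of $\operatorname{span}(\v_1,\dots,\v_{n-1})$, by Prop.~\ref{propranktransprod}.

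By Prop.~\ref{propPrincipalMinorTheorem}, $[A]_{n-1}$ has a non-null principal minor $[A]_J$ of order $r$; non-singularity of $[A]_J$ (combined with $[A]_J=[\v_j]_{j\in J}^\top[\v_j]_{j\in J}$) means the $r$ vectors $\{\v_j\}_{j\in J}$ are linearly independent, and being $r$ vectors inside an $r$-dimensional span, they form a basis of $\operatorname{span}(\v_1,\dots,\v_{n-1})$. I would next define $\alpha\in\R^{r}$ as the unique solution of $[A]_J\,\alpha=\c$, where $\c=(b_j)_{j\in J}=(\v_j\sprod\v_n)_{j\in J}$. Geometrically, $\sum_{j\in J}\alpha_j\v_j$ is the orthogonal projection of $\v_n$ onto $\operatorname{span}(\{\v_j\}_{j\in J})$, so the residual $\v_n-\sum_{j\in J}\alpha_j\v_j$ is orthogonal to every $\v_j$ with $j\in J$, and hence to all of $\operatorname{span}(\v_1,\dots,\v_{n-1})$.

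Plugging this in, for every $i\in[1..n-1]$ we obtain
\[
b_i=\v_i\sprod\v_n=\v_i\sprod\sum_{j\in J}\alpha_j\v_j=\sum_{j\in J}\alpha_j A_{ij},
\]
so the row $[b_1~b_2~\dots~b_{n-1}]$ is exactly the combination $\sum_{j\in J}\alpha_j\cdot\textnormal{row}_j([A]_{n-1})$ of rows indexed by $J$, as required. The only delicate point is whether this Gram factorization is available here without circularity; it is, because the symmetric square-root $A=KK$ was established in the preceding subsection~\ref{secsquareroot}, independently of the Cholesky factorization this section is building up to. A purely algebraic alternative would apply Schur complements (Prop.~\ref{propSchurGen}) to the $(r+2)\times(r+2)$ principal minor $[A]_{J\cup\{i,n\}}$ for each $i\in[1..n-1]\setminus J$: since $[A]_{J\cup\{i\}}$ has order $r+1>r$ its determinant vanishes, forcing the top-left entry of the resulting $2\times 2$ SDP Schur complement to be zero, which in turn (by non-negativity of $2\times 2$ SDP principal minors) forces its off-diagonal entry $b_i-d_i^\top[A]_J^{-1}\c$ to vanish too---yielding the same $\alpha=[A]_J^{-1}\c$.
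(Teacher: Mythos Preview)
Your proof is correct and takes a genuinely different route from both of the paper's proofs. The paper gives (i) a short contradiction argument via the rank--nullity theorem, writing $[b_1~\dots~b_{n-1}]=\b_{\texttt{img}}^\top+\b_0^\top$ with $\b_0\in\texttt{null}([A]_{n-1})$ and showing that $[-t\b_0^\top~1]A\left[\begin{smallmatrix}-t\b_0\\1\end{smallmatrix}\right]\to-\infty$ if $\b_0\neq\zeros$; and (ii) a longer determinant computation on the $(r{+}2)\times(r{+}2)$ minor $[A]_{[1..r]\cup\{i,n\}}$, reducing it by row/column operations to a form whose determinant is $-\widehat{b_i}^2\det([A]_r)$, which must be non-negative, forcing $\widehat{b_i}=0$.

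Your main argument instead realizes $A$ as a Gram matrix via the symmetric square root and reads the claim as an orthogonal projection: once $\{\v_j\}_{j\in J}$ is a basis of $\operatorname{span}(\v_1,\dots,\v_{n-1})$, the residual $\v_n-\sum_{j\in J}\alpha_j\v_j$ is orthogonal to every $\v_i$ with $i<n$, and the identity $b_i=\sum_{j\in J}\alpha_j A_{ij}$ drops out immediately. This is more geometric and arguably cleaner, at the cost of importing the eigendecomposition (through the square root), whereas the paper's proofs are more elementary and self-contained within rank/determinant arguments. Your non-circularity check is right: Section~\ref{secsquareroot} precedes this proposition and depends only on the eigendecomposition. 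Your ``purely algebraic alternative'' via Schur complements is essentially the paper's Proof~2, repackaged: the vanishing top-left entry of the $2\times2$ Schur complement is exactly the statement $\det([A]_{J\cup\{i\}})=0$, and the forced vanishing of the off-diagonal entry is the same $\widehat{b_i}=0$ conclusion.
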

\noindent We provide two proofs. The first one is much shorter, but the second one provides more insight into
the arrangement of the matrices.

\vspace{0.8em}
\noindent \textit{Proof 1.} Assume $[b_1~b_2~\dots b_{n-1}]$ does not belong to the row image 
(set of linear combinations of the rows) of $[A]_{n-1}$. Using the 
rank-nullity Theorem \ref{thranknullity}, we have $rank([A]_{n-1})+nullity([A]_{n-1})=n-1$.
The dimension of the image $\texttt{img}([A]_{n-1})$ plus the dimension of $\texttt{null}([A]_{n-1})$ 
is equal to $n-1$. Since the two spaces are perpendicular (any $\x_0\in \texttt{null}([A]_{n-1})$
satisfies $[A]^i_{n-1}\x_0=0$ for any row $i$ of $[A]_{n-1}$), the sums between elements of $\texttt{null}([A]_{n-1})$ 
and elements of $\texttt{img}([A]_{n-1})$ cover the whole (transposed) $\R^{n-1}$.
As such, we can write $[b_1~b_2~\dots b_{n-1}]=\b^\top_\texttt{img} + \b^\top_0$, where
$\b^\top_\texttt{img}\in\texttt{img}([A]_{n-1})$ and $\b_0 \in\texttt{null}([A]_{n-1})$
with $\b_0\neq \zeros$.
We can now calculate:
\begin{align*}
[-t\b^\top_0~1]A\begin{bmatrix}-t\b_0 \\ 1 \end{bmatrix} 
              & = t^2\b^\top_0 [A]_{n-1} \b_0 
                  - 2 t (\b^\top_\texttt{img} + \b^\top_0)\b_0 + b_n\\
              & = 0 - 2t \b_0^\top\b_0 + b_n = b_n - 2t |\b_0|^2 \to -\infty
\end{align*}
We obtained a contradiction, the assumption $[b_1~b_2~\dots b_{n-1}]\notin \texttt{img}([A]_{n-1})$
was false. \qed

~\\

\noindent \textit{Proof 2.}
If $[A]_{n-1}$ is non-singular, the conclusion is obvious: $\x^\top [A]_{n-1} = [b_1~b_2~\dots b_{n-1}]$
has solution $\x^\top =  [b_1~b_2~\dots b_{n-1}][A]_{n-1}^{-1}$.

We hereafter consider $[A]_{n-1}$ has rank $r<n-1$. 
Based on Prop.~\ref{propPrincipalMinorTheorem}, $A$ has at least a non-zero {\it principal} minor of order $r$.
Without loss of generality, we permute the rows and columns of $A$ until this non-zero
minor is positioned in the upper-left corner -- this does not change the
determinant or the SDP status (Prop~\ref{propRowColOpers}). 
Let $[A]_r$ be this leading principal minor. Consider the solution $\x$ of the system $\x^\top [A]_r=\b^\top_r$,
where $\b_r$ is $\b$ reduced to positions $[1..r]$. This solution
exists and it has value $\x^\top=\b_r^\top [A]_r^{-1}$.

We will show that $[b_1~b_2~\dots b_{n-1}]$ can be written as a linear
combination of the rows $[1..r]$
of $[A]_{n-1}$, the coefficients of this combination being $\x^\top$. 
Take any $i\in [r+1..n-1]$.
Let us consider the minor
obtained by selecting rows and columns $[1..r]\cup\{i,n\}$ of $A$, see left matrix
below. We subtract from the last row the linear combination of the first $r$ rows defined by $\x$, so as
to cancel the first $r$ positions of the last row, followed by the transposed
operation on columns. We obtain the right matrix below.

$$
\begin{bmatrix}
         &         &       &           & a_{i,1}&b_{1}\\
      \multicolumn{4}{c}{\smash{\raisebox{-1\normalbaselineskip}{\scalebox{1.5}{$[A]_r$}}}} 
                                       & a_{i,2}& b_{2}\\
         &         &       &           & \vdots   & \vdots \\
         &         &       &           & a_{i,r}& b_{r}\\
a_{i,1}& a_{i,2}& \dots & a_{i,r}& a_{i,i}  & b_i    \\
b_{1}  & b_{2}  & \dots & b_{r}  & b_i      & b_n    \\
\end{bmatrix}
\xrightarrow[\textnormal{that do not change the determinant}]{\textnormal{row and column operations}}
\begin{bmatrix}
         &         &       &           & a_{i,1}  &   0    \\
      \multicolumn{4}{c}{\smash{\raisebox{-1\normalbaselineskip}{\scalebox{1.5}{$[A]_r$}}}} 
                                       & a_{i,2}  &   0    \\
         &         &       &           & \vdots   & \vdots \\
         &         &       &           & a_{i,r}  &    0   \\
a_{i,}   & a_{i,2} & \dots & a_{i,r}   & a_{i,i}  & \widehat{b_i}    \\
    0    &    0    & \dots &    0      & \widehat{b_i}     & \widehat{b_n}    \\
\end{bmatrix},
$$
where $\widehat{b_i}=b_i - \sum_{j \in[1..r]} x_j a_{ij}$.
The determinant of this right matrix can be calculated (see the Leibniz formula or the Laplace formula for
determinants) as
follows: 
$$\widehat{b_n}\cdot \det\left([A]_{[1..r]\cup\{i\}}\right)-\widehat{b_i}^2\cdot
\det\left([A]_r\right),$$
where $[A]_{[1..r]\cup\{i\}}$ is the $(r+1)\times(r+1)$ upper left minor of above matrix. This
$(r+1)\times (r+1)$ minor
needs to be null because $[A]_{n-1}$ has rang $r$ and
$[1..r]\cup\{i\}\subseteq[1..n-1]$. On
the other hand we have $det([A]_r)>0$, so that 
the above determinant simplifies to 
$-\widehat{b_i}^2\cdot \det\left([A]_r\right)$. But this is the determinant of a
matrix obtained from a minor of $A$ after performing linear operations with rows
and columns of $A$. Since $A$ is SDP, this determinant needs to be non-negative,
and so, we need to have $\widehat{b_i}=0$, \ie, $b_i - \sum_{j \in[1..r]} x_j a_{ij}=0$,
meaning that $b_i$ is a linear combination (defined by $\x$) of the first $r$ rows of column $i$
(recall $a_{ij}=a_{ji}$).
Recall $i$ was chosen at random from $[r+1..n-1]$,  so that 
actually all elements $b_i$ of $[b_1~b_2~\dots b_{n-1}]$ can be written as a linear
combination (defined by $\x$) of the first $r$ rows.
\qed 

\begin{corollary}\label{corolZeros} 
If $A\succeq \zeros$ and $A_{jj}=0$ for some $j\in[1..n]$, then the row and column $j$ contain only zeros.
\end{corollary}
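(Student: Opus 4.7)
The plan is to exploit the characterization of SDP matrices via their principal minors (Prop.~\ref{propMinorsNonneg}), applied to the simplest non-trivial minors: the $2\times 2$ ones.

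First, fix any $i \in [1..n]$ with $i \neq j$ and consider the principal minor of $A$ selecting rows and columns $J = \{i,j\}$, namely
\[
[A]_{\{i,j\}} = \begin{bmatrix} A_{ii} & A_{ij} \\ A_{ij} & A_{jj} \end{bmatrix} = \begin{bmatrix} A_{ii} & A_{ij} \\ A_{ij} & 0 \end{bmatrix},
\]
where I used $A_{ji} = A_{ij}$ (symmetry) and the hypothesis $A_{jj} = 0$. By Prop.~\ref{propMinorsNonneg}, the determinant of every principal minor of $A$ is non-negative, so $\det([A]_{\{i,j\}}) = -A_{ij}^2 \geq 0$, which forces $A_{ij} = 0$.

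Since $i$ was arbitrary in $[1..n]\setminus\{j\}$, we conclude $A_{ij} = 0$ for all $i \neq j$; together with $A_{jj} = 0$, this says the entire row $j$ of $A$ is zero. By symmetry of $A$, the column $j$ also vanishes.

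There is no real obstacle here: once one thinks of applying the necessary condition from Prop.~\ref{propMinorsNonneg} to the $2\times 2$ minor bordered by a zero diagonal entry, the proof reduces to a one-line determinant computation. (As a sanity check, the same conclusion can be obtained directly from Def.~\ref{def1} by evaluating $\x^\top A\x$ at $\x = t\e_i + s\e_j$: the quadratic $t^2 A_{ii} + 2ts A_{ij}$ is non-negative for all $t,s\in\R$ only if $A_{ij}=0$, since otherwise a suitable sign choice of $s$ sends it to $-\infty$.)
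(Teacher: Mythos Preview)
Your proof is correct. The paper, however, places this result as a corollary of Prop.~\ref{propRankRectangular} rather than Prop.~\ref{propMinorsNonneg}: permuting $j$ to the leading position and applying Prop.~\ref{propRankRectangular} to any $2\times 2$ principal minor $\left[\begin{smallmatrix}A_{jj}&A_{ji}\\A_{ij}&A_{ii}\end{smallmatrix}\right]$, the ``leading'' $1\times 1$ block $[A_{jj}]=[0]$ has rank~$0$, so the border entry $A_{ij}$ must be a linear combination of the rows of $[0]$, hence $A_{ij}=0$. Your route via Prop.~\ref{propMinorsNonneg} is more direct and self-contained (just a one-line determinant), and in fact the very same $2\times 2$ argument already appears verbatim inside the paper's proof of the ``$\Longleftarrow$'' direction of Prop.~\ref{propMinorsNonneg}. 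The paper's route, in exchange, makes the corollary an instance of a more general phenomenon (the last row of an SDP matrix lies in the row space of the leading block), which also yields the companion corollary about $a_{11}=a_{12}=a_{21}=a_{22}$ immediately.
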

\begin{corollary}
If $A\succeq \zeros$ and $a_{11}=a_{12}=a_{21}=a_{22}$, then $a_{i1}=a_{i2}~\forall i \in[1..n]$.
\end{corollary}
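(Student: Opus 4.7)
The plan is to exploit the fact that for $A \succeq \zeros$ the quadratic form $\x \mapsto \x^\top A \x$ is a non-negative weighted sum of squares, so whenever it equals zero at some $\x$ one immediately forces $A\x = \zeros$. Concretely, applying the eigendecomposition \eqref{firstEqDecomp},
$$\x^\top A \x \;=\; \sum_{i=1}^n \lambda_i (\v_i^\top \x)^2,$$
with all $\lambda_i \geq 0$ by Prop.~\ref{propEigenSDP}. If this sum vanishes then each term $\lambda_i (\v_i^\top \x)^2$ vanishes, so $\lambda_i (\v_i^\top \x) = 0$ for every $i$, and therefore $A\x = \sum_i \lambda_i (\v_i^\top \x)\v_i = \zeros$.

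Next I would take $\x_0 \in \R^n$ with components $(1,\,-1,\,0,\,\dots,\,0)$. A direct expansion gives
$$\x_0^\top A \x_0 \;=\; a_{11} - a_{12} - a_{21} + a_{22} \;=\; 0,$$
since the four listed entries all equal the common value $a_{11}$. By the observation above, $A\x_0 = \zeros$, which is precisely the statement that column 1 and column 2 of $A$ coincide. Reading off the $i$-th component of $A\x_0 = \zeros$ then yields $a_{i1} = a_{i2}$ for every $i \in [1..n]$, as required.

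The only substantive ingredient is the implication ``$\x^\top A\x = 0 \Rightarrow A\x = \zeros$ when $A \succeq \zeros$''; once this is in hand the corollary collapses to a one-line calculation, so there is essentially no real obstacle. As a sanity check on the degenerate case $a_{11}=0$, the preceding Corollary~\ref{corolZeros} already forces the first two rows and columns of $A$ to vanish entirely, so the conclusion $a_{i1}=a_{i2}=0$ holds trivially in that situation.
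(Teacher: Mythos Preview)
Your proof is correct. The key observation that $A\succeq\zeros$ and $\x^\top A\x=0$ together force $A\x=\zeros$ is cleanly derived from the eigendecomposition, and applying it to $\x_0=(1,-1,0,\dots,0)$ immediately yields $a_{i1}=a_{i2}$ for all $i$.

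This is a genuinely different route from what the paper has in mind. In the paper the statement is listed as an (unproven) corollary of Prop.~\ref{propRankRectangular}: for any $i\geq 3$, the principal $3\times 3$ minor on rows/columns $\{1,2,i\}$ is SDP, so by Prop.~\ref{propRankRectangular} the row $[a_{i1}~a_{i2}]$ must lie in the row space of the $2\times 2$ block $\left[\begin{smallmatrix}a_{11}&a_{12}\\a_{21}&a_{22}\end{smallmatrix}\right]$; since both rows of that block are identical, one gets $a_{i1}=a_{i2}$. Your argument bypasses Prop.~\ref{propRankRectangular} entirely and is more self-contained, needing only the eigendecomposition and the non-negativity of the eigenvalues. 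The paper's route, on the other hand, explains why the result is placed as a corollary at that point in the text and reuses machinery it has just developed. Both arguments are short; yours is slightly more portable since the implication ``$\x^\top A\x=0\Rightarrow A\x=\zeros$ for $A\succeq\zeros$'' is a generally useful fact in its own right.
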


\begin{proposition} \label{propCholeskySDP}(Cholesky factorization of positive \textit{semidefinite} matrices) 
A real symmetric matrix $A$ is positive semidefinite if and only if it can be factorized as:
\begin{equation}
A=RR^\top=
\begin{bmatrix}
r_{11}   &  0     &   0  & \dots &   0   \\
r_{21}   & r_{22} &   0  & \dots &   0   \\
r_{31}   & r_{32} &r_{33}& \dots &   0   \\
\vdots   & \vdots &\vdots&\ddots & \vdots\\
r_{n1}   & r_{n2} &r_{n3}& \dots &r_{nn}  \\
\end{bmatrix}
\begin{bmatrix}
r_{11}   &r_{21}  & r_{31}& \dots&r_{n1}  \\
0        &r_{22}  & r_{32}& \dots&r_{n2}  \\
0        & 0      & r_{33}& \dots&r_{n3}  \\
\vdots   & \vdots &\vdots&\ddots & \vdots \\
0        &  0     &  0    &\dots &r_{nn}  \\
\end{bmatrix},
\end{equation}
where the diagonal terms are non-negative. 

The factorization is not always unique. There exists a factorization that has $r_{nn}=0$ only if $rank(A)=rank([A]_{n-1})$, where
$[A]_{n-1}$ is the leading principal minor of size $(n-1)\times (n-1)$.
\end{proposition}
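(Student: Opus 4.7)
The reverse direction is immediate: given $A=RR^\top$, for any $\x\in\R^n$ we have $\x^\top A \x = |R^\top \x|^2 \geq 0$, so $A\succeq\zeros$ by Def~\ref{def1}.

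For the forward direction, I proceed by induction on $n$, following the same strategy as Prop~\ref{propCholeskyDP} but handling the new difficulty that the inductively obtained $[R]_{n-1}$ may be singular. The base $n=1$ is trivial: take $r_{11}=\sqrt{a_{11}}$, which is real since $a_{11}\geq 0$. For the inductive step, partition
$$A=\begin{bmatrix}[A]_{n-1} & \b \\ \b^\top & a_{nn}\end{bmatrix},$$
note that $[A]_{n-1}\succeq\zeros$ by Prop~\ref{propMinorsNonneg}, and invoke induction to write $[A]_{n-1}=[R]_{n-1}[R]_{n-1}^\top$. The task reduces to finding $\s\in\R^{n-1}$ and $r_{nn}\geq 0$ so that $R=\bigl[\begin{smallmatrix}[R]_{n-1} & \zeros \\ \s^\top & r_{nn}\end{smallmatrix}\bigr]$ satisfies $RR^\top=A$, equivalently the two conditions $[R]_{n-1}\s=\b$ and $r_{nn}^2=a_{nn}-|\s|^2$.

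The main obstacle — absent in the definite case — is that $[R]_{n-1}$ may be singular (precisely when some inductively produced $r_{ii}=0$), so solvability of $[R]_{n-1}\s=\b$ is no longer automatic via back-substitution. Here Prop~\ref{propRankRectangular} is tailor-made: applied to $A$, it guarantees that $\b^\top$ lies in the row space of $[A]_{n-1}$, so there exists $\alphaalpha\in\R^{n-1}$ with $\b=[A]_{n-1}\alphaalpha=[R]_{n-1}\bigl([R]_{n-1}^\top\alphaalpha\bigr)$. Setting $\s:=[R]_{n-1}^\top\alphaalpha$ thus satisfies the first condition by construction; the fact that $\s$ is not uniquely determined when $[R]_{n-1}$ has a non-trivial kernel accounts for the stated non-uniqueness. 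To verify $r_{nn}$ is real, I test the SDP property of $A$ on $\x=\bigl[\begin{smallmatrix}-\alphaalpha \\ 1\end{smallmatrix}\bigr]$: expanding and using $[A]_{n-1}\alphaalpha=\b$,
$$0\leq \x^\top A\x = \alphaalpha^\top [A]_{n-1}\alphaalpha - 2\alphaalpha^\top \b + a_{nn} = a_{nn}-\alphaalpha^\top\b.$$
Since $|\s|^2=\alphaalpha^\top [R]_{n-1}[R]_{n-1}^\top\alphaalpha=\alphaalpha^\top\b$, we obtain $a_{nn}-|\s|^2\geq 0$, and $r_{nn}:=\sqrt{a_{nn}-|\s|^2}\geq 0$ is well defined, completing the inductive construction.

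Finally, for the rank statement: if the constructed factorization has $r_{nn}=0$, then the last row of $R$ equals $[\s^\top,\,0]=[\alphaalpha^\top[R]_{n-1},\,0]$, which is a linear combination of the first $n-1$ rows of $R$. Hence $rank(R)=rank([R]_{n-1})$, and applying Prop~\ref{propranktransprod} to both $A=RR^\top$ and $[A]_{n-1}=[R]_{n-1}[R]_{n-1}^\top$ yields $rank(A)=rank(R)=rank([R]_{n-1})=rank([A]_{n-1})$, as claimed.
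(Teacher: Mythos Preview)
Your proof is correct and follows the same inductive skeleton as the paper's: both invoke Prop.~\ref{propRankRectangular} to obtain $\alphaalpha$ with $\b=[A]_{n-1}\alphaalpha$ and then set the new row to $\s=[R]_{n-1}^\top\alphaalpha$ (the paper writes this as $\r_{n-1}^\top=\x^\top[R]_{n-1}$). The difference is in how $a_{nn}-|\s|^2\geq 0$ is established. The paper builds an auxiliary matrix $A_z$ with bottom-right entry $z=|\s|^2$, argues $rank(A_z)=rank([A]_{n-1})$, then borders a maximal non-zero principal minor of $[A]_{n-1}$ with the last row/column and computes a determinant that forces $\alpha=a_{nn}-z\geq 0$. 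You instead recycle the test-vector trick from the definite case (Prop.~\ref{propCholeskyDP}): evaluating $\x^\top A\x$ at $\x=\bigl[\begin{smallmatrix}-\alphaalpha\\1\end{smallmatrix}\bigr]$ gives $a_{nn}-\alphaalpha^\top\b=a_{nn}-|\s|^2\geq 0$ in one line. Your route is shorter; the paper's route yields along the way the rank identity $rank(A_z)=rank([A]_{n-1})$, which it then reuses to prove \emph{both} directions of the rank claim for the constructed factorization (it also shows $rank(A)>rank([A]_{n-1})\Rightarrow r_{nn}>0$, slightly more than the ``only if'' in the statement). Your rank argument for the stated direction is correct and compact.
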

\begin{proof}~\\
$\Longleftarrow$\\
Take any non-zero $\x\in\R^n-\{\zeros_n\}$ and observe that $\x^\top A \x = \x^\top R R^\top \x =
(R^\top \x)^\top (R^\top \x)\geq 0$. This is enough to prove that $A$ is SDP.\\
$\Longrightarrow$\\
We proceed by induction. The implication is obviously true for $n=1$. We suppose that 
there exists a factorization:
$$[A]_{n-1}=
[R]_{n-1}[R]_{n-1}^\top
=
\begin{bmatrix}
r_{11}   &  0      & \dots &   0       \\
r_{21}   & r_{22}  & \dots &   0       \\
\vdots   & \vdots  &\ddots & \vdots    \\
r_{n-1,1}&r_{n-1,2}& \dots &r_{n-1,n-1}\\
\end{bmatrix}
\begin{bmatrix}
r_{11}   &r_{21}  & \dots&r_{n-1,1}  \\
0        &r_{22}  & \dots&r_{n-1,2}  \\
\vdots   & \vdots &\ddots & \vdots \\
0        &  0     &\dots &r_{n-1,n-1}  \\
\end{bmatrix},
$$
where $r_{11},~r_{22},~\dots r_{n-1,n-1}\geq 0$. We will prove that this decomposition can be extended
to a $n\times n$ decomposition for matrix $A$. The values $r_{1n},~r_{2n},\dots
r_{n-1,n}$ (the elements above the diagonal on the last column of $R$)
are set to zero by definition.

The main difficulty is to determine the last row
$\r^\top=\left[\begin{smallmatrix}\r^\top_{n-1} & r_n\end{smallmatrix}\right]$
 of $R$.
We write
$$
A=
\begin{bmatrix}
      \multicolumn{3}{c}{\smash{\raisebox{-0.5\normalbaselineskip}{\scalebox{1.1}{$[A]_{n-1}$}}}} & \multirow{2}{*}{\scalebox{1.2}{$\b_{n-1}$}}  \\
                  &       &                                                                       &   \\
      \multicolumn{3}{c}{\b_{n-1}^\top}                                                       & b_n  \\
\end{bmatrix}
$$

Using Prop.~\ref{propRankRectangular}, there exists $\x\in\R^{n-1}$ such that
$\b_{n-1}^\top = \x^\top [A]_{n-1}$. This leads to
$$
\begin{bmatrix}
      \multicolumn{3}{c}{\smash{\raisebox{-0.5\normalbaselineskip}{\scalebox{1.1}{$[A]_{n-1}$}}}}\\
                  &       &       \\
      \multicolumn{3}{c}{\b^\top_{n-1}}  \\
\end{bmatrix}
=
\begin{bmatrix}
      \multicolumn{3}{c}{\smash{\raisebox{-0.5\normalbaselineskip}{\scalebox{1.1}{$I_{n-1}$}}}}\\
                  &       &       \\
      \multicolumn{3}{c}{\x^\top}  \\
\end{bmatrix}
\scalebox{1.2}{$[A]_{n-1}$}
=
\begin{bmatrix}
      \multicolumn{3}{c}{\smash{\raisebox{-0.5\normalbaselineskip}{\scalebox{1.1}{$I_{n-1}$}}}}\\
                  &       &       \\
      \multicolumn{3}{c}{\x^\top}  \\
\end{bmatrix}
\scalebox{1.1}{$[R]_{n-1}$}
\scalebox{1.1}{$[R]_{n-1}^\top$}
=
\begin{bmatrix}
      \multicolumn{3}{c}{\smash{\raisebox{-0.5\normalbaselineskip}{\scalebox{1.1}{$[R]_{n-1}$}}}}\\
                  &       &       \\
      \multicolumn{3}{c}{\x^\top [R]_{n-1}}  \\
\end{bmatrix}
\scalebox{1.1}{$[R]_{n-1}^\top$}.
$$
Using the notational shorthand 
$
\r_{n-1}^\top
=
\x^\top [R]_{n-1}
$, we can write:
$$
\begin{bmatrix}
      \multicolumn{3}{c}{\smash{\raisebox{-0.5\normalbaselineskip}{\scalebox{1.1}{$[A]_{n-1}$}}}}\\
                  &       &       \\
      \multicolumn{3}{c}{\b^\top_{n-1}}  \\
\end{bmatrix}
=
\begin{bmatrix}
      \multicolumn{3}{c}{\smash{\raisebox{-0.5\normalbaselineskip}{\scalebox{1.1}{$[R]_{n-1}$}}}}\\
                  &       &       \\
      \multicolumn{3}{c}{\r_{n-1}^\top}  \\
\end{bmatrix}
\scalebox{1.1}{$[R]_{n-1}^\top$}
.
$$
In fact, we can already fix the first $n-1$ positions of the last row
(of $R$) to $\r^\top=\left[\begin{smallmatrix}\r^\top_{n-1} & r_n\end{smallmatrix}\right]$.
Then, let us extend the above equality to work with $n\times n$ matrices, by adding: (i) a column with
zeros to the left factor of the product and (ii) a row with zeros and a column $\r_{n-1}$ to the right factor.
We obtain
$$
\begin{bmatrix}
      \multicolumn{3}{c}{\smash{\raisebox{-0.5\normalbaselineskip}{\scalebox{1.1}{$[R]_{n-1}$}}}} & \multirow{2}{*}{\scalebox{1}[1.4]{$\zeros$}}  \\
                  &       &                                                                       &   \\
      \multicolumn{3}{c}{\r_{n-1}^\top}                                                       & 0  \\
\end{bmatrix}
\begin{bmatrix}
      \multicolumn{3}{c}{\smash{\raisebox{-0.5\normalbaselineskip}{\scalebox{1.1}{$[R]_{n-1}$}}}} & \multirow{2}{*}{\scalebox{1}[1.4]{$\zeros$}}  \\
                  &       &                                                                       &   \\
      \multicolumn{3}{c}{\r_{n-1}^\top}                                                       & 0  \\
\end{bmatrix}^\top
=
\begin{bmatrix}
      \multicolumn{3}{c}{\smash{\raisebox{-0.5\normalbaselineskip}{\scalebox{1.1}{$[A]_{n-1}$}}}} & \multirow{2}{*}{\scalebox{1}[1.2]{$\b_{n-1}$}}  \\
                  &       &                                                                       &   \\
      \multicolumn{3}{c}{\b_{n-1}^\top}                                                       & z  \\
\end{bmatrix} = A_z,
$$
where $z=\r_{n-1}^\top \r_{n-1}$.
The matrix at right has the same rank as the matrices at left (use Prop.~\ref{propranktransprod}), that
is precisely the rank of $[R]_{n-1}$ which is equal to the rank of $[A]_{n-1}$ (by
applying again Prop.~\ref{propranktransprod} on 
$[A]_{n-1}=[R]_{n-1}R_{n-1}^\top$). In short, $A_z$ has the same rank as $[A]_{n-1}$.

Let us now study the minor 
$\left[
\begin{smallmatrix}
\overline{A} & \overline{\b}\\
\overline{\b}^\top& z
\end{smallmatrix}
\right]$ 
of $A_z$
obtained by bordering the largest order non-zero minor $\overline{A}$ of $[A]_{n-1}$ with the last column
and row of $A_z$ (keeping only the positions that constitute the minor).
The determinant of this new minor of $A_z$
has to be zero because $A_z$ has the same rank as $[A]_{n-1}$.
We now compare $A$ to $A_z$; let us
write the bottom-right term $b_n$ of $A$ in the form $b_n=z+\alpha$. 
By replacing $z$ with $z+\alpha$, the above minor evolves to 
$
\left[
\begin{smallmatrix}
\overline{A} & \overline{\b}\\
\overline{\b}^\top& z+\alpha
\end{smallmatrix}
\right]
$ which is a matrix of determinant $\alpha\det\left(\overline{A}\right)$.
Since $A$ is SDP and $\overline{A}$ is a non-zero minor of $A$, this determinant has to be non-negative, and so, 
we have $\alpha\geq 0$.
This enables us to finish the construction by setting
$$
\begin{bmatrix}
      \multicolumn{3}{c}{\smash{\raisebox{-0.5\normalbaselineskip}{\scalebox{1.1}{$[R]_{n-1}$}}}} & \multirow{2}{*}{\scalebox{1}[1.4]{$\zeros$}}  \\
                  &       &                                                                       &   \\
      \multicolumn{3}{c}{\r_{n-1}^\top}                                                       & \sqrt{\alpha}  \\
\end{bmatrix}
\begin{bmatrix}
      \multicolumn{3}{c}{\smash{\raisebox{-0.5\normalbaselineskip}{\scalebox{1.1}{$[R]_{n-1}$}}}} & \multirow{2}{*}{\scalebox{1}[1.4]{$\zeros$}}  \\
                  &       &                                                                       &   \\
      \multicolumn{3}{c}{\r_{n-1}^\top}                                                       & \sqrt{\alpha}  \\
\end{bmatrix}^\top
=
\begin{bmatrix}
      \multicolumn{3}{c}{\smash{\raisebox{-0.5\normalbaselineskip}{\scalebox{1.1}{$[A]_{n-1}$}}}} & \multirow{2}{*}{\scalebox{1}[1.2]{$\b_{n-1}$}}  \\
                  &       &                                                                       &   \\
      \multicolumn{3}{c}{\b_{n-1}^\top}                                                       & z+\alpha  \\
\end{bmatrix} = A,
$$

Finally, if 
\texttt{rank}$(A) = $ \texttt{rank}$([A]_{n-1})$
then we can say 
\texttt{rank}$(A)=$
\texttt{rank}$([A]_{n-1})=$
\texttt{rank}$([R]_{n-1})=$
\texttt{rank}$(R)$, and so, 
we need to have $r_{nn}=0$ because any
$r_{nn}>0$ would make
$\alpha=r_{nn}^2>0$, leading to
\texttt{rank}$(R)>$
\texttt{rank}$([R]_{n-1})$.
On the other hand, if 
\texttt{rank}$(A) > $ \texttt{rank}$([A]_{n-1})$,
then we have $r_{nn}=\sqrt{\alpha}> 0$.
This justifies the last statement of
the proposition.
\end{proof}

\begin{corollary} If a SDP matrix $A$ is not positive definite, the Cholesky factorization may or
may no be unique.
\end{corollary}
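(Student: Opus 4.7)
The claim is that the Cholesky factorization behavior is not uniform across non-definite SDP matrices, so the plan is simply to exhibit two small explicit examples witnessing the two behaviors. Both witnesses can be taken to be $2\times 2$; no general machinery is needed beyond re-checking that $L L^\top = A$ and that the diagonal of $L$ is non-negative (the two conditions of Prop.~\ref{propCholeskySDP}).

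For the non-uniqueness witness, I would take $A = \bigl[\begin{smallmatrix} 0 & 0 \\ 0 & 1 \end{smallmatrix}\bigr]$, which is SDP with eigenvalues $0$ and $1$, hence not positive definite. Writing a generic lower-triangular $L$ with entries $L_{11}, L_{21}, L_{22}$ and imposing $L L^\top = A$ gives only the constraints $L_{11} = 0$ and $L_{21}^2 + L_{22}^2 = 1$, with $L_{22}\ge 0$. There are infinitely many solutions, e.g.\ $L = \bigl[\begin{smallmatrix} 0 & 0 \\ 0 & 1 \end{smallmatrix}\bigr]$ and $L = \bigl[\begin{smallmatrix} 0 & 0 \\ 1 & 0 \end{smallmatrix}\bigr]$, both of which trivially multiply out to $A$.

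For the uniqueness witness, I would take $A = \bigl[\begin{smallmatrix} 1 & 1 \\ 1 & 1 \end{smallmatrix}\bigr]$, which is SDP with eigenvalues $0$ and $2$ (characteristic polynomial $\lambda(\lambda-2)$), hence again not positive definite. A lower-triangular $L$ with non-negative diagonal satisfying $L L^\top = A$ is forced entry-by-entry: $L_{11}^2 = 1$ together with $L_{11}\ge 0$ gives $L_{11} = 1$; then $L_{11} L_{21} = 1$ gives $L_{21} = 1$; finally $L_{21}^2 + L_{22}^2 = 1$ together with $L_{22}\ge 0$ gives $L_{22} = 0$. Hence $L = \bigl[\begin{smallmatrix} 1 & 0 \\ 1 & 0 \end{smallmatrix}\bigr]$ is the unique factorization.

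There is no real obstacle here: the only thing to be careful about is that both examples genuinely lie in the ``SDP but not positive definite'' regime, which is immediate from their eigenvalues, and that the verification of uniqueness in the second example uses the non-negativity of the diagonal at the step $L_{22} = 0$ (otherwise $L_{22} = 0$ would only be one of two possible signs for the square root). Together the two examples establish the corollary.
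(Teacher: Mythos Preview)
Your proof is correct. The uniqueness witness $A=\bigl[\begin{smallmatrix}1&1\\1&1\end{smallmatrix}\bigr]$ is exactly the one the paper uses, with the same entry-by-entry forcing argument. For non-uniqueness you take a different and simpler route: you use the $2\times 2$ matrix $\bigl[\begin{smallmatrix}0&0\\0&1\end{smallmatrix}\bigr]$, where the vanishing top-left entry immediately frees up $L_{21}$. The paper instead uses a $3\times 3$ example $\bigl[\begin{smallmatrix}1&1&2\\1&1&2\\2&2&13\end{smallmatrix}\bigr]$ with $A_{11}=1>0$, so the first row of $L$ is forced and the non-uniqueness only appears at the third row because the leading $2\times 2$ block is singular. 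Your example is cleaner and fully sufficient for the corollary; what the paper's larger example buys is a tie-back to Prop.~\ref{propCholeskySDP}: it exhibits two factorizations of the same matrix, one with $r_{nn}>0$ (as produced by that proposition's construction when $\operatorname{rank}(A)>\operatorname{rank}([A]_{n-1})$) and one with $r_{nn}=0$, thereby illustrating that the rank condition there governs only the particular factorization built in the proof, not all possible factorizations.
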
 
\begin{proof} The following factorization is unique:
$$
\begin{bmatrix}
1  &  1 \\
1  &  1 \\
\end{bmatrix}
=
\begin{bmatrix}
1  &  0 \\
1  &  0 \\
\end{bmatrix}
\begin{bmatrix}
1  &  1 \\
0  &  0 \\
\end{bmatrix}
$$
because it requires $1=r^2_1$ and $1=r_1r_2+r_2^2$, imposing $r_1=1$ and $r_2=0$.

The following factorization is not unique:
$$
\begin{bmatrix}
1  &  1 & 2 \\
1  &  1 & 2 \\
2  &  2 & 13 \\
\end{bmatrix}
=
\begin{bmatrix}
1  &  0 & 0 \\
1  &  0 & 0 \\
2  &  0 & 3 \\
\end{bmatrix}
\begin{bmatrix}
1  &  1  & 2 \\
0  &  0  & 0 \\
0  &  0  & 3 \\
\end{bmatrix}
$$
This is the decomposition that the proof of the above theorem would
construct by taking $\x^\top=\left[\begin{smallmatrix}2&0\end{smallmatrix}\right]$ on the
induction basis of the 
previous $2\times 2$ decomposition of
$[A]_{n-1}=\left[\begin{smallmatrix} 1 & 1\\ 1 & 1 \end{smallmatrix}\right]$.
Precisely, this proof calculates
$\left[\begin{smallmatrix}r_{31}&r_{32}\end{smallmatrix}\right]
=\x^\top [R]_{n-1}=
\left[\begin{smallmatrix}2&0\end{smallmatrix}\right]
\left[\begin{smallmatrix} 1 & 0\\ 1 & 0 \end{smallmatrix}\right]=
\left[\begin{smallmatrix} 2 & 0    \end{smallmatrix}\right]$.
 However, more generally, the
coefficients $r_{32}$ and $r_{33}$ are active only in the equation of $a_{33}$, \ie,
$13=2^2+r_{32}^2+r_{33}^2$. We could take $r_{32}=1$ and $r_{33}=\sqrt{8}$. Also, we could take
$r_{32}=3$ and $r_{33}=0$, leading to a different factorization:
$$
\begin{bmatrix}
1  &  1 & 2 \\
1  &  1 & 2 \\
2  &  2 & 13 \\
\end{bmatrix}
=
\begin{bmatrix}
1  &  0 & 0 \\
1  &  0 & 0 \\
2  &  3 & 0 \\
\end{bmatrix}
\begin{bmatrix}
1  &  1  & 2 \\
0  &  0  & 3 \\
0  &  0  & 0 \\
\end{bmatrix}
$$
Unlike the previous factorization constructed by the proof of the above theorem, this
decomposition has a null bottom-right diagonal term, although $rank(A)>rank([A]_2)$. However, notice
that $rank(R)=rank([R]_{n-1})+1$, where $[R]_{n-1}$ is the leading principal minor of size $2\times
2$. Since $rank(RR^\top)=rank(R)$ (use Prop.~\ref{propranktransprod}), the rank of $A$ is equal to
the rank of $R$ and the rank of $[A]_{n-1}$ equals the rank of $[R]_{n-1}$.
\end{proof}

\subsection{Any $A\succeq ${\bf 0} has infinitely many factorizations $A=VV^\top$ related by
rotations and reflections}
{\def\RR{\pazocal{R}}
\begin{corollary} \label{corSdpToVectors}Any SDP matrix $A\in\R^{n\times n}$
can be factorized in a form $A=VV^\top$.
Generating such $V$ is simply equivalent to 
taking $n$ vectors $\v_1^\top$, $\v_2^\top$, $\dots$ $\v_n^\top$ (the rows of $V$) 
such that $A_{ij}=\v_i^\top \v_j=\v_i\sprod \v_j~\forall i,j\in[1..n]$.
There are infinitely many such matrices $V$ 
(or vectors $\v_1$, $\v_2$, $\dots$ $\v_n$) for a fixed $A$.
We can say that any SDP matrix $A$ can be constructed by choosing a set of $n$ vectors of $\R^n$.
\end{corollary}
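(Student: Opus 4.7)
The plan is to obtain the factorization $A = VV^\top$ directly from results already established, and then to generate the one-parameter (indeed continuously-many-parameter) family of such factorizations by the action of the orthogonal group.

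First, I would invoke the Cholesky decomposition for semidefinite matrices (Prop.~\ref{propCholeskySDP}), which produces a lower triangular $R \in \R^{n \times n}$ with $A = R R^\top$. Setting $V := R$, the rows $\v_1^\top, \v_2^\top, \dots, \v_n^\top$ of $V$ are $n$ vectors in $\R^n$, and the $(i,j)$ entry of $VV^\top$ is precisely $\v_i^\top \v_j = \v_i \sprod \v_j$, matching $A_{ij}$. (Alternatively, one could cite the symmetric square root $K$ from Section~\ref{secsquareroot}, which gives $A = KK = KK^\top$ since $K$ is symmetric.) This establishes existence of at least one $V$.

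For the infinitude, the key observation is that the factorization is invariant under right-multiplication by an orthogonal matrix: if $Q \in \R^{n \times n}$ satisfies $Q Q^\top = I_n$, then
\begin{equation*}
(VQ)(VQ)^\top = V Q Q^\top V^\top = V I_n V^\top = V V^\top = A.
\end{equation*}
Thus for each orthogonal $Q$, the matrix $V' := VQ$ furnishes another valid factorization, and its rows are $n$ vectors of $\R^n$ whose pairwise dot products reproduce the entries of $A$. The orthogonal group $O(n)$ contains the full continuum of planar rotations (for $n \geq 2$, pick any two coordinates and rotate by $\theta \in [0, 2\pi)$), as well as reflections through hyperplanes, so $O(n)$ is infinite; this explains the phrase ``related by rotations and reflections'' in the section title.

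The only subtlety is confirming that these candidates are genuinely distinct, i.e., that the map $Q \mapsto VQ$ is injective on $O(n)$ (so that infinitely many $Q$ yield infinitely many $V'$). When $V$ has full column rank $n$ (equivalently, when $A \succ \zeros$ by Prop.~\ref{propEigenSDP} together with the rank identity $\text{rank}(VV^\top) = \text{rank}(V)$ mentioned after Prop.~\ref{propCholeskyViaQR}), $V$ is left-invertible, so $VQ_1 = VQ_2 \Rightarrow Q_1 = Q_2$, giving the claim immediately. When $A$ is singular, one can simply take $V$ to be $n \times r$ with $r = \text{rank}(A)$ (discarding zero columns of $R$, which does not change $VV^\top$) and apply the same argument with $O(r)$ in place of $O(n)$; the only genuine exception is $A = \zeros$, in which only $V = \zeros$ works, but this trivial case can be noted separately. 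This is the only real issue to watch, and it is mild.
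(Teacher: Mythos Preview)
Your approach is essentially the same as the paper's: existence via Cholesky (the paper also lists the eigendecomposition $V=U\sqrt{\Lambda}$ and the square root as alternatives), and infinitude via right-multiplication by orthogonal matrices. The paper stops at ``there are infinitely many such $\mathcal{R}$'' without checking that $V\mathcal{R}_1 \neq V\mathcal{R}_2$, so your injectivity discussion is more careful than the original---and you correctly flag the $A=\zeros$ exception, which the corollary statement itself glosses over.

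One small loose end in your singular-case argument: for a general Cholesky factor $R$ of a rank-$r$ matrix, the columns with zero diagonal entry need not be identically zero, so ``discarding zero columns of $R$'' may not leave exactly $r$ columns of full rank. The cleanest fix is to switch to $V=U\sqrt{\Lambda}$ from the eigendecomposition, whose nonzero columns are precisely the $r$ vectors $\sqrt{\lambda_i}\,\u_i$ and are automatically independent; then your $O(r)$ argument goes through without qualification.
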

\begin{proof}
For a given $A\succeq 0$, we have actually already presented three ways of computing a factor $V$ such
that $A=VV^\top$. We recall these three ways at points (1)-(3) below. At point (4), 
we show that from any such factor $V$ we can further generate infinitely many
other factors.
\begin{itemize}
\item[(1)] Use the above Cholesky decomposition of
SDP matrices to write
$A=RR^\top$ and take $V=R$ as needed. 
\item[(2)] Use the eigendecomposition
\eqref{firstEqDecomp} to write $A = U \Lambda U^\top $, where 
$\Lambda=\texttt{diag}(\lambda_1, \lambda_2, \dots \lambda_n)$. Since $\lambda_i\geq
0~\forall i\in[1..n]$, we can 
define real matrix $\sqrt{\Lambda}=\texttt{diag}(\sqrt{\lambda_1},
\sqrt{\lambda_2}, \dots \sqrt{\lambda_n})$ and
write $A=U\sqrt{\Lambda}\sqrt{\Lambda}U^\top=
(U\sqrt{\Lambda})(U\sqrt{\Lambda})^\top=VV^\top$ with $V=U\sqrt{\Lambda}$.
\item[(3)] Use one of the multiple square root decompositions $A=KK$ with symmetric $K$ from 
Remark \ref{remarkSymmetricSquareRoot}. This gives 
$V=K$ and $V^\top =K$. 
\item[(4)] 
We can generate infinitely many more decomposition from any $V$ determined
as above.
For this, let us consider any unitary orthogonal matrix ${\RR}$, \ie, a matrix
such that $\RR^\top \RR=I_n$. There are infinitely-many such matrices $\RR$, 
each one of them representing a composition of rotation and reflection operators.\footnote{%
Using
${\RR}~{\RR}^\top =I_n$,
notice $\x^\top \y=\x^\top
{\RR}~{\RR}^\top \y=(\x^\top {\RR})(\y^\top
{\RR})^\top$, \ie, the operator that maps $\x^\top \to \x^\top {\RR}$ preserves
the angles, and so, it needs to be a composition of rotations and reflections.}
Now
check that $V_{\text{new}}V_{\text{new}}^\top=(V{\RR})(V{\RR})^\top=V{\RR}{\RR}^\top V^\top=VV^\top=A$.
\end{itemize}
\end{proof}

As a side remark, the factorizations mentioned at above points (3) and (4) are related.
In fact, Remark
\ref{remarkSymmetricSquareRoot} used at (3) constructs the factorisation 
by applying a particular case of the technique used at (4) on $A=VV^\top=U\sqrt{\Lambda}\sqrt{\Lambda}U^\top$.
More exactly, recall that symmetric $K$ from Remark \ref{remarkSymmetricSquareRoot} 
was generated by setting $K=UDU^\top$ for any
\texttt{symmetric}
$D$ such that $DD=\Lambda$.
Such a $D$ could be found by taking $D=\sqrt{\Lambda}\RR$, where
$\RR=\texttt{diag}(e_1,~e_2,\dots,e_n)$ with $e_i=\pm 1~\forall i\in[1..n]$;
this $\RR$ can actually be seen as a composition of reflexion operators like matrix $\RR$ at point (4) above.
We can write 
$A=VV^\top=U\sqrt{\Lambda}\sqrt{\Lambda}U^\top=U\sqrt{\Lambda}\RR \RR^\top\sqrt{\Lambda}U^\top
=UDD^\top U^\top
=UDDU^\top=UDU^\top UDU^\top=KK$.

\begin{remark} Given two factorizations $A=VV^\top = UU^\top$,
it is always possible to write
$V=U\RR$, where 
$\RR$ satisfies $\RR\RR^\top=I_n$, \ie, 
$\RR$ represents a composition of rotation and reflection operators.
The rows of $V$ can thus be obtained from the rows of $U$ by applying
a composition of rotations and reflections.
\end{remark}
\begin{proof}

{
\def\lf{\lfloor}
\def\rf{\rfloor}

Let $r$ be the rank of $V$, $U$ and $A$ (recall Prop~\ref{propranktransprod})
and $J$ a set of rows such that $\lf V \rf _J$ has rank $|J|=r$, where
the operator $\lf \cdot \rf _J$ represents the given matrix reduced to rows $J$.
Any row $\v_i$ of $V$ outside $J$ (\ie, such that $i\in[1..n]\setminus J$) can be
written as linear combination of rows $J$ using coefficients $\x\in\R^r$: 
$\v_i=\x^\top \lf V \rf _J$

Let's examine row $i$ of the product $A=VV^\top=UU^\top$ for any fixed
$i\in[1..n]\setminus J$.
Replacing above $\v_i=\x^\top \lf V\rf_J$ in $\a_i=\v_iV^\top$, we obtain
$\a_i=\x^\top \lf V\rf_J V^\top$. But now notice that $\lf V\rf_J V^\top$ actually
represents the rows $J$ of matrix $A$; we can thus replace $|V|_JV^\top = \lf A\rf_J$ and obtain 
$\a_i=\x^\top \lf A\rf _J$. 

We will now prove that $\u_i=\x^\top \lf U \rf_J$;
we introduce notational shortcuts $\u_i=\overline{\u_i}+\z = \x^\top \lf U \rf_J+\z$ and we will show $\z=\zeros$.
Let us first calculate $\overline{\u_i}U^\top= \x^\top \lf U \rf_J U^\top = \x \lf A\rf_J=\a_i$. Using
$A=UU^\top$, we also have $\u_iU^\top = \a_i$ which means
that $\left(\u_i - \overline{\u_i}\right)U^\top= \z U^\top = \zeros^\top$.
Taking row $i$, we obtain $\z\u_i= 0$, or $\z\left(\overline{\u_i}+\z\right)^\top=0$. We now
simply replace $\overline{\u_i}=\x^\top \lf U \rf_J $ and obtain
$\z\left(\x^\top \lf U \rf_J +\z\right)^\top=0
\implies
\z \lf U \rf_J^\top \x +\z\z^\top=0$. 
But $\z \lf U \rf_J^\top$ represents the columns $J$ of
$\z U^\top = \zeros^\top$, \ie, $\z \lf U \rf_J^\top=\zeros^\top$ is
a row vector (of length $|J|=r$) that only contains zeros. This leads to $\z\z^\top =0$ and this shows
that $\z=0$, leading to 
$\u_i=\x^\top \lf U \rf_J$. We actually obtained that all rows $[1..n]-J$ of
the equality $A=VV^\top=UU^\top$ represent merely linear combinations of the rows $J$
of this equality.

This equality is thus a (linear combination) consequence of $\lf A\rf _J=\lf V \rf_J V^\top = \lf U \rf_J U^\top$,
where 
$\lf V \rf_J $ and $ \lf U \rf_J $ have full rank $|J|=r$.
We now replace all rows $[1..n]-J$ of $V$ with $n-r$ unit orthogonal vectors rows that are 
also perpendicular to $\lf V\rf_J^\top$ and obtain matrix $\overline{V}$; this is possible because these unit orthogonal vectors 
are simply a basis for the null space of $\lf V\rf_J^\top$ which has dimension $n-r$ by virtue
of the rank-nullity theorem. 
One can check that the product $\overline{V}~\overline{V}^\top$ is a matrix
$\overline{A}$ such that $\overline{a}_{j,j'}=a_{j,j'}~\forall j,j'\in J$,
$\overline{a}_{i,i}=1~\forall i\in[1..n]-J$ and $\overline{a}_{i,j}=\overline{a}_{j,i}=0~\forall
i\in[1..n]-J,j\in J$.
We perform a similar operation on $U$ and obtain matrix $\overline{U}$ (filling rows $[1..n]-J$
with a different basis than for $V$) and one can check that similarly we have 
$$
\overline{U}\,\overline{U}^\top=
\overline{V}\,\overline{V}^\top=
\overline{A}$$

We can now use that that
$\overline{U},
\overline{V}$ and
$\overline{A}$ are non-singular and invertible. This ensures that there exists $\RR$ such that
$\overline{V}=\overline{U}\RR$. We obtain
$\overline{U}\,\overline{U}^\top=
\overline{U}\RR
\RR^\top
\overline{U}^\top$. Multiplying at left with $\overline{U}^{-1}$
and at right with ${\overline{U}^\top}^{-1}$, we obtain that
$\RR \RR^\top = I_n$. Now recall that 
$\overline{V}=\overline{U}\RR$ contains rows $J$ inherited from
$V$ and resp.~$U$, so that we also have 
$\lf V\rf_J=\lf U \rf_J \RR$. This can easily be extended to
$V=U\RR$ because each missing row (\ie, each $i\in[1..n]-J$) is a linear
combination of rows $J$ (\ie, 
$\v_i=\x\lf V\rf_J=\x\lf U \rf_J\RR=\u_i\RR$).
}
\end{proof}
}


\subsection{\label{secSDPHessian}Convex functions have an SDP Hessian assuming
the Hessian is symmetric}
Notice proposition below requires the Hessian matrix to be symmetric. 
This condition was omitted from certain texts
\vandenbergheWrong
but
we address it in our work.
Convex functions with asymmetric non-SDP Hessians do exist, see
Example~\ref{exNonSymHessian} in Appendix~\ref{appRelated}.  For such
cases, the convexity condition should 
actually 
evolve from
$\nnabla^2 f(\y)\succeq \zeros~\forall \y\in\R^n$ (\ie, SDP Hessian)
to
$\nnabla^2 f(\y) + \nnabla^2 f(\y)^\top \succeq \zeros~\forall \y\in\R^n$.
\begin{proposition}\label{propHessian}
A twice differentiable function $f:\R^n \to \R$ with a symmetric Hessian
matrix $\nnabla^2 f(\y)=H_{\y}$ defined by terms $h^\y_{ji}=\dfrac{\partial ^2f}{\partial x_j\partial
x_i}\big(\y\big)$ for all $\y \in \R^n$ and $i,j\in[1..n]$ is convex
if and only if $H_{\y}\succeq\zeros~\forall \y\in\R^n$,
\ie, if and only if the Hessian is SDP in all points.
\end{proposition}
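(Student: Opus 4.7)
The plan is to reduce the multivariate statement to the one-dimensional convexity criterion by restricting $f$ to arbitrary lines. Recall that $f$ is convex if and only if for every $\x,\y\in\R^n$ the univariate function $g(t)=f(\x+t(\y-\x))$ is convex on $[0,1]$. Applying the chain rule twice yields
$$g''(t)=\sum_{i,j=1}^n h^{\x+t(\y-\x)}_{ji}(y_j-x_j)(y_i-x_i)=(\y-\x)^\top H_{\x+t(\y-\x)}(\y-\x),$$
so the pointwise SDP condition translates directly into $g''(t)\geq 0$ along every line.

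For the direction $\Leftarrow$, assume $H_\y\succeq\zeros$ for every $\y$. Then $g''\geq 0$ on $[0,1]$ for each line restriction, and the classical one-dimensional fact that a non-negative second derivative forces convexity (via the fundamental theorem of calculus: $g'$ is non-decreasing, so the secant dominates the chord) yields convexity of every $g$, hence of $f$.

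For the direction $\Rightarrow$, assume $f$ is convex and argue by contradiction. Suppose $H_{\y_0}\not\succeq\zeros$ at some $\y_0$. By Definition~\ref{def1} there exists $\d\in\R^n$ with $\d^\top H_{\y_0}\d<0$. The restriction $g(t)=f(\y_0+t\d)$ then satisfies $g''(0)<0$, and the second-order Taylor expansion $g(t)=g(0)+g'(0)t+\tfrac12 g''(0)t^2+o(t^2)$ gives, for all sufficiently small $\varepsilon>0$,
$$\tfrac12\bigl(g(\varepsilon)+g(-\varepsilon)\bigr)=g(0)+\tfrac12 g''(0)\varepsilon^2+o(\varepsilon^2)<g(0),$$
contradicting the convexity of $g$ (which it would inherit as the restriction of the convex $f$ to a line).

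The main subtlety is precisely where the symmetry assumption enters: the quadratic form $\d^\top H\d$ depends only on $\tfrac12(H+H^\top)$, so when $H$ is symmetric, the condition $H\succeq\zeros$ coincides with non-negativity of this form for all $\d$; without symmetry one must instead require $H+H^\top\succeq\zeros$, as the paper anticipates. A secondary mild point is the two one-dimensional facts invoked above ($g''\geq 0\Rightarrow g$ convex on an interval, and $g''(0)<0\Rightarrow$ local failure of convexity), both of which follow from the mean value theorem and a second-order Taylor expansion respectively, and crucially do \emph{not} require continuity of the Hessian beyond the stated twice differentiability.
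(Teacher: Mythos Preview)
Your proof is correct and takes essentially the same approach as the paper: both reduce to one-dimensional restrictions $g(t)=f(\y+t\v)$, compute $g''(t)=\v^\top H_{\y+t\v}\v$ via the chain rule, and use the equivalence between convexity of $f$ and convexity of all its line restrictions. The only cosmetic differences are that the paper argues the $\Rightarrow$ direction directly (``$g$ convex, hence $g''(0)\geq 0$'') where you argue by contradiction via a Peano-form Taylor expansion, and the paper spells out the $\Leftarrow$ passage from ``every $g$ convex'' to ``$f$ convex'' explicitly in \eqref{eqgconvex}, whereas you invoke it as a known fact.
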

\begin{proof}
Let us consider any $\y\in \R^n$. We take any direction $\v\in \R^n$ and define $g:\R\to
\R$ via
$$g(t)=f(\y+t\v)$$
Using the chain rule to the gradient,%
\footnote{
If you are unfamiliar with gradients, consider that
$\nnabla f(\y)$ is the hyperplane tangent to the function
graph at $\y$. Using notational shortcut
$\nnabla f(\y)=\left[\begin{smallmatrix} \nnabla_1&\nnabla_2&\dots \nnabla_n\end{smallmatrix}\right]$,
the function value (of this hyperplane) increases by $\epsilon\nnabla_i $ when one performs a step of length $\epsilon$ along direction $x_i$.
Let us further study this hyperplane: its value increases by $\nnabla_i$ when one performs a unit step (of length 1) along direction
$x_i$ from any starting point. What happens if one moves along some other direction $\v$? Answer: this is equivalent to advancing
a step of $v_1$ along $\x_1$, followed by a step of $v_2$ along $\x_2$, etc, leading to a total increase of
$\nnabla_1 v_1+\nnabla_2 v_2+\dots \nnabla_n v_n=
\left[\begin{smallmatrix} \nnabla_1&\nnabla_2&\dots \nnabla_n\end{smallmatrix}\right]\v$.
As a side remark, we can also see 
$\left[\begin{smallmatrix} \nnabla_1&\nnabla_2&\dots \nnabla_n\end{smallmatrix}\right]$ as a gradient direction (vector).
The increase (of the hyperplane) when one moves along some direction $\v$ is given by the scalar product
between $\v$ and this gradient vector, equivalent to the projection of $\v$ along this
gradient. Among all vectors of the unit sphere, the 
vector/direction that makes the hyperplane value change the most
(in absolute value)
is the one that is 
collinear to the gradient direction.
}
we obtain
\begin{equation}\label{firstder}
g'(t) = \underbrace{\nnabla f\left(\y+t\v\right)}_{\textnormal{row vector}} \v=\sum_{i=1}^n\frac{\partial f}{\partial x_i}(\y+t\v)v_i
\end{equation}
We derivate again in $t$ to obtain $g''(t)$. For this, we observe that the derivative in $t$ of any term
$\frac{\partial f}{\partial x_i}(\y+t\v)$ can be calculated as in \eqref{firstder}
using the chain rule, \ie, $\left(\frac{\partial f}{\partial
x_i}(\y+t\v)\right)'=
\nnabla \frac{\partial f}{\partial x_i}(\y+t\v)\sprod \v=
\sum_{j=1}^n \frac{\partial^2 f}{\partial x_j\partial
x_i}(\y+t\v)v_j$. Summing up over all $i\in[1..n]$, we obtain:
\begin{align}
g''(t)&=\sum_{i=1}^n\sum_{j=1}^n \frac{\partial^2 f}{\partial x_j\partial
x_i}(\y+t\v)v_jv_i
=
H_{\y+t\v}\sprod \left(\v\v^\top\right)\label{eqgderivpos}\\
\implies g''(0)&=
H_{\y}\sprod \left(\v\v^\top\right)
=
\v^\top H_{\y} \v\notag
\end{align}
If $f$ is convex, then $g$ is convex for any direction $\v\in\R^n$. As such, 
$g(0)=\v^\top H_{\y} \v\geq 0~\forall \v\in\R^n$, which means that $H_{\y}$ is SDP for any chosen $\y\in\R^n$.

Conversely, if
$H_{\y}$ is SDP for any $\y\in\R^n$, then $g$ is convex for any $\v\in \R^n$
because $g''(t)\geq 0~\forall t\in\R$
by virtue of \eqref{eqgderivpos}. The convexity definition ensures
that $\alpha f(\y_1)+\beta f(\y_2)\geq f(\alpha \y_1+\beta \y_2)$
for all
$\alpha,\beta \geq 0 \text{ such that }\alpha+\beta=1$, \ie, 
the line joining $f(\y_1)$ and $f(\y_2)$ is above
or equal to
the function
value evaluated at any point $\alpha \y_1+\beta \y_2$ on the segment $[\y_1,\y_2]$. This can be 
instantiated as follows:
\begin{equation}\label{eqgconvex}
{\beta}f(\y - \alpha \v) 
+{\alpha}f(\y+\beta\v)\geq
f(\y)~~~\forall
\y,\v\in \R^n,\alpha,\beta \geq 0 \text{ such that }\alpha+\beta=1
\end{equation}
To prove $f$ is convex, we need to show that
for any $\y_1,~\y_2\in\R^n$, $\alpha\geq0$ and $\beta=1-\alpha$
we have
$\beta f(\y_1)+\alpha f(\y_2)\geq f(\beta\y_1+\alpha \y_2)$.
But this reduces to \eqref{eqgconvex}:
if we fix $\v=\y_2-\y_1$ and replace $\beta\y_1+\alpha\y_2=\y$,
one can check $\y_1=\y-\alpha \v$ and $\y_2=\y+\beta \v$,
\ie, we obtain \eqref{eqgconvex} for $\y$ and $\v$ defined above.
\end{proof}

\section{Primal-Dual SDP programs and optimization considerations}
\subsection{Primal and dual SDP programs\label{sec21}}
\subsubsection{Main duality}
\begin{proposition}\label{propMainDuality}The dual of a primal SDP program is an SDP program and
the following degeneracy-related properties hold: 
\begin{itemize}
\item[(a)] If the primal is unbounded, the dual is infeasible. 
\item[(b)] If the primal is infeasible, the dual
can be unbounded, infeasible or non-degenerate.
\end{itemize}
We will later see that that if the primal is feasible and bounded (non-degenerate), there might be a duality gap
with regards to the optimal value of the dual, and, even more, the dual can even be infeasible.
\end{proposition}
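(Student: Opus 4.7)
The plan is to start by fixing the standard-form primal $\min\{C\sprod X : A_i\sprod X = b_i,\ i=1,\dots,m,\ X \succeq \zeros\}$ and to derive the dual through Lagrangian relaxation. I would form $L(X,\y,S) = C\sprod X - \sum_i y_i(A_i\sprod X - b_i) - S\sprod X$, attaching free multipliers $\y$ to the equalities and a matrix multiplier $S$ to the conic constraint; the fact that $S$ must lie in $S_n^+$ (not in some other cone) is exactly the self-duality of the SDP cone proved in Prop.~\ref{propSDPSelfDual}. Minimizing $L$ over $X$ equals $-\infty$ unless $C - \sum_i y_i A_i - S = \zeros$, so the dual collapses to $\max\{\b^\top\y : C - \sum_i y_i A_i \succeq \zeros\}$, which is itself an SDP (linear objective, linear equalities packaged inside a PSD constraint on a slack $S = C - \sum_i y_i A_i$).

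For part (a) the key tool will be weak duality. Given a primal-feasible $X$ and a dual-feasible $(\y,S)$ I would compute
\[
C\sprod X - \b^\top\y \;=\; C\sprod X - \sum_i y_i (A_i\sprod X) \;=\; \Bigl(C - \sum_i y_i A_i\Bigr)\sprod X \;=\; S\sprod X \;\geq\; 0,
\]
the last inequality being exactly Prop.~\ref{propABprod}/Prop.~\ref{propSDPSelfDual} applied to the two SDP matrices $S$ and $X$. Thus every dual-feasible value is a lower bound on every primal-feasible value; if the primal tends to $-\infty$, no such lower bound can exist and the dual must be infeasible.

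For part (b) I would exhibit three tiny $2\times 2$ SDPs, all with infeasible primal, realizing each of the three dual regimes. Infeasible--unbounded: $\min 0\sprod X$ subject to $X_{11} = -1$, $X \succeq \zeros$ is infeasible (entries of SDP matrices are non-negative on the diagonal), and its dual $\max -y$ subject to $\bigl[\begin{smallmatrix}-y & 0\\ 0 & 0\end{smallmatrix}\bigr]\succeq\zeros$ is unbounded (take $y \to -\infty$). Infeasible--infeasible: change the cost to $-X_{22}$; the dual becomes $\max -y$ subject to $\bigl[\begin{smallmatrix}-y & 0\\ 0 & -1\end{smallmatrix}\bigr]\succeq\zeros$, which fails at the $(2,2)$ entry and so is itself infeasible. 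Infeasible--non-degenerate: $\min 0\sprod X$ subject to $X_{11} = 0$ and $X_{12}+X_{21} = 2$, $X \succeq \zeros$ is infeasible because Cor.~\ref{corolZeros} forces $X_{12}=0$ whenever $X_{11}=0$; its dual $\max 2 y_2$ subject to $\bigl[\begin{smallmatrix}-y_1 & -y_2\\ -y_2 & 0\end{smallmatrix}\bigr]\succeq\zeros$ pins $y_2 = 0$ via the $2\times 2$ minor criterion (Prop.~\ref{propMinorsNonneg}) and leaves $y_1 \leq 0$, so the dual is feasible and bounded with optimum $0$.

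The main obstacle, I expect, is the infeasible--non-degenerate scenario: it has no LP analogue, since classical LP duality forces an infeasible primal to pair with an infeasible or unbounded dual. The SDP phenomenon exploited here is precisely the rigidity of the PSD cone embodied in Cor.~\ref{corolZeros}: a single zero on the diagonal wipes out an entire row/column, so an equality like ``$X_{12}+X_{21}=2$'' can be primal-inconsistent while the dual slack matrix inherits the mirrored rigidity (a forced zero on the diagonal of $S$) that traps the multipliers in a bounded set. Once this example is in hand, the other two cases are routine.
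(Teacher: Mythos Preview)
Your proposal is correct and follows essentially the same route as the paper: derive the dual via a Lagrangian, read off weak duality from $S\sprod X\ge 0$ for (a), and exhibit three small explicit examples for (b), with the infeasible-primal/bounded-dual case hinging on the diagonal-zero rigidity of Corollary~\ref{corolZeros}. The one difference is purely conventional: the paper takes as \emph{primal} the LMI form $(SDP)=\min\{\sum_i c_i x_i:\sum_i A_i x_i\succeq B\}$ with scalar variables, and derives $(DSDP)=\max\{B\sprod Y:A_i\sprod Y=c_i,\,Y\succeq\zeros\}$ as its dual, whereas you start from the matrix-variable standard form. So your ``primal'' is the paper's dual and vice versa; the arguments and examples carry over unchanged, but be aware of this swap when matching your statement of (a) and (b) to the paper's wording.
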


\noindent {\it Proof.} Let us introduce the first SDP program in variables $\x\in\R^n$,
using matrices of an arbitrary order.
\begin{subequations}
\label{sdp}
\begin{align}[left ={(SDP)  \empheqlbrace}]
\min~~&\sum_{i=1}^n c_ix_i    \label{sdp1}\\
s.t~~&\sum_{i=1}^n A_i x_i \succeq B \label{sdp2}\\
    &\x\in \R^n             \label{sdp3}
\end{align}
\end{subequations}
The inequalities \eqref{sdp2} are often called {\it linear matrix inequalities}.
Let us now relax them 
(or penalize their potential violation)
using Lagrangian multipliers $Y\succeq \zeros$
to obtain the Lagrangian function:
$$\L(\x,Y)=\sum_{i=1}^n c_ix_i - Y\sprod (\sum A_i x_i-B)$$
Observe that if $\x$ satisfies \eqref{sdp2}, we get awarded in the above Lagrangian because the
Lagrangian term subtracts the product of two SDP matrices (non-negative by virtue of Prop.~\ref{propABprod}).
As in linear programming, we use the convention that 
$OPT(SDP)=\infty$ if \eqref{sdp2} has no feasible solution (\ie, we have an infeasible program)
and  $OPT(SDP)=-\infty$ if \eqref{sdp1}-\eqref{sdp3} can be indefinitely small (\ie, we have an unbounded program).
However, in all cases,
$\min\limits_{\x\in\R^n} \L(\x,Y)$ is a relaxation of (SDP) and we can write:
\begin{equation}\label{eqLagrok}\min_{\x\in\R^n} \L(\x,Y)\leq OPT(SDP),~\forall Y\succeq \zeros\end{equation}

We now develop the expression of the Lagrangian:
$$\min_{\x\in\R^n}\L(\x,Y)=\min_{\x\in\R^n}Y\sprod B + \sum_{i=1}^n (c_i-Y\sprod A_i) x_i $$
If there is a single $i\in[1..n]$ such that $c_i-Y\sprod A_i\neq 0$, the above minimum is $-\infty$
(unbounded), by using an appropriate value of $x_i$.
To have a bounded $\min_{\x\in\R^n}\L(\x,Y)$, the matrix $Y$ needs to satisfy $c_i-Y\sprod A_i= 0$
for all $i\in[1..n]$. Notice that if we actually consider
a non-negative variable
$x_i\geq 0$ 
for some $i\in[1..n]$, the condition
becomes $c_i-Y\sprod A_i\geq 0$ for such $i$. We are interested in finding:
$$\max_{Y\succeq\zeros}\min_{\x\in\R^n}\L(\x,Y)$$
that can be written:
\begin{subequations}
\label{dsdp}
\begin{align}[left ={(DSDP)  \empheqlbrace}]
\max~&B\sprod Y    \label{dsdp1}\\
s.t ~&A_i\sprod Y = c_i{\raisebox{0.9em}{\tikz\coordinate(equalityConstr);}}~\forall i\in[1..n]\label{dsdp2}\\
    ~&Y\succeq \zeros             \label{dsdp3}
\end{align}
\end{subequations}

\begin{tikzpicture}[overlay]
\node(textconstr) at (13.1,2.7) {\scalebox{0.85}{Or {$A_i\sprod Y \leq c_i$} if $x_i\geq 0$ is added to (SDP)}};
\draw[darkgray, decoration={markings,mark=at position 1 with {\arrow[thick]{>}}}, postaction={decorate} ]
                                                    (textconstr)  to  (equalityConstr.north);
\end{tikzpicture}

\noindent Based on \eqref{eqLagrok}, we obtain:
\begin{equation}\label{eqWeakDual}
OPT(DSDP)\leq OPT(SDP).
\end{equation}
\textbf{The case of degenerate programs} 
is addressed below, proving points (a) and (b) of the conclusion.
\begin{itemize}
\item[(a)] In above \eqref{eqWeakDual},
we can say that that $OPT(DSDP)=-\infty$ if $(DSDP)$ is not feasible, 
because $\min\limits_{\x\in\R^n}\L(\x,Y)=-\infty~\forall Y\succeq \zeros$ in this case.
It is clear that if $(DSDP)$ is feasible, then $(SDP)$ can not be unbounded from below.
Thus, if $OPT(SDP)=-\infty$ (unbounded from below), then $(DSDP)$ needs to be infeasible.
\item[(b)]
If (SDP) is infeasible,
we can infer nothing about the dual, \ie, 
$(DSDP)$ can be unbounded, infeasible or non-degenerate.
If you are familiar with linear programming, then it is well-known
that the dual of an infeasible LP is infeasible or unbounded.
By writing such LPs in an SDP form, one can obtain the 
examples (i) and (i) below.
The most difficult is to find a primal SDP whose dual
is non-degenerate. Such programs can be found by exploiting 
a phenomenon of ``clenching'' in the development of 
$\sum_{i=1}^n A_i x_i - B \succeq \zeros$ or $Y\succeq \zeros$,
\eg, we can use
Corollary~\ref{corolZeros} to
force certain rows or columns of an SDP matrix to be zero, 
pushing it to a certain form (or to infeasibility).
For instance, the SDP matrix in example (iii) below need to
have $y=0$: the zero at position (2,2) forces
the second row and the second column to contain only zeros
by virtue of Corollary~\ref{corolZeros}. This makes this program infeasible.
On the dual side, we have $Y_{11}=0$ because $x$ has a coefficient of zero in the primal
objective function and $Y_{33}=1$ because $Y_{12}+Y_{21}+Y_{33}=1$, which leads to
a unique dual feasible solution.
\hfill \qed
\end{itemize}

\noindent \hspace{-1em}\begin{tabular}{@{}c@{\hspace{-0.3em}}|@{}c@{}c@{\hspace{-0.4em}}|@{}c}
\scalebox{0.9}{
$
\begin{array}{l}
\min~x   \\
s.t.~\begin{bmatrix} x&0\\0&-x \end{bmatrix}\succeq \begin{bmatrix} 1 & 0 \\0&1\end{bmatrix}
\end{array}
$
}
&
\scalebox{0.9}{
$
\begin{array}{l}
\min~x   \\
s.t.~\begin{bmatrix} x-y\hspace{-1em}&0\\0&-x+y \end{bmatrix}\succeq \begin{bmatrix} 1 & 0 \\0&1\end{bmatrix}
\end{array}
$
}
\hspace{-1.2em}
&
\scalebox{0.9}{
$\begin{array}{l}
\min~-y  \\
s.t.~\begin{bmatrix} x&0&0\\0&-x&0\\0&0&y \end{bmatrix}\succeq \begin{bmatrix}1&0&0\\0&1&0\\0&0&0\end{bmatrix}
\end{array}$
}
&\scalebox{0.9}{
\begin{minipage}{0.25\linewidth}
$
\begin{array}{l}
\min~y   \\
s.t.~\begin{bmatrix} x&y&0\\y&0&0\\0&0&y\end{bmatrix}\succeq\begin{bmatrix}0&0&0\\0&0&0\\0&0&1\end{bmatrix}
\end{array}
$
\end{minipage}
}
\\
&&&\\
(i) unbounded dual &\multicolumn{2}{c|}{(ii) infeasible duals}& (iii) non-degenerate dual
\end{tabular}

~\\

\begin{proposition}\label{propEqPrimDualForms}
Program (DSDP) from \eqref{dsdp1}-\eqref{dsdp3} can be written in the primal form
of (SDP) from \eqref{sdp1}-\eqref{sdp3}.
\end{proposition}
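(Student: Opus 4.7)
The plan is to re-express the dual program (DSDP) in the primal form \eqref{sdp1}--\eqref{sdp3} by performing three standard transformations: (i)~parameterizing the matrix variable $Y$ by its $\frac{m(m+1)}{2}$ independent entries, treated as unconstrained real scalars; (ii)~splitting every scalar equality constraint into two opposite scalar inequalities; (iii)~bundling the SDP cone constraint on $Y$ together with all resulting scalar inequalities into a single block-diagonal linear matrix inequality of the required primal shape $\sum_\alpha \bar A_\alpha \bar x_\alpha \succeq \bar B$.

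First, I would introduce symmetric basis matrices $E_{jk}\in \R^{m\times m}$ for $1\leq j\leq k\leq m$, where $E_{jj}$ has a single $1$ at position $(j,j)$ and $E_{jk}$ (with $j<k$) has $1$s at positions $(j,k)$ and $(k,j)$. Any symmetric $Y$ then decomposes as $Y=\sum_{j\leq k} y_{jk}E_{jk}$, so the entries $y_{jk}$ can serve as the new unconstrained scalar variables $\bar x_\alpha$ of the primal-form program, with $\alpha$ indexing pairs $(j,k)$. Under this parameterization, both the objective $B\sprod Y = \sum_{j\leq k}(B\sprod E_{jk})y_{jk}$ and each left-hand side $A_i\sprod Y = \sum_{j\leq k}(A_i\sprod E_{jk})y_{jk}$ become linear in the $y_{jk}$. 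The max in \eqref{dsdp1} is turned into a min by negating the coefficients (the two optima are linked by a sign).

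Next, I would replace each equality $A_i\sprod Y = c_i$ by the pair of scalar inequalities $A_i\sprod Y\geq c_i$ and $-A_i\sprod Y\geq -c_i$, each of which is trivially a $1\times 1$ SDP constraint. The final step is to stack the original $Y\succeq\zeros$ constraint and the $2n$ scalar constraints into one block-diagonal linear matrix inequality
\[
\mathrm{blockdiag}\Bigl(\sum_{j\leq k} y_{jk}E_{jk},~A_1\sprod Y-c_1,~c_1-A_1\sprod Y,~\dots,~A_n\sprod Y-c_n,~c_n-A_n\sprod Y\Bigr)\succeq \zeros,
\]
which, using linearity in the $y_{jk}$, is equivalent to a constraint of the form $\sum_\alpha \bar A_\alpha \bar x_\alpha \succeq \bar B$; here the top-left block of each $\bar A_\alpha$ is the corresponding $E_{jk}$, the scalar blocks carry the coefficients $\pm(A_i\sprod E_{jk})$, and the constants $\pm c_i$ are absorbed into the $\bar B$ block.

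There is no real obstacle here, only bookkeeping: one must be careful about the factor of $2$ that appears when rewriting $B\sprod Y$ or $A_i\sprod Y$ on the off-diagonal basis matrices $E_{jk}$ (since $E_{jk}\sprod M = 2M_{jk}$ for $j<k$), and one must verify that \textbf{block-diagonal} positive semidefiniteness decouples into the separate positive semidefiniteness of each block (which follows from the Def.~\ref{def1} applied to vectors supported on the respective block). Once these points are checked, the constructed program has exactly the shape of (SDP), completing the claim.
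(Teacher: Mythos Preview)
Your argument is correct: vectorizing the symmetric variable $Y$ via the basis $E_{jk}$, splitting each equality into two $1\times 1$ inequalities, and assembling everything into a block-diagonal LMI does produce a program of the shape \eqref{sdp1}--\eqref{sdp3}. The only delicate points are exactly the ones you flag (the factor $2$ on off-diagonal entries and the decoupling of a block-diagonal SDP constraint, the latter being precisely Prop.~\ref{propAggregated}).

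The paper, however, takes a different route. Rather than keeping all $\tfrac{m(m+1)}{2}$ entries of $Y$ as variables and encoding the $n$ equalities as $2n$ scalar blocks, it \emph{solves} the linear system $A_i\sprod Y=c_i$ first: it picks a particular solution $-B'$ and a basis $A'_1,\dots,A'_k$ of the null space of $\{A_i\}$, so that every feasible $Y$ is written as $Y=-B'+\sum_j A'_j x'_j$. Substituting this into the objective and into $Y\succeq\zeros$ yields a single LMI $\sum_j A'_j x'_j\succeq B'$ in only $k$ variables, with no auxiliary scalar blocks. Your construction is more elementary and needs no linear algebra beyond block-diagonalization; the paper's construction is leaner (fewer variables, a single genuine SDP block) and, more importantly, it is the specific parameterization reused verbatim in the proof of Prop.~\ref{propDualPrimalFormDualizedIntoSmtngEqToPrimal} to show that dualizing twice returns the original primal. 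If you only care about Prop.~\ref{propEqPrimDualForms} in isolation, your proof is fine; if you want it to feed into the later dual-of-dual argument, you would need the paper's null-space parameterization instead.
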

\begin{proof}
We first solve the system $A_i\sprod Y = c_i~\forall i\in[1..n]$. 
If this system has no solution, then the given (DSDP) program is infeasible;
in this case, any
infeasible (SDP) can be considered
(by convention) equivalent to the given infeasible (DSDP) -- assuming 
 that both programs have the same optimization (min/max) direction.

{\it The non-degenerate case} is the essential one: we consider from
now on that the system $A_i\sprod Y = c_i~\forall i\in[1..n]$
has at least a feasible solution $-B'$. The set of all solutions
is given by 
\begin{equation}\label{eqbase}Y=-B'+\sum_{j=1}^k A'_jx'_j,\end{equation}
where $A'_1,~A'_2,\dots A'_k$ are a basis (maximum set of independent vectors) of the null space of 
$\{A_i:~i\in[1..n]\}$ (see the null space definition in
\eqref{eqDefNull}).%
\footnote{Assuming $A_1,~A_2,\dots A_n\in\R^{m \times m}$ are linearly independent, we have
$k=\frac{m(m+1)}2-n$.  We used the rank-nullity Theorem~\ref{thranknullity} with
the fact the set of symmetric matrices of size $m\times m$ has dimension
$\frac{m(m+1)}2$, using the symmetry constraints.}
The matrices $A'_j$ with $j\in[1..k]$ and
$A_i$ with $i\in[1..n]$ satisfy:
\begin{equation}\label{eqFirstDual} 
A_i\sprod A'_j=0,~\forall i\in[1..n],~j\in[1..k]\text{ and }-A_i\sprod B'=c_i,~\forall i\in[1..n].
\end{equation} 
The space spanned by (the linear combinations of) $A_i$ ($\forall i \in [1..n]$)
and $A'_j$ ($\forall j\in [1..k]$) need to cover the whole space of symmetric matrices of the size
of $Y$, by virtue of the rank-nullity Theorem \ref{thranknullity} that could be applied on the
vectorized versions of these matrices.
One can confirm that any feasible $Y$ can be expressed in the
form~\eqref{eqbase}: just notice that $Y+B'$ 
belongs to the null space of 
$\{A_i:~i\in[1..n]\}$, and so, 
it can be expressed as a linear combination of the
basis $A'_1,~A'_2,\dots A'_k$.

Replacing \eqref{eqbase} in \eqref{dsdp1}-\eqref{dsdp3}, we
obtain:
\begin{align}
\max-B\sprod B'&+\sum_{j=1}^k(B\sprod A'_j)x'_j    \notag\\
s.t.~&\sum_{j=1}^k  A'_jx'_j \succeq B' \label{eqDualPrimalized}\\
    &\x'\in \R^k             \notag,
\end{align}
which is an SDP program in the primal form \eqref{sdp1}-\eqref{sdp3}.

\end{proof}

\begin{proposition} \label{propMainDualityWithNonNegatives} If we have $x_j\geq 0$ for certain variables $J\subseteq [1..n]$ of
\eqref{sdp1}-\eqref{sdp3}, we can still write an equivalent program without explicit
non-negative variables by incorporating the non-negativities in new rows and columns of \eqref{sdp2}. The dual can be written
in the form \eqref{dsdp1}-\eqref{dsdp3}, but there is an equivalent dual
in which equalities \eqref{dsdp2} become
$A_j\sprod Y \leq c_j$ for all $j\in J$.
\end{proposition}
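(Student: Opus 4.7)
My plan is to handle each of the two claims by a direct and symmetric ``extend the matrix'' construction.

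For the first claim (writing a primal SDP with explicit non-negativities in the bare form \eqref{sdp1}-\eqref{sdp3}), I would enlarge every matrix by a $|J|\times|J|$ diagonal block. Concretely, set
$$\tilde A_i=\begin{bmatrix} A_i & \zeros \\ \zeros & \zeros\end{bmatrix}\text{ for }i\notin J,\quad \tilde A_j=\begin{bmatrix} A_j & \zeros \\ \zeros & \e_j\e_j^\top\end{bmatrix}\text{ for }j\in J,\quad \tilde B=\begin{bmatrix} B & \zeros \\ \zeros & \zeros\end{bmatrix},$$
where $\e_j$ is the unit vector in the coordinate of $J$ indexing $j$. Then $\sum_i\tilde A_i x_i-\tilde B$ is block-diagonal with top block equal to $\sum_i A_i x_i-B$ and bottom block equal to $\texttt{diag}(x_j:j\in J)$. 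A block-diagonal matrix is SDP iff both blocks are SDP (this follows immediately from the definition \eqref{sdpbase} applied to $\x$ supported in one block at a time, or from the fact that principal minors appear on the diagonal by Prop.~\ref{propMinorsNonneg}). So the enlarged linear matrix inequality is exactly the conjunction of the original inequality and $x_j\ge 0$ for $j\in J$. This places the program in the shape \eqref{sdp1}-\eqref{sdp3}, and Prop.~\ref{propMainDuality} already produces a dual of the required form (DSDP), with dual variable $\tilde Y$ of the enlarged size.

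For the second claim (the dual reformulation with inequalities for $j\in J$), I would simply read off what the dual constraints $\tilde A_i\sprod \tilde Y=c_i$ say when $\tilde Y$ is decomposed as
$$\tilde Y=\begin{bmatrix} Y & Z\\ Z^\top & W\end{bmatrix}\succeq\zeros.$$
For $i\notin J$ we get $A_i\sprod Y=c_i$ unchanged, and for $j\in J$ we get $A_j\sprod Y+W_{jj}=c_j$. The dual objective is $\tilde B\sprod \tilde Y=B\sprod Y$, independent of $Z$ and $W$. Since $\tilde Y\succeq\zeros$ forces $W\succeq\zeros$ (a principal submatrix of an SDP matrix is SDP, Prop.~\ref{propMinorsNonneg}), in particular $W_{jj}\geq 0$, and therefore $A_j\sprod Y\le c_j$ for all $j\in J$; moreover $Y\succeq\zeros$ as well. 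Conversely, any $Y\succeq\zeros$ satisfying $A_i\sprod Y=c_i$ for $i\notin J$ and $A_j\sprod Y\leq c_j$ for $j\in J$ extends to a feasible $\tilde Y$ of the enlarged dual with the same objective value by taking $Z=\zeros$ and $W=\texttt{diag}\bigl(c_j-A_j\sprod Y:j\in J\bigr)$, which is SDP because it is a non-negative diagonal matrix, hence $\tilde Y$ is block-diagonal SDP.

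The only thing to be slightly careful about is that we are not missing any feasible $\tilde Y$: the block-diagonal reduction above shows that optimizing over $\tilde Y$ is equivalent to optimizing $B\sprod Y$ over the reduced feasible set, since $Z$ does not affect feasibility of the equality constraints nor the objective (and can always be set to $\zeros$ while preserving $\tilde Y\succeq\zeros$ once $Y,W\succeq\zeros$ are fixed). So the enlarged dual has the same optimal value as the smaller dual with mixed equalities and inequalities, and both are valid duals of the original primal with non-negative variables. I do not anticipate a real obstacle here; the only mildly delicate point is justifying that one may always choose $Z=\zeros$ without loss of generality, which follows immediately from the block structure of $\tilde A_i$ and $\tilde B$.
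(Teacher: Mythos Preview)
Your proposal is correct and follows essentially the same approach as the paper: both border the matrices with a $|J|\times|J|$ diagonal block so that the enlarged linear matrix inequality encodes the non-negativities, then read off the dual constraints to see that the new diagonal slack $W_{jj}\ge 0$ turns the equality for $j\in J$ into $A_j\sprod Y\le c_j$. You are slightly more explicit than the paper in spelling out the converse direction (extending a feasible $Y$ back to a feasible $\tilde Y$ via $Z=\zeros$ and diagonal $W$), which is a welcome clarification but not a different argument.
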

\begin{proof}
We define matrices $A'_i$ ($\forall i\in [1..n]$) and $B'$ by bordering 
$A_i$ and resp.~$B$ with  $|J|$ rows and columns that contain only zeros
except at the following new positions: $A'_j$ contains $1$ at a position $(m_j,m_j)$ that
correspond to the bordering row and column associated to $j\in J$. We can drop $x_j\geq 0$ but
write an equivalent program \eqref{sdp1}-\eqref{sdp3} with matrices $A'_i$ ($\forall i\in [1..n]$) and
$B'$.

The dual of this program has the form \eqref{dsdp1}-\eqref{dsdp3} and it contains constraints
of the form $A'_i\sprod Y'=c_i~\forall i\in[1..n]$. For $i\notin J$, $A'_i\sprod Y'=c_i$ is equivalent
to $A_i\sprod Y=c_i$. For $j\in J$, $A'_j\sprod Y'=c_j$ becomes $Y'_{m_j,m_j}+A_j\sprod Y=c_j$, which is
equivalent to $A_i\sprod Y\leq c_i$, because $Y'_{m_j,m_j}\geq 0 $ does not play a role elsewhere in the
program. The objective function $B'\sprod Y'$ is equivalent to $B\sprod
Y$. The initial dual with matrices $A'_i$ ($\forall i\in [1..n]$) and $B'$ can
be equivalently written with matrices $A_i$ ($\forall i\in [1..n]$) and $B$ by 
using inequality constraints $A_j\sprod Y\leq c_j~\forall j\in J$. Finally, notice
that $Y'\succeq \zeros\implies Y\succeq \zeros$ because $Y$ is a principal minor of $Y$ (use
the definition from Prop.~\ref{propMinorsNonneg}).
\end{proof}

\begin{proposition}\label{propAggregated}(the case of multiple constraints) Suppose one needs to impose multiple constraints 
in \eqref{sdp1}-\eqref{sdp3}:
\begin{equation}\label{eqBlocIndiv}\sum_{i=1}^n A_i^j x_i^j \succeq B^j\end{equation}
for $j\in[1..n']$. Notices that the involved matrices can have order 1 for some $j\in[1..n']$, \ie,
\eqref{eqBlocIndiv} for such $j$ is a linear constraint.
However, all these constraints can be incorporated in a unique constraint of the form
\eqref{sdp2} expressed with aggregated block-diagonal matrices with $n'$ blocks. An aggregated dual
can be expressed
in the canonical form~\eqref{dsdp1}-\eqref{dsdp2} using aggregated block-diagonal matrices. This
aggregated dual
is equivalent to a dual in which we have $n'$ dual matrix variables.
\end{proposition}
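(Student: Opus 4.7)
The plan is to first aggregate the primal and then carefully decompose the dual. For the aggregation, define the block-diagonal matrices $A_i = \texttt{blockdiag}(A_i^1, A_i^2, \ldots, A_i^{n'})$ and $B = \texttt{blockdiag}(B^1, B^2, \ldots, B^{n'})$. The crucial observation---which I would justify by partitioning any test vector $\x$ into blocks matching the block structure and applying Definition~\ref{def1}---is that a block-diagonal symmetric matrix is SDP if and only if each of its diagonal blocks is SDP. Consequently the single aggregated constraint $\sum_{i=1}^n A_i x_i \succeq B$ is equivalent to the family \eqref{eqBlocIndiv}, so the multi-constraint program has been rewritten in the canonical form \eqref{sdp1}--\eqref{sdp3}. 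Applying Proposition~\ref{propMainDuality} directly then gives an aggregated dual in the canonical form \eqref{dsdp1}--\eqref{dsdp3} with a single dual matrix variable $Y$ whose size equals the aggregated primal block size.

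Next I would show that this aggregated dual is equivalent to a dual with $n'$ separate matrix variables $Y^1,\ldots,Y^{n'}$, one per original constraint. Write $Y$ in block form with diagonal blocks $Y^j$ (of the size of $B^j$) and off-diagonal blocks. Because both $A_i$ and $B$ are block-diagonal, the scalar products appearing in the objective and the equality constraints collapse to $A_i \sprod Y = \sum_{j=1}^{n'} A_i^j \sprod Y^j$ and $B \sprod Y = \sum_{j=1}^{n'} B^j \sprod Y^j$: the off-diagonal blocks of $Y$ contribute nothing. Given any feasible $Y\succeq \zeros$, define $\widetilde Y$ by zeroing out all off-diagonal blocks. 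The objective and the equalities \eqref{dsdp2} are unchanged, and $\widetilde Y \succeq \zeros$ because each diagonal block $Y^j$ is a principal minor of the SDP matrix $Y$ and is therefore SDP by Proposition~\ref{propMinorsNonneg}, so the block-diagonal assembly $\widetilde Y$ is SDP by the same block-diagonal equivalence used in the primal step. Restricting to $Y = \texttt{blockdiag}(Y^1,\ldots,Y^{n'})$ thus loses no generality, and rewriting yields a program whose objective is $\sum_j B^j \sprod Y^j$, whose linear constraints read $\sum_j A_i^j \sprod Y^j = c_i$ for all $i\in[1..n]$, and whose SDP requirement becomes $Y^j \succeq \zeros$ for each $j\in[1..n']$.

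The main obstacle, or rather the main conceptual fact the argument rests on, is the block-diagonal SDP equivalence used in both directions: (i) in the primal, to argue the aggregated linear matrix inequality captures the family \eqref{eqBlocIndiv}, and (ii) in the dual, to argue that the block-diagonal restriction $\widetilde Y$ of a general feasible $Y$ is still feasible. Direction (i) is a direct application of Definition~\ref{def1} by splitting any $\x$ according to the blocks. Direction (ii) combines Proposition~\ref{propMinorsNonneg} (each $Y^j$ is a principal minor of $Y$, so is SDP) with the same block-diagonal argument applied in reverse to $\widetilde Y$. Everything else is essentially bookkeeping about how the aggregated objective and constraints decompose along the block structure.
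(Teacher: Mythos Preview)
Your proposal is correct and follows essentially the same approach as the paper: aggregate the primal via block-diagonal matrices, apply the standard duality, and then argue that the off-diagonal blocks of the dual variable $Y$ play no role while each diagonal block $Y^j$ inherits SDP-ness as a principal minor (via Proposition~\ref{propMinorsNonneg}). The paper's proof is slightly terser---it simply notes that the off-diagonal entries ``play no role'' rather than explicitly zeroing them into a $\widetilde Y$---but the logic is identical.
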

\begin{proof} We define aggregated block-diagonal matrices $A'_i$ ($\forall i\in[1..n]$) and $B'$ with 
$n'$ blocks defined by $A_i^j$ and resp.~$B^j$ for all $j\in [1..n']$:
$$B'=
\begin{bmatrix}
B^1    &  \zeros   &  \ldots    & \zeros \\
\zeros &  B^2      &  \ldots    & \zeros \\
\vdots & \vdots    &  \ddots    & \vdots \\
\zeros & \zeros    &  \ldots    &  B^{n'}
\end{bmatrix}
\text{ and }
A'_i=
\begin{bmatrix}
A_i^1  &  \zeros   &  \ldots    & \zeros \\
\zeros &A_i^2      &  \ldots    & \zeros \\
\vdots & \vdots    &  \ddots    & \vdots \\
\zeros & \zeros    &  \ldots    &  A_i^{n'}
\end{bmatrix}
\forall i\in[1..n]$$
Constraints
\eqref{eqBlocIndiv} are equivalent to a unique constraint in aggregated block-diagonal matrices
$$\sum_{i=1}^n A'_i\succeq B'.$$
We obtain an aggregated program \eqref{sdp1}-\eqref{sdp3} expressed with
block-diagonal matrices $A'_i$
($\forall i\in[1..n]$) and $B'$.

We can now construct the dual of this aggregated primal program. 
We obtain an aggregated dual program of the form \eqref{dsdp1}-\eqref{dsdp3} expressed
in aggregated variables $Y'\succeq 0$. The dual objective function is:
\begin{equation}\label{eqDualBlocks}B'\sprod Y'=\sum_{j=1}^{n'} B^j\sprod Y^j,\end{equation}
where $Y^j$ is obtained from $Y'$ by keeping only the rows and columns that 
correspond to block $B^j$ inside $B'$ ($\forall j\in[1..n']$). The dual constraints are
$A'_i\sprod Y'=c_i$ ($\forall i \in[1..n]$) and they can also be written as:
\begin{equation}\label{eqDualBlocksConstraints}\sum_{j=1}^{n'} A_i^j\sprod Y^j=c_i~\forall i\in[1..n].\end{equation}
Notice that the variables $y'_{i',j'}$ outside the $n'$ diagonal blocks
of $Y'$ play no role in the constraints or in the objective function of the dual.
Also, $Y'\succeq \zeros$ implies $Y^j\succeq \zeros~\forall j\in[1..n']$ because all $Y^j$ are
principal minors of $Y$ (use the definition from Prop.~\ref{propMinorsNonneg}). The aggregated dual 
of the form \eqref{dsdp1}-\eqref{dsdp2} in variables $Y'$ is equivalent to a dual in variables
$Y^j$ (with $j\in[1..n]$) using objective \eqref{eqDualBlocks} and constraints \eqref{eqDualBlocksConstraints}.
\end{proof}
\subsubsection{The dual of the dual is the initial program}
The remaining of Section~\ref{sec21}
is devoted to a few properties that may seem 
a bit boring and easy to trust, because they only ask 
to verify certain equivalences between the dual and the
primal forms. However, the exercise of verifying these
properties may offer a good insight into the different
ways of expressing the same SDP program and into the 
different ways of understanding its space of feasible solutions.

We now provide two results on the dual of the dual. The first one uses a new type of Lagrangian duality,
while the second one only uses the first duality from Prop.~\ref{propMainDuality}.
\begin{proposition}\label{propTwiceDualize} By dualizing twice the $(SDP)$ from 
\eqref{sdp1}-\eqref{sdp3} we obtain the initial $(SDP)$ and the following properties
hold
\begin{itemize}
\item[(a)]
If the $(DSDP)$ from \eqref{dsdp1}-\eqref{dsdp3} is unbounded, its dual $(SDP)$
is infeasible. 
\item[(b)]
If $(DSDP)$ is infeasible, $(SDP)$ can be unbounded, infeasible or non-degenerate.
\end{itemize}
\end{proposition}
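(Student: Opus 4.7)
The plan is to piggyback on Proposition~\ref{propEqPrimDualForms}, which already rewrites $(DSDP)$ in the canonical primal form of $(SDP)$, and then apply the main duality (Proposition~\ref{propMainDuality}) to this rewritten program. The result ``the dual of the dual is the primal'' then reduces to verifying that this second dualization, after the change of variables, recovers exactly the original $(SDP)$. Parts (a) and (b) will then follow for free by invoking Proposition~\ref{propMainDuality}.(a)-(b) on the rewritten $(DSDP)$-as-primal.

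Concretely, I would start from the explicit primal form of $(DSDP)$ derived in \eqref{eqDualPrimalized}, namely $\max\{-B\sprod B' + \sum_{j=1}^k (B\sprod A'_j) x'_j \mid \sum_{j=1}^k A'_j x'_j \succeq B',~\x'\in\R^k\}$, where the $A'_j$ form a basis of the null space of $\{A_i:i\in[1..n]\}$ and $Y=-B'+\sum_j A'_j x'_j$ is the substitution of \eqref{eqbase}. After flipping the sign (so that the objective becomes a minimization with cost coefficients $c'_j=-B\sprod A'_j$), Proposition~\ref{propMainDuality} yields a dual with matrix variable $Z\succeq \zeros$, constraints $A'_j\sprod Z = -B\sprod A'_j~\forall j\in[1..k]$, and objective $\max B'\sprod Z$ (modulo the constant $B\sprod B'$).

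The key verification step is to show that this dual is just a reparametrization of the original $(SDP)$. The relations \eqref{eqFirstDual} give $A'_j\sprod A_i=0$ and $-A_i\sprod B' = c_i$, which are exactly what is needed to perform the reverse substitution $Z = -B + \sum_{i=1}^n A_i x_i$. Indeed, by the orthogonality and the rank-nullity argument already invoked in Proposition~\ref{propEqPrimDualForms}, the affine subspace $\{Z:A'_j\sprod Z=-B\sprod A'_j~\forall j\}$ is precisely $\{-B+\sum_i A_i x_i:\x\in\R^n\}$, so the dual constraint $Z\succeq\zeros$ becomes the primal matrix inequality $\sum_i A_i x_i \succeq B$. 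A short computation using $B'\sprod A_i=-c_i$ then shows that $B\sprod B' + B'\sprod Z$ collapses to $\sum_i c_i x_i$ after accounting for the sign flip, so we recover exactly $(SDP)$ from \eqref{sdp1}-\eqref{sdp3}.

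For (a) and (b), once $(DSDP)$ is written as the primal \eqref{eqDualPrimalized} (with the min/max direction reversed), an unbounded $(DSDP)$ corresponds to an unbounded-from-below primal in the sense of Proposition~\ref{propMainDuality}.(a), whose dual (namely $(SDP)$) must then be infeasible; this proves (a). Similarly, (b) is a direct transcription of Proposition~\ref{propMainDuality}.(b) to this reparametrized situation, and the same three-example pattern displayed after Proposition~\ref{propMainDuality} already exhibits all three possibilities by reading those examples in the reverse direction. The only real obstacle is the bookkeeping around sign conventions and the min/max flip; the orthogonality identities \eqref{eqFirstDual} and the rank-nullity argument do all the structural work, so no new matrix-analytic ingredient is required.
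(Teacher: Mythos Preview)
Your approach is correct, but it is \emph{not} the route the paper takes for this proposition---it is instead exactly the argument the paper gives for the \emph{next} one, Proposition~\ref{propDualPrimalFormDualizedIntoSmtngEqToPrimal}. The paper explicitly separates these two: Proposition~\ref{propTwiceDualize} is proved by a \emph{direct} Lagrangian computation, relaxing the equality constraints \eqref{dsdp2} with multipliers $\x'\in\R^n$ to form $\L'(Y,\x')=B\sprod Y+\sum_i(c_i-A_i\sprod Y)x'_i$, rewriting $\max_{Y\succeq\zeros}\L'(Y,\x')$ as $\max_{Y\succeq\zeros}\left(B-\sum_i A_ix'_i\right)\sprod Y+\sum_i c_ix'_i$, and then invoking the self-duality of the SDP cone (Proposition~\ref{propSDPSelfDual}) to conclude this inner $\max$ is finite precisely when $\sum_i A_ix'_i-B\succeq\zeros$. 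No null-space basis $A'_j$, no reparametrization, no rank-nullity---just the mirror image of the derivation of $(DSDP)$ itself. This buys uniformity: the argument never needs the linear system $A_i\sprod Y=c_i$ to be solvable, whereas your route through \eqref{eqDualPrimalized} tacitly assumes it is (and Proposition~\ref{propDualPrimalFormDualizedIntoSmtngEqToPrimal} states that hypothesis explicitly).

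On (a) and (b): your reduction to Proposition~\ref{propMainDuality}.(a)-(b) is sound in spirit, but ``reading the three examples in the reverse direction'' is too quick. The paper does not simply reuse them; it builds fresh concrete instances (an LP-type pair for the unbounded case, a reference back to example~(ii) for infeasible--infeasible, and a genuinely new $3\times 3$ program exploiting Corollary~\ref{corolZeros} for the non-degenerate case). The roles of primal and dual are not symmetric enough that the earlier examples transfer without modification.
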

\noindent {\it Proof.} 
Let us calculate the Lagrangian dual of (DSDP) from \eqref{dsdp1}-\eqref{dsdp3} and verify that we 
obtain the (SDP) from
\eqref{sdp1}-\eqref{sdp3}. We relax constraints \eqref{dsdp2} using coefficients $\x'\in\R^n$:
\begin{equation}
\L'(Y,\x')=B\sprod Y + \sum_{i=1}^n (c_i-A_i\sprod Y) x'_i.
\label{eq2111}
\end{equation}

For any $Y$ that satisfies \eqref{dsdp2}, the value of above Lagrangian is $B\sprod Y$, \ie, the
objective value \eqref{dsdp1} of $Y$ in $(DSDP)$. 
If \textit{no} $Y\succeq \zeros$ satisfies \eqref{dsdp2}, we say $OPT(DSDP)=-\infty$,
adopting a similar convention 
as in linear programming or in Prop.~\ref{propMainDuality}.
We also say 
and  $OPT(DSDP)=\infty$ if \eqref{dsdp1}-\eqref{dsdp3} can be indefinitely large (\ie, we have an unbounded program).
However, in all cases, we can state:
\begin{equation}\label{eqLagrok2}
\max_{Y\succeq 0}\L'(Y,\x')\geq OPT(DSDP),~\forall \x'\in\R^n
\end{equation}
We now re-write the above Lagrangian:
$$\max_{Y\succeq 0}\L'(Y,\x') = \max_{Y\succeq 0}\left(B-\sum_{i=1}^n A_ix'_i\right)\sprod Y + \sum_{i=1}^n c_ix'_i$$
We will show that this expression can only be bounded if $B-\sum\limits_{i=1}^n A_ix'_i\preceq \zeros$.
Notice that $Y=\zeros$ leads to $\left(B-\sum\limits_{i=1}^n A_ix'_i\right)\sprod Y=0$.
We need values $\x'$ such that $\left(\sum\limits_{i=1}^n A_ix'_i-B\right)\sprod Y\geq 0~\forall Y\succeq
\zeros$. Using Prop \ref{propSDPSelfDual}, this can only hold if $\sum\limits_{i=1}^n A_ix'_i-B\succeq
\zeros$. For such $\x'$, we have 
$\max\limits_{Y\succeq 0}\L'(Y,\x') = \sum_{i=1}^n c_ix'_i$. We can write:
\begin{subequations}
\begin{align}[left ={\hspace{-5em}\displaystyle\min_{\x'\in\R^n}\max_{Y\succeq 0}\L'(Y,\x')=\empheqlbrace}]
\min~~&\sum_{i=1}^n c_ix'_i    \notag \\
s.t ~~&\sum A_i x'_i \succeq B \notag \\
    &\x'\in \R^n,            \notag 
\end{align}
\end{subequations}
which is exactly the $(SDP)$ from \eqref{sdp1}-\eqref{sdp3}. Based on \eqref{eqLagrok2}, we discover
\eqref{eqWeakDual} again:
\begin{equation}\label{eqWeakDual2}
OPT(DSDP)\leq OPT(SDP).
\end{equation}
We now address points (a) and (b) of the conclusion.
\begin{itemize}
\item[(a)]
It is clear that if $(SDP)$ is feasible,
then $(DSDP)$ can not be unbounded. This means that if
$(DSDP)$ unbounded, then $(SDP)$ is infeasible.
\item[(b)]
If $(DSDP)$ is infeasible, we can infer nothing about
$(SDP)$, \ie, $(SDP)$ could be unbounded, infeasible,
or non-degenerate. One can find examples of unbounded or
infeasible duals by generalizing the linear programming
examples. 
For instance, if
if $(DSDP)=\max\{y:y=-1,~y\geq 0\}$, then
$(SDP) = \min\{-x:~x\geq 1\}$ is unbounded.
A pair of infeasible primal-dual programs can
simply be taken from example (ii)  provided at the end of the proof of Prop~\ref{propMainDuality}.
To find an example of an infeasible (DSDP) with a feasible (SDP), it is
enough to take example (iii) at the end of the proof of Prop.~\ref{propMainDuality}
and to change the right-hand side and the objective function of the (SDP).

If $(DSDP)=\max\left\{
           \left[\begin{smallmatrix}1&0&0\\0&0&0\\0&0&0\end{smallmatrix}\right]\sprod Y:~
           \left[\begin{smallmatrix}1&0&0\\0&0&0\\0&0&0\end{smallmatrix}\right]\sprod Y=0,~
           \left[\begin{smallmatrix}0&1&0\\1&0&0\\0&0&1\end{smallmatrix}\right]\sprod Y=-1,
          \right\}$, then
we obtain
$(SDP)=
\min~\left\{-x_2:~
\left[\begin{smallmatrix}
        x_1   &        x_2 &    0    \\
        x_2   &        0   &    0      \\
        0   &        0   &    x_2    
\end{smallmatrix}\right]
\succeq 
\left[\begin{smallmatrix}1&0&0\\0&0&0\\0&0&0\end{smallmatrix}\right]
\right\}$
that has solution $x_2=0$ (apply Corollary~\ref{corolZeros} on
the fact that the middle element is zero) and $x_1\geq 1$ of objective
value zero. \hfill \qed
\end{itemize}

\begin{proposition}\label{propDualPrimalFormDualizedIntoSmtngEqToPrimal}
Assuming the dual $(DSDP)$ from \eqref{dsdp1}-\eqref{dsdp3} is feasible, 
we can write it into the primal form \eqref{eqDualPrimalized}
as described by Prop.~\ref{propEqPrimDualForms}.
If we apply the first duality from Prop.~\ref{propMainDuality} on this primal form, we 
obtain a dual that is equivalent to the primal (SDP) from \eqref{sdp1}-\eqref{sdp3}.
\end{proposition}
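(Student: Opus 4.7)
The plan is to start from the primalized form \eqref{eqDualPrimalized} of (DSDP), apply the Lagrangian duality construction from Prop.~\ref{propMainDuality} to it, and then perform a change of variables that recognises (SDP). Since \eqref{eqDualPrimalized} is a maximization in variables $\x'\in\R^k$ under constraint $\sum_{j=1}^k A'_j x'_j \succeq B'$, I would Lagrangize the matrix constraint with a multiplier $Z \succeq \zeros$: the Lagrangian is $-B\sprod B' + \sum_j(B\sprod A'_j)x'_j + Z\sprod (\sum_j A'_jx'_j - B')$, and $\max_{\x'} \L$ is finite only if the coefficient of each $x'_j$ vanishes, forcing the dual constraints $A'_j\sprod Z = -B\sprod A'_j$ for all $j\in[1..k]$, together with $Z\succeq\zeros$. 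The resulting dual is $\min_{Z\succeq \zeros}(-B\sprod B' - B'\sprod Z)$ under those constraints.

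The key structural step is then to show that the dual constraints $A'_j\sprod (Z+B)=0$ $\forall j$ are equivalent to $Z+B = \sum_{i=1}^n A_i x_i$ for some $x_i\in\R$, producing the bijection between the dual's feasible set and the feasible set of (SDP). This follows because the $A'_j$'s were chosen as a basis of the null space of the linear map $X\mapsto(A_i\sprod X)_{i=1}^n$ on symmetric matrices; by the rank-nullity Theorem~\ref{thranknullity} applied to vectorized symmetric matrices, together with the linear independence of the $A_i$'s assumed in the footnote after \eqref{eqbase}, the orthogonal complement of this null space is exactly $\mathrm{span}\{A_1,\dots,A_n\}$. Hence any $Z+B$ orthogonal to every $A'_j$ decomposes uniquely as $\sum_i A_i x_i$, and $Z\succeq\zeros$ translates directly into $\sum_i A_i x_i \succeq B$, which is \eqref{sdp2}.

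Finally, substituting $Z=\sum_i A_i x_i - B$ into the dual objective and using the identity $-A_i\sprod B' = c_i$ from \eqref{eqFirstDual} gives
\[
-B\sprod B' - B'\sprod Z = -B\sprod B' - \sum_i x_i(B'\sprod A_i) + B'\sprod B = \sum_i c_i x_i,
\]
since the two $B\sprod B'$ terms cancel and $-B'\sprod A_i = c_i$. The dual thus reduces to $\min \sum_i c_i x_i$ subject to $\sum_i A_i x_i \succeq B$, which is exactly (SDP). The main obstacle I anticipate is the middle step: correctly invoking rank-nullity on the vectorized symmetric matrix space and justifying that $\{A_i\}\cup\{A'_j\}$ spans it, so that the $Z\leftrightarrow \x$ correspondence is a genuine bijection rather than a many-to-one map; once that is in place, the clean cancellation of the $\pm B\sprod B'$ constants enabled by \eqref{eqFirstDual} makes the two programs truly equivalent, not merely related by a constant offset.
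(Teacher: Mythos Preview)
Your proof is correct and follows essentially the same route as the paper: dualize the primalized form \eqref{eqDualPrimalized} to obtain constraints $A'_j\sprod (Z+B)=0$ with $Z\succeq\zeros$, parametrize the solutions as $Z=\sum_i A_i x_i - B$ using that $\{A_i\}$ spans the orthogonal complement of $\mathrm{span}\{A'_j\}$, and simplify the objective via $-B'\sprod A_i=c_i$ from \eqref{eqFirstDual}. The only minor remark is that the linear independence of the $A_i$'s you invoke from the footnote is not actually needed---the orthogonal complement of $\mathrm{null}\{A_i\}$ equals $\mathrm{span}\{A_i\}$ regardless, so you may drop the word ``uniquely'' without affecting the equivalence of the two programs.
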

\begin{proof} The proof relies on a few arguments from the proof of Prop.~\ref{propEqPrimDualForms}.
First, recall that the feasible dual program \eqref{dsdp1}-\eqref{dsdp3} can be written in the primal form~\eqref{eqDualPrimalized}.
We re-write \eqref{eqDualPrimalized} as follows:
\begin{equation*}
\begin{aligned}
-B\sprod B' - \\
\\[1.8em]
\\
\end{aligned}
\left(
\begin{aligned}
\min&\sum_{i=1}^k(-B\sprod A'_i)x'_i    \notag\\
s.t &\sum_{i=1}^k  A'_ix'_i \succeq B' \\
    &\x'\in \R^k             \notag.
\end{aligned}
\right).
\end{equation*}
Recall that $A'_1,~A'_2,\dots A'_k$ and $B'$ arise from
solving $A_i\sprod Y=c_i$, \ie, any solution
$Y$ can be written in the form $Y=-B'+\sum_{i=1}^k  A'_ix'_i$, where
\begin{equation}
A'_1,\dots, A'_k\textnormal{ are a basis of the null space of }
A_1,\dots, A_n \textnormal{ and }- B'\sprod A_i=c_i~\forall i\in[1..n]
\tag{*}
\end{equation}
We now apply the first duality from Prop.~\ref{propMainDuality} on the above program in (SDP) form and obtain:
\begin{equation*}
\begin{aligned}
-B\sprod B' - \\
\\[0.4em]
\\
\end{aligned}
\left(
\begin{aligned}
\max~&B'\sprod Y    \\
s.t ~&A'_i\sprod Y = -B\sprod A'_i~\forall i\in[1..k]\\
    ~&Y\succeq \zeros.             
\end{aligned}
\right)
\end{equation*}
This dual could be infeasible even if the corresponding primal is feasible (we have already
presented such examples, see also Prop.~\ref{propDualCanBeInfeasible}).
However, 
the system of linear equations $A'_i\sprod Y = -B\sprod A'_i~\forall i\in[1..k]$ has at least
the feasible solution $Y=-B$. 
The existence of this solution is sufficient to place us in the non-degenerate case 
of the proof of
Prop.~\ref{propEqPrimDualForms},
which leads to re-writing the above program in the primal form.
For this, we consider the origin $-B$ and the basis $A_1,~A_2,\dots, A_n$ that is orthogonal to 
all $A'_i,~\forall i\in[1..k]$. Recall (*) that
$A'_1,~A'_2,\dots, A'_k$ are a basis for the null space
of 
$A_1,~A_2,\dots A_n$. As such,
the space 
generated by 
(the linear combinations of) $A_1,~A_2,\dots A_n$ and $A'_1,~A'_2,\dots A'_k$ cover
the whole space of symmetric matrices.
Any $Y\succeq \zeros$ can be written in the form $\sum\limits_{i=1}^n x_iA_i -B$. The above program can thus be
re-written as:
\begin{align*}
-B\sprod B'-\max~&\left(-B\sprod B' +\sum_{i=1}^n(B'\sprod A_i)x_i\right)\\
s.t.~&\sum_{i=1}^n  A_ix_i \succeq B \\
    &\x\in \R^n             \notag.
\end{align*}
Recall now $B'\sprod A_i=-c_i,~\forall i\in[1..n]$ from (*). By replacing this in above
program and simplifying $-B\sprod B'$, we obtain:
\begin{align*}
-\max~~&\left(\sum_{i=1}^n-c_ix_i\right)\\
s.t.~~&\sum_{i=1}^n  A_ix_i \succeq B \\
    &\x\in \R^n             \notag,
\end{align*}
which is exactly \eqref{sdp1}-\eqref{sdp3}.
\end{proof}
\subsubsection{\label{secPrimalToDual}From the primal form to the dual form}

The transformation from the primal form to the dual form is more difficult and it is not always
possible. 
\begin{proposition}\label{propPrimalToDual}
The (SDP) program in the primal form \eqref{sdp1}-\eqref{sdp3}
can be written in the dual form \eqref{dsdp1}-\eqref{dsdp3}, 
provided that
the matrices $A_1,~A_2,\dots A_n$ from \eqref{sdp2} are linearly independent.
\end{proposition}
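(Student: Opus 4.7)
The plan is to convert the primal $(SDP)$ to the dual form by introducing the slack matrix $S = \sum_{i=1}^n A_i x_i - B \succeq \zeros$ and eliminating the variables $\x$ in favor of $S$. The independence hypothesis on $A_1,\dots,A_n$ will enter in exactly one place, namely to guarantee the existence of a ``cost matrix'' $C$ representing the objective in terms of $S$.

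First I would characterize feasibility. The set of slacks $S$ that arise from some $\x\in\R^n$ is precisely $\{S : S+B\in\mathrm{span}\{A_1,\dots,A_n\}\}$. Let $A'_1,\dots,A'_k$ be a basis of the orthogonal complement (w.r.t.\ the Frobenius scalar product $\sprod$) of $\mathrm{span}\{A_1,\dots,A_n\}$ inside the ambient space of symmetric matrices; this is analogous to the basis used in the proof of Prop.~\ref{propEqPrimDualForms}. The condition $S+B\in\mathrm{span}\{A_1,\dots,A_n\}$ is then equivalent to the linear equalities $A'_j\sprod S = -A'_j\sprod B$ for $j\in[1..k]$.

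Next I would rewrite the objective $\sum_{i=1}^n c_i x_i$ as a linear function of $S$ alone. For this, I look for a symmetric matrix $C$ satisfying $C\sprod A_i = c_i$ for all $i\in[1..n]$. Searching $C$ inside $\mathrm{span}\{A_1,\dots,A_n\}$, say $C=\sum_i \alpha_i A_i$, the system becomes $G\alphaalpha=\c$, where $G_{ij}=A_i\sprod A_j$ is the Gram matrix of $A_1,\dots,A_n$. The linear independence hypothesis is exactly what makes $G$ non-singular (standard: $\x^\top G\x = \left|\sum_i x_i A_i\right|^2>0$ for $\x\neq\zeros$ iff $A_1,\dots,A_n$ are independent), so $\alphaalpha=G^{-1}\c$ yields the desired $C$. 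This is the only place where independence is truly needed, and is the main (small) obstacle of the proof. Once $C$ exists, for any feasible $\x$ we have $\sum_i c_i x_i = \sum_i (C\sprod A_i) x_i = C\sprod\left(\sum_i A_i x_i\right) = C\sprod(S+B) = C\sprod S + C\sprod B$.

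Finally I would assemble the equivalent dual-form program. The primal minimum equals
\begin{equation*}
C\sprod B \;+\; \min\bigl\{\,C\sprod S \;:\; A'_j\sprod S = -A'_j\sprod B~\forall j\in[1..k],\ S\succeq\zeros\bigr\},
\end{equation*}
which, by flipping the sign of the objective, is the same as
\begin{equation*}
C\sprod B \;-\; \max\bigl\{\,(-C)\sprod S \;:\; A'_j\sprod S = -A'_j\sprod B~\forall j\in[1..k],\ S\succeq\zeros\bigr\}.
\end{equation*}
Up to the harmless additive constant $C\sprod B$ and the overall sign (both absorbed exactly as in the proof of Prop.~\ref{propEqPrimDualForms}), this is a program of the form \eqref{dsdp1}-\eqref{dsdp3} with the identifications $B\leftarrow -C$, $Y\leftarrow S$, $A_j\leftarrow A'_j$ and $c_j\leftarrow -A'_j\sprod B$ for $j\in[1..k]$, which concludes the proof. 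As a sanity check, the dimension count matches the one in the footnote of Prop.~\ref{propEqPrimDualForms}: if the matrices live in $\R^{m\times m}$, then $k=\tfrac{m(m+1)}{2}-n$, so dualizing twice recovers an SDP in the primal form with $n$ scalar variables, as it should.
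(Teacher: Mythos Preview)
Your proof is correct and takes a genuinely different route from the paper's. The paper vectorizes the matrices, writes $\Y+\B=\A\x$ with $\A\in\R^{m\times n}$ of rank $n$, picks $n$ independent rows to \emph{explicitly solve for} $\x$ as a linear function of the corresponding entries $[\Y]_n$ of the slack, and then shows that the remaining $m-n$ rows become linear equality constraints on $Y$. So the paper's argument is constructive/algorithmic: it actually recovers $\x$ from $Y$, and a worked example follows.

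Your argument is more coordinate-free: you never recover $\x$, you only need that the objective be well-defined as a linear function of $S$, and you obtain this by producing a ``cost matrix'' $C$ with $C\sprod A_i=c_i$ via the Gram matrix of $A_1,\dots,A_n$ (non-singular precisely under the independence hypothesis). The feasibility characterization via the orthogonal-complement basis $A'_1,\dots,A'_k$ is exactly the mechanism the paper used in Prop.~\ref{propEqPrimDualForms}, so your approach unifies the two directions. What your route buys is elegance and a single clean place where independence is used; what the paper's route buys is an explicit recipe (one can read off the dual-form data $B',A'_i,c'_i$ directly, as in Example~\ref{exFromDualToPrimal}).
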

\begin{proof}
We write $Y=\sum_{i=1}^n  A_ix_i - B $ and notice that \eqref{sdp2} stipulates
$Y\succeq \zeros$. We need to 
show that the set of such $Y$ can be expressed as the solution set
of a system of equations
$A'_i\sprod Y = c'_i~\forall i\in[1..k]$, \ie, of the form ~\eqref{dsdp2}. 
However, the most difficult task is to 
write objective function~\eqref{sdp1} 
in the form $B'\sprod Y$ as in~\eqref{dsdp1}; for this, we need to express
$x_1,~x_2,\dots x_n$ as linear combinations of variables $Y$. We will see that
$x_1,~x_2,\dots x_n$ can  be exactly determined from $Y$ when $A_1,~A_2,\dots
A_n$ are linearly independent.

Consider the following program in the form \eqref{sdp1}-\eqref{sdp3} in which 
$A_1$ and $A_2$ are \textit{not} linearly independent, since
$A_1=A_2=
\left[\begin{smallmatrix}
        0     &        1   \\
        1     &        0   \\
\end{smallmatrix} \right]$.

\begin{align*}
\min~& 2x_1+3x_2 \\
s.t.~&
\begin{bmatrix}
        1     &     x1+x2   \\
    x_1+x_2   &        1   \\
\end{bmatrix}\succeq \zeros\\
&x_1,~x_2\in\R
\end{align*}
If we try to write $Y=
\left[\begin{smallmatrix}
        1     &     x1+x2   \\
    x_1+x_2   &        1   \\
\end{smallmatrix} \right]$, then the set of feasible symmetric matrices $Y$ is the set of solutions of the system
$Y\sprod
\left[\begin{smallmatrix}
        1     &        0   \\
        0     &        0   \\
\end{smallmatrix} \right]=1$ 
and
$Y\sprod
\left[\begin{smallmatrix}
        0     &        0   \\
        0     &        1   \\
\end{smallmatrix} \right]=1$. However, it is impossible to express the objective
function value in terms of variables $Y$. What is the objective value of 
$Y=
\left[\begin{smallmatrix}
        1     &        0   \\
        0     &        1   \\
\end{smallmatrix} \right]$ ? It can actually be any real number or even
$-\infty$, depending on the choice of $x_1$ and $x_2$ such that $0=x_1+x_2$.

We hereafter assume that $A_1,~A_2,\dots A_n$ are linearly independent and we
provide an algorithm for finding a system of equations whose solutions $Y$ can all be
written in the form $Y=\sum_{i=1}^n A_ix_i-B$. Let us work with ``vectorized''
versions of the matrices: the notation $\overline{X}$ represents a column
vector containing the diagonal and the upper triangular elements of symmetric matrix $X$.
The size of this vector is $m=\frac{q(q+1)}{2}$, where $q$ is the order of $X$. We have $Y=\sum_{i=1}^n A_ix_i-B\Longleftrightarrow\Y=\sum_{i=1}^n
\A_ix_i-\B$. Using aggregate matrix $\A\in \R^{m\times n}$ such that 
$\A=[\A_1~\A_2\dots \A_n]$, we can write:
\begin{equation}\label{eqvectorizeeq}
\Y + \B = \A\x
\end{equation}

Since $A_1,~A_2,\dots A_n$ are linearly independent, $\A$ has rank $n$; this
also ensures $m\geq n$. Without loss of generality, we can consider that the
first $n$ rows of $\A$ are linearly independent and form a non-null minor
$[\A]_n$. The equations corresponding to the first $n$ rows of
\eqref{eqvectorizeeq} can be written
$
[\Y]_n + [\B]_n = [\A]_n\x
$. As such, we can deduce $\x=[\A]_n^{-1}([\Y]_n + [\B]_n )$. This ensures that
the objective value $\sum_{i=1}^n c_ix_i$ from \eqref{sdp1} can be written as a
linear combination of the $[\Y]_n$ values, \ie, in the form $B'\sprod Y$ as in
\eqref{dsdp1}. We now replace $\x$ in \eqref{eqvectorizeeq} and obtain:
\begin{equation}\label{eqvectorized2}
\Y + \B = \A [\A]_n^{-1}([\Y]_n + [\B]_n ).
\end{equation}

The first $n$ rows of above formula are redundant. The remaining $k=m-n$ rows
actually represent a system of linear equations: notice that each element of
the left-hand $\Y$ is expressed as a linear combination of the free variables
$[\Y]_n$ (plus a fixed coefficient).
Now check than any such linear equation in variables $Y$ can be
rewritten in the form $A'_i\sprod Y = c'_i$ (for any $i\in[1..k]$).

We have just shown that any solution $\Y$ of \eqref{eqvectorizeeq}
satisfies a system of equations of the form $A'_i\sprod Y = c'_i$ (with $i\in[1..k]$).
We now prove the converse: any solution of this system can be written in
the form \eqref{eqvectorizeeq} for a certain $\x\in\R^n$; more exactly, this
$\x$ is the unique solution of $[\Y]_n = [\A]_n\x- [\B]_n $.
To show this, it is enough to see that any solution of the above system satisfies \eqref{eqvectorized2}
by construction and that,
after replacing $[\Y]_n = [\A]_n\x- [\B]_n $, \eqref{eqvectorized2} becomes
\begin{align*}
\Y + \B &= \A [\A]_n^{-1}([\Y]_n + [\B]_n )  \\
&= \A [\A]_n^{-1}([\A]_n\x- [\B]_n + [\B]_n )  \\
&= \A [\A]_n^{-1}([\A]_n\x)  \\
&= \A\x 
\end{align*}
By ``devectorizing''
$\Y,~\B,~\A_1,~\A_2,\dots \A_n$ into symmetric matrices
$Y,~B,~A_1,~A_2,\dots A_n$ (put the corresponding elements on the diagonal and
resp.~on the symmetric matrix positions), we obtain $Y=\sum_{i=1}^n A_ix_i -B$.
\end{proof}
\begin{example}\label{exFromDualToPrimal} We apply the above proof of Prop.~\ref{propPrimalToDual} on an example showing
how to rewrite a program \eqref{sdp1}-\eqref{sdp3} in the form of
\eqref{dsdp1}-\eqref{dsdp3}. Consider 
\begin{align}
\min~& x_1+2x_2+x_3+2x_4 \notag \\
s.t.~&
\begin{bmatrix}
        0     &     x_1+x_3   &    x_4  \\
    x_1+x_3   &    x_2+x_3+ 1 &    x_4 +2x_2 \\
       x_4    &       x_4+2x_2 &  x_1+x_2+2\\
\end{bmatrix}\succeq \zeros \label{eqInitialProg}\\
&x_1,~x_2,~x_3,~x_4\in\R \notag
\end{align}
We write a formula corresponding to \eqref{eqvectorizeeq} but without
``vectorization'':
\begin{equation}\label{eqxy}
\begin{bmatrix}
    y_{11}     &    y_{12}   &   y_{13}  \\
    y_{12}     &    y_{22}-1 &   y_{23}  \\
    y_{13}     &    y_{23}   &   y_{33}-2\\
\end{bmatrix}=
\begin{bmatrix}
        0     &     x_1+x_3   &    x_4  \\
    x_1+x_3   &    x_2+x_3    &    x_4 +2x_2 \\
       x_4    &       x_4+2x_2 &  x_1+x_2  \\
\end{bmatrix}
\end{equation}
As in paragraph below \eqref{eqvectorizeeq}, we will express the four variables
$x_1,~x_2,~x_3$ and $x_4$ in terms of four variables $Y$. 
We can not choose $Y_{11}$, because the corresponding row of \eqref{eqvectorizeeq}
has only zeros in the matrix $\A$. 
Let us choose:
$y_{12} = x_1 + x_3$,
$y_{22} -1 = x_2 + x_3$,
$y_{33} -2 = x_1 + x_2$ and 
$y_{13}    = x_4$. We obtain
\begin{equation}\label{eqxfromy}
x_1 = \frac{y_{12}+y_{33} - y_{22} - 1}2,~
x_2 = \frac {y_{22}+y_{33} - y_{12}- 3}2,~
x_3 = \frac {y_{12}+y_{22} -y_{33} + 1}2\text{ and }x_4=y_{13}.
\end{equation}
Our objective function can be written:
\begin{equation}\label{eqObj} 
\frac{y_{12}+y_{33} - y_{22} - 1}2+
2\frac {y_{22}+y_{33} - y_{12}- 3}2+
 \frac {y_{12}+y_{22} -y_{33} + 1}2+
2y_{13}=
y_{22}+
y_{33}+
2y_{13} - \frac 32.
\end{equation}
We now replace the values of $x_1,~x_2,~x_3,~x_4$ in the remaining equations of
\eqref{eqxy}. We have $y_{11}=0$ and $y_{23}=y_{13}+y_{22}+y_{33} - y_{12}- 3$,
or $3=  y_{13}+y_{22}+y_{33}-y_{12}-y_{23}$. Combining these two equations with
\eqref{eqObj}, we obtain the program:
\begin{align*}
\min 
      \Big[\begin{smallmatrix}
         0     &  0    &    1   \\
         0     &  1    &    0   \\
         1     &  0    &    1   \\
       \end{smallmatrix}\Big]
       &\sprod Y-\frac 32\\
      \Big[\begin{smallmatrix}
         1     &  0    &    0   \\
         0     &  0    &    0   \\
         0     &  0    &    0   \\
       \end{smallmatrix}\Big]
       &\sprod Y=0\\
      \Big[\begin{smallmatrix}
         0     &  -\frac12    &     \frac 12  \\
        -\frac 12     &  1    &    -\frac12   \\
         \frac 12    &  -\frac 12    &    1   \\
       \end{smallmatrix}\Big]
       &\sprod Y=3\\
       &Y\succeq \zeros
\end{align*}

One can check that any feasible solution $Y$ of above program can be written 
as in \eqref{eqInitialProg}. For this, it is enough to determine
variables $x_1,~x_2,~x_3,~x_4$ from $Y$ using \eqref{eqxfromy} and check that
all positions of $Y$ and the objective function 
from above program can be written using variables $\x$
which leads to
\eqref{eqInitialProg}. For instance, to check that $y_{23}$ can be written as
$x_4+2x_2$ with $x_2$ and $x_4$ determined by \eqref{eqxfromy}, we write:
$x_4+2x_2=y_{13}+{y_{22}+y_{33} - y_{12}- 3}=y_{23}$, where we used the second
constraint on $Y$ for the last equality.
\end{example}

We used several times in this section the transformation from the dual form into
the primal form. All these transformations rely on solving the system 
$A_i\sprod Y = c_i$, with $i \in [1..n]$. You might have noticed there are several ways to
solve this system. Just to give an example, consider that $y_1+y_2=10$
can be solved in two manners: (a) take $y_1=x_1$ as a free variable and all solutions
have the form $[y_1~y_2]=[x_1~\,x_1-10]$
or (b) take solution $[5~5]$ and all solutions take the form
$[5~5]+x_1[-1~1]$. We can say that the approach (a) was used in the Example~\ref{exFromDualToPrimal} above,
while the approach (b) was used in Prop~\ref{propEqPrimDualForms}. They are both related, 
because switching from (a) to (b) is equivalent to performing a change of variable in
the resulting primal, \eg, take \eqref{sdp1}-\eqref{sdp3} and use variables $x'_i=x_i+7$ instead
of $x_i~\forall i\in[1..n]$. Such a change leads to a different primal form,
in which the 
objective $\sum_{i=1}^n c_i x_i$ evolves to 
$\sum_{i=1}^n c_i x'_i-7n$.

\subsection{Relations between the primal optimum and the dual optimum}
\begin{proposition}(Complementary Slackness) If $\overline{\x}$ and $\overline{Y}$ are the optimum primal and
resp.~dual solutions of SDP and resp. DSDP (in \eqref{sdp1}-\eqref{sdp3}
and~\eqref{dsdp1}-\eqref{dsdp3} resp.), the duality gap can be written:
\begin{equation}\label{eqdualgap}
OPT(SDP)-OPT(DSDP) = \overline{Y}\sprod\left(\sum_{i=1}^n A_i\overline{x_i} -B\right)
\end{equation}
There is no strict complementarity as in linear programming, \ie, the matrices
of the above product might share an eigenvector whose eigenvalue is zero
in both matrices for any optimal $\xx$ and $\overline{Y}$.
\end{proposition}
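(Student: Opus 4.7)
My plan is to prove the equality \eqref{eqdualgap} by direct substitution: assuming $\overline{Y}$ is feasible for $(DSDP)$, the constraints \eqref{dsdp2} give $c_i = A_i \sprod \overline{Y}$ for every $i\in[1..n]$, so I can rewrite the primal objective as
\[ OPT(SDP)=\sum_{i=1}^n c_i \overline{x_i}=\sum_{i=1}^n (A_i\sprod \overline{Y})\overline{x_i} = \overline{Y}\sprod\sum_{i=1}^n A_i\overline{x_i}, \]
using the bilinearity of $\sprod$. Subtracting $OPT(DSDP)=B\sprod \overline{Y}=\overline{Y}\sprod B$ and factoring out $\overline{Y}$ yields exactly \eqref{eqdualgap}. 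So the whole computation is one line and requires nothing more than dual feasibility of $\overline{Y}$ (primal feasibility of $\overline{\x}$ is not even needed for the algebraic identity, only to ensure $\sum A_i\overline{x_i}-B\succeq \zeros$, which together with $\overline{Y}\succeq \zeros$ certifies that the gap is non-negative via Prop.~\ref{propABprod}).

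Note that at optimality, Prop.~\ref{propABprod} also gives a product-form: when the gap vanishes, $\overline{Y}\sprod(\sum A_i\overline{x_i}-B)=0$ is equivalent to $\overline{Y}(\sum A_i\overline{x_i}-B)=\zeros_{n\times n}$, which is the matrix analogue of the classical complementary slackness in LP.

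The remaining task is the negative statement: strict complementarity may fail, i.e.\ even at optimum, the SDP matrices $\overline{Y}$ and $\overline{S}:=\sum_{i=1}^n A_i\overline{x_i}-B$ can share an eigenvector whose eigenvalue is zero in both. I would construct a small explicit $2\times 2$ or $3\times 3$ example where the gap is zero but both $\overline{Y}$ and $\overline{S}$ are rank-deficient on a common direction $\v$. A natural template: pick $\overline{Y}=\operatorname{diag}(1,0)$ and arrange the primal so that the unique optimal slack is $\overline{S}=\operatorname{diag}(0,0)$; then $\v=(0,1)^\top$ satisfies $\overline{Y}\v=\zeros$ and $\overline{S}\v=\zeros$ simultaneously, in contrast to LP strict complementarity which would forbid the second equality on that coordinate.

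The main ``obstacle'' is essentially cosmetic: the equality itself is a two-line manipulation, so the interesting content is (i) emphasizing which hypothesis (dual feasibility) does the work, and (ii) supplying a clean counterexample to strict complementarity so the reader sees why the SDP situation genuinely differs from the LP case. I would keep the counterexample as compact as possible, perhaps reusing one of the small matrices already exhibited in the degeneracy discussion after Prop.~\ref{propMainDuality}.
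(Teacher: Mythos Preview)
Your derivation of the equality \eqref{eqdualgap} is correct and matches the paper's proof exactly: substitute $c_i=A_i\sprod\overline{Y}$ from dual feasibility, then factor out $\overline{Y}$.

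The gap is in the second half. The proposition asserts that strict complementarity can fail \emph{for every} optimal pair $(\overline{\x},\overline{Y})$, not merely for some pair. Your template---take $\overline{Y}=\operatorname{diag}(1,0)$ and ``arrange the primal so that the unique optimal slack is $\overline{S}=\zeros$''---does not explain how to force uniqueness on both sides simultaneously, and in $2\times 2$ this is essentially impossible. For instance, if you impose dual constraints $Y_{11}=1$, $Y_{22}=0$ (so $\overline{Y}=\operatorname{diag}(1,0)$ is unique), the corresponding primal is $\min\{x_1:\operatorname{diag}(x_1,x_2)\succeq B\}$; at optimum $x_1$ is pinned but $x_2$ is free, so the slack can have rank~$1$ in the $(2,2)$ direction and strict complementarity \emph{does} hold. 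Conversely, if you force the slack to vanish, the dual typically admits a full-rank optimum.

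The paper's construction is larger ($4\times 4$) precisely to get around this: it uses a zero diagonal entry to force an entire row and column of the slack to vanish (via Corollary~\ref{corolZeros}), and separately uses the dual constraint $2Y_{12}+Y_{33}=0$ together with the forced $Y_{12}=0$ to pin $Y_{33}=0$, killing another row and column of $\overline{Y}$. This guarantees $\operatorname{rank}(\overline{S})+\operatorname{rank}(\overline{Y})\le 1+2<4$ for \emph{all} optimal pairs, so $\e_3$ is always a common null eigenvector. Your plan would need an analogous ``clenching'' mechanism; a bare $2\times 2$ template does not supply it.
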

\begin{proof} It is enough to develop 
\begin{align*}
OPT(SDP)-OPT(DSDP) &=\sum_{i=1}^n c_i\overline{x_i} - B\sprod \overline{Y}\\
                   &=\sum_{i=1}^n \left(A_i\sprod \overline{Y}\right) \overline{x_i} - B\sprod \overline{Y}\\
                   &=\overline{Y}\sprod\left(\sum_{i=1}^n A_i\overline{x_i} -B\right)
\end{align*}
When strong duality holds (\ie, when $OPT(SDP)-OPT(DSDP)=0$), we can observe the following using the
eigen-decomposition \eqref{firstEqDecomp}:
\begin{itemize}
    \item[(a)] the eigenvectors of $\left(\sum_{i=1}^n A_i\overline{x_i} -B\right)$ with
non-zero eigenvalues belong to the space generated by the eigenvectors of $\overline{Y}$ with
eigenvalue $0$.
    \item[(b)] the eigenvectors of $\overline{Y}$ with
non-zero eigenvalues belong to the space generated by the eigenvectors of $\left(\sum_{i=1}^n
A_i\overline{x_i} -B\right)$ with eigenvalue $0$.
\end{itemize}
In intuitive terms, we can say that any eigenvector of $\left(\sum_{i=1}^n A_i\overline{x_i}
-B\right)$ with a non-zero eigenvalue can be seen as an eigenvector of $\overline{Y}$
with eigenvalue 0 and vice-versa. Notice we did {\it not} claim that 
$rank\left(\sum_{i=1}^n A_i\overline{x_i} -B\right) =nullity(\overline{Y})$. Take for instance the
matrices 
$
\left[\begin{smallmatrix}
        1     &        0     &      0      \\
        0     &        0     &      0      \\
        0     &        0     &      0      \\
\end{smallmatrix} \right]$ 
and 
$
\left[\begin{smallmatrix}
        0     &        0     &      0      \\
        0     &        0     &      0      \\
        0     &        0     &      1      \\
\end{smallmatrix} \right]$. They satisfy above conditions (a) and (b), but the first one has rank 1 and
the second one has nullity 2. However, if there exist multiple primal and
dual optimal solutions, do some of them satisfy
$rank\left(\sum_{i=1}^n A_i\overline{x_i} -B\right) =nullity(\overline{Y})$?

This is called the strict complementarity property: any
eigenvector of
$\left(\sum_{i=1}^n A_i\overline{x_i}
-B\right)$ with a zero eigenvalue is an eigenvector of $\overline{Y}$ with
non-zero eigenvalue. In linear programming, a theorem of Goldman and Tucker%
\footnote{
I first learned of this theorem from the article 
``Semidefinite programming'' by M. Overton and H. Wolkowicz,
a foreword for a special issue on SDP of
 Mathematical Programming (Volume 77, Issue 1, 1997).
}
states that there always exist primal-dual solutions $\x$ and $\y$ that are
\textit{strictly} complementary, \ie, 
every
zero of $A\x-\b$ corresponds to non-zero of $\y$.
In SDP programming, 
this strict complementarity property no longer
holds. Matrices
$ \left(\sum_{i=1}^n A_i\overline{x_i}\right)$ and $\overline{Y}$
can share an eigenvector whose eigenvalue is zero in both matrices.
Consider
the following primal-dual programs, both expressed in the form of
\eqref{sdp1}-\eqref{sdp3}; the dual was transformed as described in
Prop.~\ref{propEqPrimDualForms} (see below).

~\\

\begin{minipage}{0.5\textwidth}
\begin{align*}
\min~&  x_3\\
s.t.~&
\begin{bmatrix}
        x_1   &        x_2 &    0      &     0    \\
        x_2   &        0   &    0      &     0    \\
        0     &        0   &    x_2    &     0    \\
        0     &        0   &    0      &   x_3-2  \\
\end{bmatrix}\succeq \zeros\\
&x_1,~x_2,~x_3\in\R
\end{align*}
\end{minipage}
\begin{minipage}{0.5\textwidth}
\begin{align*}
\max~&  2y_{44}\\
s.t.~& Y=
\begin{bmatrix}
        0     &        0   &    0      &     0    \\
        0     &     a^2+\Delta^2&    0      &     a    \\
        0     &        0   &    0      &     0    \\
        0     &        a   &    0      &     1  \\
\end{bmatrix}\\
&Y\succeq 0
\end{align*}
\end{minipage}\\
~\\
~\\
The optimum primal solution $\xx$ satisfies $\overline x_2=0$ (row 2 and column 2 need to be
zero, because position (2,2) is zero) and $\overline x_3=2$. The matrix
$\sum\limits_{i=1}^n A_i\overline{x_i} -B$ is 
$
\left[\begin{smallmatrix}
\overline{x}_{11}&     0     &      0     &   0    \\
        0     &        0     &      0     &   0    \\
        0     &        0     &      0     &   0    \\
        0     &        0     &      0     &   0    \\
\end{smallmatrix} \right]$ which has at maximum rank 1.

Any feasible $Z$ satisfies $z_{11}=0$ because the coefficient 
of $x_1$ is zero in the primal objective function. This forces row 1 and column 1 of $Z$ to have only zeros. The dual
constraint corresponding to $x_2$ is $2z_{12}+z_{33}=0$; since $z_{12}=0$, we
have $z_{13}=0$. The dual constraint corresponding to $x_3$ imposes $z_4=1$.
There is no constraint on $z_{24}=a$; $z_{22}$ needs to be greater than or equal
to $a^2$ so as to have a non-negative principal minor corresponding to
rows/columns 2 and 4; we can write
$z_{22}=a^2+\Delta^2$.

On the dual side, 
any feasible $Y$ satisfies $y_{11}=0$ because the coefficient 
of $x_1$ is zero in the primal objective function. This forces row 1 and column 1 of $Y$ to have only zeros. The dual
constraint corresponding to $x_2$ is $2y_{12}+y_{33}=0$; since $y_{12}=0$, we
have $y_{13}=0$. The dual constraint corresponding to $x_3$ imposes $y_4=1$.
There is no constraint on $y_{24}=a$; $y_{22}$ needs to be greater than or equal
to $a^2$ so as to have a non-negative principal minor corresponding to
rows/columns 2 and 4; we can write
$y_{22}=a^2+\Delta^2$. The nullity of any feasible $Y$ is at least 2. Both
matrices have eigenvector 
$
\left[\begin{smallmatrix}
        0     &
        0        &
        1        &
        0        \\
\end{smallmatrix} \right]^\top$ with eigenvalue 0.
\end{proof}
\begin{proposition}(Non-zero duality gap) If $\overline{\x}$ and $\overline{Y}$ are the optimum primal and
resp.~dual solutions of SDP and resp. DSDP (in \eqref{sdp1}-\eqref{sdp3}
and~\eqref{dsdp1}-\eqref{dsdp3} resp.), the duality gap 
$
OPT(SDP)-OPT(DSDP)
$ is not necessarily 0.
\end{proposition}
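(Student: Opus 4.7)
The plan is to exhibit a single concrete primal--dual pair, both in the canonical forms \eqref{sdp1}-\eqref{sdp3} and \eqref{dsdp1}-\eqref{dsdp3}, for which both optima can be pinned down by hand and shown to differ. The main driver will be Corollary~\ref{corolZeros}: since a zero on the diagonal of an SDP matrix forces the entire corresponding row and column to vanish, it is easy to build SDPs in which only a thin sliver of the apparently $n$-dimensional feasible set is actually feasible. This rigidity is what produces the gap; there is no analogue in LP duality, and it is precisely the phenomenon already exploited at the end of the proof of Prop.~\ref{propMainDuality} to build degenerate examples.

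For the primal I propose
$$\min~x_1 \quad \text{s.t.}\quad
\begin{bmatrix} 0 & x_1 & 0 \\ x_1 & x_2 & 0 \\ 0 & 0 & 1+x_1 \end{bmatrix} \succeq \zeros,$$
i.e.\ the obvious choice of $A_1$, $A_2$ and $B$ so that $A_1 x_1+A_2 x_2-B$ equals the matrix above. The $(1,1)$ entry is identically $0$, so Corollary~\ref{corolZeros} forces the whole first row and column of the constraint matrix to be zero; in particular $x_1=0$. The residual condition is $x_2\ge 0$, which is obviously satisfiable. Hence every feasible $\x$ has $x_1=0$ and $OPT(SDP)=0$, attained e.g.\ by $x_1=x_2=0$.

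For the dual in the form \eqref{dsdp1}-\eqref{dsdp3}, the equality $A_2\sprod Y=0$ reads $y_{22}=0$, and Corollary~\ref{corolZeros} again collapses the entire second row and column of $Y$ to zero, so in particular $y_{12}=0$. The remaining equality $A_1\sprod Y=2y_{12}+y_{33}=1$ then reduces to $y_{33}=1$, and the objective $B\sprod Y$ reduces to $-y_{33}=-1$. Choosing $Y$ with $y_{33}=1$ and every other entry zero witnesses feasibility (it is rank-one SDP) and attainability, so $OPT(DSDP)=-1$, and the duality gap is exactly $OPT(SDP)-OPT(DSDP)=1>0$.

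The only obstacle is bookkeeping: writing $A_1,A_2,B$ cleanly, and double-checking that the rigidities claimed on each side really do pin down the optimum rather than merely bounding it. Both checks are immediate from Corollary~\ref{corolZeros}. No appeal to strong duality, unboundedness or infeasibility is needed: both programs are feasible, bounded, and attain their optima, yet the gap is strictly positive, which is exactly what the statement asserts.
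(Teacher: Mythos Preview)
Your proof is correct and uses exactly the same mechanism as the paper's own argument: a zero on the diagonal triggers Corollary~\ref{corolZeros}, which forces rigid values on both the primal and the dual side and creates the gap. The only difference is the concrete instance: the paper proves this proposition with a $4\times 4$ example (primal optimum $1$, dual optimum $0$), whereas your $3\times 3$ example is smaller and in fact coincides with the one the paper itself invokes later, just before Example~\ref{exempluUnu}, to illustrate the same phenomenon. Both examples are valid; yours is arguably the cleaner choice here.
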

\begin{proof}  Consider the following primal-dual programs,
where the dual is actually written in the primal form, as obtained
after applying the transformation from
Prop.~\ref{propEqPrimDualForms}.

~\\

\begin{minipage}{0.5\textwidth}
\begin{align*}
\min~&  x_3\\
s.t.~&
\begin{bmatrix}
        x_1   &        x_2 &    0      &     0    \\
        x_2   &        0   &    0      &     0    \\
        0     &        0   &    x_2    &  1-x_3   \\
        0     &        0   &  1-x_3    &   x_3    \\
\end{bmatrix}\succeq \zeros\\
&x_1,~x_2,~x_3\in\R
\end{align*}
\end{minipage}
\begin{minipage}{0.5\textwidth}
\begin{align*}
\max~& -2y_{34}\\
s.t.~& Y=
\begin{bmatrix}
        0     &        0   &    0      &     0    \\
        0     &     a^2+\Delta^2&    0      &     a    \\
        0     &        0   &    0      &     0    \\
        0     &        a   &    0      &     1  \\
\end{bmatrix}\\
&Y\succeq 0
\end{align*}
\end{minipage}\\
~\\
~\\
The optimum primal solution satisfies $\overline x_2=0$ (row 2 and column 2 need to be
zero using Corollary~\ref{corolZeros}, because position (2,2) is zero) and $\overline x_3=1$ (row 3 and column 3
need to be zero because position (3,3) is $x_2=0$).
The primal optimum solution is 1.

\noindent On the dual side, $y_{11}$ needs to be zero because the objective coefficient of
$x_1$ is zero. As such, all elements on row 1 and column 1 of $Y$ need to be
zero. The constraint corresponding to $x_2$ stipulates $2y_{12}+y_{33}=0$. Since
$y_{12}=0$, we need to have $y_{33}=0$. This means that the row 3 and column 3
contain only zeros, and so, $y_{34}=0$, \ie, the {\it dual optimum objective value is 0}.
Finally, let us fill the remaining elements of the optimal $Y$ and check its
feasibility. The dual constraint corresponding to $x_3$ imposes $y_4=1$.
There is no constraint on $y_{24}=a$; $y_{22}$ needs to be greater than or equal
to $a^2$ so as to have a non-negative principal minor corresponding to
rows/columns 2 and 4. 
\end{proof}

\begin{proposition}\label{propDualCanBeInfeasible} The dual 
DSDP~\eqref{dsdp1}-\eqref{dsdp3}
of a feasible SDP \eqref{sdp1}-\eqref{sdp3} is
not necessarily feasible.
\end{proposition}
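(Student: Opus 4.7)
The plan is to exhibit an explicit primal--dual pair witnessing the claim, exploiting exactly the ``clenching'' mechanism already highlighted in Prop.~\ref{propMainDuality} and Corollary~\ref{corolZeros}. The idea is to design the primal so that some primal variable $x_i$ enters the objective with coefficient $c_i=0$ while its matrix $A_i$ has a non-trivial diagonal entry. Then on the dual side, the equality $A_i\sprod Y=0$ forces a diagonal entry of $Y$ to be zero, which by Corollary~\ref{corolZeros} propagates to an entire row and column of zeros in $Y$, and one can arrange the other $A_j$'s so this kills an off-diagonal dual variable that a second equation $A_j\sprod Y=c_j$ requires to be nonzero.

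Concretely, I would take $n=2$ with $A_1=\bigl[\begin{smallmatrix}1&0\\0&0\end{smallmatrix}\bigr]$, $A_2=\bigl[\begin{smallmatrix}0&1\\1&0\end{smallmatrix}\bigr]$, $B=\bigl[\begin{smallmatrix}0&1\\1&0\end{smallmatrix}\bigr]$, $c_1=0$, $c_2=1$, which is the primal
\[
\min\; x_2\quad\text{s.t.}\quad
\begin{bmatrix} x_1 & x_2 \\ x_2 & 0\end{bmatrix}\succeq
\begin{bmatrix} 0 & 1 \\ 1 & 0\end{bmatrix},\qquad x_1,x_2\in\R.
\]
This is feasible: e.g.\ $x_1=0$, $x_2=1$ gives the zero matrix on the left-hand side after subtracting $B$, which is SDP (and Corollary~\ref{corolZeros} is consistent with this, since the relevant diagonal entry of $\sum A_ix_i-B$ is $0$ and the corresponding off-diagonal entry is also $0$).

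Next I would write down the dual in the canonical form \eqref{dsdp1}--\eqref{dsdp3}:
\[
\max\; 2Y_{12}\quad\text{s.t.}\quad Y_{11}=0,\quad 2Y_{12}=1,\quad Y\succeq\zeros.
\]
The first equation $A_1\sprod Y=0$ forces $Y_{11}=0$; applied to $Y\succeq\zeros$, Corollary~\ref{corolZeros} then imposes $Y_{12}=Y_{21}=0$. This contradicts the second equation $A_2\sprod Y=2Y_{12}=1$, so the dual feasible set is empty.

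The only conceivable obstacle is the possibility that a reader objects that the program is ``too degenerate'' (e.g.\ that the matrices $A_1,A_2$ are not linearly independent or that the primal could be rewritten); this is dispensed with by noting that $A_1$ and $A_2$ are clearly independent, the primal is manifestly feasible, and the dual infeasibility is derived purely from a self-contained application of Corollary~\ref{corolZeros}. Thus no further ``hard'' step is needed beyond the observation that one can construct such a counterexample.
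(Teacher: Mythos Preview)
Your proof is correct and follows essentially the same approach as the paper: construct an explicit counterexample where one primal variable has objective coefficient zero, so the corresponding dual equation forces a diagonal entry of $Y$ to vanish, then invoke Corollary~\ref{corolZeros} to zero out its row and column, contradicting another dual equation that requires a nonzero off-diagonal entry there. The only difference is that the paper uses a $3\times 3$ example (modifying the one already shown at point~(b) of Prop.~\ref{propTwiceDualize}), whereas your $2\times 2$ construction is more economical and arguably cleaner.
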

\begin{proof}
We have actually already shown an example that shows this at point
(b) in the proof of Prop~\ref{propTwiceDualize}. Let us modify a bit
this example for the sake of diversity, by adding a variable $x_3$.
Consider the following primal program that has at least the feasible solution
$x_1=x_2=x_3=0$.
\begin{align*}
\min~&  x_3-x_2\\
s.t.~&
\begin{bmatrix}
        x_1   &        x_2 &    x_3    \\
        x_2   &        0   &    0      \\
        x_3   &        0   &    x_2    \\
\end{bmatrix}\succeq \zeros\\
&x_1,~x_2,~x_3\in\R
\end{align*}
The dual of this program is infeasible. Since $x_1$ has a null objective
function coefficient, $y_{11}$ needs to be zero. This means (Corollary~\ref{corolZeros}) that all elements on
row 1 and column 1 of $Y$ need to be zero. We thus obtain $y_{13}=0$. On the
other hand, the dual constraint corresponding to $x_3$ stipulates that
$y_{13}=\frac 12$, which is a contradiction. Finally, notice $-x_2$ is not necessary
in the primal objective value.
\end{proof}

\subsection{\label{secstrongdualSDP}Strong duality}

Several results from this section (including the final proof of the strong
duality) are taken from a course of Anupam Gupta, also using arguments
from the lecture notes of L\' aszl\' o Lov\' asz.%
\footnote{As of 2017, they are available, respectively at
\url{http://www.cs.cmu.edu/afs/cs.cmu.edu/academic/class/15859-f11/www/notes/lecture12.pdf}
and
\url{http://www.ime.usp.br/~fmario/sdp/lovasz.pdf}.\setcounter{testfoot}{\value{footnote}}\label{testfootpage}}
\subsubsection{Basic facts on the cone of SDP matrices and the cone of definite positive matrices}
\begin{proposition} \label{propSDPcones}The SDP matrices of size $n\times n $ form a 
closed convex cone $S_n^+$ (see also Def.~\ref{defConeConvex}). The set of
positive \textbf{definite} matrices form an open
cone $\inter(S_n^+)$, see also the interior Definition~\ref{defConeInter}. The closure of $\inter(S_n^+)$ 
is $S_n^+$.
\end{proposition}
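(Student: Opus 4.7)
The plan is to verify the three claims separately, leaning on the equivalent SDP characterizations already established and using the Frobenius norm (from Prop.~\ref{propFrobenius}) as the topology on symmetric matrices.

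First I would dispatch the convex cone claim and closedness. Convexity and the cone property for $S_n^+$ follow in one line from the original Def.~\ref{def1}: if $A,B\succeq\zeros$ and $\alpha,\beta\geq 0$, then $\x^\top(\alpha A+\beta B)\x=\alpha\x^\top A\x+\beta\x^\top B\x\geq 0$ for every $\x$, so $\alpha A+\beta B\in S_n^+$. For closedness, take any sequence $A_k\in S_n^+$ converging (entrywise, or equivalently in Frobenius norm) to some symmetric $A$. Fix $\x\in\R^n$; the scalar map $M\mapsto\x^\top M\x$ is linear hence continuous, so $\x^\top A\x=\lim_k \x^\top A_k\x\geq 0$, giving $A\in S_n^+$.

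Next I would show that the positive definite matrices coincide with $\inter(S_n^+)$. For the inclusion ``$\subseteq$'', let $A\succ\zeros$ with smallest eigenvalue $\lambda_1>0$ and consider any symmetric perturbation $E$ with $|E|<\lambda_1$ (Frobenius norm). By Prop.~\ref{propFrobenius}, every eigenvalue of $E$ lies in $[-|E|,|E|]$, so for any unit $\x$ we have $\x^\top(A+E)\x\geq\lambda_1-|E|>0$ (using that $\x^\top E\x$ is bounded in absolute value by the maximum eigenvalue magnitude of $E$, via Lemma~\ref{lemmaMinRayleigh} applied to $-E$). Hence the whole Frobenius ball of radius $\lambda_1$ around $A$ sits inside $S_n^+$, showing $A\in\inter(S_n^+)$. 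For the reverse inclusion, suppose $A\in S_n^+$ is \emph{not} positive definite, so Prop.~\ref{propEigenSDP} supplies a unit eigenvector $\v$ with eigenvalue $0$. Then for every $\varepsilon>0$ the matrix $A-\varepsilon\v\v^\top$ satisfies $\v^\top(A-\varepsilon\v\v^\top)\v=-\varepsilon<0$, so it is not SDP, even though $A-\varepsilon\v\v^\top\to A$ as $\varepsilon\to 0$. Thus every neighbourhood of $A$ contains non-SDP matrices and $A\notin\inter(S_n^+)$. Since $\inter(S_n^+)$ is an interior set it is automatically open, and it is also a cone since $\alpha A\succ\zeros$ whenever $\alpha>0$ and $A\succ\zeros$.

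Finally, for $\closure(\inter(S_n^+))=S_n^+$, the inclusion $\closure(\inter(S_n^+))\subseteq S_n^+$ is immediate from closedness of $S_n^+$ proved in the first paragraph. The reverse inclusion uses the standard regularization trick already flagged at point (2) of Section~1.4: for any $A\in S_n^+$, the matrix $A+\varepsilon I_n$ has eigenvalues $\lambda_i+\varepsilon>0$ (by Prop.~\ref{propEigenSDP} on $A$) and is therefore positive definite, while $A+\varepsilon I_n\to A$ as $\varepsilon\to 0$. Hence $A\in\closure(\inter(S_n^+))$. The only subtle step is the perturbation bound in paragraph two---this is the main obstacle, and I would resolve it by reducing an arbitrary Frobenius-ball perturbation to an eigenvalue bound via Prop.~\ref{propFrobenius}, rather than invoking Weyl's inequality directly.
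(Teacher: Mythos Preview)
Your proof is correct and follows essentially the same route as the paper's. The only cosmetic differences are that (i) to show a non-definite $A\succeq\zeros$ is not interior you perturb by $-\varepsilon\v\v^\top$ along a null eigenvector while the paper uses the simpler direction $-\varepsilon I_n$, and (ii) for the open-ball argument around $A\succ\zeros$ you make the perturbation bound explicit via Prop.~\ref{propFrobenius}, whereas the paper argues more loosely that $\varepsilon\v^\top Y\v$ can be made ``ridiculously small'' compared to $\lambda_1$; your version is in fact a bit cleaner here.
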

\begin{proof} 
Most statements follow by applying the definitions.
First, it is easy to prove that any $X\succeq\zeros$ for which there is a non-zero 
$\v\in\R^n$ such that $\v^\top X\v=0$ does not belong to the interior of
$S_n^+$. We only need to show there is \textit{no} open ball centered at $X$ which is
completely contained in $S_n^+$ (see also the interior
Definition~\ref{defConeInter}). This follows from the fact that $S_n^+$
contains no element from the set $\{X-\varepsilon I:~\varepsilon>0\}$ because
$\v^\top (X-\varepsilon I)\v = 0 -\varepsilon |\v|^2<0$,
where $|\v|$ is the 2-norm $\sqrt{v_1^2+v_2^2+\dots+v_n^2}$ of $\v$.

On the other hand, a matrix $X\succ \zeros$ does belong to the interior $\inter(S_n^+)$ as it does
contain an open ball centered at $X$. 
It is not hard to show there exists a sufficiently small $\varepsilon>0$ such that 
$X+\varepsilon Y$ remains positive definite
for any symmetric matrix of bounded 2-norm. 
Take any $\v\in R^n$ such that $|\v|=1$.
Let $\lambda_1>0$ be the minimum eigenvalue of $X$ so that 
$\v^\top X \v\geq \lambda_1$
using Lemma~\ref{lemmaMinRayleigh}.
We now develop 
$\v^\top \left (X+\varepsilon Y\right) \v=
\v^\top X \v
+
\varepsilon
\v^\top Y \v
\geq \lambda_1+ \varepsilon \v^\top Y \v$. 
Since both $\v$
and $Y$ have bounded norm, $\v^\top Y \v$ is bounded; this 
way, there exists a sufficiently small $\varepsilon>0$ that makes
$\varepsilon \v^\top Y \v$ ridiculously small compared to
$\lambda_1$. This shows that 
$\v^\top \left (X+\varepsilon Y\right) \v>0~$ for any  $\v$ of norm 1, equivalent to $X+\varepsilon Y\succ \zeros$.


It is easy to verify that both 
$S_n^+$ and $\inter(S_n^+)$
are convex cones.
If $X\succeq \zeros$ (resp.~$X\succ \zeros$), for any $\alpha>0$ we have 
$\alpha X\succeq \zeros$ (resp.~$\alpha X\succ \zeros$) because $\v^\top (\alpha
X)\v=\alpha \v^\top X \v\geq 0$ (resp.~$>0$) for any $\v\in\R^n-\{\zeros\}$.
One can also confirm the convexity: for any $\alpha\in(0,1)$ and $X,Y\succeq
\zeros$ (resp.~$X,Y\succ\zeros$), we have that 
$Z_\alpha=\alpha X+ (1-\alpha) Y$ verifies $Z_\alpha\succeq\zeros$
(resp.~$Z_\alpha\succ \zeros$), because
$\v^\top Z_\alpha\v=\alpha\v^\top X\v+(1-\alpha)\v^\top Y\v\geq 0$ (resp.~$>0$)
for any $\v\in\R^n-\{\zeros\}$.

To prove that $S_n^+$ is closed, we need to show $S_n^+$ contains all its limit
points.  Assume the contrary: there is a sequence $\{X_i\}$ with
$X_i\succeq\zeros~\forall i\in\N^*$ such that $\lim\limits_{i\to\infty}X_i=Z\nsucceq \zeros$.
This means there exists a non-zero rank 1 matrix $V=\v\v^\top$ such that
$Z\sprod V=-a<0$. As such, $\lim\limits_{i\to\infty}X_i\sprod V=Z\sprod V=-a<0$.
The convergence definition states that for any $\varepsilon>0$ there exists an
$m\in \N$ such that $X_i\sprod V\in [-a-\varepsilon,-a+\varepsilon]$ for any $i\geq
m$. Taking an $\varepsilon<a$, we obtain that $X_m\sprod V<0$ which means
$X_m\nsucceq\zeros$, contradiction. All limit points
$\lim\limits_{i\to\infty}X_i$ have to belong to $S_n^+$.

We still need to prove that the closure of $\inter(S_n^+)$ is $S_n^+$. Since all
limit points of $S_n^+$ belong to $S_n^+$ (above paragraph), all limit points of 
$\inter(S_n^+)\subset S_n^+$ have to belong to $S_n^+$ as well. We only need to
prove that any $X\succeq\zeros$ is a limit point of a sequence $\{X_i\}$ such
that $X_i\succ\zeros~\forall i\in \N^*$. It is enough to take $X_i=X+\frac
1iI_n$ and one can check that $X_i=X+\frac 1iI_n\succ\zeros$ because  $\v^\top (X+\frac 1iI_n)\v=
\v^\top X\v +\frac 1i\v^\top I\v\geq \frac 1i|\v|^2>0
$ for all $\v\in\R^n-\{\zeros\}$.
The convergence to $X$ is easy to prove
using
$\displaystyle\lim_{i\to\infty} \frac 1iI_n=\zeros$.

\end{proof}

\subsubsection{The proof of the strong duality}

We need the following proposition.

\begin{proposition}\label{propNoDPsolution} Let $F(\x)=\sum\limits_{i=1}^n x_iA_i-B$ for any 
$\x\in\R^n$, where all matrices $B$ and $A_i$ (with $i\in[1..n]$) are symmetric.
$$F(\x)\nsucc\zeros~\forall \x\in\R^n \iff$$
\begin{equation}\label{eqhyperplane}
\exists Y\succeq\zeros,~Y\neq \zeros\text{ such that }
A_i\sprod
Y=0~\forall i\in[1..n] \textnormal{ and }
F(\x)\sprod Y = -B\sprod Y \leq 0
,~\forall \x\in\R^n.
\end{equation}
In other words, if the sub-space generated by
$A_1,~A_2,\dots A_n$ with basis $-B$ does not touch
$\inter(S_n^+)$, then this sub-space belongs
a hyperplane $\{X:~X\sprod Y=-B\sprod Y,~X\text{
symmetric}\}$. 
\end{proposition}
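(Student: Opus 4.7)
The plan is to prove the two directions separately, treating the implication $(\Rightarrow)$ as a separating hyperplane argument between an affine subspace and the open cone of positive definite matrices.

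For the easy direction $(\Leftarrow)$: Suppose there exists $Y\succeq\zeros$, $Y\neq \zeros$ with $A_i\sprod Y=0$ for all $i$ and $F(\x)\sprod Y\leq 0$ for all $\x$. If some $\x$ gave $F(\x)\succ \zeros$, then applying the eigen-decomposition~\eqref{firstEqDecomp} to $Y=\sum_j \lambda'_j \u_j\u_j^\top$ with $\lambda'_j\geq 0$ and at least one $\lambda'_j>0$, we would compute $F(\x)\sprod Y=\sum_j \lambda'_j\, \u_j^\top F(\x)\u_j>0$, a contradiction. The equality $F(\x)\sprod Y=-B\sprod Y$ is immediate from $A_i\sprod Y=0$.

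For the hard direction $(\Rightarrow)$: Let $\mathcal{A}=\{F(\x):\x\in\R^n\}$, an affine subspace of the space of symmetric matrices. By hypothesis $\mathcal{A}\cap \inter(S_n^+)=\emptyset$. Since $\inter(S_n^+)$ is open and convex (Prop.~\ref{propSDPcones}) and $\mathcal{A}$ is convex, the separating hyperplane theorem in the space of symmetric matrices yields a nonzero symmetric $Y$ and $c\in\R$ with $X\sprod Y\geq c$ for every $X\in\inter(S_n^+)$ and $F(\x)\sprod Y\leq c$ for every $\x\in\R^n$. Now I would extract the three conclusions in turn: (i) Since $F(\x+t\e_i)=F(\x)+tA_i$ and $F(\x+t\e_i)\sprod Y\leq c$ for every $t\in\R$, the coefficient $A_i\sprod Y$ must vanish; so $F(\x)\sprod Y$ is constant, equal to $-B\sprod Y$. (ii) Since $\inter(S_n^+)$ is a cone, for any $X\in\inter(S_n^+)$ and $\lambda>0$ we have $\lambda(X\sprod Y)\geq c$; letting $\lambda\to 0$ forces $c\leq 0$, and letting $\lambda\to\infty$ forces $X\sprod Y\geq 0$. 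By continuity and the fact that $\inter(S_n^+)$ is dense in $S_n^+$ (Prop.~\ref{propSDPcones}), $X\sprod Y\geq 0$ for all $X\succeq \zeros$, so by the self-duality Prop.~\ref{propSDPSelfDual} we obtain $Y\succeq \zeros$. (iii) Taking $\x=\zeros$ in $F(\x)\sprod Y\leq c\leq 0$ gives $-B\sprod Y\leq 0$.

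The main obstacle is the invocation of the separating hyperplane theorem: one needs a version that separates a convex set from a disjoint open convex set by a hyperplane passing strictly through neither, working inside the ambient vector space of symmetric matrices (so that the normal $Y$ is automatically symmetric). If this theorem is not yet available in the manuscript, I would either cite it as a standard tool of finite-dimensional convex analysis or prove it via the projection onto the closure of $\inter(S_n^+)-\mathcal{A}$ after showing this difference set is convex and does not contain $\zeros$; the projection of $\zeros$ onto its closure yields the required normal direction. Everything else is a routine unpacking of cone homogeneity, affine structure, and the already-established self-duality of $S_n^+$.
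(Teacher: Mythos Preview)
Your proposal is correct and follows essentially the same approach as the paper: both directions match, with the $(\Rightarrow)$ direction separating the affine image of $F$ from $\inter(S_n^+)$ via the hyperplane separation theorem, then extracting $c\le 0$, $Y\succeq\zeros$ (via self-duality), and $A_i\sprod Y=0$ from the cone and affine structure. Your worry about availability of the separation theorem is unnecessary here, as the paper proves it in the appendix (Theorem~\ref{thHyperGeneral}) and invokes it explicitly at this step.
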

\begin{proof}~\\
$\Longleftarrow$\\
If $F(\x)\sprod Y\leq 0$, $F(\x)$ can not be positive definite because any
$Z\succ\zeros$ verifies $Z\sprod Y>0$ for non-zero $Y\succeq\zeros$. To check this, use the eigenvalue
decomposition (\ie, \eqref{eigendecompnice} of Proposition
\ref{propEigenDecomp}) to write $Y$ as a sum of non-zero rank 1 matrices
of the form $\v\v^\top$. We have $Z\sprod \v\v^\top>0$ by 
Definition~\ref{def1}.

\noindent $\Longrightarrow$~\\
The interior $\inter(S_m^+)$ of the SDP cone 
does not intersect the image of $F$. 
We can apply the hyperplane separation Theorem~\ref{thHyperGeneral}: there exists a non-zero symmetric  
$Y\in \R^{m\times m}$ and a $c\in \R$ such that 
\begin{equation}\label{eqStrictIneq}
F(\x)\sprod Y \leq c\leq X\sprod Y
~\forall \x\in\R^n,~\forall X\in \inter(S_m^+)
\end{equation}

It is clear that we can not have $c>0$ because $X\sprod Y$ can be arbitrarily
close to $0$ by choosing $X=\varepsilon I_m$ for an arbitrarily small $\varepsilon>0$. We now
prove $X\sprod Y\geq 0 ~\forall X\in \inter(S_m^+)$. Let us assume the contrary:
$\exists X\in \inter(S_m^+)$ such that $X\sprod Y=c'<0$. 
By the cone property of $\inter(S_m^+)$, we have $tX\in \inter(S_m^+)~\forall t>0$. The value 
$(tX)\sprod Y=tc'$ can be arbitrarily low by choosing an arbitrarily large
$t$, and so, $(tX)\sprod Y$ can be easily less than $c$, contradiction.
This means that $X\sprod Y\geq 0$ for all $X\in \inter(S_m^+)$.

Based on \eqref{eqStrictIneq} and on the fact that $c\leq 0$, we obtain:
\begin{equation*}
F(\x)\sprod Y
\leq 0\leq 
X\sprod Y
~\forall \x\in\R^n,~\forall X\in \inter(S_m^+)
\end{equation*}
We prove that $Y\succeq \zeros$. For this, we show that 
$\overline{X}\sprod Y\geq 0~\forall \overline{X}\in S_m^+$.
Assume the contrary: there is some $\overline{X}\in S_m^+$ such that
$\overline{X}\sprod Y<0$. 
For any $\varepsilon>0$, we have 
$\overline{X}+\varepsilon I_m
\in
\inter(S_m^+)$. 
For a small enough $\varepsilon$, 
$(\overline{X}+\varepsilon I_m)\sprod Y$ remains strictly negative, which
contradicts
$\overline{X}+\varepsilon I_m
\in
\inter(S_m^+)$. 
We obtain $\overline{X}\sprod Y\geq 0~\forall 
\overline{X}\succeq \zeros$.
Using the fact that the SDP cone is self-dual (Prop~\ref{propSDPSelfDual}), we
obtain $Y\succeq \zeros$. We have just found an SDP matrix $Y$ such that
\begin{equation}\label{eqStrictIneqBisBis}
F(\x)\sprod Y\leq 0~\forall \x\in\R^n.
\end{equation}

We still need to show that $F(\x)$ is constant for every $\x\in\R$, which
is equivalent to $A_i\sprod Y=0~\forall i\in[1..n]$. This is not difficult.
If there is a single $\x\in\R^n$ such that $F(\x)=F(\zeros)+\Delta$ with
$\Delta \neq 0$, notice $F(t\x)$ can become arbitrarily large using an appropriate
value of $t$ (\ie, use $t\to\infty$ for $\Delta>0$ or $t\to-\infty$ otherwise),
which is impossible. We thus need to have:


$$F(\x)\sprod Y = F(\zeros)\sprod Y = -B\sprod Y\leq 0~\forall \x\in\R^n,$$
where we used \eqref{eqStrictIneqBisBis} for the last inequality.
\end{proof}

For the reader's convenience, we repeat the definitions of the SDP program~\eqref{sdp1}-\eqref{sdp3}
and resp.~of its dual~\eqref{dsdp1}-\eqref{dsdp3}:
\begin{equation}\label{sdpBis}
(SDP)~~\min\left\{\sum_{i=1}^n c_ix_i:~ \sum_{i=1}^n A_i x_i \succeq B,~\x\in \R^n\right\},
\end{equation}
and
\begin{equation}\label{dsdpBis}
(DSDP)~~
\max\left\{B\sprod Y:~A_i\sprod Y = c_i~\forall i\in[1..n],~Y\succeq \zeros\right\}.
\end{equation}

\begin{theorem}\label{thStrongDual1} If the primal $(SDP)$ from \eqref{sdpBis} is bounded and has a strictly
feasible solution (Slater's interiority condition), then the primal and the dual optimal values are the same and the dual
$(DSDP)$ from
\eqref{dsdpBis} reaches this optimum value. Recall (Prop.~\ref{propMainDuality}) 
that if $(SDP)$ is unbounded, then $(DSDP)$ is infeasible.
\end{theorem}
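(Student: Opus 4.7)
My strategy is to reduce the strong duality statement to an application of the separation result Prop~\ref{propNoDPsolution}, via an augmented block-diagonal system that encodes ``can we strictly beat the primal optimum?''. Let $p^\star = OPT(SDP)$, which is finite by boundedness together with Slater. I would introduce
\begin{equation*}
G(\x) = \begin{bmatrix} \sum_{i=1}^n x_i A_i - B & \zeros \\ \zeros^\top & p^\star - \sum_{i=1}^n c_i x_i \end{bmatrix} = \sum_{i=1}^n x_i \tilde{A}_i - \tilde{B},
\end{equation*}
with $\tilde{A}_i = \bigl[\begin{smallmatrix} A_i & \zeros \\ \zeros^\top & -c_i \end{smallmatrix}\bigr]$ and $\tilde{B} = \bigl[\begin{smallmatrix} B & \zeros \\ \zeros^\top & -p^\star \end{smallmatrix}\bigr]$. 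A block-diagonal symmetric matrix is positive definite iff both diagonal blocks are, so $G(\x) \succ \zeros$ would produce a primal-feasible $\x$ with objective value strictly below the infimum $p^\star$, which is impossible. Hence $G(\x) \nsucc \zeros$ for every $\x \in \R^n$, exactly the hypothesis of Prop~\ref{propNoDPsolution} applied to $G$.

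That proposition then yields a nonzero $\tilde{Y} = \bigl[\begin{smallmatrix} Y & \y \\ \y^\top & y_0 \end{smallmatrix}\bigr] \succeq \zeros$ with $\tilde{A}_i \sprod \tilde{Y} = 0$ for all $i$ and $-\tilde{B} \sprod \tilde{Y} \leq 0$. Reading off the block-diagonal scalar products (all off-diagonal blocks in $\tilde{A}_i$ and $\tilde{B}$ are zero, so $\y$ drops out) gives the two identities $A_i \sprod Y = c_i\, y_0$ for each $i\in[1..n]$ and $B \sprod Y \geq p^\star\, y_0$. Assuming $y_0 > 0$, I would rescale $Y^\star = Y/y_0$, which is SDP and satisfies $A_i \sprod Y^\star = c_i$, hence is dual-feasible with $B \sprod Y^\star \geq p^\star$; combined with weak duality (inequality~\eqref{eqWeakDual} from Prop~\ref{propMainDuality}) this forces $B \sprod Y^\star = p^\star$, so the dual attains the common optimum.

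The one real obstacle, and the place where Slater's interiority must enter, is ruling out the degenerate case $y_0 = 0$. I would argue by contradiction: if $y_0 = 0$, Corollary~\ref{corolZeros} applied to the SDP matrix $\tilde{Y}$ forces the last row and column (containing the zero diagonal entry $y_0$) to vanish, so $\y = \zeros$ and $\tilde{Y}$ collapses to a nonzero $Y \succeq \zeros$ satisfying $A_i \sprod Y = 0$ for all $i$ and $B \sprod Y \geq 0$. Now evaluate at the Slater point $\x^\circ$ with $F(\x^\circ) := \sum_i x^\circ_i A_i - B \succ \zeros$: linearity gives $F(\x^\circ) \sprod Y = \sum_i x^\circ_i (A_i \sprod Y) - B \sprod Y = -B \sprod Y \leq 0$. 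But $F(\x^\circ)$ is positive definite and hence invertible, so Prop~\ref{propABprod} forces $F(\x^\circ) \sprod Y > 0$ for any nonzero SDP $Y$ (a zero product would give $F(\x^\circ)\,Y = \zeros$ and then $Y = \zeros$). This contradicts $-B \sprod Y \leq 0$, ruling out $y_0 = 0$ and completing the proof.
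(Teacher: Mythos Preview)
Your proof is correct and follows essentially the same route as the paper's: augment the system with a scalar block encoding ``objective $< p^\star$'', apply Prop.~\ref{propNoDPsolution} (forward) to the resulting $G$, read off $A_i\sprod Y = c_i\,y_0$ and $B\sprod Y \geq p^\star y_0$, rule out $y_0=0$ via Slater, then rescale. The only cosmetic differences are that the paper places the scalar block in the top-left corner rather than the bottom-right, and handles the degenerate case by re-invoking the $\Longleftarrow$ direction of Prop.~\ref{propNoDPsolution} rather than your direct computation at the Slater point using Prop.~\ref{propABprod}; your explicit appeal to Corollary~\ref{corolZeros} to kill the off-diagonal block $\y$ when $y_0=0$ is in fact a detail the paper leaves implicit.
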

\begin{proof} Let $p$ be the optimal primal value. The system $\sum\limits_{i=1}^n c_ix_i<p$ and
$\sum_{i=1}^n A_i x_i \succeq B$ has no solution. We define
$$A'_i=
\begin{bmatrix}
-c_i       & \zeros_n^\top    \\
\zeros_n  &  A_i     \\
\end{bmatrix}~
\forall i\in [1..n]
\text{ and }
B'=
\begin{bmatrix}
-p       & \zeros_n^\top \\
\zeros_n & B\\
\end{bmatrix}
$$
and observe that $\sum\limits_{i=1}^n A'_i x_i - B'\nsucc \zeros$ $\forall \x\in\R^n$
(we can \textit{not} say $\sum\limits_{i=1}^n A'_i x_i - B'\nsucceq \zeros$, as
the optimal solution $\x$ can cancel the top-left term of the expression). We can thus apply 
Prop.~\ref{propNoDPsolution} (implication ``$\Longrightarrow$'') and conclude there is some non-zero $Y'\succeq \zeros$ such that
$A'_i\sprod Y'=0~\forall i\in[1..n]$ and $-B'\sprod Y'\leq 0$. Writing 
$Y'=\left[\begin{smallmatrix}
t      & \ldots \\
\vdots &    Y\\
\end{smallmatrix}\right]$, 
we obtain:
\begin{equation}\label{eqeg1} tc_i=A_i\sprod Y,~\forall i\in[1..n]\end{equation}
and 
\begin{equation}\label{eqeg2}-B\sprod Y \leq -tp.\end{equation}

We now prove $t>0$ by contradiction.
Supposing $t=0$, we obtain $A_i\sprod Y=0~\forall i\in[1..n]$ and $-B\sprod Y\leq 0$. Applying again
Prop.~\ref{propNoDPsolution} (implication ``$\Longleftarrow$''), we conclude $\sum\limits_{i=1}^n A_i x_i - B\nsucc\zeros~\forall \x\in
\R^n$, which contradicts the fact the primal~\eqref{sdpBis} is strictly feasible. We need to have $t>0$.

Taking SDP matrix $\overline{Y}=\frac 1t Y$, \eqref{eqeg1}-\eqref{eqeg2} become: $c_i=A_i\sprod
\overline{Y}~\forall i\in[1..n]$ and $B\sprod\overline{Y}\geq p$. In other words, 
$\overline{Y}$ is a feasible solution in the dual~\eqref{dsdpBis} and it has an objective value
$B\sprod\overline{Y}\geq p$. Using the weak duality \eqref{eqWeakDual}, $B\sprod\overline{Y}\leq p$,
and so, $B\sprod\overline{Y}=p$, \ie, the dual achieves the optimum primal value.
\end{proof}

\begin{theorem}\label{thStrongDual2} If the dual $(DSDP)$ from 
\eqref{dsdpBis} is bounded and has a strictly feasible solution,
then the primal and the dual
optimal values are the same and the primal $(SDP)$ from \eqref{sdpBis} reaches this optimum value.
Recall (Prop.~\ref{propTwiceDualize}) that if $(DSDP)$ is unbounded, then $(SDP)$ is infeasible.
\end{theorem}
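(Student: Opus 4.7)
The plan is to reduce this theorem to Theorem~\ref{thStrongDual1} by rewriting $(DSDP)$ in primal form and then recognizing its dual as equivalent to $(SDP)$.

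First, since $(DSDP)$ is feasible, I would invoke Prop.~\ref{propEqPrimDualForms} to put it in the form
\[
-B\sprod B' + \max\left\{\sum_{j=1}^k (B\sprod A'_j)x'_j :\ \sum_{j=1}^k A'_jx'_j \succeq B',\ \x'\in\R^k\right\},
\]
where $-B'$ is a particular solution of $A_i\sprod Y = c_i~\forall i$ and $A'_1,\dots,A'_k$ span the null space of $\{A_i\}$. Negating the objective turns this into an instance of the primal form $(SDP)$ of \eqref{sdp1}-\eqref{sdp3}. I would then verify that the two hypotheses of Theorem~\ref{thStrongDual1} transport to this new $(SDP)$: (i)~boundedness is immediate because the objective of the primal-form rewrite equals $B\sprod Y$ up to the constant $-B\sprod B'$, so finiteness of $(DSDP)$'s optimum is equivalent to finiteness of the rewrite; (ii)~for Slater's condition, a strictly feasible $Y \succ \zeros$ of $(DSDP)$ corresponds, via $Y = -B' + \sum_{j=1}^k A'_j x'_j$, to an $\x'$ with $\sum_j A'_j x'_j \succ B'$, i.e.~strict primal feasibility of the rewrite.

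Now Theorem~\ref{thStrongDual1} applied to this rewrite yields zero duality gap and attainment in its dual. The key observation is then Prop.~\ref{propDualPrimalFormDualizedIntoSmtngEqToPrimal}, which tells us that dualizing the primal-form rewrite of $(DSDP)$ produces a program equivalent to the original $(SDP)$. Translating back: the dual optimum of the rewrite equals its primal optimum, but the dual is (up to the equivalence of Prop.~\ref{propDualPrimalFormDualizedIntoSmtngEqToPrimal}) precisely $(SDP)$, and the rewrite's primal optimum is, up to the additive constant $-B\sprod B'$ and a sign, the optimum of $(DSDP)$. So $\text{OPT}(SDP) = \text{OPT}(DSDP)$ and $(SDP)$ attains this value, matching exactly what Theorem~\ref{thStrongDual1} guarantees for the ``dual'' in its statement (here playing the role of $(SDP)$).

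I expect the main obstacle to be bookkeeping: carefully tracking the additive constants, sign flips, and the correspondence between the variables $Y$ and $\x'$ through the two transformations, in particular checking that the Slater strict-interior point transfers correctly and that the objective correspondence really makes $\text{OPT}(SDP)$ the quantity attained by the dual of the rewrite (rather than something off by a constant). Once these identifications are made explicit, the theorem follows as an immediate corollary of Theorem~\ref{thStrongDual1} combined with Propositions~\ref{propEqPrimDualForms} and~\ref{propDualPrimalFormDualizedIntoSmtngEqToPrimal}; no new separation-style argument is needed.
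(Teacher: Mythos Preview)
Your proposal is correct and follows exactly the paper's route: rewrite $(DSDP)$ in primal form via Prop.~\ref{propEqPrimDualForms}, apply Theorem~\ref{thStrongDual1}, and invoke Prop.~\ref{propDualPrimalFormDualizedIntoSmtngEqToPrimal} to identify the resulting dual with $(SDP)$. The paper's proof is just a terse two-sentence version of what you wrote; your extra care with the Slater transfer and the constant/sign bookkeeping is welcome but not strictly needed beyond what those propositions already establish.
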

\begin{proof}
Apply Theorem~\ref{thStrongDual1} and Prop.~\ref{propDualPrimalFormDualizedIntoSmtngEqToPrimal}.
The main idea is to write the dual $(DSDP)$ from \eqref{dsdpBis} in the primal form (this is
possible using Prop.~\ref{propEqPrimDualForms}). Theorem~\ref{thStrongDual1} states that 
the dual of this primal form reaches the optimum solution. But the dual of this primal form is
exactly equivalent to the primal $(SDP)$ from \eqref{sdpBis} by virtue of Prop.~\ref{propDualPrimalFormDualizedIntoSmtngEqToPrimal}.
\end{proof}

\begin{theorem}\label{thStrongDual3} If both $(SDP)$ and $(DSDP)$ are strictly
feasible, then $OPT(SDP)=OPT(DSDP)$ and this value is reached by both programs.
\end{theorem}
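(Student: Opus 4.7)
The plan is to reduce this theorem to the two already-proven strong duality theorems (Theorem~\ref{thStrongDual1} and Theorem~\ref{thStrongDual2}) by first showing that strict feasibility of both programs forces both programs to be bounded.

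First I would invoke weak duality \eqref{eqWeakDual}: $OPT(DSDP)\leq OPT(SDP)$. Since $(DSDP)$ is strictly feasible, it admits at least one feasible $Y\succ\zeros$ whose objective value $B\sprod Y$ is a finite real number, so $OPT(DSDP)\geq B\sprod Y>-\infty$. Combining with weak duality, $OPT(SDP)\geq B\sprod Y>-\infty$, which shows $(SDP)$ is bounded from below (and hence bounded, being a minimization problem). A symmetric argument using any strictly feasible primal solution $\x$ gives $OPT(DSDP)\leq OPT(SDP)\leq \sum_{i=1}^n c_i x_i <\infty$, so $(DSDP)$ is bounded as well.

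Next I would apply Theorem~\ref{thStrongDual1} to the pair $((SDP),(DSDP))$: since $(SDP)$ is now known to be bounded and strictly feasible, we conclude $OPT(SDP)=OPT(DSDP)$ and that this common value is attained by $(DSDP)$. Then I would apply Theorem~\ref{thStrongDual2} in the same way: since $(DSDP)$ is bounded and strictly feasible, this common value is also attained by $(SDP)$. Put together, both programs attain the same finite optimum, which is exactly the claim.

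There is essentially no obstacle here; the only thing to be slightly careful about is the convention for degenerate programs established in the proofs of Prop.~\ref{propMainDuality} and Prop.~\ref{propTwiceDualize} (namely that $OPT(DSDP)=-\infty$ if infeasible and $OPT(SDP)=-\infty$ if unbounded), so that the two weak-duality inequalities $B\sprod Y\leq OPT(DSDP)\leq OPT(SDP)\leq \sum c_i x_i$ above are used with both endpoints finite, ruling out unboundedness in either direction before invoking Theorems~\ref{thStrongDual1} and~\ref{thStrongDual2}.
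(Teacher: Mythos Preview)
Your proof is correct and follows essentially the same approach as the paper: show both programs are bounded, then invoke Theorems~\ref{thStrongDual1} and~\ref{thStrongDual2}. The only cosmetic difference is that you derive boundedness directly from weak duality~\eqref{eqWeakDual}, whereas the paper cites Prop.~\ref{propMainDuality}(a) and Prop.~\ref{propTwiceDualize}(a), which are precisely the contrapositives of the same weak-duality argument you spell out.
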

\begin{proof}
Using Prop.~\ref{propMainDuality}, if $(DSDP)$ has a feasible solution, then
$(SDP)$ is bounded.
Using Prop.~\ref{propTwiceDualize}, if $(SDP)$ has a feasible solution,
$(DSDP)$ is bounded. We can now apply Theorems~\ref{thStrongDual1} and
\ref{thStrongDual2} to obtain the desired result.
\end{proof}

\subsubsection{Further properties on the intersection of the SDP cone with a sub-space}
\begin{proposition}
Let $F(\x)=\sum\limits_{i=1}^n x_iA_i-B$. If the image of $F$ (\ie, the space spanned
by $A_i$ with $i\in[1..n]$ and basis $-B$) does not intersect the SDP cone $S_m^+$, then
there exists
$ Y\succeq\zeros \text{ such that }F(\x)\sprod Y = -B\sprod Y < 0~\forall \x\in \R^n$.
\end{proposition}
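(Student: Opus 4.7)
Plan: This statement is a strict-inequality strengthening of Prop~\ref{propNoDPsolution}; the hypothesis is strengthened from ``image of $F$ misses $\inter(S_m^+)$'' to ``image of $F$ misses all of $S_m^+$'', and the conclusion is correspondingly strengthened from $-B\sprod Y \leq 0$ to $-B\sprod Y < 0$. I would begin by directly invoking Prop~\ref{propNoDPsolution} — since $\text{im}(F) \cap S_m^+ = \emptyset$ certainly implies $\text{im}(F) \cap \inter(S_m^+) = \emptyset$ — to extract a non-zero $Y\succeq \zeros$ with $A_i \sprod Y = 0$ for all $i\in[1..n]$ (so $F(\x)\sprod Y = -B\sprod Y$ is constant in $\x$) and $-B\sprod Y\leq 0$. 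Alternatively, one re-runs the separation argument of that proof: applying the hyperplane separation Theorem~\ref{thHyperGeneral} to the closed affine set $\text{im}(F)$ and the closed convex cone $S_m^+$ yields a non-zero symmetric $Y$ and $c\in\R$ with $F(\x)\sprod Y \leq c\leq X\sprod Y$; the affine structure of $\text{im}(F)$ forces $F(\x)\sprod Y$ to be constant, and the cone property combined with self-duality (Prop~\ref{propSDPSelfDual}) forces $Y\succeq \zeros$.

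The entire difficulty is therefore to promote the non-strict bound $-B\sprod Y\leq 0$ to the strict bound $-B\sprod Y<0$. I would argue by contradiction. Suppose that every $Y$ extracted by the above procedure satisfies $-B\sprod Y=0$; then the supporting hyperplane $H=\{X: X\sprod Y=0\}$ contains $\text{im}(F)$ and touches $S_m^+$ exactly along the exposed face $S_m^+\cap H=\{X\succeq \zeros: XY=\zeros\}$ (by Prop~\ref{propABprod}). Diagonalizing $Y=U\Lambda U^\top$ and writing each symmetric matrix in the block form aligned with the splitting $\ker Y \oplus \text{im}(Y)$, the constraint $F(\x)\sprod Y=0$ kills the $\lambda$-weighted trace of the $\text{im}(Y)$-block of $F(\x)$; whenever such an $F(\x)$ were SDP, Corollary~\ref{corolZeros} would then kill the entire $\text{im}(Y)$-block of $F(\x)$, reducing the SDP-membership question for $F(\x)$ to whether the projected affine subspace $\{V_1^\top F(\x) V_1:\x\in\R^n\}$ (with $V_1$ an orthonormal basis of $\ker Y$) touches $S_{\dim\ker Y}^+$.

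The main obstacle is making this descent rigorous and ensuring termination, and this is where I expect the proof to be delicate. A natural route is induction on the order $m$ of the matrices: the base case $m=1$ is trivial, since $S_1^+=\R_{\geq 0}$ and strict separation of an affine line from a half-line is elementary. At the inductive step, either the reduced affine subspace is still disjoint from the smaller SDP cone $S_{\dim\ker Y}^+$ — in which case the inductive hypothesis yields a strict separator there, and one pulls it back to a strict separator $Y'$ of the original problem by embedding it in the $\ker Y$-block — or else the reduced subspace does meet $S_{\dim\ker Y}^+$, in which case assembling an $F(\x)$ with vanishing $\text{im}(Y)$-block and an SDP $\ker Y$-block produces an SDP member of $\text{im}(F)$, contradicting the original disjointness hypothesis. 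The subtle point to verify carefully is that the off-diagonal block of $F(\x)$ between $\ker Y$ and $\text{im}(Y)$ is forced to zero at such an $\x$, which will follow from the SDP constraint once the $\text{im}(Y)$-diagonal block vanishes; this closes the contradiction and yields the desired strict inequality.
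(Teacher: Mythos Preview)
Your inductive descent has a real gap in the second branch of the dichotomy. If the projected affine set $\{V_1^\top F(\x) V_1:\x\in\R^n\}$ meets the smaller SDP cone at some $\x_0$, you want to conclude $F(\x_0)\succeq 0$ and contradict the hypothesis. But in the block decomposition aligned with $\ker Y\oplus(\ker Y)^\perp$, the single constraint $F(\x_0)\sprod Y=0$ only says $\texttt{trace}\bigl(\Lambda_+\, V_2^\top F(\x_0) V_2\bigr)=0$; it does not force the $V_2^\top F(\x_0) V_2$ block to vanish, and it says nothing at all about the off-diagonal block $V_1^\top F(\x_0) V_2$. Your proposed fix --- ``the SDP constraint forces the off-diagonal block to zero once the $(\ker Y)^\perp$-diagonal block vanishes'' --- is circular: $F(\x_0)\succeq 0$ is precisely what you are trying to establish, so you cannot invoke it to kill those blocks.

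For comparison, the paper's argument is a one-line perturbation: replace $F$ by $F_\varepsilon(\x)=F(\x)+\varepsilon I_m$, assert that for small $\varepsilon>0$ the image of $F_\varepsilon$ still misses $S_m^+$, apply Prop.~\ref{propNoDPsolution} to $F_\varepsilon$, and deduce $-B\sprod Y\leq -\varepsilon\,\texttt{trace}(Y)<0$. This is far slicker than your induction --- but the perturbation assertion is itself unjustified, and in fact the proposition as stated is \emph{false}. Take $m=2$, $n=1$, $F(x)=\left[\begin{smallmatrix} x&1\\1&0\end{smallmatrix}\right]$ (so $A_1=\left[\begin{smallmatrix}1&0\\0&0\end{smallmatrix}\right]$, $-B=\left[\begin{smallmatrix}0&1\\1&0\end{smallmatrix}\right]$). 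The determinant is $-1$, so the image never meets $S_2^+$; yet any $Y\succeq 0$ with $A_1\sprod Y=Y_{11}=0$ is forced by Corollary~\ref{corolZeros} to have $Y_{12}=0$, whence $-B\sprod Y=2Y_{12}=0$, never strictly negative. In this same example $F_\varepsilon(x)=\left[\begin{smallmatrix} x+\varepsilon&1\\1&\varepsilon\end{smallmatrix}\right]\succeq 0$ whenever $x\geq 1/\varepsilon-\varepsilon$, so no $\varepsilon>0$ keeps the perturbed image disjoint from $S_2^+$. Both your gap and the paper's are therefore essential, not merely technical: an affine subspace can be disjoint from a closed convex cone while still approaching it at infinity, and no strict separator need exist.
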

\begin{proof}~\\
\noindent $\Longleftarrow$\\
We know that $Y\succeq \zeros$ 
satisfies 
$F(\x)\sprod Y<0$
for any $\x\in\R^n$. Since all $X\succeq \zeros$ verify $X\sprod Y\geq 0$, $F(\x)$ can
not belong to the SDP cone.

\noindent $\Longrightarrow$\\
There exists a sufficiently small $\varepsilon$ such that (the image of)
$F_\varepsilon(\x)=\sum_{i=1}^n x_iA_i-B+\varepsilon I_m$ still does not
intersect the SDP cone. This means that this image does not intersect
$\inter(S^+_m)$ either and
we can apply~Prop~\ref{propNoDPsolution}
on $F_\varepsilon$. There exists a non-zero $Y\succeq \zeros$ such that
$F_\varepsilon(\x)\sprod Y = \left(-B+\varepsilon I_m\right)\sprod Y \leq 0~\forall \x\in \R^n$. It is now enough to
check that for any $\x\in\R^n$ the following value is constant:
$F(\x) \sprod Y=F_\varepsilon(\x)\sprod Y - \varepsilon I_m \sprod Y<0$;
we used
$I_m \sprod Y>0$ which follows from $\texttt{trace}(Y)>0$ (or apply Prop.~\ref{propABprod}).
\end{proof}

\begin{proposition}
Let $F(\x)=\sum\limits_{i=1}^n x_iA_i-B$
such that $\exists \x_0\in\R^n$ such that $F(\x_0)=\zeros$.
If the image of $F$  intersects the SDP cone $S_m^+$
only in the origin $\zeros$, 
then
there exists
$Y\succ\zeros \text{ such that }F(\x)\sprod Y = 0~\forall \x\in \R^n$.
\end{proposition}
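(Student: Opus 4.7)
The plan is first to simplify the geometric picture. Since $F(\x_0)=\zeros$ we have $B=\sum_i x_{0,i}A_i$, so $B\in L:=\mathrm{span}(A_1,\dots,A_n)$ and the image of $F$ coincides with the linear subspace $L$ of the symmetric matrices. The hypothesis becomes $L\cap S_m^+=\{\zeros\}$. On the other hand, the conclusion $F(\x)\sprod Y=0~\forall \x\in\R^n$ is equivalent to $A_i\sprod Y=0~\forall i\in[1..n]$ and $B\sprod Y=0$; since $B\in L$, this is exactly the statement that $Y$ lies in the orthogonal complement $L^{\perp}$ (with respect to the trace inner product on symmetric matrices). So it suffices to produce a $Y\in L^{\perp}\cap \inter(S_m^+)$, i.e.~to show
\[ L^{\perp}\cap \inter(S_m^+)\neq \emptyset.\]

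I would proceed by contradiction, closely mirroring the separation argument in Prop.~\ref{propNoDPsolution}. Suppose $L^{\perp}\cap \inter(S_m^+)=\emptyset$. Then $L^{\perp}$ (a linear subspace, hence convex) and $\inter(S_m^+)$ (a non-empty open convex set, by Prop.~\ref{propSDPcones}) are disjoint, so the hyperplane separation Theorem~\ref{thHyperGeneral} yields a non-zero symmetric $Z$ and $c\in\R$ with
\[ Y\sprod Z\leq c\leq X\sprod Z\quad \forall Y\in L^{\perp},~\forall X\in \inter(S_m^+). \]
Because $L^{\perp}$ is a linear subspace, $Y\sprod Z$ would otherwise be unbounded on $L^{\perp}$; this forces $Y\sprod Z=0~\forall Y\in L^{\perp}$ and therefore $Z\in (L^{\perp})^{\perp}=L$. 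Since $\zeros\in L^{\perp}$ we get $c\geq 0$, while $\zeros$ lies in the closure of $\inter(S_m^+)$ (Prop.~\ref{propSDPcones}) so $c\leq 0$; hence $c=0$.

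The last step extracts the contradiction. From $X\sprod Z\geq 0~\forall X\in \inter(S_m^+)$ and the closure statement of Prop.~\ref{propSDPcones}, we obtain $X\sprod Z\geq 0$ for all $X\succeq \zeros$; the self-duality of the SDP cone (Prop.~\ref{propSDPSelfDual}) then gives $Z\succeq \zeros$. Combined with $Z\in L$, the hypothesis $L\cap S_m^+=\{\zeros\}$ forces $Z=\zeros$, contradicting that $Z$ is non-zero. This proves $L^{\perp}\cap \inter(S_m^+)\neq\emptyset$; any $Y$ in this intersection is positive definite and satisfies $F(\x)\sprod Y=0~\forall \x\in\R^n$ as noted in the first paragraph.

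The main obstacle is using the separation argument carefully: one must exploit the linear (not merely convex) structure of $L^{\perp}$ to upgrade the one-sided inequality $Y\sprod Z\leq c$ into the equality $Y\sprod Z=0$, which is exactly what produces $Z\in L$; and one must pin down $c=0$ by pairing this with the fact that $\zeros$ belongs to the closure of $\inter(S_m^+)$. Once these two observations are in place, the remainder is routine and reuses self-duality and Prop.~\ref{propSDPcones} in essentially the same way as the earlier two propositions of this subsection.
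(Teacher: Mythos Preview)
Your proof is correct but takes a genuinely different route from the paper. The paper works directly: it introduces the compact slice $S^*_m=\{X\succeq\zeros:\texttt{trace}(X)=1\}$, proves (via Bolzano--Weierstrass, exploiting the boundedness of $S^*_m$) that the Minkowski difference $S^*_m-\texttt{img}(F)$ is closed, and then applies the \emph{strict} separation Theorem~\ref{thSimpleSep} to separate this closed set from~$\zeros$. The resulting $Y$ satisfies $X^a\sprod Y>0$ for all $X^a\in S^*_m$, which immediately gives $Y\succ\zeros$, and a scaling argument forces $F(\x)\sprod Y$ to be constant, hence zero.

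You instead dualize the picture: reformulate the conclusion as $L^{\perp}\cap\inter(S_m^+)\neq\emptyset$ and argue by contradiction, essentially replaying Prop.~\ref{propNoDPsolution} with $L^{\perp}$ in place of the image of $F$. The separating $Z$ is then forced into $L\cap S_m^+=\{\zeros\}$, a contradiction. What this buys you is that you avoid the closedness argument entirely (no Bolzano--Weierstrass, no compact slice $S^*_m$) and reuse the non-strict separation machinery already established. What the paper's route buys is a more constructive feel: the separating $Y$ is produced directly rather than via contradiction, and the strictness $Y\succ\zeros$ falls out of the strict inequality in Theorem~\ref{thSimpleSep} rather than from the definition of~$\inter(S_m^+)$.
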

\begin{proof}~\\
\noindent $\Longleftarrow$\\
We know that $Y\succ\zeros$ 
satisfies $F(\x)\sprod Y=0$
for any $\x\in\R^n$.
It is not hard to prove that all non-zero $X\in S^+_m$ satisfy $X\sprod Y>0$,
because Prop.~\ref{propABprod} states that $X\sprod Y=0 \implies
XY=\zeros$, which leads to $X=\zeros Y^{-1}=\zeros$ (because $Y\succ\zeros$ is non-singular).
Thus $F(\x)$ can
not cover any non-zero SDP matrix.

\noindent $\Longrightarrow$\\
Let $S^*_m=\{X\in S^+_m:~\texttt{trace}(X)=1\}$. It is not hard
to check that $S^*_m$ is convex, closed and bounded. 
The image $\texttt{img}(F)$ of $F$ is a closed convex set.

We show that 
$S^*_m-\texttt{img}(F)=\{X^a-X^b:~X^a\in S^*_m,~X^b\in
\texttt{img}(F)\}$ is closed. We take any convergent sequence 
$\{X^a_i-X^b_i\}$ and we will show the limit point belongs to
$S^*_m-\texttt{img}(F)$. 
It is not hard to see that any $X^a\in S^*_m$ is bounded in the sense that it satisfies
$X^a_{ij}\leq 1~\forall i,j\in[1..m]$, because the 2$\times 2$ minor corresponding to 
rows/columns $i$ and $j$ has to be non-negative.
The sequence $\{X^b_i\}$ (with $i\to \infty$) needs to be bounded, because otherwise
$\{X^a_i-X^b_i\}$ would be unbounded, and so, non-convergent. Using
the Bolzano-Weierstrass theorem (Theorem~\ref{thBolzanoWeierstrass}), the
bounded sequence $\{X^a_i\}$ has a convergent sub-sequence 
$\{X^a_{n_i}\}$. Using the Bolzano-Weierstrass theorem again,
the sub-sequence $\{X^b_{n_i}\}$ contains a convergent sub-sub-sequence 
$\{X^b_{m_i}\}$, with $\{m_i\}\subseteq \{n_i\}$. Since $S^*_m$ and $\texttt{img}(F)$ are closed,
$\lim\limits_{i\to \infty}X^a_{m_i}=X^a\in S^*_m$ and
$\lim\limits_{i\to \infty}X^b_{m_i}=X^b\in\texttt{img}(F)$, and so,
$\lim\limits_{i\to \infty}X^a_{m_i}-X^b_{m_i}=X^a-X^b\in
S^*_m-\texttt{img}(F)$, \ie, $S^*_m-\texttt{img}(F)$
contains all its limit points.

Since $\zeros\notin S^*_m-\texttt{img}(F)$, the simple separation
Theorem~\ref{thSimpleSep} states there is an $Y$ such that $(X^a-X^b)\sprod
Y>0,~\forall X^a\in S^*_m,~X^b\in \texttt{img}(F)$. This is
equivalent to 
\begin{equation}\label{eqtmp}
X^a\sprod Y>X^b\sprod Y,~\forall X^a\in S^*_m,~X^b\in
\texttt{img}(F).
\end{equation}
Since $\zeros\in\texttt{img}(F)$, we have $X^a\sprod Y>0~\forall X^a\in
S^*_m$. This is equivalent to 
$X^a\sprod Y>0~\forall X\in
{S^+_m}\setminus\{\zeros\}$, and so, $Y\sprod (\v\v^\top)>0~\forall \v\in\R^n-\{\zeros\}$, \ie, 
$Y\succ\zeros$.

We still need to show that $F(\x)\sprod Y=0~\forall \x\in\R^n$. Assume there
exists $\x\in\R^n$ such that $F(\x)\sprod Y=\Delta \neq 0$. 
Recall from hypothesis that there exists $\x_0\in\R^n$ such that $F(\x_0)=\zeros$.
We can write $\x=\x_0+\x_1$ and we obtain $F(\x_0+\x_1)=d$.
For any $\alpha\in\R$, we
have $F(\x_0+\alpha\x_1)\sprod Y=\alpha \Delta$, and so, $F(\x_0+\alpha\x_1)\sprod Y$ can be arbitrarily large,
violating \eqref{eqtmp}.
\end{proof}

\subsection{The difficulty of exactly solving (SDP) and algorithmic comments}

We first notice that $\max\left\{y_{12}:~
    \left[\begin{smallmatrix}
        1 &  y_{12}\\
      y_{21}& 2
    \end{smallmatrix}\right]\succeq \zeros\right\}$ is $\sqrt{2}$. This means it is
rather unlikely that a purely numerical algorithm based on binary encodings can exactly optimize any SDP program.
It is however possible to find the optimum value of any SDP program
up to any specified additive error (precision)
using
ellipsoid, interior point, spectral or conic bundle methods.

We now show that exactly solving SDP is at least as hard as the square-root sum problem
whose exact complexity is still an open problem. Consider the following feasibility program in variables
$x_2,~x_3,\dots x_n$.%
\footnote{See the article ``Semidefinite programming and combinatorial optimization'' by 
Michel Goemans, in
the 
\textit{International Congress of Mathematicians}, Volume III, Documenta Mathematica, Extra Vol.~ICM III, 1998, 657-666, 
page 2. The article is available on-line as of 2017 at 
\url{https://www.math.uni-bielefeld.de/documenta/xvol-icm/17/Goemans.MAN.ps.gz}.
}
$$
\begin{bmatrix}
1                   & k-(x_2+x_3+\dots +x_n)&\\
k-(x_2+x_3+\dots + x_n)&  a_1                   \\
                    &                     & 1 & x_2 \\
                    &                     & x_2 & a_2 \\
                    &                     &     &     & 1  &  x_3  \\
                    &                     &     &     & x_3  &  a_3  \\
                    &                     &     &     &      &      & \ddots \\
                    &                     &     &     &      &      &         &  1   &   x_n  \\
                    &                     &     &     &      &      &         &  x_n &   a_n \\
\end{bmatrix}
\succeq
\zeros.$$
This program is feasible if and only if $k\leq \sum_{i=1}^n \sqrt {a_i}$. This is the 
square-root sum problem: given $k$ and $a_1,~a_2,\dots a_n$, decide if $k\leq 
\sum_{i=1}^n \sqrt{a_i}$. It is still an open question if this problem is polynomial or not,
using at least the on-line forum \url{http://www.openproblemgarden.org/op/complexity_of_square_root_sum}. 
However, the same link reads that the SDP optimum can be determined (approximated) with 
any specified additive accuracy (error) $\varepsilon$ 
 using the interior point method or the ellipsoid algorithm in time polynomial in the size of the instance and $\log 1/\varepsilon$.

To show the difficulty of exactly solving SDP programs, we consider the following minimization problem.
\begin{align*}
\min &~x   \\
& 
\begin{bmatrix}
x  &  1   &   3  \\
1  & x+2  &  0   \\
3  &  0   & x+1
\end{bmatrix}\succeq\zeros
\end{align*}
The determinant of the whole matrix is $f(x)=x^3+3x^2-8x-19$. 
We first present an intuitive figure (graph)
\begin{wrapfigure}{r}{0.43\textwidth}
\vspace{-1.7em}
  \begin{center}
    \includegraphics[width=0.42\textwidth]{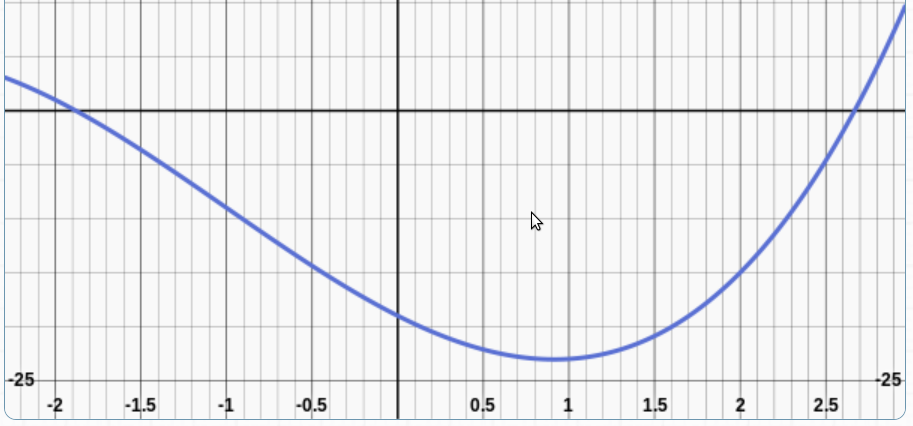}
  \end{center}
\begin{tikzpicture}[overlay]
\node at (6.7,3)     {\scalebox{0.95}{$x^*$}};
\node at (3.25,3.02) {\scalebox{0.75}{$0$  }};
\node at (3.35,1.69) {\scalebox{0.75}{$-19$}};
\end{tikzpicture}
\vspace{-2.7em}
\caption{The graph of $f(x)=x^3+3x^2-8x-19$}
\vspace{-0.2em}
\end{wrapfigure}
of $f$ on the right%
\footnote{Obtained using 
\url{http://derivative-calculator.net}.} 
 and then we will
formally determine the values of $x\geq 0$ such that $f(x)\geq 0$.
We are not interested in values $x<0$ because the leading principal minor
of size 1 would be $x<0$. The optimum of above program is clearly greater than
0.

By calculating the second derivative 
$f''(x)=6x+6$, it is clear that $f$ is strictly convex over $[0,\infty)$.
Since $f'(0)=-8$, the function 
first decreases when going from $0$ to $\infty$
and, after a point, it increases.
This means $f$ has a unique root $x^*\in[0,\infty)$ and the optimum of above program is
at least $x^*$. Before determining this root, notice the figure intuitively shows that
$x^*>2.6$. We can also formally check that $f(2.6)=-1.944$, and, by convexity, we
surely have $x^*>2.6$. 
Other principal minors do not pose any problem because they are surely non-negative for
any $x^*>2.6$. 
The most problematic one is the one associated to
rows and columns 1 and 3. Since $2.6*3.6=9.36>9$, this principal minor is
positive for $x=2.6$ and it remains so by increasing $x$.

This means that the optimum value of above program is equal to the largest root
$x^*$ of the cubic equation $f(x)=0$.  To determine this root, let us write
$f(x)=(x+1)^3-11(x+1)-9$. Using Cardano's formula,%
\footnote{See the wikipedia article 
\url{https://en.wikipedia.org/wiki/Cubic_function}.}                                    
we obtain:
$$x^*=
\sqrt[3]{\frac 92+\sqrt{\frac{3137}{108}}i}
+
\sqrt[3]{\frac 92-\sqrt{\frac{3137}{108}}i}
-1 
~\approx~
2.6679286.
$$
It is highly unlikely that an SDP algorithm could exactly determine such values in
polynomial time,
solving cubic equations like $f(x)=0$ or possibly other
higher degree equations.
 On the other
hand, the numerical methods mentioned above can determine the optimum value up
to any specified additive error.

\section{Interesting SDP programs}
\subsection{An SDP program does not always reach its min (inf) or max (sup) value}
The value
$$\min\left\{t:~
\begin{bmatrix}
t & 1 \\
1 &t' 
\end{bmatrix}
\succeq \zeros \right\}$$
is zero but {\it no} feasible solution reaches this optimum objective value of zero. However, we could use ``$\min$'' to actually mean ``$\inf$''. Since the program is bounded and strictly feasible 
($\eg$, for $t=t'=2$, we have $\left[\begin{smallmatrix}2&1\\1&2\end{smallmatrix}\right]\succ \zeros$)
we can use the strong duality (Theorem~\ref{thStrongDual1}) to conclude that the dual does 
achieve the optimum $0$. One can easily check that the dual has only one feasible point 
$\left[\begin{smallmatrix}1&0\\0&0\end{smallmatrix}\right]$ with objective value 0.

An example of 
a program 
in the dual
form~\eqref{dsdp1}-\eqref{dsdp3}
can be found by re-formulating the above primal program using the method from Section~\ref{secPrimalToDual};
aiming at a maximization form, one may find:
$$\max\left\{-Y_{11}:~\begin{bmatrix}0&1\\1&0\end{bmatrix}\sprod Y=2,~Y\succeq \zeros\right\}.$$

\subsection{The lowest and greatest eigenvalue using the SDP duality}

Consider symmetric matrix $X\in\R^{m\times m}$ with 
eigenvalues $\lambda_1\leq \lambda_2\leq \dots \leq \lambda_n$.
The following program reports the smallest eigenvalue.
\begin{align*}[left ={\lambda_1=  \empheqlbrace}]
\max~~&t \\
s.t~~&X-tI_m\succeq \zeros\\
    &t\in \R             
\end{align*}

Observe this program is bounded and strictly feasible (take $t\to-\infty$), and so, the strong duality
Theorem~\ref{thStrongDual1} states that the dual reaches the optimum value $\lambda_1$.
By dualizing after transforming ``$\max t$'' into ``$-\min -t$'', we obtain 
an objective ``$-\max -X\sprod Y$'', that is equivalent to ``$\min X \sprod Y$''.
\begin{align*}[left ={\lambda_1=  \empheqlbrace}]
\min~~&X\sprod Y \\
s.t~~&\texttt{trace}(Y) = 1\\
    &Y\succeq \zeros
\end{align*}
We have obtained a more general version of Lemma~\ref{lemmaMinRayleigh} in which the rank of $Y$ is
not necessarily 1 (as in Lemma~\ref{lemmaMinRayleigh}). Furthermore, the lowest eigenvalue 
of $-X$ is $-\lambda_n$. The first above program for $-X$ can be written, after replacing $t$ with
$-t$, as
$-\lambda_n = \max\big\{-t:~tI_m-X\succeq \zeros,~t\in \R\big\}$. As such, $\lambda_n$ can be
obtained by the following primal-dual programs. 

~\\

\noindent
\refstepcounter{equation}
  \begin{minipage}{0.5\textwidth}
  \begin{equation}\label{eqmaxeigenprimal}
  \tag{\theequation a}
    \lambda_n=\left\{
      \begin{array}{ll}
                \min~~&t \\
                s.t~~&tI_m-X\succeq \zeros\\
                      &t\in \R             
      \end{array}
    \right.
  \end{equation}
  \end{minipage}
  \begin{minipage}{0.5\textwidth}     
  \begin{equation}\label{eqmaxeigendual} 
  \tag{\theequation b}
        \lambda_n=\left\{
            \begin{array}{ll}
                \max~~&X\sprod Y \\
                s.t~~&\texttt{trace}(Y) = 1\\
                    &Y\succeq \zeros
            \end{array}
        \right.
  \end{equation}
  \end{minipage}

~\\

\subsection{Change of variable in SDP programs}

Let us focus on the dual SDP $(DSDP)$ form from \eqref{dsdp1}-\eqref{dsdp3}; recall
its constraints 
have the form $A_i\sprod Y = c_i~\forall i\in[1..n]$,
where $Y \succeq \zeros$ are the variables.
We will now change variables $Y$ into variables $Z=QYQ^\top$ for a non-singular matrix $Q$. 
Recalling Prop.~\ref{propCongruent},
such $Z$ and $Y$ are \textit{congruent} and they satisfy
$Z\succeq \zeros \iff Y \succeq \zeros$.
Notice that if
$Y=\y\y^\top$, then the variable change maps $\y\y^\top$ to $Q\y\y^\top Q^\top=(Q\y)(Q\y)^\top$, \ie,
we can also say vector $\y$ is mapped to $\z=Q\y$. 

One could try to express the dual program $(DSDP)$ in variables $Z$ by simply
replacing $Y$ with
$Q^{-1}Z {Q^{-1}}^\top$. 
But developing $A_i\sprod Y=A_i\sprod (Q^{-1}Z {Q^{-1}}^\top)$ by brute force
could be quite painful and lead to a mess.
We can obtain a more elegant reformulation using the lemma below.
\begin{lemma} Given symmetric matrices $A,B\in\R^{m\times m}$ and non-singular matrix $R$, the following
holds:
$$
A\sprod B = \left({R^{-1}}^\top A R^{-1}\right)\sprod \left(R B R^\top\right)
$$
\end{lemma}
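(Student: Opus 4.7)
The plan is to reduce both sides to traces and use the cyclic property of the trace plus the fact that $R^{-1}R = I_m$, so that the $R$ and $R^{-1}$ factors cancel out. Recall that for any symmetric matrices $X,Y$ the scalar product satisfies $X\sprod Y = \sum_{i,j}X_{ij}Y_{ij} = \texttt{trace}(XY)$ (this was already noted in the proof of Prop.~\ref{propFrobenius}). So both sides are traces of products, and the identity amounts to a manipulation of these traces.

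Concretely, I would first rewrite the right-hand side as
\[
\left({R^{-1}}^\top A R^{-1}\right)\sprod \left(R B R^\top\right) = \texttt{trace}\!\left({R^{-1}}^\top A R^{-1} R B R^\top\right) = \texttt{trace}\!\left({R^{-1}}^\top A B R^\top\right),
\]
using $R^{-1}R = I_m$ in the middle. Then I would apply the cyclic property of trace to move the factor $R^\top$ from the right end to the left: $\texttt{trace}\!\left({R^{-1}}^\top AB R^\top\right) = \texttt{trace}\!\left(R^\top {R^{-1}}^\top AB\right)$. The point is now that $R^\top {R^{-1}}^\top = (R^{-1}R)^\top = I_m^\top = I_m$, so the expression collapses to $\texttt{trace}(AB) = A\sprod B$, which is exactly the left-hand side.

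There is essentially no obstacle here; the only fact used beyond bookkeeping is the cyclic invariance of the trace, which follows from $\texttt{trace}(XY) = \texttt{trace}(YX)$ (a direct index computation, e.g.~$\sum_{i,j}X_{ij}Y_{ji} = \sum_{j,i}Y_{ji}X_{ij}$). One small pedagogical care-point I would highlight in the write-up is that the identity $A\sprod B = \texttt{trace}(AB)$ used here relies on the symmetry of $A$ and $B$ (so that no transpose is needed inside the trace); this is exactly the hypothesis in the lemma's statement, so the proof is clean.
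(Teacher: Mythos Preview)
Your proof is correct and takes a genuinely different route from the paper. The paper's proof uses the eigendecomposition to write $A=\sum_i\lambda^a_i\,\a_i\a_i^\top$ and $B=\sum_j\lambda^b_j\,\b_j\b_j^\top$, reduces by distributivity to the rank-1 case, and then applies Lemma~\ref{lemmaprod} twice to show $\left({R^{-1}}^\top \a\a^\top R^{-1}\right)\sprod\left(R\b\b^\top R^\top\right)=(\a^\top\b)^2=(\a\a^\top)\sprod(\b\b^\top)$. Your approach sidesteps the spectral machinery entirely: you go straight through $\texttt{trace}$ and use its cyclic invariance, which is more elementary and shorter. What the paper's argument buys is thematic consistency with the surrounding SDP material (everything is built from rank-1 pieces), whereas your argument buys economy and shows that the identity is purely algebraic and does not need diagonalizability or even symmetry of both factors in any essential way---only enough symmetry to write $X\sprod Y=\texttt{trace}(XY)$ without a transpose. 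Your closing remark about that hypothesis is apt; in fact symmetry of just one of the two matrices in each scalar product would already suffice.
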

\begin{proof}
Using the eigendecomposition \eqref{firstEqDecomp}
can write $A=\sum_{i=1}^m\lambda^a_i \a_i\a_i^\top$
and $B=\sum_{i=1}^m\lambda^b_i \b_i\b_i^\top$. Given that the scalar product is distributive, 
to prove the lemma, 
it is enough to show 
$\left(\a_i\a_i^\top \right)\sprod \left(\b_j\b_j^\top\right)
=
\left({R^{-1}}^\top \a_i\a_i^\top R^{-1}\right)\sprod \left(R\b_j\b_j^\top R^\top\right)~\forall i,j\in[1..n]$.
We can calculate
\begin{align*}
\left({R^{-1}}^\top \a\a^\top R^{-1}\right)\sprod \left(R\b\b^\top R^\top\right)& =
            \left(\left({R^{-1}}^\top \a\right)\left({R^{-1}}^\top \a\right)^\top\right)\sprod
            \left((R\b)(R\b)^\top\right)\\
&=\left(\left({R^{-1}}^\top \a\right)\sprod \left(R\b\right)\right)^2&\text{(we applied Lemma~\ref{lemmaprod})}\\
&=\left(\a^\top {R^{-1}} R\b\right)^2\\
&=\left(\a^\top \b\right)^2\\
&=\left(\a\a^\top\right)\sprod \left(\b\b^\top\right). &\text{(we applied Lemma~\ref{lemmaprod})}\\
\end{align*}
which finishes the proof.
\end{proof}
Based on this lemma, we have $B\sprod Y=\left({Q^{-1}}^\top B Q^{-1}\right)\sprod \left(QYQ^\top\right)
=\left({Q^{-1}}^\top Y Q^{-1}\right)\sprod Z$. By applying the same calculations on $A_i\sprod X~\forall i\in[1..n]$, we 
obtain that the dual $(DSDP)$ program is equivalent to:

\begin{align*}[left ={(DSDP_Z)  \empheqlbrace}]
\max~&\left({Q^{-1}}^\top B Q^{-1}\right)\sprod Z    \\
s.t ~&\left({Q^{-1}}^\top A_i Q^{-1}\right)\sprod Z = c_i~\forall i\in[1..n] \hspace{-6em} \\
    ~&Z\succeq \zeros             
        \tag{recall $Z\succeq\zeros \iff Y\succeq \zeros$ since $Y$ and $Z$ are congruent}
\end{align*}

\subsection{\label{sec34}Convex quadratic programming is a particular case
of SDP programming}
We consider a convex quadratic program with $n$ variables $x_1,x_2,\dots x_n\in\R$:
\begin{subequations}
\begin{align}
\min~~& \x^\top A_0 \x + \b_0^\top \x \label{cqp1}\\
s.t.~~& \x^\top A_i \x + \b_i^\top \x\leq  c_i~\forall i\in[1..p]\label{cqp2}
\end{align}
\end{subequations}
Particularizing Prop.~\ref{propHessian}, a quadratic function $\x^\top A_i\x$ ($\forall i \in
[0..p]$) is convex if and only if the Hessian $2A_i$ is SDP, \ie, we need to have
$A_i\succeq\zeros~\forall i\in[0..p]$.

\subsubsection{\label{secxx}Reformulation using the (Cholesky) factorization of SDP matrices}

Each $A_i$ (with $i\in[0..p]$) can be factorized as $A_i=R_iR_i^\top$
using any of the presented decompositions of SDP matrices (\eg,
Cholesky, eigenvalue or square root, see Corollary \ref{corSdpToVectors}).
 Let us move the objective function into the constraint set by introducing 
a real variable $c_0$ that has to be minimized such that $\x^\top A_0 \x + \b_0^\top \x\leq c_0$.
After simple algebraic manipulations, 
the above program \eqref{cqp1}-\eqref{cqp2} can be written as:
\begin{align*}
\min~~& c_0 \\
s.t.~~& c_i - \b_i^\top \x- \x^\top R_i R_i^\top \x \geq  0~\forall i\in[0..p]
\end{align*}
Using the Schur complements Property~\ref{propSchurGen}, the above program is further equivalent to program below. Indeed,
by applying the Schur complement, one can easily check that a feasible solution of above program is feasible in program below
 and vice-versa.
\begin{align}
\min~~& c_0 \notag\\
s.t.~~& 
 \begin{bmatrix}
 I_n        &            R_i^\top \x  \\
 \x^\top R_i& c_i - \b_i^\top \x
 \end{bmatrix}\succeq \zeros~~~
 \forall i\in[0..p],\notag
\end{align}
which is a program in variables $x_1,~x_2,\dots x_n$ and $c_0$. This is an SDP program
with $p+1$ constraints (that could be expressed in an aggregated form \eqref{sdp1}-\eqref{sdp3}
with a unique constraint, see also Prop.~\ref{propAggregated}).

\subsubsection{Reformulation by relaxing $\x\x^\top$ into $X\succeq \x\x^\top$}
We will show that \eqref{cqp1}-\eqref{cqp2} is equivalent to the following SDP program:
\begin{subequations}
\begin{align}
\min~~& A_0 \sprod X + \b_0^\top \x \label{eqReforma} \\
s.t.~~& A_i \sprod X + \b_i^\top \x\leq  c_i~\forall i\in[1..p]\label{eqReformb}\\
      & \begin{bmatrix} 1 &\x^\top \\ \x & X \end{bmatrix}\succeq \zeros \label{eqReformc}
\end{align}
\end{subequations}
First, it is clear that any feasible solution $\x$ of
\eqref{cqp1}-\eqref{cqp2}
can be associated
to a feasible solution 
$X=\x\x^\top$
of \eqref{eqReforma}-\eqref{eqReformc} that has the same objective value.
Conversely, it is possible to show that a feasible solution \eqref{eqReforma}-\eqref{eqReformc} 
corresponds to feasible solution of 
\eqref{cqp1}-\eqref{cqp2}, but we still need a few lines.
Any $X$ and $\x$ that satisfy \eqref{eqReformc} also satisfy 
$X\succeq \x\x^\top$ 
by virtue of the Schur complements Prop.~\ref{propSchurPart}.
Thus, $X$ can be written in the form $X=\x\x^\top + S$ for some $S\succeq\zeros$.
Since $A_i\succeq \zeros~\forall i\in[0..p]$, each product $A_i\sprod S$ is non-negative.
This ensures the fact that $\x\x^\top$ satisfy all constraints \eqref{cqp2} and,
similarly, the objective value \eqref{eqReforma} satisfies
$A_0\sprod \left(\x\x^\top+S\right)+\b_0^\top \x
\geq 
A_0\sprod \x\x^\top+\b_0^\top \x$. In other words, $\x$ is a feasible solution of
\eqref{cqp1}-\eqref{cqp2} and its objective value is no worse than 
that of $X$ and $\x$ in \eqref{eqReforma}.

The above \eqref{eqReforma}-\eqref{eqReformc} is actually an SDP program
that can be easily written in the standard form \eqref{sdp1}-\eqref{sdp3}.
Both \eqref{eqReformb} 
and \eqref{eqReformc} are linear matrix inequalities like \eqref{sdp2} in 
variables $x_1,x_2,\dots,x_n$ and $X_{11}, X_{12},\dots X_{nn}$; \eqref{eqReformb} uses ``$1\times 1$ matrices''
and \eqref{eqReformc} uses $(n+1)\times (n+1)$ matrices.
We thus have two inequalities
of the form \eqref{sdp2} 
that can be aggregated into a unique constraint \eqref{sdp2} that uses block-diagonal matrices (Prop.~\ref{propAggregated}). 
As a final side remark, similar programs may even have constraints
like 
$
\left[\begin{smallmatrix}
 -y          &\frac 12 \b_0^\top \\
\frac 12 \b_0&  A_0
\end{smallmatrix}
\right]
\sprod 
\left[\begin{smallmatrix} 1 &\x^\top \\ \x & X \end{smallmatrix}\right]\leq 0$ 
that contain variables in both factors, because of variable $y$ in the left-hand term. However, after developing
the scalar product, the variable $y$ simply arises as a term in a final sum
as in \eqref{sdp2}.

\subsubsection{\label{secqpunconstrained}
Unconstrained quadratic programming reduces to SDP programming}

We consider an \textit{unconstrained} quadratic program :
\begin{align}
\min~~& \x^\top A \x + \b^\top \x \label{cqpunconstrained}
\end{align}
If $A$ is not SDP, we can show that this program is unbounded from below. For
this, let us 
take any eigenvector $\v$ such that $A\v=-\alpha \v$ with $\alpha>0$. Consider
the function $f(t)= (t\v^\top) A (t\v) + \b^\top \v=-\alpha t^2 \v^\top \v +
t\b^\top \v$. This is clearly a strictly concave function in $t$ and it goes to $-\infty$
when $t$ goes to $\infty$. 

To solve \eqref{cqpunconstrained} by SDP programming,
one first has to check if $A$ is SDP or not. The necessity of this step makes
the approach from Section~\ref{secxx} unusable here, because we here need a method
that handles at the same time both the case
$A\nsucceq \zeros$
and
$A\succeq \zeros$.

Let us now show how to address the case $A\succeq \zeros$ by SDP programming. We first need the following result 
which should not be taken from granted (although it is omitted from certain lecture notes).

\begin{proposition}\label{propConvexReachesOpt} If an unconstrained quadratic
program  $\x^\top A \x + \b^\top \x$ (\ie, a polynomial of degree 2) is bounded from below, there is 
a solution $\xx$ that does reach the minimum value. The polynomial is convex ($A\succeq \zeros$) and the
 gradient in $\xx$ is $\zeros$.

~\\
\noindent This does \textbf{not hold for polynomials of any
degree or for convex functions}.  Indeed, $p(x_1,x_2)=(1-x_1x_2)^2+x_1^2$ does not reach its
minimum (infimum) $0=\lim\limits_{m\to \infty}p(\frac 1m,m)$.
Function $h(x)=e^x$ is
infinitely differentiable, convex and bounded from below, but there is no $x$ with $h'(x)=0$ that does reach the minimum value.
On the other hand, 
it is actually possible to prove than
a convex polynomial (of any
degree) always reaches its minimum value, but the proof of such result is outside the scope of this document.%
\footnote{%
For a proof, see the response of R.~Israel on my question asked on
the on-line forum
\url{https://math.stackexchange.com/questions/2341163/does-a-convex-polynomial-always-reaches-its-minimum-value/}.
The example $(1-x_1x_2)^2+x_1^2$ is taken from a response of J.P. McCarthy on 
\url{https://math.stackexchange.com/questions/279497/polynomial-px-y-with-inf-mathbbr2-p-0-but-without-any-point-where}.}

\end{proposition}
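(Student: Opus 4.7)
The plan is to diagonalize the quadratic form via the eigendecomposition \eqref{firstEqDecomp} of $A$, which reduces the problem to $n$ independent one-variable quadratics whose analysis is elementary. The text preceding the proposition already establishes that boundedness from below forces $A\succeq\zeros$ (if some eigenvalue is $-\alpha<0$ with eigenvector $\v$, then $f(t\v)=-\alpha t^2|\v|^2+t\b^\top\v\to-\infty$). So the remaining content is: assuming $A\succeq\zeros$ and $f$ bounded from below, exhibit a minimizer $\xx$ and verify $\nnabla f(\xx)=\zeros$.

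First I would write $A=\sum_{i=1}^n\lambda_i\v_i\v_i^\top$ with $\lambda_i\geq 0$ and orthonormal $\v_1,\dots,\v_n$. Any $\x\in\R^n$ decomposes uniquely as $\x=\sum_i a_i\v_i$ and similarly $\b=\sum_i b_i\v_i$ where $b_i=\v_i^\top\b$. Using $\v_i^\top\v_j=\delta_{ij}$, the objective separates as
\begin{equation*}
f(\x)=\x^\top A\x+\b^\top\x=\sum_{i=1}^n\bigl(\lambda_i a_i^2+b_i a_i\bigr).
\end{equation*}

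Next I would analyze each one-variable term $g_i(a_i)=\lambda_i a_i^2+b_i a_i$ separately. If $\lambda_i>0$, then $g_i$ is a strictly convex parabola attaining its minimum $-b_i^2/(4\lambda_i)$ at $a_i^{*}=-b_i/(2\lambda_i)$. If $\lambda_i=0$, then $g_i(a_i)=b_i a_i$, which is bounded from below only when $b_i=0$ (otherwise $a_i\to\pm\infty$ drives $g_i\to-\infty$, and the other terms, being bounded below independently, cannot compensate). Hence the hypothesis that $f$ is bounded below forces $b_i=0$ for every $i$ with $\lambda_i=0$; equivalently, $\b$ is orthogonal to $\ker A$, so $\b\in\texttt{img}(A)$.

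Having established this, I would define the candidate minimizer
\begin{equation*}
\xx=\sum_{i:\,\lambda_i>0}\Bigl(-\tfrac{b_i}{2\lambda_i}\Bigr)\v_i
\end{equation*}
(with the $\lambda_i=0$ components set to $0$, though any choice works). Plugging into the separated sum shows $f(\xx)=-\sum_{i:\lambda_i>0}b_i^2/(4\lambda_i)$, which equals the infimum of $f$ term by term, so $\xx$ attains the minimum. Finally, a direct computation of the gradient gives
\begin{equation*}
\nnabla f(\xx)=2A\xx+\b=\sum_{i:\,\lambda_i>0}2\lambda_i\Bigl(-\tfrac{b_i}{2\lambda_i}\Bigr)\v_i+\sum_{i=1}^n b_i\v_i=\sum_{i:\,\lambda_i=0}b_i\v_i=\zeros,
\end{equation*}
using the derived condition $b_i=0$ for $\lambda_i=0$. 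There is no real obstacle here; the only subtle point is recognizing that the condition ``$\b\in\texttt{img}(A)$'' is the exact frontier between bounded and unbounded, which is handled cleanly by working in the eigenbasis rather than trying to invert $A$ directly.
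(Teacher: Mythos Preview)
Your proof is correct but takes a genuinely different route from the paper's. The paper argues by contradiction: it assumes no stationary point exists, \ie, $\b\notin\texttt{img}(2A)$, then decomposes $\b=\b_{\texttt{img}}+\b_0$ with $\zeros\neq\b_0\in\texttt{null}(A)$ (using the image/null-space orthogonal decomposition from Prop.~\ref{propRankRectangular}), and shows that along the ray $\x=t\b_0$ the objective reduces to the linear function $t|\b_0|^2$, hence is unbounded. It then invokes the general fact that a stationary point of a convex function is a global minimizer. Your approach instead diagonalizes $A$ via the eigendecomposition, separates $f$ into $n$ decoupled one-variable quadratics, and reads off both the boundedness condition ($b_i=0$ whenever $\lambda_i=0$, equivalently $\b\perp\ker A$) and an explicit minimizer with its minimum value $-\sum_{i:\lambda_i>0}b_i^2/(4\lambda_i)$. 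Your argument is more constructive and computes the optimum explicitly; the paper's is shorter and more conceptual, relying on the image/null-space split rather than the full spectral decomposition. One tiny quibble: when you write ``the other terms, being bounded below independently, cannot compensate,'' the cleaner justification is simply to fix all $a_j$ with $j\neq i$ (say at $0$) and let $a_i\to\pm\infty$, since some of the other $g_j$ might themselves be unbounded linear terms.
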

\begin{proof}
We showed above that the polynomial is bounded only if $A\succeq \zeros$.


Let us show that if there is \textit{no} $\xx\in\R^n$ that cancels the gradient,
the program is \textit{unbounded}. The gradient can be written $\nnabla 
\left(\x^\top A \x + \b^\top \x\right)=2A\x+\b$, and so,
there is \textit{no} $\xx\in\R^n$ such that $2A\xx=-\b$. This means that $-\b$
(or $\b$) does not belong to the image 
(set of linear combinations of the columns)
of $2A$. Using a similar (transposed)
argument as in the first paragraph of the first proof of
Prop.~\ref{propRankRectangular}, $-\b$ needs to have the form
$-\b=-\b_{\texttt{img}}-\b_0$, where 
$-\b_{\texttt{img}}\in {\texttt{img}}(2A)$
and $-\b_0\in{\texttt{null}}(2A)$ with $\b_0\neq \zeros$, 
where 
${\texttt{img}}(2A)$ is the image of $2A$ and 
${\texttt{null}}(2A)$ is the null space of $A$ (see the null space definition in
\eqref{eqDefNull}).

Taking $\x=t\b_0$, 
we can now define $h(t)=(t\b_0)^\top A (t\b_0)+(\b_{\texttt{img}}+\b_0)^\top
(t\b_0)=0+t\b_0^\top \b_0=t|\b_0|^2$, where we used $\b_0\in\texttt{null}(A)\implies A\b_0=\zeros$. Since $\b_0\neq\zeros$, we have
$|\b_0|\neq 0$, and so, $h(t)$ is a non-zero linear function that is clearly
unbounded from below. Since $\x=t\b_0$ is a perfectly feasible solution of
our unconstrained program, this program is unbounded from below. We proved that 
if there is \textit{no} stationary point $\xx\in\R^n$ that cancels the gradient, the program is
\textit{unbounded}. This means that a bounded convex quadratic program needs to have some stationary point 
$\xx\in\R^n$ such that
$\nnabla
\left(\x^\top A \x + \b^\top \x\right)_{\xx}=2A\xx+\b=\zeros$. For a convex function, the
stationary point needs to be the global minimum, and so, $\xx$ reaches the minimum
value of $\x^\top A \x +\b^\top \x$.
\end{proof}

We hereafter consider \eqref{cqpunconstrained} is convex ($A\succeq \zeros$).

\begin{proposition}\label{propCqpunconstrained}The optimum value of
unconstrained convex quadratic program~\eqref{cqpunconstrained}
is equal to:
\begin{subequations}
\begin{align}
t^*=\max~~& t  \label{eqtmptmp0} \\
s.t.~ &
\begin{bmatrix}
    -t            &   \frac 12 \b^\top    \\
\frac 12 \b       &              A
\end{bmatrix}
\succeq \zeros \label{eqtmptmp},
\end{align}
\end{subequations}
where we use the convention $t^*=-\infty$ if \eqref{eqtmptmp} is infeasible for any $t\in\R$,
this case being equivalent to the fact that \eqref{cqpunconstrained} is unbounded from below.
\end{proposition}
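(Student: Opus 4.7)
The plan is to establish the equality between the QP optimum and the SDP optimum by a straightforward weak-duality / strong-duality argument that only requires expanding the quadratic form $[s,\x^\top]\,M\,[s,\x^\top]^\top$ where $M$ denotes the matrix in \eqref{eqtmptmp}. The key identity to exploit is
\[
\begin{bmatrix} s & \x^\top \end{bmatrix}
\begin{bmatrix} -t & \tfrac{1}{2}\b^\top \\ \tfrac{1}{2}\b & A \end{bmatrix}
\begin{bmatrix} s \\ \x \end{bmatrix}
= -ts^{2}+s\,\b^\top\x+\x^\top A\x,
\]
which is the bridge between the SDP feasibility condition and the QP objective.

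First I would prove the ``weak duality'' direction, i.e.\ $t^{*}\le v^{*}$, where $v^{*}$ denotes the infimum of the QP. Take any SDP-feasible $t$; plugging $s=1$ into the identity above yields $\x^\top A\x+\b^\top\x\ge t$ for every $\x\in\R^n$. Hence $t$ is a lower bound on the QP, giving $t\le v^{*}$, and, maximizing over feasible $t$, $t^{*}\le v^{*}$.

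Next I would prove the reverse inequality in the bounded case. Assume $v^{*}>-\infty$; I claim that $t=v^{*}$ is SDP-feasible, which will immediately give $t^{*}\ge v^{*}$. To check $M\succeq\zeros$ I use the same identity: for $s=0$ the quadratic form reduces to $\x^\top A\x\ge 0$ by the hypothesis $A\succeq\zeros$; for $s\ne 0$, substituting $\z=\x/s$ gives
\[
-ts^{2}+s\,\b^\top\x+\x^\top A\x \;=\; s^{2}\bigl(\z^\top A\z+\b^\top\z-v^{*}\bigr)\;\ge\;0,
\]
where the inequality holds precisely because $v^{*}$ is the infimum of the QP. This shows $M\succeq\zeros$, hence $v^{*}\le t^{*}$, and combining with weak duality gives $t^{*}=v^{*}$. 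Note that I do not actually need Prop.~\ref{propConvexReachesOpt} for this direction; the infimum suffices. Prop.~\ref{propConvexReachesOpt} is only used to note that the infimum is attained at a stationary point $\xx$ with $2A\xx+\b=\zeros$, which is worth recording since it explains how to recover the primal minimizer from the SDP solution.

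Finally I would handle the unbounded case. If \eqref{cqpunconstrained} is unbounded from below, then by weak duality no $t\in\R$ can be SDP-feasible (feasibility of $t$ would force $v^{*}\ge t>-\infty$), so \eqref{eqtmptmp} is infeasible and by convention $t^{*}=-\infty=v^{*}$. Conversely, if \eqref{eqtmptmp} is infeasible for every $t$ then the reverse direction of the bounded case contrapositively gives $v^{*}=-\infty$. The only subtle point is the use of the infimum in the strong-duality direction, which must be handled without assuming the minimum is attained; but the argument above works verbatim with $v^{*}=\inf$, so no obstacle arises.
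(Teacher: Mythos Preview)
Your proof is correct and takes a genuinely different, more elementary route than the paper. The paper first invokes Prop.~\ref{propConvexReachesOpt} to obtain a stationary point $\xx$ with $2A\xx+\b=\zeros$ (in the bounded case), then applies the row/column operations of Prop.~\ref{propRowColOpers} to reduce the matrix in \eqref{eqtmptmp} to the block-diagonal form $\left[\begin{smallmatrix}-t+\frac12\xx^\top\b & \zeros\\ \zeros & A\end{smallmatrix}\right]$, from which $t^*=\frac12\xx^\top\b$ can be read off. In the unbounded case the paper appeals to Prop.~\ref{propRankRectangular} to argue that $\frac12\b^\top$ lying outside the row space of $A$ forces infeasibility of \eqref{eqtmptmp}. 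Your argument bypasses all three of these auxiliary propositions: the single identity $[s,\x^\top]M[s,\x^\top]^\top=-ts^2+s\b^\top\x+\x^\top A\x$ together with the homogenization $\z=\x/s$ handles both directions and both cases at once, and works directly with the infimum rather than an attained minimum. What the paper's approach buys is an explicit formula $t^*=\frac12\xx^\top\b$ tied to the primal minimizer, and a concrete structural reason (row-space membership of $\b$) for infeasibility; what your approach buys is brevity and independence from the chain of earlier results.
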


\begin{proof}
It is enough to prove that: (i) if \eqref{cqpunconstrained} is unbounded, then
\eqref{eqtmptmp0}-\eqref{eqtmptmp} is infeasible and (ii) if \eqref{cqpunconstrained}
is bounded then its optimum value is equal to $t^*$ in \eqref{eqtmptmp0}-\eqref{eqtmptmp}.

Based on Prop~\ref{propConvexReachesOpt}, 
the above case (i) can only arise
if either $A$ is not SDP so that \eqref{eqtmptmp} is clearly infeasible,
or if 
\eqref{cqpunconstrained} 
has no stationary point. If there is no stationary point, then there is
no $\xx$
such that
$2A\xx+\b=\zeros$. This means that $\frac 12 \b^\top$ can not be written
as a linear combination of (the rows of) $A$.
Using Prop.~\ref{propRankRectangular}, we obtain that
\eqref{eqtmptmp} is infeasible

In the non-degenerate case (ii), Prop.~\ref{propConvexReachesOpt}
guarantees that 
the quadratic function has a stationary point $\xx$ so that
$2A\xx=-\b$. This stationary point minimizes
\eqref{cqpunconstrained} so that the optimum value is
$\xx^\top A \xx + \b^\top \xx
=- \xx^\top \frac 12 \b + \b^\top \xx=
\frac 12 \xx^\top \b$. We will prove that $t^*=
\frac 12 \xx^\top \b$.

Using Prop.~\ref{propRowColOpers}, the SDP status of 
$
\left[
\begin{smallmatrix}
    -t            &   \frac 12 \b^\top    \\
\frac 12 \b       &              A
\end{smallmatrix}
\right]
$ does not change if we add to the first row a linear combination
of the other rows. If we add 
$
\xx^\top \left[
\begin{smallmatrix}
\frac 12 \b       &              A
\end{smallmatrix}
\right]
$ to the first row followed by the same (transposed) operation
on the columns, we obtain the matrix
$
\left[
\begin{smallmatrix}
    -t  +\frac 12 \xx^\top \b         &   \zeros \\
\zeros       &              A
\end{smallmatrix}
\right]
$. This latter matrix is SDP only if 
$-t  +\frac 12 \xx^\top \b\geq 0$, which means that the optimum
$t^*$ value is $t^*=\frac 12 \xx^\top \b$, which is also the optimum
value of the unconstrained quadratic function.
\end{proof}

Finally, it is interesting that the linear matrix inequality \eqref{eqtmptmp}
is equivalent to
the following system of $2^n$ linear inequalities:
$\det\left(
\begin{smallmatrix}
    -t            &   \frac 12 \b_J^\top    \\
\frac 12 \b_J       &              A_J
\end{smallmatrix}\right)\geq 0~\forall J\subseteq [1..n]$, where $\b_J$ and resp.~$A_J$ are obtained
from $\b$ and resp.~$A$ by selecting rows or columns $J$. When $A\succeq \zeros$, \eqref{eqtmptmp0}-\eqref{eqtmptmp} can actually be written as a
linear program.

\subsection{An LP with equality constraints as an SDP program in the dual form}
Consider the Linear Program (LP):
\begin{align*}
\max~&b_1x_1+b_2x_2+\dots b_nx_n\\
s.t.~&c^j_0+c^j_1x_1+c^j_2x_2+\dots c^j_nx_n = 0~\forall j\in[1..m]
\end{align*}
There are two ways of converting this LP into an SDP. The direct method consists of replacing each
equality with two inequalities and of applying Prop.~\ref{propAggregated} on the resulting 
system of inequalities. We obtain an aggregated SDP programs expressed with aggregated diagonal
matrices.

The second method uses the property 
$\left(\c^j{\c^j}^\top\right)\sprod\left( \begin{bmatrix} 1 \\ \x\end{bmatrix}[1~\x^\top]\right)
=\left(\c^j\sprod \begin{bmatrix} 1 \\ \x\end{bmatrix}\right)^2$ proved in Lemma~\ref{lemmaprod},
where $\c^j=[c^j_0~c^j_1~c^j_2\dots c^j_n]^\top~\forall j\in[1..m]$. We can thus write:
\begin{equation}\label{eqprods}\c^j\sprod \begin{bmatrix} 1 \\ \x\end{bmatrix}=0~\iff~
\left(\c^j{\c^j}^\top\right)\sprod
\begin{bmatrix} 1 & \x^\top \\ \x & \x\x^\top \end{bmatrix}
=0.
\end{equation}
Thus, any feasible solution $\x$ of the above LP
can be directly converted to a feasible solution 
of the SDP program below by simply setting
$X=\x\x^\top$ and performing a sum over
all $j\in[1..m]$.

\begin{align*}
\max~&b_1x_1+b_2x_2+\dots b_nx_n\\
s.t.~&
\left(\sum\limits_{j=1}^m \c^j{\c^j}^\top\right)\sprod 
\begin{bmatrix} 1 & \x^\top \\ \x & X \end{bmatrix}=0\\
&
\begin{bmatrix} 1 & \x^\top \\ \x & X \end{bmatrix}\succeq \zeros
\end{align*}

The last point to prove is that any feasible solution 
or
$\left[\begin{smallmatrix} 1 & \x^\top \\ \x & X \end{smallmatrix}\right]$
of the above
SDP program can be associated to a feasible solution $\x$ of the LP.
First, we notice that
$$\left(\sum\limits_{j=1}^m \c^j{\c^j}^\top\right)\sprod 
\begin{bmatrix} 1 & \x^\top \\ \x & X \end{bmatrix}=0
\implies 
\left(\c^j{\c^j}^\top\right)\sprod
\begin{bmatrix} 1 & \x^\top \\ \x & X \end{bmatrix} = 0~\forall j\in[1..m],
$$
because for each $j\in [1..m]$ we have a non-negative 
product of two SDP matrices in the left-hand side sum, \ie, 
$\left(\c^j{\c^j}^\top\right)\sprod
\left[\begin{smallmatrix} 1 & \x^\top \\ \x & X \end{smallmatrix}\right]\geq 0~\forall j\in[1..m]$.
We can now use that $\left[\begin{smallmatrix} 1 & \x^\top \\ \x & X \end{smallmatrix}\right]\succeq \zeros$
is equivalent to $X\succeq \x\x^\top$ so as to obtain
\begin{align*}\left(\c^j{\c^j}^\top\right)\sprod
\begin{bmatrix} 1 & \x^\top \\ \x & X \end{bmatrix}
=0
&\implies
\left(\c^j{\c^j}^\top\right)\sprod
\begin{bmatrix} 0 & \zeros \\ \zeros & X-\x\x^\top \end{bmatrix}
+
\left(\c^j{\c^j}^\top\right)\sprod
\begin{bmatrix} 1 & \x^\top \\ \x & \x\x^\top \end{bmatrix}
=0\\
&\implies
\left(\c^j{\c^j}^\top\right)\sprod
\begin{bmatrix} 1 & \x^\top \\ \x & \x\x^\top \end{bmatrix}=0,
\end{align*}
where we used the fact that all involved matrices are SDP and that their scalar products are
non-negative. 
We can now apply \eqref{eqprods} to obtain the implication below, which confirms $\x$ is feasible in the initial LP.
$$
\left(\c^j{\c^j}^\top\right)\sprod
\begin{bmatrix} 1 & \x^\top \\ \x & \x\x^\top \end{bmatrix}
=0
\implies \c^j\sprod \begin{bmatrix} 1 \\ \x\end{bmatrix}=0
~\forall j\in[1..m].$$

\newpage 
\begin{center}
\line(1,0){400}

\Large
PART 2: MORE ADVANCED SDP PROGRAMMING

\line(1,0){400}

\end{center}

\addtocontents{toc}{~\\ ~\\ \noindent{\bf PART 2 MORE ADVANCED SDP PROGRAMMING} \par}

\section{Six equivalent formulations of the Lov\'asz theta number $\vartheta(G)$}
\subsection{A first SDP formulation of the theta number}
\subsubsection{The primal form $(\vartheta_G)$\label{secThetaPrimal}}

We consider a graph $G=([1..n],E)$.
We introduce the Lov\'asz theta number using the following program
based on SDP matrix $\overline{Z}\in\R^{n\times n}$.
\begin{subequations}
\begin{alignat}{6}[left ={(\vartheta_G) \empheqlbrace}]
\min~~&t                  &                                     &&&                         \label{theta1}\\
s.t.~~& \overline{z}_{ii}&&=t-1                                  &&~~\forall i\in[1..n]     \label{theta2a}\\
      & \overline{z}_{ij}&&=-1                                   &&~~\forall \{i,j\}\notin E\label{theta2b}\\
      & \overline{Z}     &&\succeq\zeros.                        &&                         \label{theta3}
\end{alignat}
\end{subequations}

We use the notational convention $\vartheta(G)=OPT(\vartheta_G)$. 
We consider above $(\vartheta_G)$ as a primal SDP program of the form \eqref{sdp1}-\eqref{sdp3} in which
the variables are $t\in\R^n$ and $\overline{z}_{ij}\in\R^n~\forall \{i,j\}\in E$.
The matrix $B$ from \eqref{sdp1}-\eqref{sdp3} contains ones on the diagonal and on 
all
positions $(i,j)$ corresponding to
$\{i,j\}\notin E$. 
As a side remark, 
using \eqref{eqmaxeigenprimal}, we observe that $(\vartheta_G)$ returns the maximum eigenvalue of 
the matrix $B$ described above.

\begin{theorem}\label{ththetaalpha} ($\vartheta(G)\geq \alpha(G)$) The Lov\'asz theta number
$\vartheta(G)$ is greater than or equal to the maximum stable $\alpha(G)$ of $G$. 
\end{theorem}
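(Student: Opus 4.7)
My plan is to fix a maximum stable set $S \subseteq [1..n]$ with $|S|=\alpha(G)$, and show that every feasible solution of $(\vartheta_G)$ must have objective value at least $|S|$. The natural ``test vector'' is the indicator $\x \in \{0,1\}^n$ of $S$: then $\x^\top \overline{Z} \x$ is a sum over pairs $(i,j) \in S\times S$ of entries $\overline{z}_{ij}$, and the crucial observation is that every off-diagonal pair $(i,j)$ with $i,j \in S$, $i\neq j$, is a non-edge (since $S$ is stable), so constraint \eqref{theta2b} pins $\overline{z}_{ij}=-1$ for those pairs.

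Next, I would expand this quadratic form using the primal constraints: the diagonal contribution is $|S|(t-1)$ via \eqref{theta2a}, and the off-diagonal contribution is $-|S|(|S|-1)$ via \eqref{theta2b}. Collecting terms, $\x^\top \overline{Z} \x = |S|\,t - |S|^2 = |S|\bigl(t - |S|\bigr)$. Since $\overline{Z}\succeq \zeros$, Definition~\ref{def1} forces $\x^\top \overline{Z}\x \geq 0$, and because $|S|=\alpha(G)\geq 1$ (assuming a non-empty graph, with the trivial case $\alpha(G)=0$ handled separately), dividing yields $t \geq |S| = \alpha(G)$.

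As this inequality holds for every feasible $t$, we conclude $\vartheta(G) = \min t \geq \alpha(G)$. There is no real obstacle: the argument is a one-line SDP feasibility test against the stable-set indicator, and the only subtlety worth flagging in the writeup is that we rely on \textit{both} constraints \eqref{theta2a} and \eqref{theta2b} being active on the restricted index set $S\times S$, which is precisely the reason the non-edge equality $\overline{z}_{ij}=-1$ (rather than an inequality) is imposed in the formulation.
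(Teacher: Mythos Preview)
Your proof is correct and follows essentially the same idea as the paper's. The paper restricts $\overline{Z}$ to the principal minor indexed by a maximum stable set $J$, observes that this minor equals $tI_{\alpha(G)}-\ones$ (diagonal $t-1$, off-diagonal $-1$), and then invokes a separate Lemma~\ref{lemmaZMinusOne} stating that such a matrix is SDP only when $t\geq \alpha(G)$. Your argument is a direct inline version of this: testing $\overline{Z}\succeq\zeros$ against the indicator vector of $S$ is exactly testing the principal minor against the all-ones vector, which is precisely what the paper does in its Proof~2 of Lemma~\ref{lemmaZMinusOne}. So the two routes coincide; you simply bypass the lemma packaging and compute the quadratic form in one shot, which is arguably cleaner for this single application, while the paper's lemma form is reusable (it is also used later in Theorem~\ref{ththetacliquecover}).
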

\begin{proof} 
Let us consider the maximum stable $J\in[1..n]$ with $|J|=\alpha(G)$. If we restrict $\overline{Z}$
from \eqref{theta1}-\eqref{theta3} to its minor corresponding to rows $J$ and columns $J$, we obtain
$tI_{\alpha(G)}-\ones\succeq \zeros$, where $\ones$ is a matrix in which all
elements are equal to 1. The fact that $t\geq \alpha(G)$ follows
from the next lemma (that needs to hold for $n=\alpha(G)$).
\end{proof}
\begin{lemma} \label{lemmaZMinusOne}The lowest $z\in \R$ for which the $n\times n$ matrix below is SDP is $z=n-1$.
$$
A_n(z)=
\underbrace{
    \begin{bmatrix}
        z    &  -1    &   -1 & \dots &   -1   \\
    -1       &    z   &   -1 & \dots &   -1   \\
    -1       & -1     &  z   & \dots &   -1   \\
    \vdots   & \vdots &\vdots&\ddots & \vdots\\
    -1       & -1     &-1    & \dots &    z   \\
    \end{bmatrix}
}_{n}
\succeq \zeros
$$
\end{lemma}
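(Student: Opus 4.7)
The plan is to reduce $A_n(z)$ to a rank-one perturbation of a multiple of the identity, compute the eigenvalues explicitly, and apply Prop.~\ref{propEigenSDP}. The key observation is that $A_n(z)$ can be rewritten as
\[
A_n(z) = (z+1)\,I_n - \ones_n\ones_n^\top,
\]
where $\ones_n$ is the $n$-dimensional all-ones column vector. This decomposition makes the matrix transparent: on the diagonal we get $(z+1)-1 = z$, and off-diagonal we get $0-1 = -1$, matching $A_n(z)$.

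Next I would compute the eigenvalues of $\ones_n\ones_n^\top$: it is a rank-one symmetric matrix, so one eigenvalue is $\ones_n^\top\ones_n = n$ (associated with the eigenvector $\ones_n$), and the remaining $n-1$ eigenvalues are $0$ (associated with the $(n-1)$-dimensional subspace orthogonal to $\ones_n$). Since $A_n(z)$ is obtained from $-\ones_n\ones_n^\top$ by adding the scalar multiple $(z+1)I_n$, the eigenvectors are preserved and each eigenvalue is shifted by $z+1$. Thus the eigenvalues of $A_n(z)$ are $z+1-n$ (with multiplicity $1$, eigenvector $\ones_n$) and $z+1$ (with multiplicity $n-1$).

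Applying Prop.~\ref{propEigenSDP}, $A_n(z)\succeq\zeros$ if and only if all eigenvalues are non-negative, which gives the two conditions $z+1-n\geq 0$ and $z+1\geq 0$. The first one is strictly tighter (for $n\geq 1$), so the minimum admissible value is $z = n-1$, as claimed.

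I do not expect any real obstacle here: the only subtle point is justifying that adding $(z+1)I_n$ shifts the spectrum by $z+1$ without changing eigenvectors, but this follows immediately from $(\alpha I_n + M)\v = \alpha\v + M\v = (\alpha+\lambda)\v$ whenever $M\v = \lambda\v$. Alternatively, one could argue directly via the eigendecomposition \eqref{firstEqDecomp} of $\ones_n\ones_n^\top$ and note that $A_n(z)$ is simultaneously diagonalizable in the same basis.
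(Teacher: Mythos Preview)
Your proof is correct and matches the paper's first proof (of four given), which likewise rewrites $A_n(z) = (z+1)I_n - \ones$ with $\ones$ the all-ones matrix, identifies the eigenvalues of $\ones$ as $n$ (eigenvector $\ones_n$) and $0$ (multiplicity $n-1$), and concludes that $z+1 = n$ is the threshold. The only cosmetic difference is that you invoke Prop.~\ref{propEigenSDP} directly on the shifted spectrum, whereas the paper phrases the same step via \eqref{eqmaxeigenprimal} (the minimum $t$ with $tI - X \succeq \zeros$ is $\lambda_{\max}(X)$); the paper's three alternative proofs use, respectively, the Frobenius norm bound (Prop.~\ref{propFrobenius}), the Gershgorin circle theorem, and an inductive Schur-complement reduction.
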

\noindent We provide four proofs so as to
master 
multiple proof techniques
and explore a variety of SDP tools 
presented in this manuscript.

\vspace{0.5em}
\noindent\textit{Proof 1.} We can write $A_n(z)=(z+1)-\ones$, where
$\ones$ is a matrix filled with $n\times n$ ones. Using
\eqref{eqmaxeigenprimal}, the minimum value $(z+1)$ such that 
$(z+1)-\ones\succeq \zeros$ is the maximum eigenvalue of $\ones$.
But what are the eigenvalues and eigenvectors of $\ones$? Since
all positions of 
$\ones \v$ must be equal for any eigenvector $\v\in\R^n$, we obtain that 
either (i) the eigenvalue $\lambda_v$ of $v$ is zero or (ii) all elements of 
$\v$ are equal so that $\lambda_v=n$. The sought value $z+1$ is thus $n$,
leading to $z=n-1$.
\qed.

\noindent\textit{Proof 2.} It is easy to see that the Frobenius
norm of $A_n(0)$ is $|A_n(0)|=\sqrt{\sum\limits_{i,j=1}^n A_n(0)_{ij}}=\sqrt{(n-1)^2}=n-1$.
Using Prop.~\ref{propFrobenius}, the minimum eigenvalue of $A_n(0)$ is at least 
$-|A_n(0)|=-(n-1)$. This is enough to guarantee that $A_n(n-1)=(n-1)I_n + A_n(0)\succeq \zeros$.
We show $A_n(z)\nsucceq\zeros$ for any $z<n-1$ by
noticing $[1~1~1\dots 1]A_n(z)[1~1~1\dots 1]^\top <0$ for any $z<n-1$. This proves
that $z=n-1$ is
the lowest $z$ such that $A_n(z)\succeq\zeros$.
\qed

\vspace{1em}
\noindent\textit{Proof 3.} We apply the Gershgorin circle Theorem
\ref{thGershgorin} to show $A_n(n-1)\succeq \zeros$. The theorem states that any
eigenvalue $\lambda$ of $A_n(n-1)$ needs to satisfy
$\left|\lambda-A_n(n-1)_{ii}\right|\leq
\sum\limits_{\substack{j=1\\j\neq i}}^n |A_n(n-1)_{ij}|$ for some
$i\in[1..n]$. However, for any $i\in[1..n]$ this reduces to
$\left|\lambda -(n-1)\right|\leq n-1$, which means $\lambda\geq 0$, leading to
$A_n(n-1)\succeq\zeros$. 
To show $A_n(z)\nsucceq\zeros$ for any $z<n-1$, use the method from the last two sentences 
of above \textit{Proof 2}.
\qed

\vspace{1em}
\noindent\textit{Proof 4.} We proceed by induction. The lemma is clearly true for $n=1$. 
For any $n\geq 2$,
we need to have $z>0$ because $z=0$ would make negative any $2\times
2$ minor (use Prop~\ref{propMinorsNonneg}).
We apply the Schur complement from Prop~\ref{propSchurGen}. Using the notations $A$, $B$ and $C$
from Prop~\ref{propSchurGen}, we write $A_n(z)=\left[\begin{smallmatrix} A & B^\top\\ B&C\end{smallmatrix}\right]$
with $A=z$ (which satisfies condition $A\succ \zeros$),
$B^\top=-\ones^\top_{n-1}=\underbrace{[-1~-1~\dots -1]}_{n-1\text{ positions }}$, and $C=A_{n-1}(z)$, \ie, 
$A_{n-1}(z)$ is the $(n-1)\times (n-1)$ bottom-right minor. Using the Schur complement, 
we obtain $A_n(z)\succeq \zeros \iff  C-BA^{-1}B^\top\succeq \zeros
\iff A_{n-1}(z) - \frac 1z \ones_{n-1}\ones^\top_{n-1}\succeq \zeros$. This last
matrix inequality boils down to:
\begin{align*}
\underbrace{
    \begin{bmatrix}
        z-\frac 1z   &   -1-\frac 1z & \dots &   -1-\frac 1z   \\
     -1  -\frac 1z   &  z -\frac 1z  & \dots &   -1-\frac 1z   \\
     \vdots &\vdots&\ddots & \vdots\\
     -1   -\frac 1z  &-1 -\frac 1z   & \dots &    z-\frac 1z   \\
    \end{bmatrix}
}_{n-1}
\succeq \zeros
&
\iff
\underbrace{
    \begin{bmatrix}
    \frac{(z+1)(z-1)}{z} & -\frac{z+1}z  & \dots &  -\frac{z+1}z     \\
     -\frac{z+1}z   &\frac{(z+1)(z-1)}{z}& \dots &  -\frac{z+1}z      \\
     \vdots &\vdots&\ddots & \vdots\\
     -\frac{z+1}z  &-\frac{z+1}z  & \dots &   \frac{(z+1)(z-1)}{z} \\
    \end{bmatrix}
}_{n-1}
\succeq\zeros  \\
&\iff
\underbrace{
    \begin{bmatrix}
    &    z-1 &   -1 & \dots &   -1   \\
    & -1     &  z-1 & \dots &   -1   \\
    & \vdots &\vdots&\ddots & \vdots\\
    & -1     &-1    & \dots &    z-1 \\
    \end{bmatrix}
}_{n-1}
\succeq\zeros 
\end{align*}
We have obtained that $A_n(z)\succeq\zeros\iff A_{n-1}(z-1)\succeq \zeros$. We can use the
induction hypothesis: the lowest $z-1$ such that $A_{n-1}(z-1)\succeq \zeros$ is $z-1=n-2$, 
and so, $z=n-1$ is the lowest $z$ such that $A_{n}(z)\succeq \zeros$.\qed

\begin{theorem}\label{ththetacliquecover} ($\vartheta(G)\leq \chi(\overline{G})$) The Lov\' asz theta
number $\vartheta(G)$ is less than or equal to the clique cover number of $G$.
This clique cover number is the chromatic number of the complementary graph
$\overline{G}$.
\end{theorem}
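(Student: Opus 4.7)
The plan is to exhibit a feasible solution of $(\vartheta_G)$ with objective value $t = k := \chi(\overline{G})$, which immediately yields $\vartheta(G) \leq k$. The first ingredient is the combinatorial identification of $\chi(\overline{G})$ as the clique partition number of $G$: a proper $k$-coloring of $\overline{G}$ has color classes that are independent sets in $\overline{G}$, hence cliques in $G$, and these partition $[1..n]$. I fix such a partition $C_1, C_2, \dots, C_k$ and write $\ell(i)$ for the unique index with $i \in C_{\ell(i)}$.

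Next, I construct auxiliary vectors $\w_1, \dots, \w_k$ satisfying $\w_\ell \cdot \w_\ell = k-1$ and $\w_\ell \cdot \w_m = -1$ for $\ell \neq m$. Their prescribed Gram matrix is exactly $k I_k - \ones$, which is the matrix $A_k(k-1)$ that Lemma~\ref{lemmaZMinusOne} shows to be SDP; Corollary~\ref{corSdpToVectors} then delivers the $\w_\ell$ as the columns of any factor $W$ in a decomposition $k I_k - \ones = W^\top W$. (Concretely, one may also take the $\w_\ell$ to be the vertices of a regular simplex in $\R^{k-1}$, scaled to norm $\sqrt{k-1}$.)

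Setting $\v_i = \w_{\ell(i)}$ and $\overline{Z} = V^\top V$ with $V = [\v_1~\v_2~\dots~\v_n]$ gives $\overline{Z} \succeq \zeros$ automatically. The diagonal entries $\overline{z}_{ii} = |\w_{\ell(i)}|^2 = k-1$ match \eqref{theta2a} for $t = k$. For any non-edge $\{i,j\} \notin E$, the vertices $i$ and $j$ cannot share a clique of the partition (every within-clique pair is an edge), so $\ell(i) \neq \ell(j)$ and $\overline{z}_{ij} = \w_{\ell(i)} \cdot \w_{\ell(j)} = -1$, matching \eqref{theta2b}. Hence $(t, \overline{Z}) = (k, V^\top V)$ is feasible in $(\vartheta_G)$, giving $\vartheta(G) \leq k = \chi(\overline{G})$.

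The only combinatorial subtlety is the observation that non-edges sit between distinct cliques of the partition; beyond this, the argument is a tidy repackaging of already-established tools, so I do not anticipate a genuine obstacle.
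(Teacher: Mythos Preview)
Your proof is correct and follows essentially the same approach as the paper: both use Lemma~\ref{lemmaZMinusOne} and Corollary~\ref{corSdpToVectors} to produce $k$ vectors with pairwise inner products $-1$ and squared norms $k-1$, assign to each vertex the vector of its clique in a clique partition of size $k=\chi(\overline{G})$, and take the resulting Gram matrix as the feasible $\overline{Z}$ in $(\vartheta_G)$. The only differences are cosmetic (notation and your parenthetical remark about the regular simplex).
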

\begin{proof} 
Consider a partition $(C_1,C_2, \dots C_k)$ of the vertex set
$[1..n]$ such that each $C_i$ (with $i\in[1..k]$) is a clique.
Applying Lemma~\ref{lemmaZMinusOne} above, we have
$$
\underbrace{
    \begin{bmatrix}
        k-1    &  -1    &   -1 & \dots &   -1   \\
    -1       &    k-1   &   -1 & \dots &   -1   \\
    -1       & -1     &  k-1   & \dots &   -1   \\
    \vdots   & \vdots &\vdots&\ddots & \vdots\\
    -1       & -1     &-1    & \dots &    k-1   \\
    \end{bmatrix}
}_{k}
\succeq \zeros
$$
This means (use the Cholesky factorisation from Prop~\ref{propCholeskySDP} or
other decompositions from Corollary~\ref{corSdpToVectors}) there exist
$k$-dimensional vectors $\v_1,\v_2,\dots \v_k$ such that $\v_i\sprod \v_i=k-1$
and $\v_i\sprod \v_j=-1~\forall i,j\in[1..k],i\neq j$. 

Since $\{C_1,C_2, \dots C_k\}$ cover the vertex set $[1..n]$, all vertices
$u\in[1..n]$ can be associated to vector $\v^u=\v_{\ell(u)}$, where $\ell(u)$ is the clique that contains $u$
(we have $u\in C_{\ell(u)}$). We actually associate the same vector to all
vertices of a clique.

We now define the matrix $\overline{Z}$ by setting
$\overline{z}_{ij}=\v^i\sprod \v^j$. This is a Gram matrix that is clearly SDP
(use Prop.~\ref{propranktransprod}). Notice 
that: (i) $\overline{z}_{ii}=k-1~\forall i\in[1..n]$ and (ii) if 
$i,j\in[1..n]$ do not belong to the same clique, then $\overline{z}_{ij}=-1$,
and so, $\overline{z}_{ij}=-1~\forall \{i,j\}\notin E$. Properties 
(i) and (ii) are enough to ensure that $\overline{Z}$ can be written in
the form \eqref{theta2a}-\eqref{theta2b} with $t=k$. We have constructed a feasible solution
of $(\vartheta_G)$ from \eqref{theta1}-\eqref{theta3} with objective value $k$.
Taking $k=\chi(\overline{G})$, we have
$OPT(\vartheta_G)\leq \chi(\overline{G})$.

This proof is inspired from a related result of a lecture note of Anupam
Gupta.%
\footnote{See (11.3) of the document available, as of 2017, at \url{http://www.cs.cmu.edu/afs/cs.cmu.edu/academic/class/15859-f11/www/notes/lecture11.pdf}.}
\end{proof}

Using $\alpha(G)=\omega(\overline{G})$ (where
$\omega$ denotes the maximum clique size), we also obtain the ``sandwich'' property
as a direct consequence of Theorem~\ref{ththetaalpha} and
Theorem~\ref{ththetacliquecover}.
\begin{equation}\label{eqSandwichTheta}
\omega(\overline{G})\leq \vartheta(G)\leq \chi(\overline{G})
\end{equation}

The following corollary of Lemma \ref{lemmaZMinusOne} can be generally useful, but we do not use it in this
document.
\begin{corollary} When $n\geq 2$ unitary vectors are as spread out as much as possible, the dot product of any pair
of them is $\frac{-1}{n-1}$.
\end{corollary}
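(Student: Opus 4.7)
The plan is to reformulate the informal phrase ``as spread out as much as possible'' as: we seek unit vectors $\v_1,\dots,\v_n$ that minimize $\max_{i\neq j} \v_i\sprod\v_j$, and then show the optimum of that minimax equals $-\frac{1}{n-1}$ and is attained precisely when all pairwise dot products are equal to this common value.

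The lower bound step is the easy half and I would do it first via the classical ``expand the squared norm of the sum'' trick: for any unit vectors $\v_1,\dots,\v_n$, the expansion
\begin{equation*}
0 \leq \Big|\sum_{i=1}^n \v_i\Big|^2 \;=\; \sum_{i=1}^n |\v_i|^2 + 2\sum_{i<j}\v_i\sprod\v_j \;=\; n + 2\sum_{i<j}\v_i\sprod\v_j
\end{equation*}
forces the average of the $\binom{n}{2}$ pairwise dot products to be at least $-\frac{1}{n-1}$. Hence $\max_{i\neq j}\v_i\sprod\v_j\geq -\frac{1}{n-1}$, with equality only if every pair achieves this common value.

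For attainability, I would exhibit unit vectors realizing the bound. Consider the candidate Gram matrix $G\in\R^{n\times n}$ with $G_{ii}=1$ and $G_{ij}=-\frac{1}{n-1}$ for $i\neq j$. Scaling by $n-1$ gives exactly the matrix $A_n(n-1)$ of Lemma~\ref{lemmaZMinusOne}, which is SDP; therefore $G\succeq\zeros$. Invoking Corollary~\ref{corSdpToVectors} (or the Cholesky factorization of SDP matrices), we can write $G=VV^\top$ and read off the rows of $V$ as unit vectors with pairwise dot product $-\frac{1}{n-1}$. This exhibits an optimal configuration and matches the lower bound, proving the corollary.

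The only subtle point is interpreting the statement, since ``spread out as much as possible'' is not formally defined; I would briefly note that both natural formalizations (minimizing the maximum pairwise dot product, or equivalently minimizing a common pairwise dot product when all are required to be equal by symmetry) give the same answer $-\frac{1}{n-1}$, with the symmetric configuration (\eg, vertices of a regular simplex centered at the origin) being optimal. No step is a real obstacle; the proof is essentially a one-line variance inequality paired with a direct appeal to Lemma~\ref{lemmaZMinusOne}.
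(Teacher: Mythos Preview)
Your proof is correct and follows essentially the same approach as the paper: the lower bound via $\bigl|\sum_i\v_i\bigr|^2\ge 0$ is exactly the paper's observation that $[1~1\dots 1]\,T'\,[1~1\dots 1]^\top<0$ whenever any off-diagonal entry of $T$ is decreased (these are the same inequality, since $\ones^\top G\,\ones=\bigl|\sum_i\v_i\bigr|^2$), and attainability is obtained identically from Lemma~\ref{lemmaZMinusOne} plus a Gram/Cholesky factorization. Your phrasing of the lower bound is arguably cleaner, and you also make explicit the equality case (all pairwise dot products equal), which the paper leaves implicit.
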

\begin{proof}
By ``spread out as much as possible'', we want 
$t=\max\{\v_i\sprod \v_j:~i,j\in[1..n],~i\neq j\}$ to be as low as possible.
Lemma~\ref{lemmaZMinusOne} states that $nI_n-\ones\succeq \zeros,$
where $\ones$ is a matrix with all elements equal to 1.
By dividing all terms by $\frac{1}{n-1}$, we obtain a matrix $T\succeq \zeros$ with 1 on the
diagonal and with $\frac {-1}{n-1}$ on all non-diagonal positions.
This means that $t\leq \frac {-1}{n-1}$. This $t$ value is optimal. If we
decrease any non-diagonal term(s) of $T$, we obtain a matrix
$T'\nsucceq\zeros$, since $[1~1\dots 1]T'[1~1\dots 1]^\top<0$.
\end{proof}
\subsubsection{The dual form of $(\vartheta_G)$\label{secThetaDual}}

We now introduce the dual program $(D\vartheta_G)$ of $(\vartheta_G)$ from
\eqref{theta1}-\eqref{theta3}. Let $D\vartheta(G)=OPT(D\vartheta_G)$.

\begin{subequations}
\begin{align}[left ={(D\vartheta_G) \empheqlbrace}]
\max~~&\sum_{i=1}^n Y_{ii} + \sum_{\{i,j\}\notin E}Y_{ij}=
      \ones\sprod Y    \label{dtheta1}\\
s.t.~~&\texttt{trace}(Y) = 1    \label{dtheta2}\\
      &Y_{ij} = 0~~~\forall \{i,j\}\in E  \label{dtheta3} \\
      &Y\succeq \zeros \label{dtheta4},
\end{align}
\end{subequations}
where \eqref{dtheta1} simplifies to $\ones\sprod Y$ because $Y_{ij}=0~\forall
\{i,j\}\in E$ by virtue of \eqref{dtheta3}. Notice that both $(\vartheta_G)$ 
and $(D\vartheta_G)$ are strictly feasible (take a sufficiently large $t$ in 
$(\vartheta_G)$ and $Y=\frac 1n I_n$ in $(D\vartheta_G)$), and so, 
we can apply the strong duality Theorem~\ref{thStrongDual3} to state that 
the optimum value is reached by both programs.

\begin{proposition} The ``sandwich'' property
$\alpha({G})\leq \vartheta(G)\leq \chi(\overline{G})$ 
obtained in \eqref{eqSandwichTheta} in the previous Section~\ref{secThetaPrimal}
can also be proved using the dual $(D\vartheta_G)$.
\end{proposition}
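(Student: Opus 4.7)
The plan is to derive both inequalities directly from the dual program $(D\vartheta_G)$: the lower bound $\alpha(G)\leq D\vartheta(G)$ will come from exhibiting an explicit feasible matrix built from a maximum stable set, and the upper bound $D\vartheta(G)\leq\chi(\overline G)$ will come from bounding $\ones\sprod Y$ on every feasible $Y$ using a clique partition together with the Cauchy--Schwarz inequality. Strong duality (already established for this primal--dual pair by Theorem~\ref{thStrongDual3}) then transfers both bounds to $\vartheta(G)$.

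For the lower bound, I would fix a maximum stable set $S\subseteq[1..n]$ with $|S|=\alpha(G)$, denote its characteristic column vector by $\chi_S$, and define $Y=\frac{1}{\alpha(G)}\chi_S\chi_S^\top$. Then $Y\succeq\zeros$ as a rank-one SDP matrix, $\texttt{trace}(Y)=|S|/\alpha(G)=1$, and $Y_{ij}$ is nonzero only when both $i,j\in S$; since $S$ is stable, this excludes every edge, so \eqref{dtheta3} holds. Writing $\ones=\mathbf{1}\mathbf{1}^\top$ with $\mathbf{1}\in\R^n$ the all-ones vector and applying Lemma~\ref{lemmaprod} gives $\ones\sprod(\chi_S\chi_S^\top)=(\mathbf{1}\sprod\chi_S)^2=|S|^2$, so the objective value is $\ones\sprod Y=|S|^2/\alpha(G)=\alpha(G)$. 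Since $D\vartheta(G)$ is a maximum over feasible $Y$, this yields $D\vartheta(G)\geq\alpha(G)$.

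For the upper bound, I would use Corollary~\ref{corSdpToVectors} to factorize any dual-feasible $Y$ as $Y=VV^\top$ and write $\y_i$ for the $i$-th row of $V$, so that $Y_{ij}=\y_i\sprod\y_j$. Then constraint~\eqref{dtheta2} becomes $\sum_{i=1}^n|\y_i|^2=1$, constraint~\eqref{dtheta3} becomes $\y_i\sprod\y_j=0$ for every $\{i,j\}\in E$, and the objective rewrites as $\ones\sprod Y=\sum_{i,j}\y_i\sprod\y_j=\bigl|\sum_{i=1}^n\y_i\bigr|^2$. Fixing a clique partition $C_1,\dots,C_k$ of $G$ with $k=\chi(\overline G)$ and setting $\z_\ell=\sum_{i\in C_\ell}\y_i$, the orthogonality of vectors indexed by a common clique collapses the cross-terms, giving $|\z_\ell|^2=\sum_{i\in C_\ell}|\y_i|^2$. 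A single Cauchy--Schwarz step then finishes:
\begin{equation*}
\Bigl|\sum_{i=1}^n\y_i\Bigr|^2=\Bigl|\sum_{\ell=1}^k\z_\ell\Bigr|^2\leq k\sum_{\ell=1}^k|\z_\ell|^2=k\sum_{i=1}^n|\y_i|^2=k.
\end{equation*}

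The main subtlety I anticipate is the reformulation of $\ones\sprod Y$ as $\bigl|\sum_i\y_i\bigr|^2$; once this ``sum-of-vectors'' view of the dual objective is in hand, the clique-orthogonality plus Cauchy--Schwarz argument is essentially forced. This vectorial picture also provides the cleanest bridge between the dual program and the Gram-matrix style reformulations of $\vartheta(G)$ that appear in the subsequent equivalent formulations, so it is worth setting up carefully at this stage.
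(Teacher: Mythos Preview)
Your proof is correct. The lower bound $\alpha(G)\leq D\vartheta(G)$ is handled exactly as in the paper: the same rank-one matrix $\frac{1}{|S|}\chi_S\chi_S^\top$ is produced as a dual-feasible point of value $|S|$.

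For the upper bound $D\vartheta(G)\leq\chi(\overline G)$ your argument is correct but genuinely different in presentation from the paper's. The paper stays at the matrix level: it partitions $Y$ into $k\times k$ blocks $\YY_{ij}$ along the clique cover, observes that each diagonal block $\YY_{ii}$ is itself diagonal (since $C_i$ is a clique), and then applies the PSD property of the $2\times 2$ principal block $\left[\begin{smallmatrix}\YY_{ii}&\YY_{ij}\\\YY_{ji}&\YY_{jj}\end{smallmatrix}\right]$ to the vector $[\ones,-\ones]^\top$, obtaining $\ones\sprod\YY_{ij}\leq\tfrac12(\ones\sprod\YY_{ii}+\ones\sprod\YY_{jj})$; summing over all $i,j$ gives $\ones\sprod Y\leq k$. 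You instead pass to a Gram factorization $Y=VV^\top$, recognise $\ones\sprod Y=\bigl|\sum_i\y_i\bigr|^2$, use the within-clique orthogonality to get $|\z_\ell|^2=\sum_{i\in C_\ell}|\y_i|^2$, and finish with a single Cauchy--Schwarz step. The two arguments are in fact the same inequality in disguise (in your notation the paper's block bound reads $\z_i\sprod\z_j\leq\tfrac12(|\z_i|^2+|\z_j|^2)$), but your vector formulation is tighter to write and, as you note, connects more transparently to the Gram-vector reformulations of $\vartheta(G)$ that follow.
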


\noindent \textit{Proof of} $\alpha({G})\leq \vartheta(G)$~\\
Take the largest stable $S$ of $G$. Construct matrix $Y^S$ such that
$Y^S_{ij}=\frac1{|S|}~\forall i,j\in S$ and $Y^S_{ij}=0$ otherwise. It is not hard to
check that $Y^S$ is a feasible solution of $(D\vartheta_G)$ in
\eqref{dtheta1}-\eqref{dtheta4} with objective value $|S|$. This is enough
to state $\alpha(G)\leq D\vartheta(G)=\vartheta(G)$. 

\noindent \textit{Proof of} $\vartheta(G)\leq \chi(\overline{G})$\\
Consider a clique cover $\{C_1,~C_2,\dots C_k\}$ of $G$.
Without loss of
generality, we can re-order $[1..n]$ such that all $C_i$ (with $i\in[1..k]$)
represent segments of $[1..n]$, \ie, $C_1$ contains the first $|C_1|$ elements
$\big[1..|C_1|\big]$, 
$C_2=\big[|C_1|+1..|C_1|+|C_2|\big]$, etc. We can decompose $Y$ 
of $(D\vartheta(G))$ from \eqref{dtheta1}-\eqref{dtheta4}
into $k^2$
blocks:
$$
Y=
\begin{bmatrix}
\YY_{11}   &   \YY_{12}  &  \ldots  & \YY_{1k} \\
\YY_{21}   &   \YY_{22}  &  \ldots  & \YY_{2k} \\
\vdots   & \vdots    &  \ddots  & \vdots \\
\YY_{k1}   &   \YY_{k2}  &  \ldots  & \YY_{kk}
\end{bmatrix},
$$
where block $\YY_{ij}$ has size $|C_i|\times |C_j|$ for any $i,j\in[1..k]$.
Notice that $\YY_{ii}$ is diagonal because $C_i$ is a clique ($\forall
i\in[1..k]$).
This means $\sum_{i=1}^k \ones\sprod \YY_{ii}=1$, based on $\texttt{trace}(Y)=1$.
We will show that 
\begin{equation}\label{eqSumY}\ones\sprod Y=\sum_{i=1}^k \sum_{j=1}^k
\ones\sprod \YY_{ij}\leq k.\end{equation}

The principal minor of $Y$ associated to rows and columns $C_i$ and $C_j$ needs to be
SDP. We thus obtain 
$\left[\begin{smallmatrix} \YY_{ii} & \YY_{ij}\\ \YY_{ji} & \YY_{jj}\end{smallmatrix}\right]\succeq \zeros$.
Take $\x^\top=[\underbrace{1~1\dots 1}_{|C_i|}~\underbrace{-1~-1~\dots
-1}_{|C_j|}]$. Using 
$\left[\begin{smallmatrix} \YY_{ii} & \YY_{ij}\\ \YY_{ji} & \YY_{jj}\end{smallmatrix}\right]
\sprod 
\x\x^\top \geq 0$, we obtain:
$$
\ones \sprod \YY_{ij}\leq \frac {\ones \sprod \YY_i+\ones \sprod \YY_j}2,~\forall
i,j\in [1..k]
$$
Notice this holds (with equality) for terms $i=j\in[1..k]$. We can simply
now obtain \eqref{eqSumY} by applying
$$\sum_{i=1}^k \sum_{j=1}^k \ones\sprod \YY_{ij}\leq 
\sum_{i=1}^k \sum_{j=1}^k\frac {\ones \sprod \YY_i+\ones \sprod \YY_j}2 
=\frac{2k \sum_{i=1}^k \ones \sprod \YY_i}2 = k.
$$


\subsection{A second SDP formulation $(\vartheta'_G)$ of the theta number}

We start from the following well-known linear program for the maximum
stable $\alpha(G)$.
\begin{subequations}
\begin{align}[left ={(\alpha_G) \empheqlbrace}]
\max~~&\sum_{i=1}^n y_i                             \notag\\
s.t.~~&y_i+y_j\leq 1~\forall \{i,j\}\in E           \notag\\
      &\y\in\{0,1\}^n                               \notag  
\end{align}
\end{subequations}
We now introduce a first SDP relaxation of above $(\alpha_G)$.
\begin{subequations}
\begin{alignat}{3}[left ={(\vartheta'_G) \empheqlbrace}]
\max~~&\sum_{i=1}^n y_{0i}&&                    \label{thetaprim1}\\
s.t.~~&y_{00} = 1&&                              \label{thetaprim2}\\ 
      &y_{ij} = 0&&\forall \{i,j\}\in E          \label{thetaprim3}\\ 
      &y_{ii} = y_{0i}&&\forall i\in[1..n]       \label{thetaprim4}\\ 
      &Y\succeq \zeros &~&                      \label{thetaprim5}
\end{alignat}
\end{subequations}
It is not hard to check that $(\vartheta'_G)$ is a relaxation of $(\alpha_G)$.
Take an optimal solution $\y$ of $(\alpha_G)$ and construct
$Y=\left[\begin{smallmatrix} 1\\ \y\end{smallmatrix}\right][1~\y^\top]$ that is
feasible for $(\vartheta'_G)$ and has the same objective value as in
$(\alpha_G)$.

\begin{theorem}\label{ththetasecprog} The optimum value $\vartheta'(G)$ of $(\vartheta'_G)$ from
\eqref{thetaprim1}-\eqref{thetaprim5} is equal to 
the Lov\'asz theta number
\ie, to
the optimum value $\vartheta(G)$
of $(\vartheta_G)$ from \eqref{theta1}-\eqref{theta3}.
\end{theorem}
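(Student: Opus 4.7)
The plan is to prove both inequalities $\vartheta'(G) \leq \vartheta(G)$ and $\vartheta'(G) \geq \vartheta(G)$, using as the pivot the strong duality between $(\vartheta_G)$ and $(D\vartheta_G)$ established just before this section (recall both are strictly feasible and bounded, so Theorem~\ref{thStrongDual3} gives $\vartheta(G)=D\vartheta(G)$ with both optima attained).

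For $\vartheta'(G) \leq \vartheta(G)$, I will take any feasible $Y$ of $(\vartheta'_G)$ with value $v = \sum_i y_{0i}>0$ and denote by $\tilde Y$ its principal submatrix on rows/columns $[1..n]$. The key trick is to test $Y\succeq\zeros$ against the vector $\w = [-v,\,1,\,1,\dots,1]^\top\in\R^{n+1}$:
\[
0 \leq \w^\top Y\w = v^2 y_{00} - 2v\sum_{i=1}^n y_{0i} + \ones\sprod\tilde Y = -v^2 + \ones\sprod\tilde Y,
\]
using $y_{00}=1$ and $\sum y_{0i}=v$. Hence $\ones\sprod\tilde Y \geq v^2$. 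Then $\tilde Y/v$ is feasible in $(D\vartheta_G)$: it is SDP as a scaled principal submatrix, it has trace $\sum y_{ii}/v=\sum y_{0i}/v=1$ thanks to the constraint $y_{ii}=y_{0i}$, and its edge entries are zero. Its objective $\ones\sprod(\tilde Y/v)\geq v$ then forces $\vartheta(G)=D\vartheta(G)\geq v$, giving the inequality. The case $v\leq 0$ is trivial since $\vartheta(G)\geq 1$.

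For the reverse inequality, I will pick an optimal $\tilde Y$ of $(D\vartheta_G)$ with $\ones\sprod\tilde Y=v:=\vartheta(G)\geq 1$ and factorize $\tilde Y = WW^\top$ with rows $\w_1,\dots,\w_n$ via Corollary~\ref{corSdpToVectors}, so that $|\w_i|^2=\tilde Y_{ii}$, $\w_i\sprod\w_j=0$ on edges, and $\sum|\w_i|^2=1$. Setting $\s=\sum_i\w_i$ gives $|\s|^2=\ones\sprod\tilde Y=v$. I will then choose the ``handle'' $\u_0=\s/\sqrt v$ (a unit vector) and
\[
\u_i = \frac{\s\sprod\w_i}{\sqrt v\,\tilde Y_{ii}}\,\w_i \text{ if }\tilde Y_{ii}>0,\qquad \u_i=\zeros \text{ otherwise.}
\]
Setting $Y_{ij}:=\u_i\sprod\u_j$ produces a Gram matrix, hence SDP. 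Straightforward checks give $Y_{00}=1$, $Y_{0i}=|\u_i|^2=Y_{ii}$, and $Y_{ij}=0$ on edges (the last because $\u_i\sprod\u_j$ is a scalar multiple of $\w_i\sprod\w_j$), so $Y$ is feasible in $(\vartheta'_G)$.

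The only delicate step is bounding the objective of this $Y$. Writing $r_i=\s\sprod\w_i=\sum_j\tilde Y_{ij}$ (so $\sum_i r_i=|\s|^2=v$), an elementary computation yields $Y_{0i}=|\u_i|^2=r_i^2/(v\tilde Y_{ii})$. Cauchy--Schwarz applied to $(r_i/\sqrt{\tilde Y_{ii}})\cdot\sqrt{\tilde Y_{ii}}$ (with the indices $\tilde Y_{ii}=0$ contributing $r_i=0$ and hence being harmless) gives
\[
v^2 = \Big(\sum_i r_i\Big)^2 \leq \Big(\sum_i \tfrac{r_i^2}{\tilde Y_{ii}}\Big)\Big(\sum_i \tilde Y_{ii}\Big) = \sum_i \tfrac{r_i^2}{\tilde Y_{ii}},
\]
so $\sum_i Y_{0i}\geq v^2/v=v=\vartheta(G)$, proving $\vartheta'(G)\geq\vartheta(G)$. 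Combining the two directions closes the sandwich and yields $\vartheta'(G)=\vartheta(G)$.
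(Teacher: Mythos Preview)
Your proof is correct and takes a genuinely different route from the paper's. The paper introduces an intermediate program $(\widetilde{\vartheta'}_G)$ (dropping the constraint $y_{ii}=y_{0i}$ but replacing the objective by $\sum_i 2y_{0i}-y_{ii}$), argues by a row/column rescaling that it has the same optimum as $(\vartheta'_G)$, then explicitly writes down its SDP dual $(D\widetilde{\vartheta'}_G)$ and, via a Schur-complement manipulation, identifies that dual with $(\vartheta_G)$. Strong duality then closes the loop.

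Your argument bypasses the intermediate program entirely and works directly between $(\vartheta'_G)$ and $(D\vartheta_G)$. The $\vartheta'(G)\le\vartheta(G)$ direction is a clean one-vector test (your choice $\w=[-v,1,\dots,1]^\top$ is exactly what makes the algebra collapse to $\ones\sprod\tilde Y\ge v^2$), followed by the observation that $\tilde Y/v$ lands in the feasible set of $(D\vartheta_G)$ thanks to the constraint $y_{ii}=y_{0i}$. The reverse direction is a nice Gram-vector construction: rescaling each $\w_i$ by $(\s\sprod\w_i)/\tilde Y_{ii}$ is precisely what forces $Y_{0i}=Y_{ii}$, and your Cauchy--Schwarz step (with the harmless $\tilde Y_{ii}=0$ indices contributing $r_i=0$ by Corollary~\ref{corolZeros}) delivers the lower bound $v$ on the objective. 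Both approaches ultimately rest on the strong duality $\vartheta(G)=D\vartheta(G)$, but yours is more constructive and avoids writing down a new dual; the paper's is more ``structural'' in that it exhibits $(\vartheta_G)$ itself as (equivalent to) the dual of a relaxation of $(\vartheta'_G)$, a fact that is reused later in the manuscript (e.g., in the orthonormal-representation section and in the $\psi^r$ hierarchy).
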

\begin{proof} We first show that $(\vartheta'_G)$ is equivalent to the following
relaxation of $(\alpha_G)$.

\begin{subequations}
\begin{align}[left ={(\widetilde{\vartheta'}_G) \empheqlbrace}]
\max~~&\sum_{i=1}^n 2y_{0i}-y_{ii}              \label{thetawidetildeprim1}\\
s.t.~~&y_{00} = 1                               \label{thetawidetildeprim2}\\ 
      &y_{ij} = 0~\forall \{i,j\}\in E          \label{thetawidetildeprim3}\\ 
      &Y\succeq \zeros                          \label{thetawidetildeprim4}
\end{align}
\end{subequations}
We take an optimal solution $Y$ of $(\widetilde{\vartheta'}_G)$ 
and we will show it satisfies $y_{ii}=y_{0i}~\forall i\in[1..n]$.
Consider any $i\in[1..n]$. If $y_{ii}=0$, then $y_{0i}$ needs to be zero as
well. If $y_{ii}\neq 0$, we can multiply row $i$ and column $i$ of $Y$ by
$\frac{y_{0i}}{y_{ii}}$ to obtain a matrix that is still
feasible for $(\widetilde{\vartheta'}_G)$ and whose objective value is greater than
or equal to that of $Y$. This simply follows from
$2y_{0i} \frac{y_{0i}}{y_{ii}}-y_{ii}\frac{y^2_{0i}}{y^2_{ii}}
\geq 
2y_{0i}-y_{ii}\stackrel{y_{ii}>0}{\iff} y^2_{0i}\geq
2y_{0i}y_{ii}-y^2_{ii}\iff (y_{0i}-y_{ii})^2\geq 0$. 
This proves $OPT(\widetilde{\vartheta'}_G)=OPT(\vartheta'_G)$. We will show that 
$OPT(\widetilde{\vartheta'}_G)=OPT(\vartheta_G)=\vartheta(G)$.

Let us write the dual of $(\widetilde{\vartheta'}_G)$.
\begin{subequations}
\begin{align}[left ={(D\widetilde{\vartheta'}_G) \empheqlbrace}]
\min~~&t                                    \label{dvarthetaprim1}\\
s.t.~~&
        \begin{bmatrix}
        t       & -\ones^\top \\
        -\ones   &      Z
        \end{bmatrix}
        =
        \begin{bmatrix}
            t    &    -1    &  -1     &   -1     &  \ldots  &   - 1    \\
            -1   &     1    &  z_{12} &   z_{13} &  \ldots  &   z_{n1}    \\
            -1   &   z_{21} &    1    &   z_{23} &  \ldots  &   z_{n2}    \\
            -1   &   z_{31} &  z_{32} &     1    &  \ldots  &   z_{n3}    \\
        \vdots   &  \vdots  &  \vdots &  \vdots  &  \ddots  &   \vdots \\
            -1   &   z_{n1} &  z_{n2} &   z_{n3} &  \ldots  &   z_{nn}    \\
        \end{bmatrix}
      \succeq \zeros                         \label{dvarthetaprim2}     \\
        & z_{ij}=0~\forall \{i,j\}\notin E     \label{dvarthetaprim3} 
\end{align}
\end{subequations}
Programs $(\widetilde{\vartheta'}_G)$ in
\eqref{thetawidetildeprim1}-\eqref{thetawidetildeprim4} and $(D\widetilde{\vartheta'}_G)$ 
in \eqref{dvarthetaprim1}-\eqref{dvarthetaprim3} are strictly feasible. For 
$(\widetilde{\vartheta'}_G)$, it is enough to take $Y=I$.
For $(D\widetilde{\vartheta'}_G)$, we have
$\left[
        \begin{smallmatrix}
        n+1       & -\ones^\top \\
        -\ones   &      I_n
        \end{smallmatrix}
\right]\succ \zeros$, as certified by applying the Sylvester criterion
(Prop.~\ref{propSylvester}) in reversed order (from bottom-right to top-left).
The determinant of the whole matrix is 1: if we add the last $n$ rows to 
the first one, we obtain a first row with only one non-zero element of value 1 in
the upper-left corner. As such, using the strong duality
Theorem~\ref{thStrongDual3}, we have
$OPT(\widetilde{\vartheta'}_G)=OPT(D\widetilde{\vartheta'}_G)$ and this value
is effectively reached by both programs.

We now reformulate $(D\widetilde{\vartheta'}_G)$ and let us focus on
\eqref{dvarthetaprim2}. Using the Schur complement Prop.~\ref{propSchurGen}, we
have 
$$
        \begin{bmatrix}
        t       & -\ones^\top \\
        -\ones   &      Z
        \end{bmatrix}\succeq \zeros
\iff
        Z-\frac 1t \ones\succeq \zeros
\iff    \frac{tZ-\ones}t\succeq \zeros
\iff    tZ-\ones \succeq \zeros,
$$
where we used several times $t>0$ (notice $t=0$ would render
$(D\widetilde{\vartheta'}_G)$ infeasible because of the upper-left 2$\times$ 2
minor of the matrix in \eqref{dvarthetaprim2}). Writing $\overline{Z}=tZ-\ones$, we notice
$\overline{z}_{ii}=t-1~\forall i\in[1..n]$
(because $z_{ii}=1$ in \eqref{dvarthetaprim2}) and $\overline{z}_{ij}=-1~\forall
\{i,j\}\notin E$ (because $z_{ij}=0$ in \eqref{dvarthetaprim3}). If $\{i,j\}\in E$, we have $\overline{z}_{ij}=tz_{ij}-1$, \ie,
$\overline{z}_{ij}$ can actually take any real value, independent from the
other terms of the matrix. Replacing this in $(D\widetilde{\vartheta'}_G)$, we
obtain that $(D\widetilde{\vartheta'}_G)$ is equivalent 
to following program which is exactly $(\vartheta_G)$ from \eqref{theta1}-\eqref{theta3}. 
\begin{alignat*}{6}[left ={(\vartheta_G) \empheqlbrace}]
\min~~&t                  &                                     &&&                         \\
s.t.~~& \overline{z}_{ii}&&=t-1                                  &&~~\forall i\in[1..n]     \\
      & \overline{z}_{ij}&&=-1                                   &&~~\forall \{i,j\}\notin E\\
      & \overline{Z}     &&\succeq\zeros.                        &&                         
\end{alignat*}

All programs presented during the proof have the same optimum, \ie, we proved
that
$
OPT(\vartheta'_G)=
OPT(\widetilde{\vartheta'}_G)=
OPT(D\widetilde{\vartheta'}_G)=
OPT(\vartheta_G)=\vartheta(G)$. The arguments from this proof are based on a
reversed version of the proof from Section 3.1. of the PhD thesis of
of Neboj\originalv{s}a Gvozdenovi\'c.%
\footnote{As of 2019, it is available on-line at
\url{http://pure.uva.nl/ws/files/4245957/54393_thesis.pdf}.}%
\end{proof}

\subsection{A formulation $\vartheta''(G)$ of the theta number without SDP matrices}
We associate an unit vector $\u^i$ for each vertex $i\in[1..n]$. We say that
the unit vectors $\{\u^1,\u^2,\dots \u^n\}$ constitute an {\it orthonormal
representation} of 
$G=([1..n],E)$ if and only if $\u^i\sprod\u^j=0~\forall \{i,j\}\notin E,~i\neq j$.
We introduce the following function:
\begin{equation}\label{eqthetaortho1}
\vartheta''(G)=\min\left\{\max_{i\in[1..n]}\frac{1}{(\c\sprod \u^i)^2}:~
                          |\c|=1,~\{\u^1,\u^2,\dots \u^n\} \text{ is an orthonormal representation of }G
                    \right\}
\end{equation}
Given any $\c$ and $\{\u^1,\u^2,\dots \u^n\}$ that yield an optimum in
above~\eqref{eqthetaortho1}, we can apply a rotation $R$ that maps $\c$
into unitary vector $R\c=[1~0~0~0\dots 0]^\top$ and $\{\u^1,\u^2,\dots \u^n\}$
into $\{\u'^1,\u'^2,\dots \u'^n\}$. A rotation is always represented by 
a matrix $R$ that is unitary and orthonormal,%
\footnote{We actually still need to prove there exists a rotation of the space
that maps the unitary vector $\c$ to $\cc=[1~0~0~0\dots
0]^\top\in\R^k$. 
This task would be very simple if $\c$ had the form
$\c=[c_1~c_2~0~0~0\dots 0]^\top$, \ie, it would be enough to
apply a rotation on the (space of) the first two coordinates and keep
all remaining $k-2$ coordinates fixed.
We will construct a basis so as to make the representation
of $\c$ in this basis always have the above form.
Let us take an orthonormal
basis $\v_1,\v_2\in\R^k$ (with $\v_1=\cc$) of the sub-space spanned 
by $\c$ and $\cc$ and complete it to a full
orthonormal basis by adding $\v_3,~\v_4\dots \v_k$. 
Writing $V=[\v_1~\v_2~\v_3\dots \v_k]$, any vector $\v\in\R^k$ can be
represented in the 
new basis as $\v=V\v'$, \ie, 
$\v'=V^{-1}\v$ is the expression of $\v$ in the new basis.
Notice $\c$ and $\cc$ can be written as linear combinations of $\v_1$ and
$\v_2$, and so, their representations $\c'$ and $\cc'$ in the new basis
only use the first two coordinates.
After checking that $\c'$ and $\cc'$ are unitary, 
we can define $\alpha=\arccos \c'\sprod \cc'$.
The matrix $R_2=\left[\begin{smallmatrix} \cos\alpha&sin\alpha\\
-\sin\alpha&\cos \alpha\end{smallmatrix}\right]$ rotates by $\alpha$
any 2-dimensional point, and so, it maps $\cc$ to $\c'$ in the space of the
first
two coordinates (or $\c'$ to $\cc$, case in which we replace $\alpha$ by
$-\alpha$).
We extend $R_2$ to $R_k\in\R^{k\times k}$ by putting an
one on each new diagonal position $(i,i)$ with $i>2$ and zeros on all other
positions $(i,j)$ with $i>2$ or $j>2$. $R_k$ rotates the first two coordinates
of any vector $\v'\in\R^k$ and leaves untouched the rest of values $v'_3,~v'_3\dots v'_k$. 
Let us calculate $VR_kV^\top\c=VR_kV^\top V\c'=VR_k\c'=V\cc'=\cc$.
Since $VR_kV^\top$ is orthonormal, it is a rotation matrix that
performs the desired rotation.
\setcounter{testfoot2}{\value{footnote}}\label{testfootpage2}}
and so, does not change scalar products or norms. Indeed, if $R^\top R=I$, for any two vectors
$\v$ and $\v'$ we have $\v^\top \v'=\v^\top R^\top R \v'=(R\v)^\top (R\v')$,
and so, the mapping $\v\to R\v$ does not change scalar products or norms.
This means that $[1~0~0~0\dots 0]^\top$ and $\{\u'^1,\u'^2,\dots \u'^n\}$ is 
also an optimal solution for~\eqref{eqthetaortho1}. As such, we can simplify
\eqref{eqthetaortho1} to:

\begin{equation}\label{eqthetaortho2}
\vartheta''(G)=\min\left\{\max_{i\in[1..n]}\left(\frac{1}{{u_1^i}}\right)^2:~
                          \{\u^1,\u^2,\dots \u^n\} \text{ is an orthonormal representation of }G
                    \right\}
\end{equation}
\begin{theorem} $\vartheta''(G)=\vartheta(G)$ \end{theorem}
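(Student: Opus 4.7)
The plan is to prove the equality by showing both inequalities $\vartheta(G) \leq \vartheta''(G)$ and $\vartheta''(G) \leq \vartheta(G)$ through explicit constructions that convert between feasible solutions of the SDP $(\vartheta_G)$ and orthonormal representations. The bridge between the two will be the Gram-matrix factorization $\overline{Z} = VV^\top$ of an SDP matrix (Corollary~\ref{corSdpToVectors}): the rows $\v^i$ of $V$ are the natural vectorial objects to relate to unit vectors $\u^i$. The key identity to engineer is: any Gram decomposition of a feasible $\overline{Z}$ of value $t$ yields vectors with $|\v^i|^2 = t-1$ and $\v^i \cdot \v^j = -1$ on non-edges, while any orthonormal representation yields unit vectors with $\u^i \cdot \u^j = 0$ on non-edges; the transformation between these two worlds amounts to translating by and normalizing against an extra direction $\c$.

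For $\vartheta''(G) \leq \vartheta(G)$: starting from a feasible $\overline{Z}$ of value $t$ with Gram factors $\v^1, \dots, \v^n$ in some $\R^d$, append one extra coordinate to produce $\w^i = (\v^i,\, 1) \in \R^{d+1}$. A short calculation gives $|\w^i|^2 = (t-1)+1 = t$ and $\w^i \cdot \w^j = -1+1 = 0$ whenever $\{i,j\} \notin E$. Setting $\u^i = \w^i/\sqrt{t}$ and $\c = (\zeros_d,\, 1)$ produces an orthonormal representation with $\c$ a unit vector satisfying $\c \cdot \u^i = 1/\sqrt{t}$ for all $i$, so $\max_i 1/(\c \cdot \u^i)^2 = t$, proving the inequality.

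For $\vartheta(G) \leq \vartheta''(G)$: given any feasible $(\c, \{\u^1,\dots,\u^n\})$ of value $t = \max_i 1/(\c \cdot \u^i)^2$ (if any $\c \cdot \u^i = 0$ the value is $+\infty$ and the inequality is vacuous; otherwise, replacing $\u^i$ by $-\u^i$ if needed — which preserves orthogonality — one may assume $\c \cdot \u^i > 0$), first rescale to $\w^i = \u^i/(\c \cdot \u^i)$, which yields $\c \cdot \w^i = 1$ and $|\w^i|^2 \leq t$ and $\w^i \cdot \w^j = 0$ on non-edges. To equalize the norms to exactly $t$ without destroying orthogonality on non-edges, pad the ambient space with $n$ extra coordinates (one per vertex): $\overline{\w}^i$ puts $\w^i$ in the original coordinates, $\sqrt{t - |\w^i|^2}$ in the $i$-th new coordinate, and zero elsewhere; extend $\c$ by zeros to $\overline{\c}$. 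Then $|\overline{\w}^i|^2 = t$, $\overline{\c} \cdot \overline{\w}^i = 1$, and $\overline{\w}^i \cdot \overline{\w}^j = 0$ on non-edges since the padding contributions sit in disjoint coordinates. Finally translate: $\v^i = \overline{\w}^i - \overline{\c}$ satisfies $|\v^i|^2 = t - 2 + 1 = t-1$ and $\v^i \cdot \v^j = 0 - 1 - 1 + 1 = -1$ on non-edges. Defining $\overline{z}_{ij} = \v^i \cdot \v^j$ gives $\overline{Z} \succeq \zeros$ with the required diagonal and off-diagonal entries, hence a feasible solution of $(\vartheta_G)$ of value $t$.

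The only subtlety is the sign-flip argument and the realization that the ``$-1$ on non-edges'' and ``$0$ on non-edges'' are related by the simple change of variable $\w = \v + \c$ with $\c$ a unit vector orthogonal to the $\v^i$'s; the padding trick is what makes this rigorous when the $|\w^i|^2$ are not already equal. No genuinely hard step remains beyond organizing these computations carefully.
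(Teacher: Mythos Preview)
Your proof is correct. The direction $\vartheta''(G)\leq\vartheta(G)$ is essentially identical to the paper's: factor the optimal $\overline{Z}$ as a Gram matrix, append one extra coordinate of value $1$, and normalize by $\sqrt{t}$ to obtain an orthonormal representation of value $t$.

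The direction $\vartheta(G)\leq\vartheta''(G)$ differs in a genuinely interesting way. The paper rescales each $\u^i$ so that all first coordinates equal the smallest one $u^1_1$, then builds a Gram matrix of these rescaled vectors together with an extra vector $\v^0=[-1/u^1_1,0,\dots,0]^\top$; this yields an $(n{+}1)\times(n{+}1)$ matrix with $Z'_{0i}=-1$, $Z'_{ij}=0$ on non-edges, and $Z'_{ii}\leq 1$, which after a diagonal boost becomes feasible for the auxiliary program $(D\widetilde{\vartheta'}_G)$. The paper then invokes the earlier equivalence $OPT(D\widetilde{\vartheta'}_G)=\vartheta(G)$ (Theorem~\ref{ththetasecprog}, proved via Schur complements) to conclude. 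Your route instead rescales to make $\c\cdot\w^i=1$, pads with $n$ fresh coordinates to equalize all $|\overline{\w}^i|^2$ to $t$, and then translates by $\overline{\c}$ to land directly in the $n\times n$ program $(\vartheta_G)$. Your argument is fully self-contained---it does not rely on Theorem~\ref{ththetasecprog} or the $(D\widetilde{\vartheta'}_G)$ formulation---at the modest cost of working in a larger ambient space. The paper's diagonal boost and your coordinate padding are the same operation at the matrix versus vector level; the real difference is the target formulation and the resulting dependency structure.
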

\begin{proof} We will first show $\vartheta(G)\leq \vartheta''(G)$ and then $\vartheta''(G)\leq
\vartheta(G)$.\\
\noindent $\vartheta(G)\leq \vartheta''(G)$\\
\noindent 
We will start from an optimal orthonormal representation of $G$ and we will construct a feasible
solution of 
$(D\widetilde{\vartheta'}_G)$ from \eqref{dvarthetaprim1}-\eqref{dvarthetaprim3} that we proved 
(Theorem~\ref{ththetasecprog}) to reach the optimum $\vartheta(G)$.
Without loss of generality, we can consider $|u^1_1|\leq |u^i_1|~\forall i\in[1..n]$, and
so, \eqref{eqthetaortho2} states that $\vartheta''(G)=\left(\frac 1{u^1_1}\right)^2$. Let us
construct vectors $\v^0,~\v^1,~\v^2,\dots \v^n$ such that $\v^0=[-\frac 1{u^1_1}~0~0~0\dots 0]^\top
$ and for all $i\in[1..n]$ we set $\v^i=\frac{u^1_1}{u^i_1}\u^i$. We construct the Gram matrix $Z'$
of these vectors such that $Z'_{ij}=\v^i\sprod \v^j~\forall i,j\in[0..n]$. We have $Z'\succeq
\zeros$ using Prop.~\ref{propranktransprod}, $Z'_{00}=\left(\frac 1{u^1_1}\right)^2=\vartheta''(G)$, and $Z'_{ij}=0~\forall \{i,j\}\notin E$, using
$\v^i\sprod \v^j=\frac{u^1_1}{u^i_1}\frac{u^1_1}{u^j_1}\u^i\sprod \u^j$ and the fact that
$\u^1,~\u^2,\dots \u^n$ are a orthonormal representation of $G$. We also have 
$Z'_{0i}=-1$, because $v^i_1=u^1_1~\forall i\in[1..n]$.

The diagonal elements are $Z'_{ii}=\left(\frac{u^1_1}{u^i_1}\right)^2 \u^i\sprod \u^i\leq
1~\forall i\in[1..n]$, where we used $|u^1_1|\leq |u^i_1|$ and the fact that 
$\u_1,~\u_2,~\u_3,\dots \u_n$ are unitary. We transform $Z'$ into a matrix $Z$ by only increasing the
diagonal elements up to $Z_{ii}=1~\forall i\in[1..n]$; $Z$ remains SDP as the sum of $Z$
and a positive diagonal matrix. This matrix $Z$ is a feasible solution in
$(D\widetilde{\vartheta'}_G)$ in \eqref{dvarthetaprim1}-\eqref{dvarthetaprim3}
with objective value $Z'_{00}=\left(\frac 1{u^1_1}\right)^2=\vartheta''(G)$.
Since $OPT(D\widetilde{\vartheta'}_G)=\vartheta(G)$ as shown in the proof of
Theorem~\ref{ththetasecprog}, we have $\vartheta(G)\leq \vartheta''(G)$.\\
~\\
\noindent $\vartheta''(G)\leq \vartheta(G)$\\
We start from $(\vartheta_G)$ in \eqref{theta1}-\eqref{theta3} and 
we will construct an orthogonal representation of value $\vartheta(G)$.
Notice
the optimal solution $\overline{Z}$ of \eqref{theta1}-\eqref{theta3} verifies $\overline{Z}_{ii}=\vartheta(G)-1$ and
$\overline{Z}_{ij}=-1~\forall \{i,j\}\notin E$. Using the Cholesky factorisation
from
Prop.~\ref{propCholeskySDP}
(or the eigenvalue or the square root factorization as in Corollary \ref{corSdpToVectors}), 
there exist $\v_1,~\v_2,\dots \v_n$ such that
$\overline{Z}_{ij}=\v_i\sprod \v_j$. We construct the representation 
$\u_i= \frac{1}{\sqrt{\vartheta(G)}}\begin{bmatrix}1\\
\v_i\end{bmatrix}~\forall i\in[1..n]$. We verify the following:
\begin{itemize}
    \item[--] $|\u_i|^2=\frac 1{\vartheta(G)}\left(1+\v_i\sprod \v_i\right)=\frac{\vartheta(G)}{\vartheta(G)}=1~\forall i\in[1..n]$, \ie, 
                the representation is unitary;
    \item[--] for $\{i,j\}\notin E$, we have $\u_i\sprod \u_j=\frac
                                              1{\vartheta(G)}\left(1+\v_i\sprod\v_j\right)=0$, \ie,
                the representation is orthonormal;
    \item[--] the value of the representation in \eqref{eqthetaortho2} is 
                $\left(\frac{1}{\frac 1{\sqrt{\vartheta(G)}}}\right)^2=\vartheta(G)$
because $u^i_1=\frac 1{\sqrt{\vartheta(G)}}~\forall i\in[1..n]$.
\end{itemize}
This shows $\vartheta''(G)\leq \vartheta(G)$, which finishes the proof.

The proof of $\vartheta(G)\leq \vartheta''(G)$ is personal. The proof of $\vartheta''(G)\leq
\vartheta(G)$ is adapted from the proof of ``$\vartheta_1(G,w)\leq \vartheta_2(G,w)$'' from
Section 6 of the survey article ``The sandwich theorem'' of Donald Knuth.%
\footnote{Published in the \textit{Electronic Journal of Combinatorics}
in 1994 (1), available at 
\url{http://www.combinatorics.org/ojs/index.php/eljc/article/view/v1i1a1/pdf}.}
\end{proof}

\subsection{A fourth formulation $\vartheta^\ell(G)$ of the theta number}
We consider an orthonormal representation $\u^1,~\u^2,\dots \u^n$ of $\overline{G}$, \ie, we have
$\u^i\sprod \u^j=0~\forall \{i,j\}\in E$. The \textit{leaning} of this representation is 
$\sum\limits_{i=1}^n (\e_1\sprod \u^i)^2
=
\sum\limits_{i=1}^n (\u^i_1)^2$, where $\e_1=[1~\underbrace{0~0\dots 0}_{n-1}]^\top$. 
One can think of this {\it leaning} term in the sense that any vector is leaning somehow
on (casts its shadow on) the first dimension. We introduce:
\begin{equation}\label{eqthetaortho3}
\vartheta^\ell(G)=\max\left\{\sum_{i=1}^n (\e_1\sprod \u^i)^2:~
                          \{\u^1,\u^2,\dots \u^n\} \text{ is an orthonormal representation of }\overline{G}
                    \right\}
\end{equation}
\begin{proposition}$\vartheta^\ell(G)=\vartheta(G)$
\end{proposition}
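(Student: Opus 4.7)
The plan is to show $\vartheta^\ell(G) = \vartheta(G)$ by reducing both directions to the dual SDP $(D\vartheta_G)$ from~\eqref{dtheta1}--\eqref{dtheta4}, which attains $\vartheta(G)$ by the strong duality Theorem~\ref{thStrongDual3}.

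For $\vartheta^\ell(G) \leq \vartheta(G)$, I would take any orthonormal representation $\{\u^1, \dots, \u^n\}$ of $\overline{G}$ with leaning $L = \sum_{i=1}^n (u^i_1)^2$ (the case $L = 0$ is trivial since $\vartheta(G) \geq \alpha(G) \geq 1$) and promote it to a feasible $Y$ for $(D\vartheta_G)$ of objective value at least $L$. Setting $p_i = u^i_1$, $U = [\u^1~\u^2~\dots~\u^n]$, and $D_p = \texttt{diag}(p_1,\dots,p_n)$, define $Y = \frac{1}{L}(UD_p)^\top(UD_p)$, equivalently $Y_{ij} = \frac{p_i p_j (\u^i \sprod \u^j)}{L}$. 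Then $Y\succeq\zeros$ (as a Gram matrix, Prop.~\ref{propranktransprod}); $Y_{ij}=0$ on every $\{i,j\}\in E$ by the orthonormality condition $\u^i\sprod\u^j=0$; $\texttt{trace}(Y) = \frac{1}{L}\sum_i p_i^2|\u^i|^2 = 1$ since each $\u^i$ is unitary; and
$$\ones\sprod Y = \frac{1}{L}\left|\sum_i p_i \u^i\right|^2 \geq \frac{1}{L}\left(\e_1\sprod\sum_i p_i\u^i\right)^2 = \frac{1}{L}\left(\sum_i p_i^2\right)^2 = L.$$
Hence $\vartheta(G) \geq \ones\sprod Y \geq L$, and taking the supremum over representations yields $\vartheta(G)\geq \vartheta^\ell(G)$.

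For the reverse $\vartheta(G) \leq \vartheta^\ell(G)$, I would start from an optimal $Y$ of $(D\vartheta_G)$ and factor $Y = V^\top V$ with columns $\v^1,\dots,\v^n$ of $V$ via Corollary~\ref{corSdpToVectors}, so that $\v^i\sprod\v^j = Y_{ij}$. Then $\v^i\sprod\v^j=0$ for $\{i,j\}\in E$, $\sum_i|\v^i|^2 = \texttt{trace}(Y) = 1$, and $\left|\sum_i\v^i\right|^2 = \ones\sprod Y = \vartheta(G)$. Applying a rotation that sends the unit vector $\frac{\sum_i\v^i}{|\sum_i\v^i|}$ to $\e_1$ (such a rotation exists by the construction described in footnote~\ref{testfootpage2} and preserves all inner products and norms), we get $\sum_i(\e_1\sprod\v^i) = \sqrt{\vartheta(G)}$. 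I would then unitize by setting $\u^i = \v^i/|\v^i|$ whenever $\v^i\neq\zeros$, and otherwise picking $\u^i$ to be any unit vector orthogonal to the others in a sufficiently large ambient space; this yields an orthonormal representation of $\overline{G}$. Cauchy--Schwarz applied to the sequences $(|\v^i|)_i$ and $(\e_1\sprod\u^i)_i$ then gives
$$\vartheta(G) = \left(\sum_i|\v^i|\cdot(\e_1\sprod\u^i)\right)^2 \leq \left(\sum_i|\v^i|^2\right)\left(\sum_i(\e_1\sprod\u^i)^2\right) = \sum_i(\e_1\sprod\u^i)^2,$$
so the leaning is at least $\vartheta(G)$, proving $\vartheta^\ell(G)\geq\vartheta(G)$.

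The main obstacle will be the bookkeeping in the degenerate situations (zero leaning $L$ in the first direction, or zero columns $\v^i$ in the second), but these are dispatched trivially by the lower bound $\vartheta(G)\geq 1$ and by enlarging the ambient space to supply orthogonal unit vectors for the vanishing columns; beyond this, the argument is a clean Cauchy--Schwarz combined with a carefully chosen congruence $Y = \frac{1}{L}(UD_p)^\top(UD_p)$ that simultaneously enforces the trace-one condition and the edge-vanishing condition.
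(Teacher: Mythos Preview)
Your proof is correct but takes a genuinely different route from the paper's. The paper establishes an \emph{exact value-preserving bijection} between orthonormal representations of $\overline{G}$ and feasible solutions of $(\vartheta'_G)$ from \eqref{thetaprim1}--\eqref{thetaprim5} (the $(n{+}1)\times(n{+}1)$ SDP with the constraints $y_{ii}=y_{0i}$): a feasible $Y$ factors into vectors $\c,\v^1,\dots,\v^n$ with $|\c|=1$ and $|\v^i|^2=\c\sprod\v^i=d_i^2$, one rotates so that $\c=\e_1$, and then $\u^i=\v^i/d_i$ is a unit orthonormal representation with leaning exactly $\sum d_i^2$, matching the SDP objective. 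You instead work with the $n\times n$ dual $(D\vartheta_G)$ and prove two \emph{inequalities}: in one direction you build $Y=\frac1L(UD_p)^\top(UD_p)$ and bound $\ones\sprod Y\geq L$ via the coordinate projection onto $\e_1$; in the other you factor $Y$, rotate $\sum_i\v^i$ onto $\e_1$, normalise, and invoke Cauchy--Schwarz on $(|\v^i|)_i$ against $(\e_1\sprod\u^i)_i$. The paper's bijection is conceptually cleaner (it gives a direct dictionary between the two formulations, with equality throughout), but it relies on the prior equivalence $\vartheta'(G)=\vartheta(G)$ from Theorem~\ref{ththetasecprog}. Your argument bypasses $(\vartheta'_G)$ entirely and exposes the Cauchy--Schwarz mechanism that links the trace-one SDP to the leaning; the price is that you only get inequalities at each step rather than a structural correspondence.
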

\begin{proof} We will show that generating an orthonormal representation of $\overline{G}$ in
\eqref{eqthetaortho3} is equivalent 
to generating a feasible solution of $(\vartheta'_G)$ in \eqref{thetaprim1}-\eqref{thetaprim5} with the same objective
value. We start by noticing that generating a feasible solution $Y\in\R^{(n+1)\times(n+1)}$ of $(\vartheta'_G)$ is equivalent to generating $n+1$
vectors $\c,\overline{\v}^1,\overline{\v}^2,\dots \overline{\v}^n$ such that $Y_{ij}=\overline{\v}^i\sprod \overline{\v}^j~\forall i,j\in[1..n]$ and
$Y_{0i}=Y_{i0}=\c\sprod \overline{\v}^i~\forall i\in[1..n]$. Given constraints
\eqref{thetaprim2}-\eqref{thetaprim4}, these vectors need to satisfy: $\c\sprod \c =Y_{00}=1$,
$\overline{\v}^i\sprod \overline{\v}^j=0~\forall \{i,j\}\in E$, and 
$\c\sprod \overline{\v}^i=\overline{\v}^i\sprod \overline{\v}^i = d^2_i,~\forall i\in[1..n]$.
The objective value of $Y$ in $(\vartheta'_G)$ is $\sum\limits_{i=1}^n d_i^2$.

Since there exists an orthonormal rotation matrix $R$ that maps $\c$ to $R\c=\e_1$ and leaves unchanged scalar products (see
Footnote \codefootnotesec, p.~\pageref{testfootpage2}), generating vectors
$\c,\overline{\v}^1,\overline{\v}^2,\dots \overline{\v}^n$ with above properties
is equivalent to generating vectors
$R\c=\e_1,R\overline{\v}^1={\v}^1,R\overline{\v}^2={\v}^2,\dots R\overline{\v}^n={\v}^n$ with the same pairwise scalar products, \ie, 
such that
${\v}^i\sprod {\v}^j=0~\forall \{i,j\}\in E$ and 
$\e\sprod {\v}^i={\v}^i\sprod {\v}^i = d^2_i,~\forall i\in[1..n]$ because $RR^\top=I$.
Remark $v^i_1=d^2_i$.

Generating vectors $\e_1,\v^1,\v^2,\dots \v^n$ with above properties is equivalent to
generating vectors $\e_1,\u^1,\u^2,\dots \u^n$, where $\u^i=\frac 1{d_i}\v^i~\forall i\in[1..n]$
(if $d_i=0$, we have $\v^i=\zeros$ and we set
$\u^i=\left[\begin{smallmatrix}\v^i\\1\end{smallmatrix}\right]=\left[\begin{smallmatrix}\zeros\\1\end{smallmatrix}\right]$ and extend all
other $\u^j~\forall j\in[1..n]-\{i\}$
with a 0 element).
One can check that $|\u^i|^2=\u^i\sprod \u^i=\frac 1{d^2_i}\v^i\sprod\v^i=1~\forall i\in[1..n]$ and 
$\u^i\sprod\u^j=\frac 1{d_i}\frac 1{d_j}\v^i\sprod \v^j=0~\forall \{i,j\}\in E$.
This means that $\u^1,~\u^2,\dots \u^n$ is an orthonormal representation of $\overline{G}$ such 
that $u^i_1=d_i~\forall i\in[1..n]$. The leaning of $\u^1,~\u^2,\dots \u^n$ in \eqref{eqthetaortho3} is $\sum\limits_{i=1}^n d_i^2$, \ie, 
the same as the objective value of feasible matrix $Y$ in $(\vartheta'_G)$ in \eqref{thetaprim1}-\eqref{thetaprim5}.
This confirms $\vartheta^\ell(G)=OPT (\vartheta'_G)=\vartheta(G)$.
\end{proof}

\subsection{Two formulations of the theta number using maximum eigenvalues}

\subsubsection{A formulation only using the maximum eigenvalue}
Let us introduce:
\begin{equation}
\vartheta^{\lambda_{\max}}(G)=\max\big\{ 
            {\lambda_{\max}}(Z):
                ~Z\succeq \zeros,~Z_{ii}=1~\forall i\in[1..n],~
                 z_{ij}=0~\forall \{i,j\}\in E
            \big\},
\label{thetaeigen}
\end{equation}
where ${\lambda_{\max}}(Z)$ is the maximum eigenvalue of $Z$.

\begin{theorem} $\vartheta^{\lambda_{\max}}(G)=\vartheta(G)$ \label{ththetaeigen}\end{theorem}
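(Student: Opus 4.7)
The plan is to reduce \eqref{thetaeigen} to the orthonormal-representation formulation $\vartheta^\ell(G)$ and then invoke the immediately preceding identity $\vartheta^\ell(G)=\vartheta(G)$. The bridge is the Gram factorization of SDP matrices.

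First I would note that, by Corollary~\ref{corSdpToVectors}, any $Z\succeq\zeros$ with $Z_{ii}=1$ and $Z_{ij}=0$ for $\{i,j\}\in E$ factors as $Z=U^\top U$, where the columns $\u^1,\dots,\u^n$ of $U$ satisfy $\u^i\sprod\u^j=Z_{ij}$. The constraints translate exactly into: each $\u^i$ is unit-length and $\u^i\sprod\u^j=0$ whenever $\{i,j\}\in E$; i.e.\ $\{\u^1,\dots,\u^n\}$ is an orthonormal representation of $\overline{G}$. Conversely every such representation gives rise to a feasible $Z$, so choosing $Z$ in \eqref{thetaeigen} is equivalent to choosing an orthonormal representation of $\overline{G}$.

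Next I would express $\lambda_{\max}(Z)$ in terms of the vectors $\u^i$. The key observation is that $U^\top U$ and $UU^\top$ share the same nonzero eigenvalues: if $(UU^\top)\v=\lambda\v$ with $\lambda\neq 0$, then $U^\top\v$ is a nonzero eigenvector of $U^\top U$ with the same eigenvalue. Since both products are SDP, this gives $\lambda_{\max}(Z)=\lambda_{\max}(U^\top U)=\lambda_{\max}(UU^\top)$. Applying Lemma~\ref{lemmaMinRayleigh} to $-UU^\top$ and using $UU^\top=\sum_{i=1}^n \u^i{\u^i}^\top$ then yields
\[
\lambda_{\max}(Z)\;=\;\max_{|\c|=1}\,\c^\top(UU^\top)\c\;=\;\max_{|\c|=1}\,\sum_{i=1}^n(\c\sprod\u^i)^2.
\]

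Finally I would invoke the rotation argument already used in the treatment of $\vartheta''(G)$ to replace the outer $\max_{|\c|=1}$ by an evaluation at $\c=\e_1$: given any orthonormal representation $\{\u^i\}$ of $\overline{G}$ and any unit $\c$, any orthogonal $R$ with $R^\top\e_1=\c$ turns $\{\u^i\}$ into the orthonormal representation $\{R\u^i\}$ of $\overline{G}$, and $\sum_i(\c\sprod\u^i)^2=\sum_i(\e_1\sprod R\u^i)^2$. Combining this with the bijection from the first step gives
\[
\vartheta^{\lambda_{\max}}(G)\;=\;\max_{\{\u^i\}}\max_{|\c|=1}\sum_{i=1}^n(\c\sprod\u^i)^2\;=\;\max_{\{\u^i\}}\sum_{i=1}^n(\e_1\sprod\u^i)^2\;=\;\vartheta^\ell(G)\;=\;\vartheta(G),
\]
the last equality being the proposition just proved. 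The only non-routine ingredient is the $U^\top U$--$UU^\top$ eigenvalue identity, which is a one-line linear-algebra fact; everything else is either the Gram factorization from Corollary~\ref{corSdpToVectors} or a rotation argument already deployed earlier in this section, so no real obstacle is anticipated.
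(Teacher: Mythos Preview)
Your proof is correct but follows a genuinely different route from the paper's. You go through the orthonormal-representation picture: factor $Z=U^\top U$, use that $U^\top U$ and $UU^\top$ share their nonzero spectrum to turn $\lambda_{\max}(Z)$ into $\max_{|\c|=1}\sum_i(\c\sprod\u^i)^2$, rotate $\c$ to $\e_1$, and invoke $\vartheta^\ell(G)=\vartheta(G)$. The paper instead stays purely in matrix language: it writes $\lambda_{\max}(Z)=\max_{|\x|=1}\x^\top Z\x$ and then exhibits an explicit bijection between pairs $(Z,\x)$ and feasible $Y$ in $(D\vartheta_G)$ via the diagonal congruence $Y_{ij}=Z_{ij}x_ix_j$ (that is, $Y=\texttt{diag}(\x)\,Z\,\texttt{diag}(\x)$), checking that $\texttt{trace}(Y)=|\x|^2=1$ and $\ones\sprod Y=\x^\top Z\x$; the inverse map is $x_i=\sqrt{Y_{ii}}$ and $Z_{ij}=Y_{ij}/(x_ix_j)$ with the obvious convention when $x_i=0$. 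So the paper links \eqref{thetaeigen} directly to $(D\vartheta_G)$, whereas you detour through $\vartheta^\ell$. Your approach has the merit of unifying two of the geometric formulations and making the ``leaning'' interpretation of $\lambda_{\max}$ transparent; the paper's approach is shorter, entirely self-contained (it needs neither the $U^\top U$--$UU^\top$ spectral fact nor the prior $\vartheta^\ell$ proposition), and singles out the diagonal-scaling trick, which is worth knowing in its own right.
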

\begin{proof} Using Lemma~\ref{lemmaMinRayleigh}, the maximum eigenvalue of 
$Z$ can be computed as ${\lambda_{\max}}(Z)=\max\limits_{|\x|=1} Z\sprod (\x\x^\top)$.
As such, \eqref{thetaeigen} is equivalent to:
\begin{equation}
\vartheta^{\lambda_{\max}}(G)=\max\big\{
            Z\sprod (\x\x^\top):
                ~Z\succeq \zeros,~Z_{ii}=1~\forall i\in[1..n],~
                 Z_{ij}=0~\forall \{i,j\}\in E,~
                 |\x|=1
            \big\}
\label{thetaeigen2}
\end{equation}
We show that any feasible solution $(Z,\x)$ of above program can be mapped to a feasible solution 
$Y$ of $(D\vartheta_G)$ from \eqref{dtheta1}-\eqref{dtheta4}. We showed
 in Section~\ref{secThetaDual} that $\vartheta(G)=OPT(D\vartheta_G)$.
For the reader's convenience, we recall below the definition of this program.
\begin{equation}\label{dthetalocal}
D\vartheta(G)=\max \left\{
                    Y\sprod \ones:~
                    Y\succeq \zeros,~
                    Y_{ij}=0~\forall \{i,j\}\in E,~
                    \texttt{trace}(Y) = 1
                \right\}
\end{equation}
Consider matrix $Y$ obtained by multiplying each row and column $i$ of $Z$ with $x_i$. In
other words, $Y_{ij}=Z_{ij}x_ix_j$. It is not hard to check that $Y$ reaches in \eqref{dthetalocal}
the same objective value $Y\sprod \ones=\sum\limits_{i,j\in [1..n]}Z_{ij}x_ix_j=Z\sprod
(\x\x^\top)$. Furthermore, $Y$ satisfies all constraints of \eqref{dthetalocal}, \ie, 
$\texttt{trace}(Y)=\sum\limits_{i=1}^n x^2_i=|\x|^2=1$.

By reversing the above transformation, it is quite straightforward to prove the converse: any feasible solution $Y$ of \eqref{dthetalocal} can be
mapped to a feasible solution $(Z,\x)$ of \eqref{thetaeigen2}. For this, we take
$x_{i}=\sqrt{Y_{ii}}~\forall i\in [1..n]$. We obtain $Z$ by dividing each row and column $i$
of $Y$ with $x_i$ for all $i$ such that $x_i>0$. More exactly, we obtain
$Z_{ij}=\frac{Y_{ij}}{x_ix_j}~\forall i,j\in[1..n],~x_i>0,x_j>0$. If $x_i=0$, we set 
$Z_{ii}=1$ and $Z_{ij}=0~\forall j\in [1..n]-\{i\}$. It is not hard to check that
$(Z,\x)$ satisfies the constraints of \eqref{thetaeigen2}. For instance, we have
$|\x|^2=\texttt{trace}(Y)=1$ and $Z_{ii}=\frac{Y_{ii}}{\sqrt{Y_{ii}}^2}=1$.
One can also check $Y\sprod \ones=\sum\limits_{i,j\in [1..n]}Y_{ij}=\sum\limits_{i,j\in
[1..n]}Z_{ij}x_ix_j=Z\sprod (\x\x^\top)$,
where we used $Y_{ij}=Z_{ij}x_ix_j$ that holds even if $x_i=0$ or $x_j=0$.

This proves \eqref{thetaeigen2} and \eqref{dthetalocal} are equivalent,
which means $\vartheta^{\lambda_{\max}}(G)=D\vartheta(G)=\vartheta(G)$.
\end{proof}

\subsubsection{A formulation using the maximum and the minimum eigenvalue}

Let us note ${\lambda_{\min}}(X)$ and ${\lambda_{\max}}(X)$ the minimum and resp.~the maximum
eigenvalue of $X$. We introduce
\begin{equation}
\vartheta^{\lambda}(G)=\max\big\{ 
            1 - \frac{{\lambda_{\max}}(X)}{{\lambda_{\min}}(X)}:~
                X_{ii}=0~\forall i\in[1..n],~
                X_{ij}=0~\forall \{i,j\}\in E
            \big\},
\label{thetaeigenbis}
\end{equation}
where we use the convention $\frac{{\lambda_{\max}}(X)}{{\lambda_{\min}}(X)}=0$ if
${{\lambda_{\max}}(X)}={{\lambda_{\min}}(X)}=0$. 
Since the only SDP matrix with zeros on the diagonal is $\zeros$,
any feasible $X\neq\zeros $ in 
above \eqref{thetaeigenbis} verifies $X\nsucceq \zeros$ and $X\npreceq \zeros$, and so,
${\lambda_{\max}}(X)>0>{\lambda_{\min}}(X)$. Any $X\neq \zeros$ yields a value 
$1 - \frac{{\lambda_{\max}}(X)}{{\lambda_{\min}}(X)}>1$ in \eqref{thetaeigenbis}.
If $X=\zeros$ is the optimal solution of \eqref{thetaeigenbis}, then $G$ must be a clique,
so that constraints $                X_{ij}=0~\forall \{i,j\}\in E$ force every non-diagonal
element of $X$ to be zero. In this case, we have
$\alpha(G)=\vartheta(G)=\chi(\overline{G})=\vartheta^{\lambda}(G)=1$.
\begin{theorem}$\vartheta^{\lambda}(G)=\vartheta(G)$\end{theorem}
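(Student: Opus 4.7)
The plan is to reduce to the already-established equivalence $\vartheta^{\lambda_{\max}}(G)=\vartheta(G)$ from Theorem~\ref{ththetaeigen} by exhibiting a natural bijection (up to scaling and shifting) between feasible solutions of the two programs. The basic idea is that the affine maps $X\mapsto I-X/\lambda_{\min}(X)$ and $Z\mapsto Z-I$ transport feasible $X$ in \eqref{thetaeigenbis} to feasible $Z$ in \eqref{thetaeigen} and vice versa, with the right objective inequalities.

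I would first dispose of the degenerate case where the constraints force $X=\zeros$: since $\texttt{trace}(X)=0$ and $X$ is symmetric, $X\ne \zeros$ implies $\lambda_{\min}(X)<0<\lambda_{\max}(X)$. If $X=\zeros$ is the only feasible matrix (which happens precisely when $G$ is a clique), both $\vartheta^{\lambda}(G)$ and $\vartheta(G)$ equal $1$, so the claim holds. Otherwise an optimum is attained on some $X\ne \zeros$, and the ratio $\lambda_{\max}(X)/\lambda_{\min}(X)$ is well-defined and strictly negative.

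For the inequality $\vartheta^{\lambda_{\max}}(G)\geq \vartheta^{\lambda}(G)$, take any feasible $X\ne\zeros$ in \eqref{thetaeigenbis} and set
$$Y=I-\frac{1}{\lambda_{\min}(X)}X.$$
Since $X-\lambda_{\min}(X)I\succeq\zeros$ and $-\lambda_{\min}(X)>0$, we have $Y\succeq\zeros$. From $X_{ii}=0$ we get $Y_{ii}=1$, and from $X_{ij}=0$ for $\{i,j\}\in E$ we get $Y_{ij}=0$ on edges; thus $Y$ is feasible in \eqref{thetaeigen}. A direct eigenvalue computation (eigenvalues of $Y$ are $1-\mu/\lambda_{\min}(X)$ as $\mu$ ranges over the spectrum of $X$, and $1/\lambda_{\min}(X)<0$) yields $\lambda_{\max}(Y)=1-\lambda_{\max}(X)/\lambda_{\min}(X)$, which is exactly the objective value of $X$ in \eqref{thetaeigenbis}. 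Hence $\vartheta^{\lambda_{\max}}(G)\geq \lambda_{\max}(Y)=\vartheta^{\lambda}(G)$.

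For the reverse inequality, take any feasible $Z$ in \eqref{thetaeigen} and set $X=Z-I$. Then $X_{ii}=0$, $X_{ij}=0$ on edges, so $X$ is feasible in \eqref{thetaeigenbis}; moreover $\lambda_{\max}(X)=\lambda_{\max}(Z)-1$ and $\lambda_{\min}(X)=\lambda_{\min}(Z)-1$. The trace constraint $\texttt{trace}(Z)=n$ gives $\lambda_{\min}(Z)\leq 1\leq\lambda_{\max}(Z)$ and $Z\succeq\zeros$ gives $\lambda_{\min}(Z)\geq 0$; if $\lambda_{\min}(Z)=1$ then $Z=I$, $X=\zeros$ and the objective is trivially $1=\lambda_{\max}(Z)$. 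Otherwise a brief computation gives
$$1-\frac{\lambda_{\max}(X)}{\lambda_{\min}(X)}=\frac{\lambda_{\max}(Z)-\lambda_{\min}(Z)}{1-\lambda_{\min}(Z)},$$
and the inequality
$$\frac{\lambda_{\max}(Z)-\lambda_{\min}(Z)}{1-\lambda_{\min}(Z)}\geq \lambda_{\max}(Z)$$
is equivalent, after clearing the positive denominator, to $\lambda_{\min}(Z)(\lambda_{\max}(Z)-1)\geq 0$, which holds because $\lambda_{\min}(Z)\geq 0$ and $\lambda_{\max}(Z)\geq 1$. Thus every feasible $Z$ yields an $X$ with objective value at least $\lambda_{\max}(Z)$, so $\vartheta^{\lambda}(G)\geq \vartheta^{\lambda_{\max}}(G)$. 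Combining the two inequalities with Theorem~\ref{ththetaeigen} gives $\vartheta^{\lambda}(G)=\vartheta^{\lambda_{\max}}(G)=\vartheta(G)$. The only mildly delicate step is the second direction, where one has to guess the change of variable $X=Z-I$ and notice that the trace normalization of $Z$ forces $\lambda_{\max}(Z)\geq 1$, which is exactly what makes the key inequality $\lambda_{\min}(Z)(\lambda_{\max}(Z)-1)\geq 0$ go through; everything else is routine.
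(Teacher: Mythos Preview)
Your proof is correct and follows essentially the same approach as the paper: both directions use the same changes of variable ($Y=I-X/\lambda_{\min}(X)$ is exactly the paper's $Z=X'+I$ with $X'=X/|\lambda_{\min}(X)|$, and $X=Z-I$ is identical). The only cosmetic difference is in the reverse direction, where you express the key inequality as $\lambda_{\min}(Z)(\lambda_{\max}(Z)-1)\geq 0$ using the trace normalization $\texttt{trace}(Z)=n$, whereas the paper writes the equivalent fact as $-1\leq\lambda_{\min}(X)<0$ and $\lambda_{\max}(X)>0$; both are the same observation in different coordinates.
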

\begin{proof}
We take the optimal $X$ in \eqref{thetaeigenbis} and we construct a feasible solution $Z$ of
\eqref{thetaeigen} with the same objective value. 
As stated above, 
$X=\zeros$
leads to
 $\vartheta^{\lambda}(G)=\vartheta(G)=1$
and any $X\neq \zeros$
satisfies
${\lambda_{\min}}(X)<0$.
Considering
${\lambda_{\min}}(X)<0$, 
we can construct $X'=\frac{X}{|{\lambda_{\min}}(X)|}$ and notice
$\frac{{\lambda_{\max}}(X)}{{\lambda_{\min}}(X)}=
 \frac{{\lambda_{\max}}(X')}{{\lambda_{\min}}(X')}=
-{{\lambda_{\max}}(X')}$. This means $X'$ is also optimal in \eqref{thetaeigenbis} and
has ${\lambda_{\min}}(X)=-1$, more exactly $X'$ lead \eqref{thetaeigenbis} to objective value
$1+{\lambda_{\max}}(X')$. Consider now $Z=X'+I$ and notice ${\lambda_{\min}}(Z)=0$, and so,
$Z\succeq \zeros$. One can easily check $Z$ is feasible in \eqref{thetaeigen} and has 
objective value ${\lambda_{\max}}(X'+I)={\lambda_{\max}}(X')+1$, \ie, the same as the
objective value of $X$ or $X'$ in \eqref{thetaeigenbis}. This shows $\vartheta^{\lambda}(G)\leq
\vartheta^{\lambda_{\max}}(G)=
\vartheta(G)$.

We still have to prove $
\vartheta^{\lambda_{\max}}(G)\leq \vartheta^{\lambda}(G)$. We now attempt to reverse
the above process. Let $Z$ be the optimal solution 
of \eqref{thetaeigen} 
and
take $X=Z-I$. Notice $X$ is feasible in \eqref{thetaeigenbis} and that
\begin{equation}\label{eqZX}
\vartheta^{\lambda_{\max}}(G)=
{\lambda_{\max}}(Z-I+I)=1+{\lambda_{\max}}(X)=1+\frac{{\lambda_{\max}}(X)}1
\leq 1-\frac{{\lambda_{\max}}(X)}{{\lambda_{\min}}(X)},
\end{equation}
where we used
$0> {\lambda_{\min}}(X)\geq -1$ that we prove now.
In fact, we already left aside 
the particular case ${\lambda_{\min}}(X)=0$ because
that would imply $X\succeq 0$ which would further lead
(by applying $\texttt{diag}(X)=\zeros$) to $X=0$, which means
the optimal $Z$ is $Z=X+I=I$ and the graph is a clique.
We can not have ${\lambda_{\min}}(X)>0$ because
that would imply $X\succ \zeros$, impossible when 
$\texttt{diag}(X)=\zeros$.
We now notice
$Z=X+I\succeq \zeros\implies {\lambda_{\min}}(X+I)\geq 0
\implies {\lambda_{\min}}(X)\geq -1$. This shows
$0> {\lambda_{\min}}(X)\geq -1$, confirming
the last inequality of \eqref{eqZX}. This finishes
the proof because \eqref{eqZX}
simplifies to
$\vartheta^{\lambda_{\max}}(G)
\leq 1-\frac{{\lambda_{\max}}(X)}{{\lambda_{\min}}(X)}
\leq \vartheta^{\lambda}(G)$.
\end{proof}

\subsection{The theta number $\vartheta(G)$ is bounded by the fractional chromatic number
$\chi^*(\overline{G})$ of $\overline{G}$}

\subsubsection{The fractional chromatic number}
Let $\CC$ be the set of cliques of $G$. In the primal-dual programs below we introduce the
most standard formulation of the fractional chromatic number of $\overline{G}$. It is not hard to see that 
the standard chromatic number $\chi(\overline{G})$ is an upper bound for these programs, by 
taking $\lambda_{C_i}=1~\forall i \in [1..\chi(\overline{G})]$,
where $\{C_1,~C_2,\dots C_{\chi(\overline{G})}\}$ is the 
optimal coloring of $\overline{G}$ (clique covering of $G$).

~\\

\noindent
  \begin{minipage}{0.5\textwidth}
  \begin{equation}\notag
    (\hat \chi^*_{\overline{G}})\left\{
      \begin{array}{lll}
        \min&  \sum\limits_{C\in \CC} \lambda_C              &                         \\
        s.t.&  \sum\limits_{\substack{C\in\CC\\ C\ni i}} \lambda_C \geq 1  &  \forall i\in[1..n]     \\
            &   \lambda_C\geq 0                              &  \forall C\in\CC 
      \end{array}
    \right.
  \end{equation}
  \end{minipage}
  \begin{minipage}{0.5\textwidth}     
  \begin{equation}\notag
        (D\hat\chi^*_{\overline{G}})\left\{
            \begin{array}{lll}
                \max & \sum\limits_{i=1}^n x_i          &                 \\
                s.t. & \sum\limits_{i\in C} x_i\leq 1  & \forall C\in \CC \\
                     & x_i\geq 0                        & \forall i\in[1..n]
            \end{array}
        \right.
  \end{equation}
  \end{minipage}

~\\

These primal-dual programs can be modified as follows. Let us drop the non-negativity constraint
$\x\geq \zeros$ in the dual $(D\hat\chi^*_{\overline{G}})$.
We prove by contradiction that the resulting program has only non-negative
optimal solutions.
Assume there is an optimal solution $\x$ such that $x_j<0$ for some $j\in[1..n]$.
We show that by increasing $x_j$ to $0$, $\x$ remains feasible. For this, take
any clique $C\ni j$;
since 
$C-\{j\}$ is naturally a clique and $\x$ is feasible, we have
$\sum_{i\in C-\{j\}}x_i \leq 1$. If we now increase $x_j$ to $0$, the new solution
satisfies 
$\sum_{i\in C}x_i=\sum_{i\in C-\{j\}}x_i\leq 1$.
This new solution is still feasible and has a higher objective value, and so, 
we obtained a contradiction. The assumption $x_j<0$ was false.
The above programs are thus equivalent to the next ones, \ie, 
$OPT(\hat \chi^*_{\overline{G}})=
OPT(D\hat \chi^*_{\overline{G}})=
OPT(\chi^*_{\overline{G}})=
OPT(D\chi^*_{\overline{G}})=
\chi^*(\overline{G})$

\noindent
\refstepcounter{equation}
  \begin{minipage}{0.5\textwidth}
  \begin{equation}\label{chistar1a}
  \tag{\theequation a}
    (\chi^*_{\overline{G}})\left\{
      \begin{array}{lll}
        \min&  \sum\limits_{C\in \CC} \lambda_C              &                         \\
        s.t.&  \sum\limits_{\substack{C\in\CC\\ C\ni i}} \lambda_C =    1  &  \forall i\in[1..n]
\\
            &   \lambda_C\geq 0                              &  \forall C\in\CC 
      \end{array}
    \right.
  \end{equation}
  \end{minipage}
  \begin{minipage}{0.5\textwidth}     
  \begin{equation}\label{chistar1b} 
  \tag{\theequation b}
        (D    \chi^*_{\overline{G}})\left\{
            \begin{array}{lll}
                \max & \sum\limits_{i=1}^n x_i          &                 \\
                s.t. & \sum\limits_{i\in C} x_i\leq 1  & \forall C\in \CC \\
                     & x_i\in \R                        & \forall i\in[1..n]
            \end{array}
        \right.
  \end{equation}
  \end{minipage}

~\\

\subsubsection{A hierarchy of SDP programs sandwiched between $\vartheta(G)$ and $\chi^*(\overline{G})$}

We will use the $(D\widetilde{\vartheta'}_G)$ formulation of $\vartheta(G)$ from
\eqref{dvarthetaprim1}-\eqref{dvarthetaprim3}. For the reader's convenience, we repeat 
the definition of this program, multiplying by $-1$ the first row and column of the
SDP matrix (this does not change its SDP status).
\begin{subequations}
\begin{alignat}{4}[left ={\left(D\widetilde{\vartheta'}_G\right) \empheqlbrace}]
\min~~&t                                    \label{ddvarthetaprim1}\\
s.t.~~&
        \begin{bmatrix}
        t       &  \ones^\top \\
         \ones   &      Z
        \end{bmatrix}
      \succeq \zeros    &&                     \label{ddvarthetaprim2}     \\
        & z_{ii}=1&&\forall i\in [1..n]        \label{ddvarthetaprim3}     \\
        & z_{ij}=0&&\forall \{i,j\}\notin E     \label{ddvarthetaprim4} 
\end{alignat}
\end{subequations}
Notice that $(\chi^*_{\overline{G}})$ in \eqref{chistar1a} is defined using variables indexed by a
subset of the power set $\P([1..n])$ of the vertex set $[1..n]$. This gives us some intuitions
that we might need programs with variables indexed by certain subsets of $[1..n]$. Let us introduce
$\P_r([1..n])=\left\{S\subseteq [1..n]:~|S|\leq r\right\}~\forall r\in[1..n]$. Given a vector
$\y$ indexed by all $I\in \P_{2r}([1..n])$, we can write $\y=(y_I)_{I\in
\P_{2r}([1..n])}$. Let us now
introduce a matrix $M_r(\y)$ with rows and columns indexed by $\P_{r}([1..n])$ such that 
$M_r(\y)_{I,J}=\y_{I\cup J}$. We can also compactly write:
\begin{equation}\label{eqMr} M_r(\y)=(y_{I\cup J})_{I,J\in \P_r([1..n])}.\end{equation}

We now introduce the following program:
\begin{equation}\label{eqpsi}
\psi^r(G)=\big\{       
            \min y_{\emptyset}:~
            M_r(\y)\succeq \zeros,~
            y_{\{i\}}=1~\forall i\in[1..n],~
            y_{\{i,j\}}=0~\forall \{i,j\}\notin E
        \big\},
\end{equation}
where one could notice that $M_r(\y)_{I,I}=M_r(\y)_{\emptyset,I}~\forall I\in \P_r([1..n])$, in
particular $M_r(\y)_{\{i\},\{i\}}=M_r(\y)_{\emptyset,\{i\}}=y_{\{i\}}=1~\forall i \in [1..n]$.
\begin{theorem} $\vartheta(G)=\psi^1(G)\leq \psi^2(G)\leq \dots \leq \psi^{\omega(G)}(G)\leq \chi^*(\overline{G})$.
\end{theorem}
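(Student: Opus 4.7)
The plan is to establish the three parts separately: the equality $\psi^1(G)=\vartheta(G)$, the monotonicity chain, and the bound $\psi^{\omega(G)}(G)\leq \chi^*(\overline{G})$.

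\textbf{Step 1: $\psi^1(G)=\vartheta(G)$.} When $r=1$, $\P_1([1..n])=\{\emptyset\}\cup\{\{i\}:i\in[1..n]\}$, so $M_1(\y)$ is an $(n+1)\times(n+1)$ matrix. Reading the entries: $M_1(\y)_{\emptyset,\emptyset}=y_\emptyset$, $M_1(\y)_{\emptyset,\{i\}}=y_{\{i\}}=1$, $M_1(\y)_{\{i\},\{i\}}=y_{\{i\}}=1$, and $M_1(\y)_{\{i\},\{j\}}=y_{\{i,j\}}$ which equals $0$ when $\{i,j\}\notin E$ and is free otherwise. Writing $t=y_\emptyset$ and collecting the off-diagonal $y_{\{i,j\}}$ into a symmetric $Z$ with $z_{ii}=1$, the matrix $M_1(\y)$ is exactly $\left[\begin{smallmatrix} t & \ones^\top \\ \ones & Z \end{smallmatrix}\right]$ as in $(D\widetilde{\vartheta'}_G)$ from \eqref{ddvarthetaprim1}–\eqref{ddvarthetaprim4}. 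The objective $y_\emptyset$ matches $t$, and the two programs are literally identical.

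\textbf{Step 2: Monotonicity $\psi^r(G)\leq \psi^{r+1}(G)$.} Take any feasible $\y=(y_I)_{I\in\P_{2(r+1)}([1..n])}$ of $\psi^{r+1}(G)$ and let $\y'$ be its restriction to indices $\P_{2r}([1..n])$. Since $\P_r([1..n])\subseteq \P_{r+1}([1..n])$, the matrix $M_r(\y')$ is the principal submatrix of $M_{r+1}(\y)$ obtained by selecting the rows and columns indexed by $\P_r([1..n])$. A principal submatrix of an SDP matrix is SDP (Prop.~\ref{propMinorsNonneg}), so $M_r(\y')\succeq \zeros$. The constraints $y'_{\{i\}}=1$ and $y'_{\{i,j\}}=0$ for $\{i,j\}\notin E$ are inherited, and $y'_\emptyset=y_\emptyset$. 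Hence $\y'$ is feasible for $\psi^r(G)$ with the same objective, proving the inequality.

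\textbf{Step 3: $\psi^{\omega(G)}(G)\leq \chi^*(\overline{G})$.} This is the main part. Let $\lambdalambda=(\lambda_C)_{C\in\CC}$ be an optimal solution of $(\chi^*_{\overline{G}})$ from \eqref{chistar1a}, so $\sum_C\lambda_C=\chi^*(\overline{G})$ and $\sum_{C\ni i}\lambda_C=1~\forall i\in[1..n]$. For each $I\in\P_{2\omega(G)}([1..n])$ define
\[
y_I=\sum_{C\in\CC,~C\supseteq I}\lambda_C.
\]
Then $y_\emptyset=\sum_C\lambda_C=\chi^*(\overline{G})$, $y_{\{i\}}=1$ by the dual equality, and for $\{i,j\}\notin E$ no clique $C$ contains both $i$ and $j$, so $y_{\{i,j\}}=0$. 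It remains to check $M_{\omega(G)}(\y)\succeq\zeros$. The key observation is the factorization: for each clique $C\in\CC$ define the vector $\v^C\in\R^{|\P_{\omega(G)}([1..n])|}$ by $v^C_I=[C\supseteq I]$ (Iverson bracket). Then
\[
M_{\omega(G)}(\y)_{I,J}=y_{I\cup J}=\sum_{C\supseteq I\cup J}\lambda_C=\sum_{C\in\CC}\lambda_C[C\supseteq I][C\supseteq J]=\sum_{C\in\CC}\lambda_C\,v^C_I\,v^C_J,
\]
which says $M_{\omega(G)}(\y)=\sum_{C\in\CC}\lambda_C\,\v^C{\v^C}^\top$, a non-negative combination of rank-one SDP matrices, hence SDP. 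Thus $\y$ is feasible for $\psi^{\omega(G)}(G)$ with value $y_\emptyset=\chi^*(\overline{G})$, which yields the claim.

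The main obstacle is the third step: one must guess the ``moment-like'' construction $y_I=\sum_{C\supseteq I}\lambda_C$ and then recognize that the resulting moment matrix factors as $\sum_C\lambda_C\v^C{\v^C}^\top$, which makes its SDP status transparent. The other two steps are essentially bookkeeping (Step~1) and a principal-submatrix argument (Step~2).
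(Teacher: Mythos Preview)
Your proof is correct and follows essentially the same approach as the paper. Step~1 (identifying $M_1(\y)$ with the matrix in $(D\widetilde{\vartheta'}_G)$), Step~2 (principal-submatrix argument), and Step~3 (building $\y$ from an optimal fractional clique cover and factoring $M_{\omega(G)}(\y)$ as a nonnegative combination of rank-one matrices $\v^C{\v^C}^\top$) all match the paper's argument; the paper presents Step~3 by first introducing the per-clique vectors $\y^C$ with $y^C_S=[S\subseteq C]$ and then taking $\y=\sum_C\lambda_C\y^C$, which is exactly your construction written term by term.
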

\begin{proof}
We first show that 
\eqref{eqpsi} with $r=1$
is equivalent to 
$\left(D\widetilde{\vartheta'}_G\right)$ in
\eqref{ddvarthetaprim1}-\eqref{ddvarthetaprim4}.
It is actually enough to carefully ``decode'' all notations to see that
the two programs are simply
identical up to a notational
translation.
With this goal, we can replace 
$M_1(\y)_{\emptyset,\emptyset}=y_{\emptyset}$ with $t$,
$M_1(\y)_{\{i\},\{j\}}=y_{\{i,j\}}=0$
with $z_{ij}=0~\forall \{i,j\}\notin E$,
and $M_1(\y)_{\{i\},\{j\}}=y_{\{i,j\}}$ with $z_{ij}~\forall \{i,j\}\in E$;
$M_{\emptyset,\{i\}}=y_{\{i\}}=1$ for $i\in[1..n]$ is simply translated into 
the vectors $\ones$ and $\ones^\top $
that border $Z$ in \eqref{ddvarthetaprim2}.
One can check that we have just mapped one program into another, which guarantees that $\vartheta(G)=OPT\left(D\widetilde{\vartheta'}_G\right)=\psi^1(G)$.

We now show that $\psi^r(G)\leq \psi^{r+1}(G)~\forall r\in[1..n]$. 
Notice it is not possible to 
border a solution 
$M_r(\y)$ of $\psi^{r}(G)$ 
with zeros
to obtain a solution
$M_{r+1}(\y')$ of $\psi^{r+1}(G)$, because 
certain elements of 
$M_{r+1}(\y')$
are inherited from 
$M_{r}(\y')$, \eg, a proper bordering imposes $M_{r+1}(\y')_{\emptyset,[1..r+1]}=M_r(\y)_{\{1\},[1..r]}$.
Take the
solution $\y'$ that
achieves the optimum value $\psi^{r+1}(G) $.
The key is to notice
$M_r(\y')$ is a principal minor of $M_{r+1}(\y')$, and so, $M_r(\y')\succeq \zeros$.
By removing from $\y'$ all indices $S\in \P_{2r+2}[1..n]-\P_{2r}[1..n]$, we obtain a vector $\y$ 
indexed by $\P_{2r}[1..n]$ that generates a feasible solution 
$M_r(\y)$ of $\psi^{r}(G)$.
Indeed, $M_r(\y)\succeq \zeros$ because $M_r(\y)=M_r(\y')$ is a principal minor
of $M_{r+1}(\y')$; all other constraints in \eqref{eqpsi} concern the values
of $\y$ on sets of $\P_2([1..n])$ that are inherited from $\y'$. The objective value is the same,
\ie, $y_{\emptyset}=y'_{\emptyset}$. This is enough to state $\psi^r(G)\leq \psi^{r+1}(G)$, because 
\eqref{eqpsi} has a minimizing objective. The value $\psi^r(G)$ could be even strictly less
that $y_{\emptyset}$ because $\psi^r(G)$ is obtained with a program \eqref{eqpsi} with fewer
constraints than $\psi^{r+1}(G)$, \ie, it involves smaller SDP matrices $M_r(\y)$. In other words, there might
exist some $\yy$ indexed by $\P_{2r}([1..n])$ that achieves a lower value $\psi^r(G)$ and that can not
be extended to some feasible $\yy'$ indexed by $\P_{2i+2}([1..n])$.

We finally show $\psi^{\omega(G)}(G)\leq \chi^*(\overline{G})$. We take the optimum solution
$\lambda$ of $(\chi^*_{\overline{G}})$ from \eqref{chistar1a} and construct a feasible 
solution with the same objective value in \eqref{eqpsi} with
$r=\omega(G)=\P_{\omega(G)}([1..n])$. Take any 
clique $C\in \CC\subseteq \P_{\omega(G)}([1..n])$ and construct $\y^C$ indexed by $\P_{2\omega(G)}([1..n])$ such that

\begin{equation}\label{eqcases}
y^C_S=
  \begin{cases} 
   1         & \text{if } S\subseteq C \\
   0         & \text{if } S\nsubseteq C \\
  \end{cases}
\end{equation}
One only needs to decode notations to see that
$M_{\omega(G)}(\y^C)=\overline{\y^C}~\overline{\y^C}^\top\in\{0,1\}^{P_{\omega(G)}([1..n])\times P_{\omega(G)}([1..n])}$, where
$\overline{\y^C}$ is a reduced version of $\y^C$ that contains only
$\P_{\omega(G)}([1..n])$ elements $\overline{y^C_S}=y^C_S$ with $S\in
\P_{\omega(G)}([1..n])$. Indeed, if $M_{\omega(G)}(\y^C)_{I,J}=0$, then $I\cup J \nsubseteq C$ using
\eqref{eqcases}, and so, we have $I\nsubseteq C$ or $J\nsubseteq C$, which means that $y^C_I=0$ or
$y^C_J=0$. Also, if $M_{\omega(G)}(\y^C)_{I,J}=1$, then we have $I\cup J \subseteq C$ by virtue of
\eqref{eqcases}, which means that $I,J\subseteq C$, and so, $y^C_I=y^C_J=1$. This confirms that
$M_{\omega(G)}(\y^C)=\overline{\y^C}~\overline{\y^C}\succeq \zeros$. 

By applying \eqref{eqMr} on $\y=\sum\limits_{C\in\CC}\lambda_C \y^C$ we obtain
$M_{\omega(G)}\left(\y\right)=M_{\omega(G)}\left(\sum\limits_{C\in\CC}\lambda_C \y^C\right)
=
\sum\limits_{C\in\CC} \lambda_C M_{\omega(G)}\left(\y^C\right)$.
As a sum of PSD matrices $M_{\omega(G)}(\y^C)$ resp.~multiplied by positive scalars $\lambda_C$, 
the matrix $M_{\omega(G)}\left(\y\right)$ is SDP. We now check the two
non-SDP constraints of \eqref{eqpsi}.
First, we have
$y_{\{i\}}=\sum\limits_{C\in\CC}\lambda_C \y^C_{\{i\}}= \sum\limits_{C\in\CC, C\ni i} \lambda_C =    1~\forall i\in[1..n]$, using \eqref{chistar1a}.
Secondly, $y_{\{i,j\}}=0~\forall \{i,j\}\notin E$ also holds because the non-edge $\{i,j\}$
belongs to no clique. We have just constructed a feasible solution $\y$ in \eqref{eqpsi} for
$r=\omega(G)$ with objective value 
$y_{\emptyset}=\sum\limits_{C\in \CC} \lambda_C \y^C_{\emptyset} = \sum\limits_{C\in \CC} \lambda_C=\chi^*(\overline{G})$. 
This is enough to conclude $\psi^{\omega(G)}(G)\leq
\chi^*(\overline{G})$.

Parts of this proof are a simplification of the proof of Theorem 3.1.~from 
the article ``The operator $\Psi $ for the chromatic number of a graph''
of Neboj\originalv{s}a Gvozdenovi\'c and Monique Laurent.%
\footnote{Published in \textit{SIAM Journal on Optimization} in 2008, vol 19(2), pp. 572--591,
available on-line as of 2017 at 
\url{http://homepages.cwi.nl/~monique/files/SIOPTGL1.pdf}.}
\end{proof}

\section{A taste of copositive optimization and sum of squares hierarchies}

\subsection{\label{seccopos}Introducing the completely positive and the copositive cones}
Let us try to produce better relaxations and reformulations 
by replacing the cone $S_n^+$ of SDP matrices with a smaller cone.
For this, we introduce the cone of \textit{completely positive matrices}:

\begin{align}\label{eqcomplpositive1}
C^{n*}&=\left\{X\in S_n:~X=\sum_{i=1}^k \y_i\y_i^\top \text{ with } \y_i\geq \zeros~\forall i\in[1..k]\right\}\\
       &=\texttt{conv}\left\{\y\y^\top:\y\geq \zeros~\right\},\label{eqcomplpositive2}
\end{align}
where $S_n\subsetneq \R^{n\times n}$ is the set of real symmetric matrices
and the operator $\texttt{conv}(...)$ produces all convex combinations of the elements from the set given as argument.
It is clear that any $X$ from \eqref{eqcomplpositive1} can be written as a convex combination of rank-1 matrices
as in \eqref{eqcomplpositive2}.
For this, it is enough to write $X=\sum_{i=1}^k \y_i\y_i^\top=
\sum_{i=1}^k \frac 1k \left(\sqrt{k}\y_i\right)\left(\sqrt{k}\y_i\right)^\top$.
Any $X\in C^{n*}$ 
can be written $X=YY^\top$ with $Y=[\y_1~\y_2~\dots \y_k]$, and so, 
it is clear that $X$ is also SDP using Prop.~\ref{propranktransprod}.
The difference between a completely positive matrix $X$ and an SDP one is that
above $Y$ needs to be non-negative. Any SDP matrix $\widetilde {X}\succeq \zeros$ can be written as $\widetilde{X}=\widetilde{Y}\widetilde{Y}^\top$
using various decompositions (\eg, Cholesky, eigenvalue or square root, see Corollary \ref{corSdpToVectors}),
but $\widetilde{Y}$  is not necessarily non-negative.
For any $n\geq 2$, we have $C^{n*}\subsetneq S_n^+$.

We can define the dual of a cone $C$ (with regards to the scalar product) using
the formula
$C^*=\left\{Y:~X\sprod Y\geq 0~\forall X\in C\right\}$. We already proved in
Prop.~\ref{propSDPSelfDual} that the SDP cone is self-dual, \ie, $(S_n^+)^*=S_n^+$.
Since $C^{n*}$ is smaller than $S_n^*$, its dual cone might be larger than $S_n^+$. Indeed,
we introduce the \textit{cone of copositive matrices}
\begin{equation}\label{eqcopositive}
C^n=\left\{X\in S_n:~X\sprod \y\y^\top\geq 0~\forall \y\geq \zeros\right\}
\end{equation}
such that $C^n=(C^{n*})^*$.

Let $\NN^n\subsetneq S_n$ be the set of non-negative symmetric matrices. 
The following hierarchy of inclusions
\begin{equation}\label{eqHierarchySDP}
C^{n*}\subset S_n^+\cap \NN^n \subset S_n^+ \subset S_n^+ + \NN^n \subset C^n
\end{equation}
 follows from two short arguments.
First, any $X\in C^{n^*}$ satisfies $X\succeq\zeros$ (see above)
and $X\geq\zeros$ (see the definition \eqref{eqcomplpositive1}).
Secondly, any $S\succeq \zeros$ and $N\geq \zeros$ verify
$S\sprod \y\y^\top \geq 0$ and
$N\sprod \y\y^\top \geq 0$ for any $\y\geq \zeros$.

\subsection{Reformulating a homogeneous quadratic program as a copositive problem}

We consider the following problem with a {\it homogeneous objective}
function and \textit{non-negative variables}:

\begin{subequations}
\begin{align}[left ={(QP_+)  \empheqlbrace}]
\min~~&     Q\sprod \x\x^\top        \label{eqqpplus1}\\
s.t~~ &     \a^\top \x = b           \label{eqqpplus2}\\
      &\x\in \R_+^n,                 \label{eqqpplus3}
\end{align}
\end{subequations}
where $\a>0$ is a {\it strictly positive parameter}.
One should keep in mind that  $\x$ is non-negative.  

\subsubsection{Solving $(QP_+)$ is NP-hard}
Not surprisingly, solving this program is NP hard. This 
follows from the fact that it contains the maximum stable problem as a particular 
case,%
\footnote{If we allow the objective
to be non-homogeneous, 
we obtain a particular case of
unconstrained quadratic programming {\it in non-negative variables}.
This problem is NP-hard because it is at least as hard as the bi-partition
problem, \ie, 
one can solve the (bi-)partition problem for elements 
$a_1,a_2,\dots a_n$ by solving 
$\min_{x_i,x'_i\geq 0}\sum_i (x_i + x'_i - 1)^2 + x_ix'_i + \left( \sum_i x_i a_i - \frac 12 \sum_i
a_i\right)^2$. More generally, unconstrained quadratic programming is not
NP-hard because it reduces to SDP programming
(Section~\ref{secqpunconstrained}).
}
as a consequence of the following result.
\begin{proposition}\label{propStableCopositive} Consider a graph $G$ with adjacency matrix $A^G$.
The maximum stable $\alpha(G)$ can be determined by solving the following program,
which is a particular case of $(QP_+)$ from above \eqref{eqqpplus1}-\eqref{eqqpplus3}.
\begin{equation}\label{eqStableViaQP}\frac 1{\alpha(G)}=
\min\left\{\left(A^G+I_n\right)\sprod \x\x^\top:~\e_n^\top \x = 1\right\},
\end{equation}
where $\e_n^\top=\underbrace{[1~1~1\dots 1]}_{\text{n positions}}$.
\end{proposition}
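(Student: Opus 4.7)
The plan is to prove the two inequalities separately: an upper bound using a maximum stable set directly, and a matching lower bound via a classical perturbation argument (essentially the Motzkin--Straus technique) that reduces any candidate solution to one whose support is a stable set.

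\textbf{Upper bound $\le 1/\alpha(G)$.} Let $S\subseteq [1..n]$ with $|S|=\alpha(G)$ be a maximum stable set, and define $\xx\in\R^n_+$ by $\xx_i=1/\alpha(G)$ for $i\in S$ and $\xx_i=0$ otherwise. Then $\e_n^\top \xx=1$, and since $S$ is stable we have $A^G_{ij}=0$ for all $i,j\in S$, so
\[
(A^G+I_n)\sprod \xx\xx^\top = \sum_{i,j\in S}(A^G_{ij}+\delta_{ij})\xx_i\xx_j=\sum_{i\in S}\xx_i^2=\alpha(G)\cdot\tfrac{1}{\alpha(G)^2}=\tfrac{1}{\alpha(G)}.
\]

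\textbf{Lower bound $\ge 1/\alpha(G)$.} Let $f(\x)=(A^G+I_n)\sprod \x\x^\top$; the feasible set (a simplex) is compact, so $f$ attains its minimum at some $\xx$ with support $S=\{i:\xx_i>0\}$. I would then prove the following key claim: we may assume without loss of generality that $S$ is a stable set. Suppose instead that there exist $i,j\in S$ with $\{i,j\}\in E$. Consider the feasible perturbation $\xx(t)=\xx+t(\e_i-\e_j)$ for $t\in[-\xx_i,\xx_j]$, which preserves $\e_n^\top\xx(t)=1$ and keeps $\xx(t)\ge\zeros$. A direct computation of $(\e_i-\e_j)^\top(A^G+I_n)(\e_i-\e_j)=2-2A^G_{ij}=0$ (since $\{i,j\}$ is an edge) shows that the map $t\mapsto f(\xx(t))$ is \emph{affine}. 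Hence its minimum on $[-\xx_i,\xx_j]$ is attained at an endpoint, which yields a feasible point with objective $\le f(\xx)$ but with strictly smaller support. Iterating this reduction (finitely many times, since the support shrinks at each step) produces an optimal $\xx^\star$ whose support $S^\star$ is a stable set.

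Once the support is a stable set, $A^G_{ij}=0$ for all $i,j\in S^\star$, so $f(\xx^\star)=\sum_{i\in S^\star}{\xx_i^\star}^2$. Under $\sum_{i\in S^\star}\xx_i^\star=1$, the Cauchy--Schwarz inequality (or convexity of $t\mapsto t^2$) gives $\sum_{i\in S^\star}{\xx_i^\star}^2\ge \tfrac{1}{|S^\star|}\ge \tfrac{1}{\alpha(G)}$, with equality when $\xx^\star$ is uniform on $S^\star$. Combining the two inequalities yields the stated equality.

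\textbf{Main obstacle.} The only non-routine step is the reduction of the support to a stable set. The trick is recognising that the ``bad'' directions $\e_i-\e_j$ with $\{i,j\}\in E$ make $f$ affine rather than strictly convex, which is exactly what allows the endpoint argument to work without increasing the objective; all other pieces are direct computations or elementary simplex bounds.
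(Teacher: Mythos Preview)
Your proof is correct and follows essentially the same approach as the paper: both use the Motzkin--Straus perturbation along $\e_i-\e_j$ for an edge $\{i,j\}$ in the support, observe that the objective is affine in that direction (the paper writes this out explicitly as $\texttt{val}(t,x_i+x_j-t)$, you compute the quadratic coefficient $(\e_i-\e_j)^\top(A^G+I)(\e_i-\e_j)=0$), push to an endpoint to shrink the support, and finish by minimizing $\sum x_i^2$ over the simplex on a stable set. The only cosmetic differences are that the paper starts from an arbitrary feasible point rather than invoking compactness, and uses a direct averaging argument instead of Cauchy--Schwarz for the last step.
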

\begin{proof} We use the technique described next. Consider any feasible solution
$\x$ of above program \eqref{eqStableViaQP}.
We say that an edge $\{i,j\}$ 
is \textit{supported} by $\x$ if $x_i,x_j> 0$. We will show that 
if $\x$ has more than zero supported edges, then it be transformed into a no-worse solution $\x'$ 
that has strictly fewer supported edges. 
For this, let us start by
evaluating the contribution of $x_i$ and
$x_j$ to the objective value of edge $\{i,j\}$:
\begin{align}\label{eqval1}
\texttt{val}(x_i,x_j)&= x_i^2+2x_ix_j+x_j^2+2\sum\limits_{k\in[1..n]-\{i,j\}}A^G_{ki}x_kx_i
                                          +2\sum\limits_{k\in[1..n]-\{i,j\}}A^G_{kj}x_kx_j\\
                     &= (x_i+x_j)^2 + z_i x_i + z_j x_j, \label{eqval2}
\end{align}
where the values $z_i$ and $z_j$ are determined by the above sums and do not depend
on $x_i$ or $x_j$. Consider now the function $f:[0,x_i+x_j]\to \R$ defined by
$f(t)=\texttt{val}(t,x_i+x_j-t)$, \ie, we replace $x_i$ with $t$ and
$x_j$ with $x_i+x_j-t$ in above \eqref{eqval1}-\eqref{eqval2}.
This way $f(t)$ can be written
$$f(t)=(x_i+x_j)^2+ z_i t + z_j(x_i+x_j-t)= (z_i-z_j)t + (x_i+x_j)^2 +
z_j(x_i+x_j)$$
The only-non constant term is $(z_i-z_j)t$. We obtain that $f$ is a
linear function that reaches its minimum value at one of
the two bounds of $[0,x_i+x_j]$, \ie, a 
value $\overline{t}$ that minimizes $f$ is either
$\overline{t}=0$ or $\overline{t}=x_i+x_j$. 
We can say
$f\left(\overline{t}\right)=\texttt{val}\left(\overline{t},x_i+x_j-\overline{t}\right)$ is at least as good
as $f(x_i)=\texttt{val}(x_i,x_j)$.

If we define $x'_i=\overline{t},~x'_j=x_i+x_j-\overline{t}$
and $x'_k=x_k~\forall k\in[1..n]-\{i,j\}$, we obtain that $\x'$ is at least 
as good as $\x$. Notice that the edge $\{i,j\}$ is no longer supported in $\x'$
because $x'_i$ or $x'_j$ is equal to zero. Any other supported edge in $\x'$ is 
also supported in $\x$, because $x'_{\ell}>0\implies x_{\ell}>0~\forall \ell\in[1..n]$. 
This means that $\x'$ has at least one supported edge less than $\x$. By repeating
above operation iteratively for all supported edges, we will eventually find
some $\xx'$ that is at least as good as $\x$ and has no supported edge.

This means that the non-zero elements of $\xx'$ generate a stable $S$ of $G$. The objective
value of $\xx'$ can be written $I_{|S|}\sprod \xx~\xx^\top$, where
$\xx$ is a vector containing only the $|S|$ non-zero values of $\xx'$. We
can still say $\e_{|S|}^\top \xx=1$, \ie, the sum of the elements 
of $\xx$ (or $\xx'$) is one.
We now show that all elements of $\xx$ are equal if $\xx'$ is optimal.
Assume the contrary: there is $i,j\in[1..|S|]$ such that $\xxx_i\neq \xxx_j$. The
contribution of $\xxx_i$ and $\xxx_j$ to the objective function is 
$\overline{\texttt{val}} (\xxx_i,\xxx_j)={\xxx_i^2} +  {\xxx_j^2}=
\frac 12 \left(2\xxx_i^2+2 \xxx_j^2\right)
>
\frac 12 \left(\xxx_i^2+\xxx_j^2 + 2\xxx_i\xxx_j\right)
=
2 \left(\frac{\xxx_i+\xxx_j}2\right)^2$.
By replacing $\xxx_i$ and $\xxx_j$ with 
$\left(\frac{\xxx_i+\xxx_j}2\right)$ we obtain a better objective
value, while still respecting the constraint (the sum of the elements
remains the same). This means that $\xx'$ is not optimal, which is a contradiction.
As such, the optimal $\xx'$ has the same value $\frac 1{|S|}$ 
at all positions $i \in S$ and its objective value is $\sum_{i=1}^{|S|}
\frac{1}{|S|^2}=\frac {|S|}{|S|^2}=\frac 1{|S|}$.

We started from an arbitrary solution $\x$ and we constructed
a solution $\xx'$ that is no worse than $\x$ and that has no supported edge;
this means there is always an optimal solution with no supported edges.
We then showed that if $\xx'$ is optimal, it has to contain the value $\frac 1{|S|}$
on all positions of some stable $S$  of $G$; the objective value of this
solution
is $\frac 1{|S|}$.
But this value can only be optimal if 
$S$ is a maximum stable of $G$ with $|S|=\alpha(G)$.
 We obtained that the optimum of the program 
from \eqref{eqStableViaQP} is indeed $\frac 1{\alpha(G)}$.
It is achieved by taking $x_i = \frac{1}{|{S}|}$ for all
$i\in{S}$ and $x_i=0$ if $i\notin S$
for a maximum stable $S$.
\end{proof}

According to the article ``Copositive Optimization'' by 
Immanuel Bomze, Mirjam D{\"u}r and Chung-Piaw Teo,%
\footnote{Published in the \textit{Optima} 89 newsletter in august 2012,
pp.~2-8, available on line at 
\url{http://www.mathopt.org/Optima-Issues/optima89.pdf}.
\setcounter{testfoot4}{\value{footnote}}\label{testfootpage4}%
}
the above result dates back to the 1960s. However, the 
proof is personal.

\subsubsection{\label{secqpplus}The reformulation of $(QP_+)$ as a copositive program}

Let us consider the following program associated to $(QP_+)$ from
\eqref{eqqpplus1}-\eqref{eqqpplus3}.

\begin{subequations}
\begin{align}[left ={C(QP_+)  \empheqlbrace}]
\min~~&     Q\sprod  X                          \label{eqqcpplus1}\\
s.t~~ &   A\sprod X=  (\a\a^\top)\sprod X = b^2 \label{eqqcpplus2}\\
      &X\in C^{n*},                             \label{eqqcpplus3}
\end{align}
\end{subequations}
where recall $A=\a\a^\top$ respects 
$A\geq \zeros$ and 
$A_{ii}>0~\forall i\in [1..n]$
because 
\eqref{eqqpplus1}-\eqref{eqqpplus3} imposes $a_i>0\forall i \in[1..n]$, \ie,
$\a$ is a strictly positive parameter.
Since the above program minimizes a
linear function, its optimum might only be achieved by an extreme point (or along an extreme
ray) of the feasible area defined by \eqref{eqqcpplus2}-\eqref{eqqcpplus3}.

\begin{proposition}\label{propExtremeSolCnstar} We consider a symmetric matrix $A\geq \zeros$ 
with a strictly positive diagonal ($A_{ii}>0~\forall i\in[1..n]$) and some $b\in \R$.
The extreme solutions (vertices) 
of the set below are the rank-1 matrices of the form $X=\u\u^\top$ with $\u\geq
\zeros$. This set has no extreme rays.
\begin{equation}\label{eqCWithConstraint}
[C^{n*}_A]= \left\{X\in C^{n*}:~A\sprod X = b^2\right\}
\end{equation}
\end{proposition}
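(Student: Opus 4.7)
The plan is to dispatch three items: boundedness rules out extreme rays, a column-space argument shows every rank-1 feasible matrix is extreme, and a rescaling of an arbitrary completely positive decomposition shows every extreme point has rank 1.

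First I would show that $[C^{n*}_A]$ is bounded. Any $X\in C^{n*}$ has non-negative entries (write $X=\sum_i \y_i\y_i^\top$ with $\y_i\geq\zeros$), and $A\geq\zeros$, so the equality $A\sprod X=\sum_{i,j}A_{ij}X_{ij}=b^2$ is a sum of non-negative terms. In particular $A_{ii}X_{ii}\leq b^2$, and since $A_{ii}>0$ this gives $X_{ii}\leq b^2/A_{ii}$; applying Prop.~\ref{propMinorsNonneg} to each $2\times2$ principal minor of $X\succeq\zeros$ yields $X_{ij}^2\leq X_{ii}X_{jj}$, so every entry of $X$ is bounded. A bounded convex set admits no extreme ray.

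Next I would verify that every rank-1 feasible matrix $X=\u\u^\top$ with $\u\geq\zeros$ is extreme. The degenerate case $b=0$ forces $\u^\top A\u=0$; since $A\geq\zeros$ and $A_{ii}>0$, every term $A_{ij}u_iu_j$ in the sum is $\geq 0$ and $A_{ii}u_i^2$ must vanish, so $\u=\zeros$, $X=\zeros$, and $[C^{n*}_A]=\{\zeros\}$ is trivially extreme. For $b\neq 0$, suppose $X=\tfrac12(X_1+X_2)$ with $X_k\in[C^{n*}_A]$. Decomposing $X_1=\sum_i\y_i\y_i^\top$ with $\y_i\geq\zeros$, the column space of $X_1$ is spanned by the $\y_i$ and is contained in the one-dimensional column space of $X=\u\u^\top$. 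Hence each $\y_i$ is a scalar multiple of $\u$; choosing any index $\ell$ with $u_\ell>0$, the non-negativity of $\y_i$ forces $\y_i=\alpha_i\u$ with $\alpha_i\geq 0$. Therefore $X_1=c_1\u\u^\top$ for some $c_1\geq 0$, and analogously $X_2=c_2\u\u^\top$. The constraint $A\sprod X_k=c_kb^2=b^2$ forces $c_1=c_2=1$, so $X_1=X_2=X$.

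Conversely, I would show every extreme point has the stated rank-1 form. Pick any decomposition $X=\sum_{i=1}^k\y_i\y_i^\top$ with $\y_i\geq\zeros$, $\y_i\neq\zeros$, and set $\lambda_i=\y_i^\top A\y_i$; since $A_{ii}>0$ and some entry of $\y_i$ is strictly positive, $\lambda_i>0$, and $\sum_i\lambda_i=A\sprod X=b^2$ (so the $b=0$ case is vacuous, $X=\zeros$, and trivially rank 1). Rescale to $\u_i=(|b|/\sqrt{\lambda_i})\,\y_i\geq\zeros$, so that $\u_i^\top A\u_i=b^2$ and $\u_i\u_i^\top\in[C^{n*}_A]$. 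Then
\[
X=\sum_{i=1}^k \frac{\lambda_i}{b^2}\,\u_i\u_i^\top
\]
is a convex combination of points of $[C^{n*}_A]$. Extremality forces all summands $\u_i\u_i^\top$ to coincide, and since each $\u_i\geq\zeros$ is non-zero this forces $\u_i=\u_j$ for all $i,j$. Hence $X=c\,\u\u^\top$ with $c>0$, and absorbing $c$ into the vector gives the required rank-1 form.

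The main obstacle will be the sign issue in the rank-1 uniqueness step: the column-space argument places each $\y_i$ only on the real line through $\u$, and it is crucially the combination of $\y_i\geq\zeros$ and the existence of some strictly positive coordinate of $\u$ that pins down $\alpha_i\geq 0$ and delivers $X_1=c_1\u\u^\top$ with $c_1\geq 0$. Keeping the degenerate $b=0$ case consistent throughout is the only other subtlety.
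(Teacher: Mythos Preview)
Your proof is correct and follows the same three-part skeleton as the paper (no extreme rays; rank-1 $\Rightarrow$ extreme; extreme $\Rightarrow$ rank-1), but the individual arguments differ in flavor.

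For the absence of extreme rays, the paper argues directly that any ray direction $Z$ must satisfy $Z\geq\zeros$, $A\sprod Z=0$, hence $\texttt{diag}(Z)=\zeros$; since $Z\neq\zeros$ this forces $Z\nsucceq\zeros$, contradicting $C^{n*}\subset S_n^+$. Your boundedness argument is shorter and more geometric, and it buys you the same conclusion without touching the PSD cone. For rank-1 $\Rightarrow$ extreme, the paper takes a symmetric perturbation $X\pm M$, concatenates the two CP factorizations, and uses $\texttt{rank}(X)=1$ to force $M=tX$, then kills $t$ via the constraint; your column-space containment is the same rank idea phrased more cleanly. For extreme $\Rightarrow$ rank-1, the paper builds a two-sided perturbation $X_\alpha=X+\alpha\bigl(\tfrac{1}{t_1}\y_1\y_1^\top-\tfrac{1}{t_2}\y_2\y_2^\top\bigr)$ that stays feasible for small $|\alpha|$; your rescaling instead writes $X$ directly as a convex combination of rank-1 feasible matrices, which is equivalent (moving weight between two of your terms reproduces the paper's perturbation) but more transparent, and it also shows as a byproduct that every feasible $X$ is a convex combination of rank-1 feasible matrices. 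Both proofs hinge on the same key fact, namely that $\y^\top A\y>0$ for every non-zero $\y\geq\zeros$ thanks to the strictly positive diagonal of $A$.
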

\begin{proof}We will prove three facts:
\begin{enumerate}
\item [(i)] A rank-1 completely positive matrix $X$ such that $AX=b^2$ is an
extreme solution of $[C^{n*}_A]$.
\item [(ii)] A completely positive matrix $X$ of rank higher than one can not be
an extreme solution of $[C^{n*}_A]$.
\item [(iii)] The set $[C^{n*}_A]$ has \textit{no} extreme ray.
\end{enumerate}

\noindent (i)\\
We first prove that a rank-1 matrix $X=\y\y^\top$ (with $\y\geq\zeros$) 
such that $AX=b^2$ is an
extreme solution of this $[C^{n*}_A]$ set. Assume the contrary: there is symmetric non-zero $M\in\R^{n\times n}$
such that $X-M,X+M\in [C^{n*}_A]$. Based on~\eqref{eqcomplpositive1}, we 
can write $X-M=Y_aY_a^\top$ and $X+M=Y_bY_b^\top$; combining the two, we obtain $X=\frac 12
[Y_aY_b][Y_aY_b]^\top$. But since $rank(X)=rank\left(\y\y^\top\right)=1$, we can apply
Prop.~\ref{propranktransprod} to conclude
$rank(X)=rank\left([Y_aY_b][Y_aY_b]^\top\right)=rank([Y_aY_b])$, and so, 
we have $rank([Y_aY_b])=1$. The columns of $Y_a$ and $Y_b$ need to be multiples
of $\y$. The matrices $X$ and $X-M$ are multiples of $\y\y^\top$, and so, 
$M$ is a multiple of $X$, \ie, $M=tX$ with $t\neq 0$. This contradicts 
$X-M\in [C^{n*}_A]$ because $A\left(X-M\right)=b^2 - tb^2\neq b^2$.
The assumption $X-M,X+M\in [C^{n*}_A]$ for some $M\neq \zeros$ was false, and so, $X$ 
is an extreme solution.

\noindent (ii)\\
We now show that any matrix $X\in C^{n*}$ of rank higher than 1 can not
be an extreme solution of $[C^{n*}_A]$. Assume there exists $k$
non-zero
vectors $\y_1,~\y_2,\dots \y_k\geq \zeros$ such that 
$rank[\y_1~\y_2~\dots \y_k]>1$ (\ie, they are not all multiples of
the same vector) and
$X=\sum_{i=1}^k \y_i\y_i^\top$, recall definition \eqref{eqcomplpositive1}.
Without loss of generality, we suppose $\y_1$ and $\y_2$ are linearly
independent. We can write:
\begin{equation}\label{eqAX}
A\sprod X = \underbrace{A\sprod \y_1\y_1^\top}_{t_1}
          + \underbrace{A\sprod \y_2\y_2^\top}_{t_2}
          + A\sprod \sum_{i=3}^k \y_i\y_i^\top.
\end{equation}
Notice we have $t_1,~t_2>0$, because 
$A\sprod \y_1\y_1^\top\geq \texttt{diag}(A)\sprod \texttt{diag}(\y_1\y_1^\top)>0$ 
follows from the fact that
$A$ and $\y_1$ are non-negative,
the diagonal of $A$
is strictly positive and $\y_1$ is non-null. An analogous argument proves
$t_2>0$. We now introduce the following family of matrices depending on a
parameter $\alpha$.
\begin{equation}\label{eqXalpha}
X_{\alpha} = X + \alpha
                \underbrace{
                    \left(
                        \frac 1{t_1}\y_1\y_1^\top
                        -\frac 1{t_2}\y_2\y_2^\top
                    \right)
                }_{\substack{\neq 0\text { because }\y_1\text{ and} \\ \y_2\text{ are independent}}}
\end{equation}
Notice $X_{\alpha}$ remains completely positive for a sufficiently small
positive or negative $\alpha$ --- more exactly the limits of $\alpha$ are 
$\alpha \in [-t_1,t_2]$.
Let us check if $X_{\alpha}\in [C^{n*}_A]$ by calculating the scalar product
with $A$:
$$
AX_{\alpha}= AX + \alpha
                    \left(
                        \frac 1{t_1} A\sprod \y_1\y_i^\top
                        -\frac 1{t_2}A\sprod \y_2\y_2^\top
                    \right)
            =AX + \alpha\left(\frac {t_1}{t_1}-\frac{t_2}{t_2}\right)
            =AX
$$
We can move from $X$ back and forward  along non-zero
$
                    \left(
                        \frac 1{t_1}\y_1\y_i^\top
                        -\frac 1{t_2}\y_2\y_2^\top
                    \right)
$ 
until we reach (the above) limits of $\alpha$.
Thus, such $X$ can not be an extreme solution of $[C^{n*}_A]$.

\noindent (iii)\\
We finally show (by contradiction) that $[C^{n*}_A]$ can not contain an extreme ray of the form $X+tZ$
with $t>0$ and $Z\neq \zeros_{n\times n}$. 
Assume such $Z$ exists. We can easily notice $Z\geq \zeros$ because $C^{n*}\subset \R_+^n$. 
Since $A\sprod Z$ has to be zero and the diagonal of $A$ is
strictly positive, 
we also obtain $\texttt{diag}(Z)=\zeros$, which means $Z$ is {\it not} SDP
by applying
Corollary~\ref{corolZeros}.
There exists a vector $\u$ such that $Z\sprod \u\u^\top
=-z <0$. 
But now notice that $(X+tZ)\sprod \u\u^\top
=X\sprod \u\u^\top - tz $ can become negative for a sufficiently large $t$, 
which means $X+tZ$ is not SDP. 
This is a contradiction because $C^{n*}\subset S_n^+$ in
\eqref{eqHierarchySDP}.
\end{proof}

The above Prop.~\ref{propExtremeSolCnstar} leads to the fact that the optimal 
solution of 
$(C(QP_+))$ from \eqref{eqqcpplus1}-\eqref{eqqcpplus3} has the form
$X=\y\y^\top$ with $\y\geq \zeros$. This means $\y$ is an optimal
solution of $(QP_+)$ from \eqref{eqqpplus1}-\eqref{eqqpplus3}. Indeed,
if $(QP_+)$ would have a solution $\yy$ of better quality, $\overline{X}=\yy
~\yy^\top$ would also be a solution of better quality than $X$ in $(C(QP_+))$.
This means $(C(QP_+))$ is an exact reformulation of $(QP_+)$.

The difficulty of $(C(QP_+))$ is hidden in the cone constraint. Indeed, checking
membership in $C^{n*}$ is NP-hard. In particular, if we try to factorise $X\in
C^{n*}$ into $X=YY^\top$ using any of the decompositions presented for SDP matrices (\eg,
Cholesky, eigenvalue or square root, see Corollary \ref{corSdpToVectors}), the
factor $Y$ is \textit{not necessarily} non-negative. 
It is still an open question whether checking $C^{n*}$ membership is in NP (making
the problem NP-complete, because it is NP-Hard) or not.
Checking membership in the dual cone $C^n$ is
even co-NP complete.%
\footnote{Recall a decision problem is NP (reps.~co-NP) if 
and only if
there is a polynomial-time algorithm that can verify ``yes'' (resp.~``no'') instances.
}
Under a legitimate well-accepted (but still open)
assumption co-NP$\neq$ NP, a co-NP complete problem can not belong to NP -- if that were
the case, all co-NP problems would belong to NP, which is unlikely.
 This way,
it is very likely that checking $C^n$ membership is \textit{not} even in NP.
For more details on such aspects, we refer the reader to
the article 
``On the computational complexity of membership problems for the completely
positive cone and its dual'' by 
Peter Dickinson and
Luuk Gijben.%
\footnote{Published in \textit{Computational optimization and applications}
in 2014, vol 57(2), pp. 403--415, available on-line at
\url{http://www.optimization-online.org/DB_FILE/2011/05/3041.pdf}.}

\subsubsection{Comparing with the SDP relaxation of $(QP_+)$}
We now investigate the reasons why 
replacing $C^{n*}$ with $S_n^+$
does not lead to such a strong result (reformulation).
In other words, we replace the completely positive ($A$-)constrained set $[C^{n*}_A]$ 
from \eqref{eqCWithConstraint}
with an SDP set $[SDP_A]$ using the same constraint defined by $A$.
More exactly, we investigate why the characterization of extreme solutions
from Prop.~\ref{propExtremeSolCnstar}
of $[C^{n*}_A]$ does not hold in the same way for
\begin{equation}\label{eqCWithConstraintSDP}
[SDP_A]= \left\{X\succeq \zeros:~A\sprod X = b^2\right\}.
\end{equation}

First, we can still say that the rank-1 matrices $X\succeq \zeros$ such that
$A\sprod X=b^2$ are extreme solutions of $[SDP_A]$. It is enough to check
that the arguments for point (i) from the proof of
Prop.~\ref{propExtremeSolCnstar} still hold for $[SDP_A]$. However, one should
bear in mind that a rank-1 SDP matrix $X=\y\y^\top$ might \textit{not} verify
$\y\geq 0$, and so, $\y$ might \textit{not} be a feasible solution of the initial program
$[QP_+]$ from \eqref{eqqcpplus1}-\eqref{eqqcpplus3}.

Secondly, the SDP set might have extreme rays and the proof of (iii) from 
Prop.~\ref{propExtremeSolCnstar} fails in the SDP case. This follows from the fact that even if $A\geq
\zeros$, we can still find SDP matrices $Z$ such that $A\sprod Z=0$. This means
there might well be matrices $X+tZ\in[SDP_A], \forall t>0$, based on $A\sprod
(A+tZ)=A\sprod X = b^2$. If $\exists Z\succeq \zeros$ such that $A\sprod Z=0$
and $Q\sprod Z<0$, the SDP relaxation of 
$[QP_+]$ from \eqref{eqqcpplus1}-\eqref{eqqcpplus3} is unbounded. We will
assume that this relaxation is not unbounded, \ie, $A\sprod Z=0
\implies Q \sprod Z \geq 0$.

Finally, we tackle the point (iii) of the proof of
Prop.~\ref{propExtremeSolCnstar}. We can still prove there is no
extreme solution of $[SDP_A]$ with rank higher than 1.
For this, we can still write \eqref{eqAX} with independent $\y_1$ and 
$\y_2$. However, we can no longer state $t_1,~t_2>0$. If we have 
$t_1=0$, then $\y_1\y_1^\top$ is a ray of $[SDP_A]$ and it is clear $X$ is not
an extreme point because we can add or subtract multiples of $\y_1\y_1^\top$
from the description $X=\sum_{i=1}^k \y_i\y_i^\top$ and remain in $[SDP_A]$.
The same happens if $t_2=0$.
We can hereafter assume $t_1\neq 0$ and $t_2\neq 0$. This way, we can still 
construct the family $X_\alpha$ of matrices from \eqref{eqXalpha}.
One can check that $X_\alpha$ remains in the $[SDP_A]$ for sufficiently small
values of $\alpha$, \ie, check that if 
$\alpha\in [-\varepsilon,+\varepsilon]$ with $\varepsilon<\min(|t_1|,|t_2|)$, 
then the coefficients of $\y_1\y_1^\top$ and 
$\y_2\y_2^\top$ in the sum composing $X_\alpha$ remain strictly positive.
This is enough to guarantee that $X$ is
not
an extreme solution.

%

To summarize, we have found two differences 
between the completely positive (re-){\it formulation} and the SDP {\it relaxation}.
First, the feasible area in the SDP case can have extreme rays and the
objective is unbounded if there is $Z\succeq \zeros$ such that $A\sprod Z=0$ and
$Q\sprod Z<0$. Secondly, it the objective is not unbounded, the optimal solution
has rank 1 like in the completely positive case, but it is not necessarily non-negative.

\subsection{Relaxations of the copositive formulation of the maximum stable}

\subsubsection{A second completely positive formulation of the maximum stable}
We have already 
introduced the $\alpha(G)$ formulation \eqref{eqStableViaQP} and 
proven it in Prop.~\ref{propStableCopositive}. We will show that

\begin{equation}\label{eqStableViaQP2}
{\alpha(G)}=\max
\left\{\e_n\e_n^\top \sprod X:~
\left(A^G+I_n\right)\sprod X=1,~X\in C^{n*}\right\},
\end{equation}
where 
 $A^G$ is the adjacency matrix of 
graph $G$ and recall $\e_n^\top=\underbrace{[1~1~1\dots 1]}_{\text{n positions}}$.

First, notice this program is very similar to $C(QP_+)$ from \eqref{eqqcpplus1}-
\eqref{eqqcpplus3}. In particular, the matrix $A^G+I_n$ satisfies exactly all
conditions imposed on $A$ in Prop.~\ref{propExtremeSolCnstar}. As such, an 
optimal solution of \eqref{eqStableViaQP2} can be achieved by an extreme
solution of the feasible area that has the form $X=\y\y^\top$ with $\y\geq
\zeros$ by virtue of Prop.~\ref{propExtremeSolCnstar}. It is enough to prove the
following:
\begin{equation}\label{eqStableViaQP3}
{\alpha(G)}=\max
\left\{\e_n\e_n^\top \sprod \y\y^\top:~
\left(A^G+I_n\right)\sprod \y\y^\top=1\right\},
\end{equation}
Consider a feasible solution $\x$ of \eqref{eqStableViaQP} with objective
value 
$\left(A^G+I_n\right)\sprod \x\x^\top=\frac 1t$. Let us define $\y=\sqrt{t}\x$ and
one can calculate 
$\left(A^G+I_n\right)\sprod \y\y^\top=\sqrt{t}^2 \left(A^G+I_n\right)\sprod
\x\x^\top=t\frac 1t= 1$, \ie, $\y$ is feasible in \eqref{eqStableViaQP3}. Based on $\e_n^\top \x=1$, we obtain $\e_n^\top
\y=\sqrt{t}$, and so, $\e_n\e_n^\top \sprod \y\y^\top=\sqrt{t}^2=t$. From a feasible
solution $\x$ of \eqref{eqStableViaQP} with value $\frac 1t$, we constructed
a feasible solution in \eqref{eqStableViaQP3} with objective value $t$.
The converse is also possible. Consider any feasible solution $\y$ of
\eqref{eqStableViaQP3} of objective value $(\e_n^\top \y)^2=t$ (notice any feasible
$\y$ is non-zero, and so, $t>0$). Let us define $\x=\frac {\y}{\sqrt{t}}$. From
$(\e_n^\top \y)^2=t$, we have $(\e_n^\top \x)^2=\left(\frac 1{\sqrt{t}}\e_n^\top
\y\right)^2=\frac 1{\sqrt{t}^2}t=1$, \ie, $\x$ is feasible in
\eqref{eqStableViaQP}. The objective value of $\x$ is
$\left(A^G+I_n\right)\sprod \x\x^\top=
\frac 1{\sqrt{t}^2}\left(A^G+I_n\right)\sprod \y\y^\top=\frac 1t$.

From a feasible solution of \eqref{eqStableViaQP} with objective value $\frac
1t$ we can construct a feasible solution of \eqref{eqStableViaQP3} with objective
value  $t$. Conversely, from a feasible solution of \eqref{eqStableViaQP3} with
objective value $t$ we can construct a feasible solution of
\eqref{eqStableViaQP} with objective value $\frac 1t$. This is enough to
guarantee that the optimum of \eqref{eqStableViaQP3} is 1 divided by the optimum
of \eqref{eqStableViaQP}, \ie, it is $\frac {1}{\frac 1{\alpha(G)}}=\alpha(G)$.

\subsubsection{\label{secsos}Sum-of-squares relaxations of the copositive formulation of $\alpha(G)$}

\paragraph{The copositive formulation of $\alpha(G)$}

The dual of \eqref{eqStableViaQP2} can be calculated as in the SDP case, 
see 
also
the description of primal-dual conic programs from
Section~\ref{secconWeakDual}.
We can apply the technique used in the proof of
Prop.~\ref{propTwiceDualize}, but we need the dual cone of $C^{n*}$, \ie, $C^n$.
The dual of \eqref{eqStableViaQP2} can thus be written as follows:
\begin{equation}\label{eqStableCopos}
\alpha(G) = \min \left\{t:~t\left(A^G+I_n\right)-\e_n\e_n^\top \in C^n\right\}.
\end{equation} 

It is possible to show that both \eqref{eqStableViaQP2} and
\eqref{eqStableCopos} are strictly feasible. Let us start with 
\eqref{eqStableViaQP2} and notice that if $Y\in\inter\left(C^{n*}\right)$, then
$\texttt{diag}(Y)\neq 0$ and $\left(A^G+I_n\right)\sprod Y\geq
\texttt{diag}(Y)\sprod \texttt{diag}(I_n)>0$. As such, we can define 
$X=\frac Y{\left(A^G+I_n\right)\sprod Y}$ that also belongs to
$\inter\left(C^{n*}\right)$
and is feasible in \eqref{eqStableViaQP2}. We only need to show that $C^{n*}$
has a non-empty interior, \ie, we have to generate some strictly feasible
matrix of $C^{n*}$.
For this, consider the 
set $\AA^{(0,1)}=\left\{nI_n+A\in S_n:~0<A_{ij}<1\right\}$. For any 
$nI_n+A\in \AA^{(0,1)}$ and $M\in S_n$, there is a sufficiently small
$\varepsilon>0$ such that $nI_n+A-\varepsilon M,~nI_n+A+\varepsilon M\in \AA^{(0,1)}$.
Because of this last property, 
to prove
$\AA^{(0,1)}\subsetneq \inter\left(C^{n*}\right)$ 
it is now enough to show $\AA^{(0,1)}\subsetneq C^{n*}$.
We can write 
\begin{equation}\label{eqintercnstar}\AA^{(0,1)}\ni nI_n+A=\sum\limits_{i < j} A_{ij} E^{ij}+
\sum\limits_{i\in[1..n]} \left(n+A_{ii}-\sum_{j\neq
i}A_{ij}\right)E^{ii},\end{equation}
where $E^{ij}$ is a matrix full of zeros except at positions $(i,i)$,
$(i,j)$, $(j,i)$ and $(j,j)$ where it has ones. All terms in
above \eqref{eqintercnstar} can be written under the form $\y\y^\top$ for a
non-negative $\y$. For $A_{ij}E^{ij}$, it is enough to take a $\y$ full of 
zeros except for $y_i=y_j=\sqrt{A_{ij}}$. For 
$\left(n+A_{ii}-\sum_{j\neq
i}A_{ij}\right)E^{ii}$, we take a $\y$ full of zeros, except for
$y_i=\sqrt{n+A_{ii}-\sum_{j\neq i}A_{ij}}>\sqrt{n+A_{ii}-(n-1)}>0$. This allows
one to write $nI_n+A$ in the form required by the $C^{n*}$ definition
\eqref{eqcomplpositive1}.

We now show \eqref{eqStableCopos} has strictly feasible solutions. 
It is enough to show there is a sufficiently large $t$ such that
$\left(t\left(A^G+I_n\right)-\e_n\e_n^\top\right)\sprod \y\y^\top >0$
for any non-negative $\y\geq \zeros$. Using
$\left(t\left(A^G+I_n\right)-\e_n\e_n^\top\right)\sprod \y\y^\top
\geq 
\left(t I_n-\e_n\e_n^\top\right)\sprod \y\y^\top$, it suffices 
to show that this last scalar product is strictly positive if $t$ is large
enough.  But this simply follows from the fact that the matrix
$t I_n-\e_n\e_n^\top$ becomes diagonally dominant and can have arbitrarily
large eigenvalues when $t\to \infty$, \ie, we can easily have
$t I_n-\e_n\e_n^\top \succ \zeros$.

Since both \eqref{eqStableViaQP2} and \eqref{eqStableCopos} are strictly
feasible, we can apply the strong duality Theorem~\ref{conthStrongDual3}
for linear conic programming that states
that both programs have the same optimum value and they effectively reach
this value.

\paragraph{\label{secStrengtheningNatural}A ``natural'' strengthening bounded by the Lov\'asz number}

There are several cone hierarchies and relaxations used to approximate the
copositive cone $C^n$. Let us first recall hierarchy \eqref{eqHierarchySDP}
and define a ``natural'' strengthening (more constrained restricted version)
that replaces $C^n$ with 
$S_n^+ + \NN^n$, where $\NN^n\subsetneq S_n$ is the set of non-negative symmetric matrices.
The strengthening can simply be written by modifying \eqref{eqStableCopos}:

\begin{equation}\label{eqStableCoposRelax}
\alpha^0(G) = \min \left\{t:~t\left(A^G+I_n\right)-\e_n\e_n^\top \in S_n^+ +
\NN^n\right\} ~~\left[\geq \alpha(G)\right].
\end{equation} 
Since the dual cone of $S_n^+ + \NN^n$ is $S_n^+\cap \NN^n$,\footnote{To see
this, 
simply notice $S_n^+\subsetneq S_n^+ + \NN^n$, and so, 
$\left(S_n^+ + \NN^n\right)^* \subsetneq (S_n^+)^*=S_n^+$. Similarly, 
$\left(S_n^+ + \NN^n\right)^*\subsetneq (\NN^n)^*=\NN^n$.
As such, $\left(S_n^+ + \NN^n\right)^*\subset S_n^+\cap \NN^n$. One
can check that $X\in S_n^+\cap \NN^n$ and $Y=Y_1+Y_2$ with $Y_1\in S_n^+$ and
$Y_2 \in \NN^n$ yield $X\sprod (Y_1+Y_2)=X\sprod Y_1 + X\sprod Y_2\geq 0$, using
the fact that $X$ is both SDP and non-negative.
\setcounter{testfoot8}{\value{footnote}}\label{testfootpage8}%

}
the dual can be written as follows:
\begin{equation}\label{eqStableComplPosRelax}
{\alpha^0(G)}=\max
\left\{\e_n\e_n^\top \sprod X:~
\left(A^G+I_n\right)\sprod X=1,~X\in S_n^+\cap \NN^n\right\}.
\end{equation}

 The first program
\eqref{eqStableCoposRelax} is strictly feasible by taking a sufficiently large
$t$, so that 
$t\left(A^G+I_n\right)-\e_n\e_n^\top =
\underbrace{tI_n-\e_n\e_n^\top}_{\succ \zeros}+\underbrace{tA^G}_{\geq \zeros}$
and
$t\left(A^G+I_n\right)-\e_n\e_n^\top \pm \varepsilon M=
\underbrace{tI_n-\e_n\e_n^\top\pm\varepsilon M}_{\succ
\zeros}+\underbrace{tA^G}_{\geq \zeros}$ using a sufficiently small $\varepsilon>0$
for any $M\in S_n$.
The dual \eqref{eqStableComplPosRelax} is strictly feasible taking 
$X=\frac {tI_n+\e_n\e_n^\top}{(A^G+I_n)\sprod\left({tI_n+\e_n\e_n^\top}\right)}$
for a sufficiently large $t$. 
We can apply the strong duality Theorem~\ref{conthStrongDual3}
for linear conic programming to state 
 that both programs have the same optimum
value $\alpha^0(G)$ and they effectively reach this value.

We now prove the following:
\begin{equation}\label{eqStableCoposRelaxRelax}
\alpha^0(G) = \min \Big\{t:~tI_n+\sum_{\{i,j\}\in E: i<j} t_{ij}E_{ij}-\e_n\e_n^\top \in S_n^+ + \NN^n\Big\},
\end{equation} 
where $t_{ij}$ are decision variables and $E_{ij}$ is a matrix
that contains a value of one at positions $(i,j)$ and $(j,i)$ and zeros everywhere else.
This is a relaxation of
\eqref{eqStableCoposRelax} because it lifts the constraints $t_{ij}=t$
$\forall \{i,j\}\in E$.
However, we can show that any feasible solution of
\eqref{eqStableCoposRelaxRelax} with objective value $t$ can be transformed
into a feasible
solution of \eqref{eqStableCoposRelax} with the same objective value $t$. 

Take any edge $\{i,j\} \in E$ such that $t_{ij}\neq t$. If $t_{ij}<t$, one
can simply increase $t_{ij}$ to $t$ and remain feasible because the resulting matrix 
is the old matrix
plus a non-negative increase that belongs to $\NN^n$. If $t_{ij}>t$, let 
us decrease $t_{ij}$ to the minimum value
$\overline{t}$ that keeps the resulting matrix in $S^+_n+\NN^n$. 
Let us focus on the $2\times 2$ minor 
corresponding to $i$
and $j$. If $\overline{t}>t$, this minor is not SDP, and so, a decrease
from $\overline{t}$ down towards $t$ would only represent a decrease of the
$\NN^n$ component the matrix in $S_n^++\NN^n$. 
The only possible case that can forbid any decrease below $\overline{t}$
is
$\overline{t}=t$.



As a relaxation of \eqref{eqStableCoposRelax}, the new program
\eqref{eqStableCoposRelaxRelax} remains strictly feasible. This means that the
following dual does achieve the optimum value $\alpha^0(G)$.

\begin{align}\label{eqStableComplPosRelaxRelax1}
{\alpha^0(G)}&=\max
\left\{\e_n\e_n^\top \sprod X:~ \left(A^G+I_n\right)\sprod X=1,~X_{ij}=0~\forall \{i,j\}\in E, ~X\in S_n^+\cap \NN^n\right\}\\
&=\max\left\{\e_n\e_n^\top \sprod X:~ I_n \sprod X=1,~X_{ij}=0~\forall \{i,j\}\in E, ~X\succeq\zeros, X\geq \zeros \right\}.
\label{eqStableComplPosRelaxRelax2}
\end{align}
It is clear that above \eqref{eqStableComplPosRelaxRelax2} 
\textit{without the non-negativity} constraint $X\geq 0$
 is equivalent to the $(D\vartheta_G)$ formulation from
\eqref{dtheta1}-\eqref{dtheta4}.
As such, $\alpha^0(G)\leq
OPT(D\vartheta_G)=\vartheta(G)$. This ``natural'' strengthening  of the copositive formulation
of the maximum stable is bounded by the Lov\'asz theta number. 
Since we have 
$\alpha(G)\leq \alpha^0(G)$
in \eqref{eqStableCoposRelax}, 
we can 
write $\alpha(G)\leq \alpha^0(G)\leq \vartheta(G)$.

\paragraph{\label{secsoshierarchy}The sum of squares (\sos) hierarchy}
This is the only subsection where I will use a few results without an explicit 
proof. In all cases, I will leave a comment on the margin of the document.

This subsection is devoted to the \sos~approach due to Parilo, De Klerk and Pasechnik as cited
in the ``Copositive Optimization'' article of the \textit{Optima 89} newsletter 
(see Footnote \total{testfoot4}, p.~\pageref{testfootpage4}) or
in the article 
 ``Semidefinite Bounds for the Stability Number of a Graph via Sums of Squares of Polynomials''
by Neboj\originalv{s}a Gvozdenovi\'c and Monique Laurent.%
\footnote{Published in 
\textit{Mathematical Programming} in 2007, vol 110(1), pp 145--173,
available on-line as of 2017 at
\url{http://oai.cwi.nl/oai/asset/11672/11672D.pdf}.
\setcounter{testfoot5}{\value{footnote}}\label{testfootpage5}%
}

~\\
\noindent\textbf{The general \sos setting}\\
First, notice that $M\in C^n$ (recall Def.~\eqref{eqcopositive}) $\iff
p_M(\x)=M\sprod [x_1^2~x_2^2~\dots x_n^2]^\top [x_1^2~x_2^2~\dots
x_n^2]=\sum_{i,j=1}^n M_{ij}x_i^2x_j^2\geq 0~\forall \x\in\R^n$. This means 
that the polynomial $p_M(\x)$ 
takes only non-negative values over the reals.
 We can start using results
related to non-negative polynomials.
There are at least two conditions that
guarantee that a polynomial is always non-negative: (i) all coefficients are
positive and all powers of the variables are even, (ii) it can be
written as a sum of squares (\sos).
One of the earliest work on these aspects dates back to Hilbert who
studied the classes of non-negative polynomials that can always be written as an
\sos. He proved, for instance, that non-negative univariate polynomials
($n=1$) and non-negative polynomials of degree 2 can always be decomposed into a
\sos, see also Hilbert's 17$^{\text{th}}$ problem. However, even if a
non-negative polynomial does not respect any of above conditions (i) or (ii), 
we could multiply it by another non-negative polynomial and obtain a product
 that does respect (i) or (ii).

Applying this last argument, we notice that $p_M(\x)$ is non-negative if
and only if 
\begin{equation}\label{eqpmr}p_M^{(r)}=\left(\sum_{i=1}^n x_i^2\right)^r p_M(\x)\end{equation} is 
non-negative. P{\' o}lya proved in the 1970s the following theorem.
\begin{theorem}\label{thPolya} If $p_M(\x)$ is a homogeneous polynomial (with all
terms of the same degree) that is strictly
positive over $\R^n-\{0\}$, then $p_M^{(r)}$ has only non-negative coefficients%
\marginpar{\vspace{0.5em}\fbox{\begin{minipage}{0.8\linewidth}
    \scriptsize \textbf{Missing:} a proof for the result of P\'
olya (Theorem~\ref{thPolya}).\end{minipage}}}
for a sufficiently large $r$. 
\end{theorem}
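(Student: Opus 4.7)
The plan is to exploit the very special structure of $p_M(\x)=\sum_{i,j}M_{ij}x_i^2x_j^2$, which depends on the variables only through $x_1^2,\dots,x_n^2$, and to reduce the claim to the classical version of P\'olya's theorem on the non-negative orthant. I would first substitute $y_i=x_i^2$ to obtain $q_M(\y)=\y^\top M\y$, a homogeneous polynomial of degree $d=2$ in $\y$. Because $\x\neq\zeros\iff\y=(x_1^2,\dots,x_n^2)\neq\zeros$ and $\y\in\R_+^n$, strict positivity of $p_M$ on $\R^n\setminus\{\zeros\}$ translates to strict positivity of $q_M$ on $\R_+^n\setminus\{\zeros\}$, i.e. strict copositivity of $M$. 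Moreover $\sum_i x_i^2=\sum_i y_i$, and every monomial appearing in $p_M^{(r)}(\x)=(\sum_i x_i^2)^r p_M(\x)$ has each variable to an even power, with the coefficient of $x_1^{2\alpha_1}\cdots x_n^{2\alpha_n}$ equal to that of $y_1^{\alpha_1}\cdots y_n^{\alpha_n}$ in $(\sum_i y_i)^r q_M(\y)$. Hence it suffices to prove the classical P\'olya statement: if $q(\y)$ is a homogeneous polynomial of degree $d$ with $q(\y)>0$ for all $\y\in\R_+^n\setminus\{\zeros\}$, then $(\sum_i y_i)^r q(\y)$ has only non-negative coefficients for $r$ large enough.

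Next I would expand everything by the multinomial theorem. In multi-index notation $q(\y)=\sum_{|\beta|=d}q_\beta\y^\beta$ and $(\sum_i y_i)^r=\sum_{|\gamma|=r}\binom{r}{\gamma}\y^\gamma$, so the coefficient $c_\alpha$ of $\y^\alpha$ in $(\sum_i y_i)^r q(\y)$ (where $|\alpha|=N:=r+d$) is
\begin{equation*}
c_\alpha=\sum_{\substack{|\beta|=d\\ \beta\le\alpha}} q_\beta\binom{r}{\alpha-\beta}
=\frac{N!}{\alpha!}\sum_{|\beta|=d}q_\beta\prod_{i=1}^n\frac{\alpha_i(\alpha_i-1)\cdots(\alpha_i-\beta_i+1)}{N(N-1)\cdots(N-d+1)},
\end{equation*}
after an elementary rearrangement that factors out $N!/\alpha!$. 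By homogeneity the hypothesis is equivalent to $q\ge\epsilon$ on the compact simplex $\Delta_n=\{\y\in\R_+^n:\sum_i y_i=1\}$ for some $\epsilon>0$, using continuity of $q$ and compactness of $\Delta_n$.

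Finally, the key observation is that, for each fixed $\beta$,
\begin{equation*}
\prod_{i=1}^n\frac{\alpha_i(\alpha_i-1)\cdots(\alpha_i-\beta_i+1)}{N(N-1)\cdots(N-d+1)}=\prod_{i=1}^n\left(\frac{\alpha_i}{N}\right)^{\beta_i}+O(1/N),
\end{equation*}
uniformly over all $\alpha$ with $|\alpha|=N$: each $\alpha_i/N\in[0,1]$ and each factor is a polynomial in $1/N$ whose non-constant coefficients are bounded by quantities depending only on $d$ and $n$. Summing against the (fixed) $q_\beta$'s gives $c_\alpha\cdot\alpha!/N!=q(\alpha/N)+O_q(1/N)$ uniformly in $\alpha$, and since $\alpha/N\in\Delta_n$ we have $q(\alpha/N)\ge\epsilon$. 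Taking $r$ (equivalently $N$) large enough so the $O_q(1/N)$ error drops below $\epsilon$ makes every $c_\alpha$ strictly positive, proving the classical P\'olya statement; substituting back $y_i=x_i^2$ yields the theorem. The main obstacle will be securing the \emph{uniformity} of the approximation across the entire lattice $\{\alpha:|\alpha|=N\}$, including near the faces of $\Delta_n$ where some $\alpha_i=0$; this is handled because the $O(1/N)$ bound has constants depending only on $d$, $n$ and the fixed $q_\beta$, never on $\alpha$, so a single threshold $r$ works simultaneously for all multi-indices.
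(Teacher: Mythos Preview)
The paper provides \emph{no} proof of this theorem; it is explicitly flagged in the margin as one of the two results in the SOS-hierarchy section that the author leaves unproven. So there is nothing to compare against---your argument stands on its own, and it is correct. It is essentially the standard proof of P\'olya's positivstellensatz (P\'olya 1928, with later quantitative refinements by de Loera--Santos and Powers--Reznick): rewrite the coefficient of each monomial in $(\sum_i y_i)^r q(\y)$ via the multinomial theorem, factor out $N!/\alpha!$, recognise the remaining sum as $q$ evaluated at the lattice point $\alpha/N\in\Delta_n$ plus a uniformly $O(1/N)$ perturbation, and use the strict lower bound $\epsilon>0$ on the compact simplex to absorb the error for large $N$.

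Two remarks worth recording. First, your opening reduction via $y_i=x_i^2$ is not merely convenient but \emph{necessary}: the theorem as the paper phrases it (``$p_M(\x)$ is a homogeneous polynomial\dots'') would be false for a generic homogeneous form positive on $\R^n\setminus\{0\}$. For instance $p(x_1,x_2)=x_1^2-x_1x_2+x_2^2$ is positive definite, yet the coefficient of $x_1^{2r+1}x_2$ in $(x_1^2+x_2^2)^r p(x_1,x_2)$ equals $-1$ for every $r$, since $(x_1^2+x_2^2)^r$ contributes only even exponents and the sole source of that monomial is $x_1^{2r}\cdot(-x_1x_2)$. The statement is true precisely because $p_M(\x)=\sum_{i,j}M_{ij}x_i^2x_j^2$ depends only on the squares, which is exactly what your substitution exploits. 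Second, your uniformity claim is the crux and is handled correctly: writing each falling-factorial ratio as $\prod_k(t_i-k/N)\big/\prod_k(1-k/N)$ with $t_i=\alpha_i/N\in[0,1]$, the deviation from $\prod_i t_i^{\beta_i}$ is a polynomial in $1/N$ whose coefficients are bounded polynomials in the $t_i$ of total degree at most $d$, so the implied constant depends only on $d$, $n$ and the finitely many $q_\beta$, never on $\alpha$.
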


Considering 
our $p_M$ above of the form
$p_M(\x)=M\sprod [x_1^2~x_2^2~\dots x_n^2]^\top [x_1^2~x_2^2~\dots
x_n^2]$, we notice that
$p_M^{(r)}$ accepts an obvious \sos decomposition for a sufficiently large $r$,
\ie, all terms of $p_M^{(r)}$ are squares multiplied by a positive value.
We now can define the
cone:
\begin{equation}\label{eqknr}
\K_n^{(r)}=\{M\in S_n:~p_M^{(r)}\text{ can be written as an \sos}\}.
\end{equation}
Using Theorem~\ref{thPolya}, we have $\inter(C^n)=\bigcup\limits_{r\geq 1}
\K_n^{(r)}$. We also have $\K_n^{(i)}\subseteq K_n^{(i+1)}$ because
switching from $i$ to $i+1$ is equivalent to
multiplying an \sos form with $\sum_{i=1}^n x_i^2$, which develops into a sum
of squared (and thus \sos) terms.
Replacing $C^n$ with $\K_n^{(r)}$
in \eqref{eqStableCopos}, we obtain the following strengthening (restriction) of
\eqref{eqStableCopos}:
\begin{equation}\label{eqalphasos}
\alpha^{(i)}(G) = \min \left\{t:~t\left(A^G+I_n\right)-\e_n\e_n^\top \in \K_n^{(i)}\right\}
\end{equation}
and we naturally have 
$$\alpha^{(0)}(G)\geq \alpha^{(1)}(G)\geq \alpha^{(2)}(G)\dots
\geq \alpha^{(r)}(G)\to \alpha(G),$$ using $\K_n^{(i)}\subseteq K_n^{(i+1)}$, with 
the convergence following from $\inter(C^n)=\lim\limits_{r\to \infty}
\K_n^{(r)}$. As such, we easily have 
$\alpha(G)=\left\lfloor \alpha^{(r)}(G)\right\rfloor$ for a sufficiently large $r$.
A sufficient value of $r$ is $r=\alpha(G)^2$, as proved in 
\marginpar{\vspace{0.5em}\fbox{\begin{minipage}{0.8\linewidth}
    \scriptsize \textbf{Missing:} the proof for the sufficiency of
$r=\alpha(G)^2$.\end{minipage}}}
Theorem 4.1.~of 
the article
``Approximation of the stability number of a graph via copositive programming''
by 
Etienne de Klerk and Dmitri Pasechnik.%
\footnote{Published in \textit{SIAM journal on optimization} in 2002,
vol 12(4), pp.~875--892, 
available on-line as of 2017 at 
\url{https://dr.ntu.edu.sg/handle/10220/6790}.}
It is conjectured that $\alpha^{(r)}(G)=\alpha(G)$ for $r\geq \alpha(G)-1$, see 
also
Conjecture 1 in the article indicated at Footnote \total{testfoot5} (p.~\pageref{testfootpage5}).

We will show that optimizing over some $\K^{(i)}_n$ might be easier than
optimizing over $C^n$, particularly for bounded values of $i$. While checking membership in $C^n$ is NP-hard
and probably not NP (assuming NP$\neq$co-NP, see last paragraph 
of Section~\ref{secqpplus}), checking membership in $K^{(i)}_n$ requires
solving an SDP with ${{n+i+1}\choose{i+2}}\times{{n+i+1}\choose{i+2}}$
variables, as we will see later in Remark~\ref{refOnlyDegree2}.
More generally, we will show one can determine an \sos~decomposition of any polynomial of degree $2d$
by solving and SDP with at most ${{n+d}\choose{d}}\times {{n+d}\choose{d}}$ variables, using results
from the Phd thesis ``Structured Semidefinite Programs
and Semialgebraic Geometry Methods
in Robustness and Optimization'' by Pablo Parrilo.%
\footnote{Defended in 2000 at 
California Institute of Technology, available on-line as of 2017 at
\url{http://www.mit.edu/~parrilo/pubs/files/thesis.pdf}.}
~\\

\noindent \textbf{Characterizing $K^{(0)}_n$}\\
Let us start with $K^{(0)}_n$ and we will show that $\alpha^{(0)}(G)$ is actually
equal to the
natural strengthening $\alpha^0(G)$ from \eqref{eqStableCoposRelax} which
is bounded by 
the Lov\'asz theta number $\vartheta(G)$ as described in 
Subsection~\ref{secStrengtheningNatural}.
However, let us first present a small example.
\begin{example}Consider $p^{(0)}_M(\x)=p_M(\x)=x_1^4+x_2^4+3(x_1x_2)^2$ that
accepts multiple \sos~decompositions, 
\eg,  $(x_1^2+x_2^2)^2+(x_1x_2)^2$
or $(x_1^2)^2+(x_2^2)^2+(x_1x_2)^2$.
Such 
decompositions can be determined by solving an SDP with a null objective function
(SDP feasibility problem).
\end{example}
\begin{proof} We will later show (see Remark \ref{refOnlyDegree2}) that for such polynomial, the only terms that can
appear in the squared expressions are $x_1^2$, $x_2^2$ and $x_1x_2$. Let us
define $\xx^\top=[x_1^2,~x_2^2,~x_1x_2]$ and write $p_M(\x)=\MM\sprod
\xx~\xx^\top$, with $\MM\in S_3$. 

\noindent {\it Step 1}~~
This last equality 
$p_M(\x)=\MM\sprod
\xx~\xx^\top$
does not allow one to uniquely identify $\MM$.
This comes from the fact that $(x_1x_2)^2$ can be written
both as a square of $x_1x_2$ (involving a non-zero $\MM_{33}$ in this
last equality) and as a product of 
$x_1^2$ and $x_2^2$ (involving $\MM_{12}$). We can write 
$p_M(\x)=p_M(\x)+\lambda_{12}\left(-x_1^2x_2^2+ (x_1x_2)^2\right)$.
This way, $\MM$ can take the form
$$\MM_{\lambda_{12}}=
\begin{bmatrix}
1 & \frac {3 -\lambda_{12}}2 & 0\\
\frac {3 -\lambda_{12}}2 & 1 & 0\\
0             &        0   & \lambda_{12}
\end{bmatrix}.$$

\noindent {\it Step 2}~~
Assume there is an \sos decomposition $p_M(\x)= \sum_{i=1}^k (\v_i\sprod
\xx)^2= \sum_{i=1}^k \v_i\v_i^\top \sprod \xx~\xx^\top=VV^\top\sprod \xx~\xx^\top$.
This can only happen if $\MM=VV^\top$ is SDP (see also
Prop.~\ref{propranktransprod}). The above decomposition $(x_1^2+x_2^2)^2+
(x_1x_2)^2$ corresponds to $\lambda_{12}=1$. We can find a decomposition by
solving an SDP program $\MM_{\lambda_{12}}\succeq \zeros$ with null objective function,
\ie, we only want to know if there exists feasible $\MM_{\lambda_{12}}\succeq\zeros$.

Both steps can generate infinitely-many \sos decompositions.
At Step 1, there is a feasible matrix $\MM_{\lambda_{12}}$ for each
$\lambda_{12}\in R$. At Step 2, any SDP matrix 
$\MM_{\lambda_{12}}$ accepts infinitely many factorizations 
$\MM=VV^\top$, as stated in
Corollary \ref{corSdpToVectors}.
\end{proof}

Let us return to general case of the polynomial $p_M(\x)$ of (homogeneous) degree
$2d=4$, \ie, all terms have degree $d=4$. The squared expressions contain no free terms because
$p_M(\x)$ has no free term. They can neither contain terms of degree $d-1=1$
like $x_i$
because there is no way other squared expression cancel some $x_i^2$. We define
$\xx$ to contain all monomials of degree $d=2$ and we attempt to write
$p_M(\x)=\MM\sprod \xx~\xx^\top$. As already stated in the proof of above
example, an \sos decomposition $p_M(\x)= \sum_{i=1}^k (\v_i\sprod
\xx)^2= \sum_{i=1}^k \v_i\v_i^\top \sprod \xx~\xx^\top=VV^\top\sprod \xx~\xx^\top$
exists if and only there is some SDP $\MM=VV^\top$ such that $p_M(\x)=\MM\sprod \xx~\xx^\top$.
All expressions of the form $p_M(\x)=\MM\sprod \xx~\xx^\top$ can be found by 
writing 
\begin{align*}p_M(\x)&=p_M(\x)+\sum_{i<j}\lambda_{ij}\left(-(x_i^2)(x_j^2)+(x_ix_j)^2\right)\\
                 &+\sum_{\substack{j<j'\\i\neq j,i\neq j'}}\mu_{ijj'}\left((x_i x_{j})(x_i x_{j'})-(x_i^2)(x_jx_{j'})\right)\\
                 &+\sum_{\substack{i'\neq j',j\neq i',j\neq j'\\i<j,i<i',i<j'}}\mu_{ii'jj'}\left((x_ix_j)(x_{i'}x_{j'})-(x_ix_{i'})(x_jx_{j'})\right),
\end{align*}
where ``$i<j,i<i',i<j'$'' in the last sum comes from the fact that 
(i) permuting $i$ and $j$ is equivalent to permuting $i'$ and $j'$,
(ii) permuting $i$ and $i'$ is equivalent to permuting $j$ and $j'$,
and
(iii) permuting $i$ and $j'$ is equivalent to permuting $i'$ and $j$.
A feasible $\MM$ can take the following form:
\begin{equation}\label{eqMM}
\MM_{\lambdalambda,\mumu}=
\begin{bmatrix}
M_{11}             &    M_{12}-\lambda_{12} & \ldots   &  M_{1n}-\lambda_{1n}            &    *        &      *      &\ldots     &    *             \\
M_{12}-\lambda_{12}&    M_{22}              & \ldots   &  M_{2n}-\lambda_{2n}            &    *        &      *      &\ldots     &    *             \\
\vdots             &\vdots                  & \ddots   & \vdots                          &\vdots       & \vdots      &\ddots     &\vdots            \\
M_{n1}-\lambda_{n1}&  M_{n2}-\lambda_{n2}   & \ldots   &  M_{nn}                         &    *        &      *      &\ldots     &    *             \\
             *     &                 *      & \ldots   &    *                            &2\lambda_{12}&      *      &\ldots     &    *             \\
             *     &                 *      & \ldots   &    *                            &     *       &2\lambda_{13}&\ldots     &    *             \\
\vdots             &\vdots                  & \ddots   & \vdots                          &\vdots       & \vdots      &\ddots     &\vdots            \\
             *     &                 *      & \ldots   &    *                            &     *       &      *      &\ldots     &2\lambda_{(n-1),n}\\
\end{bmatrix},
\end{equation}
where the asterisks stand for null terms or linear combinations of the $\mumu$ variables. However, if 
$\MM_{\lambdalambda,\mumu}\succeq\zeros$ for some $\mumu\neq\zeros$, then 
$\MM_{\lambdalambda,\zeros}\succeq \zeros$, because any minor of 
$\MM_{\lambdalambda,\zeros}$ (replace above asterisks with zeros) can be seen as a diagonal of blocks that also appear in the 
minors of $\MM_{\lambdalambda,\mumu}$ if $\mumu\neq \zeros$. All these blocks need to 
have a non-negative determinant in both matrices. As such, we can find an \sos~decomposition if 
and only if $\MM_{\lambdalambda,\zeros}\succeq\zeros$.

We now investigate the relation between the above $\MM_{\lambdalambda,\zeros}$
and the initial matrix $M$ that defines $p_M(\x)=M\sprod [x_1^2~x_2^2~\dots x_n^2]^\top
[x_1^2~x_2^2~\dots
x_n^2]$.
We obtain relatively easily that if $\MM_{\lambdalambda,\zeros}\succeq\zeros$, then $M$ can be written as an SDP matrix (the leading principal minor
of size $n\times n$ of $\MM_{\lambdalambda,\zeros}\succeq\zeros$) plus a non-negative matrix filled with all the $\lambda_{ij}$ values. In 
other words, $M\in S_n^+ + \NN^n$, and so, $K^{(0)}_n\subseteq S_n^++\NN^n$. 

We can also show $S_n^++\NN^n\subseteq K^{(0)}_n$. This
follows from the fact that if $S_n^++\NN^n\ni
M=\left[M\right]'+[\lambdalambda]'$ with $\left[M\right]'\succeq\zeros$ and
$[\lambdalambda]'\geq\zeros$, we can also
write $M$ as the sum of an SDP matrix and a non-negative matrix
$[\lambdalambda]$ such that $\texttt{diag}([\lambdalambda])=\zeros$.
For this,
let us (re-)write $M=\underbrace{\left[M\right]'+\texttt{diag}([\lambdalambda]')}_{\left[M\right]\succeq\zeros} + \underbrace{[\lambdalambda]'-
            \texttt{diag}([\lambdalambda]')}_{[\lambdalambda]\geq
\zeros}=\left[M\right]+[\lambdalambda]$ such that $\left[M\right]\succeq\zeros$,
$[\lambdalambda]\geq\zeros $ and $[\lambdalambda]$ has zeros on the diagonal. We
can thus construct a matrix $\MM_{\lambdalambda,\zeros}\succeq\zeros $ as above from these 
two matrices $\left[M\right]$ and $[\lambdalambda]$. This means $p_M(\x)$ is an
\sos, so that
$M\in K^{(0)}_n$.
We have just proved that
$S_n^++\NN^n\subseteq K^{(0)}_n$
so that
 $K^{(0)}_n=S_n^++\NN^n$.

We obtained that $\alpha^{(0)}(G)$ 
and $\alpha^0(G)$ have the same feasible area.
 This means that $\alpha^{(0)}(G)$ is 
the natural strengthening $\alpha^0(G)$ from \eqref{eqStableCoposRelax} 
bounded by 
the theta number $\vartheta(G)$ as described in
Subsection~\ref{secStrengtheningNatural}.

~\\
\noindent \textbf{Studying $K^{(i)}_n$ and finding the \sos~decomposition of $p_M^{(i)}$}\\
Recall the definitions 
\eqref{eqpmr} and \eqref{eqknr} of $p_M^{(i)}$ and respectively~$K^{(i)}_n$. We simply
obtain that $p_M^{(i)}$ is a polynomial of degree $2d=2(i+2)$.
We can find an \sos~decomposition of a polynomial of degree $2d$ using the following approach already exemplified above.
For most general polynomials of degree $2d$, we define a column vector $\xx$ containing all monomials of degree less than or equal to $d$. We have
that $p_M^{(i)}$ is an \sos~if and only if we can 
write it $p_M^{(i)}(\x)=\MM\sprod \xx~\xx^\top$ with $\MM\succeq\zeros$.
We can write $\sum_{j=1}^k (\v_j\sprod
\xx)^2= \sum_{j=1}^k \v_j\v_j^\top \sprod \xx~\xx^\top=VV^\top\sprod \xx~\xx^\top$
if and only if there exists $\MM=VV^\top\succeq \zeros$ such that $p_M^{(i)}=\MM\sprod \xx~\xx^\top$. The number of (independent) squared expressions is equal to the rank 
of $V$ which is equal to the rank of $\MM$ (see Prop.~\ref{propranktransprod}).

Checking membership in $K^{(i)}_n$ reduces to checking SDP membership for a matrix $\MM$ of the
form \eqref{eqMM} but of much larger size and with
 more variables like $\lambdalambda$ or $\mumu$. For instance, 
a term like
$x_i^2\cdot M_{ij}(x_i)^2(x_j)^2=M_{ij}x_i^4x_j$
of 
$p_M^{(i)}(\x)$ leads to a constraint
$M_{ij}=
\sum_{ab=x_i^4x_j^2} \MM_{a,b}$, where $\MM_{a,b}$ indicates the $\MM$ term
corresponding to monomials 
$a$ and $b$ in $\MM$. 
An optimization problem over $K^{(i)}_n$ can be transformed into 
an SDP problem by replacing an initial constraint of the form $M\in K^{(i)}_n$ into
some $\MM\succeq\zeros$, but where the size of $\MM$ can be much larger than $n\times n$, 
recall $\xx$ contains all monomials of degree less than or equal to $d$.

\paragraph{The size of the SDP programs used for computing \sos~decompositions}
In the most general setting, the length of $\xx$ is given by the number of
monomials of degree less than or equal to $d$. A classical combinatorics method
known as ``stars and bars'' can determine this number as ${n+d}\choose{d}$; it
relies on encoding any monomial using a 
string of stars and bars that arises quite out of the blue (\eg, $x_1^2x^3x_4$ is
$\star\star||\star|\star$).
However, I prefer to show it using a different argument that I personally find more natural. Consider the following set 
$\{x_1,~x_2,\dots x_n,~c_1,~c_2,\dots c_d\}$ of cardinal $n+d$. 
Choosing some
$x_{i_1},~x_{i_2},\dots x_{i_k}$ amounts to building an initial expression $e=x_{i_1}x_{i_2}\dots x_{i_k}$ (or $e=1$ 
if no $x_i$ term is chosen). Then, choosing some 
$c_j$ amounts to the following: (i) if the $j^\text{th}$ factor of $e$ exists, copy (insert) it at position $j+1$
or (ii) if the $j^\text{th}$ factor of $e$ does not exist ($e$ is shorter), do not modify $e$. 
Under this interpretation, any 
monomial of degree less than or equal to $d$ corresponds to choosing $d$ elements from $\{x_1,~x_2,\dots x_n,~c_1,~c_2,\dots c_d\}$.
\begin{itemize}
    \item $x_1$ corresponds to choosing $\{x_1,~c_2,~c_3,\dots c_d\}$. We start with expression
$e=x_1$ and the copying elements $c_2,~c_3,\dots c_d$ do not modify $e$ because $e$ has no factors at positions $2,~3,\dots d$.
    \item $x_1^2x_2$ corresponds to choosing $\{x_1,~x_2,~c_1,~c_4,~c_5,\dots c_d\}$. Indeed, we first
have the expression $e=x_1x_2$; then, $c_1$ will insert a copy of $x_1$ at position 2 to obtain
$e=x_1x_1x_2$. The $c_4$ element does not change $e$, because $e$ has no factor at position $4$.
Same applies to $c_5,~c_6\dots c_d$, and so, the final $e$ is $e=x_1x_1x_2$.
    \item $x_5x_7^3x_9$ corresponds to choosing $\{x_5,~x_7,~x_9,~c_2,~c_3,~c_6,~c_7,\dots c_d\}$.
We start with $e=x_5x_7x_9$ and $c_2$ inserts at position $3$ a copy of the second element $x_7$, leading
to $e=x_5x_7x_7x_9$. Then, $c_3$ duplicates the third element $x_7$, generating 
$e=x_5x_7x_7x_7x_9$. Elements $c_6,~c_7,\dots c_d$ will perform no modification on $e$ because $e$ has
no factor at positions $6,~7,\dots d$.
    \item $x_7^{d}$ corresponds to choosing $\{x_7,~c_1,~c_2,\dots c_{d-1}\}$. Indeed, we start with 
$e=x_7$ and $c_1$ duplicates $x_7$ leading to $x_7x_7$. Then $c_2$ duplicates the second term, leading
to $x_7x_7x_7$. Applying this for all $c_1,~c_2,\dots c_{d-1}$, we obtain that $x_7$ is duplicated 
$d-1$ times, and so, the final expression is $\underbrace{x_7x_7\dots x_7}_{d\text{ times }}$.
    \item $x_7^2x_9^{d-2}$ corresponds to choosing $\{x_7,~x_9,~c_1,~c_3,~c_4,~\dots c_{d-1}\}$. We
start with $e=x_7x_9$ and $c_1$ duplicates $x_7$ leading to $e=x_7x_7x_9$. Since $c_2$ is not chosen, $x_7$
is not duplicated again. On the other hand, $x_9$ is duplicated in cascade $d-3$ times and we obtain
$e=x_7x_7\underbrace{x_9x_9x_9\dots x_9}_{d-2\text{ times }}$.
    \item $1$ corresponds to choosing $c_1,~c_2,\dots c_d$.
\end{itemize}
\begin{remark} \label{refOnlyDegree2}The polynomial $p_M^{(i)}$ of degree $2d=2(i+2)$ also has the property that it contains
only monomials of degree $2d$. In this case, we do \underline{not} need to construct $\xx$ using \underline{all} monomials of 
degree less than or equal to $d$, but we only need monomials of degree exactly $d$.
The number of such monomials is 
$${{n+d-1}\choose{d}}={{n+i+1}\choose{i+2}}.$$
\end{remark}
\begin{proof} First, since $p_M^{(i)}$ has no free member, we can \textit{not} use a squared expression with a free term, because
\textit{no} other squared expression can cancel a free term. 
We can repeat this argument by induction. Let us take any $d'<d$;
the induction basis is that there is {\it no} term of degree strictly smaller than
$d'$ in any squared expression.
Now notice we can not have a squared expression of the form $(e+x)^2$ with 
\texttt{degree(x)}=$d'$, because
we can not cancel the monomial $x^2$. Indeed, such monomial could only be canceled by some 
$\left(e+x'-x''\right)^2$ with $x^2=x'x''$ and the induction hypothesis states
there are {\it no} terms 
such as $x'$ or $x''$ of degree strictly smaller than $d'$
in any squared expression.
We proved by
induction that there are {\it no} terms of degree $d'<d$ in our squared expressions, \ie, 
$\xx$ is constructed using only monomials of degree $d$. 

Investigating in detail the combinatorial argument above this proposition, we notice that
expressions of degree exactly $d$ are only constructed when $c_d$ is not chosen
(hint: let $c_\ell$ be the first chosen among the $c_i$'s and notice there 
are at least $\ell$ elements chosen among the $x_i$'s because none of
$c_1,~c_2,\dots c_{\ell-1}, c_d$ is chosen). The number of monomials of degree exactly $d$ is thus
${n+d-1}\choose{d}$.
\end{proof}

A final remark sometimes partially overlooked is that we took somehow for
granted that we can \textit{not} have an \sos~decomposition with 
some terms of degree $\ddd>d$. However, this follows from the following argument. Take the maximum degree of a term in a squared expression
and assume this value is $\ddd>d$. 
Assume there is an \sos~decomposition $\sum\left(e_i+f_i\right)^2$, where the polynomials $f_i'$s contain all terms
of degree $\ddd$. The monomials of degree $2\ddd$ are those resulting from (summing
up) the terms  $f_i^2$. 
We need to have $\sum f_i^2=0$, which means \textit{any} $f_i$ yields $f^2_i=0$, and so, $f_i=0$. 
 
\subsection{Further characterization of the completely positive and the
copositive cones}
We first recall below the following basic hierarchy from \eqref{eqHierarchySDP} of
Section~\ref{seccopos}. 
\begin{equation}\label{eqHierarchySDPBis}
C^{n*}\subset S_n^+\cap \NN^n \subset S_n^+ \subset S_n^+ + \NN^n \subset C^n,
\end{equation}
We showed in
Footnote \total{testfoot8} (p.~\pageref{testfootpage8}) that 
$\left(S_n^+ + \NN^n\right)^* = S_n^+\cap \NN^n$.
We will now prove the converse:
$\left(S_n^+\cap \NN^n \right)^*=S_n^+ + \NN^n$.
I could not easily find a direct proof using the properties of these matrices.
However, 
it is direct consequence of the following general proposition
combined with the fact that $S_n^+ + \NN^n$ is closed (which follows 
from Prop.~\ref{propSDPcones} and the fact that $\NN^n$ is closed).
More exactly, 
 property below enables us
to derive the following implication:
${\left(S_n^+ + \NN^n\right)^*}^* = \left(S_n^+\cap \NN^n\right)^*$
$\implies$
$S_n^+ + \NN^n = \left(S_n^+\cap \NN^n\right)^*$.

\begin{proposition}\label{propDualDualClosureCone} Given any convex cone $C$, we have $\closure(C)={C^*}^*$.
\end{proposition}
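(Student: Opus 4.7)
The plan is to establish the two inclusions $\closure(C) \subseteq C^{**}$ and $C^{**} \subseteq \closure(C)$ separately.

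For the easy inclusion, I would unfold definitions: any $X \in C$ satisfies $X \sprod Y \geq 0$ for every $Y \in C^*$ by the very definition of $C^*$, hence $X \in C^{**}$, giving $C \subseteq C^{**}$. Observing that
$$C^{**} = \bigcap_{Y \in C^*} \{Z : Y \sprod Z \geq 0\}$$
is an intersection of closed half-spaces, $C^{**}$ is itself closed, and therefore $\closure(C) \subseteq C^{**}$.

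For the reverse inclusion I would argue by contradiction. Assume there exists $Z \in C^{**} \setminus \closure(C)$. Since $\closure(C)$ is closed and convex (convexity passes to the closure, and it is closed by construction), applying the hyperplane separation Theorem~\ref{thHyperGeneral} to the disjoint closed-convex set $\closure(C)$ and the compact-convex set $\{Z\}$ yields a non-zero $Y$ and a scalar $c \in \R$ such that
$$Z \sprod Y < c \leq X \sprod Y \quad \forall X \in \closure(C).$$
Since $C$ is a non-empty convex cone, picking any $X_0 \in C$ and letting $t \to 0^+$ in $tX_0 \in C$ shows $\zeros \in \closure(C)$; plugging $X = \zeros$ into the inequality above gives $c \leq 0$.

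The crucial step is to upgrade $Y$ to a member of $C^*$. For any $X \in C$ and any $t > 0$ the cone property gives $tX \in C$, hence $t\,(X \sprod Y) \geq c$; if $X \sprod Y$ were strictly negative, letting $t \to \infty$ would violate this bound, so $X \sprod Y \geq 0$ for every $X \in C$, i.e.\ $Y \in C^*$. But then the assumption $Z \in C^{**}$ forces $Z \sprod Y \geq 0$, contradicting $Z \sprod Y < c \leq 0$. The assumption $Z \notin \closure(C)$ was false, proving $C^{**} \subseteq \closure(C)$. The main obstacle is the separation step — ensuring a clean strict separation between a closed convex set and an external point — but this is already packaged in Theorem~\ref{thHyperGeneral} used earlier in the strong duality proof; everything else is bookkeeping with the cone scaling trick.
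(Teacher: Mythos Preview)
Your argument follows essentially the same route as the paper's proof, and your treatment of the easy inclusion is actually cleaner: rather than a limiting argument, you observe that $C^{**}=\bigcap_{Y\in C^*}\{Z:Y\sprod Z\geq 0\}$ is closed and contains $C$, hence contains $\closure(C)$. That is a nice simplification.

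There is, however, one genuine slip in the hard direction. You invoke Theorem~\ref{thHyperGeneral} and then write $Z\sprod Y<c\leq X\sprod Y$. But Theorem~\ref{thHyperGeneral} as stated in the paper only yields the non-strict inequality $\v\sprod\x\geq c\geq\v\sprod\y$; with merely $Z\sprod Y\leq c\leq 0$ your final contradiction collapses to the trivial $0\leq Z\sprod Y\leq 0$. The strict inequality you need --- separation of a closed convex set from a point outside it --- is precisely the content of Theorem~\ref{thSimpleSep} (the Simple Separation Theorem), and that is what the paper invokes at this step. Replace your citation of Theorem~\ref{thHyperGeneral} with Theorem~\ref{thSimpleSep} and the proof goes through exactly as you wrote it; the cone-scaling trick to upgrade $Y$ into $C^*$ is identical to the paper's.
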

\begin{proof}
We first show $\closure(C)\subseteq {C^*}^*$. Assume the opposite: there
is a sequence $(\c_i)$ with all $\c_i\in C$ and $\lim\limits_{i \to \infty}
\c_i=\c\notin {C^*}^*$. This means there is ${\c^*}\in C^*$ such that $\c\sprod {\c^*}=z<0$.
But since $\lim\limits_{i \to \infty} \c_i=\c$, there exists some $\c_i$
sufficiently close to $\c$ such that $\c_i\sprod {\c^*}\in[z-\varepsilon,z+\varepsilon]$, for
any $\varepsilon>0$. For a sufficiently small $\varepsilon$, this means $\c_i\sprod {\c^*}<0$
which is a contradiction of ${\c^*}\in C^*$ and $\c_i\in C$.
As such, $\lim\limits_{i \to \infty} \c_i=\c\notin {C^*}^*$ is false
$\implies\closure(C)\subseteq {C^*}^*$.

We now show $\b\notin \closure(C)\implies \b\notin {C^*}^*$. We apply the simple
separation Theorem~\ref{thSimpleSep} on
$\closure(C)$ and $\{\b\}$ to obtain there is a strictly separating inequality
$\c\sprod {\c^*}>\b\sprod {\c^*}~\forall \c\in \closure(C)$.
Since $\zeros \in C$, we have $\zeros\sprod {\c^*}>\b\sprod {\c^*} \implies \b\sprod {\c^*}<0$.
We now show $\c^*\in C^*$, \ie, $\c\sprod\c^*\geq 0~\forall \c\in C$.
Assume $\exists \cc\in C$ such that $\cc\sprod {\c^*} <0$. Using the cone property,
$t\cc\in C$ for any arbitrarily large $t$. This means $t\cc\sprod {\c^*}$ can be
arbitrarily low, easily less than $\b \sprod {\c^*}$, which is a contradiction. The
assumption $\exists \cc\in C$ such that $\cc\sprod {\c^*}<0$ was false. All $\c\in C$
satisfy $\c\sprod {\c^*} \geq 0$.
We produced ${\c^*} \in C^*$ such that $\b\sprod {\c^*}<0$ for any $\b$
outside $\closure(C)$.
\end{proof}

Using the results from Section \ref{secsoshierarchy}, 
the above hierarchy \eqref{eqHierarchySDPBis} is refined to the 
hierarchy below. 
Recalling P\' olya's Theorem~\ref{thPolya}, we have $\inter(C^n)=
\K_n^{(\infty)}=
\lim\limits_{i\to\infty}
\K_n^{(i)}$.

\begin{equation}\label{eqHierarchySDPTri}
C^{n*}\subset S_n^+\cap \NN^n \subset S_n^+ \subset S_n^+ +
\NN^n=\K^{(0)}_n\subset \K^{(1)}_n\subset \K^{(2)}_n\subset \dots \K^{(\infty)}_n \subset C^n,
\end{equation}

We can also build an outer approximation hierarchy for $C^n$. Consider
$\N_r^n=\left\{\z\in \N^n:~\sum_{j=1}^n z_i\leq r\right\}$ and define 
$\PP_n^{(r)}=\left\{X\in S_n:~X\sprod \z\z^\top\geq 0~\forall
\z\in\N_r^n\right\}$.%
\footnote{Some papers define $\N_r^n$ using  $\sum_{j=1}^n z_i= r$. However, as
far as I can see, this does no longer ensures 
the property
$\PP_n^{(r+1)}\subset \PP_n^{(r)}$ that we need afterwords. We would have 
$X=
\left[
\begin{smallmatrix}
    7   &    -8  &    0 \\
    -8  &    7   &    0 \\
    0   &   0    &   2
\end{smallmatrix}
\right]\in \PP_3^{(3)}$ but
$X\notin \PP_3^{(2)}$.}
Since $\PP_n^{(r)}$ contains less constraints than $C^n$,
we simply obtain $C^n\subset \PP_n^{(r)}~\forall r\in \N$. Notice also $\N_r^n\subsetneq
\N_{r+1}^n$, and so $C^n\subset\PP_n^{(r+1)}\subset \PP_n^{(r)}\forall r\in \N$.
Writing $\PP_n^{(\infty)}=\lim\limits_{r\to\infty}\PP_n^{(r)}$, we will prove
that
$$C^n=\PP_n^{(\infty)}\subset \dots\subset \PP_n^{(3)}\subset \PP_n^{(2)}\subset \PP_n^{(1)}$$

We still have to show the first equality; the
other inclusions were proved in above paragraph.
We know $C^n\subseteq \PP_n^{(\infty)}$, because $C^n$ contains more constraints
than $\PP_n^{(\infty)}$. We only need to show that $X\in \PP_n^{(\infty)}\implies
X\in C^n$. Assume the opposite: 
$\exists X\in \PP_n^{(\infty)}$ such that
$X\notin C^n$. This means there exists $\y\in \R^n_+$ such that 
$X\sprod \y\y^\top<0$. 
Now we can construct a rational $\y_q$ such that
$X\sprod \y_q\y_q^\top<0$. Such a $\y_q$ can simply
be constructed by taking sufficiently many decimals of
of $\y$, enough to make 
$X\sprod \y_q\y_q^\top$
as close as necessary to
$X\sprod \y\y^\top$.
By multiplying rational $\z_q$ with the least common denominator, we
obtain an \textit{integer} $\z_i$ such that $X\sprod \z_i\z_i^\top<0$, which
contradicts $X\in \PP_n^{(\infty)}$. The assumption $X\notin C^n$ was false,
and so, $C^n=\PP_n^{(\infty)}$.

The final remark is that all inclusions in the basic hierarchy
\eqref{eqHierarchySDPBis} are strict. 
Indeed, $S_n^+\cap \NN^n \subsetneq S_n^+$ simply because 
$\left[\begin{smallmatrix} 1 &-1\\-1&1\end{smallmatrix}\right]
\in S_2^+-S_2^+\cap \NN^2$. It is also very easy to check
$\left[\begin{smallmatrix} 0 &1\\1&0\end{smallmatrix}\right]$
belongs to $S_2^+ + \NN^2$, but not to $S_2^+$.
We can derive
$C^{n*}\subsetneq S_n^+\cap \NN^n$
from the fact that 
$C^{n*}$ and  $S_n^+\cap \NN^n$ have disjoint dual cones
$C^n$ and resp.~$S_n^+ + \NN^n$. Indeed, we can show 
$S_n^+ + \NN^n \subsetneq C^n$ using
the so-called Horn-matrix.%
\footnote{For an exact reference, check the comments on equation $(7)$ 
from the ``Copositive Optimization'' article of the \textit{Optima 89} newsletter,
see the \texttt{http} link from Footnote \total{testfoot4}, p.~\pageref{testfootpage4}.}
$$
H=
\left[
\begin{array}{rrrrr}
1 & -1 & 1  & 1  & -1 \\
-1& 1  & -1 & 1  & 1  \\
1 &-1  &1   & -1 & 1  \\
1 & 1  & -1 & 1  & -1 \\
-1& 1  & 1  & -1 & 1  
\end{array}
\right]
$$

\subsubsection{The Horn matrix is copositive}

We will show this without decomposing $H\sprod \x\x^\top$ and calculating.
We start from some $\x\geq 0$ and we investigate the evolution of function $H\sprod
\x\x^\top$ as we change variables $\x$ in a particular way described next.
The key is to notice that if we can decrease $x_i$ and $x_j$ by any
$\varepsilon\in(0,min(x_i,x_j)]$, 
the value $H\sprod \x\x^\top$ decreases, for any $i,j\in[1..5]$ with $i\neq j$.
Indeed, take any two lines $i$ and $j$ and notice they contain four times 
$\left[\begin{smallmatrix} -1 \\ 1 \end{smallmatrix}\right]$
or 
$\left[\begin{smallmatrix} 1 \\ -1 \end{smallmatrix}\right]$ and one times
$\left[\begin{smallmatrix} 1 \\ 1 \end{smallmatrix}\right]$. The contribution
to $H\sprod
\x\x^\top$ of a sub-matrix 
$\left[\begin{smallmatrix} -1 \\ 1 \end{smallmatrix}\right]$
or
$\left[\begin{smallmatrix} 1 \\ -1 \end{smallmatrix}\right]$ is a multiple of
$x_i-x_j$. Since we decrease \textit{both variables} $x_i$ and $x_j$ by $\varepsilon$,
the value $x_i-x_j$ does not change. On the other hand, the contribution
of a sub-matrix
$\left[\begin{smallmatrix} 1 \\ 1 \end{smallmatrix}\right]$ is 
$x_\ell(x_i+x_j)$ for some $\ell\notin\{i,j\}$ that has to verify $x_\ell\geq
0$. Decreasing both $x_i$ and $x_j$ can only decrease $x_\ell(x_i+x_j)$.
An analogous (transposed) phenomenon happens on the columns
$i$ and $j$.
This way, decreasing $x_i$ and $x_j$ by the same $\varepsilon>0$ can only decrease
$H\sprod \x\x^\top$. 

We can sequentially decrease pairs of non-zero variables $x_i$ and $x_j$ until we end up
with a final $\x$ that contains only one non-zero variable and whose value
$H\sprod \x{\x}^\top$ is less than or equal to the initial one. This finishes the
proof, because this final value is a multiple of a diagonal element of $H$ that is
surely non-negative.

\subsubsection{The Horn matrix does not belong to $S_5^++\NN^5$}

The easiest way to see $H$ is not SDP is 
by noticing it does not respect the structure from 
Prop.~\ref{propRankRectangular}.
Indeed, the first four elements of
the last row can not be written as a linear combination of the rows of
$[H]_{[1..4]}$, because of columns 2 and 3, where $[H]_I$ is the principal minor
of $A$ that selects rows $I$ and columns $I$. 
We still need to show $H-N$ is not SDP for any $N\geq \zeros$. 
Assume the contrary: there is $N\geq \zeros$ such that $H-N\succeq\zeros$
and we sequentially conclude:
\begin{enumerate}
\item[(i)] \begin{sloppypar}The operation $H-N$ can not 
decrease any diagonal element $H_{ii}$, because that would lead to 
$\det \left([H-N]_{\{i,j\}}\right)<0$ for $j=i+1\texttt{ mod 5}$. Indeed, the product of
the elements on the main diagonal of such
$[H-N]_{\{i,j\}}$ would become less than 1 if $N_{ii}>0$ or $N_{jj}>0$, which is strictly less that the product
of the elements of the second diagonal (remark $(H-N)_{ij}\leq H_{ij}=-1$).
We thus need to have $\texttt{diag}(N)=\zeros$.
\end{sloppypar}
\item[(ii)]\begin{sloppypar} The operation
$H-N$ can not decrease any $H_{ij}$ 
for $j=i+1\texttt{ mod 5}$, 
because this would lead to 
$\det\left([H-N]_{\{i,j\}}\right)<0$. Indeed, this would increase the product of the elements of
the second diagonal of $[H-N]_{\{i,j\}}$ while the main diagonal does not change by virtue of
$\texttt{diag}(N)=\zeros$ from above point (i). We obtain that all negative elements
of $H$ can not be decreased by the operation $H-N$. \end{sloppypar}
\item[(iii)]
Notice we have 
$H^3=[H]_{[1..3]}=
[H]_{[2..4]}=
[H]_{[3..5]}$. Since we need such matrices $H^3$ to remain SDP after subtracting $N$,
the extremal elements of the second diagonal (\ie, $H^3_{13}$ and $H^3_{31}$)
need to remain unchanged. This follows from applying
Prop.~\ref{propRankRectangular} and the fact that the other elements of any
$H^3$ are not changed by the $H-N$ operation by virtue of (i) and (ii) above. We
obtain $N_{13}=N_{24}=N_{25}=0$ and the same applies to the transposed elements.
\item[(iv)] The only elements of $N$ that can still be non-zero are $N_{14}$, $N_{25}$ and
their transposed, so that $|H-N|_{\{1,3,5\}}=H_{\{1,3,5\}}$.
Since the determinant of this $3\times 3$ matrix is $-4$, $H-N$ can not be SDP.

\end{enumerate}
We obtained there is no $N\geq \zeros$ that can lead to $H-N\succeq\zeros$, and thus,
$H\notin S_5^+ + \NN^5$.

\subsection{A final short property: the Schur complement does not apply in
$C^{n*}$}
I finish with a final remark not directly related to other property from this
chapter.
In certain proofs, I was tempted to use a ``Schur complement
property'' with completely positive matrices but this is not possible.
\begin{proposition}
The Schur complement property from Prop.~\ref{propSchurGen} does not hold for
completely positive matrices. In particular, 
if
$\left[\begin{smallmatrix}
1 & \y^\top \\
\y& Y
\end{smallmatrix}\right]
\in 
C^{(n+1)*}$, we do not necessarily have $Y-\y\y^\top \in C^{n*}$.
\end{proposition}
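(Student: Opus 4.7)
The plan is to build an explicit counterexample with $n=2$: a completely positive matrix
\[
M=\begin{bmatrix} 1 & \y^\top \\ \y & Y \end{bmatrix}\in C^{3*}
\]
whose Schur complement $Y-\y\y^\top$ has a strictly negative entry. Since $C^{2*}\subset \NN^2$ by the hierarchy~\eqref{eqHierarchySDPBis}, a single negative off-diagonal entry already suffices to block membership in $C^{2*}$, which removes the need to argue against a full CP-decomposition on the Schur side.

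The candidate I would try first is $\y=(1,1)^\top$ and $Y=2I_2$, so that
\[
M=\begin{bmatrix}1&1&1\\1&2&0\\1&0&2\end{bmatrix}\quad\text{and}\quad Y-\y\y^\top=\begin{bmatrix}1&-1\\-1&1\end{bmatrix}\notin \NN^2.
\]
To certify $M\in C^{3*}$, I would exhibit a two-term non-negative decomposition of the shape required by~\eqref{eqcomplpositive1}, namely
\[
M=\y_1\y_1^\top+\y_2\y_2^\top,\qquad \y_1=\tfrac{1}{\sqrt{2}}\begin{bmatrix}1\\2\\0\end{bmatrix},\quad \y_2=\tfrac{1}{\sqrt{2}}\begin{bmatrix}1\\0\\2\end{bmatrix},
\]
whose correctness is a one-line verification (each rank-one summand is non-negative and their sum reproduces $M$ exactly).

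The only genuinely non-routine step is discovering this decomposition. The guiding heuristic is that the SDP Schur complement (Prop.~\ref{propSchurPart}) already yields $Y-\y\y^\top\succeq\zeros$ whenever the bordered matrix is SDP, so the obstruction to complete positivity must come purely from the failure of the non-negativity component, not from the PSD component. I would therefore search for a CP matrix whose off-diagonal mass from $\y\y^\top$ is split into two rank-one pieces supported on disjoint coordinate pairs: $\y_1$ zeroes out coordinate $3$ and $\y_2$ zeroes out coordinate $2$, which mechanically forces the aggregate $Y_{23}=0$ while $(\y\y^\top)_{23}=1$, i.e.\ it produces exactly the negative entry $-1$ in the Schur complement. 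Once this example is written down, no further work is required.
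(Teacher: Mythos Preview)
Your proof is correct and is essentially the paper's own example specialized to a concrete parameter value: the paper writes $M=\tfrac12\bigl([1~0~a]^\top[1~0~a]+[1~a~0]^\top[1~a~0]\bigr)$ and observes that the off-diagonal of $Y-\y\y^\top$ is negative, and your choice $\y_1=\tfrac1{\sqrt2}(1,2,0)^\top$, $\y_2=\tfrac1{\sqrt2}(1,0,2)^\top$ is precisely this construction with $a=2$. Your write-up is slightly more explicit (you verify the decomposition and invoke $C^{2*}\subset\NN^2$ to finish), but the idea is identical.
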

\begin{proof} It is enough to give an example. 
We will exhibit a $2\times 2$ matrix $Y$ with some zeros and
a vector $\y$ with no zero. It is enough to 
take the sum
$\left[1~0~a\right]^\top \left[1~0~a\right]
+\left[1~a~0\right]^\top \left[1~a~0\right]$ and divide it
by $2$ to obtain the completely positive matrix
below.
$$
\begin{bmatrix}
      1   & a/2 & a/2 \\
      a/2 & a^2/2 & 0 \\
      a/2 & 0 & a^2/2
\end{bmatrix}$$
The non-diagonal terms of $Y$ are zero. If we try to apply the Schur complement, we obtain negative entries
on the non-diagonal terms of 
$Y-\y\y^\top $, so that this matrix can not be completely
positive.
\end{proof}

\section{\label{sec4}SDP relaxations and convexifications of quadratic programs}
Let us give a short warning: an important difficulty in this section
 comes from parsing a number of 
 (Lagrangian) notations. It may be useful to print this section (twice) to more easily
jump from one formula or notation to another.
\subsection{\label{sec41}The most general quadratic program: SDP relaxation and total Lagrangian}
We presented SDP reformulations and relaxations of \textit{convex} quadratic programs
in Section \ref{sec34}.
We now introduce  a quadratic program in its most general form, not necessarily
convex. Notice this form could include 
$0-1$ quadratic programs by adding $x^2_i=x_i~\forall i\in[1..n]$ to the set of quadratic
constraints~\eqref{qp2}.

\begin{subequations}
\begin{align}[left ={(QP)  \empheqlbrace}]
\min~~&     Q\sprod \x\x^\top +\c^\top \x                             \label{qp1}\\
s.t~~ &B_i   \sprod \x\x^\top + \d_i^\top       \x = (\leq) e_i~~~~\forall i \in [1..p]   \label{qp2}\\
      &\x\in \R^n                                                     \label{qp3}
\end{align}
\end{subequations}
The basic SDP relaxation of above $(QP)$ program is the following:
\begin{subequations}
\begin{align}[left ={SDP(QP)  \empheqlbrace}]
\min~~&     Q\sprod X   +\c^\top \x                                         \label{sdpqp1}\\
s.t~~ &B_i   \sprod X  +\d_i^\top \x= (\leq) e_i~~~~\forall i \in [1..p]   \label{sdpqp2}\\
      &\begin{bmatrix} 1&\x^\top\\ \x & X\end{bmatrix}\succeq \zeros \label{sdpqp3}
\end{align}
\end{subequations}

Let us start with a rather negative result: the SDP relaxation of
$\min\{-x^2:~x=0\}=0$ is $\min\{-X_{11}:~x=0,~X\succeq xx^\top =0\}=-\infty$.
This means that the above basic SDP relaxation can be arbitrarily bad
in the worst case. However, we will overcome this with different
strengthening methods in the next subsections. 


Let us introduce the Total Lagrangian (TL) in variables $\x$ of $(QP)$ from
\eqref{qp1}-\eqref{qp3}. We can write:
\begin{subequations}
\begin{align}
\left(TL^\sx(QP)\right)=&\max_{\mumu}\LL_{TL^\sx(QP)}(\mumu)    \label{eqTotLagrObj} \\
             &\LL_{TL^\sx(QP)}(\mumu)=
                \min_{\x\in\R^n}\left(Q+\sum_{i=1}^p \mu_i B_i\right) \sprod \x\x^\top 
                + \left(\c^\top+\sum_{i=1}^p \mu_i\d^\top_i\right)\x - \mumu^\top \e
                \label{eqTotLagrBody}
\end{align}
\end{subequations}
Notice we have $\mu_i\geq 0$ if we have an inequality constraint $B_i\sprod \x\x^\top+d_i^\top x \leq e_i$ for some $i\in[1..p]$.

\begin{theorem} \label{thTotLagrDualSdp} The optimum total Lagrangian
$\left(TL^\sx(QP)\right)$ of a quadratic program $(QP)$ is equal to the optimum value 
of the \underline{dual} of the basic SDP relaxation $(SDP(QP))$ of $(QP)$ from \eqref{sdpqp1}-\eqref{sdpqp3}.
\end{theorem}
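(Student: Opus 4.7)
The plan is to compute the dual of $(SDP(QP))$ explicitly via standard SDP duality, and then to recognize the inner (scalar) maximization in the resulting expression as the LMI characterization of the optimum of an unconstrained convex quadratic from Prop.~\ref{propCqpunconstrained}.

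First I would rewrite $(SDP(QP))$ as a single SDP in the variable $Y=\bigl[\begin{smallmatrix} 1 & \x^\top\\ \x & X\end{smallmatrix}\bigr]\succeq\zeros$, where the ``$1$'' in the top-left is enforced by the linear constraint $E_{00}\sprod Y=1$ (with $E_{00}$ the matrix having a single $1$ at position $(0,0)$). The objective $Q\sprod X+\c^\top\x$ becomes $\widehat Q\sprod Y$ with $\widehat Q=\bigl[\begin{smallmatrix} 0 & \frac12\c^\top\\ \frac12\c & Q\end{smallmatrix}\bigr]$, and each constraint $B_i\sprod X+\d_i^\top\x=(\le)\,e_i$ becomes $\widehat B_i\sprod Y=(\le)\,e_i$ with $\widehat B_i=\bigl[\begin{smallmatrix} 0 & \frac12\d_i^\top\\ \frac12\d_i & B_i\end{smallmatrix}\bigr]$. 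This places $(SDP(QP))$ in the dual SDP template \eqref{dsdp1}--\eqref{dsdp3}, so its dual (minimization) is obtained from Prop.~\ref{propMainDuality}, with Prop.~\ref{propMainDualityWithNonNegatives} used to keep $\mu_i\ge 0$ on rows coming from $\le$-constraints of $(QP)$.

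Next I would assemble the dual explicitly. Assigning scalar $s$ to $E_{00}\sprod Y=1$ and $\mu_i$ to $\widehat B_i\sprod Y=(\le)\,e_i$, the dual LMI reads $s\,E_{00}+\sum_i\mu_i\widehat B_i\preceq\widehat Q$, which after expansion becomes
\begin{equation*}
\begin{bmatrix} -s & \tfrac12\bigl(\c+\sum_i\mu_i\d_i\bigr)^\top\\[0.2em] \tfrac12\bigl(\c+\sum_i\mu_i\d_i\bigr) & Q+\sum_i\mu_i B_i\end{bmatrix}\succeq\zeros,
\end{equation*}
with dual objective $s-\mumu^\top\e$ to be maximized. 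Fixing $\mumu$ and maximizing over $s$ alone, the resulting scalar LMI is exactly \eqref{eqtmptmp0}--\eqref{eqtmptmp} applied to the unconstrained quadratic $\x^\top M(\mumu)\x+\v(\mumu)^\top\x$, where $M(\mumu)=Q+\sum_i\mu_i B_i$ and $\v(\mumu)=\c+\sum_i\mu_i\d_i$. By Prop.~\ref{propCqpunconstrained} the optimal $s^*(\mumu)$ equals $\min_{\x\in\R^n}\bigl[\x^\top M(\mumu)\x+\v(\mumu)^\top\x\bigr]$, with the convention that infeasibility of the LMI corresponds to the quadratic being unbounded below (value $-\infty$). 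Substituting this back into the outer maximum in $\mumu$ gives $\max_{\mumu}\bigl[\min_\x \x^\top M(\mumu)\x+\v(\mumu)^\top\x-\mumu^\top\e\bigr]$, which is exactly $\LL_{TL^\sx(QP)}(\mumu)$ from \eqref{eqTotLagrBody} maximized as in \eqref{eqTotLagrObj}, yielding $TL^\sx(QP)$.

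The main obstacle, and the place where care is needed, is the bookkeeping: matching the sign conventions (a minimization primal dualizes to a maximization dual), keeping $\mu_i\ge 0$ exactly on rows associated with $\le$-constraints of $(QP)$ as in Prop.~\ref{propMainDualityWithNonNegatives}, and handling the degenerate $\mumu$'s for which the quadratic $\x^\top M(\mumu)\x+\v(\mumu)^\top\x$ is unbounded below. In the latter case Prop.~\ref{propCqpunconstrained} guarantees that the corresponding LMI is infeasible, so such $\mumu$ are automatically excluded from the outer max on both sides of the claimed equality; this harmless correspondence is what makes the identification of the dual of $(SDP(QP))$ with $TL^\sx(QP)$ exact rather than merely an inequality.
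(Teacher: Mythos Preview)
Your proposal is correct and follows essentially the same approach as the paper: both arguments hinge on Prop.~\ref{propCqpunconstrained} to identify the inner scalar optimization with the LMI \eqref{eqTotLagrConstr2}, arriving at the same formulation \eqref{eqTotLagrObj2}--\eqref{eqTotLagrConstr2}. The only difference is the direction of travel---the paper starts from $\LL_{TL^\sx(QP)}(\mumu)$, applies Prop.~\ref{propCqpunconstrained} to obtain the LMI, and then recognizes the result as the dual of $(SDP(QP))$, whereas you first write $(SDP(QP))$ explicitly in the $(DSDP)$ template, dualize, and then invoke Prop.~\ref{propCqpunconstrained} to collapse the inner max over $s$ back to $\LL_{TL^\sx(QP)}(\mumu)$; your route makes the dualization step more visible, while the paper's route makes the Lagrangian interpretation more immediate.
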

\noindent \textit{Proof.}
To compute $\LL_{TL^\sx(QP)}(\mumu)$ for a fixed $\mumu$, we can use the results for
unconstrained quadratic programs from Section \ref{secqpunconstrained}, more exactly
Proposition \ref{propCqpunconstrained} that can be applied on $\LL_{TL^\sx(QP)}(\mumu)$ as follows:

\begin{align*}
\LL_{TL^\sx(QP)}(\mumu)
=\max_{t}~~& t  - \mumu\e                                                                \\
s.t.~ &
\begin{bmatrix}
    -t            &   \frac 12 \left(\c^\top+\sum_{i=1}^p \mu_i\d^\top_i\right)    \\
\frac 12 \left(\c+\sum_{i=1}^p \mu_i\d_i\right)       & Q+\sum_{i=1}^p \mu_i B_i
\end{bmatrix}
\succeq \zeros.                                                  
\end{align*}
Replacing this in \eqref{eqTotLagrObj}, we obtain:
\begin{subequations}
\begin{align}
\left(TL^\sx(QP)\right)
=\max_{t,\mumu}~~& t  - \sum_{i=1}^p \mu_i\e_i                  \label{eqTotLagrObj2}                         \\
s.t.~ &
\begin{bmatrix}
    -t            &   \frac 12 \left(\c^\top+\sum_{i=1}^p \mu_i\d^\top_i\right)    \\
\frac 12 \left(\c+\sum_{i=1}^p \mu_i\d_i\right)       & Q+\sum_{i=1}^p \mu_i B_i
\end{bmatrix}
\succeq \zeros,                          \label{eqTotLagrConstr2}
\end{align}
\end{subequations}
\begin{sloppypar}
\noindent
which is exactly the dual of $SDP(QP)$ from \eqref{sdpqp1}-\eqref{sdpqp3}. Recall 
that an inequality constraint $B_i\sprod \x\x^\top+d_i^\top \x \leq e_i$ leads
to non-negative $\mu_i\geq 0$ in the total Lagrangian. When dualizing such $\mu_i$
of above \eqref{eqTotLagrObj2}-\eqref{eqTotLagrConstr2},
we obtain a inequality constraint 
in $SDP(QP)$, see also Prop.~\ref{propMainDualityWithNonNegatives}.
However, we
can certainly write 
$\left(TL^\sx(QP)\right)=OPT(DUAL(SDP(QP)))$. One should bear in mind that there 
might be a duality gap between $(DUAL(SDP(QP)))$ and $(SDP(QP))$. \qed
\end{sloppypar}

The fact that $\left(TL^\sx(QP)\right)\leq OPT(SDP(QP))$ can also be shown
by noticing 
\begin{equation}\label{eqTotLagrIsSdp}\left(TL^\sx(QP)\right)=\left(TL^\sX(QP)\right),\end{equation}
where
$\left(TL^\sX(QP)\right)$ is the total Lagrangian of $(SDP(QP))$, \ie, 
it is defined using the same formulas 
\eqref{eqTotLagrObj}-\eqref{eqTotLagrBody} but we replace the term
$\x\x^\top$ from \eqref{eqTotLagrBody} with $X$ and add constraint $X\succeq \x\x^\top$
to \eqref{eqTotLagrBody}.
To show $\LL_{TL^\sX(QP)}(\mumu)=\LL_{TL^\sx(QP)}(\mumu)$, 
it is enough to show there is an optimal solution 
$(X,\x)$ of $\LL_{TL^\sX(QP)}$ such that $X=\x\x^\top$.
We consider two cases:
(a)
$Q+\sum_{i=1}^p \mu_i B_i\nsucceq \zeros$
and (b)
$Q+\sum_{i=1}^p \mu_i B_i\succeq \zeros$.
In the first case (a), we take an $\x\in\R^n$ such that
$\left(Q+\sum_{i=1}^p \mu_i B_i\right)\sprod \x\x^\top<0$. By replacing
$\x$ with $t\x$ we obtain a quadratic concave function in
$t$ that converges to $-\infty$; by taking $X=\x\x^\top$
$\LL_{TL^\sX(QP)}(\mumu)$ converges to $-\infty$ as well.
In the case (b),
there is no use to consider any $X=Y+\x\x^\top$
with some non-zero $Y\succeq \zeros$:
the existence of such $Y$ can only increase the value of 
$\LL_{TL^\sX(QP)}(\mumu)$
by $\left(Q+\sum_{i=1}^p \mu_i B_i\right)\sprod Y\geq 0$.
This confirms that $\left(TL^\sx(QP)\right)=\left(TL^\sX(QP)\right)\leq OPT(SDP(QP))$.

The dual of $(SDP(QP))$ is actually constructed by relaxing all constraints
from \eqref{sdpqp1}-\eqref{sdpqp3}, \ie, the proof
of the general dualization property (Proposition~\ref{propTwiceDualize})
starts by computing 
in \eqref{eq2111}
the total SDP Lagrangian
using coefficients 
$\x'$. These coefficients $\x'$ may include $\mumu$ 
and also the dual variable ($t$) of the constraint
the puts a value of 1 in the upper right corner of
the matrix from \eqref{sdpqp3}.
This explains the very close relationship between 
$DUAL(SDP(QP))$ and 
$\left(TL^\sX(QP)\right)$.
The above argument could actually extend to a second proof of
Theorem~\ref{thTotLagrDualSdp}. Many convexification
ideas presented next have certain roots in the fact that 
$\left(TL^\sX(QP)\right)$
is very related to $DUAL(SDP(QP))$.

\subsection{\label{sec42}Partial and total Lagrangians for quadratic programs with linear 
equality constraints}

We consider a version of the general quadratic program $(QP)$ from \eqref{qp1}-\eqref{qp3}
in which we explicitly separate linear equality constraints $A\x=\b$. 
Furthermore, we consider adding $\pp$ redundant constraints
$\B_j\sprod \x\x^\top + \dd_j^\top  \x = \ees_j~\forall j\in[1..\pp]$ that
are always satisfied for all $\x\in\R^n$ such that $A\x=\b$. An example of such redundant constraint
is $A^\top_j A_j \sprod \x\x^\top = b^2_j$, where $A_j$ is row $j$ of $A$.
By summing up over all $j\in[1..\pp]$ we can also obtain
$A^\top A \sprod \x\x^\top = \sum\limits_{j=1}^{\pp} b_j^2$. Many other examples
can be found, \eg, we will later see the redundant constraint sets from
 Example \ref{exRedund1} and Example~\ref{exRedund2}.
However, for now, it is enough to say that such constraints have no impact on the initial quadratic
program, but they can be useful to convexify it (\ie, so that the factor of the quadratic term
becomes SDP)  or to strengthen its SDP relaxation.

We formulate
\begin{subequations}
\begin{alignat}{4}[left ={(QP_\segal)  \empheqlbrace}]
\min~~&     Q\sprod \x\x^\top +\c^\top \x                              &&                        \label{qpp1}\\
s.t~~ &A\x = \b                                                        &&                        \label{qpp2}\\
      &\B_j   \sprod \x\x^\top + \dd_j^\top       \x =        \ees_j   &&\forall j \in [1..\pp]  \label{qpp3}\\
      &B_i   \sprod \x\x^\top + \d_i^\top       \x = (\leq) e_i        &~~&\forall i \in [1..p]  \label{qpp4}\\
      &\x\in \R^n                                                                                \label{qpp5}
\end{alignat}
\end{subequations}
The formulation of the SDP relaxation leads to a program $SDP(QP_\segal)$ defined by writting
the SDP form \eqref{sdpqp1}-\eqref{sdpqp3} corresponding to above $(QP_\segal)$. More exactly, we obtain the following SDP relaxation 
of $\left(QP_\segal\right)$.
\begin{subequations}
\begin{alignat}{4}[left ={SDP\left(QP_\segal\right)  \empheqlbrace}]
\min~~&     Q\sprod    X      +\c^\top \x                              &&                        \label{qppsdp1}\\
s.t~~ &A\x = \b                                                        &&                        \label{qppsdp2}\\
      &\B_j   \sprod   X       + \dd_j^\top       \x =        \ees_j   &&\forall j \in [1..\pp]  \label{qppsdp3}\\
      &B_i   \sprod    X      + \d_i^\top       \x = (\leq) e_i        &~~&\forall i \in [1..p]  \label{qppsdp4}\\
      &\begin{bmatrix} 1&\x^\top\\ \x & X\end{bmatrix}\succeq \zeros   &&                        \label{qppsdp5}
\end{alignat}
\end{subequations}
 
\subsubsection{The partial Lagrangians of $(QP_{=})$ and $SDP(QP_=)$}
We introduce below the Partial Lagrangian (PL) in variables $\x$ of $(QP_\segal)$.

\begin{subequations}
\begin{align}
&\left(PL_\sx(QP_\segal)\right)=\max_{\mumu}\LL_{PL^\sx(QP_\segal)}\left(\mumu\right)   \label{eqPartLagrObj} \\
&             \LL_{PL^\sx(QP_\segal)}(\mumu)=
                 \left\{
                    \begin{array}{ll}
                        \min &\left(Q+\sum_{i=1}^p \mu_i B_i\right) \sprod \x\x^\top 
                              + \left(\c^\top+\sum_{i=1}^p \mu_i\d^\top_i\right)\x - \mumu^\top \e \\
                        s.t.~&A\x=b\\
                             &\x\in\R^n
                    \end{array}
                 \right. \label{eqPartLagrBody}
\end{align}
\end{subequations}

This partial Lagrangian does \text{not} dualize the linear equality constraints $A\x=\b$ from \eqref{qpp2}
and ignores the redundant constraints \eqref{qpp3}. In other words, we obtain the partial Lagrangian
of the initial 
program 
without redundant constraints. Notice that adding redundant constraints
to above $\LL_{PL^\sx(QP_\segal)}(\mumu)$
would not change its value, because $\LL_{PL^\sx(QP_\segal)}(\mumu)$ already imposes $A\x=b$, and so,
the redundant constraints are satisfied.

The above partial Lagrangian is equal to the following augmented partial Lagrangian that also 
dualizes the redundant constraints
\eqref{qpp3}. 

\begin{subequations}
\begin{align}
&\left(PL_\sx(QP_\segal)\right)=\max_{\mumu,\mumumu}\LL_{PL^\sx(QP_\segal)}\left(\mumu,\mumumu\right)   \label{eqPartLagrObj2} \\
&             \LL_{PL^\sx(QP_\segal)}(\mumu,\mumumu)=
                 \left\{
                    \begin{array}{ll}
                        \min &\left(Q+\sum_{i=1}^p \mu_i B_i+\sum_{j=1}^{\pp} \mumumus_j \B_j \right) \sprod \x\x^\top+ \\
                             &\left(\c^\top+\sum_{i=1}^p \mu_i\d^\top_i+\sum_{j=1}^{\pp} \mumumus_j \dd^\top_j \right)\x \\
                             &- \mumu^\top \e -\mumumu^\top \ee \\
                        s.t.~&A\x=b\\
                             &\x\in\R^n
                    \end{array}
                 \right.\label{eqPartLagrBody2}
\end{align}
\end{subequations}

Notice we slightly abused notations, because we used $\LL_{PL^\sx(QP_\segal)}$ both as 
a function with one argument $\mumu$ in \eqref{eqPartLagrBody}
or as an augmented function with two arguments $\mumu$ and $\mumumu$ in
\eqref{eqPartLagrBody2} just above.
However, it is not hard to check that the value of this partial Lagrangian only depends on
the first argument, \ie, since the 
$\mumumu$ terms are multiplied with (redundant) expressions that are equal to zero, we 
can write:
\begin{equation*}\LL_{PL^\sx(QP_\segal)}(\mumu,\mumumu)=\LL_{PL^\sx(QP_\segal)}(\mumu)~\forall
\mumu,\mumumu,\end{equation*} 

Let us now introduce the augmented partial Lagrangian in variables $X$ (such that $X\succeq\x\x^\top$) of the SDP relaxation 
$(SDP\left(QP_\segal\right))$ from \eqref{qppsdp1}-\eqref{qppsdp5}.
\begin{subequations}
\begin{align}
&\left(PL^\sX(QP_\segal)\right)=\max_{\mumu,\mumumu}\LL_{PL^\sX(QP_\segal)}\left(\mumu,\mumumu\right)   \label{eqPartLagrObj3} \\
&             \LL_{PL^\sX(QP_\segal)}(\mumu,\mumumu)=
                 \left\{
                    \begin{array}{ll}
                        \min &\left(Q+\sum_{i=1}^p \mu_i B_i+\sum_{j=1}^{\pp} \mumumus_j \B_j \right) \sprod X        + \\
                             &\left(\c^\top+\sum_{i=1}^p \mu_i\d^\top_i+\sum_{j=1}^{\pp} \mumumus_j \dd^\top_j \right)\x \\
                             &- \mumu^\top \e -\mumumu^\top \ee \\
                        s.t.~&A\x=b\\
                             &\begin{bmatrix} 1&\x^\top\\ \x & X\end{bmatrix}\succeq \zeros
                    \end{array}
                 \right.\label{eqPartLagrBody3}
\end{align}
\end{subequations}
The redundant constraints are dualized in objective function terms that are no longer redundant. In this relaxation, we have \textit{no}
guarantee that the term of $\mumumus_j$ is zero, \ie, we no longer have
$\B_j   \sprod   X       + \dd_j^\top       \x -        \ees_j=0~\forall j \in [1..\pp]$,
as the only constraint on $X$ is $X\succeq \x\x^\top$. 
It is not hard to see that the above program is: (i) a Lagrangian relaxation of
$(SDP\left(QP_\segal\right))$ 
and (ii) an SDP relaxation of 
$\LL_{PL^\sx(QP_\segal)}(\mumu,\mumumu)$.
Combining (i) and (ii), we directly obtain:
\begin{subequations}
\begin{align}
\label{eqTempus}
\left(PL^\sX(QP_\segal)\right)
    &\leq OPT(SDP\left(QP_\segal\right)),\\
\left(PL^\sX(QP_\segal)\right)
    &\leq \left(PL^\sx(QP_\segal)\right).
\label{eqPartLagrSdpLower}
\end{align}
\end{subequations}                                                                                                           


\subsubsection{\label{secTotLagrWithEqualities}The total Lagrangian using equality constraints}
The total Lagrangian can be expressed as in
\eqref{eqTotLagrObj}-\eqref{eqTotLagrBody}.
However, technically, we now separate the dual variables $\betabeta$
associated to the linear equality constraints. We also separate the dual variables $\mumumu$
associated with the redundant constraints. Thus, the total Lagrangian from
\eqref{eqTotLagrObj}-\eqref{eqTotLagrBody}  evolves to:

\begin{subequations}
\begin{align}
\Big(TL^\sx&(QP_\segal)\Big)=\max_{\mumu,\mumumu,\betabeta}\LL_{TL^\sx(QP_\segal)}(\mumu,\mumumu,\betabeta) \label{eqTotLagrObj2WithEquality} \\
             &\LL_{TL^\sx(QP_\segal)}(\mumu,\mumumu,\betabeta)=
            \left\{
              \begin{array}{ll}
                 \displaystyle \min_{\x\in\R^n}&\left(Q+\sum_{i=1}^p \mu_i B_i+\sum_{j=1}^{\pp} \mumumus_j \B_j\right) \sprod \x\x^\top\\
                 &+\left(\c^\top+\sum_{i=1}^p \mu_i\d^\top_i+\sum_{j=1}^{\pp} \mumumus_j\dd^\top_j+\betabeta^\top A\right)\x\\
                 & - \mumu^\top \e - \mumumu^\top \ee  -\betabeta^\top \b
              \end{array}
            \right.
                \label{eqTotLagrBody2}
\end{align}
\end{subequations}

\begin{sloppypar}
We can define the SDP version of the above total Lagrangian 
by
replacing $\x\x^\top$ with $X$ in \eqref{eqTotLagrBody2} 
and by adding constraint $X\succeq \x\x^\top$.
We thus construct the total Lagrangian
$\LL_{TL^\sX(QP)}(\mumu,\mumumu,\betabeta)$ in variables $X$ (and $\x$).
For any fixed $\mumu$ and $\mumumu$,
the value $\max\limits_{\betabeta}\LL_{TL^\sX(QP)}(\mumu,\mumumu,\betabeta)$ can be seen as a Lagrangian 
of $\LL_{PL^\sX(QP_\segal)}\left(\mumu,\mumumu,\right)$.
As such, the optimum SDP total Lagrangian 
$\Big(TL^\sX(QP_\segal)\Big)=\max\limits_{\mumu,\mumumu,\betabeta}\LL_{TL^\sX(QP_\segal)}(\mumu,\mumumu,\betabeta)$,
satisfies:
\begin{equation}\label{eqTLLessPLSDP}\left(TL^\sX(QP_\segal)\right)\leq \left(PL^\sX(QP_\segal)\right).
\end{equation}
\end{sloppypar}

\begin{proposition}\label{propTotPartSDPLagrEqual} 
The total Lagrangian $\Big(TL_{\mumu^*,\mumumu^*}^\sx(QP_\segal)\Big)$ for 
\underline{fixed $\mumu^*$ and $\mumumu^*$} (formally defined as the maximum of (\theeqtest.a) below) is equal to 
$OPT\left(\LL_{PL^\sX(QP_\segal)}(\mumu^*,\mumumu^*)\right)$
from \eqref{eqPartLagrBody3}. This is enough to ensure:
\begin{equation}\label{eqTLEqualPLSDP}\left(TL^\sx(QP_\segal)\right)=\left(TL^\sX(QP_\segal)\right)= \left(PL^\sX(QP_\segal)\right).
\end{equation}
\end{proposition}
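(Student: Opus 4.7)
The plan is to recognise that this proposition is essentially a re-application of Theorem~\ref{thTotLagrDualSdp} at one level higher. The key observation is that once $\mumu^*$ and $\mumumu^*$ are fixed, the inner minimisation $\LL_{TL^\sx(QP_\segal)}(\mumu^*,\mumumu^*,\betabeta)$ defined in \eqref{eqTotLagrBody2} is \emph{exactly} the Lagrangian (dualising $A\x=\b$ with multipliers $\betabeta$) of the equality-constrained quadratic program $\LL_{PL^\sx(QP_\segal)}(\mumu^*,\mumumu^*)$ from \eqref{eqPartLagrBody2}. Hence
$$TL_{\mumu^*,\mumumu^*}^\sx(QP_\segal)\;=\;\max_{\betabeta}\LL_{TL^\sx(QP_\segal)}(\mumu^*,\mumumu^*,\betabeta)$$
is, in the terminology of Section~\ref{sec41}, the total Lagrangian of the quadratic program $\LL_{PL^\sx(QP_\segal)}(\mumu^*,\mumumu^*)$, which has only the linear equality constraint $A\x=\b$ (so $\betabeta$ is the only multiplier needed).

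First I would apply Theorem~\ref{thTotLagrDualSdp} to this quadratic program to conclude $TL_{\mumu^*,\mumumu^*}^\sx(QP_\segal)=OPT(DUAL(SDP(\LL_{PL^\sx(QP_\segal)}(\mumu^*,\mumumu^*))))$. A direct inspection shows that the basic SDP relaxation of $\LL_{PL^\sx(QP_\segal)}(\mumu^*,\mumumu^*)$ (replace $\x\x^\top$ by $X$ and impose $X\succeq \x\x^\top$ through the bordered matrix as in~\eqref{sdpqp3}) is exactly the program $\LL_{PL^\sX(QP_\segal)}(\mumu^*,\mumumu^*)$ from \eqref{eqPartLagrBody3}. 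What then remains is a strong duality argument to replace ``$DUAL(\cdot)$'' by ``$OPT(\cdot)$''. For this I would invoke Theorem~\ref{thStrongDual1}: the primal SDP is strictly feasible whenever $A\x=\b$ has a solution, since taking any such $\x$ together with $X=\x\x^\top+tI_n$ for $t>0$ makes the bordered matrix in \eqref{eqPartLagrBody3} strictly positive definite by the Schur complement (Prop.~\ref{propSchurPart}); in the degenerate case where the primal is unbounded from below, the dual is infeasible by Prop.~\ref{propMainDuality} and both sides equal $-\infty$, so the equality still holds.

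Finally I would deduce the chain $\left(TL^\sx(QP_\segal)\right)=\left(TL^\sX(QP_\segal)\right)=\left(PL^\sX(QP_\segal)\right)$ in two moves. Taking the max over $(\mumu^*,\mumumu^*)$ on both sides of the first part yields immediately $\left(TL^\sx(QP_\segal)\right)=\left(PL^\sX(QP_\segal)\right)$. The missing identity $TL^\sx=TL^\sX$ can be shown pointwise exactly as in the argument behind \eqref{eqTotLagrIsSdp}: for any fixed $(\mumu,\mumumu,\betabeta)$, if the quadratic-form coefficient $Q+\sum_i\mu_i B_i+\sum_j\mumumus_j\B_j$ is not SDP both inner minimisations diverge to $-\infty$, while if it is SDP then relaxing $\x\x^\top$ to any $X\succeq\x\x^\top$ only increases the objective (non-negative scalar product of two SDP matrices, Prop.~\ref{propABprod}), so the minimum is achieved at $X=\x\x^\top$ and the two Lagrangian values coincide. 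The main obstacle is the strong-duality step, but it is manageable thanks to the explicit strictly feasible primal SDP point constructed above.
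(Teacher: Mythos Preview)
Your proof is correct and follows essentially the same route as the paper. The paper also recognises that, for fixed $(\mumu^*,\mumumu^*)$, the total Lagrangian in $\betabeta$ is precisely the SDP dual of $\LL_{PL^\sX(QP_\segal)}(\mumu^*,\mumumu^*)$ (it re-derives this via Prop.~\ref{propCqpunconstrained} rather than invoking Theorem~\ref{thTotLagrDualSdp} as a black box, but the content is identical), and then closes the gap by strict feasibility of $\LL_{PL^\sX(QP_\segal)}(\mumu^*,\mumumu^*)$, citing Theorem~\ref{thStrongDual2} where you cite Theorem~\ref{thStrongDual1}; either works since only equality of optimal values is needed. Your final step, taking the pointwise equality and maximising over $(\mumu,\mumumu)$ to get $TL^\sx=PL^\sX$ directly, is in fact slightly cleaner than the paper's version, which instead picks the optimal $(\mumu^*,\mumumu^*)$ for $PL^\sX$ and combines with the already-established inequalities \eqref{eqTotLagrIsSdp} and \eqref{eqTLLessPLSDP}.
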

\begin{proof}
Let us introduce notational shortcuts
$Q_{\mumu,\mumumu}= Q+\sum_{i=1}^p \mu_i B_i+\sum_{j=1}^{\pp} \mumumus_j \B_j$
and
$\c^\top_{\mumu,\mumumu}=\c^\top+\sum_{i=1}^p \mu_i\d^\top_i+\sum_{j=1}^{\pp} \mumumus_j\dd^\top_j$.
Starting from the right term of \eqref{eqTLEqualPLSDP}, notice
that $\left(\LL_{PL^\sX(QP_\segal)}(\mumu^*,\mumumu^*)\right)$ 
from \eqref{eqPartLagrBody3}
can be compactly written:

\begin{align}
&             \LL_{PL^\sX(QP_\segal)}(\mumu^*,\mumumu^*)=
                 \left\{
                    \begin{array}{ll}
                        \min &Q_{\mumu^*,\mumumu^*} \sprod X+
                             \c^\top_{\mumu^*,\mumumu^*} \x
                             - \mumu{^*}^\top \e -\mumumu{^*}^\top \ee \\
                        s.t.~&A\x=b\\
                             & \begin{bmatrix}1&\x^\top\\ \x & X\end{bmatrix}\succeq \zeros
                    \end{array}
                 \right.\label{eqPartLagrBody2BisSDPBefore}
\end{align}
The total Lagrangian 
\eqref{eqTotLagrObj2WithEquality}-\eqref{eqTotLagrBody2}
with fixed $\mumu^*$ and $\mumumu^*$, can be written in compact notations using \textit{only variables
$\betabeta$} associated to the linear equality constraint $A\x=\b$.

\addtocounter{equation}{1}
\setcounter{eqtest}{\value{equation}}
~~~~~~~~~~~~~~
\begin{minipage}{0.85\textwidth}
\begin{align}
\label{eqtest}
&\Big(TL_{\mumu^*,\mumumu^*}^\sx(QP_\segal)\Big)=\notag \\
&= 
       \left\{
         \begin{array}{llr}
                \max\limits_{\betabeta}&\LL_{TL^\sx(QP_\segal)}(\mumu^*,\mumumu^*,\betabeta) &~ \text{(\theequation a)}\\
                                s.t.&\LL_{TL^\sx(QP_\segal)}(\mumu^*,\mumumu^*,\betabeta)= 
                                    \left\{
                                      \begin{array}{ll}
                                         \displaystyle \min_{\x\in\R^n}&Q_{\mumu^*,\mumumu^*} \sprod \x\x^\top
                                         +\left(\c^\top_{\mumu^*,\mumumu^*} +\betabeta^\top A\right)\x\\
                                         & - \mumu{^*}^\top \e - \mumumu{^*}^\top \ee  -\betabeta^\top \b
                                      \end{array}
                                    \right.                                              &~ \text{(\theequation b)}\\
         \end{array}
       \right. \notag \\
            &= 
            \left\{
              \begin{array}{llr}
                 \displaystyle \max_{t,\betabeta}& t - \mumu{^*}^\top \e - \mumumu{^*}^\top \ee  -\betabeta^\top \b &~~~~~~~~~~~~~~~~~~~~~~~~~~~~~~~~~~~~~~~~~~~~~~~~~~~~~~ \text{(\theequation c)}\\
                        s.t.~&\begin{bmatrix}
                                    -t        & \frac {\c^\top_{\mumu^*,\mumumu^*} +\betabeta^\top A}2 \\
\frac {\c_{\mumu^*,\mumumu^*} +A^\top \betabeta}2 & Q_{\mumu^*,\mumumu^*} 
                              \end{bmatrix}\succeq \zeros                                                           &~~~~~~~~~~~~~~~~~~~~~~~~~~~~~~~~~~~~~~ \text{(\theequation d)}
              \end{array}
            \right. \notag
\end{align}
\end{minipage}

~\\

\noindent where we applied 
Proposition \ref{propCqpunconstrained} 
 from Section \ref{secqpunconstrained},
similarly to what we did for the total Lagrangian formulation \eqref{eqTotLagrObj2}-\eqref{eqTotLagrConstr2}.
We obtained 
in (\theeqtest c)-(\theeqtest d)
the dual of 
$\LL_{PL^\sX(QP_\segal)}(\mumu^*,\mumumu^*)$
from
\eqref{eqPartLagrBody2BisSDPBefore}. 
Since this last program is strictly feasible
(it has no
constraint on $X$ other than $X\succeq \x\x^\top$), 
we can apply the strong duality Theorem~\ref{thStrongDual2}
to conclude:
\begin{align*}
&OPT\left(DUAL\left(\LL_{PL^\sX(QP_\segal)}(\mumu^*,\mumumu^*)\right)\right)=
 \Big(TL_{\mumu^*,\mumumu^*}^\sx(QP_\segal)\Big) = 
OPT\left(\LL_{PL^\sX(QP_\segal)}(\mumu^*,\mumumu^*)\right)
\end{align*}

We can also notice that
$\left(\LL_{PL^\sX(QP_\segal)}(\mumu^*,\mumumu^*)\right)$ 
could be either bounded or unbounded ($-\infty$); recall these
two cases are also separated in the strong duality Theorem~\ref{thStrongDual2}.
If it is unbounded, then (\theeqtest d)
is surely infeasible meaning that the total Lagrangian (\theeqtest
a)--(\theeqtest b) has to be unbounded ($-\infty$).

Till here, we worked with an arbitrary fixed 
solution 
$(\mumu^*,\mumumu^*)$.
From now on, 
let us consider that
$(\mumu^*,\mumumu^*)$ 
is
the optimal solution
of the SDP partial Lagrangian $\left(PL^\sX(QP_\segal)\right)$
 from \eqref{eqPartLagrObj3};
we proved above that
$\left(PL^\sX(QP_\segal)\right)=
OPT(\LL_{PL^\sX(QP_\segal)}(\mumu^*,\mumumu^*))$
is effectively reached by the total Lagrangian 
$(TL^\sx(QP_\segal))$
in a point defined by above $\mumu^*$ and $\mumumu^*$ and some $\betabeta^*$.
Combining this with 
$\left(TL^\sx(QP_\segal)\right)= \left(TL^\sX(QP_\segal)\right)\leq \left(PL^\sX(QP_\segal)\right)$
from 
\eqref{eqTotLagrIsSdp}
and
\eqref{eqTLLessPLSDP}, we obtain
the sought inequality \eqref{eqTLEqualPLSDP} that we recall
below for the reader's convenience.
\end{proof}
\begin{equation}\label{eqTLEqualPLSDP_bis}
\left(TL^\sx(QP_\segal)\right)=\left(TL^\sX(QP_\segal)\right)=
\left(PL^\sX(QP_\segal)\right)
\end{equation}

Now recall Theorem~\ref{thTotLagrDualSdp} on total Lagrangians
that states 
$OPT(DUAL(SDP(P_\segal)))=\left(TL^\sx(QP_\segal)\right)$.
Combining this with above \eqref{eqTLEqualPLSDP_bis} and \eqref{eqTempus}-\eqref{eqPartLagrSdpLower}, we obtain the following
fundamental hierarchies (inequalities):

\hspace{-2em}\begin{minipage}{1.015\linewidth}
\begin{subequations}
\begin{align}
\label{eqFundLagr1}
OPT(DUAL(SDP(P_\segal)))&=\left(TL^\sx(QP_\segal)\right) = \left(TL^\sX(QP_\segal)\right) = \left(PL^\sX(QP_\segal)\right) \leq \left(PL^\sx(QP_\segal)\right)\\
\label{eqFundLagr2}
OPT(DUAL(SDP(P_\segal)))&=\left(TL^\sx(QP_\segal)\right) = \left(TL^\sX(QP_\segal)\right) = \left(PL^\sX(QP_\segal)\right) \leq OPT(SDP(P_\segal))
\end{align}
\end{subequations}
\end{minipage}

~\\

The values of $\left(PL^\sx(QP_\segal)\right)$ and $OPT(SDP(P_\segal))$ can not be ordered
in the most general setting.  We will provide Example~\ref{exempluUnu} in which $\left(PL^\sx(QP_\segal)\right)<OPT(SDP(P_\segal))$
and Example~\ref{exempluDoi} in which $\left(PL^\sx(QP_\segal)\right)>OPT(SDP(P_\segal))$.
As a side remark, recall from equations \eqref{eqTempus}-\eqref{eqPartLagrSdpLower}
that
$(PL^\sX(QP_\segal))$ is a relaxation of both
$(PL^\sx(QP_\segal))$ and $OPT(SDP(P_\segal))$.

\subsubsection{Using convexifications to 
obtain
$\left(PL^{\protect\sX}(QP_{\protect\segal})\right) = \left(PL^{\protect\sx}
(QP_{\protect\segal})\right)$}

We here show how
$(PL^\sx(QP_\segal))$ becomes equal to $(PL^\sX(QP_\segal))$ by exploiting 
the relaxation of the redundant constraints.
The following proposition is a (quite deeply) modified and adapted version of a theorem from a lecture note of Fr\'ed\'eric~Roupin,%
\footnote{See slide 57 of
\url{http://lipn.univ-paris13.fr/~roupin/docs/MPROSDPRoupin2017-partie2.pdf}.}
but the proof is personal.
\begin{proposition}\label{propConvexifier} If $\left(PL^\sx(QP_\segal)\right)=-\infty$ 
it is clear that \textbf{all} 
programs from the hierarchy \eqref{eqFundLagr1} are unbounded. 
If $\left(PL^\sx(QP_\segal)\right)\neq -\infty$, we can take an optimal solution $\mumu^*$ of $\left(PL^\sx(QP_\segal)\right)$
in \eqref{eqPartLagrObj}-\eqref{eqPartLagrBody}. If there exists $\mumumu^*$ that convexifies
the augmented formulation of $\left(PL^\sx(QP_\segal)\right)$ from
\eqref{eqPartLagrObj2}-\eqref{eqPartLagrBody2},
then $\left(PL^\sX(QP_\segal)\right) = \left(PL^\sx(QP_\segal)\right)$.
This means that the first fundamental hierarchy \eqref{eqFundLagr1} collapses.
\end{proposition}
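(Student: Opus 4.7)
The plan is to prove the stated equality by the standard ``convexification implies exactness of the SDP relaxation'' argument, combined with the observation that the redundant constraints are inert in the $\x$-formulation. The $-\infty$ case is immediate: by hierarchy \eqref{eqFundLagr1}, every quantity to the left of $\left(PL^\sx(QP_\segal)\right)$ is bounded above by it, so all of them collapse to $-\infty$. Hence I would focus on the bounded case.

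Since \eqref{eqPartLagrSdpLower} already gives $\left(PL^\sX(QP_\segal)\right) \leq \left(PL^\sx(QP_\segal)\right)$, the task reduces to the reverse inequality. The natural strategy is to exhibit a single $(\mumu,\mumumu)$ at which $\LL_{PL^\sX(QP_\segal)}(\mumu,\mumumu)$ already attains the value $\left(PL^\sx(QP_\segal)\right)$. The hypothesis hands us exactly what is needed: take the optimal $\mumu^*$ of the $\x$-Lagrangian and the convexifying multiplier $\mumumu^*$, so that
\[
Q_{\mumu^*,\mumumu^*} := Q + \sum_{i=1}^{p}\mu^*_i B_i + \sum_{j=1}^{\pp}\mumumus^*_j \B_j \succeq \zeros.
\]

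The main step is to show that $\LL_{PL^\sX(QP_\segal)}(\mumu^*,\mumumu^*) = \LL_{PL^\sx(QP_\segal)}(\mumu^*,\mumumu^*)$. The inequality ``$\leq$'' is clear because any feasible $\x$ of the $\x$-problem lifts to the feasible pair $(X=\x\x^\top,\x)$ of the $X$-problem with identical objective value. For ``$\geq$'', I would take an arbitrary feasible $(X,\x)$ of the $X$-problem; by the Schur complement Prop.~\ref{propSchurPart}, \eqref{qppsdp5} is equivalent to $X = \x\x^\top + S$ with $S \succeq \zeros$. Substituting into the objective,
\[
Q_{\mumu^*,\mumumu^*} \sprod X + \c^\top_{\mumu^*,\mumumu^*}\x - \mumu^{*\top}\e - \mumumu^{*\top}\ee \;=\; \Big(\text{objective of }\LL_{PL^\sx}\text{ at }\x\Big) \;+\; Q_{\mumu^*,\mumumu^*}\sprod S,
\]
and Prop.~\ref{propABprod} ensures $Q_{\mumu^*,\mumumu^*}\sprod S \geq 0$ because both factors are SDP. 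Since $\x$ satisfies $A\x=\b$, it is feasible in the $\x$-problem, so this value is $\geq \LL_{PL^\sx(QP_\segal)}(\mumu^*,\mumumu^*)$. Taking the min over $(X,\x)$ gives the sought ``$\geq$''.

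To finish, I would invoke the remark made right below \eqref{eqPartLagrObj2}--\eqref{eqPartLagrBody2} that the value of the augmented $\x$-Lagrangian does not depend on $\mumumu$, so $\LL_{PL^\sx(QP_\segal)}(\mumu^*,\mumumu^*) = \LL_{PL^\sx(QP_\segal)}(\mumu^*) = \left(PL^\sx(QP_\segal)\right)$ by optimality of $\mumu^*$. Chaining,
\[
\left(PL^\sX(QP_\segal)\right) \geq \LL_{PL^\sX(QP_\segal)}(\mumu^*,\mumumu^*) = \left(PL^\sx(QP_\segal)\right),
\]
which together with \eqref{eqPartLagrSdpLower} closes the proof and collapses the hierarchy \eqref{eqFundLagr1}. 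The only genuinely delicate point is the ``$\geq$'' step above, and its difficulty is entirely absorbed by Prop.~\ref{propABprod} once the convexification hypothesis is invoked; no further technical obstacle is anticipated.
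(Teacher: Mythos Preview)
Your proof is correct and follows essentially the same route as the paper: reduce to showing $\LL_{PL^\sX(QP_\segal)}(\mumu^*,\mumumu^*)=\LL_{PL^\sx(QP_\segal)}(\mumu^*,\mumumu^*)$, and then argue that writing $X=\x\x^\top+S$ with $S\succeq\zeros$ the extra term $Q_{\mumu^*,\mumumu^*}\sprod S\geq 0$ can only increase the objective, so the optimum is attained at $S=\zeros$. Your version is slightly more explicit (invoking the Schur complement and Prop.~\ref{propABprod}), but the argument is the same.
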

\vspace{-0.4em}
\begin{sloppypar}
\noindent \textit{Proof.}
We have 
$\left(PL^\sX(QP_\segal)\right)\leq \left(PL^\sx(QP_\segal)\right)$
from \eqref{eqFundLagr1}.
Since we say (slightly abusing notations) that
$\left(PL^\sx(QP_\segal)\right)=\LL_{PL^\sx(QP_\segal)}(\mumu^*)=\LL_{PL^\sx(QP_\segal)}(\mumu^*,\mumumu^*)$,
we can write 
$\LL_{PL^\sX(QP_\segal)}(\mumu^*,\mumumu^*) \leq 
\left(PL^\sX(QP_\segal)\right)
\leq 
\left(PL^\sx(QP_\segal)\right)
=
\LL_{PL^\sx(QP_\segal)}(\mumu^*,\mumumu^*)
$.
It is thus enough to show that 
$\LL_{PL^\sX(QP_\segal)}(\mumu^*,\mumumu^*)
=
\LL_{PL^\sx(QP_\segal)}(\mumu^*,\mumumu^*)$.

Since the Hessian of $\LL_{PL^\sX(QP_\segal)}(\mumu^*,\mumumu^*)$ 
is equal to the Hessian of $\LL_{PL^\sx(QP_\segal)}(\mumu^*,\mumumu^*)$, both programs are convex and have an SDP Hessian.
Writing the optimal solution of $\LL_{PL^\sX(QP_\segal)}(\mumu^*,\mumumu^*)$ as
$X=\x\x^\top+Y$, we notice $Y\succeq\zeros$ can only increase the objective value of
$\LL_{PL^\sX(QP_\segal)}(\mumu^*,\mumumu^*)$, and so, an optimal $Y$ can be $Y=\zeros$. An optimal solution of 
$\LL_{PL^\sX(QP_\segal)}(\mumu^*,\mumumu^*)$ satisfies $X=\x \x^\top$, \ie, 
$\LL_{PL^\sX(QP_\segal)}(\mumu^*,\mumumu^*)=
\LL_{PL^\sx(QP_\segal)}(\mumu^*,\mumumu^*)=\left(PL^\sx(QP_\segal)\right)$.\qed
\end{sloppypar}
\vspace{0.4em}

We now provide an example in which the hierarchy \eqref{eqFundLagr1} collapses, but the last
inequality in \eqref{eqFundLagr2} is strict, \ie, there is a duality gap between
$(DUAL(SDP(P_\segal)))$ and $(SDP(P_\segal))$. The example relies on
the fact that 
$\min\left\{x_{12}:\left[\begin{smallmatrix}0&x_{12}&0\\x_{12}&x_{22}&0\\0&0&1+x_{12}\end{smallmatrix}\right]\succeq\zeros\right\}=0$,
while the dual of this program has optimum value -1.

\begin{example}\label{exempluUnu} We present the $(QP_\segal)$ example on the left and its partial Lagrangian $\left(PL^\sx(QP_\segal)\right)$ on the right.
After that, we will compare $(SDP(QP_\segal))$
and
$(DUAL(SDP(QP_\segal))$.

\vspace{0.3em}
\noindent\begin{minipage}{0.35\linewidth}
\begin{alignat*}{4}[left ={(QP_\segal)  \empheqlbrace}]
\min~~        &\left[\begin{smallmatrix}0&\frac 12&0&0\\ \frac12 &0&0&0\\ 0&0&0&0\\ 0&0&0&1\end{smallmatrix}\right]\sprod \x\x^\top &&       \\
s.t.~         &x_4=0                                                                                               &&                        \\
\mumumus_{44}:&\left[\begin{smallmatrix}0&  0     &0&0\\  0      &0&0&0\\ 0&0&0&0\\ 0&0&0&1\end{smallmatrix}\right]\sprod \x\x^\top=0 &&     \\
\mu_{23}     :&\left[\begin{smallmatrix}0&  0     &0&0\\  0      &0&1&0\\ 0&1&0&0\\ 0&0&0&0\end{smallmatrix}\right]\sprod \x\x^\top=0 &&     \\
\mu_{13}     :&\left[\begin{smallmatrix}0&  0     &1&0\\  0      &0&0&0\\ 1&0&0&0\\ 0&0&0&0\end{smallmatrix}\right]\sprod \x\x^\top=0 &&     \\
\mu_{11}     :&\left[\begin{smallmatrix}1&  0     &1&0\\  0      &0&1&0\\ 1&1&0&0\\ 0&0&0&0\end{smallmatrix}\right]\sprod \x\x^\top=0 &&     \\
\mu_{12}     :&\left[\begin{smallmatrix}0&-\frac12&0&0\\-\frac 12&0&0&0\\ 0&0&1&0\\ 0&0&0&0\end{smallmatrix}\right]\sprod \x\x^\top=1 &&     \\
      &\x\in \R^4                                                                                                                            
\end{alignat*}
The first above constraint is a linear equality constraint. The next one is a redundant constraint, \ie, 
we have  $\pp=1$ with
regards to the canonical formulation \eqref{qpp1}-\eqref{qpp5} of $(QP_\segal)$.
The other constraints are classical quadratic constraints.
\end{minipage}
~~~~~
\begin{minipage}{0.66\linewidth}
The left program has only two feasible
 solutions $\x^\top=[0~0~1~0]$ and
$\x^\top=[0~0~-1~0]$ both of value 0.\footnotemark
We now formulate
the (augmented) partial Lagrangian.
\begin{align*}
\Big(PL^\sx(QP_\segal)\Big)&=\max_{\mumu,\mumumu}\min_{\substack{\x\in\R^4\\x_4=0}}\\
&\left[\begin{smallmatrix}\mu_{11}   &\frac 12 -\frac {\mu_{12}}2      &\mu_{13}+\mu_{11}&0        \\  
        \frac 12 -\frac {\mu_{12}}2 &               0                 &\mu_{23}+\mu_{11} &0        \\  
        {\mu_{13}}+\mu_{11}         &\mu_{23}+\mu_{11}                &\mu_{12}          &0        \\  
        0                           & 0                               &     0            &1+\mumumus_{44}
     \end{smallmatrix}\right]\sprod \x\x^\top - \mu_{12}\\
     &=[\mumu]\sprod \x\x^\top -\mu_{12}
\end{align*}
The above (inner) minimization problem can be unbounded only if $[\mumu]_{3\times 3}\succeq \zeros$,
where $[\mumu]_{3\times 3}$ is the leading principal minor of size $3\times 3$ of $[\mumu]$.
The variable $\mumumus_{44}$ is redundant and does not appear
in the non-augmented
formulation \eqref{eqPartLagrObj}-\eqref{eqPartLagrBody} of $\Big(PL^\sx(QP_\segal)\Big)$.
However, notice that Prop~\ref{propConvexifier} does hold, \ie, the augmented partial
$\LL_{PL^\sx(QP_\segal)}(\mumu^*,\mumumus_{44}^*)$
from the inner minimization problem 
is
convex for any optimal solution $(\mumu^*,\mumumus_{44}^*)$ of
$\Big(PL^\sx(QP_\segal)\Big)$.
Since $[\mumu^*]_{3\times 3}\succeq \zeros$
and
$[\mumu^*]_{22}=0$, we need to have $[\mumu^*]_{12}=0$, \ie, $\mu^*_{12}=1$. The rest of the elements
can be, for instance, $\mu^*_{11}=\mu^*_{13}=\mu^*_{23}=\mumumus^*_{44}=0$. However, for any choice of these
latter elements, the objective value of the minimization problem will be $-1$ (notice $\x=\zeros$
leads to an objective value of $-\mu^*_{12}=-1$), and so, 
$\Big(PL^\sx(QP_\segal)\Big)=-1$.
\end{minipage}
\footnotetext{Constraints $\mu_{23}$ and $\mu_{13}$ enforce $x_2x_3=x_1x_3=0$;
combining this with constraint $\mu_{11}$ that states $x_1^2+2x_2x_3+2x_1x_3=0$,
we obtain $x_1=0$. Constraint $\mu_{12}$ enforces $-x_1x_2 +x_3^2=1\implies
x_3^2=1$. Finally, $x_2=0$ follows from constraint
$\mu_{23}$.}

~\\

\noindent The SDP relaxation $SDP(QP_\segal)$ can be written:
$\min\left\{X_{12}:X=\left[\begin{smallmatrix}0&X_{12}&0&0\\X_{12}&X_{22}&0&0\\0&0&1+X_{12}&0 \\ 0&0&0&0\end{smallmatrix}\right]\succeq\x\x^\top\succeq \zeros\right\}=0$.
We have $OPT(SDP(QP_\segal))=0$ and this solution is achieved by $X_{12}=0$, $X_{22}=0$,
$X_{33}=1+X_{12}=1$ and
$\x=[0~0~0~0]^\top$ (or $\x=[0~0~1~0]^\top$); notice $X_{12}$ can not be strictly less than 0 because $X_{11}=0$.
One could calculate that $OPT(DUAL(SDP(QP_\segal)))=-1$. All values in the hierarchies \eqref{eqFundLagr1}-\eqref{eqFundLagr2} are equal to $-1$ except
$OPT(SDP(QP_\segal))=0$.
\end{example}

\subsection{The case of $0-1$ quadratic programs: partial and total Lagrangians}

A part of this section aims at proving results related to the QCR method of
Alain Billionnet, Sourour Elloumi and Marie-Christine Plateau
(see link in Footnote \total{testfoot6}, p.~\pageref{testfootpage6})
and to the article of Alain Faye and Fr\'ed\'eric~Roupin
indicated in Footnote \codefootnotetri, p.~\pageref{testfootpage3}. However, 
I think the presentation style (using longer and more detailed arguments) and the order of
the theorems from
this manuscript is completely different.

\subsubsection{Main characterization}
We interpret a $0-1$ quadratic program exactly as a particular case 
of $(QP_\segal)$ from \eqref{qpp1}-\eqref{qpp5} in which the 
first $n$ constraints \eqref{qpp4} are $x^2_i=x_i$. We can thus have $p\geq n$
non-redundant quadratic constraints. All results from the previous Subsection~\ref{sec42}
do hold in this new $0-1$ context. In fact, the $0-1$ programs are in some sense 
simpler because 
we can prove 
$OPT(DUAL(SDP(QP_\segal)))= OPT(SDP(QP_\segal))$, \ie, the hierarchy \eqref{eqFundLagr2}
collapses and we can no longer have a duality gap 
$\Big(PL^\sx(QP_\segal)\Big) < OPT(SDP(QP_\segal))$ as in Example \ref{exempluUnu}.

\begin{theorem}\label{thSdpEqualsPL01}The following fundamental hierarchies hold for $0-1$ quadratic programs.

\noindent \begin{minipage}{1.01\linewidth}
\begin{subequations}
\begin{align}
OPT(DUAL(SDP(P_\segal)))&=\left(TL^\sx(QP_\segal)\right) = \left(TL^\sX(QP_\segal)\right)   =  \left(PL^\sX(QP_\segal)\right) =    OPT(SDP(P_\segal)) \label{eqFundLagr1Bis} \\
                        & \leq \left(PL^\sx(QP_\segal)\right) \label{eqFundLagr2Bis}
\end{align}
\end{subequations}
\end{minipage}
\end{theorem}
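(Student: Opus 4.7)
The plan is to reduce the new content of this theorem to a single claim --- namely $OPT(SDP(QP_\segal)) = OPT(DUAL(SDP(QP_\segal)))$ --- since the rest of the chain is already established by \eqref{eqFundLagr1} and \eqref{eqFundLagr2} from Section~\ref{sec42}. Once strong duality holds between the SDP relaxation and its dual, the two hierarchies \eqref{eqFundLagr1}-\eqref{eqFundLagr2} collapse into \eqref{eqFundLagr1Bis}, and the trailing \eqref{eqFundLagr2Bis} is inherited verbatim.

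To kill the duality gap I would invoke Theorem~\ref{thStrongDual2}, which requires that $DUAL(SDP(QP_\segal))$ be bounded and strictly feasible. Boundedness (from above, since this is a max) is cheap: any $0\text{-}1$ feasible $\x^*$ of $(QP_\segal)$ lifts to a feasible $(X,\x) = (\x^*{\x^*}^\top, \x^*)$ of $SDP(QP_\segal)$, so weak duality forces $OPT(DUAL(SDP(QP_\segal))) \leq OPT(SDP(QP_\segal)) < +\infty$; the degenerate cases in which $SDP(QP_\segal)$ itself is infeasible or unbounded can be absorbed into the $\pm\infty$ conventions already adopted in Section~\ref{sec42}.

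Strict feasibility is where the $0\text{-}1$ structure truly bites. Working in the bordered-matrix form \eqref{eqTotLagrObj2}-\eqref{eqTotLagrConstr2} of the total Lagrangian, which by Theorem~\ref{thTotLagrDualSdp} is exactly $DUAL(SDP(QP_\segal))$, I would set $\betabeta = \zeros$, $\mumumu = \zeros$, $\mu_i = 0$ for $i \in [n+1..p]$, and $\mu_i = M$ for the $n$ constraints $x_i^2 = x_i$. For these constraints $B_i = \e_i\e_i^\top$ and $\d_i = -\e_i$, so the bottom-right block of the bordered matrix becomes $Q + M\, I_n$, which is strictly positive definite as soon as $M > -\lambda_{\min}(Q)$. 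By the Schur complement (Prop.~\ref{propSchurGen}), strict positive definiteness of the whole bordered matrix then reduces to the scalar strict inequality
\[
-t \;>\; \tfrac{1}{4}(\c - M\e)^\top (Q + M\, I_n)^{-1}(\c - M\e),
\]
which is secured by choosing $t$ sufficiently negative. Hence the dual is strictly feasible, and Theorem~\ref{thStrongDual2} yields $OPT(SDP(QP_\segal)) = OPT(DUAL(SDP(QP_\segal)))$.

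The main (minor) obstacle I anticipate is purely one of bookkeeping with the paper's primal/dual and min/max conventions: $SDP(QP_\segal)$ is a minimization but structurally sits in the ``dual'' matrix-variable form of Section~\ref{sec21}, so Theorem~\ref{thStrongDual2} must be applied with the roles matched accordingly (the role of ``(DSDP)'' being played by $DUAL(SDP(QP_\segal))$ in max form). Once this identification is made, the construction above is mechanical; the $0\text{-}1$ constraints serve the single purpose of supplying enough free scalar multipliers $\mu_1,\dots,\mu_n$ to pump the bottom-right block arbitrarily far into the interior of $S_n^+$, something that is not generically available for an arbitrary $(QP_\segal)$ and explains why this collapse is special to the $0\text{-}1$ case.
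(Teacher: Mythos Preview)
Your proposal is correct and follows the same route as the paper: reduce everything to closing the gap $OPT(DUAL(SDP(QP_\segal)))=OPT(SDP(QP_\segal))$, then establish strict feasibility of the dual by pumping the multipliers $\mu_1=\dots=\mu_n=M$ of the $0\text{-}1$ constraints $x_i^2=x_i$ to make the bottom-right block positive definite and then choosing $-t$ large. The paper finishes that last step via the Sylvester criterion plus a Leibniz expansion of the determinant (rather than your Schur complement), and it cites Theorem~\ref{thStrongDual1} rather than~\ref{thStrongDual2} --- your own bookkeeping remark already anticipates that the LMI-form program here matches the ``(SDP)'' template, not ``(DSDP)''.
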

\begin{proof} The first equality \eqref{eqFundLagr1Bis} follows from the fact that we can 
show that the hierarchy
\eqref{eqFundLagr2} collapses, \ie, all inequalities in \eqref{eqFundLagr2} are equalities
in a $0-1$ context.
To show this, it is enough to prove there is no duality gap between $(DUAL(SDP(P_\segal)))$
and $(SDP(P_\segal))$. This is a consequence of the strong duality Theorem~\ref{thStrongDual1},
considering that $(DUAL(SDP(P_\segal)))$ is strictly feasible
and bounded -- the unbounded case $OPT(DUAL(SDP(P_\segal)))=\infty$ 
would lead to 
to $(OPT(SDP(P_\segal)))=\infty$ by virtue of
$OPT(DUAL(SDP(P_\segal)))\leq OPT(SDP(P_\segal))$.
However, 
the dual is always strictly feasible because the SDP constraint \eqref{eqTotLagrConstr2}
can be  written under the form:
$$
Y+Y_\mu=
Y+
\begin{bmatrix}
-t                &-\frac 12 \mu_1  &-\frac 12 \mu_2  & \ldots &-\frac 12 \mu_n \\
- \frac 12 \mu_1  & \mu_1           &        0        & \ldots & 0              \\
- \frac 12 \mu_2  &      0          &       \mu_2     & \ldots & 0              \\
\vdots            &    \vdots       & \vdots          & \ddots & 0              \\
- \frac 12 \mu_n  &        0        &      0          & \ldots & \mu_n          \\
\end{bmatrix} \succeq \zeros.
$$
The above matrix can be strictly feasible (positive definite) by taking $\mu_1=\mu_2\dots=\mu_n=M$
for a sufficiently large $M$, so that the bottom-right $n\times n$ minor 
$[Y+\texttt{diag}(\mumu)]_{n\times n}$ become positive definite.
A sufficient $M$ value can be $1$ minus the lowest eigenvalue of $Y$. Using the Sylvester criterion
(Prop.~\ref{propSylvester}) in reversed order, we can prove that the above matrix
$Y+Y_\mu$ is positive definite
by showing that the whole determinant is positive. This can always be the case by taking a sufficiently
large value of $-t$. By developing the Leibniz formula for the determinant of the matrix
$Y+Y_\mu$, 
there will be a term $-t\det\left([Y+\texttt{diag}(\mumu)]_{n\times n}\right)$ that can be arbitrarily large, 
so as to make the determinant as high as possible. Using the strong duality as mentioned above, 
we obtain $OPT(DUAL(SDP(P_\segal)))=OPT(SDP(P_\segal))$, and so,
the hierarchy \eqref{eqFundLagr2} collapses into \eqref{eqFundLagr1Bis}.
Finally, \eqref{eqFundLagr2Bis} follows from
\eqref{eqFundLagr1}.
\end{proof}

The above proof does show that $OPT(DUAL(SDP(P_\segal)))=OPT(SDP(P_\segal))$, but the strong duality
theorem only guarantees
that $(SDP(P_\segal))$ does reach the optimum value. The program $(DUAL(SDP(P_\segal)))$ does not 
necessarily effectively reach its optimum value. This will become clear in the following example.

\subsubsection{Two examples: $OPT(DUAL(SDP(P_{\protect\segal})))$
may not reach its own optimum value and
$OPT(SDP(P_{\protect\segal}))$ may
be strictly lower than
$\left(PL^{\protect\sx}(QP_{\protect\segal})\right)$
}

\begin{example} \label{exTLDoesNotReachOpt}We modify 
Example~\ref{exempluUnu} as
follows:
\begin{itemize}
\item[--]
The constraint associated to dual variable $\mu_{12}$ becomes
$$\begin{bmatrix}0&-\frac12&0&0\\-\frac 12&0&0&0\\ 0&0&-1&0\\
0&0&0&0\end{bmatrix}\sprod \x\x^\top=0$$
\item[--]
We add  four constraints
$x_i^2=x_i$ with $i\in[1..4]$ whose dual values are $\mu_1,~\mu_2,~\mu_3$ and $\mu_4$. 
These constraints imply that all variables are binary.
\end{itemize}
The $SDP(P_\segal)$ relaxation of $(QP_\segal)$ has the solution
$X=\zeros_{4\times 4}$ of objective value $0$. We will show that
$(DUAL(SDP(P_\segal)))$ converges to $0$, even if there is no
feasible solution with value 0 in this dual.
For this, we first write
$(DUAL(SDP(P_\segal)))$ as follows:

\begin{align*}[left ={(DUAL(SDP(P_\segal)))  \empheqlbrace}]
\max~~& t \\ 
s.t.~ &
 \left[\begin{smallmatrix}
-t                 & \frac {-\mu_1}2                    &  \frac {-\mu_2}2                & \frac {-\mu_3}2    &  \frac {-\mu_4}2       \\
\frac {-\mu_1}2    & \mu_{11}+\mu_1                     &\frac 12 -\frac {\mu_{12}}2      &\mu_{13}+\mu_{11}   &0                       \\  
\frac {-\mu_2}2    &        \frac 12 -\frac {\mu_{12}}2 &   \mu_{2}                       &\mu_{23}+\mu_{11}   &0                       \\  
\frac {-\mu_3}2    &        {\mu_{13}}+\mu_{11}         &\mu_{23}+\mu_{11}                &-\mu_{12}+\mu_{3}   &0                       \\  
\frac {-\mu_4}2    &        0                           & 0                               &     0              &1+\mumumus_{44}+\mu_4
     \end{smallmatrix}\right] \succeq \zeros,                          
\end{align*}
A solution of value zero of the above program would clearly set $t=0$. This
would imply that Row 1 contains only zeros, so that $\mu_1=\mu_2=\mu_3=\mu_4=0$. Furthermore, 
because $\mu_2$ stands alone on the diagonal at position (3,3), row
3 has to contain only zeros as well. This means $\mu_{12}=1$ which leads
to a negative value at position (4,4) on the diagonal. The resulting
matrix can not be SDP.

We can prove the optimal solution converges to zero.
Let us take
$\mu_1= 0       $, $\mu_2=\varepsilon$, $\mu_3=0$, $\mu_4=0$, $\mu_{11}=M$, $\mu_{12}=0$, $\mu_{13}=-\mu_{11}=-M$,
$\mu_{23}=-\mu_{11}=-M$, $\mumumus_{44}=0$. The variables $\varepsilon$ and $M$
stand, resp.~, for a very small and a very large positive value.
The above program simplifies to:
\begin{align*}
\max~~& t                                                       \\
s.t.~ &[{\mumu}_{M,\varepsilon}]=
 \left[\begin{smallmatrix}
-t                 &   0                                &  {-\varepsilon}/2                  &  0                 &  0                     \\
0                  &          M                         &     1/2                         &0                   &0                       \\  
{-\varepsilon}/2      &        1/2                         &   \varepsilon                      &0                   &0                       \\  
0                  &        0                           &     0                           &0                   &0                       \\  
0                  &        0                           & 0                               &     0              &1
     \end{smallmatrix}\right] \succeq \zeros,                          
\end{align*}
We take a sufficiently large value of $M$ ($>\frac 1{4\varepsilon}$), so 
as to be able to have $\det([{\mumu}_{M,\varepsilon}]_{3\times 3})>0$ for small values of $-t$, where
$[{\mumu}_{M,\varepsilon}]_{3\times 3}$ is the leading principal minor of size $3\times 3$.
However, the optimum value of above program is $-\frac \varepsilon 4$, associated
to $-t= \frac \varepsilon 4$. 
Recall $M$ can be as large as necessary to ensure $\det([{\mumu}_{M,\varepsilon}]_{3\times 3})>0$.
The objective value of this solution is $-\frac \varepsilon 4$ and its limit is $\lim_{\varepsilon\to0} -\frac \varepsilon 4=0$.
\end{example}

We continue with an example in which the fundamental inequality \eqref{eqFundLagr2Bis} is strict, 
\ie, $OPT(SDP(P_\segal)) < \left(PL^\sx(QP_\segal)\right)$.
\begin{example}\label{exempluDoi} We introduce a $0-1$ program on the left,
the partial Lagrangian on the right and then we will analyse the SDP relaxation.

~\\

\noindent\begin{minipage}{0.35\linewidth}
\begin{alignat*}{4}[left ={(QP_\segal)  \empheqlbrace}]
\min~~        &\left[\begin{smallmatrix}0 & - 1 \\ -1 & 0  \end{smallmatrix}\right]\sprod \x\x^\top                                   &&     \\
s.t.~         &x_1+x_2=1                                                                                                              &&    \\
\mu_1:        &\left[\begin{smallmatrix}1 &   0 \\ 0  & 0  \end{smallmatrix}\right]\sprod \x\x^\top -x_1=0                            &&     \\
\mu_2:        &\left[\begin{smallmatrix}0 &   0 \\ 0  & 1  \end{smallmatrix}\right]\sprod \x\x^\top -x_2=0                            &&     \\
              &\x\in \R^2                                                                                                                            
\end{alignat*}
Notice that the first constraint in the above program is a linear equality constraint. We have
no redundant constraints, \ie, we have  $\pp=0$ with regards to the canonical formulation \eqref{qpp1}-\eqref{qpp5} of
$(QP_\segal)$. 
\end{minipage}
~~~~~
\begin{minipage}{0.65\linewidth}
The only feasible solutions of $(QP_\segal)$ are $[x_1~x_2]=[1~0]$ and $[x_1~x_2]=[0~1]$, both of value
$OPT(QP_\segal)=0$.
We now formulate
the partial Lagrangian.
\begin{align*}
\left(PL^\sx(QP_\segal)\right)&=\max_{\mu_1,\mu_2}\min_{\substack{\x\in\R^2\\x_1+x_2=1}}
\left[\begin{smallmatrix}
        \mu_1 &  -1 \\ -1 & \mu_2 
     \end{smallmatrix}\right]\sprod \x\x^\top - \mu_{1}x_1-\mu_2x_2\\
\end{align*}
We know by the Lagrangian definition that $\left(PL^\sx(QP_\segal)\right)\leq OPT(QP_\segal)$. We will show 
that for $[\mu_1~\mu_2]=[-1~-1]$, the Lagrangian reaches $OPT(QP_\segal)=0$. Indeed, replacing these
$[\mu_1~\mu_2]$ values in above formula, we obtain
$$
\min_{\substack{\x\in\R^2\\x_1+x_2=1}}
\left[\begin{smallmatrix}
        -1 &  -1 \\ -1 & -1 
     \end{smallmatrix}\right]\sprod \x\x^\top + x_1+ x_2
=-(x_1+x_2)^2 + (x_1+x_2)= 0
$$
\end{minipage}

~\\

\noindent For now, we have $\left(PL^\sx(QP_\segal)\right)=OPT(QP_\segal)=0$ and let us turn towards $(SDP(QP_\segal))$.
We notice that $[x_1~x_2]=[0.5~0.5]$ combined with $X=
\left[\begin{smallmatrix}
        0.5 &  0.25 \\ 0.25 & 0.5 
     \end{smallmatrix}\right]
= \x\x^\top + 0.25 I_2=
\left[\begin{smallmatrix}
        0.25 &  0.25 \\ 0.25 & 0.25 
     \end{smallmatrix}\right]
+
\left[\begin{smallmatrix}
        0.25 &  0    \\  0   & 0.25 
     \end{smallmatrix}\right]
$ is a feasible solution of $(SDP(QP_\segal))$ with objective value $-0.5$. This leads to
$$OPT(SDP(QP_\segal))\leq -0.5 < 0 = \left(PL^\sx(QP_\segal)\right),$$
and so, the inequality \eqref{eqFundLagr2Bis} can be strict.
\end{example}
\subsubsection{The limit of the strongest convexification is 
$\left(PL^{\protect\sx}(QP_{\protect\segal})\right)$}

\paragraph{The strongest convexifications can 
lead to 
$\left(PL^{\protect\sX}(QP_{\protect\segal})\right) =
\left(PL^{\protect\sx}(QP_{\protect\segal})\right)$
\label{parBestConvexification}}
We recall Proposition \ref{propConvexifier}. It states that if
$\mumu^*$ is an optimal solution of the
non-augmented \eqref{eqPartLagrObj}-\eqref{eqPartLagrBody} formulation of 
 $\left(PL^\sx(QP_\segal)\right)$
and there 
there exists $\mumumu^*$ that convexifies
the augmented \eqref{eqPartLagrObj2}-\eqref{eqPartLagrBody2} formulation of $\left(PL^\sx(QP_\segal)\right)$, 
then $\left(PL^\sX(QP_\segal)\right) = \left(PL^\sx(QP_\segal)\right)$. In other words, 
if the convexification is successful (for the optimal $\mumu^*$ chosen above), both hierarchies \eqref{eqFundLagr1Bis}-\eqref{eqFundLagr2Bis}
collapse (in the binary case).

Let $Q_{\mumu^*}$ be the factor of the quadratic term $\x\x^\top$  
associated to the optimal solution $\mumu^*$
of the non-augmented
formulation \eqref{eqPartLagrObj}-\eqref{eqPartLagrBody} 
of
$\left(PL^\sx(QP_\segal)\right)$. The factor of
$\x\x^\top$ in the augmented formulation 
\eqref{eqPartLagrObj2}-\eqref{eqPartLagrBody2} 
of $\left(PL^\sx(QP_\segal)\right)$
has the form
$Q_{\mumu^*,\mumumu}=Q_{\mumu^*}+\sum_{j=1}^{\pp} \mumumus_j \B_j$ where the $\B_j's$ with $j\in[1..\pp]$ are
the quadratic factors of $\x\x^\top$ in the redundant constraints \eqref{qpp3}
of the $(QP_\segal)$ definition.

Let us first investigate
the case in which
if $Q_{\mu^*}$ is not positive over $\left\{\y\in\R^n:A\y=\zeros\right\}$, 
\ie, over the null space $\texttt{null}(A)$ of $A$ (see also \eqref{eqDefNull}).
In this case, we can show that $\left(PL^\sx(QP_\segal)\right)=-\infty$.
Indeed, if there is some $\y\in\R^n$ such that $A\y=\zeros$ and
$Q_{\mu^*}\sprod \y\y^\top = -z < 0$, we can take any feasible solution $\x\in\R^n$ 
of 
$\LL_{PL^\sx(QP_\segal)}(\mumu^*)$
from the non-augmented 
$\left(PL^\sx(QP_\segal)\right)$ formulation \eqref{eqPartLagrObj}-\eqref{eqPartLagrBody}.
This $\x$ satisfies $A\x=\b$ and notice that $\x+t\y$ satisfies $A(\x+t\y)=\b$ as well $\forall t>0$.
If we write the objective
value of $\x+t\y$ as a function of $t$, we obtain a polynomial of degree 2
with the leading term $-zt^2$. This is a concave polynomial that is unbounded from below regardless
of its non-quadratic terms.  
In this case it is not difficult to achieve
$\left(PL^\sX(QP_\segal)\right) = \left(PL^\sx(QP_\segal)\right)=-\infty$. 

We hereafter focus on the contrary case: $Q_{\mu^*}$ is positive over $\texttt{null(A)}$.
We are looking for redundant constraints that can convexify any 
quadratic factor $Q_{\mu}$ that is non-negative (positive) over $\texttt{null}(A)$.
In fact, it may be enough to convexify 
the matrix $Q_{\mu^*}$ associated to the 
optimal solution $\mu^*$ 
of $\left(PL^\sx(QP_\segal)\right)$, but
let us keep in mind a broader objective.
However, if it is possible to convexify $Q_{\mu^*}$ into $Q_{\mu^*,\mumumu^*}\succeq\zeros$,
we can apply Proposition \ref{propConvexifier}, 
and so, both the fundamental hierarchies \eqref{eqFundLagr1Bis}-\eqref{eqFundLagr2Bis}
collapse --
also recall Theorem~\ref{thSdpEqualsPL01}.
In such a case, we obtain 
that
$OPT(SDP(Q_\segal))=\left(PL^\sX(QP_\segal)\right)$ reaches the limit $\left(PL^\sx(QP_\segal)\right)$.

However, if the redundant constraints are not strong enough to convexify 
$Q_{\mumu^*}$ for the above chosen optimal $\mumu^*$, they might be able to
convexify some $Q_{\mu}$ for some other $\mumu$. If this happens, 
we are certain that $\left(PL^\sX(QP_\segal)\right)$ is bounded, although
its value may be  
$\LL_{PL^\sx(QP_\segal)}(\mumu)
<
\LL_{PL^\sx(QP_\segal)}(\mumu^*)
=\left(PL^\sx(QP_\segal)\right)$.
Finally, keep in mind that
only
$\left(PL^\sx(QP_\segal)\right)$ does not depend
on the chosen redundant constraints in the
fundamental hierarchy
\eqref{eqFundLagr1Bis}-\eqref{eqFundLagr2Bis}, \ie, one can consider that
all other terms in this hierarchy are actually indexed by a set of chosen redundant 
constraints.

\paragraph{Examples of redundant constraints of different strengths\label{parRedundantConstr}}

\begin{example}\label{exRedund1}(A unique redundant constraint)
A redundant constraint can be constructed by observing that $(A_i\x-b_i)^2=0$
for any $i\in[1..p]$, where $A_i$ is the row $i$ of $A$. We obtain 
$(A_i\x)^2-2 b_iA_i\x + b_i^2=0$, equivalent to 
\begin{equation}\label{eqConv1}
A_i^\top A_i \sprod \x\x^\top-2b_iA_i\x + b_i^2=0,
\end{equation} by virtue
of Lemma~\ref{lemmaprod}. By summing over all $i\in[1..p]$, we obtain
\begin{equation}\label{eqConv2}
A^\top A \sprod \x\x^\top-2\b^\top A\x + \b^\top \b =0
\end{equation}
As long as we consider a partial and not a total
Lagrangian, we have $2b_iA_i\x=2b_i^2$  because
$A\x=\b$ is active; thus, the constraint \eqref{eqConv1} reduces to 
$A_i^\top A_i \sprod \x\x^\top = b_i^2$ and \eqref{eqConv2} reduces to
\begin{equation}\label{eqConv3}
A^\top A \sprod \x\x^\top = \b^\top \b 
\end{equation}
\end{example}

\begin{example}\label{exRedund2}(A set of redundant constraints) We first use constraints $A\x=b$.
For each $i\in[1..p]$, we generate a set of constraints $x_jA_i\x=x_j\b_i$ for all $j\in[1..n]$.
There exists a linear combination of these constraints that generate the constraint from
previous Example \ref{exRedund1}.
\end{example}
\begin{proof} For each $i\in[1..p]$, we perform the following. We multiply by $A_{ij}$
the constraint $x_jA_i\x=x_j b_i,~\forall j\in[1..n]$. By summing up over all $j$, we obtain
$\left(\sum\limits_{j=1}^n A_{ij}x_j\right) A_i\x = \left(\sum\limits_{j=1}^n A_{ij}x_j\right) b_i$ that
can be written $A_i\x A_i\x = A_i \x b_i$, or furthermore 
$A_i^\top A_i \sprod \x\x^\top - b_i A_i\x=0$. We now multiply $-A_i\x+b_i=0$ by $b_i$
to obtain $-b_iA_i\x + b^2_i=0$ and we add this to the previous equality to obtain
$A_i^\top A_i \sprod \x\x^\top - 2b_i A_i\x+b_i^2=0$, which is exactly \eqref{eqConv1}.
\end{proof}

~\\
\noindent{\textbf{The above constraint sets are not equivalent in the general
non-binary case}}

\begin{remark}\label{remUniqueConstrWeaker}
In the general non-binary case, the first redundant constraint set (Example~\ref{exRedund1}) 
might be weaker. For instance, consider $Q_{\mu^*}=
\left[\begin{smallmatrix} 0 & 1 \\ 1 & 0\end{smallmatrix}\right]$ and a unique linear
constraint $A\x=\b$ with $A=[0~1]$ and $\b=0$. The constraint generated by the first Example~\ref{exRedund1}
is \eqref{eqConv2}, \ie,
$A^\top A \sprod \x\x^\top=0$, equivalent to
$\left[\begin{smallmatrix} 0 & 0 \\ 0 & 1\end{smallmatrix}\right]
\sprod \x\x^\top = 0$. This 
$A^\top A=\left[\begin{smallmatrix} 0 & 0 \\ 0 & 1\end{smallmatrix}\right]$
matrix can not convexify $Q_{\mu^*}$.
On the other hand, the second redundant constraint set (Example~\ref{exRedund2})
generates $x_1 A \x = x_1b = 0$ (or $x_1x_2=0$, \ie, 
$A'\sprod \x\x^\top =\left[\begin{smallmatrix} 0 & 1 \\ 1 & 0\end{smallmatrix}\right]\sprod \x\x^\top=0$)
and $x_2 A \x = x_2b = 0$
(or $x_2x_2=0$, \ie, 
$A''\sprod \x\x^\top=\left[\begin{smallmatrix} 0 & 0 \\ 0 & 1\end{smallmatrix}\right]\sprod \x\x^\top=0$).
The matrices 
$A'=\left[\begin{smallmatrix} 0 & 1 \\ 1 & 0\end{smallmatrix}\right]$
and 
$A''= \left[\begin{smallmatrix} 0 & 0 \\ 0 & 1\end{smallmatrix}\right]$
can easily convexify $Q_{\mu^*}$.
\end{remark}

\begin{remark}\label{remExemple2SuccessConvex}
The second redundant constraint set (Example~\ref{exRedund2}) can 
convexify any matrix $Q_{\mu^*}$ that is non-negative over \texttt{null}$(A)$. This way, we can always apply Prop.~\ref{propConvexifier} and 
collapse the hierarchy \eqref{eqFundLagr1}, \ie, the convexification is optimal
and it reaches the limit value $\left(PL^\sx(QP_\segal)\right)$.
The proof of this convexification is given in Appendix~\ref{appConvex}, see more exactly
Prop~\ref{propConvexSDP}. 

If $Q_{\mu^*}$ is \textit{strictly} positive 
over $\texttt{null(A)}$ (\ie, $\u^\top Q \u >0~\forall \u\in\texttt{null(A)}-\{\zeros\}$),
$Q_{\mu^*}$ can be convexified more easily. We can use any unique redundant constraint
of the form
$\x^\top A^\top S A \x = \b^\top S \b$ for some $S\succ\zeros$, \eg, for instance
\eqref{eqConv3} corresponds to choosing $S=I_n$.
In other words, we can always construct $Q_{\mu^*}+\lambda A^\top S
A\succeq\zeros$ for a sufficiently large $\lambda$, see the proof in Prop.~\ref{propConvexDP}.
\end{remark}

~\\
\noindent{\textbf{The above constraint sets are equivalent in the binary case}}
\vspace{0.7em}

\noindent For the binary case, we will prove below (Prop.~\ref{propEquivConv}) that the SDP programs integrating
the SDP versions of above redundant constraint sets (Examples~\ref{exRedund1}
and~\ref{exRedund2}) are equivalent. Recall that in the 0-1 case the convexified
total and partial Lagrangians reach $OPT(SDP(P_\segal))$ as stated in \eqref{eqFundLagr1Bis}.
Using above two statements, the two redundant 
constraint sets 
make the convexified
total and partial Lagrangians reach the same value $OPT(SDP(P_\segal))$, \ie, they are equivalent.

\begin{proposition} \label{propEquivConv}The SDP constraints associated to the redundant constraints from
Example~\ref{exRedund1} and Example~\ref{exRedund2} are equivalent. This means that
the best convexifications (of the total or partial Lagrangians) achieved 
by the two redundant constraint sets
have \underline{the same value in the $0-1$ case}, \ie, 
that of the SDP bound $OPT(SDP(P_\segal))$ expressed using either set of redundant constraints.
\end{proposition}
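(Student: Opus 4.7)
The plan is to show that, once the base constraints of $SDP(QP_\segal)$ (the linear equality $A\x=\b$ together with the Schur-complement form $\bigl[\begin{smallmatrix}1&\x^\top\\\x&X\end{smallmatrix}\bigr]\succeq\zeros$) are imposed, the SDP liftings of the two redundant systems cut out the same feasible region in $(\x,X)$. Once that is proved the two SDP programs must return the common value $OPT(SDP(QP_\segal))$, and the $0$-$1$ half of the statement will then follow from the fundamental identity \eqref{eqFundLagr1Bis} in Theorem~\ref{thSdpEqualsPL01}, which equates $OPT(SDP(QP_\segal))$ with the best convexified partial-Lagrangian value $\left(PL^\sX(QP_\segal)\right)$.

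First I would write each redundant system at the SDP level. Example~\ref{exRedund2}'s scalar identity $x_jA_i\x=x_jb_i$ lifts to $\sum_k A_{ik}X_{jk}=b_ix_j$ for all $i,j$, which assembles into the matrix equation $AX=\b\x^\top$. Example~\ref{exRedund1}'s identity \eqref{eqConv1} lifts to $A_i^\top A_i\sprod X-2b_iA_i\x+b_i^2=0$; exploiting $A\x=\b$ this collapses to $A_iXA_i^\top=b_i^2$ for each $i\in[1..p]$. The implication ``Example~\ref{exRedund2} $\Rightarrow$ Example~\ref{exRedund1}'' is immediate, since $AX=\b\x^\top$ post-multiplied by $A^\top$ gives $AXA^\top=\b(A\x)^\top=\b\b^\top$, whose $i$-th diagonal entry is $b_i^2$.

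The main step is the converse. From the base constraint \eqref{qppsdp5} and Proposition~\ref{propSchurPart} one has $X-\x\x^\top\succeq\zeros$, whence $A_i(X-\x\x^\top)A_i^\top=A_iXA_i^\top-(A_i\x)^2=b_i^2-b_i^2=0$. Writing $X-\x\x^\top=UU^\top$ via Corollary~\ref{corSdpToVectors}, this reads $|U^\top A_i^\top|^2=0$, so $U^\top A_i^\top=\zeros$ and hence $(X-\x\x^\top)A_i^\top=UU^\top A_i^\top=\zeros$. Therefore $XA_i^\top=\x\x^\top A_i^\top=b_i\x$, i.e.\ $A_iX=b_i\x^\top$; stacking over $i\in[1..p]$ recovers $AX=\b\x^\top$, which is precisely Example~\ref{exRedund2}'s SDP constraint.

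The feasible sets of the two SDP programs therefore coincide and they share the optimum $OPT(SDP(QP_\segal))$. Invoking \eqref{eqFundLagr1Bis} in the $0$-$1$ setting, this common value equals the best convexified partial-Lagrangian bound $\left(PL^\sX(QP_\segal)\right)$ (and, whenever the optimal $\mumu^*$ of $\left(PL^\sx(QP_\segal)\right)$ admits a convexifying $\mumumu^*$, also $\left(PL^\sx(QP_\segal)\right)$ itself by Proposition~\ref{propConvexifier}); the identity $\left(TL^\sX(QP_\segal)\right)=\left(PL^\sX(QP_\segal)\right)$ from the same hierarchy transfers the conclusion to the total Lagrangian as well. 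The only non-routine step in the argument is the implication ``$A_i(X-\x\x^\top)A_i^\top=0\Rightarrow(X-\x\x^\top)A_i^\top=\zeros$'', which rests on the elementary fact that a PSD matrix with $\v^\top Z\v=0$ annihilates $\v$; everything else is direct substitution into previously established identities.
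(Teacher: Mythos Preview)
Your argument is correct and is in fact a bit slicker than the paper's. The paper works with the single aggregate lift \eqref{eqRed1}, rewrites it as $A^\top A\sprod(X-\x\x^\top)=0$, invokes Proposition~\ref{propABprod} to upgrade this to the matrix identity $A^\top A\,(X-\x\x^\top)=\zeros$, and then performs an explicit row/column computation that ultimately needs the assumption $\operatorname{rank}(A)=p$ to peel off each $A_i$. You instead work row by row: from $A_i(X-\x\x^\top)A_i^\top=0$ and a factorisation $X-\x\x^\top=UU^\top$ you get $U^\top A_i^\top=\zeros$ and hence $(X-\x\x^\top)A_i^\top=\zeros$ directly. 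This is the rank-one special case of Proposition~\ref{propABprod} and it bypasses the full-rank hypothesis entirely.

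One small point worth making explicit: the paper's version of Example~\ref{exRedund1} is the \emph{single} aggregate constraint \eqref{eqRed1} (equivalently \eqref{eqConv2}), whereas you lift \eqref{eqConv1} and obtain the $p$ individual constraints $A_iXA_i^\top=b_i^2$. These are equivalent under \eqref{qppsdp5}, since $A^\top A\sprod(X-\x\x^\top)=\sum_i A_i(X-\x\x^\top)A_i^\top$ is a sum of nonnegative terms (each is $\v^\top Z\v$ with $Z\succeq\zeros$), so the sum vanishes iff every term does; stating this one line would close the only gap between your reading of Example~\ref{exRedund1} and the paper's.
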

\begin{proof} Notice using Examples~\ref{exRedund1} and \ref{exRedund2} that the 
two SDP constraint sets are respectively:
\begin{subequations}
\begin{align}
&A^\top A \sprod X-2\b^\top A\x + \b^\top \b =0 \label{eqRed1}\\
& \sum\limits_{k=1}^n A_{ik}X_{jk}= x_jb_i,     ~\forall j\in[1..n],~i\in[1..p] \label{eqRed2}
\end{align}
\end{subequations}
The implication \eqref{eqRed2}$\implies$\eqref{eqRed1} can be constructed by applying on
constraints \eqref{eqRed2} the linear combination presented in the proof of Example~\ref{exRedund2}.
This linear combination leads to \eqref{eqRed1}.

It is more difficult to show the converse \eqref{eqRed1}$\implies$\eqref{eqRed2}. 
We write \eqref{eqRed1} as 
$0=A^\top A \sprod (X-\x\x^\top) + A^\top A \sprod \x\x^\top -2\b^\top A\x + \b^\top \b =
A^\top A \sprod (X-\x\x^\top) + 0$, where we used \eqref{eqConv2}
which is a consequence of $A\x=\b$ (this constraint does appear in both SDP
formulations even if it is not necessary as it can be inferred from
\eqref{eqRed1} or \eqref{eqRed2} respectively).
We thus obtain $A^\top A \sprod (X-\x\x^\top)=0$ and Prop.~\ref{propABprod} implies that
$A^\top A (X-\x\x^\top)=\zeros$. Taking row $r\in[1..n]$ and column $c\in[1..n]$ of this product, we obtain:
\begin{align*}
0 &= \sum_{k=1}^n (A^\top A)_{rk}(X-\x\x^\top)_{kc}\\
  &= \sum_{k=1}^n \left(\sum_{i=1}^p A_{ir}A_{ik}\right) \left (X_{kc} -x_kx_c\right)\\
  &= \sum_{k=1}^n \sum_{i=1}^p A_{ir}\left (A_{ik} X_{kc} -A_{ik} x_kx_c\right)\\
  &= \sum_{k=1}^n \sum_{i=1}^p A_{ir}A_{ik} X_{kc} - \sum_{i=1}^p A_{ir} b_i x_c \tag{\text{we used $A_i\x=b_i$}}\\
  &= \sum_{i=1}^p A_{ir}\left (\sum_{k=1}^n A_{ik} X_{ck} -x_c b_i\right)
\end{align*}
Since the last formula holds for all $r\in[1..n]$, we can reformulate it in terms of the 
rows $A_i$ of $A$, obtaining
$\sum_{i=1}^p A_{i}\left (\sum_{k=1}^n A_{ik} X_{ck} -x_c b_i\right)=\zeros$. This is a linear
combination of the rows $A_i$ of $A$. Assuming a legitimate condition $rank(A)=p$ (\ie, the 
constraints $A\x=\b$ are linearly independent), this linear combination can lead to $\zeros$ only
if $\sum_{k=1}^n A_{ik} X_{ck} -x_c b_i=0~\forall i\in[1..p]$. 
Since this holds for any $c\in[1..n]$, we have obtained \eqref{eqRed2}.
This proof is taken from
Prop.~5 of the paper ``Partial Lagrangian relaxation for General Quadratic Programming''
by Alain Faye and Fr\'ed\'eric~Roupin.\footnote{Published in 
\textit{A Quarterly Journal of Operations} in 2007,
vol 5(1), pp.~75-88.
\setcounter{testfoot3}{\value{footnote}}\label{testfootpage3}}
\end{proof}

\subsubsection{Collapsing both hierarchies by  convexification and an
associated {\tt Branch-and-bound}}

We here focus on solving {\it binary} equality-constrained quadratic programming.
The main idea is that we  can use
the best convexification constructed in Section~\ref{secTotLagrSandwich} to
determine fast lower bounds for a \texttt{Branch-and-bound} (Section~\ref{secbbound}) that solves
the initial binary equality-constrained  quadratic problem. As a side remark, certain convexification ideas below can well
apply to the non-binary problem as well.

\paragraph{\label{secTotLagrSandwich}Determining the best convexification
coefficients 
 $\mumu^*$ and $\mumumu^*$ 
by solving $DUAL(SDP(P_{\protect\segal}))$}

According to \eqref{eqFundLagr1Bis}, in the binary case,
the value of a convexified total Lagrangian reaches $OPT(SDP(P_\segal))$.
However, the quality of $OPT(SDP(P_\segal))$ is dependent on the redundant constraints it integrates.
Let us write the total Lagrangian from \eqref{eqTotLagrObj2WithEquality}-\eqref{eqTotLagrBody2}
in a more compact form:

\begin{subequations}
\begin{align}
\Big(TL^\sx&(QP_\segal)\Big)=\max_{\mumu,\mumumu,\betabeta}\LL_{TL^\sx(QP_\segal)}(\mumu,\mumumu,\betabeta) \label{eqTotLagrObj2WithEqualityBis} \\
             &\LL_{TL^\sx(QP_\segal)}(\mumu,\mumumu,\betabeta)=
            \left\{
              \begin{array}{ll}
                 \displaystyle \min_{\x\in\R^n}&Q_{\mumu,\mumumu} \sprod \x\x^\top
                 +\left(\c^\top_{\mumu,\mumumu} +\betabeta^\top A\right)\x\\
                 & - \mumu^\top \e - \mumumu^\top \ee  -\betabeta^\top \b,
              \end{array}
            \right.
                \label{eqTotLagrBody2Bis}
\end{align}
\end{subequations}
where 
$Q_{\mumu,\mumumu}= Q+\sum_{i=1}^p \mu_i B_i+\sum_{j=1}^{\pp} \mumumus_j \B_j$
and
$\c^\top_{\mumu,\mumumu}=\c^\top+\sum_{i=1}^p \mu_i\d^\top_i+\sum_{j=1}^{\pp} \mumumus_j\dd^\top_j$.

Let us first show that $\Big(TL^\sx(QP_\segal)\Big)$ is bounded.
For this, we can use the fact that in the $0-1$ case the objective function contains $n$ terms of
the form $\mu_i x_i^2-\mu_i x_i$ (for all $i\in[1..n]$) that
could always strictly convexify any matrix. 
In other words,
there always exist $\mumu^{o}$
and
$\mumumu^{o}$ such that $Q_{\mumu^{o},\mumumu^{o}}\succ\zeros$, \ie,
take a sufficiently large $\mu_i^o$ in the terms $\mu_i^o x_i^2-\mu_i^o x_i$ (for all
$i\in[1..n]$). We have
$Q_{\mumu^{o},\mumumu^{o}}$ invertible by virtue of
$\det\left(Q_{\mumu^{o},\mumumu^{o}}\right)>0$. The (transposed) gradient of the
objective function 
of 
$\LL_{TL^\sx(QP_\segal)}(\mumu^{o},\mumumu^{o},\betabeta)$ from
\eqref{eqTotLagrBody2Bis} 
is
$\nnabla^\top_{\mumu^{o},\mumumu^{o}}(\x)=2 \x^\top Q_{\mumu^{o},\mumumu^{o}}
+
\c^\top_{\mumu,\mumumu} +\betabeta^\top A$. 
Since $Q_{\mumu^{o},\mumumu^{o}}$ is invertible, there 
 exists a stationary point 
$\x^{o}$ in 
which 
$\nnabla^\top_{\mumu^{o},\mumumu^{o}}(\x^{o})=\zeros$. The stationary point of a (strictly) convex function is its minimizer,
and so, we are sure that 
$OPT(DUAL(SDP(P_\segal)))=\left(TL^\sx(QP_\segal)\right)=OPT(SDP(P_\segal))$ is bounded.

Let us study what happens if we determine $\mumu^*$ and $\mumumu^*$ 
from the optimal values of 
$DUAL(SDP(P_\segal))$ from \eqref{eqTotLagrObj2}-\eqref{eqTotLagrConstr2}, \ie, 
solve \eqref{eqTotLagrObj2}-\eqref{eqTotLagrConstr2} and
retrieve the optimal value of $\mumu$ (think of $\mumu$ and $\mumumu$
as merged into a unique $\mumu$).
He hereafter consider that the coefficients $\mumu^*$ and $\mumumu^*$ are fixed.
We will show that using these coefficients in the augmented partial Lagrangian allows it to 
 reach the same value as the total Lagrangian $\left(TL^\sx(QP_\segal)\right)$.
We recall the definition of the augmented partial Lagrangian 
from \eqref{eqPartLagrBody2} using more compact notations
for fixed $\mumu^*$ and $\mumumu^*$:
\begin{align}
&             \LL_{PL^\sx(QP_\segal)}(\mumu^*,\mumumu^*)=
                 \left\{
                    \begin{array}{ll}
                        \min &Q_{\mumu^*,\mumumu^*} \sprod \x\x^\top+
                             \c^\top_{\mumu^*,\mumumu^*} \x
                             - {\mumu^*}^\top \e -\mumumu{^*}^\top \ee \\
                        s.t.~&A\x=b\\
                             &\x\in\R^n
                    \end{array}
                 \right.\label{eqPartLagrBody2Bis}
\end{align}
For the fixed $\mumu^*$ and $\mumumu^*$ determined above, we have
$Q_{\mumu^*,\mumumu^*}\succeq\zeros$ because of the bottom-right term of
\eqref{eqTotLagrConstr2}.
As such,
the above program has the same optimum value as
its SDP version obtained by replacing $\x\x^\top$ with $X$
in \eqref{eqPartLagrBody2Bis} and adding $X\succeq \x\x^\top$.
This SDP partial Lagrangian 
$\LL_{PL^\sX(QP_\segal)}(\mumu^*,\mumumu^*)$
is formally defined by
\eqref{eqPartLagrBody3}.

\begin{sloppypar}
One can easily check that 
$\Big(TL^\sx(QP_\segal)\Big)
=\LL_{TL^\sx(QP_\segal)}(\mumu^*,\mumumu^*,\betabeta^*)
\leq \LL_{PL^\sx(QP_\segal)}(\mumu^*,\mumumu^*)
=\LL_{PL^\sX(QP_\segal)}(\mumu^*,\mumumu^*)\leq 
\left(PL^\sX(QP_\segal)\right)$. Both inequalities follow from Lagrangian relaxation relations.
But since we know $OPT(DUAL(SDP(P_\segal)))= \Big(TL^\sx(QP_\segal)\Big)=
\left(PL^\sX(QP_\segal)\right)=OPT(SDP(P_\segal))$ from the fundamental hierarchy
\eqref{eqFundLagr1Bis},
we can (re-)write: 

\end{sloppypar}
\begin{equation}\label{eqLastConvex}
\scalebox{.95}{$
OPT\left(\LL_{PL^\sx(QP_\segal)}(\mumu^*,\mumumu^*)\right)=\Big(TL^\sx(QP_\segal)\Big)=OPT(DUAL(SDP(QP_\segal)))=OPT(SDP(QP_\segal)).
$}
\end{equation}

\begin{remark}\label{remarkOptCoefs}
In the $0$--$1$ case, the coefficients $\mumu^*$ and $\mumumu^*$ determined by solving
$DUAL(SDP(P_\segal))$ as explained above
can be used in the partial Lagrangian
program $\left(\LL_{PL^\sx(QP_\segal)}(\mumu^*,\mumumu^*)\right)$ and make this
program reach the value $OPT(SDP(QP_\segal))$. 
If $(QP_\segal)$
 integrates  
the redundant constraints from Example~\ref{exRedund2},
the hierarchy \eqref{eqFundLagr1} collapses
as stated in 
Remark~\ref{remExemple2SuccessConvex},
\ie, we can write
$\left(\LL_{PL^\sx(QP_\segal)}(\mumu^*,\mumumu^*)\right)
 = \Big(TL^\sx(QP_\segal)\Big)
 =\left(PL^\sx(QP_\segal)\right)$.
Applying~\eqref{eqLastConvex},
we obtain 
$\left(\LL_{PL^\sx(QP_\segal)}(\mumu^*,\mumumu^*)\right)=OPT(SDP(QP_\segal))=\left(PL^\sx(QP_\segal)\right)$,
\ie, 
both \eqref{eqFundLagr1Bis}-\eqref{eqFundLagr2Bis} collapse;
the convexification is optimal,
reaching its limit value $\left(PL^\sx(QP_\segal)\right)$ from
\eqref{eqFundLagr2Bis}.
In fact, since we are in the $0-1$ case, we can also use the redundant constraint from
Example~\ref{exRedund1} and obtain the same value
$OPT(SDP(Q_\segal))=\left(PL^\sx(QP_\segal)\right)$ using
Prop.~\ref{propEquivConv}.
\end{remark}

Finally, we showed in Example \ref{exTLDoesNotReachOpt} that
the optimal solution of $DUAL(SDP(QP_\segal))$
might only converge towards $OPT(SDP(QP_\segal))$ without effectivelly reaching this value.
In this case, we can have a sequence of solutions ($\mumu^*_i,\mumumu^*_i$) whose objective
values converge towards the optimum. We can apply the same calculations
from any $\mumu^*_i,\mumumu^*_i$ that reach an objective value arbitrarily close to the optimum $OPT(SDP(QP_\segal))$.

\paragraph{Using the optimal convexification coefficients in a convex quadratic
\texttt{Branch-and-bound}\label{secbbound}}

Let us here consider that the only quadratic constraints of $(QP_\segal)$ are
of the form $x_i^2=x_i~\forall i\in[1..n]$ (\ie, integrality constraints). However,
$(QP_\segal)$ can integrate various redundant constraint sets, \eg, like those from 
Example~\ref{exRedund1} or
Example~\ref{exRedund2}. In fact, these two constraint sets produce the same SDP relaxation value
$OPT(SDP(QP_\segal))$ in the $0-1$ case, by virtue of Prop.~\ref{propEquivConv}. 
If we use any of these constraint sets, we obtain 
$OPT(SDP(QP_\segal))=\left(PL^\sx(QP_\segal)\right)$ as stated in
Remark~\ref{remarkOptCoefs}. This means that the convexification is optimal,
because it reaches its limit $\left(PL^\sx(QP_\segal)\right)$.

Once the optimal coefficients $\mumu^*$ and $\mumumu^*$ have been found, we focus
on the fact that the augmented partial Lagrangian $\LL_{PL^\sx(QP_\segal)}(\mumu^*,\mumumu^*)$ from \eqref{eqPartLagrBody2Bis}
is convex with linear constraints and has the optimum value $OPT(SDP(QP_\segal))$ by virtue of \eqref{eqLastConvex}.
From now on, we can apply a convex quadratic solver to optimize by \texttt{Branch-and-bound}
the binary version of 
$\LL_{PL^\sx(QP_\segal)}(\mumu^*,\mumumu^*)$ from \eqref{eqPartLagrBody2Bis}.
One can verify that the objective value of $\x$ in this program is equal to the value 
of $\x$ in $(QP_\segal)$ whenever we have $A\x=\b$ and $\x$ is binary (\ie, all
quadratic constraints are satisfied). 
By solving the binary version of 
$\LL_{PL^\sx(QP_\segal)}(\mumu^*,\mumumu^*)$ we actually 
solve $(QP_\segal)$.
Optimizing a convex quadratic
program takes usually less time than solving an SDP. We solved a single SDP program (\ie, $(SDP(QP_\segal))$ 
and its dual equal to the total Lagrangian) to obtain the best convexification coefficients $\mumu^*,\mumumu^*$; we use the
quality of the SDP bound at the root of the \texttt{Branch-and-bound} tree.
More importantly, at each \texttt{Brand-and-bound} node some of the $\x$
variables are fixed to binary values and we can still solve a reduced partial
Lagrangian only using the remaining variables. The value of this reduced partial
Lagrangian can be
used to prune the node.

Appendix~\ref{appbbound} briefly discusses
a more refined approach that further restricts
the feasible area of
$\LL_{PL^\sx(QP_\segal)}(\mumu^*,\mumumu^*)$
by imposing the additional constraint $x_i\in[0,1]$, equivalent to
$x^2_i\leq x_i~\forall i\in[1..n]$.

\section{Basic elements of several other research topics: under construction}
\subsection{Approximation algorithms using SDP programming}
We here only present a (famous) SDP $0.8785$--approximation algorithm for
Max--Cut, but one should keep in mind there are also other SDP approximation algorithms that exploit 
a similar approach.

\begin{proposition} (Goemans-Williamson SDP approximation algorithm) Consider a
weighted graph $G=([1..n], E)$ with weights $w_{ij}\geq 0~\{i,j\}\in E$ and
$w_{ij}=0~\forall \{i,j\}\notin E$.  The Max-Cut problem requires splitting
$[1..n]$ in two sub-sets so as to maximize the total weighted sum of the edges
with end vertices in different subsets. 
The optimum value $OPT(MC_w)$ satisfies
$$0.8785\cdot OPT(SDP_w)<OPT(MC_w)\leq OPT(SDP_w),$$ where $(SDP_w)$ is the SDP
program:
\begin{subequations}
\begin{align}[left ={(SDP_w)  \empheqlbrace}]
\max~~&\sum_{i=1}^n \sum_{j=i+1}^n \frac 12 w_{ij}(1-X_{ij})    \label{eqgw1}\\
s.t~~ &\texttt{diag}(X)=I_n                                   \label{eqgw2}\\
      &X\succeq\zeros                                         \label{eqgw3}
\end{align}
\end{subequations}
\end{proposition}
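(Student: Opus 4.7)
The plan is to establish the two inequalities separately. For the upper bound $OPT(MC_w)\leq OPT(SDP_w)$, I would show that $(SDP_w)$ is indeed a relaxation. Given any cut, encode it by $\x\in\{-1,+1\}^n$ with $x_i=+1$ if vertex $i$ is on one side and $x_i=-1$ otherwise. Then $X=\x\x^\top$ is SDP (rank one), satisfies $\texttt{diag}(X)=I_n$ because $x_i^2=1$, and gives $\tfrac{1}{2}w_{ij}(1-X_{ij})=\tfrac{1}{2}w_{ij}(1-x_ix_j)$ which equals $w_{ij}$ if $\{i,j\}$ crosses the cut and $0$ otherwise. Hence the cut value equals the $(SDP_w)$ objective at $X$, proving $OPT(MC_w)\leq OPT(SDP_w)$.

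For the lower bound, I would use a randomized rounding argument (Goemans-Williamson). Take an optimal $X^*$ of $(SDP_w)$ and apply the factorization from Corollary~\ref{corSdpToVectors} to write $X^*=VV^\top$ with rows $\v_1,\v_2,\dots,\v_n\in\R^n$. Since $\texttt{diag}(X^*)=I_n$, each $\v_i$ is a unit vector, and $X^*_{ij}=\v_i\sprod\v_j=\cos\theta_{ij}$, where $\theta_{ij}\in[0,\pi]$ is the angle between $\v_i$ and $\v_j$. Then pick a uniformly random unit vector $\r\in\R^n$ and round by partitioning vertices according to the sign of $\v_i\sprod\r$. A short geometric argument (projecting onto the 2-plane spanned by $\v_i$ and $\v_j$) shows that the probability that $\v_i$ and $\v_j$ end up on opposite sides of the random hyperplane $\{\x:\r\sprod\x=0\}$ is exactly $\theta_{ij}/\pi$. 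Therefore the expected cut value is
\begin{equation*}
\E[\text{cut}]=\sum_{i<j}w_{ij}\cdot\frac{\theta_{ij}}{\pi}.
\end{equation*}

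The crux is now the analytic inequality: for every $\theta\in[0,\pi]$,
\begin{equation*}
\frac{\theta/\pi}{\tfrac{1}{2}(1-\cos\theta)}\geq \alpha_{GW}:=\min_{\theta\in(0,\pi]}\frac{2\theta}{\pi(1-\cos\theta)}>0.8785.
\end{equation*}
Assuming this bound, and using $w_{ij}\geq 0$, one gets term-by-term $w_{ij}\cdot \theta_{ij}/\pi \geq \alpha_{GW}\cdot \tfrac{1}{2}w_{ij}(1-\cos\theta_{ij})=\alpha_{GW}\cdot \tfrac{1}{2}w_{ij}(1-X^*_{ij})$, and summing over $i<j$ yields $\E[\text{cut}]\geq \alpha_{GW}\cdot OPT(SDP_w)$. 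Since the expected value is achieved by some outcome of the random draw, there exists a cut of value at least $\alpha_{GW}\cdot OPT(SDP_w)>0.8785\cdot OPT(SDP_w)$, so $OPT(MC_w)>0.8785\cdot OPT(SDP_w)$.

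The main obstacle is proving the numeric bound $\alpha_{GW}>0.8785$. This is a one-variable calculus exercise: define $f(\theta)=\frac{2\theta}{\pi(1-\cos\theta)}$ on $(0,\pi]$, check that $f(\pi)=2/\pi\approx 0.6366$ is not the minimizer (the interesting regime is near $\theta^*\approx 2.331$), and verify by differentiation that $f$ has a unique interior critical point $\theta^*$ solving $\tan(\theta/2)=\theta$, with value $f(\theta^*)\approx 0.87856$. The rest of the argument is essentially routine; I would also remark that the probability computation $\Pr[\mathrm{sign}(\v_i\sprod\r)\neq\mathrm{sign}(\v_j\sprod\r)]=\theta_{ij}/\pi$ relies on the rotational symmetry of the distribution of $\r$, which reduces the problem to uniform-angle rounding in two dimensions.
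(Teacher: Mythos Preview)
Your approach is essentially identical to the paper's: the same $\pm 1$ encoding to show $(SDP_w)$ is a relaxation, the same random-hyperplane rounding with separation probability $\theta_{ij}/\pi$, and the same one-variable analysis of $f(\theta)=\dfrac{2\theta}{\pi(1-\cos\theta)}$ with the minimizer near $\theta^*\approx 2.331$. One small slip worth correcting: $f(\pi)=\dfrac{2\pi}{\pi(1-\cos\pi)}=\dfrac{2\pi}{2\pi}=1$, not $2/\pi$; this does not harm your argument (indeed it helps, since $1>0.8785$), and the minimum is interior as you claim.
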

\begin{proof} One can formulate the Max-Cut problem using variables
$z_i\in\{-1,~1\}$ such that $z_i\neq z_j$ implies that vertices $i$ and $j$
belong to different subsets, and so, edge $\{i,j\}$ has a contribution 
$w_{ij}$ to the objective function. We can formulate Max-Cut
as 
$\max\left\{\sum\limits_{i=1}^n\sum\limits_{i=j+1}^n\frac 12
w_{ij}\left(1-z_iz_j\right):~z_i\in\{-1,1\}~\forall i\in[1..n]\right\}$. We now
apply the
following relaxation: we transform $z_i$ into a vector $\y_i$ (with
$i\in[1..n]$) of the unit sphere. The product $z_iz_j$ is generalized to $X_{ij}=\y_i\sprod
\y_j$, and so, $X$ is a Gram matrix that needs to be SDP (use
Prop.~\ref{propranktransprod}) and that satisfies $X_{ii}=\y_i\sprod \y_i=1$
(because $\y_i$ belongs to the unit sphere for all $i\in[1..n]$).
We thus obtain that the above $(SDP_w)$ from \eqref{eqgw1}-\eqref{eqgw3}
is a relaxation of the Max-Cut problem, and so, $OPT(MC_w)\leq OPT(SDP_w)$.

We now prove $0.8785 OPT(SDP_w)<OPT(MC_w)$. From a feasible solution of
$(SDP_w)$ from \eqref{eqgw1}-\eqref{eqgw3} we can generate a feasible Max-Cut
solution as follows. Take any vector $\v\in\R^n$ and set $z_i=-1$ if
$\y_i\sprod \v\leq 0$ and $z_i=1$ otherwise. For different vectors $\v$ we
obtain different Max-Cut solutions. But the probability of separating 
$\y_i$ and $\y_j$ (with $i,j\in[1..n]$) so as to give rise to an objective function
contribution of $w_{ij}$ is exactly $\dfrac{\arccos(\y_i\sprod \y_j)}{\pi}$.
The expected value resulting from taking different $\v$ vectors is
$\sum_{i=1}^n \sum_{j=i+1}^n w_{ij}\dfrac{\arccos(\y_i\sprod \y_j)}{\pi}$.
This expected value needs to be less than or equal to $OPT(MC_w)$.

We will show that the expected value is greater than $0.8785\cdot OPT(SDP_w)$.
For this, notice each term $\dfrac 12 w_{ij} (1-\y_i\sprod\y_j)$ from
\eqref{eqgw1} corresponds to $w_{ij}\dfrac{\arccos(\y_i\sprod \y_j)}{\pi}$
in the above sum (representing the expected value). We calculate
the minimum of 
$$
\dfrac 
{
~~\dfrac{\arccos(\y_i\sprod \y_j)}{\pi}~~
}
{
\frac 12 (1-\y_i\sprod\y_j)
}
=
\dfrac 2{\pi}
\dfrac
{\arccos(\y_i\sprod \y_j)}
{1-\y_i\sprod \y_j}
=
\dfrac 2{\pi}
\dfrac {\alpha}{1-\cos(\alpha)}=
f(\alpha).
$$
For $\alpha=0$ we have $\y_i\sprod\y_j=1$ and the correspondence 
$\dfrac 12 w_{ij} (1-\y_i\sprod\y_j)\to w_{ij}\dfrac{\arccos(\y_i\sprod \y_j)}{\pi}$
is equivalent to $0\to 0$, so we can ignore such terms. It is enough to prove
$f(\alpha)=\dfrac 2{\pi}
\dfrac {\alpha}{1-\cos(\alpha)}>0.8785$ for any $\alpha\in (0,\pi]$.
The derivative in $\alpha$ is 
$f'(\alpha)=\dfrac{2\left(1-\alpha\sin\left(\alpha\right)-\cos\left(\alpha\right)\right)}{{\pi}\left(\cos\left(\alpha\right)-1\right)^2}$.
The denominator of $f'(\alpha)$ is thus positive over the whole interval of interest
$\alpha\in(0,\pi]$. The derivative of the
numerator is $-2\alpha\cos(\alpha)$, which is
negative over $\alpha\in\left[0,\dfrac{\pi}2\right)$ and non-negative
over $\alpha\in\left[\dfrac{\pi}2,\pi\right]$.
As such, the numerator
 is $0$ in $\alpha=0$ and it decreases as $\alpha$ increases
up to $\dfrac{\pi}2$ and starts increasing after $\dfrac{\pi}{2}$. This
numerator continue to increase 
even after the point $\overline{\alpha}$ where $f'(\overline{\alpha})=0$. 
Consequently, the numerator is negative for $\alpha\leq \overline{\alpha}$ (and so is
$f'(\alpha)$) and positive for $\alpha \geq \overline{\alpha}$ (and so is
$f'(\alpha)$).
Thus, the stationary point $\overline{\alpha}$ reaches
the minimum of $f$. We obtain that for 
$\alpha=2.33112237$ the derivative is slightly negative and
for $\alpha=2.33112238$ it is slightly positive.
For both these values of
$\alpha$, $f$ is greater than $0.878567$. 
The figure below%
\footnote{The figure and the derivatives are obtained using
\url{http://derivative-calculator.net}.} 
plots in blue the value of $f$ close to
$2.331$.
\includegraphics[width=\linewidth]{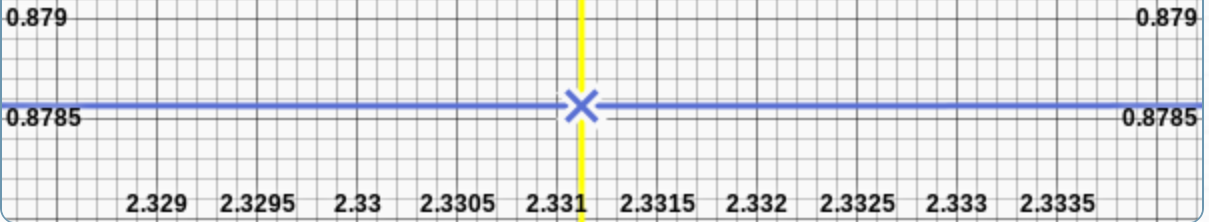}
Using numerical arguments
(see figure above), the value of $f$ is always greater than $0.8785$.
\end{proof}

As a side remark, it was
proved that the above ratio of the Goemans-Williamson algorithm is essentially
optimal if the Unique Games Conjecture holds.%
\footnote{In the article 
``Optimal inapproximability results for MAX-CUT and other 2-variable CSPs?''
by 
Subhash Khot, Guy
Kindler, Elchanan
Mossel and 
Ryan O'Donnell,
published in SIAM Journal on Computing in 2007, vol 37(1), pp 319--357, 
a preliminary version is available at
\url{https://www.cs.cmu.edu/~odonnell/papers/maxcut.pdf}.}

\subsection{Strong duality in the more general context of linear conic programming}

All SDP and linear programs presented in this work are actually particular
cases of more general linear conic programs. We here only present how the
SDP strong duality actually holds in linear conic programming.
The line of reasoning is a relatively direct generalization
of analogous results from the SDP case. As mentioned in the first paragraph of
Section~\ref{secstrongdualSDP}, the initial ideas 
are taken from a course of Anupam Gupta, also using arguments
from the lecture notes of L\' aszl\' o Lov\'asz (see
Footnote \codefootnote, p.~\pageref{testfootpage}).

\subsubsection{A preliminary conic separation result}

We need Prop.~\ref{genpropNoDPsolution} below that generalizes
Prop.~\ref{propNoDPsolution}. We first recall three basic definitions
to establish a rigorous framework.
\begin{definition}\label{defConeConvex}A convex cone $\CCC\subset\R^m$ is a set closed
under linear combinations with positive coefficients. In particular, 
if $X,Y\in \CCC$, then $tX\in \CCC~\forall t>0$ and $X+Y\in\CCC$. 

\noindent If the cone
is not convex, we only have $X\in\CCC\implies tX\in\CCC~\forall t>0$.
\end{definition}
\begin{definition}\label{defConeDual} The dual (convex) cone $\CCC^*$ of cone $\CCC$ is defined by
$$\CCC^*=\left\{Y\in\R^m:~\overline{X}\sprod Y\geq 0~\forall
\overline{X}\in\CCC\right\}.$$
\end{definition}
\begin{definition}\label{defConeInter} The interior of cone $\CCC$ is the cone
$$\inter(\CCC)=\{Z\in \CCC:~\exists \varepsilon>0\text{ s.t. }
|Z'-Z|<\varepsilon\implies Z'\in\CCC\}.$$ In other words, 
the set $\inter(\CCC)$ contains an open ball around each of its elements.

\noindent The interior is a cone because
if $\CCC$ contains a ball around $Z$,
it also contains a ball scaled by a factor of $t$ (shrinked or enlarged by $t$) around $tZ~\forall t>0$.
\end{definition}
\noindent\line(1,0){100}
\begin{proposition}\label{genpropNoDPsolution} Let $F(\x)=\sum\limits_{i=1}^n x_iA_i-B$ for any 
$\x\in\R^n$, where $B\in\R^m$ and $A_i\in \R^m~\forall i\in[1..n]$. We consider
a closed convex cone $\CCC\in\R^m$ and its dual $\CCC^*$.
The following needs to hold.
$$F(\x)\notin \inter(\CCC)~\forall \x\in\R^n \iff$$
\begin{equation}\label{geneqhyperplane}
\exists Y\in \CCC^*,~Y\neq \zeros\text{ such that }
F(\x)\sprod Y = -B\sprod Y \leq 0
,~\forall \x\in\R^n.
\end{equation}
We say that $F(\x)$ belongs to the hyperplane $\{X:~X\sprod Y=-B\sprod Y\}$.
Above equation~\eqref{geneqhyperplane} implies $A_i\sprod
Y=0~\forall i\in[1..n]$.
\end{proposition}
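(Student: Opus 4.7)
The plan is to mirror the structure of the SDP-specific Prop.~\ref{propNoDPsolution}, replacing $S_m^+$ by $\CCC$ and self-duality by the assumption $\CCC=(\CCC^*)^*$ whenever needed (which is automatic once $\CCC$ is closed and convex, via Prop.~\ref{propDualDualClosureCone}). For the easy direction ``$\Longleftarrow$'', I would argue that any $Z\in\inter(\CCC)$ must satisfy $Z\sprod Y>0$ for every non-zero $Y\in\CCC^*$: by definition of interior, a small ball around $Z$ lies in $\CCC$, so $Z-\varepsilon Y/|Y|\in\CCC$ for $\varepsilon$ small enough, hence $(Z-\varepsilon Y/|Y|)\sprod Y\ge 0$ which yields $Z\sprod Y\ge \varepsilon|Y|>0$. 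Thus a hyperplane $X\sprod Y\le 0$ cannot contain any interior point of $\CCC$, so $F(\x)\notin\inter(\CCC)$.

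For the forward direction ``$\Longrightarrow$'', I would invoke a hyperplane separation theorem (the analogue of Theorem~\ref{thHyperGeneral}) to separate the affine image of $F$ from the open convex set $\inter(\CCC)$: there exists a non-zero $Y\in\R^m$ and $c\in\R$ with
\begin{equation*}
F(\x)\sprod Y\le c\le X\sprod Y,\quad \forall\,\x\in\R^n,\ \forall\,X\in\inter(\CCC).
\end{equation*}
The cone property $tX\in\inter(\CCC)$ for any $t>0$ forces $c\le 0$ (let $t\to 0^+$) and $X\sprod Y\ge 0$ for all $X\in\inter(\CCC)$ (otherwise letting $t\to\infty$ breaks the lower bound by $c$). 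The case $\inter(\CCC)=\emptyset$ is degenerate and can be handled separately, or excluded by a standing assumption.

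Next I would upgrade ``$X\sprod Y\ge 0~\forall X\in\inter(\CCC)$'' to ``$Y\in\CCC^*$'', i.e.~to $X\sprod Y\ge 0$ for every $X\in\CCC$. The clean way is to use the fact that, for a convex set with non-empty interior, the closure coincides with the closure of the interior; hence every $X\in\closure(\CCC)=\CCC$ is the limit of a sequence $X_k\in\inter(\CCC)$, and $X_k\sprod Y\ge 0$ passes to the limit by continuity of the inner product. Combined with $c\le 0$, the separating inequality collapses to
\begin{equation*}
F(\x)\sprod Y\le 0,\quad \forall\,\x\in\R^n.
\end{equation*}

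Finally, I would argue that $F(\x)\sprod Y$ is actually constant in $\x$. If there were some $i\in[1..n]$ with $A_i\sprod Y=\Delta\ne 0$, then $F(t\,\mathbf e_i)\sprod Y=t\Delta-B\sprod Y$ can be driven to $+\infty$ (take $t\to+\infty$ if $\Delta>0$, else $t\to-\infty$), contradicting the bound just proved. So $A_i\sprod Y=0$ for every $i\in[1..n]$, which reduces $F(\x)\sprod Y$ to the constant value $F(\zeros)\sprod Y=-B\sprod Y$, automatically $\le 0$. The main subtlety I anticipate is the interior/closure step: it only works cleanly when $\inter(\CCC)\ne\emptyset$, and some care is needed to invoke the right version of the hyperplane separation theorem for an affine set and an open convex set, as was already the case in Prop.~\ref{propNoDPsolution}.
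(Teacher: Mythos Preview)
Your proposal is correct and follows essentially the same route as the paper's proof: both directions are handled identically (interior points give strictly positive pairing; hyperplane separation, then cone scaling to get $c\le 0$ and $X\sprod Y\ge 0$ on $\inter(\CCC)$, then upgrade to $Y\in\CCC^*$, then constancy of $F(\x)\sprod Y$). The only cosmetic differences are that the paper upgrades to $Y\in\CCC^*$ by the explicit perturbation $\overline X+\varepsilon Z$ with $Z\in\inter(\CCC)$ rather than invoking ``closure of the interior equals the closure'', and it shows constancy by a direct scaling $F\big(\tfrac{-d+1}{d'}\x\big)\sprod Y=1$ rather than coordinate-wise via $A_i\sprod Y$; your remark about the degenerate case $\inter(\CCC)=\emptyset$ is a point the paper leaves implicit.
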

\begin{proof}~\\
$\Longleftarrow$\\
If $F(\x)\sprod Y\leq 0$ for some non-null $Y\in\CCC^*$, $F(\x)$ can not belong to 
the interior of $\CCC$ because any $Z\in \inter(\CCC)$
satisfies $Z\sprod Y>0$. Indeed, if $Z\in \inter(\CCC)$, then $\CCC$ contains a
ball around $Z$, and so, there exists $\varepsilon>0$ such that $Z-\varepsilon Y\in
\CCC$. If we assume $Z\sprod Y=0$, then $(Z-\varepsilon Y)\sprod Y=-\varepsilon
|Y|^2<0$, since $Y\neq \zeros$. This is a contradiction on $Y\in \CCC^*$ and
$Z-\varepsilon Y \in \CCC$. As such, any $Z\in \inter(\CCC)$ satisfies 
$Z\sprod Y>0$, and so, $F(\x)\notin \inter(\CCC)$.

\noindent $\Longrightarrow$~\\
We know that $\inter(\CCC)$ does not intersect the image of $F$. 
We can apply the hyperplane separation Theorem~\ref{thHyperGeneral}: there exists a non-zero 
$Y\in \R^{m}$ and a $c\in \R$ such that 
\begin{equation}\label{geneqStrictIneq}
F(\x)\sprod Y \leq c\leq X\sprod Y
~\forall \x\in\R^n,~\forall X\in \inter(\CCC)
\end{equation}

It is clear that we can not have $c>0$ because $X\sprod Y$ can be arbitrarily
close to $0$ by choosing $X=\varepsilon Z\in\inter(\CCC)$ for an arbitrarily small
$\varepsilon>0$ and some
$Z\in\inter(\CCC)$---recall 
$\varepsilon Z\in\inter(\CCC)$, because
$\inter(\CCC)$ is a cone (Def.~\ref{defConeInter}).
We now
prove $X\sprod Y\geq 0 ~\forall X\in \inter(\CCC)$. Let us assume the contrary:
$\exists X\in \inter(\CCC)$ such that $X\sprod Y=c'<0$. 
Since $\inter(\CCC)$ is a cone (Def~\ref{defConeInter}), we have $tX\in \inter(\CCC)~\forall t>0$. The value 
$(tX)\sprod Y=tc'$ can be arbitrarily low by choosing an arbitrarily large
$t$, and so, $(tX)\sprod Y$ can be easily less than $c$, contradiction.
This means that $X\sprod Y\geq 0~\forall X\in \inter(\CCC)$. Using $c\leq 0$,
\eqref{geneqStrictIneq} simplifies to
\begin{equation*}
F(\x)\sprod Y
\leq 0 \leq 
X\sprod Y
~\forall \x\in\R^n,~\forall X\in \inter(\CCC)
\end{equation*}

It is not hard to prove that $Y\in\CCC^*$.
For this, we will show that 
$\overline{X}\sprod Y\geq 0~\forall \overline{X}\in \CCC$, relying
on the above proved fact $\overline{X}\sprod Y\geq 0~\forall \overline{X}\in
\inter(\CCC)$.
Assume the contrary: there is some $\overline{X}\in \CCC$ such that
$\overline{X}\sprod Y<0$. 
For any $\varepsilon>0$, we have 
$\overline{X}+\varepsilon Z
\in
\inter(S_m^+)$ for any $Z\in \inter(\CCC)$---because of the cone property and of the
fact that if $\CCC$ contains a ball centered at $Z$, then
$\CCC$ contains this ball shrinked by $\varepsilon$ centered at
$\overline{X}+\varepsilon Z$.
For a small enough $\varepsilon$, 
$(\overline{X}+\varepsilon Z)\sprod Y$ remains strictly negative, which
contradicts
$\overline{X}+\varepsilon Z
\in
\inter(\CCC)$. 
We obtain $\overline{X}\sprod Y\geq 0~\forall 
\overline{X}\in\CCC$.
This means that $Y\in\CCC^*$ and $Y$ satisfies
\begin{equation}\label{geneqStrictIneqBisBis}
F(\x)\sprod Y\leq 0~\forall \x\in\R^n.
\end{equation}
We write $d=F(\zeros)\sprod Y=-B\sprod Y$. Assume for the sake of contradiction there is an $\x\in\R^n$ such
that $F(\x)=d+d'$ with $d'\neq 0$. Let us calculate 
$F\left(\frac {-d+1}{d'}\x\right)\sprod Y=
d+\frac {-d+1}{d'}d'=d-d+1=1$, which contradicts \eqref{geneqStrictIneqBisBis}. The
assumption $d'\neq 0$ was false, and so, we obtain
$$F(\x)\sprod Y = F(\zeros)\sprod Y = -B\sprod Y\leq 0,~\forall \x\in\R^n$$
where we used \eqref{geneqStrictIneqBisBis} for the last inequality.
\end{proof}
\def\CLP{\texttt{ConicLP}}
\def\DCLP{\texttt{DConicLP}}
\subsubsection{\label{secconWeakDual}Recalling the weak duality for conic programming}
Let us introduce the first conic linear program in variables $\x\in\R^n$ based
on closed convex cone $C$,
generalizing $(SDP)$ from \eqref{sdp1}-\eqref{sdp3}.
\begin{subequations}
\begin{align}[left ={(\CLP) \empheqlbrace}]
\min~~&\sum_{i=1}^n c_ix_i    \label{conic1}\\
s.t~~&\sum_{i=1}^n A_i x_i - B \in C \label{conic2}\\
    &\x\in \R^n             \label{conic3}
\end{align}
\end{subequations}

By relaxing the conic constraint \eqref{conic2} with 
multipliers $Y\in C^*$, we obtain the following Lagrangian:
$$\L(\x,Y)=\sum_{i=1}^n c_ix_i - Y\sprod (\sum A_i x_i-B)$$
Notice that if $\x$ satisfies \eqref{conic2}, we get awarded in the above Lagrangian because 
$Y\sprod (\sum A_i x_i-B)\geq 0$ when $Y\in C^*$ and $\sum A_i x_i-B\in C$.
Thus,  $\min\limits_{\x\in\R^n} \L(\x,Y)$ is a relaxation of (\CLP):%
\begin{equation}\label{coneqLagrok}\min_{\x\in\R^n} \L(\x,Y)\leq OPT(\CLP),\end{equation}
where $OPT(\CLP)$ can also be unbounded ($-\infty$).

Let us write:
$$\min_{\x\in\R^n}\L(\x,Y)=\min_{\x\in\R^n}Y\sprod B + \sum_{i=1}^n (c_i-Y\sprod A_i) x_i $$
If there is a single $i\in[1..n]$ such that $c_i-Y\sprod A_i\neq 0$, the above minimum is $-\infty$
(unbounded), by using an appropriate value of $x_i$.
To have a bounded $\min_{\x\in\R^n}\L(\x,Y)$, $Y$ needs to verify $c_i-Y\sprod A_i= 0$
for all $i\in[1..n]$. 
We are interested in finding:
$$\max_{Y\in C^*}\min_{\x\in\R^n}\L(\x,Y)$$
that can be written:
\begin{subequations}
\begin{align}[left ={(\DCLP) \empheqlbrace}]
\max~&B\sprod Y    \label{dconic1}\\
s.t ~&A_i\sprod Y = c_i~\forall i\in[1..n]\label{dconic2}\\
    ~&Y\in C^*             \label{dconic3}
\end{align}
\end{subequations}
Based on \eqref{coneqLagrok}, we obtain the weak duality:
\begin{equation}\label{coneqWeakDual}
OPT(\DCLP)\leq OPT(\CLP).
\end{equation}
In particular, if $OPT(\CLP)=-\infty$, then
$(\DCLP)$ needs to be infeasible, because otherwise the above \eqref{coneqWeakDual}
would limit $OPT(\CLP)$ from going to $-\infty$.

\subsubsection{Strong duality for the primal form}
The following theorem generalizes Theorem~\ref{thStrongDual1} of the SDP case.

\begin{theorem}\label{conthStrongDual1} If the primal $(\CLP)$ from 
\eqref{conic1}-\eqref{conic3} is bounded and has a strictly
feasible solution (Slater's interiority condition), then the primal and the dual optimal values are the same and the dual
$(\DCLP)$ from \eqref{dconic1}-\eqref{dconic3} reaches this optimum value. 
Recall (last phrase of Section~\ref{secconWeakDual}) 
that if $(\CLP)$ is unbounded, then $(\DCLP)$ is infeasible.
\end{theorem}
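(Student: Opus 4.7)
The plan is to mimic essentially verbatim the argument used in the SDP case (Theorem~\ref{thStrongDual1}), replacing the role of the SDP cone $S_m^+$ with the general closed convex cone $C$, and replacing Prop.~\ref{propNoDPsolution} with its conic generalization Prop.~\ref{genpropNoDPsolution}. The idea is to bundle the objective into the cone constraint by augmenting the ambient space from $\R^m$ to $\R\times\R^m$.

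Concretely, let $p=OPT(\CLP)$ (finite, since the problem is bounded and strictly feasible). First I would define the augmented cone
\[
C'=\R_{\geq 0}\times C\subset \R\times \R^m,
\]
which is clearly a closed convex cone with $\inter(C')=\R_{>0}\times \inter(C)$ and $(C')^*=\R_{\geq 0}\times C^*$. Then set
\[
A'_i=(-c_i,A_i),\qquad B'=(-p,B),\qquad F(\x)=\sum_{i=1}^n A'_i x_i-B'=\Bigl(p-\textstyle\sum c_ix_i,\ \sum A_ix_i-B\Bigr).
\]
By definition of $p$ as an infimum, no $\x$ satisfies $\sum c_ix_i<p$ together with $\sum A_ix_i-B\in C$; in particular $F(\x)\notin \inter(C')$ for every $\x\in\R^n$. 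Now I would apply Prop.~\ref{genpropNoDPsolution} to produce a nonzero $Y'=(t,Y)\in(C')^*=\R_{\geq 0}\times C^*$ satisfying $A'_i\sprod Y'=0$ for all $i$ and $-B'\sprod Y'\leq 0$. Translating these two relations gives
\[
A_i\sprod Y = tc_i\ \ \forall i\in[1..n],\qquad B\sprod Y\geq tp.
\]

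The crux of the argument is to show $t>0$, and this is where Slater's condition enters. I would argue by contradiction: if $t=0$ then $Y\neq\zeros$, $Y\in C^*$ and $A_i\sprod Y=0$ for all $i$. Applied to any strictly feasible $\bar\x$ (so that $Z:=\sum A_i\bar x_i-B\in\inter(C)$), this yields
\[
Y\sprod Z=\sum_i(A_i\sprod Y)\bar x_i-B\sprod Y=-B\sprod Y\leq 0.
\]
But a small auxiliary lemma (essentially the $\Leftarrow$ direction of Prop.~\ref{genpropNoDPsolution}) shows that $Y\in C^*$, $Y\neq \zeros$ and $Z\in\inter(C)$ force $Y\sprod Z>0$: if $Y\sprod Z=0$, pick $\varepsilon>0$ with $Z-\varepsilon Y\in C$ (possible since a ball around $Z$ lies in $C$), and compute $Y\sprod(Z-\varepsilon Y)=-\varepsilon|Y|^2<0$, contradicting $Y\in C^*$. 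This contradiction rules out $t=0$, hence $t>0$.

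Once $t>0$, I would simply rescale $\bar Y=Y/t\in C^*$ (using that $C^*$ is a cone); then $A_i\sprod \bar Y=c_i$ for all $i$ (so $\bar Y$ is feasible for $(\DCLP)$) and $B\sprod \bar Y\geq p$. Combined with the already-established weak duality \eqref{coneqWeakDual}, this forces $B\sprod\bar Y=p=OPT(\CLP)$, proving that the dual achieves the primal optimum. The only real obstacle is the $t>0$ step; the rest is bookkeeping. Once strict feasibility is used through the small positivity lemma for $Y\sprod Z$ on $\inter(C)$, the argument goes through with no reference whatsoever to the particular structure of the SDP cone.
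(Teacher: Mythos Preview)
Your proof is correct and follows essentially the same approach as the paper: you augment the cone to $\R_{\geq 0}\times C$, apply Prop.~\ref{genpropNoDPsolution} in the forward direction to produce $(t,Y)$, and then rule out $t=0$ via strict feasibility. The only cosmetic difference is that, for the $t=0$ step, the paper invokes the $\Leftarrow$ direction of Prop.~\ref{genpropNoDPsolution} directly, whereas you unpack that implication in place by picking a strictly feasible $\bar\x$ and showing $Y\sprod Z>0$; these are the same argument.
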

\begin{proof} Let $p$ be the optimal primal value. 
The system $\sum\limits_{i=1}^n c_ix_i<p$ and
$\sum_{i=1}^n A_i x_i - B\in C$ has no solution. We define
$$\A_i=
\begin{bmatrix}
-c_i       \\
A_i     
\end{bmatrix}~
\forall i\in [1..n]
\text{ and }
\B=
\begin{bmatrix}
-p       \\
B
\end{bmatrix},
$$
and the closed convex cone
$$
\CCCC=\left\{
\begin{bmatrix}
x \\      
X
\end{bmatrix}
:~
x\geq 0,~X\in C
\right\}$$
Notice that 
$\sum\limits_{i=1}^n \A_i x_i - \B\notin \inter(\CCCC)$ $\forall \x\in\R^n$
(we can \textit{not} say $\sum\limits_{i=1}^n \A_i x_i - \B\notin \CCCC$, as
the optimal solution $\x$ can cancel the value $-p$ on the first position of the
expression).
We can thus apply 
Prop.~\ref{genpropNoDPsolution} (implication ``$\Longrightarrow$'') and conclude
there is some non-zero $\Y\in {\CCCC}^*$ such that
$\A_i\sprod \Y=0$ and $-\B\sprod \Y\leq 0$. Writing 
$\Y=\left[\begin{smallmatrix}
t \\
Y\\
\end{smallmatrix}\right]$ with $t\geq 0$ and $Y\in C^*$ (necessary and
sufficient conditions to have $\Y\in {\CCCC}^*$), 
we obtain:
\begin{equation}\label{coneqeg1} tc_i=A_i\sprod Y,~\forall i\in[1..n]\end{equation}
and 
\begin{equation}\label{coneqeg2}-B\sprod Y \leq -tp.\end{equation}
Assume $t=0$. We obtain $A_i\sprod Y=0~\forall i\in[1..n]$ and $-B\sprod Y\leq
0$ with $Y\in C^*$ as discussed above. Applying again
Prop.~\ref{genpropNoDPsolution} (``$\Longleftarrow$'' implication), we conclude
$\sum\limits_{i=1}^n A_i x_i - B\notin \inter(C)~\forall \x\in
\R^n$, which contradicts the fact the primal $(\CLP)$ from~\eqref{conic1}-\eqref{conic3} is
strictly feasible. We thus need to have $t>0$.

Taking $\widehat{Y}=\frac 1t Y\in C^*$ (recall Def.~\ref{defConeConvex} and
Def.~\ref{defConeDual}), the above \eqref{coneqeg1}-\eqref{coneqeg2} become: $c_i=A_i\sprod
\widehat{Y}~\forall i\in[1..n]$ and $B\sprod\widehat{Y}\geq p$. In other words, 
$\widehat{Y}$ is a feasible solution in the
dual $(\DCLP)$ from~\eqref{dconic1}-\eqref{dconic3} and it has an objective value
$B\sprod\widehat{Y}\geq p$. Using the weak duality \eqref{coneqWeakDual}, the
feasible solution $\widehat{Y}$ of $(\DCLP)$ needs to satisfy $B\sprod\widehat{Y}\leq p$,
and so, $B\sprod\widehat{Y}=p$, \ie, the dual achieves the optimum primal value
in $\widehat{Y}$.
\end{proof}

\subsubsection{Strong duality for dual forms}
We start from the conic dual.  For the reader's convenience, we repeat the definitions of 
$(\DCLP)$ from~\eqref{dconic1}-\eqref{dconic3}:
\begin{subequations}
\begin{align}[left ={(\DCLP) \empheqlbrace}]
\max~&B\sprod Y    \label{ddconic1}\\
s.t.~&A_i\sprod Y = c_i~\forall i\in[1..n]\label{ddconic2}\\
    ~&Y\in C^*             \label{ddconic3}
\end{align}
\end{subequations}
The following is a generalization of Prop.~\ref{propEqPrimDualForms}.
\begin{proposition}\label{conpropEqPrimDualForms}
Program $(\DCLP)$ from \eqref{ddconic1}-\eqref{ddconic3} is
equivalent to $(\CLP')$ from \eqref{cconic1}-\eqref{cconic3} below
which is a program in the primal form \eqref{conic1}-\eqref{conic3}, 
provided that the system of linear
equations $A_i\sprod Y = c_i~\forall i\in[1..n]$ from \eqref{ddconic2} has at least
a feasible a solution.
\end{proposition}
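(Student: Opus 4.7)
The plan is to mimic the proof of Prop.~\ref{propEqPrimDualForms} from the SDP case, the only essential change being that the cone $S_n^+$ is replaced with the (arbitrary) closed convex cone $C^*$; since the argument in Prop.~\ref{propEqPrimDualForms} never actually used the self-duality or any other specific property of $S_n^+$ outside the handling of the linear system \eqref{ddconic2}, it should go through verbatim after renaming the cone.

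First I would fix a particular solution $-B'$ of the linear system $A_i\sprod Y = c_i$ for all $i\in[1..n]$ (this exists by hypothesis), and let $A'_1,A'_2,\dots,A'_k$ be a basis of the null space $\{M:~A_i\sprod M=0~\forall i\in[1..n]\}$ viewed inside the ambient linear space of $Y$'s. Exactly as below \eqref{eqbase}, any $Y$ satisfying \eqref{ddconic2} can then be written uniquely in the affine form
\begin{equation*}
Y = -B' + \sum_{j=1}^{k} A'_j x'_j,\qquad \x'\in\R^k,
\end{equation*}
since $Y+B'$ lies in the null space of $\{A_i\}$. The coverage statement is justified in the SDP case by the rank--nullity theorem applied to the vectorised matrices; here I would apply the same rank--nullity argument on the underlying finite-dimensional linear space containing~$C^*$.

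Next I would substitute this parametrisation into \eqref{ddconic1}-\eqref{ddconic3}. The objective $B\sprod Y$ becomes $-B\sprod B' + \sum_{j=1}^{k}(B\sprod A'_j)x'_j$, which is a linear function of $\x'$ (plus a constant). The equality constraints \eqref{ddconic2} are automatically satisfied by construction of the parametrisation, so they disappear. Finally, the cone constraint $Y\in C^*$ translates into
\begin{equation*}
\sum_{j=1}^{k} A'_j x'_j - B' \in C^*,
\end{equation*}
which is exactly a conic linear matrix inequality of the form \eqref{conic2}, but now with the cone $C^*$ (rather than $C$) and with data $\{A'_j\}_{j=1}^{k}$ and~$B'$. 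Pulling these three pieces together gives a program $(\CLP')$ of the form \eqref{conic1}-\eqref{conic3} whose optimum differs from $OPT(\DCLP)$ only by the constant $-B\sprod B'$ and by a sign flip corresponding to the max/min convention, so the two programs are equivalent.

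The only non-routine point, and the one I would flag as the main obstacle, is the handling of the degenerate case where the linear system has no solution: in Prop.~\ref{propEqPrimDualForms} this was dismissed by the convention of declaring both sides infeasible, and the proposition's hypothesis now explicitly assumes feasibility, so no separate argument is needed. Consequently the whole proof reduces to the parametrisation/substitution step above, with no new subtlety introduced by replacing $S_n^+$ with a general closed convex cone~$C$.
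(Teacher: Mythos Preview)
Your proposal is correct and follows essentially the same approach as the paper: parametrise the affine solution set of \eqref{ddconic2} by a particular solution $-B'$ plus the null-space basis $A'_1,\dots,A'_k$, substitute into the objective and cone constraint, and observe that nothing in this argument used any property of $S_n^+$ beyond the linear algebra of the equality constraints. The paper's proof is precisely this substitution, written out slightly more tersely.
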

\begin{proof}
We first solve the system $A_i\sprod Y = c_i~\forall i\in[1..n]$. This
system has at least a feasible solution $-B'$. The set of all solutions
is given by 
\begin{equation}\label{coneqbase}Y=-B'+\sum_{j=1}^k A'_jx'_j\text{ with }x'_j\in\R\,\forall
j\in[1..k],\end{equation}
where $A'_1,~A'_2,\dots A'_k$ are a basis of the null space of 
$\{A_i:~i\in[1..n]\}$ satisfying:
\begin{equation}\label{coneqFirstDual} 
A_i\sprod A'_j=0,~\forall i\in[1..n],~j\in[1..k].
\end{equation} 
The space spanned by (the linear combinations of) $A_i$ ($\forall i \in
[1..n]$)
and $A'_j$ ($\forall j\in [1..k]$) need to cover the whole space 
$\R^m$, by virtue of the
 rank-nullity Theorem \ref{thranknullity}.
Replacing \eqref{coneqbase} in the objective function of $(\DCLP)$ from \eqref{ddconic1}, we
obtain objective function 
$\max \left(-B'+  \sum_{j=1}^k A'_jx'_j\right)\sprod B
=
\max -B\sprod B' +\sum_{j=1}^k(B\sprod A'_j)x'_j=
-\min B\sprod B' +\sum_{j=1}^k(-B\sprod A'_j)x'_j$. The dual $(\DCLP)$ can be
written:
\begin{subequations}
\begin{align}[left ={(\CLP') \empheqlbrace}]
-\min~&B\sprod B' +\sum_{j=1}^k(-B\sprod A'_j)x'_j \label{cconic1}\\
s.t.~&\sum_{j=1}^k  A'_jx'_j - B'\in C^* \label{cconic2}\\
    &\x'\in \R^k             \label{cconic3},
\end{align}
\end{subequations}
which is a linear conic program in the primal form \eqref{conic1}-\eqref{conic3}.
\end{proof}
\begin{sloppypar}
Applying the duality $(\CLP)\to(\DCLP)$ from Section \ref{secconWeakDual}
on above $(\CLP')$, we obtain $(\DCLP')$ below. Notice we proved in 
Prop.~\ref{propDualDualClosureCone} that $\closure(C)={C^*}^*$. Since we said
from the beginning (first paragraph of Section~\ref{secconWeakDual}) that 
$C$ is closed, we can use $C={C^*}^*$.
\end{sloppypar}
\begin{subequations}
\begin{align}[left ={(\DCLP') \empheqlbrace}]
-\max~&B\sprod B' + B'\sprod Y'                          \label{dddconic1}\\
s.t.~&A'_j\sprod Y' = -B\sprod A'_j~\forall j\in[1..k]   \label{dddconic2}\\
    ~&Y'\in {C^*}^*=C                                    \label{dddconic3}
\end{align}
\end{subequations}
The system of equations \eqref{dddconic2} has at least the solution $Y'=-B$ and
this allows us to express $(\DCLP')$ in the primal form using (the
approach from) Prop~\ref{conpropEqPrimDualForms}. Any
solution of this system could be written as $-B$ plus a linear combination of 
vectors from the null space of $\{A'_j:~j\in[1..k]\}$. But we said in
Prop~\ref{conpropEqPrimDualForms} (see \eqref{coneqFirstDual} and above) 
that $\{A'_j:~j\in[1..k]\}$ are a basis of the null space of $\{A_i:~i\in[1..n]\}$.
Vectors $A'_j$ (with $j\in[1..k]$) and $A_i$ (with $i\in[1..n]$) span the whole
space $\R^m$, by virtue of the rank-nullity
Theorem \ref{thranknullity}.
The null space of $\{A'_j:~j\in[1..k]\}$ can thus be generated by 
the linear combinations of (a maximum set of independent vectors of)
$\{A_i:~i\in[1..n]\}$. This is enough to allow us to say that all solutions 
$Y'$ of \eqref{dddconic2} verify:
$$Y'=-B+\sum_{i=1}^n A_ix_i\text{ with }x_i\in\R^n\,\forall i\in[1..n].$$
Replacing this in the objective function \eqref{dddconic1} of $(\DCLP')$ above,
we obtain the objective function in variables $x_1,~x_2,\dots x_i$:
\begin{align*}
&-\max  B\sprod B'+ \left(-B+\sum_{i=1}^n A_ix_i\right)\sprod B'
=
-\max \sum_{i=1}^n (A_i\sprod B') x_i
=
-\max \sum_{i=1}^n -c_i x_i
=\min \sum_{i=1}^n c_i x_i,
\end{align*}
where we used $A_i\sprod -B'=c_i$ from the first line of the proof of
Prop.~\ref{conpropEqPrimDualForms}. This way, replacing the value of $Y'$, program $(\DCLP')$ from
\eqref{dddconic1}-\eqref{dddconic3} is completely equivalent to:
\begin{subequations}
\begin{align}[left ={(\CLP) \empheqlbrace}]
\min~~&\sum_{i=1}^n c_ix_i    \label{ccconic1}\\
s.t~~&\sum_{i=1}^n A_i x_i - B \in C \label{ccconic2}\\
    &\x\in \R^n,            \label{ccconic3}
\end{align}
\end{subequations}
which is exactly the initial program from \eqref{conic1}-\eqref{conic3}.

\begin{theorem}\label{conthStrongDual2} If the dual $(\DCLP)$ from 
\eqref{ddconic1}-\eqref{ddconic3} (or \eqref{dconic1}-\eqref{dconic3}) is bounded and has a strictly
feasible solution, then the primal and the dual optimal values are the same and the 
primal $(\CLP)$ from \eqref{ccconic1}-\eqref{ccconic3} (or
\eqref{conic1}-\eqref{conic3}) reaches this optimum value. 
\end{theorem}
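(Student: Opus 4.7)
The plan is to reduce this theorem to Theorem~\ref{conthStrongDual1} by reusing the two equivalences already established in the paragraph just before the statement. Namely, Prop.~\ref{conpropEqPrimDualForms} expresses $(\DCLP)$ in the primal form $(\CLP')$ of \eqref{cconic1}--\eqref{cconic3}, whose underlying cone is $C^*$ (closed and convex, since it is a dual cone); and the subsequent derivation shows that the conic dual $(\DCLP')$ of $(\CLP')$ in \eqref{dddconic1}--\eqref{dddconic3} is, after substituting $Y'=-B+\sum_i A_ix_i$, literally the original primal $(\CLP)$. These two algebraic steps are the substantive content and are already in place; what remains is to check that the hypotheses of Theorem~\ref{conthStrongDual1} transfer to $(\CLP')$ and then read the conclusion back through the equivalences.

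First I would verify strict feasibility and boundedness of $(\CLP')$. If $Y\in\inter(C^*)$ is strictly feasible for $(\DCLP)$, then via the parameterization $Y=-B'+\sum_{j=1}^k A'_j x'_j$ from \eqref{coneqbase} the corresponding $\x'\in\R^k$ satisfies $\sum_{j=1}^k A'_j x'_j - B' = Y \in \inter(C^*)$, which is exactly strict feasibility for $(\CLP')$ with respect to the cone $C^*$. Boundedness transfers because the objective of $(\CLP')$ is, up to the additive constant $-B\sprod B'$ and the min/max flip, equal to the objective of $(\DCLP)$ evaluated on the same $Y$; hence $OPT(\CLP')=-OPT(\DCLP)+(\text{constant})$ is finite whenever $OPT(\DCLP)$ is. The bounded, strictly feasible primal $(\CLP')$ therefore satisfies the hypotheses of Theorem~\ref{conthStrongDual1}.

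Applying Theorem~\ref{conthStrongDual1} to $(\CLP')$ yields that its dual $(\DCLP')$ is feasible, achieves the same optimum value, and this optimum is effectively reached. Substituting $Y'=-B+\sum_i A_ix_i$, which is the argument already carried out between Prop.~\ref{conpropEqPrimDualForms} and the theorem statement, shows $(\DCLP')$ is $(\CLP)$ itself (with optimum values tied together through the same $-B\sprod B'$ constant and sign flip). Tracing the constants back gives $OPT(\CLP)=OPT(\DCLP)$, with the value reached by some $\x\in\R^n$ in $(\CLP)$. The main obstacle I anticipate is purely bookkeeping: keeping track of the additive constants and the sign flips introduced by the two reformulations so that the equality of optima at the end is between $OPT(\CLP)$ and $OPT(\DCLP)$ rather than some shifted pair; once that chain of substitutions is written out carefully, the theorem follows with no new ideas beyond those already proved.
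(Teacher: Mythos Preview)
Your proposal is correct and follows essentially the same route as the paper: rewrite $(\DCLP)$ as $(\CLP')$ via Prop.~\ref{conpropEqPrimDualForms}, apply Theorem~\ref{conthStrongDual1} to $(\CLP')$, and then use the already-established identification of $(\DCLP')$ with $(\CLP)$ to read off the conclusion. The paper's own proof is terser---it simply asserts that strict feasibility and boundedness transfer to $(\CLP')$---whereas you spell out why (the parameterization \eqref{coneqbase} is a bijection onto the feasible set, so interiority and objective values carry over); your extra care with the sign flips and the additive constant $-B\sprod B'$ is warranted and not present in the paper, but no new idea is involved.
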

\begin{proof} Since $(\DCLP)$ is feasible, we can apply
Prop.~\ref{conpropEqPrimDualForms} and obtain that $(\DCLP)$ can be reformulated
as $(\CLP')$ from \eqref{cconic1}-\eqref{cconic3}. This $(\CLP')$ needs to be
bounded and to have a strictly feasible solution as well, and so, we can apply
Theorem~\ref{conthStrongDual1} that states that the dual of $(\CLP')$ reaches 
the optimum value of $(\CLP')$. But the dual of $(\CLP')$ is $(\DCLP')$ from
\eqref{dddconic1}-\eqref{dddconic3}, which is completely equivalent to $(\CLP)$
from \eqref{ccconic1}-\eqref{ccconic3}. We obtained that $(\CLP)$ reaches the
optimum value of $(\DCLP)$.
\end{proof}
\begin{theorem}\label{conthStrongDual3} If 
both 
the primal $(\CLP)$ from \eqref{ccconic1}-\eqref{ccconic3} (or
\eqref{conic1}-\eqref{conic3}) and the dual 
$(\DCLP)$ from 
\eqref{ddconic1}-\eqref{ddconic3} (or \eqref{dconic1}-\eqref{dconic3}) are
strictly feasible, then they have the same optimum value and they both reach it.
\end{theorem}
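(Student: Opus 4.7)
The plan is to mirror the SDP argument used for Theorem~\ref{thStrongDual3}: combine the two previous conic strong duality results (Theorems~\ref{conthStrongDual1} and~\ref{conthStrongDual2}) with weak duality \eqref{coneqWeakDual} to simultaneously obtain attainment on both sides and equality of optima. The only real content to verify is that strict feasibility of one program, together with the existence of any feasible solution of the other, already forces the second program to be bounded, so that the hypotheses of the previous two theorems apply.

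First I would fix a strictly feasible primal solution $\xx\in\R^n$ and a strictly feasible dual solution $\Y\in C^*$, whose existence is given by hypothesis. Using weak duality \eqref{coneqWeakDual} once, I get $OPT(\DCLP)\leq \sum_{i=1}^n c_i \xxx_i<+\infty$, so the dual is bounded from above. Using weak duality a second time, $OPT(\CLP)\geq B\sprod \Y>-\infty$, so the primal is bounded from below. Thus both programs are bounded and strictly feasible.

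At this point the conclusion is immediate. Applying Theorem~\ref{conthStrongDual1} to the (bounded, strictly feasible) primal gives $OPT(\CLP)=OPT(\DCLP)$ with the optimum effectively attained by $(\DCLP)$. Applying Theorem~\ref{conthStrongDual2} to the (bounded, strictly feasible) dual gives the same equality of optima and ensures the optimum is attained by $(\CLP)$ as well. Combining these two statements yields the theorem. There is no genuine obstacle here; the subtlety, if any, is merely to make sure that ``bounded'' in the hypotheses of Theorems~\ref{conthStrongDual1} and~\ref{conthStrongDual2} really follows from the mere existence of a feasible solution on the other side, which is exactly what the two applications of weak duality above supply.
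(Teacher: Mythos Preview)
Your proof is correct and follows exactly the paper's approach: use weak duality \eqref{coneqWeakDual} together with feasibility on each side to conclude that both programs are bounded, then invoke Theorems~\ref{conthStrongDual1} and~\ref{conthStrongDual2}. The paper's proof is just a terser version of what you wrote.
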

\begin{proof} Using weak duality \eqref{coneqWeakDual}, both programs need to be
bounded. The conclusion then simply follows from combining 
Theorem~\ref{conthStrongDual1} and Theorem~\ref{conthStrongDual2}.
\end{proof}

\subsection{Polynomial Optimization}
\subsection{Algorithms for SDP optimization}
\appendix

\addtocontents{toc}{~\\ ~\\ \noindent{\bf APPENDIX} \par}


\section{\label{appBasic}On ranks, determinants and space dimensions}

This is the most elementary section from this document.
If you are an absolute beginner, you could read first App.~\ref{appBasicbasic}
before App.~\ref{appRanks}.

\subsection{\label{appRanks}The rank-nullity theorem and other interesting rank properties}
\begin{definition}\label{defrank} Given matrix $A\in \R^{n\times m}$, the rank $\texttt{rank}(A)$ has two
equivalent definitions:
\begin{itemize}\leftskip-1em
\item[(a)] the largest
order of any non-zero minor of $A$ (see Def~\ref{defMinor})
\item[(b)] the maximum number of independent rows (or
columns) of $A$, \ie, the dimension of the space generated by the rows (or columns) of $A$ using
linear combinations (see also Def.~\ref{defSpaceSize} for the notion of dimension).
\end{itemize}
\end{definition}
\begin{proof}
~\\
\noindent $(a)\implies(b)$\\
\noindent We show that the existence of a non-zero minor of order $r$. Without loss of generality, we permute rows and columns until the non-zero
minor is positioned in the upper-left corner, and, so we can consider that
$det([A]_r)\neq 0$, where $[A]_r$
is the leading principal minor of size $r\times r$, where $r=rank(A)$. Since the rows and columns of
$[A]_r$ are independent, the corresponding rows and columns in the full matrix are linearly
independent (in $A$). We have at least $r$ independent rows and columns.

We will show that any row $\a_{i}$ with $i>r$ is dependent of rows $\a_1,~\a_2,\dots \a_r$. Since
$det([A]_r)\neq 0$, there exist $x_1, x_2, \dots x_r\in \R$ such that 

\begin{equation}\label{linsol}[a_{i1}~a_{i2}~\dots
a_{ir}]=[x_1~x_2~\dots x_r] [A]_r\end{equation}Take any column $j\in[r+1..m]$ and consider the matrix obtained
by bordering $[A]_r$ with row $i$ and column $j$.
$$
\begin{bmatrix}
        &         &       &       &  a_{1j}  \\
      \multicolumn{4}{c}{\smash{\raisebox{-1\normalbaselineskip}{\scalebox{1.5}{$[A]_r$}}}} & a_{2j} \\
        &         &       &       &      \vdots \\
        &         &       &       &      a_{rj} \\
 a_{i1} & a_{i2}  & \dots & a_{ir}&  a_{ij} \\
\end{bmatrix}
$$
We subtract from the last row the first $r$ rows respectively multiplied by $x_1,~x_2,\dots x_r$. By
virtue of \eqref{linsol}, the above matrix becomes:
$$
\begin{bmatrix}
        &         &       &       &  a_{1j}  \\
      \multicolumn{4}{c}{\smash{\raisebox{-1\normalbaselineskip}{\scalebox{1.5}{$[A]_r$}}}} & a_{2j} \\
        &         &       &       &      \vdots \\
        &         &       &       &      a_{rj} \\
 0      & 0       & \dots & 0     &  a_{ij}-\sum_{k=1}^r x_k a_{kj} \\
\end{bmatrix}
$$
Since $[A]_r$ is the largest non zero minor, both above bordered matrices need to have a zero
determinant. But the determinant of the last above matrix is $\left(a_{ij}-\sum_{k=1}^r x_k
a_{kj}\right) det([A]_r)$. Since $det([A]_r)\neq 0$, we obtain that 
$a_{ij}=\sum_{k=1}^r x_k a_{kj}$. Since this needs to hold for all $j\in[r+1..m]$, we obtain
$\a_i=x_1\a_1+x_2\a_2+\dots x_r\a_r$, and so, row $i$ is dependent on the first $r$ rows. The number
of independent rows is thus exactly $r$. An analogous argument can be used to obtain that the number of
independent columns is $r$. This is the dimension of the space generated by the rows
(resp.~columns).

\noindent $(b)\implies(a)$\\
The determinant of any minor that contains rows $r+1$ rows (resp.~columns) is zero, because one
of these rows (resp.~columns) can be written as a linear combination of the other $r$.
The order of the largest non-zero minor is thus at most $r$. It is easy to see that
this largest order can not be $r-1$. If that were the case, we could apply the
above 
$(a)\implies(b)$ proof to show that all rows are dependent on some $r-1$ rows.
\end{proof}

\begin{proposition}\label{propPrincipalMinorTheorem} Given symmetric matrix $A\in \R^{n \times n}$ of rank $r$, $A$ has at least one
 non-zero \textit{principal} minor of order $r$.
\end{proposition}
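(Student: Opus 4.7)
The plan is to find a principal minor of order $r$ by locating $r$ linearly independent rows of $A$ and then exploiting the symmetry of $A$ to show that the columns with the same indices form an $r\times r$ non-singular block. I would first invoke Definition~\ref{defrank}(b) to pick an index set $J\subseteq[1..n]$ with $|J|=r$ such that the rows $\{\a_i\}_{i\in J}$ are linearly independent and thus span the row space of $A$.

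Next, I would translate the row dependencies into column dependencies using $A^\top = A$. For each $i\in[1..n]\setminus J$, there exist coefficients $c_{i,k}$ (for $k\in J$) such that $\a_i = \sum_{k\in J} c_{i,k}\a_k$, i.e.\ $A_{ij}=\sum_{k\in J}c_{i,k}A_{kj}$ for every $j$. Taking transposes (equivalently, using $A_{ij}=A_{ji}$), the very same coefficients express column $i$ of $A$ as $\sum_{k\in J}c_{i,k}$ times column $k$. So columns outside $J$ are linear combinations of the columns indexed by $J$.

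Now consider the $r\times n$ submatrix $A_J$ obtained by keeping only the rows in $J$. Its rank is exactly $r$ (its rows are independent by construction), so its column space has dimension $r$. Restricting the identity ``column $i = \sum_{k\in J}c_{i,k}\cdot$ column $k$'' to the $r$ rows in $J$ shows the same relation holds inside $A_J$: the columns of $A_J$ indexed by $[1..n]\setminus J$ lie in the span of the columns indexed by $J$. Therefore the $r$ columns of $A_J$ indexed by $J$ already span this $r$-dimensional column space, and hence must themselves be linearly independent. These $r$ columns assemble precisely into the principal minor $[A]_J$, which is therefore a non-singular $r\times r$ matrix, giving $\det([A]_J)\neq 0$.

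The argument is short and the only delicate step is the symmetry transfer, where one must be careful that the coefficients expressing dependent rows in terms of rows $J$ are the same coefficients expressing dependent columns in terms of columns $J$ — this is exactly where $A=A^\top$ enters, and without symmetry the conclusion can fail (a non-symmetric matrix of rank $r$ need not have a non-zero \emph{principal} minor of order $r$, only \emph{some} non-zero minor of order $r$).
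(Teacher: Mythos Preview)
Your proof is correct and follows essentially the same approach as the paper's own proof: pick $r$ independent rows indexed by $J$, use symmetry to transfer the row dependencies to column dependencies, and conclude that the $r\times r$ block $[A]_J$ has full rank. Your write-up is more explicit about the mechanics (particularly the ``symmetry transfer'' step), but the underlying idea is identical.
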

\begin{proof}
The rank definition ensures the existence of a set of rows $J$ (with $|J|=r$) such that
any other row $i\in [1..n]- J$ can be written as a linear combination of  
rows $J$. This means that the matrix $A$ reduced to rows $J$ has full rank $r$.
By symmetry, each column of this latter matrix with $r$ rows can be written 
as a linear combination of the columns $J$. This means that the matrix $A$ reduced
to rows $J$ and (then) to columns $J$ has full rank $r$, \ie, the principal minor
corresponding to rows $J$ and columns $J$ is non-zero.
\end{proof}
The above proposition is sometimes referred to as the ``principal minor theorem'' and it
also holds if $A$ is skew-symmetric, \ie, if $A^\top = -A$.

\begin{theorem}\label{thranknullity}(Rank-nullity theorem) Given $A\in\R^{n\times m}$, the null space 
(kernel) of $A$ is given by
\begin{equation}\label{eqDefNull}
\texttt{null}(A)=\left\{\x\in \R^m:~A\x=\zeros_n\right\}
\end{equation}
We denote by $nullity(A)$ the dimension (maximum number of linearly independent vectors---see also
Def.~\ref{defSpaceSize}) of
$\texttt{null}(A)$. The following equation holds:
\begin{equation}
\label{eqranknullity}
m=rank(A)+nullity(A)
\end{equation}
\end{theorem}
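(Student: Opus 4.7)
The plan is to prove the identity by constructing an explicit basis of the column space of $A$ whose size is $m - \texttt{nullity}(A)$. Set $k = \texttt{nullity}(A)$ and let $\{\v_1, \v_2, \ldots, \v_k\}$ be a basis of $\texttt{null}(A)$. Because these $k$ vectors are linearly independent in $\R^m$, they can be extended (by picking any vector outside the current span, one at a time, until no more can be added) to a full basis $\{\v_1, \ldots, \v_k, \w_1, \ldots, \w_{m-k}\}$ of $\R^m$. I will then argue that the images $\{A\w_1, A\w_2, \ldots, A\w_{m-k}\}$ form a basis of the column space $\texttt{img}(A)$, which has dimension $\texttt{rank}(A)$ by the column-space characterization in Def.~\ref{defrank}(b). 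This immediately gives $m-k = \texttt{rank}(A)$, i.e.~\eqref{eqranknullity}.

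For the spanning property, any $\y \in \texttt{img}(A)$ can be written as $\y = A\x$ for some $\x \in \R^m$; expanding $\x = \sum_{i=1}^k a_i \v_i + \sum_{j=1}^{m-k} b_j \w_j$ in the chosen basis and using $A\v_i = \zeros_n$ yields $\y = \sum_{j=1}^{m-k} b_j A\w_j$, so the $A\w_j$'s span $\texttt{img}(A)$. For linear independence, suppose $\sum_{j=1}^{m-k} c_j A\w_j = \zeros_n$. Then $A\left(\sum_{j=1}^{m-k} c_j \w_j\right) = \zeros_n$, so $\sum_{j=1}^{m-k} c_j \w_j \in \texttt{null}(A)$, meaning it equals some linear combination $\sum_{i=1}^k d_i \v_i$. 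Rearranging, $\sum_{j=1}^{m-k} c_j \w_j - \sum_{i=1}^k d_i \v_i = \zeros_m$, and because $\{\v_i\} \cup \{\w_j\}$ is a basis (hence linearly independent), all $c_j$ and $d_i$ must vanish.

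The main obstacle is mostly pedagogical: one should be careful about which definition of rank is being invoked. Def.~\ref{defrank} already establishes the equivalence between the ``largest non-zero minor'' and ``maximum number of independent columns'' characterizations, so once the argument above shows that the image of $A$ has dimension $m - k$, I can directly conclude that this equals $\texttt{rank}(A)$. A second small step I would include is a brief justification that a linearly independent set can always be extended to a basis of $\R^m$, since the proof hinges on this extension; this relies on the standard fact that $\R^m$ has dimension $m$ (Def.~\ref{defSpaceSize}), so any independent set has at most $m$ elements and any maximal independent set is a basis. With these pieces in place, the chain $m = k + (m-k) = \texttt{nullity}(A) + \texttt{rank}(A)$ follows.
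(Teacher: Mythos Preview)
Your proof is correct and follows the standard ``extend a basis of the kernel'' argument: you fix a basis of $\texttt{null}(A)$, complete it to a basis of $\R^m$, and show that the images of the added vectors form a basis of $\texttt{img}(A)$. The spanning and independence steps are both sound, and your appeal to Def.~\ref{defrank}(b) to identify the dimension of the column space with $\texttt{rank}(A)$ is exactly right.

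The paper takes a different, more computational route. It starts from the rank side rather than the nullity side: it permutes rows and columns so that an invertible $r\times r$ block $[A]_r$ sits in the upper-left corner, then explicitly solves the system $A\x=\zeros_n$ by writing $\x_r = -[A]_r^{-1}[A]_{m-r}\x_{m-r}$, which exhibits the null space as freely parameterized by the last $m-r$ coordinates. Your approach is cleaner and more conceptual, avoiding any matrix inversion or row/column permutation; the paper's approach is more hands-on and has the side benefit of producing an explicit parameterization of $\texttt{null}(A)$ (useful elsewhere in the document, e.g.\ when writing solution sets of linear systems). Both arrive at the same conclusion, just from opposite ends.
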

\begin{proof}
Without loss of generality, we permute rows and columns until a non-zero
principal minor of size $r=rank(A)$ is positioned in the upper-left corner,
so that the leading principal
minor $[A]_r$ has a non-null determinant. We want to study the set of solutions
of 
\begin{equation}\label{eqSyst}
A\x=\zeros_n
\end{equation}
Using Def~\ref{defrank}, the last $n-r$ rows of $A$ can be written as a linear combination of the
first $r$ rows. It is enough to investigate only the first $r$ equations in above system
\eqref{eqSyst}. We can say that $\x$ is a solution of above system if and only if it is a solution
of the following:
\begin{equation*}
\begin{bmatrix}
                                                                                     & a_{1,r+1} & a_{1,r+2}  &\dots & a_{1,m} \\
~~~~{\smash{\raisebox{-1\normalbaselineskip}{\scalebox{2.5}{$[A]_r$}}}} & a_{2,r+1} & a_{2,r+2}  &\dots & a_{2,m} \\
                                                                                      &  \vdots   &  \vdots    &\ddots&  \vdots    \\
                                                                                      &  a_{r,r+1}&  a_{r,r+2} &\dots &  a_{r,m}\\
\end{bmatrix}
\x=\zeros_r
\end{equation*}
Using simple notational shortcuts, we write the above as follows:
\begin{equation}\label{eqSyst3}
\Big[ \left[A\right]_r~~~\left[A\right]_{m-r}\Big]
\begin{bmatrix}
\x_r\\
\x_{m-r}\\
\end{bmatrix}
=
\zeros_r,
\end{equation}
where $\left[A\right]_{m-r}$ is the $A$ minor obtained by selecting the first
$r$ rows and last $m-r$ columns; $\x_r$ selects the first $r$ components of $\x$
and $\x_{m-r}$ selects the last $m-r$. We can further re-write \eqref{eqSyst3} above as:
\begin{equation*}
[A]_r \x_r + [A]_{m-r}\x_{m-r} = \zeros_r
\end{equation*}
We can now write $\x_r$ as a function of $\x_{m-r}$, more exactly:
\begin{equation*}
 \x_r =-[A]^{-1}_r [A]_{m-r}\x_{m-r}
\end{equation*}
We can say that each of the first $r$ components of $\x$ (\ie, $\x_r$) can be
written as a linear combination of the 
last $m-r$ components (\ie, $\x_{m-r}$). The dimension of the space generated by
all above solutions $\x$ 
of \eqref{eqSyst}
reduces to the dimension
of the space generated by the last $m-r$ components $\x_{m-r}$, \ie, this
dimension is $m-r$, which confirms
\eqref{eqranknullity}.
\end{proof}

\begin{proposition} \label{proprankprod}Given $A\in \R^{n\times m}$ and $B\in \R^{m\times p}$, we have
\begin{equation}\label{eqrankprod}
rank(AB)\leq min\big(rank(A),rank(B)\big)
\end{equation}
\end{proposition}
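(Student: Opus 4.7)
The plan is to prove the two inequalities $\texttt{rank}(AB) \leq \texttt{rank}(A)$ and $\texttt{rank}(AB) \leq \texttt{rank}(B)$ separately, then take the minimum. I will use definition (b) of the rank from Def.~\ref{defrank} (the dimension of the space spanned by the rows/columns), which is the most natural tool here; the equivalence with the minor-based definition (a) has already been established, so this is legitimate.

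For $\texttt{rank}(AB) \leq \texttt{rank}(A)$, I would argue via the column space. The key observation is that column $j$ of $AB$ equals $A\b_j$, where $\b_j$ is column $j$ of $B$, and $A\b_j = \sum_{k=1}^m b_{kj}\a_k$ where $\a_1,\dots,\a_m$ are the columns of $A$. Thus every column of $AB$ is a linear combination of the columns of $A$, so the column space of $AB$ is contained in the column space of $A$. Taking dimensions (recall the dimension of a subspace cannot exceed that of the containing space, essentially Def.~\ref{defSpaceSize}), we get $\texttt{rank}(AB) \leq \texttt{rank}(A)$.

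For $\texttt{rank}(AB) \leq \texttt{rank}(B)$, I would apply the transposed (row) version of the same argument: row $i$ of $AB$ is $\a_i^\top B = \sum_{k=1}^m a_{ik}\b_k'^\top$ where $\b_k'^\top$ are the rows of $B$. Hence every row of $AB$ lies in the row space of $B$, and so $\texttt{rank}(AB) \leq \texttt{rank}(B)$. Alternatively, one can invoke the rank-nullity Theorem \ref{thranknullity}: any $\x \in \texttt{null}(B)$ satisfies $(AB)\x = A(B\x) = A\zeros = \zeros$, so $\texttt{null}(B) \subseteq \texttt{null}(AB)$, which gives $\texttt{nullity}(B) \leq \texttt{nullity}(AB)$; since both $B$ and $AB$ have $p$ columns, the rank-nullity identity $p = \texttt{rank}(\cdot) + \texttt{nullity}(\cdot)$ yields $\texttt{rank}(AB) \leq \texttt{rank}(B)$.

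There is no real obstacle here — the proof is essentially immediate once one picks the right characterization of rank. The only thing to be careful about is being consistent about whether one is reasoning over columns or rows, and invoking Def.~\ref{defrank}(b) (rather than the minor-based definition) so that ``subspace inclusion implies dimension inequality'' can be used directly. Combining both inequalities then gives $\texttt{rank}(AB) \leq \min(\texttt{rank}(A), \texttt{rank}(B))$, as claimed.
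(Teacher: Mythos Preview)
Your proof is correct. Your primary route (column space of $AB$ is contained in the column space of $A$; row space of $AB$ is contained in the row space of $B$) is more direct than the paper's, which instead shows $\texttt{null}(A^\top)\subseteq\texttt{null}((AB)^\top)$ and then invokes the rank--nullity Theorem~\ref{thranknullity} to convert the nullity inequality into a rank inequality. Your alternative argument for the second inequality (via $\texttt{null}(B)\subseteq\texttt{null}(AB)$) is essentially the paper's method applied on the right rather than the left. The advantage of your column/row-space argument is that it needs only Def.~\ref{defrank}(b) and the trivial fact that a subspace has dimension at most that of the space containing it; the paper's approach, while equally short, leans on the rank--nullity theorem as an intermediate tool.
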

\begin{proof} Take any $\x\in \R^n$ in the null space of $A^\top$, \ie, $A^\top \x=\zeros_m$ or $\x^\top A =
\zeros^\top_m$. We observe that $\x^\top AB=\zeros^\top_m B = \zeros_p$. This means that
$\x$ also belongs to the null space of $(AB)^\top$. This means that the null space of $(AB)^\top$ is
greater than or equal to the null space of $A^\top$. We can write
$nullity\big((AB)^\top\big)\geq
nullity(A)$. Using the rank-nullity theorem (Th~\ref{thranknullity}), we have
$rank(A^\top)+nullity(A^\top)=n$ and
$rank\big((AB)^\top\big)+nullity\big((AB)^\top\big)=n$. This means that
$rank\big((AB)^\top\big)\leq rank(A^\top)$, equivalent to 
$rank(AB)\leq rank(A)$. Analogously, we can obtain $rank(AB)\leq rank(B)$, which leads to
\eqref{eqrankprod}.
\end{proof}

\begin{proposition} \label{proprankprodInvert}Given any $A\in\R^{n\times n}$ and invertible $U$, the following holds:
$$rank(AU)=rank(A)$$
\end{proposition}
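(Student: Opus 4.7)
The plan is to apply Proposition~\ref{proprankprod} twice, exploiting the invertibility of $U$ to get inequalities in both directions.

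First, I would invoke Proposition~\ref{proprankprod} directly on the product $AU$ to obtain
\[
rank(AU)\leq \min\bigl(rank(A),rank(U)\bigr)\leq rank(A),
\]
which gives one of the two needed inequalities with essentially no work. The key ingredient of the whole argument is that invertibility of $U$ is only used in the second step, not here.

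For the reverse inequality $rank(A)\leq rank(AU)$, I would use the identity $A = (AU)U^{-1}$, which is valid precisely because $U$ is invertible (so $U^{-1}$ exists). Applying Proposition~\ref{proprankprod} to this factorization yields
\[
rank(A) = rank\bigl((AU)U^{-1}\bigr) \leq rank(AU).
\]
Combining the two inequalities gives $rank(AU)=rank(A)$, completing the proof.

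There is no real obstacle here: the whole argument is a two-line application of the submultiplicativity of rank from Proposition~\ref{proprankprod}, and the role of invertibility of $U$ is simply to guarantee that $U^{-1}$ exists so that we can write $A$ as a product whose first factor is $AU$. As a side remark, the same argument proves the symmetric statement $rank(UA)=rank(A)$ for any invertible $U$ of appropriate size, which is useful to record since it appears implicitly (via congruence/similarity) in several places earlier in the manuscript.
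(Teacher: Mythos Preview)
Your proof is correct and takes essentially the same approach as the paper: both apply Proposition~\ref{proprankprod} twice, once to $AU$ and once to $(AU)U^{-1}=A$, to sandwich the two ranks. The paper just compresses this into the single chain $rank(A)\geq rank(AU)\geq rank(AUU^{-1})=rank(A)$.
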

\begin{proof}
Using~\eqref{eqrankprod}, we have $rank(A)\geq rank(AU)\geq rank(AUU^{-1})=rank(A)$, which
means that all inequalities are actually equalities.
\end{proof}

\begin{proposition}\label{propRankEigen}
Any similar matrices $A,B\in \R^{n\times n}$ (\ie, such that $B = U^{-1} A
U$ for some $U\in R^{n\times n}$) have the same rank.
\end{proposition}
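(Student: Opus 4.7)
The plan is to invoke Prop.~\ref{proprankprod} (sub-multiplicativity of the rank) twice in each direction. The key observation is that similarity is symmetric in $A$ and $B$: from $B = U^{-1} A U$ we also get $A = U B U^{-1}$, by left-multiplying by $U$ and right-multiplying by $U^{-1}$.

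First I would apply Prop.~\ref{proprankprod} to the product $U^{-1}(A U)$ to obtain
$\texttt{rank}(B) = \texttt{rank}(U^{-1} A U) \leq \texttt{rank}(A U) \leq \texttt{rank}(A)$,
where each inequality is a single use of \eqref{eqrankprod}. Then, viewing $A = U B U^{-1}$ in exactly the same way, the same two-step bound gives $\texttt{rank}(A) \leq \texttt{rank}(B)$. Combining the two inequalities yields $\texttt{rank}(A) = \texttt{rank}(B)$, which is the claim.

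There is no real obstacle here; the only thing to be careful about is that Prop.~\ref{proprankprod} is only an inequality, so one cannot conclude equality from a single application and must use the symmetric role of $A$ and $B$. As a slightly slicker alternative, one could instead apply Prop.~\ref{proprankprodInvert} directly: that proposition already gives $\texttt{rank}(AU) = \texttt{rank}(A)$, and a transpose argument (using that rank equals the maximum number of independent columns = independent rows, see Def.~\ref{defrank}) upgrades it to $\texttt{rank}(U^{-1} C) = \texttt{rank}(C)$ for any invertible $U$, after which $\texttt{rank}(B) = \texttt{rank}(U^{-1}(A U)) = \texttt{rank}(A U) = \texttt{rank}(A)$ follows in one line. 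I would probably present the symmetric two-inequality argument because it requires no auxiliary transpose lemma and stays entirely within the statement of \eqref{eqrankprod}.
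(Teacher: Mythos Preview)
Your proof is correct. The paper's own proof simply invokes Prop.~\ref{proprankprodInvert} twice (once for $U$, once for $U^{-1}$), which is exactly the ``slicker alternative'' you describe; your main argument via two applications of \eqref{eqrankprod} in each direction is just the unpacked version of that same sandwich inequality, so the two approaches are essentially identical.
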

\begin{proof}
This simply follows from applying Proposition~\ref{proprankprodInvert} twice, once for $U$ and once
for $U^{-1}$.
\end{proof}

\begin{proposition}\label{propeigenmult} The rank of symmetric $A\in \R^{n\times n}$ is equal to $n$ minus the
multiplicity of the eigenvalue 0. We can write:
$$n = rank(A)+eigenmult(0)$$
\end{proposition}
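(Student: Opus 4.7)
The plan is to use the eigendecomposition of symmetric matrices (formula \eqref{firstEqDecomp}) together with the rank-preservation results already established (Prop.~\ref{proprankprodInvert} and Prop.~\ref{propRankEigen}), reducing the problem to counting non-zero diagonal entries of a diagonal matrix.

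First I would write $A = U \Lambda U^\top$, where $U$ is orthogonal ($U^\top U = I_n$, so $U^\top = U^{-1}$) and $\Lambda = \texttt{diag}(\lambda_1, \lambda_2, \dots, \lambda_n)$ lists each eigenvalue of $A$ repeated according to its (algebraic) multiplicity. Recall the eigendecomposition guarantees that geometric and algebraic multiplicities coincide for symmetric matrices (the second step of the proof sketch for \eqref{firstEqDecomp}), so the number of times $0$ appears on the diagonal of $\Lambda$ is exactly $eigenmult(0)$.

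Next I would show $\texttt{rank}(A) = \texttt{rank}(\Lambda)$. This is immediate from Prop.~\ref{propRankEigen}, since $A = U \Lambda U^{-1}$ makes $A$ and $\Lambda$ similar. Alternatively, one can apply Prop.~\ref{proprankprodInvert} twice: $\texttt{rank}(A) = \texttt{rank}(U \Lambda U^\top) = \texttt{rank}(\Lambda U^\top) = \texttt{rank}(\Lambda)$, using that $U$ and $U^\top$ are both invertible.

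Finally I would compute $\texttt{rank}(\Lambda)$ directly from Def.~\ref{defrank}. Let $k$ be the number of non-zero eigenvalues on the diagonal (so that $k = n - eigenmult(0)$). The principal minor of $\Lambda$ selecting the rows/columns indexed by the positions of non-zero eigenvalues is itself diagonal with only non-zero entries, hence has determinant equal to the product of those $\lambda_i$'s, which is non-zero. Conversely, any minor of order larger than $k$ must include at least one row and column indexed at a zero diagonal entry; since $\Lambda$ is diagonal, such a minor has at least one zero row, so its determinant vanishes. Thus $\texttt{rank}(\Lambda) = k$, giving $\texttt{rank}(A) = n - eigenmult(0)$, which rearranges to the desired identity.

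There is really no substantial obstacle here: all the heavy lifting was done when establishing the eigendecomposition (in particular the equality of geometric and algebraic multiplicities for symmetric matrices) and the rank-under-similarity lemma. The only point one must be careful about is precisely that $\Lambda$ lists eigenvalues with their algebraic multiplicities, so ``the number of zero diagonal entries'' really equals $eigenmult(0)$ and not merely the count of distinct zero eigenvalues.
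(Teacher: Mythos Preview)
Your proof is correct and follows essentially the same route as the paper: eigendecompose $A = U\Lambda U^\top$, invoke Prop.~\ref{propRankEigen} to get $\texttt{rank}(A)=\texttt{rank}(\Lambda)$, and read off $\texttt{rank}(\Lambda)$ as the number of non-zero diagonal entries. The paper's version is terser (it simply asserts that a diagonal matrix has rank equal to its number of non-zero diagonal entries), while you spell out the minor-counting argument explicitly, but the substance is identical.
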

\begin{proof}
We use the eigendecomposition (as proved in Prop~\ref{propEigenDecomp}):
$$A = U \Lambda U^\top,$$
where $U$ is invertible ($U^{-1}=U^\top$) and $\Lambda$ is a diagonal matrix with the eigenvalues of
$A$ on the diagonal.  Using Prop~\ref{propRankEigen}, $A$ and $\Lambda$ have the same rank. Since
$\Lambda$ is diagonal, its rank is equal to the number of non-zero elements on the diagonal, \ie,
$n$ minus the multiplicity of eigenvalue $0$. Additionally, remark that we proved in
Prop.~\ref{propGeoAlgeEqual} and 
Prop.~\ref{firstsym} that in symmetric matrices the algebraic multiplicity of an eigenvalue is equal
to its geometric multiplicity, hence we use the term multiplicity to refer to both. 
\end{proof}

\begin{proposition}\label{propranktransprod} Given $V\in \R^{m\times n}$, the
matrix $A=V^\top V\in \R^{n\times n}$ is
SDP. We say $A$ is the Gram matrix of the column vectors of $V$. Furthermore,
$rank(V)=rank(V^\top V)=rank(V^\top S V)$ for any positive
definite $S\in \R^{m \times m}$.
\end{proposition}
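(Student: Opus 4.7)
The SDP claim and the Gram interpretation are immediate. For any $\x \in \R^n$, we have $\x^\top A \x = \x^\top V^\top V \x = (V\x)^\top (V\x) = |V\x|^2 \geq 0$, so $A \succeq \zeros$ by Definition~\ref{def1}. Writing $V$ in columns $V = [\v_1~\v_2~\dots~\v_n]$, direct multiplication gives $A_{ij} = \v_i^\top \v_j$, identifying $A$ as the Gram matrix of the columns of $V$.

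The substantive part is the rank identity $\texttt{rank}(V) = \texttt{rank}(V^\top V)$. My plan is to prove this via the rank-nullity Theorem~\ref{thranknullity} by showing $\texttt{null}(V) = \texttt{null}(V^\top V)$. The inclusion $\texttt{null}(V) \subseteq \texttt{null}(V^\top V)$ is trivial: $V\x = \zeros \Rightarrow V^\top V \x = \zeros$. For the reverse, I would use the SDP trick already applied above: if $V^\top V \x = \zeros$, then $\x^\top V^\top V \x = |V\x|^2 = 0$, forcing $V\x = \zeros$. Hence the two null spaces coincide, so $\texttt{nullity}(V) = \texttt{nullity}(V^\top V)$, and since both matrices have $n$ columns, Theorem~\ref{thranknullity} yields $\texttt{rank}(V) = \texttt{rank}(V^\top V)$.

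For the general case with $S \succ \zeros$, I would reduce to the already proven case by factoring $S$. Using the Cholesky decomposition of positive definite matrices (Prop.~\ref{propCholeskyDP}), write $S = RR^\top$ with $R$ lower triangular and strictly positive diagonal, so in particular $R$ (and $R^\top$) is non-singular. Then
\[
V^\top S V = V^\top R R^\top V = (R^\top V)^\top (R^\top V),
\]
and applying the identity already established to the matrix $R^\top V \in \R^{m \times n}$ gives $\texttt{rank}(V^\top S V) = \texttt{rank}(R^\top V)$. It remains to see that left-multiplication by a non-singular square matrix preserves rank; this is the transposed analogue of Prop.~\ref{proprankprodInvert} and follows by the same sandwich argument, $\texttt{rank}(V) \geq \texttt{rank}(R^\top V) \geq \texttt{rank}((R^\top)^{-1} R^\top V) = \texttt{rank}(V)$. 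Hence $\texttt{rank}(V^\top S V) = \texttt{rank}(V)$, completing the chain.

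There is no real obstacle here; the only point that deserves care is making sure the rank-invariance argument is stated for \emph{left} multiplication by an invertible square matrix, since Prop.~\ref{proprankprodInvert} is phrased for right multiplication. This is handled in one line by the same Prop.~\ref{proprankprod} sandwich, so the entire proof should be short.
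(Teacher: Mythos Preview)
Your proof is correct. For the general $S\succ\zeros$ case you take a slightly different route from the paper: you factor $S=RR^\top$ via Cholesky and reduce to the already-proven $S=I_m$ case, whereas the paper runs the null-space argument directly for arbitrary $S\succ\zeros$ in one pass, using that $(V\x)^\top S(V\x)=0$ forces $V\x=\zeros$ by positive definiteness. Both are fine; the paper's version is a touch shorter since it avoids invoking Cholesky and the rank-invariance sandwich, while yours has the pedagogical advantage of explicitly isolating the $S=I_m$ case first.
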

\begin{proof}~\\
1) To see $V^\top V$ is SDP, notice that for any $\x\in\R^n$, we have 
$\x^\top (V^\top V) \x
=(\x^\top V^\top)(V \x)
=( V\x)^\top(V \x)=|V\x|^2$. Also, by writing $\y=V\x$, this value simply becomes $\sum_{i=1}^n y^2_i\geq 0$.

\noindent 2)
$rank(V^\top V)=rank(V)$ follows from
$rank(V^\top S V)=rank(V)$ with $S=I_m$. We now prove the equality for an
arbitrary $S\succ\zeros$.
We will show $V^\top S V$ and $V$ have the same null space so as to apply the rank-nullity Theorem
\ref{thranknullity}. It is clear that any $\x\in\R^n$ in the null space of $V$ belongs to the null
space of $V^\top SV$, because $V\x = \zeros_m\Longrightarrow V^\top SV \x = \zeros_n$. We now prove
the converse: 
$ V^\top SV \x = \zeros_n\Longrightarrow V\x = \zeros_m$. We multiply both sides of 
$V^\top SV \x = \zeros_n$ by $\x^\top$ and we obtain $\x^\top V^\top SV \x =0$, equivalent to
$(V\x)^\top S (V \x)=0$. Using $S\succ\zeros$, this value can only be
zero if $V\x=\zeros_m$.
We obtained that the
null space of $V$ is equal to the null space of $V^\top SV$. Using the rank-nullity Theorem
\ref{thranknullity}, the two matrices need to have the same rank.
\end{proof}

\subsection{\label{appBasicbasic} Results on determinants and space dimensions}
\begin{definition}\label{defMinor} Given matrix $A$ of size $n\times m$, a minor
of $A$ is a sub-matrix obtained by selecting only some rows $J_1\subseteq [1..n]$ and some columns
$J_2\subseteq [1..m]$ of $A$. A {\it principal
minor} $[A]_J$ is a minor obtained by selecting the same rows and columns $J=J_1=J_2$.
A principal minor $[A]_J$ is a
{\it leading principal minor} if $J=[1..p]$ for some $p\leq n,m$.
We say $[A]_J$ is null (or zero) if $\det([A]_J)=0$.
The order of $[A_J]$ is $|J|$.
\end{definition}
\subsubsection{Very elementary results on matrices and determinants}
\begin{proposition}\label{lindep} Given complex matrix $A\in \C^{n\times n}$, 
$det(A)=0\iff \exists \u \in \C^n-\{\zeros\}$ such that $A\u=\zeros$.
\end{proposition}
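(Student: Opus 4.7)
The plan is to handle the two implications separately, with the $\Longleftarrow$ direction being essentially immediate and the $\Longrightarrow$ direction reduced to Gaussian elimination.

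For the easy direction $\Longleftarrow$, I would write $A = [A_1 \mid A_2 \mid \cdots \mid A_n]$ in terms of its columns; the equation $A\u = \zeros$ then reads $\sum_i u_i A_i = \zeros$. Picking any index $k$ with $u_k \neq 0$, we can solve for $A_k$ as a linear combination of the remaining columns. By multilinearity and the alternating property of the determinant (subtracting from column $k$ exactly the linear combination of the other columns that equals it leaves $\det(A)$ unchanged but zeros out column $k$), this forces $\det(A)=0$. This direction is valid verbatim over $\C$ since determinant theory over $\C$ uses the same multilinear/alternating characterization.

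For the harder direction $\Longrightarrow$, my plan is to use Gaussian elimination with partial pivoting. Apply a sequence of elementary row operations to $A$: row swaps and additions of a scalar multiple of one row to another. Each such operation corresponds to left-multiplication by an invertible elementary matrix $E_j$, so after a finite number of steps we obtain $U = E_m \cdots E_1 A$ with $U$ upper triangular. Two facts are crucial: (i) since each $E_j$ is invertible, $A\u = \zeros \iff U\u = \zeros$, so the null spaces of $A$ and $U$ coincide; (ii) each $E_j$ has non-zero determinant (a swap contributes $-1$, a row addition contributes $1$), so $\det(U) = c \cdot \det(A)$ for some non-zero $c$, whence $\det(A)=0 \iff \det(U)=0$. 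Because $U$ is upper triangular, $\det(U)$ equals the product $U_{11}U_{22}\cdots U_{nn}$, so $\det(U)=0$ forces at least one diagonal entry to vanish.

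It then remains to exhibit a non-zero $\u \in \C^n$ with $U\u = \zeros$. Let $k$ be the smallest index with $U_{kk} = 0$. I would set $u_k = 1$, $u_{k+1} = u_{k+2} = \cdots = u_n = 0$, and back-solve the first $k-1$ equations of $U\u = \zeros$ for $u_{k-1}, u_{k-2}, \ldots, u_1$ in turn. Equation $i$ for $1 \leq i < k$ reduces to $U_{ii}u_i + \sum_{j=i+1}^{k} U_{ij} u_j = 0$, and since $U_{ii} \neq 0$ by minimality of $k$, this uniquely determines $u_i$ from values already computed. Equation $k$ reads $U_{kk}\cdot 1 = 0$, which holds. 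For $i > k$, equation $i$ involves only the zero entries $u_{k+1}, \ldots, u_n$ on the right of $U_{ii}$ and the value $U_{ii}u_i$, so it is automatically satisfied. The resulting $\u$ has $u_k = 1 \neq 0$, hence is non-zero; and it satisfies $U\u = \zeros$, hence $A\u = \zeros$. The main potential obstacle—worrying that Gaussian elimination or back-substitution might behave differently over $\C$—dissolves because these algorithms are purely field-theoretic manipulations and work identically in any field.
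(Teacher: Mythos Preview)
Your proof is correct and follows essentially the same approach as the paper: both directions match closely, with the $\Longleftarrow$ implication handled via column dependence and the $\Longrightarrow$ implication via Gaussian elimination. The only organizational difference is that the paper packages the elimination as an induction on $n$ (eliminate the first column, then recurse on the $(n-1)\times(n-1)$ block $B$), whereas you carry out the full reduction to an upper-triangular $U$ in one sweep and then back-substitute; these are the same argument unrolled differently.
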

\begin{proof}~\\
$ \Longleftarrow $ ~\\
Take $\u \neq 0$ such that $A\u=\zeros$. Without loss of generality, we assume
$u_1\neq 0$. We can write:
$$
\begin{bmatrix} a_{11} \\ a_{12} \\ \vdots \\ a_{1n} \end{bmatrix}
+
\frac{u_2}{u_1}\begin{bmatrix} a_{21} \\ a_{22} \\ \vdots \\ a_{2n} \end{bmatrix}
+
\frac{u_3}{u_1}\begin{bmatrix} a_{31} \\ a_{32} \\ \vdots \\ a_{3n} \end{bmatrix}
+
\ldots
\ldots
+
\frac{u_n}{u_1}\begin{bmatrix} a_{n1} \\ a_{n2} \\ \vdots \\ a_{nn} \end{bmatrix}
=
\begin{bmatrix} 0 \\ 0 \\ \vdots \\ 0 \end{bmatrix}
$$
We use the fact that adding a multiple of a column to another column does not
change the determinant. By performing all additions of column multiples from
above formula, we obtain only zeros in the first column. The determinant of the
resulting matrix can only be zero.\\
$\Longrightarrow$ ~\\
We proceed by induction. For $n=1$, the implication is obviously true. Assume
that it holds for $n-1$. The implication is also obvious if $A=\zeros_{n\times
n}$. We assume that $A$ has some non-zero elements and without loss of
generality we can use $a_{11}\neq 0$ (it is enough to permute lines/columns to
obtain this).

We want to find $\u \in \C^n-\{0\}$ such that $A\u=\zeros$. 
$A$ can be changed to a form in which $a_{11}$ becomes $1$: it is enough to
divide first line by the initial $a_{11}$ to obtain an equivalent system of
equations. To simplify notations, we can continue assuming $a_{11}=1$.
We perform Gaussian
elimination on first column using pivot $a_{11}=1$. We want to solve:
$$
\begin{bmatrix}
    1             &    a_{12}   &   a_{13 }    &  \ldots   & a_{1n}         \\
    0             &a_{22}-a_{12}&a_{23}-a_{13} &  \ldots   & a_{2n}-a_{1n}  \\
    0             &a_{32}-a_{12}&a_{33}-a_{13} &  \ldots   & a_{3n}-a_{1n}  \\
    \vdots        &\\
    0             &a_{n2}-a_{12}&a_{n3}-a_{13} &  \ldots   & a_{nn}-a_{1n}  \\
\end{bmatrix}
\begin{bmatrix}
    u_1 \\ u_2 \\ u_3 \\ \vdots \\ u_n \\
\end{bmatrix}
=
\begin{bmatrix}
 0    \\ 0 \\ 0 \\ \vdots \\ 0 \\
\end{bmatrix}
$$
Using compact notations, this can also be written:
\begin{equation}\label{compact}
\begin{bmatrix}
        1     &    \b^\top        \\
  \zeros_{n-1}&       B
\end{bmatrix}
\begin{bmatrix}
    u_1 \\ \v       
\end{bmatrix}
=
\begin{bmatrix}
     0 \\ \zeros_{n-1} 
\end{bmatrix}
\end{equation}
Since above row multiplications and addition did not change the determinant of
the matrix, we obtain 
$det \bigl(\begin{smallmatrix}
        1     &    \b^\top        \\
  \zeros_{n-1}&       B
\end{smallmatrix} \bigr)=0$, which means $det(B)=0$. We can use the induction
hypothesis: there exists non-zero $\v\in \C^{n-1}$ such that $B\v =
\zeros_{n-1}$. The last n-1 equations of above system are satisfied. We still
need to find $u_1$ such that $u_1+\b^\top \v = 0\implies u_1 = - \b^\top \v$.
We have just found a solution 
$\u = \big[\begin{smallmatrix}
        u_1 \\
        \v \\ 
\end{smallmatrix} \bigr]=0$
for \eqref{compact}. It is easy to check that $\u$ verifies $A\u=0$, \ie, it is
enough to reverse the Gaussian elimination since the first equation/line is
verified by $\u$.
\end{proof}
\begin{corollary} If we replace $\C$ by $\R$ in the proof of above
Proposition~\ref{lindep}, everything remains
correct. The solution of the system will be real, because $u_1 = - \b^\top \v$
is real using the induction hypothesis that $\v \in \R^{n-1}$.
\end{corollary}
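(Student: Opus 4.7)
The plan is to observe that the proof of Proposition \ref{lindep} is entirely ``field-agnostic'' in one direction: it uses only the four arithmetic operations ($+$, $-$, $\times$, $\div$) plus the induction hypothesis, and all four operations preserve membership in $\R$. So the real case reduces to checking that every object the original proof constructs stays real when the input matrix is real.

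First I would verify the base case $n=1$ in $\R$: if $\det(A)=a_{11}=0$ with $A\in\R^{1\times 1}$, then $\u=[1]\in\R-\{0\}$ satisfies $A\u=0$. Next I would walk through the inductive step of the original proof and flag the two places where a scalar is produced, checking that each one lies in $\R$. The first is the scaling $a_{11}\to 1$ accomplished by dividing the first row by $a_{11}\neq 0$; since $a_{11}\in\R$ and $a_{11}\neq 0$, division yields a matrix in $\R^{n\times n}$. The second is the Gaussian elimination that replaces each row $i\geq 2$ by (row $i$) minus $a_{i1}\cdot$(row $1$); again every coefficient is real, so the resulting submatrix $B\in\R^{(n-1)\times (n-1)}$ and vector $\b\in\R^{n-1}$ are real.

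Then I would apply the induction hypothesis directly to $B$: since the elementary row operations above do not change the determinant, $\det(B)=\det(A)=0$, so by the real version of the induction hypothesis there exists $\v\in\R^{n-1}-\{\zeros\}$ with $B\v=\zeros$. Finally I would set $u_1=-\b^\top\v\in\R$ (a real dot product of real vectors) and form $\u=\bigl[\begin{smallmatrix}u_1\\ \v\end{smallmatrix}\bigr]\in\R^n$; $\u$ is non-zero because $\v\neq\zeros$, and $A\u=\zeros$ by reversing the elementary row operations (which are invertible real operations).

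The hard part is essentially nil: there is no genuine obstacle, since the original proof never invoked any complex-specific fact such as the fundamental theorem of algebra or the existence of complex eigenvalues. The only thing worth flagging is that the permutation of rows/columns used to arrange $a_{11}\neq 0$ is a real operation (no complex conjugation is needed to pick a non-zero pivot). Thus the corollary follows simply by rereading the proof of Proposition \ref{lindep} and noting that $\R$ is closed under the arithmetic operations it uses.
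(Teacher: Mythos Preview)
Your proposal is correct and follows exactly the approach the paper takes: the paper's own justification for this corollary is the single observation (embedded in the statement) that $u_1=-\b^\top\v$ is real once the induction hypothesis gives $\v\in\R^{n-1}$, and you have simply unpacked that remark into a careful step-by-step verification that every arithmetic operation in the proof of Proposition~\ref{lindep} preserves reality. One tiny imprecision: when you write ``$\det(B)=\det(A)=0$'' you mean that the block-triangular matrix obtained after elimination has determinant $1\cdot\det(B)$, which equals $\det(A)$; the matrices $B$ and $A$ have different sizes, so it is the chain $\det(A)=\det\bigl[\begin{smallmatrix}1&\b^\top\\\zeros&B\end{smallmatrix}\bigr]=\det(B)$ that gives $\det(B)=0$.
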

\begin{proposition}The determinant of a matrix $A$ is the product of its
eigenvalues. The trace of $A$ is equal to the sum of the eigenvalues.
\label{propdetiseigenprod}
\end{proposition}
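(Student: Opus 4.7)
The plan is to analyze the characteristic polynomial $p(x) = \det(xI_n - A)$ of $A \in \R^{n \times n}$, whose roots are exactly the eigenvalues $\lambda_1, \lambda_2, \dots, \lambda_n$ (counted with algebraic multiplicity) as already established earlier in the manuscript in Section~\ref{sec11} and Prop.~\ref{lindep}. Invoking the fundamental theorem of algebra (which the excerpt explicitly accepts without proof), we can factor $p(x) = \prod_{i=1}^n (x - \lambda_i)$. The whole proof then reduces to comparing two expressions of $p(x)$: the factored form, and the form obtained by expanding the determinant directly.

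For the determinant part, I would evaluate both expressions of $p$ at $x=0$. On the factored side, $p(0) = \prod_{i=1}^n (-\lambda_i) = (-1)^n \prod_{i=1}^n \lambda_i$. On the determinant side, $p(0) = \det(-A) = (-1)^n \det(A)$, where the factor $(-1)^n$ comes from pulling a $-1$ out of each of the $n$ rows. Equating and canceling $(-1)^n$ gives $\det(A) = \prod_{i=1}^n \lambda_i$, as desired. This is the easy half.

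For the trace, I would compare the coefficient of $x^{n-1}$ in both expressions of $p(x)$. On the factored side, expanding $\prod_{i=1}^n (x - \lambda_i)$ yields coefficient $-\sum_{i=1}^n \lambda_i$ in front of $x^{n-1}$. On the determinant side, I would appeal to the Leibniz formula $\det(xI_n - A) = \sum_{\sigma \in S_n} \operatorname{sgn}(\sigma) \prod_{i=1}^n (xI_n - A)_{i,\sigma(i)}$ and argue that only the identity permutation $\sigma = \operatorname{id}$ contributes to the coefficient of $x^{n-1}$ or $x^n$: the identity gives $\prod_{i=1}^n (x - a_{ii}) = x^n - (\sum a_{ii}) x^{n-1} + \dots$, whereas any non-identity $\sigma$ must displace at least two indices (a permutation cannot move exactly one element), so at most $n-2$ of the factors have the form $(x - a_{ii})$ and the remaining factors are constants $-a_{i,\sigma(i)}$; hence such terms have degree at most $n-2$ in $x$. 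Matching the $x^{n-1}$ coefficients on both sides yields $-\operatorname{trace}(A) = -\sum_{i=1}^n \lambda_i$.

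The only potentially delicate step is the combinatorial claim that any non-identity $\sigma \in S_n$ has at most $n-2$ fixed points. I would justify it in one line: if $\sigma \neq \operatorname{id}$, then there is some $i$ with $\sigma(i) \neq i$, and since $\sigma$ is a bijection the element $j = \sigma(i)$ also has $\sigma^{-1}(j) = i \neq j$, producing a second non-fixed index. Thus at most $n-2$ factors of the Leibniz product can contribute the variable $x$, and the required degree bound follows. With this in hand, both the determinant and trace identities drop out immediately from matching coefficients.
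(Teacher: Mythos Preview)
Your proof is correct and follows essentially the same route as the paper's own proof: both factor the characteristic polynomial as $\prod_i (x-\lambda_i)$, evaluate at $x=0$ for the determinant, and compare the $x^{n-1}$ coefficient (via the Leibniz formula and the observation that only the identity permutation contributes terms of degree $\geq n-1$) for the trace. Your justification of the fixed-point count for non-identity permutations is slightly more explicit than the paper's, but the argument is the same.
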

\begin{proof}
Consider the characteristic polynomial $det(xI-A)$. The eigenvalues of $A$ are
the roots of $det(xI-A)$, and so, 
\begin{equation}\label{eqdet}det(xI-A)=(x-\lambda_1)(x-\lambda_2)\dots (x-\lambda_n).\end{equation}

\noindent 1) It is enough to evaluate this equation in $x=0$ and we obtain $det(-A)=(-1)^n
\lambda_1\lambda_2\dots\lambda_n$. We also have $det(-A)=(-1)^n det(A)$, because any term in the
Leibniz formula for determinants is a product of $n$ elements of the matrix. This simply leads to
$(-1)^ndet(-A)=(-1)^n \lambda_1\lambda_2\dots\lambda_n$, and so,
$det(A)=\lambda_1\lambda_2\dots\lambda_n$\\
~\\

\noindent 2) We evaluate the term corresponding to $x^{n-1}$ of both sides of \eqref{eqdet}. In the
right side we obtain the term $-\sum_{i=1}^n \lambda_i x^{n-1}$. We will show that in the lefthand
side we obtain $-\sum_{i=1}^n \a_{ii} x^{n-1}$. Using the Leibniz formula for determinants, we
observe that the only determinant terms that make $x$ arise $n-1$ times in $det(xI-A)$ are those that use
$n-1$ diagonal terms of the form $x-a_{ii}$ with $i\in[1..n]$. Such a determinant term needs to also use the $n^\text{th}$ diagonal value as
well. To find the term of $x^{n-1}$ in $det(xI-A)$ we need to develop
$(x-a_{11})(x-a_{22})\dots(x-a_{nn})$. The $x^{n-1}$ term is $-\sum_{i=1}^n \a_{ii} x^{n-1}$.
\end{proof}

\begin{proposition}\label{propInverseComutes}Given $A,B\in \C^n$, if $AB=I_n$
then $BA=I_n$. We want a proof for the lazy, without calculating
some left $A^{-1}$ or some right $B^{-1}$.
\end{proposition}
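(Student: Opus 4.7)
The plan is to exploit the determinant identity $\det(AB)=\det(A)\det(B)$ (accepted from the start of the paper) together with Prop.~\ref{lindep}, avoiding any explicit construction of $A^{-1}$ or $B^{-1}$. Applying determinants to $AB=I_n$ gives $\det(A)\det(B)=\det(I_n)=1$, so in particular $\det(A)\neq 0$. By the contrapositive of Prop.~\ref{lindep}, this means the only $\u\in\C^n$ with $A\u=\zeros$ is $\u=\zeros$; in other words, $A$ has trivial null space.

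Next, I would form the matrix $M=BA-I_n$ and left-multiply by $A$ to exploit the hypothesis $AB=I_n$ directly:
\begin{equation*}
AM = A(BA-I_n) = (AB)A - A = I_n A - A = \zeros_{n\times n}.
\end{equation*}
This means that every column $\u$ of $M$ satisfies $A\u=\zeros$. Combined with the previous step (trivial null space of $A$), each such column must be $\zeros$, hence $M=\zeros_{n\times n}$, i.e., $BA=I_n$.

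The argument is short and entirely ``lazy'' in the sense requested: we never isolate or construct an inverse matrix; we merely use that $\det(A)\neq 0$ to get injectivity of the map $\u\mapsto A\u$, and then feed the equation $AB=I_n$ into the column-wise vanishing of $BA-I_n$. The only thing one must be careful about is the direction of multiplication in the step $A(BA-I_n)=(AB)A-A$, which relies solely on associativity and not on any commutativity; and the appeal to Prop.~\ref{lindep} is over $\C$, exactly matching the hypothesis $A,B\in\C^{n\times n}$. I do not foresee a real obstacle; the main temptation to resist is introducing $A^{-1}$ prematurely, which would make the proof circular with respect to the very fact we are trying to establish.
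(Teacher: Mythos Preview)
Your proof is correct and takes a genuinely different route from the paper's. The paper argues via $B$: it shows the columns of $B$ are linearly independent (since $AB\alphaalpha\neq\zeros$ for $\alphaalpha\neq\zeros$ forces $B\alphaalpha\neq\zeros$), hence span $\C^n$, so there exists $Y$ with $BY=I_n$; then $Y=(AB)Y=A(BY)=A$, giving $BA=I_n$. You instead argue via $A$: the determinant identity gives $\det(A)\neq 0$, Prop.~\ref{lindep} turns this into trivial null space for $A$, and the clean algebraic step $A(BA-I_n)=\zeros$ forces $BA=I_n$ column by column. Your approach is shorter and leans on $\det(AB)=\det(A)\det(B)$, which the paper explicitly lists as an accepted prerequisite; the paper's approach avoids determinants entirely and instead relies on the dimension/spanning fact that $n$ independent vectors in $\C^n$ cover all of $\C^n$. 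Both are ``lazy'' in the intended sense of never writing down an explicit inverse, but yours is the more economical of the two.
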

\begin{proof} 
We first show that the columns of $B$ are linearly independent. This follows
from $\alphaalpha\in\C^n-\{\zeros\}\implies (AB)\alphaalpha\neq \zeros$. It is
clear we can not have $B\alphaalpha =\zeros$ because this would make
$AB\alphaalpha=\zeros$. 
The columns of $B$ need to be linearly independent, and so, their linear combinations cover 
a space of dimension $n$, \ie, $\C^n$ (see also Prop.~\ref{defSpaceSize}). As
such, for any column $\x_i$ of $I_n=[\x_1~\x_2\dots \x_n]$, there exist a linear combination of the columns of $B$ that
is equal to $\x_i$, \ie, $\exists\y_i$ s.~t.~$\x_i=B\y_i$. Joining toghether all
columns $\x_i$ of $I_n$, there exists $Y=[\y_1~\y_2\dots\y_n]$ s.~t.~$I_n=BY$.
We finish
by $Y=(AB)Y=A(BY)=A$ which proves $BA=I_n$.
\end{proof}

\subsubsection{The dimension of a (sub-)space}
\begin{definition}\label{defaffinelyindep} The vectors $\x_1,~\x_2,\dots \x_k\in\R^n$ are affinely indepedent if there is no
$\lambda_1,~\lambda_2,\dots \lambda_k$ with $\sum_{i=1}^k\lambda_i=0$ such that $\sum_{i=1}^k
\lambda_i\x_i=\zeros_n$. This is equivalent to the fact that $\x_1-\x_k,~\x_2-\x_k,\dots
\x_{k-1}-\x_k$ are linearly independent.
\end{definition}
\begin{proof} It is enough to show that 
$$\x_1,~\x_2,\dots \x_k \text{ affinely dependent}
\Longleftrightarrow
\x_1-\x_k,~\x_2-\x_k,\dots \x_{k-1}-\x_k \text { linearly dependent}
$$
\noindent $\Longrightarrow$\\
Given $\lambda_1,~\lambda_2,\dots \lambda_k$ with $\sum_{i=1}^k\lambda_i=0$ such that
$\sum_{i=1}^k \lambda_i\x_i=\zeros_n$, we obtain
$$\sum_{i=1}^{k-1} \lambda_i(\x_i-\x_k)\underbrace{=
  \sum_{i=1}^{k}   \lambda_i(\x_i-\x_k)}_{\textnormal{we added $+\lambda_k(\x_k-\x_k)$}}=
  \sum_{i=1}^{k}   \lambda_i\x_i -
  \sum_{i=1}^{k}   \lambda_i\x_k= \zeros_n - 0 \x_k=\zeros_n$$

\noindent $\Longleftarrow$\\
We consider there is $\lambda_1,~\lambda_2,\dots \lambda_{k-1}$ such that
$\sum_{i=1}^{k-1}\lambda_i(\x_i-\x_k)=0$,
equivalent to $\sum_{i=1}^{k-1} \lambda_i\x_i - \sum_{i=1}^{k-1}
\lambda_i\x_k=0$.
By taking $\lambda_k=-\sum_{i=1}^{k-1}\lambda_i$,
this reduces to $\sum_{i=1}^{k-1} \lambda_i\x_i+\lambda_k\x_k=0$.
\end{proof}
\begin{definition} \label{defSpaceSize}A subspace $S\subseteq \R^n$ has dimension $k$ if $k+1$ is the maximum number of
affinely independent vectors $\x_0, \x_1,~\x_2\dots x_{k}\in S$. This is equivalent to the existence of
maximum $k$ linearly independent vectors $\x_1-\x_{0},~\x_2-\x_0,\dots \x_{k}-\x_0$ by virtue of Def
\ref{defaffinelyindep}. We say that $\x_0$ is
an orgin and $\x_1-\x_0,~\x_2-\x_0,\dots \x_k-\x_0$ are a basis of $S$.

If $S$ contains $\zeros_n$, we can take $\x_0=\zeros_n$ and the definition is equivalent to the
existence of maximum $k$ linearly independent vectors
$\x_1,~\x_2,\dots \x_k$.
\end{definition}

\subsubsection{Every eigenvalue belongs to a Gershgorin disc}
\begin{theorem}\label{thGershgorin} (Gershgorin circle theorem)
Given complex matrix $A\in \C^{n\times n}$, every eigenvalue $\lambda$ belongs
to a Gershgorin disk of the following form for some $i\in[1..n]$.
$$\left\{z\in \C:~|z-A_{ii}|\leq \sum_{\substack{j=1\\j\neq i}}^n |A_{ij}|\right\}$$
\end{theorem}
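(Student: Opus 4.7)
The plan is to take an eigenvalue $\lambda$ together with any associated eigenvector $\v \neq \zeros$ (which exists by Prop.~\ref{lindep} applied to $\lambda I_n - A$, since $\det(\lambda I_n - A) = 0$ by the characteristic polynomial definition) and then to pick the row index where $\v$ has maximal modulus, so as to turn the eigenvalue equation into a triangle-inequality bound. Concretely, I would let $i \in [1..n]$ be such that $|v_i| = \max_{j\in[1..n]} |v_j|$; since $\v \neq \zeros$, we have $|v_i| > 0$.

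Next I would read off row $i$ of the eigenvalue equation $A\v = \lambda \v$, obtaining $\sum_{j=1}^n A_{ij} v_j = \lambda v_i$, and rearrange it as $(\lambda - A_{ii}) v_i = \sum_{j \neq i} A_{ij} v_j$. Taking moduli, applying the triangle inequality, and then bounding $|v_j| \leq |v_i|$ gives $|\lambda - A_{ii}| \, |v_i| \leq \sum_{j\neq i} |A_{ij}| \, |v_j| \leq |v_i| \sum_{j\neq i} |A_{ij}|$. Dividing both sides by $|v_i| > 0$ yields exactly $|\lambda - A_{ii}| \leq \sum_{j\neq i} |A_{ij}|$, placing $\lambda$ in the $i$-th Gershgorin disk.

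There is no real obstacle here; the proof is essentially a one-liner once one commits to the trick of selecting the coordinate of $\v$ with largest modulus. The only small subtlety worth flagging is the need to justify the existence of a (possibly complex) eigenvector, which is handled by Prop.~\ref{lindep} as already used in the discussion preceding the real-symmetric eigendecomposition; after that, everything reduces to the triangle inequality and the normalization $|v_j|/|v_i| \leq 1$. The index $i$ selected in this way depends on $\lambda$ (different eigenvalues may land in different disks), but the statement only requires existence of \emph{some} $i$, so no uniformity is needed.
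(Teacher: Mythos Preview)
Your proof is correct and essentially identical to the paper's: both select the coordinate of maximal modulus in an eigenvector, isolate $(\lambda - A_{ii})v_i$ from row $i$ of $A\v=\lambda\v$, and apply the triangle inequality together with $|v_j|\le|v_i|$. The only cosmetic difference is that the paper divides by $v_i$ before taking moduli whereas you take moduli first and then divide by $|v_i|$; your explicit appeal to Prop.~\ref{lindep} for the existence of an eigenvector is a small addition the paper leaves implicit.
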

\begin{proof}Consider eigenvalue $\lambda$ with an eigenvector
$\v\in\C^n-\{\zeros\}$.
Take an index $i\in[1..n]$ such that $|v_i|\geq |v_j|~\forall j\in[1..n]$.
This means that $\left|\dfrac{v_j}{v_i}\right|\leq 1~\forall j\in[1..n]$. We now develop position $i$ of 
$A\v=\lambda \v$ and we obtain $\lambda v_i=\sum\limits_{j=1}^n A_{ij}v_j$. Dividing
this by $v_i$ leads to $\lambda = A_{ii} + \sum\limits_{\substack{j=1\\j\neq
i}}^n A_{ij}\dfrac{v_j}{v_i}$. We can further develop:
\begin{align*}
\left|\lambda - A_{ii}\right|&=\left|\sum_{\substack{j=1\\j\neq i}}A_{ij}\dfrac{v_j}{v_i}\right|\\
                  &\leq \sum_{\substack{j=1\\j\neq i}}\left|A_{ij}\dfrac{v_j}{v_i}\right|
                    \tag{\text{we used the triangle inequality}}\\
                  &\leq \sum_{\substack{j=1\\j\neq i}}^n \left|A_{ij}\right|,
                    \tag{\text{we used }$\left|\dfrac{v_j}{v_i}\right|\leq 1$}
\end{align*}
which proves that $\lambda$ belongs to the Gershgorin disk associated to $i$.
\end{proof}

\section{\label{appa}Three decompositions: eigenvalue, QR and square root}

\subsection{Preliminaries on eigen-values/vectors and similar matrices}
\begin{proposition}\label{eigenexists} Given matrix $A\in \R^{n\times n}$, one can always
find some $\lambda \in \C$ and $\u\in \C^n$ such that $A\u=\lambda \u$, \ie,
there is at least one eigenvalue associated to an eigenvector.
\label{realeigenexists}
\end{proposition}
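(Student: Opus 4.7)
The plan is to show that the existence of an eigenvalue/eigenvector pair reduces immediately to two ingredients the manuscript has already licensed: the Fundamental Theorem of Algebra (explicitly listed among the results taken for granted in the argument section) and Prop.~\ref{lindep} (which was proved over $\C$ precisely so it could be applied here).

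First I would introduce the characteristic polynomial $p(x) = \det(xI_n - A)$. Expanding by the Leibniz formula, one sees that $p$ is a polynomial in $x$ of degree exactly $n$ with real coefficients, and hence in particular with complex coefficients; the leading term is $x^n$. This step is essentially a bookkeeping observation and I would not dwell on it beyond noting where the degree comes from (the diagonal product $(x-A_{11})(x-A_{22})\cdots(x-A_{nn})$ in the Leibniz expansion contributes the unique $x^n$ term, all other Leibniz terms having degree strictly less).

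Next, applying the Fundamental Theorem of Algebra, $p$ has at least one root $\lambda \in \C$. By definition of this root, $\det(\lambda I_n - A) = 0$. Note that $\lambda I_n - A$ is a matrix with entries in $\C$ (the subtraction of a real matrix from a complex scalar matrix), so I would emphasize that we are working inside $\C^{n\times n}$ from this point on, which is why Prop.~\ref{lindep} was stated in the complex setting.

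Finally, applying Prop.~\ref{lindep} to the complex matrix $\lambda I_n - A$, the vanishing of its determinant yields the existence of a non-zero $\u \in \C^n$ with $(\lambda I_n - A)\u = \zeros$, i.e.\ $A\u = \lambda \u$, which is exactly the desired eigenvalue/eigenvector relation. The main obstacle here is essentially invisible, because it has been packaged away into the Fundamental Theorem of Algebra; without that black box, guaranteeing even one complex root of a degree-$n$ polynomial is the whole difficulty, and the author already signaled that this is one of the very few results accepted without proof in the manuscript. So the proof itself is really just a two-line chain: characteristic polynomial has a root (FTA) $\Longrightarrow$ singular matrix (determinant zero) $\Longrightarrow$ non-trivial kernel vector (Prop.~\ref{lindep}) $\Longrightarrow$ eigenvector.
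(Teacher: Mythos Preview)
Your proof is correct and follows exactly the same route as the paper: form the characteristic polynomial, invoke the Fundamental Theorem of Algebra to obtain a complex root $\lambda$, and then apply Prop.~\ref{lindep} (stated over $\C$ for precisely this purpose) to produce a non-zero $\u$ with $(\lambda I_n - A)\u = \zeros$. Your version is slightly more verbose in justifying the degree of the polynomial and in stressing why the complex setting is needed, but the logical skeleton is identical.
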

\begin{proof}
We consider the characteristic polynomial $det(xI-A)=0$. Using the fundamental
theorem of algebra, this polynomial has at least one
complex root $\lambda$. We
have $det(A-\lambda I)=0$. Using Proposition~\ref{lindep}, there exists some
$\u \in \C^n$ such that
$(A-\lambda I) \u=\zeros\implies A\u = \lambda\u$.
\end{proof}

\begin{proposition}\label{propEigenReal} All eigenvalues and proper eigenvectors of a real symmetric matrix 
are real (non complex).
\label{lambdareal}
\end{proposition}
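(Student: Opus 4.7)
The plan is to reduce everything to a standard ``two ways to evaluate $\bar{\u}^\top A \u$'' trick. First I would invoke Prop.~\ref{eigenexists} to obtain \emph{some} (possibly complex) eigenpair $(\lambda,\u)$ with $\lambda\in\C$, $\u\in\C^n\setminus\{\zeros\}$, and $A\u=\lambda\u$. Taking complex conjugates of this identity and using the crucial hypothesis that $A$ is \emph{real} (so $\bar A=A$), I would derive the companion identity $A\bar\u=\bar\lambda\bar\u$.

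Next I would compute the scalar $\bar\u^\top A\u$ in two different ways. On the one hand, substituting $A\u=\lambda\u$ gives $\bar\u^\top A\u=\lambda\,\bar\u^\top\u$. On the other hand, using the symmetry $A=A^\top$, I can rewrite $\bar\u^\top A\u=\bar\u^\top A^\top\u=(A\bar\u)^\top\u$, and then substitute $A\bar\u=\bar\lambda\bar\u$ to obtain $\bar\u^\top A\u=\bar\lambda\,\bar\u^\top\u$. Equating the two expressions yields $(\lambda-\bar\lambda)\,\bar\u^\top\u=0$. The key observation is that $\bar\u^\top\u=\sum_{i=1}^n |u_i|^2>0$ since $\u\neq\zeros$, so we may divide and conclude $\lambda=\bar\lambda$, i.e.\ $\lambda\in\R$.

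Having shown every eigenvalue is real, I would finish with the eigenvector claim as follows. Since $\lambda\in\R$, the matrix $\lambda I_n-A$ is a \emph{real} $n\times n$ matrix, and by construction $\det(\lambda I_n-A)=0$. I can therefore invoke the real-coefficient corollary of Prop.~\ref{lindep} (explicitly noted right after its proof: ``If we replace $\C$ by $\R$\ldots everything remains correct'') to produce a nonzero \emph{real} vector $\v\in\R^n$ with $(\lambda I_n-A)\v=\zeros$, i.e.\ a real eigenvector attached to $\lambda$.

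I do not anticipate a serious obstacle; the only point that requires care is the bookkeeping with conjugates and transposes in Step~2, namely making sure the identity $\bar\u^\top A\u=(A\bar\u)^\top\u$ is written with an ordinary transpose (not a conjugate transpose) so that the step ``pull $A$ across'' uses only $A=A^\top$ and not any unjustified Hermitian property. The positivity $\bar\u^\top\u>0$ must likewise be stated with the genuine Hermitian inner product $\sum|u_i|^2$, not with $\u^\top\u$, which could vanish for a nonzero complex vector.
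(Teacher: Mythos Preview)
Your proof is correct and the eigenvalue argument is essentially identical to the paper's: both compute the Hermitian form $\bar\u^\top A\u$ (the paper writes it as $\u^*A\u$) in two ways and divide by $\sum_i|u_i|^2>0$ to force $\lambda=\bar\lambda$. The only real difference is in the eigenvector step: the paper splits a complex eigenvector as $\u=\u_a+\u_b i$ and observes that the real and imaginary parts $\u_a,\u_b$ each satisfy $(A-\lambda I)\u_a=\zeros$ and $(A-\lambda I)\u_b=\zeros$, whereas you re-invoke the real corollary of Prop.~\ref{lindep} on the real singular matrix $\lambda I_n-A$. Both routes are fine; yours is arguably cleaner since it sidesteps the (easy but unstated) check that at least one of $\u_a,\u_b$ is nonzero.
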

\begin{proof}
Take symmetric matrix $A\in \R^{n\times n}$, as well as $\lambda$ and $\u$ such that $\lambda \u = A \u$. Let
$\u^*$ be the conjugate transpose of $\u$, \ie, transpose $\u$ and negate all
terms that contain $i$. We have:
$$\u^*\lambda \u = \u^* A \u $$
We take the conjugate transpose of both sides (transpose and then negate imaginary
terms) and obtain:
$$\u^* \lambda^* \u = \u^* A \u.$$
We used the fact that the conjugate of an expression can be obtained by
conjugating each of the expression's terms,\footnote{If you are unfamiliar with
complex numbers, take the product of two complex numbers: $ a_1 a_2 - b_1 b_2 +
(a_1b_1+a_2b_2) i = (a_1 + b_1 i)(a_2+b_2 i)$. By conjugating each term in the
right-hand side, we obtain the conjugate of the left-hand side, \ie,
$ a_1 a_2 - b_1 b_2 - (a_1b_1+a_2b_2) i$. For additions, the property is even easier to
verify.} as well as 
$\left(\u^*\right)^*=\u$ and
$\left(A^*\right)^*=A$.
The two above expressions have the same right-hand side and need to be equal:
$\u^*\lambda \u= \u^*\lambda^* \u\implies 
\lambda (\u^* \u) = \lambda^* (\u^* \u)$. 
Since $(a + bi)(a-b i) = a^2 + b^2$, it is easy to check 
that $\sum_i^n u^*_i u_i$ is a strictly positive real (unless $\u=\zeros$). As
such, $\lambda = \lambda^*$, \ie, $\lambda$ is real.

We now show that a proper eigenvector $\u$ is real. Suppose $\u = \u_a + \u_bi$, with $\u_a,\u_b \in \R^n$. Since $(A - \lambda
I)(\u_a + \u_bi) = \zeros$, we obtain:
$(A - \lambda I)\u_a=\zeros$ and
$(A - \lambda I)\u_b=\zeros$. This means that $\u$ is merely a combination of
other eigenvectors $\u_a$ and $\u_b$ of $\lambda$. We consider that the proper eigenvectors are $\u_a$ and $\u_b$. One
can always multiply them by complex numbers and combine them to obtain 
eigenvectors like $\u$.
\end{proof}

\begin{definition} (algebraic and geometric multiplicity) The algebraic multiplicity of an eigenvalue $\lambda$ of
matrix $A\in \C^{n\times n}$ is the multiplicity of root $\lambda$ in the
characteristic polynomial $det(xI - A)$. The geometric multiplicity of $\lambda$
is the dimension of the eigen space $\{ \u \in \C^n: A\u=\lambda \u\}$, \ie, the
maximum number of linearly independent eigenvectors of $\lambda$ (this is how we calculate the
dimension of any space that includes $\zeros$, recall Def.~\ref{defSpaceSize}). The two
multiplicities are not necessarily equal.
\end{definition}
\begin{proof}
We give an example in which the two multiplicities are not equal. Take
$A=
\bigl[\begin{smallmatrix}
        1     &        2                   \\
        0     &        1                   \\
\end{smallmatrix} \bigr]$. The characteristic polynomial is $(x-1)^2=0$, and so,
the multiplicity of eigenvalue $\lambda = 1$ is 2. To compute the geometric
multiplicity, we determine the solutions of $u_1 = u_1 + 2 u_2$ and $u_2 = 0u_1+u_2$.
We obtain $u_2=0$ and $u_1$ can take any value. The eigen space of $\lambda$
contains all vectors $\u$ with $u_2=0$; the dimension of this space is $1$.
\end{proof}

\begin{proposition}\label{simMatSameEigenVals}
Any similar matrices $A,B\in \C^{n\times n}$ (\ie, such that $B = U^{-1} A
U$ for some non-singular $U\in C^{n\times n}$) have the same characteristic polynomial.
Consequently, $A$ and $B$ have the same eigenvalues with the same algebraic
multiplicities. We also say that $B$ is the representation of $A$ in the basis
determined 
by the columns of $U$; the change of basis matrix from this basis to the
canonical basis is exactly $U$.
\end{proposition}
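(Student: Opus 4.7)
The plan is to reduce the claim about characteristic polynomials to a straightforward determinant computation, using only the multiplicativity property $\det(XY)=\det(X)\det(Y)$ that the paper takes as a prerequisite. The geometric meaning (change of basis) will be handled as a short separate remark, since the algebraic identity does all the real work.

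First I would rewrite $xI$ as $U^{-1}(xI)U$, which is legal because the scalar matrix $xI$ commutes with every matrix, and then pull the similarity conjugation out of the difference:
\begin{equation*}
xI - B \;=\; U^{-1}(xI)U - U^{-1}AU \;=\; U^{-1}(xI-A)U.
\end{equation*}
Taking determinants and applying $\det(XY)=\det(X)\det(Y)$ twice,
\begin{equation*}
\det(xI-B) \;=\; \det(U^{-1})\,\det(xI-A)\,\det(U) \;=\; \det(xI-A),
\end{equation*}
where the last equality uses $\det(U^{-1})\det(U)=\det(U^{-1}U)=\det(I)=1$. Since the two characteristic polynomials are literally the same polynomial in $x$, their roots (the eigenvalues) coincide and each root has the same algebraic multiplicity. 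No step here is really an obstacle; the one subtlety worth a sentence is justifying $\det(U^{-1})\det(U)=1$, which I would derive from $U^{-1}U=I$ rather than from a separate formula for $\det(U^{-1})$.

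Finally, for the basis-interpretation part of the statement, I would argue briefly as follows. Let $U=[\u_1~\u_2~\dots~\u_n]$ with the $\u_i$ linearly independent (which holds because $U$ is non-singular), so that the columns of $U$ form a basis of $\C^n$. A vector whose coordinates in this new basis are $\v\in\C^n$ has canonical-basis coordinates $U\v$. Applying $A$ in the canonical basis gives $AU\v$, and re-expressing this in the new basis amounts to multiplying on the left by $U^{-1}$, yielding $U^{-1}AU\v = B\v$. Thus $B$ is precisely the matrix representation of the linear map $A$ in the basis $\u_1,\dots,\u_n$, and the change-of-basis matrix sending the new basis to the canonical one is $U$ itself (its columns being, by definition, the canonical-coordinate expressions of the new basis vectors).
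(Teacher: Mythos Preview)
Your proof is correct and follows essentially the same route as the paper: both factor $xI-B=U^{-1}(xI-A)U$ and then apply multiplicativity of the determinant together with $\det(U^{-1})\det(U)=1$. Your additional paragraph justifying the change-of-basis interpretation is a nice bonus, since the paper's proof treats that part as a remark rather than something to be argued.
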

\begin{proof}
\begin{align*}
det(xI - B)    &= det(xI - U^{-1} A U) \\
               &= det(xU^{-1} U - U^{-1} A U) \\
               &= det(U^{-1} (xI- A) U) \\
               &= det(U^{-1})\cdot det(xI- A)\cdot det(U) \\
               &= det(xI-A)
\end{align*}
\end{proof}
\begin{proposition}\label{propSameGeoMultForSimMat}
Any two similar matrices $A,B\in \C^{n\times n}$ have the same geometric
multiplicity for any common eigenvalue $\lambda$.
\end{proposition}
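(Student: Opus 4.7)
The plan is to exhibit an explicit linear bijection between the $\lambda$-eigenspace of $B$ and the $\lambda$-eigenspace of $A$, namely the map $\v \mapsto U\v$. Since $U$ is non-singular, such a map automatically preserves linear independence in both directions, and therefore preserves the dimension of any subspace it bijects.

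First I would verify that $\v \mapsto U\v$ does send the $\lambda$-eigenspace of $B$ into the $\lambda$-eigenspace of $A$. If $B\v = \lambda \v$, then $U^{-1} A U \v = \lambda \v$, and multiplying both sides on the left by $U$ yields $A(U\v) = \lambda (U\v)$, so $U\v$ is an eigenvector of $A$ associated to $\lambda$ (and $U\v \neq \zeros$ whenever $\v \neq \zeros$, because $U$ is non-singular). Symmetrically, since $A = U B U^{-1}$, the map $\w \mapsto U^{-1}\w$ sends the $\lambda$-eigenspace of $A$ into the $\lambda$-eigenspace of $B$. These two maps are mutual inverses, so they establish a linear bijection between the two eigenspaces.

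Next I would argue that this bijection implies equality of dimensions. Let $k'$ denote the geometric multiplicity of $\lambda$ in $B$, and take a maximal linearly independent family $\v_1, \v_2, \dots, \v_{k'}$ of $\lambda$-eigenvectors of $B$. By the above, $U\v_1, U\v_2, \dots, U\v_{k'}$ are $\lambda$-eigenvectors of $A$; they are linearly independent, since $\sum_i \alpha_i U\v_i = \zeros$ forces $U(\sum_i \alpha_i \v_i) = \zeros$, and the non-singularity of $U$ then forces $\sum_i \alpha_i \v_i = \zeros$, hence $\alpha_i = 0$ for all $i$. Therefore the geometric multiplicity of $\lambda$ in $A$ is at least $k'$. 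Running the identical argument with $U^{-1}$ and swapping the roles of $A$ and $B$ gives the reverse inequality, so the two geometric multiplicities coincide.

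The argument is almost entirely bookkeeping; I do not anticipate a real obstacle. The only subtle point worth being careful about is not confusing the bijection of eigenspaces with a coincidence of eigenvectors: an eigenvector of $B$ is generally not an eigenvector of $A$, but its image under $U$ is. All the work goes into checking that the conjugation by $U$ intertwines the two eigenvalue equations, which is immediate from $B = U^{-1}AU$.
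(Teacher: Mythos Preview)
Your proof is correct and follows essentially the same approach as the paper: both exhibit the map $\v \mapsto U\v$ from the $\lambda$-eigenspace of $B$ to that of $A$, verify it via the computation $B\v=\lambda\v \Rightarrow A(U\v)=\lambda(U\v)$, and use non-singularity of $U$ for bijectivity. If anything, your version is slightly more explicit than the paper's in spelling out why a \emph{linear} bijection preserves dimension (via the linear-independence argument), whereas the paper simply asserts that the bijection implies equal geometric multiplicities.
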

\begin{proof}
Take any fixed $\lambda$ and $\u_B\in \C^n$ such that $B\u_B = \lambda \u_B$. We can write
$\lambda \u_B = U^{-1} A U \u_B$, which is equivalent to 
$\lambda U \u_B = A U \u_B$. As such, $A$ has eigenvector $\u_A=U\u_B$ with eigenvalue
$\lambda$. There is a bijection between the eigenvectors of $B$ and the
eigenvectors of $A$, given by above transformation $\u_B\to \u_A=U\u_b$.
To check the bijectivity, notice the injectivity follows from
 $U\u_B=U\u'_B\implies U(\u_B-\u'_B)=\zeros
\implies \u_B=\u'_B$ based on the non-singularity of $U$.
The surjectivity follows from $\forall \u_A\in \C^n,~\exists \u_B=U^{-1}\u_A\in\C^n$ such that
$\u_A=U\u_B$.
This bijection shows that the geometric multiplicities of $\lambda$ are
the same.
\end{proof}
\subsection{\label{apEigenDecomp}The eigenvalue decomposition}
\begin{proposition}(Eigendecomposition)\label{propEigenDecomp}
Any symmetric matrix $A\in \R^{n\times n}$ can be decomposed as follows:
\begin{align}
A &= U \Lambda U^\top \label{EigenDecomp}\\
  &= \sum_{i=1}^n \lambda_i \u_{\times,i}\u^\top_{\times,i},\label{EigenDecompBis}
\end{align}
where $\Lambda=\texttt{diag}(\lambda_1, \lambda_2, \dots \lambda_n)$ contains
the
eigenvalues of $A$ and $U$ contains columns $\u_{\times,1},~\u_{\times,2},\dots \u_{\times,n}$ 
that represent the
orthonormal unit eigenvectors of $A$.
\end{proposition}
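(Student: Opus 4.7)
The plan is to assemble the decomposition from three ingredients that have essentially already been prepared in Section~\ref{sec11}: (i)~existence of a real eigenvalue for any real symmetric matrix, (ii)~orthogonality of eigenvectors attached to distinct eigenvalues, and (iii)~the equality of algebraic and geometric multiplicity (Step~2 of the sketch in Section~\ref{sec11}). First I would invoke Prop.~\ref{eigenexists} to obtain a complex eigenvalue of $A$ as a root of the characteristic polynomial $\det(xI_n-A)$, and Prop.~\ref{propEigenReal} to guarantee that, because $A$ is real symmetric, this eigenvalue and an associated eigenvector are both real. Then Prop.~\ref{propdetiseigenprod} (``the determinant equals the product of the eigenvalues'') justifies that the characteristic polynomial factors as $(x-\lambda_1)(x-\lambda_2)\cdots(x-\lambda_n)$ over $\R$ with multiplicities summing to $n$.

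Next I would record, in a short separate lemma, the classical orthogonality argument reproduced in the ``simple case of distinct eigenvalues'' paragraph of Section~\ref{sec11}: for two eigenvectors $\v_i,\v_j$ corresponding to eigenvalues $\lambda_i\neq\lambda_j$ of a symmetric $A$, one computes $\v_i^\top A\v_j$ in two ways and concludes $\v_i^\top\v_j=0$. This handles interactions between distinct eigenspaces.

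The key (and hardest) step is then to establish, for each eigenvalue $\lambda_i$ of algebraic multiplicity $k_i$, that the eigenspace $E_{\lambda_i}=\{\v\in\R^n:\,A\v=\lambda_i\v\}$ has dimension exactly $k_i$. I would follow Step~2 of Section~\ref{sec11}: if the geometric multiplicity were $k'_i<k_i$, pick an orthonormal basis of $E_{\lambda_i}$ and complete it (via Gram--Schmidt) to an orthonormal basis of $\R^n$; packing these vectors as the columns of an orthogonal $U$ yields
\[
U^\top A U=\begin{bmatrix}\lambda_i I_{k'_i} & \zeros\\ \zeros & D\end{bmatrix}.
\]
Because $U^\top A U$ is similar to $A$, Prop.~\ref{simMatSameEigenVals} and Prop.~\ref{propSameGeoMultForSimMat} say it shares both the characteristic polynomial and the geometric multiplicity of $\lambda_i$; but the block-diagonal shape forces $\lambda_i$ to be a root of $\det(xI-D)$, producing an extra independent eigenvector and contradicting $k'_i<k_i$. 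Hence $k'_i=k_i$. This similarity-and-bordering argument is where the real work lies, and it is the step I expect to be the main obstacle to make rigorous without hand-waving about ``filling in the remaining columns''; the cleanest way is an explicit Gram--Schmidt construction.

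Finally, I would assemble the decomposition. Within each eigenspace $E_{\lambda_i}$ of dimension $k_i$, apply Gram--Schmidt to obtain $k_i$ orthonormal eigenvectors; between distinct eigenspaces, orthogonality is automatic by the lemma above. Summing the dimensions $\sum k_i=n$ gives a full orthonormal family $\v_1,\dots,\v_n$. Stacking them as columns of $U$ yields $U^\top U=I_n$ (hence also $UU^\top=I_n$ by Prop.~\ref{propInverseComutes}), and the identities $A\v_i=\lambda_i\v_i$ for all $i$ combine into $AU=U\Lambda$ with $\Lambda=\texttt{diag}(\lambda_1,\dots,\lambda_n)$. Right-multiplying by $U^\top$ yields \eqref{EigenDecomp}, and then \eqref{EigenDecompBis} follows by expanding the matrix product column-by-column as already shown below equation~\eqref{firstEqDecomp} in the main text.
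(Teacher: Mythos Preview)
Your proposal is correct and follows essentially the same route as the paper's first proof (the one detailed in Appendix~\ref{appGeoAlgeEqual} via Prop.~\ref{propGeoAlgeMultResultsEigendecomp} and Prop.~\ref{propGeoAlgeEqual}): establish that algebraic and geometric multiplicities coincide using the similarity-and-block argument, then assemble the orthonormal eigenbasis. The paper also offers a second, independent proof via the Schur triangulation (Appendix~\ref{appSchurTriang}), which you did not use but which is optional.
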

\begin{proof} 
We provide two proofs 
(Appendix~\ref{appGeoAlgeEqual}
and
Appendix~\ref{appSchurTriang})
for showing the key fact \eqref{EigenDecomp}:
\begin{enumerate}
\leftskip -1em
    \item[--] 
The first proof was actually given in Section~\ref{sec11} and we here only
repeat it in greater detail and a bit generalized.
It relies on 
the equality between the geometric and algebraic multiplicities 
of each eigenvalue.
    \item[--] Use the Schur decomposition of complex matrices 
and above \eqref{EigenDecomp} becomes a 
simple 
re-writing of
\eqref{schurDiag}
in Prop.~\ref{firstsym}.
Reading this second proof is useful
to develop a general culture (on complex or asymmetric matrices).
\end{enumerate}

\noindent Both above proofs
also show that $U$ contains $n$ orthonormal unit eigenvectors of $A$
and that this way we have $U^\top=U^{-1}$.
After showing \eqref{EigenDecomp} with either proof, \eqref{EigenDecompBis}
simply follows from developing $A=U \Lambda U^\top$:
\begin{align*}A &= 
[\u_{\times,1},\u_{\times,2}, \dots,\u_{\times,n}]
\Lambda
\begin{bmatrix}
\u^\top_{\times,1} \\
\u^\top_{\times,2} \\
\vdots             \\
\u^\top_{\times,n}
\end{bmatrix}
=
[\u_{\times,1},\u_{\times,2}, \dots,\u_{\times,n}]
\begin{bmatrix}
\lambda_1\u^\top_{\times,1} \\
\lambda_2\u^\top_{\times,2} \\
\vdots             \\
\lambda_n\u^\top_{\times,n}
\end{bmatrix}
=
\sum_{i=1}^n \lambda_i \u_{\times,i}\u^\top_{\times,i}
\end{align*}
Writing eigenvector $\v_i=\u_{\times,i}\in\R^n$, the above formula can be
expressed in
a very compact form:
\begin{equation}\label{eigendecompnice}
A=\sum_{i=1}^n \lambda_i \v_i\v^\top_i,
\end{equation}
where $\v_1,~\v_2,\dots \v_n$ are orthonormal eigenvectors (pairwise orthogonal
and of unitary norm). 
\end{proof}

\subsubsection{\label{appGeoAlgeEqual}Proof using the equality of the geometric and algebraic multiplicities}

\begin{proposition}\label{propGeoAlgeMultResultsEigendecomp} Consider 
(possibly non-symmetric) matrix $A\in\R^{n\times n}$ such that each eigenvalue
$\lambda_i$  has the same geometric and
algebraic multiplicity $k_i$. This means root $\lambda_i$ arises $k_i$ times in
the characteristic polynomial and there are $k_i$ linearly independent eigenvectors of
$\lambda_i$. Matrix $A$ has the following eigen-decomposition
(diagonalization): 
\begin{equation}\label{EigenDecompGeoEqAlgeGen}
A=U\texttt{diag}(\lambda_1,~\lambda_2,\dots \lambda_n) U^{-1}\end{equation} 
where $U$ contains $n$ orthonormal eigenvectors of $A$ and 
$\texttt{diag}(\lambda_1,~\lambda_2,\dots \lambda_n)$ is a diagonal matrix with
the (possibly complex) eigenvalues on the diagonal. 

\noindent If $A$ is symmetric, we have $\lambda_1,~\lambda_2,\dots \lambda_n\in\R$
(Prop.~\ref{propEigenReal}). We also prove $U^{-1}=U^\top$, and so, \eqref{EigenDecompGeoEqAlgeGen} becomes:
\begin{equation}\label{EigenDecompGeoEqAlge}
A=U\texttt{diag}(\lambda_1,~\lambda_2,\dots \lambda_n) U^\top \end{equation} 
\end{proposition}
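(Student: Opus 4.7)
The plan is to assemble a basis of $\R^n$ (or $\C^n$) out of eigenvectors by taking, for each distinct eigenvalue $\lambda_i$, a basis of its eigenspace, and then to show that the union of these bases is linearly independent. Once I have $n$ independent eigenvectors stacked as columns of $U$, the identity $AU=U\,\texttt{diag}(\lambda_1,\ldots,\lambda_n)$ follows from $A\v_j=\lambda_j\v_j$ column by column, and multiplying on the right by $U^{-1}$ yields \eqref{EigenDecompGeoEqAlgeGen}.

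First I would note that since the characteristic polynomial has degree $n$, the sum of the algebraic multiplicities equals $n$. Under the hypothesis that geometric multiplicity equals algebraic multiplicity for every eigenvalue, I therefore have $\sum_i k_i = n$ linearly independent eigenvectors available inside the individual eigenspaces. The key step is then the classical fact that eigenvectors belonging to distinct eigenvalues are linearly independent. I would prove this by induction on the number of distinct eigenvalues $\lambda^{(1)},\lambda^{(2)},\ldots,\lambda^{(m)}$ involved: assuming $\sum_{j=1}^{m}\v_j=\zeros$ with $\v_j$ in the eigenspace of $\lambda^{(j)}$ (and each $\v_j\neq\zeros$ when present), apply $A-\lambda^{(m)}I$ to eliminate $\v_m$ and reduce to a relation among $m-1$ eigenspaces, where the induction hypothesis forces $(\lambda^{(j)}-\lambda^{(m)})\v_j=\zeros$ for $j<m$ and hence $\v_j=\zeros$; then $\v_m=\zeros$ as well. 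Combining this with the linear independence inside each eigenspace gives $n$ linearly independent eigenvectors globally, so the matrix $U$ whose columns they form is non-singular, which is exactly what is needed for $A=U\Lambda U^{-1}$.

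For the symmetric case I would additionally invoke two facts already established earlier in the text: for real symmetric $A$ the eigenvalues are real (Prop.~\ref{propEigenReal}), and eigenvectors associated to \emph{distinct} eigenvalues are automatically orthogonal (the short calculation $\v_i^\top A\v_j = \lambda_j\v_i^\top\v_j = \lambda_i\v_i^\top\v_j$ from Section~\ref{sec11}). Within a single eigenspace of dimension $k_i$ the $k_i$ independent eigenvectors provided by the hypothesis can be orthonormalised by Gram--Schmidt, and since every vector in the resulting orthonormal set still satisfies $A\v=\lambda_i\v$, it remains an eigenvector of $\lambda_i$. Concatenating the orthonormal bases of the distinct eigenspaces produces a full orthonormal system of $n$ eigenvectors, so the matrix $U$ satisfies $U^\top U=I_n$, whence $U^{-1}=U^\top$ (using Prop.~\ref{propInverseComutes}) and \eqref{EigenDecompGeoEqAlgeGen} collapses to \eqref{EigenDecompGeoEqAlge}.

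The main obstacle, and the only step that is not a short bookkeeping exercise, is the cross-eigenspace independence argument. Once that induction is in place, everything else (dimension count, stacking eigenvectors into $U$, reading off $AU=U\Lambda$, and the symmetric refinement via orthogonality plus Gram--Schmidt) is routine, provided I am careful to distinguish what must be done inside each eigenspace from what must be done across distinct eigenspaces.
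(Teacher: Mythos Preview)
Your proposal is correct and follows essentially the same route as the paper: take bases of the individual eigenspaces, stack them into $U$, read off $AU=U\Lambda$, and in the symmetric case use orthogonality across distinct eigenvalues together with an orthonormal choice inside each eigenspace to get $U^\top U=I_n$. In fact you are slightly more careful than the paper: the paper simply writes ``multiplying at right with $U^{-1}$'' without ever justifying that $U$ is invertible, whereas you supply the standard induction showing that eigenvectors from distinct eigenspaces are linearly independent, which is exactly what is needed to make that step rigorous.
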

\begin{proof}
Since each eigenvalue $\lambda_i$ (with $i\in[1..n]$) has the same geometric and algebraic
multiplicity $k_i$, we can say each repetition of $\lambda_i$ as root of the
characteristic polynomial can be associated to a different eigenvector.
The eigenspace of $\lambda_i$ has dimension $k_i$ and we can surely
find an orthonormal basis of this space to represent the $k_i$ eigenvectors
associated to the $k_i$ repetitions of root $\lambda_i$. The sums of the algebraic multiplicities is $n$
because the characteristic polynomial has degree $n$, and so, we can fill
an $n\times n$ matrix $U$ with the eigenvectors of $\lambda_1,~\lambda_2,\dots
\lambda_n$.

Since
by multiplying $A$ with
any eigenvector (column) of $U$ we obtain the eigenvector multiplied by its
eigenvalue,
we can write
$U\texttt{diag}(\lambda_1,~\lambda_2,\dots \lambda_n)=AU$. By multiplying at right 
with $U^{-1}$, we obtain \eqref{EigenDecompGeoEqAlgeGen}.

If $A$ is symmetric, we can show $U^{-1}=U^\top$ using the fact that the eigenvectors are orthonormal. We already said above that the eigenvectors
corresponding to the same eigenvalue can be chosen to be orthonormal. Any
eigenvectors $\v_i$ and $\v_j$ corresponding to distinct eigenvalues
$\lambda_i\neq \lambda_j$ need to be orthogonal. 
To see this, notice
$\v_i^\top A\v_j=\v_i^\top\lambda_j\v_j=\lambda_j\v_i^\top \v_j$ and also
$\v_i^\top A\v_j=\lambda_i\v_i^\top \v_j$ based on $\v_i^\top A={(\v_i^\top
A)^\top}^\top=(A^\top \v_i)^\top =(A\v_i)^\top=\lambda_i\v_i^\top$.
This leads to $\lambda_j\v_i^\top \v_j=\lambda_i\v_i^\top \v_j$, and, using $\lambda_i\neq \lambda_j$, 
we obtain $\v_i^\top \v_j=0~\forall i,j\in[1..n],~i\neq j$. 
This shows $U^\top U = I_n$, and so, $U^{-1}=U^\top$. 
By simply replacing $U^{-1}$ with $U^\top$ in
\eqref{EigenDecompGeoEqAlgeGen}, we obtain \eqref{EigenDecompGeoEqAlge} as
needed.
\end{proof}
\begin{proposition}\label{propGeoAlgeEqual} Any eigenvalue $\lambda$ of real symmetric matrix $A$ has
the same algebraic and geometric multiplicity.
\end{proposition}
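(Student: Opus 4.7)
The plan is to proceed by contradiction along the lines sketched in Step 2 of Section~\ref{sec11}, but now spelling out the details carefully. Fix an eigenvalue $\lambda_i$ of $A$ with algebraic multiplicity $k_i$ and geometric multiplicity $k'_i$, and assume $k'_i < k_i$ toward a contradiction (the reverse inequality $k'_i \leq k_i$ is generally easy and can be recorded separately, but it is the strict inequality that we must exclude). The aim is to produce an orthogonal change of basis $U$ under which $U^\top A U$ becomes block-diagonal and then to use the invariance of the characteristic polynomial and the geometric multiplicity under similarity (Propositions~\ref{simMatSameEigenVals} and~\ref{propSameGeoMultForSimMat}) to derive a contradiction.

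First I would construct $U$ explicitly. Take $k'_i$ orthonormal eigenvectors $\v_1,\dots,\v_{k'_i}$ of $\lambda_i$ spanning its eigenspace (such a basis exists because the eigenspace has dimension $k'_i$, recall Prop.~\ref{propEigenReal} which ensures we may stay in $\R^n$). Complete them to an orthonormal basis $\v_1,\dots,\v_{k'_i},\w_1,\dots,\w_{n-k'_i}$ of $\R^n$ by picking unit vectors perpendicular to $\mathrm{span}(\v_1,\dots,\v_{k'_i})$. Set $V=[\v_1\ \v_2\ \dots\ \v_{k'_i}]$, $W=[\w_1\ \w_2\ \dots\ \w_{n-k'_i}]$, and $U=[V\ W]$, so that $U^\top U = I_n$ by construction.

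Next I would compute the blocks of $U^\top A U$. The top-left block is $V^\top A V = V^\top(\lambda_i V) = \lambda_i I_{k'_i}$ since each column of $V$ is a $\lambda_i$-eigenvector. The off-diagonal block $W^\top A V$ equals $W^\top(\lambda_i V)=\lambda_i W^\top V = 0$ because the columns of $W$ are orthogonal to the columns of $V$. The crucial use of symmetry is here: by $A=A^\top$, we also get $V^\top A W = (W^\top A^\top V)^\top = (W^\top A V)^\top = 0$. Hence
\begin{equation*}
U^\top A U = \begin{bmatrix} \lambda_i I_{k'_i} & \zeros \\ \zeros & D \end{bmatrix},
\end{equation*}
for some $D\in\R^{(n-k'_i)\times(n-k'_i)}$. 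Note $D$ is symmetric since $U^\top A U$ is symmetric.

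Finally I would close the contradiction. Since $U$ is orthogonal, $U^\top=U^{-1}$, so $A$ and $U^\top A U$ are similar. By Prop.~\ref{simMatSameEigenVals} they share a characteristic polynomial; the characteristic polynomial of the block-diagonal $U^\top A U$ factors as $(x-\lambda_i)^{k'_i}\det(xI-D)$, so $\lambda_i$ must be a root of $\det(xI-D)$ of multiplicity $k_i - k'_i \geq 1$. Therefore $D$ has at least one eigenvector for $\lambda_i$; lifting it by a zero block of size $k'_i$ at the top gives an eigenvector of $U^\top A U$ for $\lambda_i$ that is linearly independent of the first $k'_i$ standard basis vectors (which are the obvious $\lambda_i$-eigenvectors coming from the $\lambda_i I_{k'_i}$ block). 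Hence the $\lambda_i$-eigenspace of $U^\top A U$ has dimension at least $k'_i+1$, contradicting Prop.~\ref{propSameGeoMultForSimMat} which says similar matrices have equal geometric multiplicities. The main obstacle in this argument, and the step that really requires symmetry of $A$, is the vanishing of the off-diagonal block $V^\top A W$; without symmetry one could only conclude $W^\top A V = 0$, which would not suffice to block-diagonalize, and indeed the statement itself fails in the non-symmetric case (e.g.\ $\bigl[\begin{smallmatrix}1&1\\0&1\end{smallmatrix}\bigr]$).
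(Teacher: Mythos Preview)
Your proposal is correct and follows essentially the same route as the paper's proof: build an orthogonal matrix whose first $k'_i$ columns are orthonormal $\lambda_i$-eigenvectors, use the symmetry of $A$ to force both off-diagonal blocks of $U^\top A U$ to vanish, and then combine Prop.~\ref{simMatSameEigenVals} with Prop.~\ref{propSameGeoMultForSimMat} to derive the contradiction. The paper also records explicitly that the factorization $(x-\lambda_i)^{k'_i}\det(xI-D)$ already rules out $k'_i>k_i$, which you mention only in passing, but the substance of the argument is the same.
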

\begin{proof}We assume the characteristic polynomial of $A$ has a factor
$(x-\lambda)^s$, \ie, the algebraic multiplicity of $\lambda$ is $s$. Let $t$
be the dimension of the eigenspace of $\lambda$, \ie, the geometric
multiplicity is $t$. We will show $s=t$. Consider $t$ orthonormal
eigenvectors of $\lambda$, generated by taking an orthonormal basis of the
eigenspace of $\lambda$. We construct an unitary matrix $V$ by putting these
eigenvectors on the first $t$ columns and by filling the remaining $n-t$ columns
with other vectors that generate an orthonormal basis of $\R^n$ together with 
$\v_1,~\v_2,\dots \v_{t}$. We thus have $V^{-1}=V^\top$ and we can compute:
$$
V^\top A V=V^\top 
\left[ \lambda\v_1~\lambda\v_2~\dots \lambda\v_{t}~B\right]
=
\begin{bmatrix}
\lambda I_{t}           &  C    \\
\zeros_{n-t,t} &      D
\end{bmatrix},
$$
where the zeros on the first $t$ columns are due to the fact that each $\v_i$
with $i\in[1..t]$ is orthogonal to all the other column vectors of $V$.
By transposing above formula, we obtain the same matrix because $\left(V^\top A
V\right)^\top = V^\top A^\top V = V^\top A V$. This means that 
$
\left[
\begin{smallmatrix}
\lambda I_{t}           &  C    \\
\zeros_{n-t,t} &      D
\end{smallmatrix}
\right]
=
\left[
\begin{smallmatrix}
\lambda I_{t}           &  C    \\
\zeros_{n-t,t} &      D
\end{smallmatrix}
\right]^\top
$, and so, $C$ must be zero and $D$ must be diagonal. We can write:
$$
V^{-1}A V=
V^\top A V
=
\begin{bmatrix}
\lambda I_{t}           &  \zeros_{t,n-t}    \\
\zeros_{n-t,t} &      D
\end{bmatrix}.
$$

This means that matrices $A$ 
and
$
\left[
\begin{smallmatrix}
\lambda I_{t}           & \zeros_{t,n-t}     \\
\zeros_{n-t,t} &      D
\end{smallmatrix}
\right]
$
are similar, and so, they have the same characteristic polynomial by virtue of
Prop.~\ref{simMatSameEigenVals}. 
The characteristic polynomial of the second matrix is $(x-\lambda)^t
\det(xI-D)$.
This directly shows that we can \textit{not}
have $t>s$. This would be equivalent to the existence of a term $(x-\lambda)^t$ with $t>s$
in the characteristic polynomial of $A$, which is impossible because the
algebraic multiplicity of $\lambda$ is $s$.

We now prove by contradiction that
$t=s$. Assuming the contrary, the only remaining case is $t<s$. This means that
$\det(xI-D)$ has to
contain a term $(x-\lambda)^{s-t}$
because $\det(xI-A)$ contains a term $(x-\lambda)^s$.
This way, $\lambda$ is an eigenvalue of $D$
for which there exists at least an eigenvector $\d\in\R^{n-t}$.
We will show that $\lambda$ has a geometric multiplicity higher than $t$ in
$
\left[
\begin{smallmatrix}
\lambda I_{t}           &  \zeros_{t,n-t}    \\
\zeros_{n-t,t} &      D
\end{smallmatrix}
\right]
$, which contradicts Prop.~\ref{propSameGeoMultForSimMat}, \ie, similar matrices
must have the same eigenvalue multiplicities for a common eigenvalue $\lambda$.
It is not hard to check that $\lambda$ has at least the following $t+1$ eigenvectors in
$
\left[
\begin{smallmatrix}
\lambda I_{t}           &  \zeros_{t,n-t}    \\
\zeros_{n-t,t} &      D
\end{smallmatrix}
\right]
$:
\newcommand\vvspace[1]{%
\begin{minipage}{0em} \vspace{#1} \end{minipage}
}
\newcommand\vertbracket[2]{%
\left.  \begin{minipage}{0em} \vspace{#1} \end{minipage}\hspace{-0.4em}\right\}\scalebox{0.85}{#2}
}
$$
\begin{bmatrix}
1\\
0\\
0\\
0\\
0\\
\vdots\\
0
\end{bmatrix}
\begin{matrix}
\vvspace{0.5em}\\
\vertbracket{7.8em}{n-1}\\
\end{matrix}
~,~~
\begin{bmatrix}
0\\
1\\
0\\
0\\
0\\
\vdots\\
0
\end{bmatrix}
\begin{matrix}
\vvspace{2.3em}\\
\vertbracket{6.9em}{n-2}\\
\end{matrix}
~,~~
\begin{bmatrix}
0\\
0\\
1\\
0\\
0\\
\vdots\\
0
\end{bmatrix}
\begin{matrix}
\vvspace{3.5em}\\
\vertbracket{5.6em}{n-3}\\
\end{matrix}
\dots
~~~
\begin{bmatrix}
0\\
\vdots \\
0\\
1\\
0\\
\vdots\\
0
\end{bmatrix}
\begin{matrix}
\vertbracket{3.9em}{t-1}\\
\vvspace{1.6em}\\
\vertbracket{4em}{n-t}\\
\end{matrix}
\text{and }
\begin{bmatrix}
0\\
\vdots \\
0\\
0\\
\vvspace{1.7em}
\\
\d\\
\vvspace{1.7em}
\end{bmatrix}
\begin{matrix}
\vertbracket{5em}{t}\\
\vvspace{4.6em}\\
\end{matrix}.
$$
The last vector is an eigenvector, because it is enough to check that the top first $t$ positions of the product
with 
$
\left[
\begin{smallmatrix}
\lambda I_{t}           &  \zeros_{t,n-t}    \\
\zeros_{n-t,t} &      D
\end{smallmatrix}
\right]
$
 are
$\lambda I_t \zeros_t+\zeros_{t,n-t}\d=\zeros_t$ and the bottom $n-t$ positions are $\zeros_{n-t,t}\zeros_t+D\d=\lambda \d$ since
$\d$ is an eigenvector of $D$. We obtained that $\lambda$ has geometric multiplicity at least $t+1$ in 
$\left[
\begin{smallmatrix}
I_{t}           &  \zeros_{t,n-t}    \\
\zeros_{n-t,t} &      D
\end{smallmatrix}
\right]
$ and $t$ in $A$,
contradicting Prop.~\ref{propSameGeoMultForSimMat} as stated
above. This ensures the only possible case is $t=s$.
\end{proof}
\subsubsection{\label{appSchurTriang}Proof using the Schur triangulation of general complex square matrices}
\begin{theorem} \label{thschur}(Schur decomposition) Given any
$A\in \C^{n\times n}$, there exists unitary matrix $U\in \C^{n\times n}$ (\ie, such
that the conjugate transpose satisfies $U^*U=I$) for which:
\begin{equation}\label{schur} U^*AU=T,\end{equation}
where $T$ is an upper triangular matrix. The diagonal elements of $T$ are the
eigenvalues of $A$. The number of times eigenvalue $\lambda$ appears on the
diagonal of $T$ is the algebraic multiplicity of $\lambda$ in $A$.
\end{theorem}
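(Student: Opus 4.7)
The plan is to proceed by induction on $n$, peeling off one eigenvalue at a time and reducing to a smaller block. The base case $n=1$ is trivial (any $1\times 1$ matrix is upper triangular, and $U=[1]$ suffices).

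For the inductive step, assume the result holds for $(n-1)\times(n-1)$ complex matrices, and let $A\in\mathbb{C}^{n\times n}$. First, I would invoke Proposition~\ref{eigenexists} to obtain some $\lambda_1\in\mathbb{C}$ and a non-zero eigenvector $\u_1\in\mathbb{C}^n$ with $A\u_1=\lambda_1\u_1$; normalize so that $\u_1^*\u_1=1$. Next, extend $\u_1$ to an orthonormal basis $\u_1,\u_2,\ldots,\u_n$ of $\mathbb{C}^n$ (by completing $\u_1$ to a basis in any fashion and then applying Gram--Schmidt with respect to the standard Hermitian inner product). Assemble the columns into $V=[\u_1~\u_2~\dots~\u_n]\in\mathbb{C}^{n\times n}$, which is unitary by construction ($V^*V=I_n$).

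Now compute $V^*AV$. The first column is $V^*A\u_1=\lambda_1 V^*\u_1=\lambda_1 e_1$ because $\u_i^*\u_1=\delta_{i1}$. Hence
$$V^*AV=\begin{bmatrix}\lambda_1 & \b^*\\ \zeros_{n-1} & A'\end{bmatrix}$$
for some $\b\in\mathbb{C}^{n-1}$ and some $A'\in\mathbb{C}^{(n-1)\times(n-1)}$. By the induction hypothesis, there exists unitary $U'\in\mathbb{C}^{(n-1)\times(n-1)}$ such that $U'^*A'U'=T'$ is upper triangular. Set
$$W=\begin{bmatrix}1 & \zeros_{n-1}^*\\ \zeros_{n-1} & U'\end{bmatrix},$$
which is unitary because $W^*W=I_n$ follows from block-multiplication and $U'^*U'=I_{n-1}$. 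Then
$$W^*(V^*AV)W=\begin{bmatrix}\lambda_1 & \b^*U'\\ \zeros_{n-1} & T'\end{bmatrix}=T,$$
which is upper triangular. Finally, put $U=VW$; it is unitary as a product of unitaries, and $U^*AU=T$, proving \eqref{schur}.

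For the last claim regarding the diagonal entries, apply Proposition~\ref{simMatSameEigenVals}: since $T=U^*AU=U^{-1}AU$ (because $U$ is unitary), $A$ and $T$ are similar and share the characteristic polynomial. The characteristic polynomial of an upper triangular $T$ is $\det(xI-T)=\prod_{i=1}^n(x-t_{ii})$, so the diagonal entries of $T$ are exactly the roots of the characteristic polynomial of $A$, counted with their algebraic multiplicities. The main obstacle in this proof is essentially bookkeeping: ensuring that the Gram--Schmidt extension genuinely produces a unitary $V$ in $\mathbb{C}^{n\times n}$ (the Hermitian inner product behaves slightly differently than the real one, but the standard Gram--Schmidt argument carries over verbatim) and then verifying that the block $W$ one prepends the identity to remains unitary. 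After that, everything reduces to cleanly chaining the two unitary conjugations.
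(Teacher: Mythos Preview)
Your proof is correct and follows essentially the same approach as the paper: induction on $n$, peeling off one eigenvector to get a block form, applying the inductive hypothesis to the $(n-1)\times(n-1)$ block, and composing the two unitary conjugations. The only cosmetic difference is that the paper constructs the completion of $\u_1$ to an orthonormal basis column by column via solving linear systems rather than invoking Gram--Schmidt by name, and the final appeal to Proposition~\ref{simMatSameEigenVals} for the characteristic polynomial is identical.
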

\begin{proof}
We proceed by induction. For $n=1$, the theorem is clearly true. Assume it holds
for $n-1$ and we prove it also holds for $n$.

Consider an eigenvalue $\lambda$ and an eigenvector $\u$ (they exists as proved by
Proposition~\ref{eigenexists}). Without loss of generality we assume $\u$ is
unitary, \ie, $\u^*\u=1$ (from a non-unitary eigenvector we can easily obtain 
an unitary one by dividing each of its term by a non-negative real number).

We construct an unitary matrix $\overline U\in \C^{n\times n}$ with $\u$ on the
first column. We write
$\overline U = [ \u,V]$ with $V\in C^{n\times (n-1)}$. The construction of $V$ can be done
column by column as follows. The first unitary column $\v_{\times,1}$ of $V$ is chosen by
solving $\u^* \v_{\times,1}=0$ in variables $v_{11}, v_{21}, \dots v_{n1}$. The
existence of a solution for this equation can follow from the more general
Proposition~\ref{lindep}; and once a solution is found, we easily make 
$\v_{\times,1}$ unitary by dividing all terms by the initial norm $|\v_{\times,1}|$.
The $i^\texttt{th}$ unitary column $\v_{\times,i}$ (for $i\leq
n-1$) is chosen by solving the following system in variables $v_{1i}, v_{2i}, \dots
v_{ni}$: (a) $\u^* \v_{\times,i}=0$ and (b) $\v_{\times,j}^* \v_{\times,i}=0$ for all
$j\leq i-1$. There are at most $n-1$ equations for $n$ variables, and so,
a solution has to exist (for the skeptical, the coefficients of the $n-1$
equations can be put in a $n\times n$ matrix filled with zeros on the last row,
so as to obtain a null determinant and apply Proposition~\ref{lindep}). We obtain:

$$\overline{U}^* A \overline U = 
\begin{bmatrix}
\u^* \\
V^*
\end{bmatrix}
A
[\u,V]=
\begin{bmatrix}
\u^* \\
V^*
\end{bmatrix}
[\lambda \u,AV]=
\begin{bmatrix}
\lambda          &   \u^* AV   \\
\lambda V^*\u    &   V^* A V \\
\end{bmatrix}
=
\begin{bmatrix}
\lambda          &   \u^* AV   \\
\zeros_{n-1}     &   V^* A V \\
\end{bmatrix}
$$

As a side remark, remark that if $A$ is hermitian ($A=A^*$) or real symmetric, the top-right term is
also zero: $\u^* A V=(A\u)^* V=\lambda \u^* V=\zeros_{n-1}^\top$. In fact, it is
possible to particularize the above line of proof to directly prove 
that real symmetric matrices are diagonalizable
(and produce an eigen-decomposition), but it may be useful now to stay a
bit more on the general case.

Let us use more compact notations for above equation:
\begin{equation}\label{compactschur}
\overline{U}^* A \overline U = 
\begin{bmatrix}\
\lambda          &   \b^\top   \\
\zeros_{n-1}     &     B       \\
\end{bmatrix}
\end{equation}

Using the induction hypothesis, there is some unitary matrix
$U_B\in\C^{(n-1)\times(n-1)}$ such that $U^*_BBU_B=T_B$ is upper triangular.
We construct unitary matrix
$\widehat U=
\bigl[\begin{smallmatrix}
        1     &    \zeros_{n-1}^\top       \\
  \zeros_{n-1}&        U_B                 \\
\end{smallmatrix} \bigr]$ and we obtain:
\begin{align}
\widehat{U}^* \overline{U}^* A \overline U \widehat U
&=
\begin{bmatrix}
        1     &    \zeros_{n-1}^\top       \\
  \zeros_{n-1}&        U^*_B               \\
\end{bmatrix}
\begin{bmatrix}
\lambda          &   \b^\top   \\
\zeros_{n-1}     &     B       \\
\end{bmatrix}
\begin{bmatrix}
        1     &    \zeros_{n-1}^\top       \\
  \zeros_{n-1}&        U_B               \\
\end{bmatrix}
=
\begin{bmatrix}
        1     &    \zeros_{n-1}^\top       \\
  \zeros_{n-1}&        U^*_B               \\
\end{bmatrix}
\begin{bmatrix}
   \lambda    &    \b^\top U_B          \\
  \zeros_{n-1}&       BU_B               \\
\end{bmatrix}
\nonumber \\
&=
\begin{bmatrix}
   \lambda    &    \b^\top U_B          \\
  \zeros_{n-1}&  U^*_B B U_B            \\
\end{bmatrix}
=
\begin{bmatrix}
   \lambda    &    \b^\top U_B          \\
  \zeros_{n-1}&          T_B            \\
\end{bmatrix}
=
T
\end{align}
Since $T_B$ is upper triangular by the induction hypothesis, $T$ is also
upper triangular. Noting $U = \overline{U}\widehat{U}$, we obtain \eqref{schur}.
It is not hard to check that $U$ is unitary: 
$U^*U
=\widehat{U}^* \overline{U}^*\overline U \widehat U
=\widehat{U}^* \widehat U
= I$.

We still need to prove that the diagonal elements of $T$ are the eigenvalues of
$A$. Applying Proposition \ref{simMatSameEigenVals}, similar matrices $A$ and $T$ have the same
characteristic polynomial. But the characteristic polynomial of upper triangular
matrix $T$ is $(x-t_{11})(x-t_{22})\dots (x-t_{nn})$. The diagonal elements of
$T$ coincide thus with the roots of the characteristic polynomial of $A$ and
$T$.
\end{proof}

$U$ does not necessarily contain the eigenvectors of $A$ as columns, even if the construction 
 starts from an eigenvector of $A$. Consider, for instance, the matrix 
$\left[\begin{smallmatrix}
1 & -1 \\
1 & -1 
\end{smallmatrix}\right]$. 
The characteristic polynomial is $x^2$ so that root $\lambda=0$ has algebraic multiplicity 
2. However, the matrix has only one eigenvector 
$\u=\left[\begin{smallmatrix}
\frac 1{\sqrt{2}}\\
\frac 1{\sqrt{2}}
\end{smallmatrix}\right]$,
because it has rank 1; the geometric multiplicity of eigenvalue $\lambda=0$ is 1. The Schur decomposition constructs
$U$ (equal to $\overline U$ because $U_B=1$ in the proof) by putting $\u$ on the first column and by filling the other columns of $U$ so as to make
 $U$ orthonormal. We obtain
$U=\left[\begin{smallmatrix}
\frac 1{\sqrt2} & \frac 1{\sqrt2} \\
\frac 1{\sqrt2} & \frac {-1}{\sqrt2}
\end{smallmatrix}\right]$ and $U^*=U^\top=U$.
The decomposition is
$\left[\begin{smallmatrix}
\frac 1{\sqrt2} & \frac 1{\sqrt2} \\
\frac 1{\sqrt2} & \frac {-1}{\sqrt2}
\end{smallmatrix}\right]
\begin{bmatrix}
1 & -1 \\
1 & -1 
\end{bmatrix}
\left[\begin{smallmatrix}
\frac 1{\sqrt2} & \frac 1{\sqrt2} \\
\frac 1{\sqrt2} & \frac {-1}{\sqrt2}
\end{smallmatrix}\right]=
\begin{bmatrix}
0 & 2 \\
0 & 0 
\end{bmatrix}$.
$U$ does not contain an eigenvector on the second column.


\begin{proposition}\label{firstsym} If $A\in \R^{n\times n}$ is a symmetric matrix, the Schur
decomposition computes a diagonalization of $A$. There exists an unitary (orthonormal) matrix $U\in
\R^{n\times n}$ such that
\begin{equation}\label{schurDiag} T = U^*AU = U^\top A U\end{equation}
is a diagonal matrix with the eigenvalues of $A$ on the diagonal. 
The columns of $U$ are the eigenvectors of $A$ 
which needs to be real (Prop.~\ref{propEigenReal}),
and $U\in\R^{n\times n}$ so
that $U^*=U^\top$. 
\end{proposition}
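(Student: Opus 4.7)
The plan is to deduce this from the already-proved general Schur triangulation (Theorem~\ref{thschur}), combined with the fact that real symmetric matrices have real eigenvalues and eigenvectors (Prop.~\ref{propEigenReal}). Concretely, I would run the same construction as in the proof of Theorem~\ref{thschur}, but show that at every inductive step everything can be kept real and the off-diagonal upper block of the recursion vanishes, so the final triangular $T$ is actually diagonal.

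First I would set up the base of the induction. Since $A$ is real symmetric, Prop.~\ref{propEigenReal} provides a real eigenvalue $\lambda$ with a real unit eigenvector $\u$. As in the proof of Theorem~\ref{thschur}, extend $\u$ to a real orthonormal basis of $\R^n$, assembled as the columns of a real orthogonal matrix $\overline{U}=[\u,V]$, so that $\overline{U}^\top\overline{U}=I_n$ and the side remark in the proof of Theorem~\ref{thschur} (namely $\u^\top A V=(A\u)^\top V=\lambda\u^\top V=\zeros^\top$) gives
\[
\overline{U}^\top A\,\overline{U}=
\begin{bmatrix}
\lambda & \zeros_{n-1}^\top \\
\zeros_{n-1} & B
\end{bmatrix},
\]
with $B\in\R^{(n-1)\times(n-1)}$. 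Transposing both sides and using $A^\top=A$ shows that $B$ is itself real symmetric, so the induction hypothesis applies to $B$: there is a real orthogonal $U_B$ with $U_B^\top B U_B=T_B$ diagonal, whose diagonal entries are the eigenvalues of $B$.

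Next I would patch the pieces together exactly as in the Schur proof: set $\widehat{U}=\bigl[\begin{smallmatrix}1 & \zeros^\top \\ \zeros & U_B\end{smallmatrix}\bigr]$ and $U=\overline{U}\,\widehat{U}$, both real orthogonal, so that
\[
U^\top A U=
\begin{bmatrix}
\lambda & \zeros_{n-1}^\top \\
\zeros_{n-1} & T_B
\end{bmatrix}=T,
\]
which is diagonal. Since $A$ and $T$ are similar, Prop.~\ref{simMatSameEigenVals} gives that the diagonal entries $\lambda,\,[T_B]_{11},\dots,[T_B]_{n-1,n-1}$ are exactly the eigenvalues of $A$ with the correct algebraic multiplicities. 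Finally, reading off the columns of $AU=UT=U\,\texttt{diag}(\lambda_1,\dots,\lambda_n)$ shows that column $i$ of $U$ is an eigenvector of $A$ for eigenvalue $T_{ii}$, and these columns are orthonormal because $U^\top U=I_n$.

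The main obstacle is really just the bookkeeping to stay in the reals: one must verify (i) that the extension of $\u$ to a real orthonormal basis is possible (it is, by the standard Gram–Schmidt argument already implicit in the complex construction of Theorem~\ref{thschur}), and (ii) that the deflated block $B$ is still real symmetric so the induction has something to bite on. As an alternative shortcut, once one has the complex Schur decomposition $U^*AU=T$ with $T$ upper triangular, one can observe directly that $T^*=(U^*AU)^*=U^*A^*U=U^*AU=T$, so $T$ is self-adjoint and upper triangular, hence diagonal and real; the reality of $U$ then follows because one can choose each eigenvector real by Prop.~\ref{propEigenReal}. I would mention this shorter variant as a remark but prefer the inductive argument above because it mirrors the construction already given for Theorem~\ref{thschur}.
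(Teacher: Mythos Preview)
Your proposal is correct. Interestingly, the paper takes as its main proof exactly the ``alternative shortcut'' you mention at the end: it applies the complex Schur decomposition from Theorem~\ref{thschur} to get upper-triangular $T=U^*AU$, then observes $T^*=(U^*AU)^*=U^*A^*U=U^*AU=T$, so $T$ is Hermitian and upper triangular, hence real diagonal; the reality of $U$ is then deduced afterwards from Prop.~\ref{propEigenReal}. Your preferred inductive argument is a genuinely different route: by choosing real eigenvectors from the start and using the side remark $\u^\top AV=\zeros^\top$ at each step, you keep the entire construction in $\R^{n\times n}$ throughout and never need to argue post hoc that the complex $U$ can be taken real. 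The paper's approach is shorter and slicker; yours is more self-contained and makes the reality of $U$ transparent rather than an afterthought.
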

\begin{proof}
Theorem \ref{thschur} shows that there is a decomposition \eqref{schurDiag} that
generates an upper triangular matrix $T$.
We apply the conjugate transpose on both sides of \eqref{schurDiag}:
$$ T^* = (U^* A U)^* = U^* A^* (U^*)^* = U^* A U = T$$
The equality $T^*=T$ can only hold if $T$ is a diagonal matrix with real
elements on the diagonal. Recalling that Theorem \ref{thschur} shows that the
diagonal elements of $T$ are the eigenvalues of $A$ (each taken with its
algebraic multiplicity), we can write
$T=\texttt{diag}(\lambda_1, \lambda_2, \dots \lambda_n)$. Multiplying
\eqref{schurDiag} by $U$ at left (and using that $U$ is unitary, \ie, 
$UU^*=U^*U=I_n$, as stated by Theorem~\ref{thschur}), we obtain:
$ U \texttt{diag}(\lambda_1, \lambda_2, \dots \lambda_n) = A U.$
The column $i$ (with $i\in[1..n]$) on both sides can be written:
$\lambda_i u_{\times,i} = A u_{\times,i}$.
This shows that column $i$ of $U$ is an eigenvector associated to $\lambda_i$.
Using Proposition~\ref{propEigenReal}, this column $u_{\times,i}$ contains only
real elements, and so, $U$ is real.
\end{proof}

\subsection{\label{secQR}The QR decomposition of real matrices}
\begin{minipage}{0.65\linewidth}
\begin{proposition}\label{propQR} Any matrix $A\in\R^{n\times n}$ can be decomposed as
$$A=QR,$$
\end{proposition}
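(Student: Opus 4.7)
My plan is to construct $Q$ and $R$ explicitly via the Gram--Schmidt orthogonalization process applied to the columns of $A$. Writing $A = [\a_1~\a_2~\dots~\a_n]$, I will iteratively build an orthonormal family $\q_1, \q_2, \dots, \q_p$ (with $p \leq n$) together with the coefficients of $R$, in such a way that each $\a_j$ ends up expressed as a linear combination of only the first few $\q_i$'s. This locality of the expansion is what will yield the upper triangular shape of $R$.

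More concretely, I will maintain a counter $k$ of how many orthonormal vectors have been produced so far. To initialize, if $\a_1 \neq \zeros$ I set $\q_1 = \a_1/|\a_1|$, record $R_{1,1} = |\a_1|$, and set $k = 1$; if $\a_1 = \zeros$, I leave column $1$ of $R$ empty and keep $k = 0$. Inductively, after processing columns $1, \ldots, j-1$, I consider $\a_j$: I compute $r_{ij} = \q_i^\top \a_j$ for $i = 1, \dots, k$, form the residual $\widetilde{\q} = \a_j - \sum_{i=1}^{k} r_{ij}\q_i$, and then either set $\q_{k+1} = \widetilde{\q}/|\widetilde{\q}|$ with $R_{k+1,j} = |\widetilde{\q}|$ (incrementing $k$) when $\widetilde{\q}\neq \zeros$, or simply leave $R_{i,j} = 0$ for $i > k$ when $\widetilde{\q} = \zeros$ (meaning $\a_j$ already lies in the span of $\q_1,\dots,\q_k$). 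After processing all $n$ columns, I set $p = k$, $Q = [\q_1~\q_2~\dots~\q_p] \in \R^{n\times p}$, and $R \in \R^{p \times n}$ with entries $R_{ij}$ as defined above. By construction every column of $A$ satisfies $\a_j = \sum_{i=1}^{p} R_{ij} \q_i$, so $A = QR$; orthonormality $Q^\top Q = I_p$ is built into the procedure.

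It remains to argue that $R$ is genuinely upper triangular. The key observation is that after processing column $j$, the counter $k$ has been incremented \textit{at most once} (and has not been touched when processing columns $j+1, j+2, \dots$ yet), so the number $k_j$ of orthonormal vectors available at the end of step $j$ satisfies $k_j \leq j$. The entries $R_{ij}$ with $i > k_j$ are set to $0$ by construction, and in particular $R_{ij} = 0$ for $i > j$, which is exactly the upper triangular condition.

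The main obstacle I anticipate is bookkeeping in the rank-deficient case: when a column $\a_j$ lies in the span of previous $\q$'s, we must not add a new orthonormal vector, yet we still need to record the expansion coefficients in column $j$ of $R$, and the indices of the rows of $R$ (which track $\q$-vectors) then no longer coincide with the indices of the columns of $A$. Careful index management is needed to see that the inequality $k_j \leq j$ is preserved and that all zero entries of $R$ sit strictly below the diagonal. Once this is handled, the proof is essentially complete; the uniqueness or positive-diagonal refinements that other QR statements sometimes assert are not needed here, since the calling Proposition~\ref{propCholeskyViaQR} explicitly notes that the $QR$ decomposition from Prop.~\ref{propQR} does not impose sign conditions on the diagonal of $R$.
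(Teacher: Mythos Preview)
Your proposal is correct and follows essentially the same approach as the paper: Gram--Schmidt orthogonalization applied to the columns of $A$, with care taken in the rank-deficient case. The only cosmetic difference is that the paper first builds an $n\times n$ pair $\overline{Q},\overline{R}$ allowing zero columns in $\overline{Q}$ (degenerate normalized vectors) and then deletes those zero columns and the matching rows of $\overline{R}$ at the end, whereas you skip the dependent columns on the fly via your counter $k$; your inequality $k_j\le j$ is exactly the observation that makes the paper's row-deletion preserve upper-triangularity.
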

\end{minipage}
\scalebox{0.75}{
\begin{minipage}{0.48\linewidth}
\vspace{-0.5em}
$$n\left\{
\begin{bmatrix}
~ &~ &~ & ~ & ~ \\
~ &~ &~ & ~ & ~ \\

\multicolumn{5}{c}{\smash{\raisebox{-0.1\normalbaselineskip}{\scalebox{1.8}{$A$}}}}\\
~ &~ &~ & ~ & ~ \\
~ &~ &~ & ~ & ~ \\
\end{bmatrix}
\right.
=
\underbrace{
\begin{bmatrix}
~ & ~ & ~ \\
~ & ~ & ~ \\
\multicolumn{3}{c}{\smash{\raisebox{-0.1\normalbaselineskip}{\scalebox{1.8}{$Q$}}}}\\
~ & ~ & ~ \\
~ & ~ & ~ \\
\end{bmatrix}}_{p}
\begin{bmatrix}
{\tikz\coordinate(lambda1);} ~ &~ &~ & ~ & ~ \\
\multicolumn{5}{c}{\smash{\raisebox{-0.1\normalbaselineskip}{\scalebox{1.8}{$R$}}}}\\
{\tikz\coordinate(lambda0);} ~
&\hspace{-0.8em}\raisebox{0.3\normalbaselineskip}{\scalebox{1.3}{$\zeros$}}
&
\hspace{0.4em}
{\tikz\coordinate(lambda2);} 
\hspace{0.4em}
& ~ & ~ \\
\end{bmatrix}
$$

\begin{tikzpicture}[overlay]
\draw[-,darkgray] (lambda0.south west) to (lambda1.south west);
\draw[-,darkgray] (lambda0.south west) to (lambda2.south west);
\draw[-,darkgray] (lambda1.south west) to (lambda2.south west);
\end{tikzpicture}
\vspace{-4em}
\end{minipage}
}
{\it 
\noindent where $Q\in\R^{n\times p}$ is an orthonormal matrix  (its columns are 
orthogonal\\
\noindent unit vectors meaning
$Q^\top Q =I_p$) and $R\in\R^{p\times n}$ is upper triangular.}

\begin{proof} 
In preamble, let us first (try to) capture the ``spirit'' of the factorization
we want to prove. $Q$ has to contain
a number of $p$ unit vectors that actually represent 
an orthonormal basis of the space spanned by them. Then, $A=QR$ means that each column $\a_k$
of $A$ can be written as a linear combination of the first $k$ elements of this basis (the
first $k$ columns of $Q$). It is easy to obtain this linear combination when $\a_k$ belongs to the space spanned
by these first $k$ elements of the basis. We need to construct an increasingly larger 
orthonormal basis that first covers only $\a_1$, then $\a_1$ and $\a_2$, then
$\a_1,~\a_2$ and $\a_3$, etc. Geometrically, we can first simply take the unit vector
$\e_1$ collinear with $\a_1$; at step 2, we take a vector $\e_2$ perpendicular to $\a_1$ or $\e_1$ such that
$\e_1$ and $\e_2$ determine the same 2D hyperplane as $\a_1$ and $\a_2$; at the third step
we take a vector $\e_3$ perpendicular on this 2D hyperplane so as to determine the same 3D (sub-)space as
$\a_1,~\a_2,~\a_3$. This is the goal of the Gram-Schmidt process presented next.

We now formally present the Gram-Schmidt process on the columns of $A=[\a_1,~\a_2,\dots \a_n]$.
This process takes the column vectors $\a_1,~\a_2,\dots \a_n$ and generates an orthogonal set of vectors that spans 
the same subspace as $\a_1,~\a_2,\dots \a_n$.
Let us define the \textit{normalized vector} $\e\in\R^n$ of $\u\in\R_n$ as $\e=\frac{\u}{|\u|}$ if $\u\neq
\zeros$, or $\e=\zeros$ if $\u=\zeros$ (degenerate normalized vector). Notice a non-degenerate 
normalized vector $\e$ has norm 1 because $\e\sprod \e = \frac {\u\sprod \u}{|\u|^2}=1$.
We define the projection operator $\texttt{proj}$ by setting:
\begin{equation}\label{eqProjection}
\texttt{proj}_\e \a=(\e\sprod \a)\e=\texttt{proj}_\u \a = \left(\frac{\u}{|\u|}\sprod
\a\right)\frac{\u}{|\u|}= \frac{\u\sprod \a}{\u\sprod \u}\u,
\end{equation}
where $\e$ is the normalized vector of $\u$. If $\u=\e=0$, we define $\texttt{proj}_\zeros\a=\zeros$.
This represents the projection of $\a$ on $\e$ or $\u$. The Gram-Schmidt process constructs the following
sequence:
\begin{align*}
\u_1 &= \a_1 \\
\u_2 &= \a_2 - \texttt{proj}_{\u_1}\a_2\\
\u_3 &= \a_3 - \texttt{proj}_{\u_1}\a_3-\texttt{proj}_{\u_2}\a_3\\
\vdots&\\
\u_n &= \a_n - \sum_{i=1}^{n-1} \texttt{proj}_{\u_i}\a_n
\end{align*}

Geometrically, this construction work as follows: to compute $\u_k$, 
it projects $\a_k$ orthogonally onto the subspace $U$ generated by $\u_1,~\u_2,\dots \u_{k-1}$, which is the same as the 
subspace generated by $\a_1,~\a_2,\dots \a_{k-1}$. The vector $\u_k$ is then defined to be the
difference between $\a_k$ and this projection, guaranteed to be orthogonal to all of the vectors in
the subspace $U$.

We now show formally that $\u_k$ is orthogonal to all $\u_1,~\u_2,\dots \u_{k-1}$.
Assume by induction $\u_{k'}\sprod \u_j=0~~\forall j,k'\in[1..n],~j< k'\leq k-1$.
To show $\u_k\sprod \u_j=0~\forall j<k$, we first observe $\u_k\sprod \u_j=0$ if $\u_j=\zeros$. 
If $\u_j\neq\zeros$, 
we calculate:
\begin{align*}
\u_k\sprod \u_j&= \left(\a_k - \sum_{i=1}^{k-1} \texttt{proj}_{\u_i}\a_k\right)\sprod \u_j\\
               &=
                \a_k\sprod \u_j 
                -
                \sum_{\substack{i\in[1..k-1]\\ \u_i\neq \zeros}} 
                    \frac{\u_i\sprod \a_k}{\u_i\sprod \u_i}\u_i\sprod \u_j
                    \tag{we developped $\texttt{proj}_{\u_i}\a_k$ using \eqref{eqProjection} for $\u_i\neq\zeros$}\\
               &=\a_k\sprod \u_j
                -
                 \frac{\u_j\sprod \a_k}{\u_j\sprod \u_j}\u_j\sprod \u_j 
                    \tag{we used $\u_i\sprod \u_j=0$ for $i\neq j$ and $i,j\in[1..k-1]$}\\
               &=\a_k\sprod \u_j-\u_j\sprod \a_k=0
\end{align*}
Notice we can have $\u_k=\zeros$ for certain $k\in[1..n]$. However, we can also 
calculate 
$$\u_k\sprod \u_k =\left(\a_k - \sum_{i=1}^{k-1} \texttt{proj}_{\u_i}\a_k\right)\sprod
\u_k=\a_k\sprod \u_k -                 
\sum_{\substack{i\in[1..k-1]\\ \u_i\neq \zeros}} 
                    \frac{\u_i\sprod \a_k}{\u_i\sprod \u_i}\u_i\sprod \u_k=\a_k\sprod \u_k,$$
meaning that $\u_k=\texttt{proj}_{\u_k}\u_k=\texttt{proj}_{\u_k}\a_k$. 
This allows us to write the equation at step
$k$ of above Gram-Schmidt process as:
\begin{align*}
\a_k &= \u_k + \sum_{i=1}^{k-1} \texttt{proj}_{\u_i}\a_k = \texttt{proj}_{\u_k}a_k + \sum_{i=1}^{k-1} \texttt{proj}_{\u_i}\a_k \\
       &=\sum_{i=1}^{k} \texttt{proj}_{\u_i}\a_k                 
       =\sum_{i=1}^{k} \texttt{proj}_{\e_i}\a_k            \tag{see \eqref{eqProjection}}     \\
       &=\sum_{i=1}^{k} (\e_i \sprod \a_k)\e_i,
\end{align*}                
where $\e_i$ is the normalized vector of $\u_i$. Since this holds for any $k\in[1..n]$, we can write
it in matrix form:
$$
A=[\a_1~\a_2\dots \a_n]=[\e_1~\e_2\dots \e_n]
\begin{bmatrix}
\e_1\sprod \a_1  &  \e_1\sprod \a_2  &  \e_1\sprod \a_3 & \ldots    & \e_1\sprod \a_n  \\
0                &  \e_2\sprod \a_2  &  \e_2\sprod \a_3 & \ldots    & \e_2\sprod \a_n  \\
0                &0                  &  \e_3\sprod \a_3 & \ldots    & \e_3\sprod \a_n  \\
\vdots           &\vdots             &  \vdots          & \ddots    & \vdots           \\
0                &0                  &   0              & \ldots    & \e_n\sprod \a_n
\end{bmatrix}
=\overline{Q}~\overline{R}
$$
Notice that some columns of $\overline{Q}=[\e_1~\e_2\dots \e_n]$ can be zero. We can transform the
$n\times n$ matrix $\overline{Q}$ into a $n\times p$ matrix $Q$ by removing $n-p$ zero columns. At the
same time, we need to remove the corresponding rows of $\overline{R}$ and we obtain matrix
$R\in\R^{p\times n}$ that remains upper triangular, \ie, the elements below the diagonal are zero,
leading to $A=QR$ as needed.
\end{proof}
\subsection{\label{appSquareRoot}An SDP matrix has a unique SDP square root factor}

We introduced the square root decompositions in Section~\ref{secsquareroot}. 
Let's examine the square root matrices $K\in\R^{n\times n}$ such that
$KK=A\succeq \zeros$.
Using the eigendecomposition
\eqref{firstEqDecomp},
we have $KK=A=U\Lambda U^\top$
where $\Lambda$ is diagonal and
$U$ satisfies $UU^\top =I_n$.
We can  thus
write $KK= KUU^\top K =U\Lambda U^\top \implies
U^\top K U U^\top K U = \Lambda$.
Using notation $D=U^\top K U$, we obtain
$DD=\Lambda$.
In other words, $K$ need to have the form
$K=UDU^\top$ for some $D\in\R^{n\times n}$ such that $DD=\Lambda$.

\begin{proposition} Given SDP matrix $A\in\R^{n\times n}$, there exists a unique matrix
$K\in\R^{n\times n}$ such that $KK=A$ and $K\succeq \zeros$. This $K$ is called the principal square root of $A$.
\end{proposition}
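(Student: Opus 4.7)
The preamble has already done most of the setup: existence is witnessed by the principal square root $K_{\text{princ}}=U\sqrt{\Lambda}U^\top$, and for any $K\in\R^{n\times n}$ with $KK=A$ one must have $K=UDU^\top$ for some $D\in\R^{n\times n}$ satisfying $DD=\Lambda$. So my plan is to supplement this with the SDP hypothesis to pin down $D$ uniquely. First I would observe that, since $K=UDU^\top$ with $U$ orthonormal ($U^\top U=I_n$, so in particular $U$ is non-singular), $K$ and $D$ are congruent; by Prop.~\ref{propCongruent}, $K\succeq\zeros \iff D\succeq\zeros$. Thus the whole problem reduces to showing: \emph{the only $D\in\R^{n\times n}$ with $D\succeq\zeros$ and $D^2=\Lambda$ is $D=\sqrt{\Lambda}=\texttt{diag}(\sqrt{\lambda_1},\ldots,\sqrt{\lambda_n})$.}

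The easy half is the case where all the $\lambda_i$ are distinct: then $D$ commutes with $\Lambda=D^2$ (obvious: $D\cdot D^2=D^2\cdot D$), and anything commuting with a diagonal matrix with distinct diagonal entries must itself be diagonal. With $D$ diagonal, $D\succeq\zeros$ and $D^2=\Lambda$ force each diagonal entry to be the unique nonnegative square root of $\lambda_i$, giving $D=\sqrt{\Lambda}$. The hard part, and really the crux of the argument, is the case of \emph{repeated} eigenvalues, because then $D$ is no longer forced to be diagonal just from commuting with $\Lambda$.

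To handle this, I would group the columns of $\Lambda$ into blocks according to equal values of $\lambda_i$, so that $\Lambda$ takes the block-diagonal form $\Lambda=\texttt{diag}(\lambda_{i_1}I_{k_{i_1}},\ldots,\lambda_{i_r}I_{k_{i_r}})$ where $\lambda_{i_1},\ldots,\lambda_{i_r}$ are the distinct eigenvalues. Commutativity $D\Lambda=\Lambda D$ then forces $D$ to be block-diagonal with the \emph{same} block structure (off-diagonal blocks would satisfy $(\lambda_{i_s}-\lambda_{i_t})D_{st}=\zeros$ with $\lambda_{i_s}\neq\lambda_{i_t}$, hence $D_{st}=\zeros$). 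On each diagonal block $D_s\in\R^{k_{i_s}\times k_{i_s}}$ we then have $D_s\succeq\zeros$ and $D_s^2=\lambda_{i_s}I_{k_{i_s}}$.

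Finally, I would dispose of each block using the eigen-decomposition of $D_s$: write $D_s=V_sM_sV_s^\top$ with $M_s=\texttt{diag}(\mu_1,\ldots,\mu_{k_{i_s}})$ and $\mu_j\geq 0$ (which exists by Prop.~\ref{propEigenDecomp} combined with Prop.~\ref{propEigenSDP}). Squaring gives $V_sM_s^2V_s^\top=\lambda_{i_s}I$, and since $V_s$ is orthonormal this yields $M_s^2=\lambda_{i_s}I$, i.e.\ $\mu_j^2=\lambda_{i_s}$ for every $j$; combined with $\mu_j\geq 0$ this forces $\mu_j=\sqrt{\lambda_{i_s}}$, hence $M_s=\sqrt{\lambda_{i_s}}\,I$ and $D_s=V_s\sqrt{\lambda_{i_s}}\,IV_s^\top=\sqrt{\lambda_{i_s}}\,I_{k_{i_s}}$. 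Reassembling the blocks, $D=\sqrt{\Lambda}$, and therefore $K=U\sqrt{\Lambda}U^\top=K_{\text{princ}}$. The main obstacle is exactly the repeated-eigenvalue case, and the block-diagonal reduction outlined above is what neutralises it; everything else is bookkeeping on top of results already proved in the manuscript.
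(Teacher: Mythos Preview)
Your proof is correct. Both you and the paper reduce uniqueness to showing that the only SDP $D$ with $D^2=\Lambda$ is $D=\sqrt{\Lambda}$, and both exploit the eigendecomposition of $D$ to do this; the organization, however, differs. You first invoke the commutativity $D\Lambda=\Lambda D$ to force $D$ into block-diagonal form along the eigenspaces of $\Lambda$, and then dispatch each block $D_s$ (with $D_s^2=\lambda_{i_s}I$, $D_s\succeq\zeros$) by eigendecomposing it separately. The paper instead eigendecomposes $D=VEV^\top$ globally from the outset, derives the entry-wise relation $V_{ij}e_j^2=\lambda_iV_{ij}$ from $VE^2=\Lambda V$, and uses it together with nonnegativity of $e_j,\lambda_i$ to show directly that $VE=\sqrt{\Lambda}V$, hence $D=\sqrt{\Lambda}$. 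Your block reduction is essentially a structural repackaging of the paper's entry-wise implication $e_j^2\neq\lambda_i\Rightarrow V_{ij}=0$; it makes the role of repeated eigenvalues more transparent, at the cost of one extra layer of notation, while the paper's route is slightly shorter but more computational.
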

\begin{proof}
As described in above paragraph, any $K$ such that $KK=A$ satisfies $K=UDU^\top$; the columns of $U$ are the unitary orthonormal 
eigenvectors of $A$, so that $UU^\top=I$. We notice $\x^\top D \x= \x^\top U^\top U D U^\top U \x
= (U\x)^\top K (U\x)\geq 0~\forall \x\in\R^n$, and so, $D$ is also SDP. To prove the unicity of SDP matrix $K$, it is enough 
to show there is a unique SDP matrix $D\in\R^{n\times n}$ such that
$DD=\texttt{diag}(\lambda_1,\lambda_2,\dots,\lambda_n)$, where $\lambda_1,\lambda_2,\dots,\lambda_n$
are the eigenvalues of $A$. We apply the eigendecomposition on SDP matrix $D$: we obtain
$D=VEV^\top$, where $E=\texttt{diag}(e_1,e_2,\dots e_n)\geq \zeros$ and $VV^\top=V^\top V =I_n$.

Using $DD=\Lambda$, we have $VEV^\top VEV^\top=\Lambda$, and so, $VE^2V^\top=\Lambda$, or
$VE^2=\Lambda V$. Taking any $i,j\in [1..n]$, we have $(VE^2)_{ij}=(\Lambda V)_{ij}$, equivalent
to $V_{ij}e_j^2=\lambda_i V_{ij}$. We observe the following property: 
\begin{equation}
e_j^2\neq \lambda_i \implies V_{ij}=0 \tag{*}
\end{equation}

To prove that $K=UDU^\top$ is unique, it is enough to show that $D$ is unique.
We will exactly determine the value of $D$ by showing
$D=VEV^\top=\sqrt{\Lambda}$, where
$\sqrt{\Lambda}=\texttt{diag}\left(\sqrt{\lambda_1},~\sqrt{\lambda_2},\dots,
\sqrt{\lambda_n}\right)$. For this, it is enough
to prove $VE=\sqrt{\Lambda}V$, which is somehow a consequence of 
above $VE^2=\Lambda V$. More exactly, 
 we need to show that we find the same value at position $(i,j)$
of both sides of $VE=\sqrt{\Lambda}V$ 
for any $i,j\in[1..n]$. For this, we have to show $V_{ij}e_j = \sqrt{\lambda_i}V_{ij}$.
If $e_j=\sqrt{\lambda_i}$, this is clearly true. If $e_j\neq \sqrt{\lambda_i}$, we 
have $e^2_j\neq \lambda_i$ (recall both $e_j$ and $\lambda_i$ are non-negative eigenvalues of SDP
matrices), and so, $(*)$ states that $V_{ij}=0$, showing $V_{ij}e_j = \sqrt{\lambda_i}V_{ij}=0$.
\end{proof}

\section{\label{appRelated}Useful related facts}
We provide two classical results, a proposition related to the
completely positive cone, finishing with an example of a convex function
with an asymmetric non-SDP Hessian.

\subsection{Optimality conditions for linearly-constrained quadratic programs}
\begin{proposition} Consider the following linearly-constrained quadratic
 optimization problem, based on (not necessarily SDP) symmetric matrix $Q\in\R^{n\times n}$,
full-rank matrix $A\in \R^{p\times n}$ with $p\leq n$ and
$\b\in\R^p$.
\begin{subequations}
\begin{alignat}{4}[left ={(QP_\segal)  \empheqlbrace}]
\min~~        &p(\x)=\x^\top Q\x+\c^\top \x    \label{eqqegal1}                &&     \\
s.t.~         &A\x=\b                    \label{eqqegal2}                      &&    \\
              &\x\in \R^n                \label{eqqegal3}                                                                                            
\end{alignat}
\end{subequations}
The solution $\x^*$ is optimal if and only if the following conditions are
satisfied:
\begin{subequations}
\begin{align}
   &2Q\x^*+\c = A^\top \mumu  \text{ for some } \mumu\in \R^p \label{eqFirstOrder}  \\  
   &\z^\top Q \z\geq 0~\forall \z\in\texttt{null}(A)=\left\{\z\in \R^n:~A\z=\zeros\right\}
                                                              \label{eqSecOrder}
\end{align}
\end{subequations}
\end{proposition}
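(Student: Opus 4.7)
My plan is to prove the equivalence by standard perturbation arguments, exploiting the fact that the feasible set $\{\x : A\x = \b\}$ is an affine subspace whose direction is exactly $\texttt{null}(A)$.

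For the direct implication ($\Rightarrow$), I assume $\x^*$ is optimal. For any $\z \in \texttt{null}(A)$ and any $t \in \R$, the point $\x^* + t\z$ satisfies $A(\x^* + t\z) = \b$ and is therefore feasible. Expanding (and using the symmetry of $Q$ so that $\x^{*\top}Q\z = \z^\top Q \x^*$), I obtain
\begin{equation*}
p(\x^* + t\z) - p(\x^*) = t\,(2Q\x^* + \c)^\top \z + t^2\, \z^\top Q \z.
\end{equation*}
Since $\x^*$ is optimal, this expression must be $\geq 0$ for every $t\in \R$ (both positive and negative), which forces the linear coefficient to vanish, i.e.\ $(2Q\x^* + \c)^\top \z = 0$ for all $\z \in \texttt{null}(A)$, and the quadratic coefficient to be non-negative, giving \eqref{eqSecOrder}. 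The vanishing of the linear coefficient says that $2Q\x^* + \c$ is orthogonal to $\texttt{null}(A)$; by the standard fact $\texttt{null}(A)^\perp = \texttt{img}(A^\top)$ (which follows from the rank-nullity Theorem~\ref{thranknullity} applied to $A$ and its transpose, also implicitly used in the proof of Prop.~\ref{propRankRectangular}), this exactly means $2Q\x^* + \c = A^\top \mumu$ for some $\mumu \in \R^p$, yielding \eqref{eqFirstOrder}.

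For the converse ($\Leftarrow$), I take any feasible $\x$ and write $\x = \x^* + \z$ with $\z = \x - \x^*$; then $A\z = A\x - A\x^* = \b - \b = \zeros$, so $\z \in \texttt{null}(A)$. Applying the same expansion as above with $t = 1$,
\begin{equation*}
p(\x) - p(\x^*) = (2Q\x^* + \c)^\top \z + \z^\top Q \z.
\end{equation*}
The first-order condition \eqref{eqFirstOrder} gives $(2Q\x^* + \c)^\top \z = \mumu^\top A \z = \mumu^\top \zeros = 0$, while the second-order condition \eqref{eqSecOrder} yields $\z^\top Q\z \geq 0$. Hence $p(\x) \geq p(\x^*)$ for every feasible $\x$, proving optimality.

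The only slightly delicate point is the identification $\texttt{null}(A)^\perp = \texttt{img}(A^\top)$ used in the forward direction; everything else is mechanical algebra. This is the step that genuinely uses that $A$ has full row rank $p$ (so that the $\mumu$ is in fact uniquely determined). All remaining ingredients reduce to expanding a quadratic polynomial in $t$ and inspecting its coefficients, so I would not expect any real obstacle.
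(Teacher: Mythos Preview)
Your proof is correct and matches the paper's second proof (the paper gives three): both argue by considering the one-variable function $t \mapsto p(\x^* + t\z)$ for $\z \in \texttt{null}(A)$, extracting the first-order condition from the vanishing of the linear term and the second-order condition from the sign of the quadratic term, with the converse handled by the same expansion at $t=1$. One small inaccuracy in your commentary: the identity $\texttt{null}(A)^\perp = \texttt{img}(A^\top)$ holds for any matrix $A$, not just full-rank ones; the full-rank hypothesis only guarantees uniqueness of $\mumu$, which the statement does not actually require.
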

\noindent It can be a useful exercise to give three proofs using different techniques.

\vspace{1.5em}
\noindent \textit{Proof 1.} We solve by force the system $A\x=\b$. Since $A$ has
full rank $p$, the null space $\texttt{null}(A)$ of $A$ has dimension $n-p$ by
virtue of the  rank-nullity Theorem~\ref{thranknullity}. Let $\z_1,~\z_2,\dots
\z_{n-p}$ be a basis of the null space. Any solution $\x$ of the system
considered above has the form $\x=\v+\sum_{i=1}^{n-p} \z_iy_i$ where $A\v=\b$.
Constructing matrix $Z=[\z_1~\z_2~\dots \z_{n-p}]$, we can write the solutions
of the system under the form $\v+Z\y$ with $\y\in\R^{n-p}$.

We can write $(QP_\segal)$ from \eqref{eqqegal1}-\eqref{eqqegal3} 
under the form:
\begin{subequations}
\begin{alignat}{2}[left ={(QP_\segal)  \empheqlbrace}]
\min~~        &(\v+Z\y)^\top Q(\v+Z\y)+\c^\top (\v+Z\y)    \label{eqquncon1}                  \\
s.t.~         &\y\in\R^{n-p}                              \label{eqquncon2}                  
\end{alignat}
\end{subequations}
 We re-write above \eqref{eqquncon1} as:
$$\y^\top Z^\top Q Z \y +
2\v^\top Q Z\y
+\v^\top Q \v
+\c^\top \v
+\c^\top Z\y,
$$
where we used $\v^\top Q Z\y=(\v^\top Q Z\y)^\top = \y Z^\top Q \v$.
Using Prop.~\ref{propConvexReachesOpt}, this unconstrained quadratic program is
bounded from below if and only if it is convex and the gradient vanishes at the
optimal solution $\y^*$. The necessary and sufficient conditions for the
optimality of $\x^*=\v+\y^*$ are:
\begin{itemize}
    \item[(a)] $Z^\top Q Z\succeq \zeros$, which is equivalent to the fact that 
    $Q$ is positive over $\texttt{null}(A)$, \ie, we obtain the second
    order condition \eqref{eqSecOrder}.
    \item[(b)] \begin{sloppypar} The optimal solution $\y^*$ needs to cancel the 
    (column vector) gradient $2 Z^\top Q Z\y^* + 2Z^\top Q \v + Z^\top \c=
             Z^\top\left(2Q(\v + Z\y^*)+\c\right)=
             Z^\top\left(2Q\x^*+\c\right)$. Since the rows of $Z^\top$ are
    a transposed basis for $\texttt{null}(A)$, the above gradient can only
    cancel if $2Q\x^*+\c$ belongs to the transposed row image of $A$, \ie, there
    is some $\mumu\in \R^p$ such that $2Q\x^*+\c=A^\top \mumu$, which is 
    exactly the first order condition \eqref{eqFirstOrder}.
    \end{sloppypar}
\end{itemize}
Finally, there is a degenerate case $p=n$ in which above $Z$ and $\y$ have
dimension 0 and they disappear. In this case, \eqref{eqFirstOrder} holds because
$A^\top \mumu$ with $\mumu\in\R^n$ can cover the whole space $\R^n$ since
 $A$ is a square full rank matrix. The second order condition
\eqref{eqSecOrder} holds because it reduces to nothing using
$\texttt{null}(A)=\{\zeros\}$. The system has thus only one feasible solution
$\x^*=A^{-1}\b$
that satisfies both conditions above.\qed

\vspace{1.5em}
\noindent \textit{Proof 2.}\\
\noindent $\Longrightarrow$ The first order condition \eqref{eqFirstOrder} follows by
applying the method of Lagrange multipliers, obtaining a particular case 
of the KKT conditions.%
\footnote{See also my document ``Trying to demystify the Karush-Kuhn-Tucker
conditions'', available on-line at
\url{http://cedric.cnam.fr/~porumbed/papers/kkt.pdf}.}
However, in our case the argument reduces to the following.
The gradient $\nnabla p(\x^*)$ is perpendicular in $\x^*$ to a surface (level
set) on which
$p$ takes the constant value $p(\x^*)$. Why does not $p$ increase or decrease by moving backward or forward
from $\x^*$ along some $\z\in\texttt{null}(A)$ ? Because the gradient in $\x^*$ 
is perpendicular to $\z$. Indeed, using the chain rule, the derivative in $t=0$ 
of $f(t)=p(\x^*+t\z)$ is equal to $f'(0)=\nnabla p(\x^*)\sprod \z=0$. If this were not zero,
one could move backward or forward from $\x^*$ along $\z$ by some $\varepsilon>0$ to decrease
$p$. We obtain that the gradient $\nnabla p(\x^*)$ is perpendicular to
$\texttt{null}(A)$, and so, it belongs to the transposed row image of $A$, 
\ie, $\nnabla p(\x^*)=A^\top \mumu$ for some $\mumu\in \R^p$, which 
is exactly \eqref{eqFirstOrder}.

We prove $\z^\top Q \z\geq 0\forall \z\in\texttt{null}(A)$ by assuming
the opposite: $\exists \z\in \texttt{null}(A)$ such that $\z^\top Q
\z=-\varepsilon<0$. All $\x+t\z$ are feasible since they satisfy $A(\x+t\z)=A\x=\b$.
The function $f(t)=p(\x+t\z)$ has degree 2 and the quadratic factor is
$\z^\top Q \z t^2=-\varepsilon t^2$. This is a concave function that goes to
$-\infty$ in both directions. This means $\x^*$ is not minimal. We obtained a
contradiction from $\z^\top Q \z=-\varepsilon<0$. The second order condition
\eqref{eqSecOrder} needs to hold.

\noindent $\Longleftarrow$ We suppose both conditions
\eqref{eqFirstOrder}-\eqref{eqSecOrder} are satisfied by some 
$\x^*$ such that $A\x^*=\b$. We will prove that 
$p(\x)\geq p(\x^*)$ for any feasible $\x=\x^*+\z$, with $\z\in\texttt{null}(A)$.
Consider the function $f(t)=p(\x^*+t\z)$. Using the chain rule and
\eqref{eqFirstOrder}, we
obtain $f'(0)=\nnabla p(\x^*)\sprod \z=\nnabla p(\x^*)^\top \z$.
Replacing the gradient with the right hand side of \eqref{eqFirstOrder},
this is further equal to $\mumu^\top A \z$ which is equal to $0$
because $\z\in\texttt{null}(A)$. We thus obtained $f'(0)=0$.
We will show $f$ is convex. The only quadratic factor in $t$ of 
$p(\x^*+t\z)$ is $\z^\top Q\z t^2$ and its second derivative 
is $2\z^\top Q \z$ which is non-negative by virtue of the second order condition
\eqref{eqSecOrder}. This means that $f$ is convex and reaches its 
minimum at $t=0$, \ie, $p(\x^*)\leq p(\x)$ for all feasible $\x$.
\qed

\vspace{1.5em}
\noindent \textit{Proof 3.}\\
\noindent $\Longrightarrow$ We show the second order condition
\eqref{eqSecOrder} using the second proof above. We can prove the first order
condition \eqref{eqFirstOrder} using convexification results. 
We first apply Prop.~\ref{propConvexSDP} that states 
there exists $\lambdalambda\in \R^{n\times p}$ such that 
$p_{\lambdalambda}(\x)=p(\x)+\sum 
\lambda_{ji}(x_jA_i\x-x_j\b_i)$ is convex, where $A_i$ is the row $i$
of $A$. Notice $p_{\lambdalambda}(\x)=p(\x)$ whenever $A\x=\b$.
We can write the following program
\begin{equation}\label{eqplambda}
p(\x^*)=\min_{\x} \left\{p_{\lambdalambda}(\x):~{A\x=\b}\right\}
\end{equation}
as an SDP program because it is convex. Using the results from
Section~\ref{secTotLagrWithEqualities}, this SDP program 
takes the form of $\left(PL^\sX(QP_\segal)\right)$
from 
\eqref{eqPartLagrObj3}-\eqref{eqPartLagrBody3} with $\mumu=\zeros$ and
$\mumumu=\zeros$.
We can write 
$p(\x^*)=OPT\eqref{eqplambda}=\left(PL^\sx(QP_\segal)\right)=
\left(PL^\sX(QP_\segal)\right)$ because the Hessian of
all these programs except the first one is SDP; this means that
the first essential hierarchy \eqref{eqFundLagr1} collapses (like
in Remark~\ref{remExemple2SuccessConvex}).
As such, the total Lagrangian 
of \eqref{eqplambda} reaches $p(\x^*)$. The total Lagrangian for optimal 
Lagrangian multipliers $\betabeta\in\R^p$ is
\begin{equation}\label{eqTotalLagr}
\min_{\x\in\R^n} p_{\lambdalambda}(\x)+\betabeta^\top A\x - \betabeta^\top \b.
\end{equation}
Evaluating this total Lagrangian in $\x^*$, we obtain the value $p(\x^*)$.
Since the total Lagrangian reaches $p(\x^*)$, the minimum of \eqref{eqTotalLagr}
needs to be $p(\x^*)$. Using Prop.~\ref{propConvexReachesOpt}, this can only be
the case if the gradient in $\x^*$ of \eqref{eqTotalLagr} is zero.

Let us calculate the gradient of $p(\x)+\sum 
\lambda_{ji}(x_jA_i\x-x_j\b_i)+\betabeta^\top A\x$
in $\x^*$.
The gradient of each term $x_jA_i\x-x_j\b_i$ is 
computed as follows. For any $k\neq j$, the partial derivative on $x_k$
is $x_jA_{ik}$. The partial derivative on $x_j$ is 
$2A_{ij}x_j+\sum_{k\neq j} A_{ik}x_k-b_i=
x_jA_{ij} + A_i\x-b_i$. The second term vanishes in feasible $\x^*$. 
We obtain that the gradient of $x_jA_i\x-x_j\b_i$ in
$\x^*$ is $x^*_j A_i^\top$.
We obtain 
$\nnabla\left(p(\x)+\sum \lambda_{ji}(x_jA_i\x-x_j\b_i) +\betabeta^\top
A\x\right)_{\x^*}=
\nnabla p(\x^*)+\sum\lambda_{ji} x^*_jA_i^\top+ A^\top \betabeta$. The last two 
terms belong to the transposed row image of $A$. Since the gradient needs to be
zero, the first term $\nnabla p(\x^*)$ has to belong to the transposed row image of $A$ as well, 
\ie, $\nnabla p(\x^*)=A^\top \mumu$ for some $\mumu\in \R^p$, which 
is exactly \eqref{eqFirstOrder}.

\noindent $\Longleftarrow$ Using the fact that $Q$ is non-negative over
$\texttt{null}(A)$, we can use a relatively similar (reversed) argument 
as in the above ``$\Longrightarrow$'' proof. First, 
as above, we apply
Prop.~\ref{propConvexSDP} that states 
there exists $\lambdalambda\in \R^{n\times p}$ such that 
$p_{\lambdalambda}(\x)=p(\x)+\sum 
\lambda_{ji}(x_jA_i\x-x_j\b_i)$ is convex.
Notice $p_{\lambdalambda}(\x)=p(\x)$ whenever $A\x=\b$.
The total Lagrangian of 
$\min \left\{p_{\lambdalambda}(\x):~{A\x=\b}\right\}$ 
is $p_{\lambdalambda}(\x)+\betabeta^\top A\x - \betabeta^\top \b$ which is a
convex polynomial for any value of $\betabeta\in\R^p$. 
The value of this total Lagrangian in $\x^*$ is $p(\x^*)$.
It is enough to show
that the gradient of the total Lagrangian in $\x^*$ is zero for an appropriate
$\betabeta\in \R^p$. Using the calculations from the above ``$\Longrightarrow$''
proof, the gradient in $\x^*$ is
$\nnabla p(\x^*)+\sum\lambda_{ji} x^*_jA_i^\top+ A^\top \betabeta=
A^\top \mumu + \sum\lambda_{ji} x^*_jA_i^\top+ A^\top \betabeta$. The first two
terms belong to the transposed row image of $A$, and so, they can be canceled 
by an appropriate $\betabeta\in \R^p$. We obtain that the total Lagrangian
reaches the value $p(\x^*)$, and so, $\x^*$ needs to be a minimizer of $p$ (the
Lagrangian is always less than or equal to the constrained optimum of $p$ or
$p_{\lambdalambda}$).
\qed

\subsection{More insight and detail into the convexifications
from Section~\ref{sec4}}
\subsubsection{\label{appConvex}Constraints that can be used to convexify any matrix non-negative over
$\texttt{null}(A)$}
We consider a full-rank matrix $A\in \R^{p\times n}$ associated to linear constraint
$A\x=\b$.
Based on these constraints, one can generate various redundant quadratic constraints
that are surely satisfied when $A\x=\b$, see Section~\ref{parRedundantConstr} for examples.
We showed in Section \ref{parBestConvexification} that a partial Lagrangian subject
to $A\x=\b$ can reach the optimum value only when the Lagrangian multipliers
construct a matrix $Q$ that is non-negative over the null space 
$\texttt{null}(A)$ of $A$ (see \eqref{eqDefNull} for the null space definition);
if this does not happen, the partial Lagrangian converges to $-\infty$.
We here discuss convexifications that can make $Q$ non-negative over the whole $\R^n$
(\ie, SDP) using the Lagrangian multipliers associated to the redundant quadratic constraints.

The first paragraph
of Section 2.3.~from ``Partial Lagrangian relaxation for General Quadratic Programming''
(see Footnote \codefootnotetri, p.~\pageref{testfootpage3})
states the following result as already known in the literature. 
If $Q$ is strictly positive over $\texttt{null}(A)$, there exists $V\in \R^{n\times n}$ such
that $Q+A^\top V A\succeq\zeros$.
We prove below a generalization of this result.
This proof is not taken from existing work and we think it is original;
it uses the Bolzano-Weierstrass theorem 
\ref{thBolzanoWeierstrass} (any bounded sequence contains a convergent
sub-sequence).

\begin{proposition}\label{propConvexDP} 
Consider any $Q\in\R^{n\times n}$ strictly positive over $\texttt{null(A)}$, 
\ie, $\u^\top Q \u >0~\forall \u\in\texttt{null(A)}-\{\zeros\}$.
If $B\succeq \zeros$ satisfies
\begin{equation}\label{eqBNull}B\sprod \u\u^\top =0 \iff \u\in \texttt{null}(A),\end{equation}
then there exists $\lambda>0$ such that $Q + \lambda B\succeq \zeros$. In other words,
we can convexify $Q$ using
the Lagrangian multiplier of
a (redundant) quadratic constraint with quadratic factor $B$. The matrix $B$ can be for
instance $B=A^\top S A$ for any $S\succ \zeros$, generalizing the redundant constraint
from Example~\ref{exRedund1}.
\end{proposition}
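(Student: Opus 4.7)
My plan is to argue by contradiction using a compactness argument on the unit sphere, in the spirit of the Bolzano--Weierstrass proofs already used in the manuscript (e.g.\ in Section~\ref{secstrongdualSDP}). Suppose that no $\lambda>0$ works. Then for every integer $k\geq 1$ the matrix $Q+kB$ fails to be SDP, so there is a unit vector $\u_k\in\R^n$ (we may always rescale) with
\[
\u_k^\top Q\u_k + k\,\u_k^\top B\u_k < 0.
\]

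Since $(\u_k)$ lives on the compact unit sphere, Theorem~\ref{thBolzanoWeierstrass} gives a convergent subsequence $\u_{k_j}\to\u^*$ with $|\u^*|=1$. The key step is to show $\u^*\in\texttt{null}(A)$. From the displayed inequality, using that $|\u_{k_j}^\top Q\u_{k_j}|$ is bounded by a fixed constant (e.g.\ by the Frobenius norm $|Q|$ via Prop.~\ref{propFrobenius}), I obtain
\[
0 \le \u_{k_j}^\top B\u_{k_j} < \frac{|Q|}{k_j} \xrightarrow{j\to\infty} 0,
\]
where non-negativity comes from $B\succeq\zeros$. Passing to the limit by continuity yields ${\u^*}^\top B\u^*=0$, and the biconditional hypothesis \eqref{eqBNull} then forces $\u^*\in\texttt{null}(A)$.

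Now I exploit strict positivity: because $|\u^*|=1$ and $\u^*\in\texttt{null}(A)\setminus\{\zeros\}$, the assumption gives ${\u^*}^\top Q\u^* > 0$. By continuity of $\u\mapsto \u^\top Q\u$, there is some $c>0$ with $\u_{k_j}^\top Q\u_{k_j}>c$ for all sufficiently large $j$. Combining this with $\u_{k_j}^\top B\u_{k_j}\geq 0$ (from $B\succeq\zeros$) produces
\[
\u_{k_j}^\top (Q+k_jB)\u_{k_j} \;\ge\; c + k_j\,\u_{k_j}^\top B\u_{k_j} \;\geq\; c \;>\; 0,
\]
which contradicts the defining inequality of $\u_{k_j}$. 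Hence some $\lambda>0$ must work, proving $Q+\lambda B\succeq\zeros$.

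Finally, to verify the concrete example $B=A^\top SA$ with $S\succ\zeros$, I will just check \eqref{eqBNull}: for any $\u\in\R^n$, $\u^\top A^\top SA\u = (A\u)^\top S(A\u)$, which is $\geq 0$ (so $B\succeq\zeros$) and vanishes precisely when $A\u=\zeros$ because $S$ is positive definite, i.e.\ exactly when $\u\in\texttt{null}(A)$. The main (but modest) obstacle is the intermediate bookkeeping step showing $\u_{k_j}^\top B\u_{k_j}\to 0$; everything else is a direct continuity-plus-compactness argument.
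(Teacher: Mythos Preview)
Your proof is correct and rests on the same compactness idea as the paper's: extract a convergent subsequence on the unit sphere via Bolzano--Weierstrass (Theorem~\ref{thBolzanoWeierstrass}) and derive a contradiction from the hypotheses on $Q$ and $B$. The organization differs, however. The paper introduces the auxiliary ratio $f(\x)=\dfrac{B\sprod\x\x^\top}{|Q\sprod\x\x^\top|}$ on the set $\widetilde{X}=\{\x:|\x|=1,\ Q\sprod\x\x^\top<0\}$, assumes $\inf f=0$, and then splits into three cases depending on whether the limit point $\widetilde{\x}$ lies in $\widetilde{X}$, in $\texttt{null}(A)$, or in the remaining region. Your argument bypasses both the ratio function and the case analysis: by taking $\lambda=k\to\infty$ along the integers you force $\u_{k_j}^\top B\u_{k_j}\to 0$ directly, so the limit point is automatically pushed into $\texttt{null}(A)$ via \eqref{eqBNull}, collapsing the paper's three cases into one. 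This is a genuine streamlining; the paper's version, on the other hand, makes the role of the threshold $\lambda=1/\inf f$ more explicit and would generalize slightly more easily if one wanted a quantitative bound on $\lambda$.
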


\begin{proof} We define a set of particular interest: 
$\widetilde{X}=\{\x\in\R^n:~|\x|=1,~Q\sprod \x\x^\top<0\}$. We need to make the
elements $\x$ of this set verify $(Q+\lambda B)\sprod \x\x^\top\geq 0$.
Consider the function $f:\widetilde{X}\to \R$ defined by 
$$f(\x)=\frac {B\sprod \x\x^\top}{|Q\sprod \x\x^\top|}$$
Based on \eqref{eqBNull}, $B\succeq\zeros$ is strictly positive
over 
all $\x$ outside the null space of $A$. 
Since $\widetilde{X}\subseteq
\R^n-\texttt{null}(A)$, we easily obtain $f(\x)>0~\x\in\widetilde{X}$.

We will prove by contradiction that 
$\inf f(\x)>0$. Assuming the contrary (\ie, 
$\inf f(\x)=0$), 
there exists a sequence $(\x_i)$ with
$\x_i\in\widetilde{X}$ such that
$\lim\limits_{i\to \infty} f(\x_i)=0$. Using the
Bolzano-Weierstrass Theorem \ref{thBolzanoWeierstrass},
there exists a sub-sequence $\{\x_{n_i}\}$ such that
$\lim\limits_{i\to\infty} \x_{n_i} = \widetilde{\x}$.
We will show we can obtain
a contradiction for each of the following three cases 
that cover $\widetilde{\x} \in \R^n$:
\begin{description}
    \item[(i)]   $\widetilde{\x}\in\widetilde{X}$
    \item[(ii)]  $\widetilde{\x}\in\texttt{null}(A)$
    \item[(iii)] $\widetilde{\x}\in\R^n - \texttt{null}(A)-\widetilde{X}$
\end{description}

For case (i), it is enough to notice that the sequence $f(\x_{n_i})$ can
be arbitrarily close to $f(\widetilde{\x})>0$, which contradicts that $f(\x_i)$
converges to zero. 

For case (ii), we obtain by hypothesis that
$Q\sprod \widetilde{\x}~\widetilde{\x}^\top >0$.
This leads to
$Q\sprod \widetilde{\x_\varepsilon}~\widetilde{\x_\varepsilon}^\top >0$ for any 
$\widetilde{\x_\varepsilon}$ such that $|\widetilde{\x_\varepsilon}-\widetilde{\x}|<\varepsilon$ 
for a sufficiently small $\varepsilon$. But this would mean that $f$ is undefined in a sufficiently
small ball around $\widetilde{\x}$, and so, 
we can \textit{not} have
$\lim\limits_{i\to\infty} \x_{n_i} = \widetilde{\x}$ with
$\x_{n_i}\in\widetilde{X}~\forall i\in \N$,
contradiction.

For case (iii), 
we can use \eqref{eqBNull} and $\widetilde{\x}\notin \texttt{null}(A)$
to obtain
$B\sprod \widetilde{\x}\widetilde{\x}^\top=z>0$. When $i\to\infty$, the value
$B\sprod \x_{n_i}\x_{n_i}^\top$ can become arbitrarily close to $z$ and 
$Q\sprod \x_{n_i}\x_{n_i}^\top$ can become arbitrarily close to $0$. 
This latter fact ($\lim_{i\to \infty}Q\sprod \x_{n_i}\x_{n_i}^\top=0$)
follows from 
$Q\sprod \x_{n_i}\x_{n_i}<0~\forall i\in \N$ (because
$\x_{n_i}\in \widetilde{X}$)
and 
$Q\sprod \widetilde{\x}\widetilde{\x}^\top\geq 0$ (because
$\widetilde{\x}\notin\widetilde{X}$).
Combining the above convergence properties of
$B\sprod \x_{n_i}\x_{n_i}^\top$ and
$Q\sprod \x_{n_i}\x_{n_i}^\top$, we obtain
that
$f(\x_{n_i})
=
\frac {B\sprod \x_{n_i}\x_{n_i}^\top}
{-Q\sprod \x_{n_i}\x_{n_i}^\top}$ 
can become arbitrarily large, 
contradicting that $f(\x_i)$ 
converges to zero. 

This means there exists a (possibly large) $\lambda>0$ such that 
$f(\x)=\frac {B\sprod \x\x^\top}{|Q\sprod \x\x^\top|}>\frac 1{\lambda}~\forall \x\in \widetilde{X}$.
We obtain $\lambda {B\sprod \x\x^\top}+ Q\sprod \x\x^\top>0$ 
for all $\x$ such that $Q\sprod \x\x^\top<0$ (we developed the $\widetilde{X}$
definition, forgetting that all its elements are unitary).
For the remaining cases, \ie, for all $\x\in\R^n$ such that $Q\sprod \x\x^\top\geq 0$, 
we also obtain
$\left(Q+\lambda B\right)\sprod \x\x^\top\geq 0$ simply because $B\succeq
\zeros$ and $\lambda>0$. Combining both cases above, we get
$\left(Q+\lambda B\right)\sprod \x\x^\top\geq 0~\forall \x\in\R^n$.
\end{proof}

The next proofs are modified versions of the proofs from (Section 2.3 of)
  ``Partial Lagrangian relaxation for General Quadratic Programming''
by Alain Faye and Fr\'ed\'eric~Roupin 
(see Footnote \codefootnotetri, p.~\pageref{testfootpage3}).

\begin{proposition}\label{propConvexSDP} 
We are given a full rank matrix  $A\in\R^{p\times n}$ associated to constraints $A\x=\b$.
Consider any $Q\in\R^{n\times n}$ non-negative over $\texttt{null(A)}$, 
\ie, $\u^\top Q \u \geq 0~\forall \u\in\texttt{null(A)}-\{\zeros\}$.
There exists a linear combination of the redundant 
constraints 
$x_jA_i\x-x_j\b_i=0$ (where $A_i$ is the row $i$ of $A$, with $j\in[1..n]$ and $i\in[1..p]$) from Example~\ref{exRedund2}
that can be added to $Q\sprod \x\x^\top$ to transform $Q$ into an SDP matrix.
Equivalently, if $\A_{j\swarrow i}$ is the $n\times n$ matrix with only a non-zero row $j$ that contains $A_i$,
then there always exist $\lambda_{ji}\in \R$ (for all $j\in[1..n]$ and $i\in [1..p]$) such that 
$Q+\sum_{j,i}\lambda_{ji} \left(\A_{j\swarrow i}^\top +\A_{j\swarrow i}\right)\succeq \zeros$.

\end{proposition}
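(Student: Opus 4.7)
The plan is to translate the statement into a cleaner matrix-theoretic form and then use the orthogonal splitting $\R^n=\texttt{null}(A)\oplus\texttt{range}(A^\top)$ to reduce the problem to a trivial PSD completion. First I would observe that $\A_{j\swarrow i}$ is the outer product $\e_j A_i$ (the $j$-th standard column vector times the $i$-th row of $A$), so
$$\sum_{j,i}\lambda_{ji}\A_{j\swarrow i}=YA,\qquad Y\in\R^{n\times p},\ Y_{ji}=\lambda_{ji}.$$
The problem therefore becomes: find $Y\in\R^{n\times p}$ with $Q+YA+A^\top Y^\top\succeq\zeros$.

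Next I would pass to an orthonormal basis of $\R^n$ adapted to the orthogonal decomposition $\R^n=\texttt{null}(A)\oplus\texttt{range}(A^\top)$. By the rank--nullity Theorem~\ref{thranknullity} and the full-rank hypothesis on $A$, the null space has dimension $n-p$, and in the new basis $A$ takes the form $[\,0\mid\tilde A\,]$ with $\tilde A\in\R^{p\times p}$ \emph{invertible}. The matrix $Q$ decomposes correspondingly into blocks $\bigl(\begin{smallmatrix} Q_{NN} & Q_{NR}\\ Q_{NR}^\top & Q_{RR}\end{smallmatrix}\bigr)$, and the hypothesis that $Q$ is non-negative on $\texttt{null}(A)$ becomes $Q_{NN}\succeq\zeros$. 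Partitioning $Y=\bigl(\begin{smallmatrix} Y_N\\ Y_R\end{smallmatrix}\bigr)$ accordingly and multiplying out,
$$YA+A^\top Y^\top=\begin{bmatrix} 0 & Y_N\tilde A\\ \tilde A^\top Y_N^\top & Y_R\tilde A+\tilde A^\top Y_R^\top\end{bmatrix}.$$
The zero upper-left block is intrinsic — it reflects the fact that $x^\top(YA+A^\top Y^\top)x=0$ whenever $Ax=\zeros$ — but the invertibility of $\tilde A$ means I can freely prescribe the off-diagonal block (by choosing $Y_N=B\tilde A^{-1}$ to realize $Y_N\tilde A=B$) and any symmetric lower-right block (by choosing $Y_R=\tfrac{1}{2}C\tilde A^{-1}$ when $C$ is symmetric).

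I would then make the explicit choices $Y_N=-Q_{NR}\tilde A^{-1}$ to kill the cross block, and $Y_R=\tfrac{1}{2}(\lambda I_p-Q_{RR})\tilde A^{-1}$ with $\lambda$ larger than the largest eigenvalue of $Q_{RR}$. The resulting $Q+YA+A^\top Y^\top$ is then block-diagonal with blocks $Q_{NN}\succeq\zeros$ and $\lambda I_p\succ\zeros$, hence SDP. Changing back to the original basis is harmless: the change of basis is a congruence by an orthonormal matrix and therefore preserves the SDP status (Prop.~\ref{propCongruent}), while the new $Y$ in the original basis has entries $\lambda_{ji}=Y_{ji}$. The main obstacle is really the middle step — identifying that $\{YA+A^\top Y^\top:Y\in\R^{n\times p}\}$ is precisely the space of symmetric matrices whose quadratic form vanishes on $\texttt{null}(A)$, so that arbitrary $(M_{NR},M_{RR})$ blocks are attainable while the $(N,N)$ block is necessarily untouched; once this structural picture is in place, the PSD completion itself is an immediate block-diagonalisation thanks to $Q_{NN}\succeq\zeros$.
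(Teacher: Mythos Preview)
Your argument is correct. The identification $\sum_{j,i}\lambda_{ji}\A_{j\swarrow i}=YA$ with $Y_{ji}=\lambda_{ji}$ matches exactly how the paper reduces the statement to finding $\W\in\R^{n\times p}$ with $Q+A^\top\W^\top+\W A\succeq\zeros$ (Prop.~\ref{propSuperConvexifier}); the paper and you simply name the unknown $Y$ versus $\W$.

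Where you diverge is in how the existence of such a $Y$ is established. The paper first applies a QR decomposition $A=R^\top U$ to replace $A$ by an orthonormal-row matrix $U$, and then runs an \emph{inductive} construction: at each step it chooses a single vector $\w_i$ so that the matrix $M^{i-1}=[\u_i,\dots,\u_p,B]^\top Q_{i-1}[\u_i,\dots,\u_p,B]$ has its first row and column zeroed out, working down from $i=p$ to $i=0$. Your route is non-inductive: you change to an orthonormal basis adapted to $\texttt{null}(A)\oplus\texttt{range}(A^\top)$ once, observe that in this basis $A=[0\mid\tilde A]$ with $\tilde A$ invertible, and then solve for the two blocks $Y_N,Y_R$ of $Y$ in one shot by inverting $\tilde A$. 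Both arguments rest on the same structural observation (the $(N,N)$ block is untouchable while the $(N,R)$ and $(R,R)$ blocks are freely prescribable), but your block-matrix version is substantially shorter and makes that structure more transparent. The paper's inductive version, on the other hand, is closer to an explicit Gram--Schmidt-style algorithm for producing the $\w_i$ one at a time. One small wording point: when you write ``the new $Y$ in the original basis has entries $\lambda_{ji}=Y_{ji}$'', the $Y$ you obtain in the original basis is $PY'$ (with $P$ the orthogonal change-of-basis matrix), not $Y'$ itself; this does not affect the existence claim, but it would be cleaner to say so explicitly.
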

\begin{proof} We will show in Prop.~\ref{propSuperConvexifier} there exists $\W\in\R^{n\times p}$
such that $Q +A^\top \W^\top +\W A\succeq \zeros$. We can write
$\W=\sum W_{ji}E_{ji}$
where the $E_{ji}$'s represent the canonical base indexed by $j\in[1..n]$ and $i\in[1..p]$, \ie,
$E_{ji}\in\R^{n\times p}$ has a value of one at position $(j,i)$ and
only zeros at all other positions. We have 
\begin{align*}
A^\top \W^\top +\W A &= \sum_{j,i} W_{ji}\left(A^\top E^\top_{ji}+E_{ji}A\right)\\
                     &=\sum_{j,i}  W_{ji}\left(\A_{j\swarrow i}^\top +\A_{j\swarrow i}\right),
\end{align*}
which concludes the proof, with the values $\lambda_{ji}=W_{ji}$.

\end{proof}

\begin{proposition}
\label{propSuperConvexifier}
We are given a full rank matrix  $A\in\R^{p\times n}$ associated to constraints $A\x=\b$.
Consider any $Q\in\R^{n\times n}$ non-negative over $\texttt{null}(A)$, 
\ie, $\u^\top Q \u \geq 0~\forall \u\in\texttt{null(A)}-\{\zeros\}$. There exists $\W\in \R^{n\times
p}$ such that $Q +A^\top \W^\top +\W A\succeq \zeros$.
\end{proposition}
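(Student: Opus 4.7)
The plan is to work in an orthonormal basis adapted to the splitting $\R^n=\texttt{null}(A)\oplus\texttt{null}(A)^\perp$. Let the columns of $U\in\R^{n\times(n-p)}$ form an orthonormal basis of $\texttt{null}(A)$ and the columns of $V\in\R^{n\times p}$ form an orthonormal basis of $\texttt{null}(A)^\perp=\texttt{img}(A^\top)$, so that $[U,V]$ is orthogonal. Since $A$ annihilates $\texttt{null}(A)$ and has full row rank $p$, one has $AU=\zeros$ and $AV\in\R^{p\times p}$ is non-singular (its columns span the row image of $A$, see Prop.~\ref{propranktransprod}).

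The key step will be to show that the correction $A^\top W^\top+WA$ has a rigid footprint in the $[U,V]$-basis: it leaves the top-left block of $Q$ untouched and can be tuned freely everywhere else. Concretely, writing $\tilde W=[U,V]^\top W=\bigl[\begin{smallmatrix}\tilde W_1\\ \tilde W_2\end{smallmatrix}\bigr]$ with $\tilde W_1\in\R^{(n-p)\times p}$ and $\tilde W_2\in\R^{p\times p}$, the identity $A[U,V]=[\,\zeros,\,AV\,]$ will yield, after a direct block computation,
\begin{equation*}
[U,V]^\top\bigl(Q+A^\top W^\top+WA\bigr)[U,V]=
\begin{bmatrix}
U^\top Q U & U^\top Q V+\tilde W_1(AV)\\[2pt]
V^\top Q U+(AV)^\top\tilde W_1^\top & V^\top Q V+\tilde W_2(AV)+(AV)^\top\tilde W_2^\top
\end{bmatrix}.
\end{equation*}
The top-left block $U^\top Q U$ is exactly where the hypothesis that $Q$ is non-negative over $\texttt{null}(A)$ bites: it is SDP and, crucially, does not depend on $W$.

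Using invertibility of $AV$, I would then choose $\tilde W_1=-U^\top QV\,(AV)^{-1}$ to kill both off-diagonal blocks, and $\tilde W_2=\tfrac12\bigl(I_p-V^\top QV\bigr)(AV)^{-1}$ so that the bottom-right block equals $I_p\succ\zeros$. The resulting block-diagonal matrix $\bigl[\begin{smallmatrix}U^\top QU&\zeros\\ \zeros&I_p\end{smallmatrix}\bigr]$ is SDP. Since $[U,V]$ is non-singular, congruence by $[U,V]$ preserves the SDP status (Prop.~\ref{propCongruent}), so $Q+A^\top W^\top+WA\succeq\zeros$ with $W=[U,V]\,\tilde W\in\R^{n\times p}$, as required.

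The main obstacle is seeing the right change of variables: expanded in the canonical basis, the expression $A^\top W^\top+WA$ looks like a generic Lyapunov-type perturbation of $Q$, and it is not obvious what pieces of $Q$ it can or cannot correct. The adapted basis $[U,V]$ makes the structure transparent — the perturbation acts as a block-triangular operator that zeroes out everything except the $U^\top QU$ block, which is precisely the block for which the hypothesis already provides positivity. Once this structural observation is in place, the construction of $\tilde W_1,\tilde W_2$ is purely linear algebra.
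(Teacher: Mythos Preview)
Your proof is correct and substantially cleaner than the paper's. Both arguments work in an orthonormal basis adapted to the splitting $\texttt{null}(A)\oplus\texttt{null}(A)^\perp$, but the paper first passes through a QR factorisation $A=R^\top U$ (so its $U$ plays the role of your $V^\top$, and its $B$ plays the role of your $U$) and then runs an induction over the rows $\u_1,\dots,\u_p$ of $U$: at step $i$ it chooses a rank-one correction $\u_i\w_i^\top+\w_i\u_i^\top$ that zeros out the first row and column of an intermediate test matrix $M^{i-1}$, shrinking the ``bad'' subspace $L_i$ by one dimension until only $\texttt{null}(A)$ remains. Your block argument accomplishes the same thing in a single step: since $AV$ is invertible you can solve for the whole off-diagonal block and the whole bottom-right block at once, rather than peeling them off row by row. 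The inductive approach makes explicit that each $\w_j$ can be chosen sequentially (which is perhaps closer in spirit to the Lagrangian-multiplier interpretation of Prop.~\ref{propConvexSDP}), but your direct construction is shorter and exposes the structural point more transparently.

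One minor remark: your citation of Prop.~\ref{propranktransprod} for the invertibility of $AV$ is not quite the right reference; the cleanest justification is that $A[U,V]=[\,\zeros,\,AV\,]$ has rank $p$ because $[U,V]$ is non-singular (Prop.~\ref{proprankprodInvert}), hence $\texttt{rank}(AV)=p$.
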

\begin{proof} We recall the QR decomposition from Prop.~\ref{propQR} and the 
Gram-Schmidt orthogonalization process described in the proof of this Prop.~\ref{propQR}.
We apply this process up to the last column $p$ of $A^\top$, so as to factorize
$A^\top = U^\top R$, where $U$ has the size of $A$, $UU^\top =I_p$ and $R\in \R^{p\times p}$ is upper triangular.
One can see this as a $QR$ decomposition restricted to the first $p$ columns, \ie, 
it can be extended to a full $QR$ decomposition by adding $n-p$ zero columns to $A^\top$ and 
$R$. However, we can equivalently factorize $A=R^\top U$.
Notice that the (first $i\leq p$) rows of $U$ span the same subspace as the
(first $i\leq
p$) rows of $A$.
Using
Prop.~\ref{proprankprod}, we have $p=rank(A)\leq rank(R^\top), rank(U)$, and so, 
$U$ and $R^\top$ are full rank and $R^\top$ is invertible. If we find $W\in\R^{n\times p}$ such 
that $S+U^\top W^\top + WU\succeq \zeros$, we can use equation below and finish the proof:
\begin{equation}\label{eqWU} WU=W \left(R^\top\right)^{-1} R^\top U=
\underbrace {W \left(R^\top\right)^{-1}}_{\W} A
\text{ and }
U^\top W^\top = A^\top \W^\top.
\end{equation}

It is enough to show there exists $W\in \R^{n\times p}$ such that $Q +U^\top W^\top +W U\succeq
\zeros$,
where the (first $i\leq p$) rows of $U$ are an orthonormal basis spanning the
same subspace as the (first $i\leq p$) rows of $A$.
Let $B$ be a matrix whose columns are an orthonormal basis of $\texttt{null}(A)=\texttt{null}(U)$. 
Using the rank-nullity Theorem~\ref{thranknullity}, we have $B\in\R^{n, n-p}$.
The fact that $Q$
is non-negative over $\texttt{null(A)}$ is equivalent to $B^\top Q B\succeq
\zeros$; this follows from
\begin{equation}
\x^\top B^\top Q B \x\geq 0~\forall \x\in\R^{n-p}\iff
(B\x)^\top Q (B \x)\geq 0~\forall \x\in\R^{n-p}\iff
\y^\top Q \y \geq 0~\forall \y\in\texttt{null}(A)
\label{eqSdpPartial}
\end{equation}

Let $L_i$ be the sub-space spanned by $\u_{i+1},~\u_{i+2},\dots \u_{p}$ and (the columns of) $B$,
where $\u_i$ is the row $i$ of $U$ written as a column vector. This sub-space has dimension $n-i$
 and is perpendicular on $\u_1,~\u_2,\dots \u_i$ (recall $UB=0$). We can also write:
\begin{equation}\label{eqLi}
L_i= \texttt{img}\left(\u_{i+1},~\u_{i+2},\dots \u_{p},B\right)=
    \left\{\y\in\R^n:~\u_j\y=0~\forall j\in[1..i]\right\},
\end{equation}
where \texttt{img}$(...)$ is the sub-space generated by the column vectors of the
matrices given as arguments. In particular, we have $L_0=\R^n$ and $L_p=\texttt{img}(B)=\texttt{null}(A)$.
We will construct a matrix $Q_i\in\R^{n\times n}$ that is non-negative
on $L_i$, by induction on $i$ from
$i=p$ (with $Q_p=Q$) down to $i=0$ (when $i=0$, $Q_0$ non-negative over $\R^n$ is equivalent to
$Q_0\succeq\zeros$).
Using an argument as the one from \eqref{eqSdpPartial}, a matrix $Q_i\in\R^{n\times
n}$ is non-negative over $L_i$ if and only if 
$M^i=\left[\u_{i+1},~\u_{i+2},\dots \u_{p},B\right] ^\top Q_i \left[\u_{i+1},~\u_{i+2},\dots
\u_{p},B\right]\succeq \zeros$. We will prove we can 
use $Q_{i}$ associated to $M^{i}\succeq\zeros$ to
construct $Q_{i-1}$ such that $M^{i-1}\succeq\zeros$.
We will thus iteratively construct $Q_i=Q+\sum\limits_{j=i+1}^p \u_j\w_j^\top +\w_j \u_j^\top$ with $i$ from $p$
down to $0$. At the last iteration we will obtain:
\begin{equation}Q_0= Q+ \sum\limits_{j=1}^p \u_j\w_j^\top +\w_j
\u_j^\top= Q+ U^\top W^\top  +   WU\label{eqQ0}\end{equation} as needed, where $W=[\w_1~\w_2~\w_3\dots \w_p]$.
We will use several times the following property:
\begin{equation}\label{eqPerpend}\u_a^\top \left(\u_j\w_j^\top +\w_j \u_j^\top\right)\u_b=0~\forall \u_a,\u_b\in
\left\{\u_1,~\u_2,\dots ,\u_{j-1},\u_{j+1},~\u_{j+2},\dots ,\u_p,~B\right\},\end{equation}
where one can read $B$ as an enumeration of column vectors (slightly abusing notations).
This simply follows from $\u_a^\top \u_j=\u_j^\top \u_b=0$ with $\u_a$ and $\u_b$ from the above set.

Now we present the induction step. We can assume $Q_i$ is already constructed and we need
to determine 
$Q_{i-1}=Q_i+\u_i\w_i^\top +\w_i \u_i^\top$. More exactly, the goal is to find
some $\w_i\in\R^n$ such that
$M^{i-1}=\left[\u_i,~\u_{i+1},\dots \u_{p},B\right] ^\top Q_{i-1} \left[\u_{i},~\u_{i+1},\dots
\u_{p},B\right]\succeq \zeros$, as argued above.
At each transition $i\to i-1$, we can use the induction hypothesis that 
$Q_i$ is associated to $M^i\succeq\zeros$ (recall this is surely true for $Q_p=Q$ and $M^p=B^\top Q B$ by hypothesis).
Let us develop the formula of $M^{i-1}$ that we will construct to be SDP.
\begin{align}
M^{i-1}&=
        \scalebox{0.92}{$
        \begin{bmatrix}\u_i^\top \\ \u_{i+1}^\top \\ \vdots\\ \u_{p}^\top \\ B^\top \end{bmatrix}
            \left(Q+\sum\limits_{j=i}^p \u_j\w_j^\top +\w_j \u_j^\top\right)
        \left[\u_{i},~\u_{i+1},\dots \u_{p},B\right] 
        $}
        =
        \scalebox{0.92}{$
            \left[
            \begin{array}{c|c}
                    M^{i-1}_{1,1} &  M^{i-1}_{1,[2..n-i+1]} \\
                    \hline
                M^{i-1}_{[2..n-i+1],1} & M^{i-1}_{[2..n-i+1],[2..n-i+1]}
            \end{array}
            \right]$}
\notag
\\
       &=
\left[
\begin{array}{c|c}
       \u_i^\top \left(Q+\sum\limits_{j=i}^p \u_j\w_j^\top +\w_j \u_j^\top\right) \u_i &
         \text{\scalebox{0.98}{$\u_i^\top \left(Q+\sum\limits_{j=i}^p \u_j\w_j^\top
+\w_j \u_j^\top\right) \left[\u_{i+1},\dots \u_{p},B\right]$}}\\
        \hline
        \begin{bmatrix}\u_{i+1}^\top \\ \vdots\\ \u_{p}^\top \\ B^\top \end{bmatrix}
            \left(Q+\sum\limits_{j=i}^p \u_j\w_j^\top +\w_j \u_j^\top\right) \u_i& 
        {\scalebox{1.5}{$M^i$}}\\
\end{array}
\right],
\label{eqBigM}
\end{align}
where we used \eqref{eqPerpend} with $j=i$ and $\u_a,\u_b
\in
\left\{\u_{i+1},~\u_{i+2},\dots ,\u_p,~B\right\}$ to remain with the $M^{i}$ term in the bottom-right cell. Let us develop the 
\textit{first line} of $M^{i-1}$:
\begin{itemize} \leftskip-1em
    \item [--] the first position is 
$M^{i-1}_{1,1}=\u_i^\top \left(Q+\sum\limits_{j=i}^p \u_j\w_j^\top +\w_j \u_j^\top\right) \u_i=
\u_i^\top \left(Q+\u_i\w_i^\top +\w_i \u_i^\top\right) \u_i$ by virtue of \eqref{eqPerpend}. We can
further develop this into  $M^{i-1}_{1,1}=\u_i^\top Q \u_i + \w_i^\top \u_i + \u_i^\top \w_i$.
    \item [--] on the remaining $n-i$ positions, we have:
         \begin{align*}
            M^{i-1}_{1,[2..n-i+1]}
            &=\u_i^\top \left(Q+\sum\limits_{j=i}^p \u_j\w_j^\top +\w_j \u_j^\top\right) \left[\u_{i+1},\dots \u_{p},B\right]\\
            &= 
         \u_i^\top Q\left[\u_{i+1},\dots \u_{p},B\right]
            +
         \u_i^\top \u_i\w_i^\top \left[\u_{i+1},\dots \u_{p},B\right]\tag{we used \eqref{eqPerpend}}\\
            &~~~+
         \u_i^\top \left(\sum\limits_{j=i}^p \w_j \u_j^\top\right) \left[\u_{i+1},\dots \u_{p},B\right]\\
         &=\u_i^\top Q\left[\u_{i+1},\dots \u_{p},B\right]
            +
         \w_i^\top \left[\u_{i+1},\dots \u_{p},B\right]\\
            &~~~+
         \u_i^\top \left[\w_{i+1},\dots \w_{p},\zeros\right]\tag{we used \eqref{eqPerpend}}
         \end{align*}
Using the orthonormality properties of $\u_{i+1},\dots \u_{p}$ and $B$, the last term
can be written 
        \begin{align*}
            \u_i^\top \left[\w_{i+1},\dots \w_{p},\zeros\right]&=
            \u_i^\top \underbrace{\left[\w_{i+1},\dots \w_{p},\zeros\right]
            \left[\u_{i+1},\dots \u_{p},B\right]^\top}_{P_i} \left[\u_{i+1},\dots \u_{p},B\right]\\
            &=\u_i^\top P_i \left[\u_{i+1},\dots \u_{p},B\right],
        \end{align*}
where $P_i\in\R^{n\times n}$ does not depend on the vector $\w_i$ we need to determine. We will need the 
following:
\begin{equation}\label{eqpi}
P_i\u_i=\zeros_{n\times 1} \text{ and } \u_i^\top P_i^\top = \zeros_{1\times n}
\end{equation}
\end{itemize}
Finally, by simplifying above formulas, the first row of $M^{i-1}$ can be written:
$$M^{i-1}_1=
\left[\u_i^\top Q \u_i + 2\u_i^\top \w_i,~
\left(\u_i^\top Q +\w_i^\top + \u_i^\top P_i\right)\left[\u_{i+1},\dots \u_{p},B\right]\right]$$

We need to determine $\w_i$ such that $M^{i-1}\succeq \zeros$, given that we can
rely on the induction hypothesis $M^i\succeq \zeros$.
The simplest way to construct an SDP matrix $M^{i-1}$ is to generate only zeros on the first row $M^{i-1}_1$. First, we would like
$\w_i^\top$ to cancel the terms $\u_i^\top Q + \u_i^\top P_i$ in above $M^{i-1}_1$ formula.
As such, $\w_i$ integrates a first term  $-\left(\u_i^\top Q + \u_i^\top P_i\right)^\top$. 
A second term of
 $\w_i$ is
 $z\u_i$; 
this second term does not change the canceled positions of the first row (see
point (b) below), 
but it
can make $M^{i-1}_{1,1}\geq 0$ for a sufficiently large $z$.
Thus, we set $\w_i=-\left(Q+P_i^\top\right)\u_i + z\u_i$, where the value of $z$ will
be determined at point (a) below. However, this $\w_i$ vector
leads to the following values on the first row of $M^{i-1}$.
\begin{sloppypar}
\begin{itemize}
\item [(a)] 
$M^{i-1}_{1,1}=\u_i^\top Q \u_i + 2\u_i^\top \Big(-\left(Q+P_i^\top\right)\u_i + z\u_i\Big)=
 \u_i^\top Q \u_i -2 \u_i^\top Q \u_i -2\u_i^\top P_i^\top \u_i+ 2\u_i ^\top z\u_i=
-\u_i^\top Q \u_i + 2z\u_i^\top \u_i=
-\u_i^\top Q \u_i + 2z$, where we used \eqref{eqpi} to cancel the $P_i^\top$ term.
We set $z=\frac 12 \left(\u_i^\top Q \u_i\right)$ to make $M^{i-1}_{1,1}=0$,
but larger values can also be chosen.
\item[(b)] $M^{i-1}_{1,[2..n-i+1]}=\left(\u_i^\top Q +\left(-\left(Q+P_i^\top\right)\u_i + z\u_i\right)^\top + \u_i^\top P_i\right)\left[\u_{i+1},\dots \u_{p},B\right]
=
z\u_i^\top \left[\u_{i+1},\dots \u_{p},B\right]={\zeros_{1\times (n-i)}}.$
\end{itemize}
\end{sloppypar}
The resulting first row of $M^{i-1}$ is filled with zeros, and so needs to be the first column by symmetry. 
Recalling \eqref{eqBigM}, we have $M^{i-1}=\left[\begin{smallmatrix} 0 & \zeros \\ \zeros & M^i\end{smallmatrix}\right]$.
Using the induction hypothesis
$M^i\succeq\zeros$, we obtain $M^{i-1}\succeq \zeros$, which finishes the induction step. At the last iteration, we obtain $M^0\succeq \zeros$, which
is enough to guarantee $Q_0\succeq \zeros$ using arguments discussed above. Recalling \eqref{eqQ0} and \eqref{eqWU}, this finishes the proof.
\end{proof}

\subsubsection{Refining the
\texttt{Branch-and-bound}
for 
equality-constrained binary quadratic
programming from Section~\ref{secbbound}\label{appbbound}}

An approach like in Section~\ref{secbbound}
 can be found in the article
``Improving the performance of standard solvers for quadratic 0-1 programs by a tight convex reformulation: The QCR method''
by Alain Billionnet, Sourour Elloumi and Marie-Christine Plateau.%
\footnote{Published in \textit{Discrete Applied Mathematics} in 2009, vol 157 (6), pp. 1185-1197, 
a draft is available at \url{http://cedric.cnam.fr/fichiers/RC1120.pdf}.
\setcounter{testfoot6}{\value{footnote}}\label{testfootpage6}%
}
They use the redundant constraints from Example~\ref{exRedund2} which yield the
optimum value 
$OPT(SDP(QP_\segal))=\left(PL^\sx(QP_\segal)\right)$ as stated in 
Remark~\ref{remarkOptCoefs}.
However, instead of using $\LL_{PL^\sx(QP_\segal)}(\mumu^*,\mumumu^*)$, they work with a restricted
version 
$\LL^{[0-1]}_{PL^\sx(QP_\segal)}(\mumu^*,\mumumu^*)$
of this program in which they also impose $x_i\in[0,1]$, equivalent to $x^2_i\leq x_i~\forall i\in[1..n]$.
However, they show that this program has the same objective value $OPT(SDP(QP_\segal))$
using an argument based on the Slater's interiority condition, stating that the
total dual Lagrangian of
$\LL^{[0-1]}_{PL^\sx(QP_\segal)}(\mumu^*,\mumumu^*)$ has no duality gap---see the
implication $(D1)\to(D2)$. However, we can here use a different argument.

\begin{sloppypar} If we construct 
$\LL^{[0-1]}_{PL^\sx(QP_\segal)}(\mumu^*,\mumumu^*)$,
$\LL^{[0-1]}_{PL^\sX(QP_\segal)}(\mumu^*,\mumumu^*)$ and
$SDP^{[0-1]}(P_\segal)$ 
from resp.~$\LL_{PL^\sx(QP_\segal)}(\mumu^*,\mumumu^*)$,
$\LL_{PL^\sX(QP_\segal)}(\mumu^*,\mumumu^*)$ and
$SDP(P_\segal)$ 
by adding constraints $x_i^2\leq x_i$ (or resp.~$X_{ii}\leq x_i$) $\forall i\in[1..n]$ in
resp.~\eqref{eqPartLagrBody2Bis}, \eqref{eqPartLagrBody3} and \eqref{qppsdp1}-\eqref{qppsdp5}, then
the following hierarchy holds and it naturally collapses:
\end{sloppypar}
\begin{subequations}
\begin{align}
OPT\left(SDP(P_\segal)\right)&= OPT\left(\LL_{PL^\sx(QP_\segal)}(\mumu^*,\mumumu^*)\right) \label{eqZero}\\
&\leq OPT\left(\LL^{[0-1]}_{PL^\sx(QP_\segal)}(\mumu^*,\mumumu^*)\right)\label{eqZerobis}\\
&= OPT\left(\LL^{[0-1]}_{PL^\sX(QP_\segal)}(\mumu^*,\mumumu^*)\right)\label{eqUnu}\\
&\leq OPT\left(SDP^{[0-1]}(P_\segal)\right)                                           \label{eqDoi}\\
&=OPT\left(SDP(P_\segal)\right)                                           \label{eqTrei}
\end{align}
\end{subequations}
\begin{proof} 
The first equality \eqref{eqZero} is taken from \eqref{eqLastConvex}. The inequality \eqref{eqZerobis} follows 
from the fact that the ``$[0-1]$'' version of the partial Lagrangian (a
minimization program) is more constrained.
The equality \eqref{eqUnu} is due to the fact that the quadratic factor 
$Q_{\mumu^*,\mumumu^*}$ of both programs (in variables $X$ or $\x$) is SDP and that $X=\x\x^\top$
respects all constraints of the SDP program in variables $X$. The inequality \eqref{eqDoi}
follows from the fact that $\left(\LL^{[0-1]}_{PL^\sX(QP_\segal)}(\mumu^*,\mumumu^*)\right)$ is a Lagrangian of
$(SDP^{[0-1]}(P_\segal))$. Finally, \eqref{eqTrei} holds because $(SDP(P_\segal))$ already 
integrates binary constraints $X_{ii}=x_{i}~\forall i\in[1..n]$.
\end{proof}

More advanced convexifications can be found in the work of A.~Billionnet,
S.~Elloumi and 
A.~Lambert, \eg, see papers
``Extending the QCR method to general mixed integer programs.''
and
``Exact quadratic convex reformulations of mixed-integer quadratically constrained problems''.%
\footnote{Both published in \textit{Mathematical Programming}, resp~.in
2012 (vol. 131(1), pp. 381-401) and~2016 (vol 158(1), pp 235-266).}
However, for the moment, such methods lie outside the scope of this non-research
document;
further progress is unessential for now.


\subsection{A convex function with an asymmetric Hessian}
We next provide an example of a convex function with an asymmetric Hessian. This
shows that a statement like ``A twice differentiable function is convex if
and only if its Hessian is SDP'' is technically not complete, because a convex
function can have an asymmetric non-SDP Hessian.
This case is omitted from certain textbooks 
(see a reference in the first paragraph of Section~\ref{secSDPHessian})
but we addressed it in our work by requiring the Hessian to
be symmetric in Prop.~\ref{propHessian}.

\begin{example}\label{exNonSymHessian}
The following function $f$ is convex for any $\mu\geq 9$ and has a
non-symmetric Hessian in $\zeros$, \ie, $\nabla^2 f(\zeros)$ is not symmetric.
\begin{equation}\label{eqNonSym}
f(x,y)=
  \begin{cases} 
   \dfrac{x^3y}{x^2+y^2}+\mu x^2+\mu y^2  & \text{if } (x,y)\neq (0,0) \\
   0         &                                      \text{if } (x,y)=0 \\
  \end{cases}
\end{equation}

\end{example}
\begin{proof} The gradient of $f$ at
$(0,0)$ can not be computed algebraically. We can however obtain
$\dfrac{\partial f}{\partial x}(0,0)=
\lim\limits_{\varepsilon\to 0} \dfrac{f(\varepsilon,0)-f(0)}{\varepsilon}
=
\lim\limits_{\varepsilon\to 0} \dfrac{\mu \varepsilon^2}{\varepsilon}
=0
$ and similarly 
$\dfrac{\partial f}{\partial y}(0,0)=0$.
The gradient of $f$ is thus:
\begin{equation*}\label{eqNonSym2}
\nnabla f(x,y)=
  \begin{cases} 
   \left(\dfrac{x^2y\left(x^2+3y^2\right)}{\left(x^2+y^2\right)^2}+2\mu x,
    \dfrac{x^3\left(x^2-y^2\right)}{\left(x^2+y^2\right)^2}      +2\mu y
    \right)
         & \text{if } (x,y)\neq (0,0) \\
   (0,0)         &                                      \text{if } (x,y)=0 \\
  \end{cases}
\end{equation*}

Let us now calculate the Hessian $\nnabla^2 f(\x)$. As above, we apply the
derivative formula to calculate $\nnabla^2 f(0,0)$:
\begin{align*}
\dfrac{\partial^2 f}{\partial x \partial x}\Big(0,0\Big)
   &=
    \lim_{\varepsilon\to 0}
    \dfrac{ 
           \dfrac{\partial f}{\partial x}\Big(\varepsilon,0\Big) 
           -\dfrac{\partial f}{\partial x}\Big(0,0\Big)
          }
          {\varepsilon}
    =
    \lim_{\varepsilon\to 0}
    \dfrac{ 
          2\mu\varepsilon
          }
          {\varepsilon}
    =2\mu\\
\dfrac{\partial^2 f}{\partial y \partial x}\Big(0,0\Big)
   &=
    \lim_{\varepsilon\to 0}
    \dfrac{ 
           \dfrac{\partial f}{\partial x}\Big(0,\varepsilon\Big) 
           -\dfrac{\partial f}{\partial x}\Big(0,0\Big)
          }
          {\varepsilon}
    =
    \lim_{\varepsilon\to 0}
    \dfrac{ 
        0
          }
          {\varepsilon}
    =0\\
\dfrac{\partial^2 f}{\partial x \partial y}\Big(0,0\Big)
   &=
    \lim_{\varepsilon\to 0}
    \dfrac{ 
           \dfrac{\partial f}{\partial y}\Big(\varepsilon,0\Big) 
           -\dfrac{\partial f}{\partial y}\Big(0,0\Big)
          }
          {\varepsilon}
    =
    \lim_{\varepsilon\to 0}
    \dfrac{ 
    ~~\dfrac{\varepsilon^3(\varepsilon^2-0^2)}{(\varepsilon^2+0^2)^2}~~
          }
          {\varepsilon}
    =\lim_{\varepsilon \to 0}
        \dfrac{\varepsilon}{\varepsilon}
    =1\\
\dfrac{\partial^2 f}{\partial y \partial y}\Big(0,0\Big)
   &=
    \lim_{\varepsilon\to 0}
    \dfrac{ 
           \dfrac{\partial f}{\partial y}\Big(0,\varepsilon\Big) 
           -\dfrac{\partial f}{\partial y}\Big(0,0\Big)
          }
          {\varepsilon}
    =
    \lim_{\varepsilon\to 0}
    \dfrac{ 
          2\mu\varepsilon
          }
          {\varepsilon}
    =2\mu\\
\end{align*}

The Hessian of $f$ is thus:

\begin{equation*}\label{eqNonSym3}
\nnabla^2 f(x,y)=
  \begin{cases} 
\begin{bmatrix}
\dfrac{2xy^3\left(3y^2-x^2\right)}{\left(x^2+y^2\right)^3} + 2\mu
&
\dfrac{x^2\left(x^4+6y^2x^2-3y^4\right)}{\left(x^2+y^2\right)^3}
\\
\dfrac{x^2\left(x^4+6y^2x^2-3y^4\right)}{\left(x^2+y^2\right)^3}
&
\dfrac {2x^3y\left(y^2-3x^2\right)}{\left(x^2+y^2\right)^3}+2\mu
\end{bmatrix}
         & \text{if } (x,y)\neq (0,0) \\
\begin{bmatrix}
2\mu & 1 \\
0    & 2\mu
\end{bmatrix}
        &                                      \text{if } (x,y)=0 \\
  \end{cases}
\end{equation*}

All fractions in the Hessian have the form $\dfrac{p_1(x,y)}{p(x,y)}$, where
$p_1$ and $p$ are both 
homogeneous polynomials of degree
6. We easily obtain $\dfrac{p_1(tx,ty)}{p(tx,ty)}=\dfrac{t^6p_1(x,y)}{t^6p(x,y)}
=\dfrac{p_1(x,y)}{p(x,y)}$. The graph of such fraction can be seen as a surface
that has the same height (value) on each ray starting from (but not touching)
the origin. The image of $\dfrac{p_1(x,y)}{p(x,y)}$ over $\R^2-\{0\}$ is equal to the 
image of $\dfrac{p_1(x,y)}{p(x,y)}$ over unit circle $x^2+y^2=1$, which needs to
be bounded. 

We now calculate the bounds of the fractions. First, notice
$\dfrac{p_1(x,y)}{p(x,y)}=p_1(x,y)$ over the unit circle, since
$p(x,y)=(x^2+y^2)^6$. Each monomial of degree 6 of $p_1(x,y)$ belongs
to the interval $[-1,1]$. Using this, we obtain, for instance, that $\dfrac{\partial^2
f}{\partial x \partial x}-2\mu\in 
\big[ 
    \min\left(2xy^3\left(3y^2-x^2\right)\right),
    \max\left(2xy^3\left(3y^2-x^2\right)\right)
\big]
\subset [-8,8]$ when $x^2+y^2=1$. By applying the same approach on all fractions, we
obtain:
$$
-
\begin{bmatrix}
  8&  10\\
10 &  8
\end{bmatrix}
\leq 
\nnabla^2 f(x,y)
-
\begin{bmatrix}
2\mu  &  0\\
0     & 2\mu
\end{bmatrix}
\leq 
\begin{bmatrix}
  8&  10\\
10 &  8
\end{bmatrix}
$$

By taking any $\mu\geq 9$, we obtain 
$\nnabla^2 f(x,y)\succeq \zeros$ for $(x,y)\neq (0,0)$. On the other hand, we 
can \textit{never} state $\nnabla^2(0,0)\succeq \zeros$ because $\nnabla^2(0,0)$
is not symmetric. We thus need to use
$\nnabla^2(0,0)+\nnabla^2(0,0)^\top\succeq \zeros$.

The idea is taken from the last pages of the article 
 ``On second derivatives of convex functions''
by Richard Dudley,%
\footnote{Published in 
\textit{Mathematica Scandinavica} in 1978, vol 41, pp 159--174,
available on-line as of 2017 at
\url{http://www.mscand.dk/article/download/11710/9726},
see also the discussion on the on-line math forum
\url{https://math.stackexchange.com/questions/1181713/convex-function-with-non-symmetric-hessian}.
} 
but we are the first to calculate an explicit minimum value of $\mu$.
The derivatives were calculated at \url{http://www.derivative-calculator.net}.
\end{proof}

\subsection{\label{appHyperSep}The separating hyperplane theorem}
\subsubsection{General theorems and their reduction to a particular case}
\begin{theorem}\label{thHyperGeneral} (Hyperplane separation theorem) Given two
disjoint convex sets
$X,Y\subset\R^n$, there exist a non-zero $\v\in\R^n$ and a real number $c$ such
that 
\begin{equation}\label{eqxyc}
\v\sprod \x \geq c \geq \v\sprod \y,\end{equation}
for any $\x\in X$ and $\y\in Y$.  The hyperplane $\{\u\in\R^n:~\v\sprod \u=c\}$
separates $X$ and $Y$.
\end{theorem}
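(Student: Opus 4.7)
The plan is to reduce the two-set problem to the problem of separating the origin from a single convex set, and then handle the possibility that the origin lies on the boundary of that set's closure by a compactness-limit argument.

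First I would define $C = X - Y = \{\x - \y : \x \in X, \y \in Y\}$. Convexity of $C$ follows immediately from convexity of $X$ and $Y$ (a convex combination of $\x_1 - \y_1$ and $\x_2 - \y_2$ is $(\lambda \x_1 + (1-\lambda)\x_2) - (\lambda \y_1 + (1-\lambda)\y_2) \in C$), and $\zeros \notin C$ since $X \cap Y = \emptyset$. If I can produce a non-zero $\v$ with $\v \sprod \z \geq 0$ for every $\z \in C$, then $\v \sprod \x \geq \v \sprod \y$ for all $\x \in X$ and $\y \in Y$, whence any $c \in [\sup_{\y \in Y} \v \sprod \y,\ \inf_{\x \in X} \v \sprod \x]$ satisfies \eqref{eqxyc}.

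Next I would split into two cases. If $\zeros \notin \closure(C)$, then the simple separation Theorem~\ref{thSimpleSep} applied to the closed convex set $\closure(C)$ and the singleton $\{\zeros\}$ yields directly a non-zero $\v$ with $\v \sprod \z > 0$ for all $\z \in \closure(C) \supseteq C$, and we are done. Otherwise $\zeros \in \closure(C) \setminus C$; since a convex set and its closure share the same interior, $\zeros$ cannot lie in the interior of $\closure(C)$ (else $\zeros \in C$), so $\zeros$ is a boundary point of $\closure(C)$. Thus for each $k \in \N^*$ I can pick $\z_k \notin \closure(C)$ with $|\z_k| < 1/k$. Applying Theorem~\ref{thSimpleSep} to the closed convex set $\closure(C)$ and the point $\z_k$ gives, after normalization, a unit vector $\v_k$ and a real $c_k$ with $\v_k \sprod \z_k < c_k \leq \v_k \sprod \w$ for every $\w \in \closure(C)$. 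The sequence $(\v_k)$ lies on the unit sphere and is therefore bounded, so by the Bolzano--Weierstrass Theorem~\ref{thBolzanoWeierstrass} it has a subsequence $\v_{k_j} \to \v$, and $|\v| = 1$ ensures $\v \neq \zeros$. Fixing any $\w \in C$, the inequality $\v_{k_j} \sprod \z_{k_j} \leq \v_{k_j} \sprod \w$ passes to the limit: the left side tends to $0$ (because $|\v_{k_j}| = 1$ and $|\z_{k_j}| \to 0$, so $|\v_{k_j} \sprod \z_{k_j}| \leq |\z_{k_j}| \to 0$) while the right side tends to $\v \sprod \w$; hence $\v \sprod \w \geq 0$ for every $\w \in C$, which is the desired separation of $\zeros$ from $C$.

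The hard part is the boundary case $\zeros \in \closure(C) \setminus C$: one cannot simply ``separate $\zeros$ from $C$'' by a single application of the simple separation theorem, because Theorem~\ref{thSimpleSep} requires the point to be outside the closed convex set. The trick is to perturb $\zeros$ slightly to obtain a point outside $\closure(C)$, apply the simple separation to the perturbed point, and then pass to a limit of the normal directions; the normalization $|\v_k| = 1$ is essential so that compactness of the unit sphere (via Bolzano--Weierstrass) yields a non-zero limiting direction, and the inequality survives the limit because both sides are continuous in $\v_k$ at fixed $\w$ and $\z_k$. One last subtlety I would double-check is the existence of the perturbation $\z_k$: since $\zeros$ is a boundary point of $\closure(C)$, every neighborhood of $\zeros$ meets the complement of $\closure(C)$, which supplies the sequence.
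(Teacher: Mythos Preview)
Your proof is correct and follows essentially the same route as the paper. The paper also reduces to separating $\zeros$ from the convex set $X-Y$, then splits into the two cases $\zeros\notin\closure(X-Y)$ (handled by Theorem~\ref{thSimpleSep}) and $\zeros$ a boundary point of $\closure(X-Y)$; the only organizational difference is that the paper packages your perturbation-plus-Bolzano--Weierstrass limiting argument as a separate ``supporting hyperplane'' lemma (Theorem~\ref{thSupportHyper}) rather than inlining it, and it records the intermediate step as a standalone $\zeros$-separation theorem (Theorem~\ref{thHyperZero}).
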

\begin{proof}
We will show that the general theorem reduces to a simpler theorem version
in which
$Y=\{\zeros\}$, \ie, the hyperplane $\zeros$-separation Theorem
\ref{thHyperZero}.

Consider the set $Z=X-Y=\{\x-\y:~\x\in X,~\y\in Y\}$. The set $Z$ is convex:
take $\z_a=\x_a-\y_a$, $\z_b=\x_b-\y_b$ and any $\alpha\in[0,1]$ and observe $\alpha
\z_a+(1-\alpha)\z_b=\alpha(\x_a-\y_a)+(1-\alpha)(\x_b-\y_b)
                   =\alpha \x_a+(1-\alpha)\x_b-
                    (\alpha \y_a+(1-\alpha)\y_b)$. Since $X$ and $Y$ are convex, 
$\x_{\alpha}=\alpha \x_a+(1-\alpha)\x_b\in X$ and $\y_{\alpha}=\alpha \y_a+(1-\alpha)\y_b\in Y$, and
so, $\z_a+(1-\alpha)\z_b=\x_{\alpha}-\y_{\alpha}\in Z$, \ie, $Z$ is convex.

We apply the hyperplane $\zeros$-separation Theorem \ref{thHyperZero} on $Z$
and $\zeros$ (observe $\zeros\notin Z$ because $X$ and $Y$ are disjoint) and
obtain there is a non-zero $\v\in \R^n$ such that $\v\sprod \z\geq 0~\forall
\z\in Z$. This means that 
\begin{equation}\label{eqxyv}
\v\sprod \x\geq \v\sprod \y,~\forall \x\in X\text{ and }\forall\y\in
Y.\end{equation}
We obtain $\inf_{\x\in X}\v\sprod \x\geq \sup_{\y\in Y}\v\sprod \y$,
because otherwise 
\eqref{eqxyv} would be violated by some
$\x$ and $\y$ such that 
$\v\sprod \x$ is close enough to
$\inf_{\x\in X}\v\sprod \x$ and $\v\sprod \y$ is close enough to
$\sup_{\y\in Y}\v\sprod \y$. Taking $c=\frac{\inf_{\x\in X}\v\sprod
\x+\sup_{\y\in Y}\v\sprod \y}2$, \eqref{eqxyv} can be written in the form \eqref{eqxyc}.
\end{proof}

\begin{theorem}\label{thHyperZero}
(Hyperplane $\zeros$-separation theorem) Given convex set
$X\subset\R^n$ that does not contain $\zeros$, there exist a non-zero
$\v\in\R^n$ such that
$$\v\sprod \x \geq 0,$$ 
for any $\x\in X$. 
\end{theorem}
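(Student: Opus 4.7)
The plan is to split the argument into two cases depending on whether $\zeros$ lies in $\closure(X)$.

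First I would handle the case $\zeros \notin \closure(X)$ by a nearest-point construction. Pick any $\x_0 \in \closure(X)$ and intersect $\closure(X)$ with the closed ball of radius $|\x_0|$ centered at $\zeros$; this is a non-empty closed bounded set, so by the Bolzano--Weierstrass theorem the continuous function $\x \mapsto |\x|$ attains its minimum at some $\x^* \in \closure(X)$, with $|\x^*| > 0$ since $\zeros \notin \closure(X)$. Set $\v = \x^*$. For any $\x \in X$ and $\alpha \in (0,1]$, convexity of $\closure(X)$ implies $(1-\alpha)\x^* + \alpha\x \in \closure(X)$, so
\[
|(1-\alpha)\x^* + \alpha\x|^2 \geq |\x^*|^2.
\]
Expanding, subtracting $|\x^*|^2$, dividing by $\alpha > 0$, and letting $\alpha \to 0^+$ gives $2(\x \sprod \x^* - |\x^*|^2) \geq 0$, hence $\v \sprod \x \geq |\v|^2 > 0 \geq 0$, as required.

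The harder case is $\zeros \in \closure(X) \setminus X$. My plan is to perturb: I would find a sequence $\{\z_i\}$ in $\R^n \setminus \closure(X)$ with $\z_i \to \zeros$, apply the first case to each translate $X - \z_i$ (which satisfies $\zeros \notin \closure(X - \z_i) = \closure(X) - \z_i$ by construction), and obtain unit normals $\v_i$ (normalize after applying case 1) satisfying $\v_i \sprod (\x - \z_i) > 0$ for all $\x \in X$. Since $\{\v_i\}$ lies on the unit sphere, the Bolzano--Weierstrass theorem extracts a subsequence $\v_{i_k} \to \v$ with $|\v| = 1 \neq 0$. Taking limits in $\v_{i_k} \sprod \x \geq \v_{i_k} \sprod \z_{i_k}$ and using $\z_{i_k} \to \zeros$ yields $\v \sprod \x \geq 0$ for every $\x \in X$.

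The main obstacle is justifying that the approximating sequence $\{\z_i\}$ exists, equivalently that $\closure(X) \neq \R^n$ in this case. I would argue as follows: any convex subset of $\R^n$ either lies in a proper affine hyperplane or has non-empty interior; in the latter case the standard identity $\inter(\closure(X)) = \inter(X)$ for convex sets forces $\closure(X) = \R^n$ to imply $X = \R^n$, and in the former case $\closure(X)$ is contained in that proper hyperplane and in particular is not $\R^n$. Either way, $\zeros \notin X$ rules out $X = \R^n$, so $\closure(X) \neq \R^n$ and its non-empty open complement contains points arbitrarily close to the boundary point $\zeros$, furnishing the sequence $\{\z_i\}$. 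This dense-convex-set lemma is the one piece that is not an immediate consequence of the tools developed earlier in the manuscript and would need to be verified carefully.
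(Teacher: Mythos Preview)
Your approach is essentially the paper's Proof~2 (the ``textbook'' route): split on whether $\zeros \in \closure(X)$, handle the first case by a nearest-point argument (this is the paper's Theorem~\ref{thSimpleSep}), and handle the boundary case by a limiting argument with unit normals extracted via Bolzano--Weierstrass (this is the paper's Theorem~\ref{thSupportHyper}). The paper also offers a genuinely different Proof~1, by induction on $n$ using the notion of ``open rays'', which you do not touch.

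One small slip in your last paragraph: from your dichotomy you conclude only $\closure(X) \neq \R^n$, but that alone does not place $\zeros$ on the boundary of $\closure(X)$, which is what you actually need for the exterior sequence $\{\z_i\}$ to exist. The repair is immediate using the very identity you quote: in the non-empty-interior case, $\inter(\closure(X)) = \inter(X) \subseteq X$ directly gives $\zeros \notin \inter(\closure(X))$ from $\zeros \notin X$; in the affinely-degenerate case $\inter(\closure(X)) = \emptyset$ trivially. So you already have all the ingredients --- just state the stronger conclusion $\zeros \notin \inter(\closure(X))$ rather than the weaker $\closure(X)\neq\R^n$. Incidentally, the paper glosses over this exact point, asserting without further argument that ``$\zeros$ does not belong to the interior, because it does not belong to $X$'', so your instinct to flag it as needing care is well placed.
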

\noindent We give two proofs. The first one is based on induction and it takes
a bit more than one page. It is a personal proof; I 
doubt it can also be found in classical textbooks. The second
proof takes 2.5 pages and it follows well-established arguments (some of them using convergent sequences) that I could found on the
Internet.\footnote{I used the Wikipedia
article
\url{en.wikipedia.org/wiki/Hyperplane_separation_theorem} and the course
of Peter Norman \url{www.unc.edu/~normanp/890part4.pdf}.}
The first proof essentially relies on
Theorem~\ref{thOpenRay} from Appendix~\ref{sepperso};
the second one 
relies on
Theorem~\ref{thSimpleSep} and Theorem~\ref{thSupportHyper} 
from Appendix \ref{secschoolbook}.\\
~\\
\noindent\textit{Proof 1}
For any $\u\in\R^n$, 
let $f(\u)$ be the largest $t$ such that $t\u\in X$, or $-\infty$ if no $t\u$
belongs to $X$. If $f(\u)>0$ for all non-zero $\u\in \R^n$, then $\zeros\in X$: it is enough
to take $\u$ and $-\u$ and observe that the segment joining $f(\u)\u$ and
$f(-\u)-\u$ contains $\zeros$. Since $X$ does not contain $\zeros$, there exist
some non-zero $\u\in \R^n$ such that $f(\u)\leq 0$. The theorem then follows
from applying Theorem~\ref{thOpenRay}.
\qed\\

\noindent\textit{Proof 2}
We first prove that the closure $\overline{X}$ of $X$ (\ie, the set $X$ along with
all its limit points) is convex. Take any $\x,~\y\in\overline{X}$ and consider
two sequences $\{\x_i\}$ and $\{\y_i\}$ in $X$ that converge to $\x$ and resp.~$\y$.
Such sequences always exist because, by definition, $\overline{X}$ 
is the set of the limit points of all sequences 
of $X$.
Take any $\alpha\in[0,1]$;
it is enough to
show $\alpha \x+(1-\alpha)\y\in\overline{X}$ to prove that $\overline{X}$ is convex.
We have $\z_i=\alpha\x_i+(1-\alpha)\y_i\in X$, because $X$ is convex. We next observe that
$z=\lim\limits_{i\to\infty} z_i\in\overline{X}$, because $\overline{X}$ contains all
limit points of $X$. But $\alpha \x+(1-\alpha)\y=\alpha \lim\limits_{i\to\infty}
\x_i+(1-\alpha)\lim\limits_{i\to\infty} \y_i=\lim\limits_{i\to\infty}
\z_i=\z\in\overline{X}$, and so, 
$\overline{X}$ is convex.

If $\zeros$ does not belong to the closure $\overline{X}$, then the conclusion follows from the Simple Separation
Theorem~\ref{thSimpleSep} applied on $\zeros$ and $\overline{X}$. 
If $\zeros$ belongs to the closure of $X$, the conclusion follows from the
Simple Supporting Hyperplane Theorem~\ref{thSupportHyper} applied on
$\overline{X}$ and $\zeros$ as a boundary point of $\overline{X}$ ($\zeros$ does
not belong to the interior, because it does not belong to $X$).
\qed\\

The following variant can be generally useful, but we do not need it in this
document.
\begin{theorem}\label{thHyperGeneralOpen} (Separation theorem for open set $X$) Given two
disjoint convex sets
$X,Y\subset\R^n$ such that $X$ is open, there exist a non-zero $\v\in\R^n$ and a real number $c$ such
that 
\begin{equation}\label{eqxycopen} \v\sprod \x > c \geq \v\sprod \y, \forall \x\in X,~\y\in Y \end{equation}
The closure $\overline{X}$ of $X$ satisfies
\begin{equation}\label{eqxycopenegal} \v\sprod \overline{\x} \geq c \geq \v\sprod \y, \forall \overline{\x}\in \overline{X},~\y\in Y \end{equation}
\end{theorem}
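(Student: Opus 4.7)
The plan is to bootstrap off the already-established general separation Theorem~\ref{thHyperGeneral}, which gives a non-strict separation, and then upgrade the inequality on the $X$-side to a strict one by exploiting the fact that $X$ is open. So the first step is simply to invoke Theorem~\ref{thHyperGeneral} on the disjoint convex sets $X$ and $Y$ to obtain a non-zero $\v\in\R^n$ and $c\in\R$ with $\v\sprod \x \geq c \geq \v\sprod \y$ for all $\x\in X$ and $\y\in Y$. This immediately gives the right-hand inequality of \eqref{eqxycopen} and \eqref{eqxycopenegal}; all subsequent work concerns only the left-hand inequality.

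Next I would upgrade $\v\sprod \x \geq c$ to $\v\sprod \x > c$ for every $\x\in X$. Assume for contradiction that some $\x_0\in X$ satisfies $\v\sprod \x_0 = c$. Since $X$ is open, there exists $\varepsilon>0$ such that the ball of radius $\varepsilon$ centered at $\x_0$ is contained in $X$. In particular, the point $\x_0 - \tfrac{\varepsilon}{2|\v|}\v$ belongs to $X$, yet
\[
\v\sprod \Bigl(\x_0 - \tfrac{\varepsilon}{2|\v|}\v\Bigr) = c - \tfrac{\varepsilon}{2|\v|}|\v|^2 = c - \tfrac{\varepsilon|\v|}{2} < c,
\]
which contradicts the non-strict separation obtained in the first step. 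Hence the inequality must be strict on $X$, establishing \eqref{eqxycopen}.

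For the closure statement \eqref{eqxycopenegal}, I would take any $\overline{\x}\in\overline{X}$ and, by definition of closure, pick a sequence $\{\x_i\}\subset X$ with $\lim_{i\to\infty}\x_i = \overline{\x}$. For every $i$ we have $\v\sprod \x_i > c$ from \eqref{eqxycopen}, so by continuity of the scalar product and passing to the limit we get $\v\sprod \overline{\x}\geq c$ (the strict inequality cannot in general be preserved in the limit, which is precisely why we only get $\geq c$ on $\overline{X}$). Combined with the right-hand inequality from step one, this yields \eqref{eqxycopenegal}.

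The main obstacle is really just the second step, and it is not so much technical as it is the key structural idea of the theorem: openness of $X$ forbids any boundary-style contact between $X$ and the separating hyperplane, because any $\x_0\in X$ lying on the hyperplane would admit a small perturbation along $-\v$ that stays in $X$ but falls on the wrong side of $c$. The rest is a routine application of the already-proved Theorem~\ref{thHyperGeneral} and a standard sequential closure argument, analogous to the one used earlier in \emph{Proof 2} of Theorem~\ref{thHyperZero} to show that $\overline{X}$ is convex.
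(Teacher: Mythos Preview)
Your proof is correct and follows essentially the same route as the paper: invoke Theorem~\ref{thHyperGeneral} for the non-strict separation, then use openness of $X$ to perturb any hypothetical boundary point $\x_0$ along $-\v$ to derive a contradiction, and finally pass to the closure via a sequential/continuity argument. The only cosmetic difference is that the paper phrases the closure step as a proof by contradiction while you pass directly to the limit, but the content is identical.
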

\begin{proof} Using the standard hyperplane separation Theorem
\ref{thHyperGeneral}, there is a non-zero $\v\in\R^n$ and some $c\in\R$ such that:
\begin{equation}\label{eqxycopen2} \v\sprod \x \geq c \geq \v\sprod \y, \forall \x\in X,~\y\in Y \end{equation}
For the sake of contradiction, assume  there exists some $\x\in X$ such that $\v\sprod \x = c$. Since $X$ is
open, $X$ contains an open ball around $\x$, and so, for a sufficiently small
$\epsilon>0$, we have $\x-\epsilon \v\in X$. But $\v\sprod(\x-\epsilon\v)=c
-\epsilon |\v|^2<c$ which contradicts \eqref{eqxycopen2}. The assumption
$\v\sprod \x=c$ was false, and so, \eqref{eqxycopen2} becomes \eqref{eqxycopen}.

We still have to prove \eqref{eqxycopenegal}. Assume there is some
$\overline{\x}$ in $\overline{X}$ such that $\v\sprod \overline{\x}<c$. Since
$\overline{\x}$ has to be the limit point of some sequence $\{\x_i\}$ with
elements $\x_i\in X~\forall i\in \N^*$, we deduce that 
$\lim\limits_{i\to \infty}\v\sprod \x_i=\v\sprod\overline{\x}$. For any
$\epsilon>0$ there exists some $m\in\N^*$ such that $|\v\sprod
\x_i-\v\sprod\overline{\x}|<\epsilon~\forall i\geq m$. Taking any
$\epsilon<c-\v\sprod \overline{\x}$, we have 
$\v\sprod \x_m<c$, which contradicts \eqref{eqxycopen}. The
assumption $\v\sprod \overline{\x}<c$ was false, which proves
\eqref{eqxycopenegal}.
\end{proof}

\subsubsection{\label{sepperso}Proving the theorem using personal arguments}
\begin{theorem}\label{thOpenRay} (Hyperplane $\zeros$-separation theorem in presence of open
rays) Consider convex set $X\in\R^n$ 
(that may contain $\zeros$ or not)
and let $f(\u)$ be the largest $t$ such
that $t\u\in X$ for any $\u\in\R^n$. If $f(\u)\leq 0$, we say that
ray $\u$ is open, because there is no $\epsilon>0$ such that $\epsilon \u\in
X$. If there is at least an open ray, then there exist some
non-zero $\v\in\R^n$ such that 
$$\v\sprod \x\geq 0,~\forall \x\in X.$$
\end{theorem}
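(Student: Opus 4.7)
I will proceed by induction on the ambient dimension $n$. The base case $n=1$ is essentially immediate: any convex $X \subseteq \R$ is an interval, the open-ray hypothesis produces a sign $\u_0 \in \{-1,+1\}$ with $f(\u_0)\leq 0$, forcing $X \subseteq \{x : \u_0 x \leq 0\}$, so $\v := -\u_0$ separates. For the inductive step I would first perform an orthogonal change of coordinates (which preserves scalar products, convexity, and the open-ray property) so that the given open ray becomes $\u_0 = -\e_n$; the hypothesis then reads ``$(\zeros, -t) \notin X$ for every $t > 0$''. Let $\pi : \R^n \to \R^{n-1}$ denote the projection that drops the last coordinate and set $X' := \pi(X) \subseteq \R^{n-1}$, which is again convex.

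\textbf{Case split inside the inductive step.} If $X'$ itself admits an open ray in $\R^{n-1}$, the induction hypothesis yields a non-zero $\w \in \R^{n-1}$ with $\w \sprod \x' \geq 0$ on all of $X'$, and lifting to $\v := (\w, 0) \in \R^n \setminus \{\zeros\}$ closes this case, since $\v \sprod \x = \w \sprod \pi(\x) \geq 0$ for every $\x \in X$. Otherwise $X'$ admits no open ray at all; then the ``fill-in-origin'' convexity argument used in Proof~1 of Theorem~\ref{thHyperZero} (applied in dimension $n-1$) forces $\zeros \in X'$, so some $(\zeros, c) \in X$ exists with $c \geq 0$ (by the open-ray hypothesis). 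In this case I would examine the convex lower-boundary function $g(\y) := \inf\{z : (\y, z) \in X\}$ on $X'$, note that $g(\zeros) \geq 0$, and attempt to produce $\w \in \R^{n-1}$ with $g(\y) \geq \w \sprod \y$ for every $\y \in X'$; then $\v := (-\w, 1) \neq \zeros$ finishes the argument, because for any $(\y, z) \in X$ one has $\v \sprod (\y, z) = z - \w \sprod \y \geq g(\y) - \w \sprod \y \geq 0$.

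\textbf{Main obstacle.} The real difficulty is producing this $\w$: it is precisely an affine minorant of $g$ through $(\zeros, g(\zeros))$, i.e.\ a subgradient of $g$ at the origin, and the classical existence results for such subgradients invoke exactly the hyperplane-separation result I am trying to establish, creating a circularity that must be broken. My strategy to avoid it is a secondary induction on $n$ applied to the translated convex set $\widetilde X := X - g(\zeros)\,\e_n$, whose lower-boundary function $\widetilde g$ satisfies $\widetilde g(\zeros) = 0$ and which still carries the open ray $-\e_n$ at the new origin. One then argues a dichotomy: \emph{either} the translation makes a new open-ray direction (no longer parallel to $-\e_n$) appear in the projection, pulling the problem back into Case~A and closing it by induction, \emph{or} $\widetilde X$ is confined to a proper affine subspace through the origin, in which restriction genuinely reduces the dimension and permits a different inductive invocation. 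Rigorously verifying that this dichotomy is exhaustive — and that neither branch loops back to the subgradient existence one started with — is the technical heart of the argument.
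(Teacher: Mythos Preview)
Your induction scheme differs from the paper's in a way that creates exactly the circularity you flag, and your proposed escape from that circularity does not work.

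The concrete gap is in Case~B. You translate by $g(\zeros)\,\e_n$ and then claim a dichotomy: either a new open-ray direction appears in the projection, or $\widetilde X$ lies in a proper subspace. But translation along $\e_n$ leaves the projection unchanged, $\pi(\widetilde X)=\pi(X)=X'$, so if $X'$ had no open ray before the translation it still has none afterwards; the first branch is therefore vacuous. The second branch also fails in general: take $X=\{\x:x_n>0\}$, for which $-\e_n$ is an open ray, $X'=\R^{n-1}$ has no open ray, $g\equiv 0$ so $\widetilde X=X$, and $\widetilde X$ is full-dimensional. In this example the required $\w$ is $\zeros$ and $\v=\e_n$, but your dichotomy produces neither. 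More generally, obtaining the affine minorant of $g$ at $\zeros$ really is the $(n-1)$-dimensional supporting-hyperplane problem for the epigraph of $g$, and nothing in your translation step reduces its dimension or supplies it with an open ray.

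The paper avoids this trap by a different projection. It starts the induction at $n=2$ (proved directly via an angular argument), and in the inductive step first intersects $X$ with a $2$-plane $S_2$ containing the given open ray $\u$. The $n=2$ case then produces a separator $\v_2\in S_2$ for $X\cap S_2$, and the crucial observation is that $-\v_2$ is itself an open ray for the full set $X$ (not just for the slice). One now projects $X$ onto the hyperplane $S_{n-1}$ orthogonal to a unit vector $\u_2\in S_2$ with $\u_2\perp\v_2$; because $\v_2\in S_{n-1}$, the open ray $-\v_2$ survives the projection, and the induction hypothesis applies to $X_{n-1}=\pi_{S_{n-1}}(X)$ without any case split. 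In short: project \emph{perpendicular to the plane that found you a new open ray}, not along the open ray you started with. That is the missing idea in your argument.
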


\begin{proof}
We proceed by induction. We first prove it for $n=2$ using the notion of angle.

\begin{lemma} The theorem holds for $n=2$.\end{lemma}
\begin{proof}
Without loss of generality, we consider the open ray $\u\in\R^2$ is unitary.
We associate $\u$ with an angle $\theta=0$. 
Using a slight notation abuse, let $f(\theta)=f(\u_\theta)$, where $\u_\theta$
is an unitary vector of $\R^2$ that makes an angle of $\theta$ with $\u$
measured clockwise. Technically, $\u_\theta$ 
satisfies $\u\sprod\u_\theta=\cos(\theta)$ and it 
is the first unitary vector with this property found by moving
clockwise from $\u$.
Let $\Theta$ be the set of angles
$\theta$ for which $f(\theta)>0$.
$\Theta$ belongs to segment $(0,2\pi)$ because it does not contain $\theta
=0$ and it might be open. However, $\Theta$ needs to have 
an infimum $\inf(\Theta)$ and a supremum $\sup(\Theta)$, 
see also Prop.~\ref{propSupExists}.

Assume for the sake of contradiction that
$\sup(\Theta)-\inf(\Theta)>\pi$. This means there are two angles
$\theta_M,\theta_m\in \Theta$ close enough
to $\sup(\Theta)$ and resp.~$\inf(\Theta)$ so that $\theta_M-\theta_m>\pi$.
By convexity, the segment that joins
$f(\theta_M)\u_{\theta_M}$ and
$f(\theta_m)\u_{\theta_m}$ is included in $X$ and it also
intersects the segment $[\zeros,\u]$ in some point $\epsilon \u$
with $\epsilon>0$. This contradicts the fact that $\u$ is an open ray.


We can now consider $\sup(\Theta)-\inf(\Theta)\leq \pi$. The line
$\left\{t\u^*:~t\in \R,\u^*\in \R^2-\{\zeros\}\right\}$ that goes
through $\zeros$ and makes an angle of $\inf(\Theta)$ with $\u$ (clockwise) has the whole
$X$ on one side; all points in $X$ make an angle in
$[\inf(\Theta),\sup(\Theta)]$ with $\u$ (clockwise). We can take $\v$ one of the two vectors perpendicular to $\u^*$ in $\zeros$
and obtain $\v\sprod \x \geq 0$ for all $\x\in X$.
\end{proof}

Now consider $n>2$. Take a 2-dimensional sub-space $S_2$ that contains the unitary
vector $\u$, \ie, $S_2=\left\{t\u+t'\u':~t,t'\in \R\right\}$ for some unitary $\u'$ such that
$\u\sprod\u'=0$. The intersection of two convex sets is convex, and so,
$X_2=S_2\cap X$ is convex. Using above lemma for $n=2$, there exists some
unitary $\v_2\in S_2$ such that $\v_2\sprod \x_2\geq 0,~\forall \x_2\in X_2$.
Notice
that $f(-\v_2)\leq 0$ because all $-t\v_2$ with $t>0$ do not belong to $X$ or
$X_2$, since $\v_2\sprod (-t\v_2)<0$.

Take any \textit{unitary} $\u_2\in S_2$ such that 
\begin{equation}\label{eqPlane}\v_2\sprod \u_2=0.\end{equation}
Consider the $(n-1)$-dimensional sub-space $S_{n-1}\subsetneq \R^n$ perpendicular
on $\u_2$, \ie, 
$S_{n-1}=\{\x\in X:~\u_2\sprod \x=0\}$.
Observe $\v_2$ and $-\v_2$ belong to $S_{n-1}$, using \eqref{eqPlane}.
We now project the whole space $X$ on $S_{n-1}$, \ie, we obtain the set
$X_{n-1}=\{\x-\u_2(\u_2\sprod \x):~\x\in X\}$. One can easily check
that all elements of $X_{n-1}$ satisfy $\u_2\sprod(\x-\u_2(\u_2\sprod \x))=
\u_2\sprod \x - \u_2\sprod \u_2(\u_2\sprod \x)=\u_2\sprod \x -\u_2\sprod\x=0$.

We now define function $f_{n-1}:S_{n-1}\to \R\cup\{-\infty\}$ in the same style
as $f$, \ie, $f_{n-1}(\s_{n-1})$
is the smallest $t$ for which $t\s_{n-1} \in X_{n-1}$, for any $\s_{n-1}\in
S_{n-1}$. We showed above that $f(-\v_2)\leq 0$. We can
also prove $f_{n-1}(-\v_2)\leq 0$. Recall we have 
$\v_2\sprod \x_2\geq 0,~\forall \x_2\in X_2$. Consider now the projection 
$\x_{n-1}=\x_2-\u_2(\u_2\sprod \x_2)$ and notice that $\v_2\sprod \x_{n-1}=
\v_2\sprod (\x_2-\u_2(\u_2\sprod \x_2))=\v_2\sprod \x_2
-\v_2\sprod\u_2(\u_2\sprod\x_2)=\v_2\sprod \x_2\geq 0$ (we used
\eqref{eqPlane} for the last equality). The elements $-t\v_2$
with $t>0$ can not belong to the projection of $X_2$, because
$\v_2\sprod(-t\v_2)<0$ and the elements $\x_{n-1}$ of the projection
verify $\v_2\sprod\x_{n-1}\geq 0$. Finally, remark we do not lose generality by
restricting the argument to the projections of $X_2$: all elements of $X$ that
could project on $-t\v_2$ could only belong to $X_2$, \ie, the space generated
by $\v_2$ and $\u_2$.

We can easily check that $X_{n-1}$ is convex. Consider $\x_{n-1}\in X_{n-1}$ as the projection of 
$\x_{n-1}+a\u_2 \in X$ and $\y_{n-1}\in X_{n-1}$ as the projection of
$\y_{n-1}+b\u_2\in X$. Using the
convexity of $X$, the following holds for any $\alpha\in [0,1]$: $\alpha
(\x_{n-1}+a\u_2)+(1-\alpha) (\y_{n-1}+b\u_2)\in X$. We can re-write this as:
$\alpha \x_{n-1}+(1-\alpha) \y_{n-1} +(\alpha a +(1-\alpha)b)\u_2\in X$, and so,
$\alpha \x_{n-1}+(1-\alpha) \y_{n-1}\in X_{n-1}$ (one can easily verify that the
scalar product of this with $\u_2$ is zero).

We now apply the induction hypothesis on set $X_{n-1}$ with $f_{n-1}(-\v_2)\leq
0$ in the sub-space $S_{n-1}$ (notice this is a full $(n-1)$-dimensional space
where all elements can be written as a linear combination of a canonical basis
perpendicular to $\u_2$). We obtain there is non-zero $\v_{n-1}\in S_{n-1}$ such that $\v_{n-1}\sprod
\x_{n-1}\geq 0$ for all $\x_{n-1}\in X_{n-1}$. This means that $\v_{n-1}\sprod
(\x_{n-1}+a\u_2)\geq 0$ for any $\x_{n-1}\in X_{n-1}$ and any $a\in \R$, because
$\v_{n-1}$ is perpendicular on $\u_2$. It is
easy to check that $X\subseteq\{\x_{n-1}+a\u_2:~\x_{n-1}\in X_{n-1},~a\in \R\}$.
This is enough to conclude that $\v_{n-1}\x\geq 0$ for any $\x\in X$.
\end{proof}

\subsubsection{\label{secschoolbook}Proving the theorem using well-established textbook arguments}
\begin{theorem}\label{thSimpleSep} (Simple separation theorem)
Given convex closed set $X\subset\R^n$ and some $\y\in \R^n$ such that
$\y\notin X$, there exist a non-zero $\v\in\R^n$ such that 
\begin{equation}\label{eqxycsimple}
\v\sprod \x > \v\sprod \y,~\forall \x\in X.\end{equation}
\end{theorem}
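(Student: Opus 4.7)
The plan is to construct the separating vector $\v$ from the geometry of the closest point in $X$ to $\y$. Concretely, I would first show that, because $X$ is closed and $\y \notin X$, there exists a point $\x^* \in X$ that minimizes the Euclidean distance $|\x - \y|$ over $\x \in X$. Then I would take $\v = \x^* - \y$, which is non-zero since $\y \notin X$, and verify that this $\v$ strictly separates $\y$ from every point of $X$.

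For the existence of a closest point, the approach is a standard compactness argument. Pick any $\x_0 \in X$ and set $R = |\x_0 - \y|$. The intersection $X \cap \overline{B}(\y, R)$ is non-empty, closed and bounded, hence compact by a Bolzano--Weierstrass-type argument (Theorem~\ref{thBolzanoWeierstrass}). Since the continuous function $\x \mapsto |\x - \y|$ attains its infimum on a compact set, a minimizer $\x^*$ exists and clearly also minimizes $|\x - \y|$ over the whole of $X$, because any point farther than $R$ from $\y$ cannot be a minimizer. I expect this to be the main obstacle, since one has to argue carefully that the minimum exists and cannot be escaped (the closedness of $X$ is what blocks a minimizing sequence from converging outside $X$).

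The separation inequality then follows by exploiting the convexity of $X$. For any $\x \in X$ and $t \in [0,1]$, the segment point $\x^* + t(\x - \x^*)$ lies in $X$, so
\begin{equation*}
|\v + t(\x-\x^*)|^2 = |\v|^2 + 2t\, \v\sprod (\x-\x^*) + t^2 |\x - \x^*|^2 \geq |\v|^2.
\end{equation*}
Dividing by $t>0$ and letting $t \to 0^+$ yields $\v \sprod (\x - \x^*) \geq 0$, i.e., $\v \sprod \x \geq \v \sprod \x^*$ for every $\x \in X$.

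Finally, to convert ``$\geq$'' into the strict inequality against $\y$, observe that
\begin{equation*}
\v \sprod \x^* - \v \sprod \y = \v \sprod (\x^* - \y) = \v \sprod \v = |\v|^2 > 0,
\end{equation*}
which, combined with $\v\sprod \x \geq \v \sprod \x^*$, gives $\v\sprod \x > \v \sprod \y$ for all $\x \in X$, as required. Thus the only non-routine step is the existence of $\x^*$; once that is in place the rest is a short convexity computation.
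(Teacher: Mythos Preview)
Your proposal is correct and follows essentially the same approach as the paper: find the nearest point $\x^*\in X$ to $\y$ (via a Bolzano--Weierstrass compactness argument), set $\v=\x^*-\y$, and use convexity of $X$ along the segment $\x^*+t(\x-\x^*)$ to derive the first-order inequality $\v\sprod(\x-\x^*)\geq 0$, then add $|\v|^2>0$ to get strictness. The only cosmetic differences are that the paper first translates to $\y=\zeros$ and also proves uniqueness of the nearest point (which is not actually needed for the separation), while you work directly with general $\y$ and skip uniqueness.
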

\begin{proof}
We need the following lemma.
\begin{lemma} Given closed convex set $X$, there exist a unique $\x\in X$ such that
$|\x|=\inf\{|\x'|:~\x'\in X\}$, where $|\cdot|$ is the norm (length), \eg, 
$|\x|=\sqrt{\x\sprod \x}$.
\end{lemma}
\begin{proof}
The Wikipedia proof is kind of magical for my taste, with a few tricks arising
rather out of the blue. I provide a more natural and even simpler (without
Cauchy sequences) proof.

Let $\delta=\inf\{|\x'|:~\x'\in X\}$. We first show  $X$ contains
a sequence $\{\x_i\}$ such that $\lim\limits_{i\to \infty} |\x_i|=\delta$. For
instance, we can consider $\epsilon_1=1$, $\epsilon_2=\frac 12$,
$\epsilon_3=\frac 13$,
$\dots$. For each $\epsilon_i$ ($i\in \N^*$), $X$ needs to contain some $\x_i$
such that $|\x_i|< \delta+\epsilon_i$, because otherwise we would have
 $\delta+\epsilon_i\leq \inf \{|\x'|:~\x'\in X\}$, impossible since
$\delta=\inf\{|\x'|:~\x'\in X\}$. The sequence $\{\x_i\}$ constructed this way
satisfies $\lim\limits_{i\to \infty} |\x_i|=\delta$.

This sequence $\{\x_i\}$ of elements of $\R^n$ needs to contain a convergent
subsequence using the Bolzano-Weierstrass Theorem \ref{thBolzanoWeierstrass},
\ie, there exists a sub-sequence $\{\x_{n_i}\}$ such that
$\lim\limits_{i\to\infty} \x_{n_i} = \x$. Since $X$ is closed, it contains all
limit points, and so, $\x\in X$. It is not hard now to check that the sub-sequence
$\{|\x_{n_i}|\}$ converges to $\delta$. Since for any $\epsilon$ there exists
$m\in \N^*$ such that $|\x_j|<\delta+\epsilon~\forall j\geq m$, there must be
some $n_m\geq m$ (the sub-sequence is infinite) such that
$|\x_{n_j}|<\delta+\epsilon~\forall n_j\geq n_m$. This confirms $\delta=
\lim\limits_{i\to\infty} |\x_{n_i}|=|\x|$.

We still need to show that $\x$ is the unique element of minimum norm. Suppose
there exists $\y\in X-\{\x\}$ such that $|\y|=\delta$. By convexity, $\frac
{\x+\y}2\in X$. We can calculate $|\frac {\x+\y}2|^2=\frac{\x\sprod \x +
\y\sprod \y+2\x\sprod \y}4=\frac {\delta^2+\x\sprod \y}2$. We will show
$\x\sprod \y <\delta^2$. For this, it is enough to observe that $0<|\x-\y|^2=
\x\sprod \x + \y\sprod \y -2\x\sprod \y=2\delta^2 -2\x\sprod \y$, \ie,
$\x\sprod \y<\delta^2$. We obtained that  $|\frac {\x+\y}2|^2<\delta^2$,
contradiction. There is no $\y\neq \x$ in $X$ such that $|\y|=\delta$.
\end{proof}

We will first prove the theorem for $\y=\zeros$ and then we will use a simple
translation argument to extend it for an arbitrary $\y$. Using the lemma, let us
take $\v\in X$ of minimum norm $\delta>0$ (because $\y=\zeros\notin X$). We will
prove the following: 
\begin{equation}\label{eqSeparator}
    \v\sprod \x > 0,~\forall \x \in X
\end{equation}

Consider any $\x \in X$ and write $\Delta=\x-\v$. 
Keeping in mind the goal of showing $\v\sprod \Delta\geq 0$,
consider
a function $f:[0,1]\to \R$ defined by
$f(t)=\left(\v+t \Delta\right)\sprod \left(\v+t \Delta\right)$.
By convexity, we simply have 
$\v+t\Delta\in X~\forall t\in[0,1]$, and, 
using the above lemma,
we also obtain $f(t)>f(0)=\gamma~\forall t>0$.
This means the derivative in 0 can not be negative;
we need to have $f'(t)\geq 0$. 
Since $f'(t)=2 \v\sprod \Delta+t^2\Delta\sprod \Delta$, 
this means 
$\v\sprod \Delta\geq 0$, 
enough to show $\v\sprod \x=\v\sprod\left(\v+\Delta\right)=\gamma+\v\sprod\Delta\geq
\gamma>0$.

Finally, if $\y\neq \zeros$, it is enough to consider set $X'=\{\x-\y:~\x\in X\}$ which
is convex and does not contain $\zeros$. We can apply the theorem for $\zeros$
and $X'$ and we obtain there is $\v\in\R^n$ such that $\v\sprod (\x-\y)>0$,
$\forall \x\in X$, which is equivalent to \eqref{eqxycsimple} as needed.
\end{proof}
\begin{theorem}\label{thSupportHyper} (Simple supporting hyperplane theorem)
Given convex closed set $X\subset\R^n$ such that $\zeros$ is a boundary point of
$X$, there exist a non-zero $\v\in\R^n$ such that 
\begin{equation}\label{eqxycsimplesupport}
\v\sprod \x \geq 0,~\forall \x\in X.
\end{equation}

\end{theorem}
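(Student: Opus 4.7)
The plan is to approximate the boundary point $\zeros$ from outside $X$ and pass to a limit using the Simple Separation Theorem~\ref{thSimpleSep}, together with the Bolzano--Weierstrass Theorem~\ref{thBolzanoWeierstrass} already invoked in the preceding proofs.

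First I would exploit the fact that $\zeros$ is a boundary point of $X$, but not an interior point. This means every open ball around $\zeros$ contains points outside $X$, so I can construct a sequence $\{\y_i\}$ with $\y_i \notin X$ for all $i \in \N^*$ and $\lim_{i \to \infty} \y_i = \zeros$ (e.g.\ by picking, for each $i$, some $\y_i$ in the ball of radius $\frac{1}{i}$ around $\zeros$ that lies outside $X$). Since $X$ is closed and convex and $\y_i \notin X$, I can apply Theorem~\ref{thSimpleSep} to obtain, for each $i$, a non-zero vector $\v_i \in \R^n$ with $\v_i \sprod \x > \v_i \sprod \y_i$ for all $\x \in X$. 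Dividing $\v_i$ by $|\v_i|$ does not change this inequality (strict positivity of norm), so I may assume $|\v_i| = 1$ for all $i$.

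Next I would extract a convergent subsequence. The sequence $\{\v_i\}$ lies in the unit sphere of $\R^n$, which is bounded, so by the Bolzano--Weierstrass Theorem~\ref{thBolzanoWeierstrass} there is a subsequence $\{\v_{n_i}\}$ converging to some $\v \in \R^n$. Continuity of the norm gives $|\v| = \lim_{i \to \infty} |\v_{n_i}| = 1$, so $\v \neq \zeros$, which is the crucial non-degeneracy I need.

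Finally I would pass to the limit in the separating inequality. Fix any $\x \in X$. For each $i$ I have $\v_{n_i} \sprod \x > \v_{n_i} \sprod \y_{n_i}$. As $i \to \infty$, the left-hand side tends to $\v \sprod \x$ by continuity of the scalar product, and the right-hand side tends to $\v \sprod \zeros = 0$ because $\v_{n_i} \to \v$ and $\y_{n_i} \to \zeros$ (the product of two convergent sequences in $\R^n$ converges to the product of the limits, and here one factor even goes to $\zeros$ while the other is bounded). Therefore $\v \sprod \x \geq 0$ for every $\x \in X$, which is exactly \eqref{eqxycsimplesupport}. The main subtlety to watch out for is that the strict inequality from Theorem~\ref{thSimpleSep} only survives as a weak inequality after passing to the limit, which is fine since the conclusion is already stated with ``$\geq$'', and that normalizing the $\v_i$ is essential to ensure the limit $\v$ is not the zero vector.
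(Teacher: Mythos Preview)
Your proof is correct and follows essentially the same approach as the paper: approximate $\zeros$ by exterior points, apply Theorem~\ref{thSimpleSep} to get unit separating vectors, extract a convergent subsequence via Bolzano--Weierstrass, and pass to the limit. The only cosmetic difference is that the paper phrases the final step as a contradiction argument (assuming $\v\sprod\x<0$ for some $\x$ and deriving a clash with $\v_{n_i}\sprod\x_{n_i}\to 0$), whereas you take the limit directly; your version is arguably cleaner, and you are more explicit than the paper about why $\v\neq\zeros$.
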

\begin{proof}
Since $\zeros$ is a boundary point, there exists a sequence $\{\x_i\}$ of exterior
points (\ie, $\x_i\notin X,~\forall i\in \N^*$) that converges to $\zeros$.
Using the simple separation Theorem~\ref{thSimpleSep}, for each $\x_i$ there
exists a non-zero unitary $\v_i\in \R^n$ such that 
\begin{equation}\label{eqconverger}\v_i\sprod \x >\v_i\sprod \x_i~\forall \x\in
X.\end{equation} Without loss of generality, we can consider all $\v_i$ are unitary, \ie,
$|\v_i|=1~\forall i\in\N^*$. As such, the sequence $\{\v_i\}$ is bounded, and
so, we can apply the
Bolzano-Weierstrass Theorem~\ref{thBolzanoWeierstrass} to conclude that
$\{\v_i\}$ contains a
convergent sub-sequence $\{\v_{n_i}\}$ such that $\lim\limits_{i\to \infty}
\v_{n_i}=\v$.

Assume there is some $\x\in X$ such that $\v\sprod \x=-a<0$. 
We derive a contradiction using a limiting argument.
We take an $\epsilon<|a|$;
since $\lim\limits_{i\to \infty} \v_{n_i}\sprod \x =\v\sprod \x=-a$,
there exits some $m\in \N^*$ such that $\v_{n_i}\sprod \x\in
[-a-\epsilon,-a+\epsilon]$ for all $i\geq m$. 
Applying \eqref{eqconverger}, we obtain that 
all these $i\geq m$ satisfy $\v_{n_i}\sprod  \x_{n_i}<-a+\epsilon<0$.
This contradicts the fact that 
$\lim\limits_{i\to \infty} \x_{n_i}=0$. Indeed, if $\v_{n_i}$ can become
arbitrarily close to $\v$ while at the same time $\x_{n_i}$ becomes
arbitrarily close to $\zeros$, then the product $\v_{n_i}\sprod \x_{n_i}$ can
also become arbitrarily close to $0$.
\end{proof}

\paragraph{Convergence theorems on sequences}
We need several convergence results on sequences for the classical proof of the
hyperplane separation theorem, \ie, in particular for Theorems
\ref{thSimpleSep} and \ref{thSupportHyper}.

\begin{proposition}\label{propSupExists}Any bounded set $S\subsetneq \R$ has a
unique finite  least upper bound (or supremum) $\sup(S)$. Equivalently,
a unique $\inf(S)$ also exists and is finite.
\end{proposition}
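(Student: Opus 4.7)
The plan is to prove uniqueness trivially and existence by a bisection (nested-interval) argument, reducing everything to the convergence of bounded monotone sequences (which in this manuscript can be inherited from the same completeness machinery that underlies the Bolzano-Weierstrass Theorem~\ref{thBolzanoWeierstrass}).

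First, uniqueness is immediate: if $L$ and $L'$ are both least upper bounds of $S$, then each is an upper bound, so $L \leq L'$ and $L' \leq L$, giving $L = L'$. For existence, I implicitly assume $S$ is nonempty (if one admits $S = \emptyset$, every real is an upper bound and no least one exists, so the statement needs this). Since $S$ is bounded, pick some $s_0 \in S$ and set $a_0 = s_0 - 1$ (not an upper bound, because $s_0 > a_0$) and pick $b_0$ to be any upper bound of $S$. Clearly $a_0 < b_0$.

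Next, I would construct sequences $\{a_n\}$ and $\{b_n\}$ by bisection. At step $n$, let $m_n = (a_n + b_n)/2$; if $m_n$ is an upper bound of $S$, set $a_{n+1} = a_n$ and $b_{n+1} = m_n$, otherwise set $a_{n+1} = m_n$ and $b_{n+1} = b_n$. By induction this preserves three invariants: $a_n$ is \emph{not} an upper bound of $S$, $b_n$ \emph{is} an upper bound of $S$, and $b_n - a_n = (b_0 - a_0)/2^n$. The sequence $\{a_n\}$ is nondecreasing and bounded above by $b_0$, and $\{b_n\}$ is nonincreasing and bounded below by $a_0$; invoking the convergence of bounded monotone sequences, both converge, and since $b_n - a_n \to 0$ they share a common limit $L \in \R$.

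Finally, I would verify $L = \sup(S)$ with two short contradiction arguments. If $L$ were not an upper bound, there would exist $s \in S$ with $s > L$; since $b_n \to L$, for $n$ large enough $b_n < s$, contradicting that each $b_n$ is an upper bound. If some $L' < L$ were also an upper bound, then because $a_n \to L$, for $n$ large enough $a_n > L'$; but $a_n$ is not an upper bound, so there exists $s \in S$ with $s > a_n > L'$, contradicting that $L'$ is an upper bound. Hence $L$ is the least upper bound. The main obstacle, and the only non-routine point, is justifying that the bounded monotone sequences $\{a_n\}$ and $\{b_n\}$ converge, which is a reformulation of the completeness of $\R$; in the spirit of this manuscript I would simply cite it as a foundational property (equivalent to Theorem~\ref{thBolzanoWeierstrass}). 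The analogous statement for $\inf(S)$ follows by applying the result to $-S = \{-s : s \in S\}$ and observing $\inf(S) = -\sup(-S)$.
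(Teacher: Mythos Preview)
Your bisection argument is mathematically sound and is a standard textbook route to the least-upper-bound property. However, within this manuscript's logical ordering it is circular: Prop.~\ref{propMonotone} (bounded monotone sequences converge) is proved \emph{from} Prop.~\ref{propSupExists}, and Theorem~\ref{thBolzanoWeierstrass} is in turn proved from Prop.~\ref{propMonotone}. So when you write that monotone convergence ``can be inherited from the same completeness machinery that underlies Theorem~\ref{thBolzanoWeierstrass},'' you are appealing to results that the paper establishes only \emph{after} (and using) the very proposition you are trying to prove. You clearly recognise that some form of completeness must be assumed somewhere; the point is that in this document, Prop.~\ref{propSupExists} \emph{is} that starting point, not a consequence of it.

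The paper's own proof sidesteps this by working directly with decimal expansions: it builds $\sup(S)$ digit by digit, choosing $a_0$ as the greatest integer that is not a strict upper bound of $S$, then $a_1$ as the greatest digit such that $a_0.a_1$ is not a strict upper bound, and so on. This is self-contained once one accepts that real numbers are represented by (possibly infinite) decimal strings, which is where completeness is implicitly baked in. Your nested-interval approach is arguably cleaner and more conceptual, and it generalises more readily (no base-$10$ artefacts, no fuss about $0.999\ldots = 1$), but to use it here you would have to reorganise the dependencies: take monotone convergence (or nested intervals) as the primitive completeness axiom and then derive the supremum property from it, rather than the other way round.
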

\begin{proof}
To avoid unessential complication, we will assume that $S$ contains at least a
positive number. Any set $S'$ can be transformed to this form by applying 
a simple translation $S=\{s'-s'_0:~s\in S'\}$ for some fixed $s'_0\in S'$. If $\sup(S)$
exists, then $\sup(S')=\sup(S)+s'_0$.

We will determine $\sup(S)$ as a real number $a$ written in the decimal
expansion as $a=a_0.a_1a_2a_3\dots$ with potentially infinite number of digits. 
We can suppose that $a_0.a_1a_2\dots a_n99\dots 9$ with $a_n\neq 9$ is equal to 
$a_0.a_1a_2\dots a_{n-1}(a_n+1)$.
We choose
the decimals as follows.
\begin{itemize}
    \item[--] $a_0$ is the greatest integer that is not a strict upper bound for
$S$ (\ie, that is not strictly greater than all elements of $S$). A finite $a_0$ value must
exists because $S$ is bounded.
    \item[--] $a_1$ is the greatest digit such that $a_0.a_1$ is not a strict upper bound of $S$
    \item[--] $a_2$ is the greatest digit such that $a_0.a_1a_2$ is not a strict upper bound of $S$
    \item[$\vdots$]
    \item[--] $a_n$ is the greatest digit such that $a_0.a_1a_2\dots a_n$ is not a strict upper bound of $S$
    \item[$\vdots$]
\end{itemize}
We now prove that $a=a_0.a_1a_2a_3\dots$ is an upper bound of $S$. Assume there
exists an $s\in S$ such that $s>a$. This means there exists an index $n$ such that
$s$ can be written $a_0.a_1a_2\dots a_{n-1}s_ns_{n+1}s_{n+2}\dots $ where
$s_n> a_n$. Since $a_n$ is the greatest digit such that $a_0.a_1a_2\dots
a_n$ is not a strict upper bound for $S$, we obtain that $a_0.a_1a_2\dots a_{n-1}s_n$ is
a strict upper bound for $S$. This implies that $s\geq a_0.a_1a_2\dots a_{n-1}s_n$ is
also a strict upper bound of $S$, which contradicts $s\in S$. There can be no $s\in S$
such that $s>a$.

To prove the uniqueness, 
we still have to show that $a$ is the minimum upper bound, \ie, 
there is no other upper bound $a'<a$. 
For the sake of contradiction, 
assume there exists such an upper bound 
$a'<a$; it can be written $a'=a_0.a_1a_2\dots a_{n-1}a'_na'_{n+1}a'_{n+2}\dots$ such
that $a'_n< a_n$ for some $n$ and $a'_{n+1}, a'_{n+2}\dots$ are not all $9$---such a
number would reduce to $a_0.a_1a_2\dots a_{n-1}(a'_n+1)$. We thus
obtain that $a'<a_0.a_1a_2\dots a_{n-1}a_n$. Since $a'$ is an upper bound, 
$a_0.a_1a_2\dots a_{n-1}a_n$ needs to be a strict upper bound. This contradicts
the choice of $a_n$ as the greatest digit such that $a_0.a_1a_2\dots a_{n-1}a_n$
is \textit{not} a strict upper bound of $S$.

By combining the two above paragraphs, we obtain that $a=a_0.a_1a_2a_3\dots$ is
the least upper bound $\sup(S)$. Recall we can suppose that
$a_0.a_1a_2\dots a_n99\dots 9$ with $a_n\neq 9$ is equal to $a_0.a_1a_2\dots
a_{n-1}(a_n+1)$.
\end{proof}

\begin{proposition}\label{propMonotone} Any bounded monotone sequence $\{a_i\}$ of real numbers is
convergent.
\end{proposition}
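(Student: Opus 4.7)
The plan is to reduce everything to the existence of a supremum (Prop.~\ref{propSupExists}) which was just established. Without loss of generality I would treat the monotone increasing case; the monotone decreasing case is obtained by applying the same argument to the negated sequence $\{-a_i\}$, which is bounded and monotone increasing, and then observing that if $-a_i \to L$ then $a_i \to -L$.

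So assume $\{a_i\}$ is monotone increasing and bounded. First, I would form the set $S = \{a_i : i \in \mathbb{N}^*\} \subsetneq \R$, which is bounded by hypothesis, and apply Prop.~\ref{propSupExists} to obtain $a = \sup(S) \in \R$. The candidate limit is of course $a$.

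Next I would verify $\lim_{i \to \infty} a_i = a$ directly from the $\epsilon$--$N$ definition. Fix $\epsilon > 0$. Since $a$ is the \emph{least} upper bound, $a - \epsilon$ is not an upper bound of $S$, so there exists some index $m$ with $a_m > a - \epsilon$. Now the key is monotonicity: for every $i \geq m$ we have $a_i \geq a_m > a - \epsilon$, while the supremum property gives $a_i \leq a$ for every $i$. Combining these two inequalities yields $a - \epsilon < a_i \leq a$, i.e.~$|a_i - a| < \epsilon$ for all $i \geq m$, which is exactly the convergence statement.

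There is no real obstacle here beyond correctly invoking the least upper bound property; the whole content of the proof is that monotonicity propagates the ``close to $a$'' property from the single index $m$ (produced by the fact that $a - \epsilon$ fails to be an upper bound) to every subsequent index. The only minor care required is to handle the decreasing case by the negation trick (or, symmetrically, to use $\inf(S)$ together with the analog of Prop.~\ref{propSupExists}, which follows immediately by applying that proposition to $\{-s : s \in S\}$).
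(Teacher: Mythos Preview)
Your proof is correct and follows essentially the same approach as the paper: both reduce to the non-decreasing case, set $a=\sup\{a_i\}$ via Prop.~\ref{propSupExists}, and use the least-upper-bound property together with monotonicity to produce the index $m$ beyond which $a-\epsilon<a_i\leq a$. The only cosmetic difference is that you handle the decreasing case explicitly by negation, whereas the paper simply declares it analogous.
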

\begin{proof}
Since the sequence is monotone, we can 
consider it is non-decreasing, the non-increasing case being completely
analogous.
Let $A=\left\{a_i:~i\in\{1,2,\dots \infty\}\right\}$.
Since $A$ is bounded, the least upper bound property (Prop.~\ref{propSupExists})
states that $a=\sup(A)$ exists and is finite.
For any $\epsilon>0$, there needs to exist some positive integer $n$ such
that $a_n>a-\epsilon$, because otherwise $a-\epsilon$ would be an upper bound 
lower than $\sup(A)$, impossible. Since $\{a_i\}$ is non-decreasing, all integers
$n'>n$ verify $a_{n'}\geq a_{n}>a-\epsilon$. This is exactly the definition of
the fact that $\lim\limits_{i\to \infty}a_i=a$.
\end{proof}
\begin{theorem}\label{thBolzanoWeierstrass} (Bolzano--Weierstrass theorem)
Any bounded sequence $\{\x_i\}$ of $\R^n$ contains a convergent subsequence.
\end{theorem}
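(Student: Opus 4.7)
The plan is to reduce the general $\R^n$ case to the one-dimensional case by iteratively extracting convergent sub-subsequences coordinate by coordinate, and then prove the one-dimensional case by constructing a monotone subsequence and invoking Prop.~\ref{propMonotone}.

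For the one-dimensional case, I would first show that every real sequence $\{a_i\}$ contains a monotone subsequence. Call an index $i$ a \emph{peak} if $a_i \geq a_j$ for all $j \geq i$. Two cases arise: (a) if there are infinitely many peaks $i_1 < i_2 < i_3 < \dots$, then by definition $\{a_{i_k}\}$ is non-increasing; (b) if there are only finitely many peaks, let $N$ be larger than any peak index, so for every $i > N$ there exists $j > i$ with $a_j > a_i$. Starting from any $i_1 > N$ and iterating, I obtain a strictly increasing subsequence $\{a_{i_k}\}$. In either case the subsequence is monotone, and, being a subsequence of a bounded sequence, it is bounded. Prop.~\ref{propMonotone} then guarantees it converges, which proves the theorem for $n=1$.

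For general $n$, given a bounded sequence $\{\x_i\}$ in $\R^n$, each coordinate sequence $\{(x_i)_k\}$ with $k\in[1..n]$ is bounded in $\R$ (since $|(x_i)_k| \leq |\x_i|$). I apply the one-dimensional result to the first coordinate to extract a subsequence $\{\x^{(1)}_m\} = \{\x_{\varphi_1(m)}\}$ whose first coordinate converges. From $\{\x^{(1)}_m\}$ I apply the one-dimensional result again to its second coordinate, obtaining a sub-subsequence $\{\x^{(2)}_m\}$ whose second coordinate converges; the first coordinate still converges, since any subsequence of a convergent sequence converges to the same limit. Iterating $n$ times produces a subsequence $\{\x^{(n)}_m\}$ in which all $n$ coordinates converge. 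Since $|\x^{(n)}_m - \y|^2 = \sum_{k=1}^n \big((x^{(n)}_m)_k - y_k\big)^2$, coordinatewise convergence to a vector $\y$ is equivalent to convergence in $\R^n$, so $\{\x^{(n)}_m\}$ converges.

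No step is conceptually hard. The mildly subtle points are (i) handling both cases of the peak construction cleanly, and (ii) keeping the bookkeeping of nested subsequences rigorous, so that the final sub-subsequence is genuinely a subsequence of the original $\{\x_i\}$ (which follows because the composition of strictly increasing index maps is strictly increasing).
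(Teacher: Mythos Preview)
Your proof is correct and follows essentially the same approach as the paper: the one-dimensional case is handled by the peak/maxima argument to extract a monotone subsequence (the paper splits the ``finitely many peaks'' situation into two sub-cases, empty and finite non-empty, but the content is identical), followed by Prop.~\ref{propMonotone}; the $\R^n$ case is then done by the same coordinate-by-coordinate iterated subsequence extraction.
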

\begin{proof}
We first show the theorem for $n=1$.
\begin{lemma}
Any bounded sequence $\{x_i\}$ of $\R$ contains a convergent subsequence.
\end{lemma}
\begin{proof}
We consider the set of maxima $N=\left\{i \in
\left\{1,2,\dots\right\}:~x_j<x_i,\forall j>i\right\}$. 
We distinguish three cases depending on the cardinal of $N$:
\begin{enumerate}
\item If $N=\emptyset$, then
for any index $n_i$, there exists a position $n_{i+1}$ such that
$x_{n_{i+1}}\geq x_{n_i}$.
The subsequence $x_{n_1},~x_{n_2},~x_{n_3},\dots$ is monotone non-decreasing, and so,
convergent using Prop.~\ref{propMonotone}.
\item If $N$ is not finite, then $N$ contains an infinite sequence of indices
$n_{1}<n_{2}<n_{3}<\dots $ such that $x_{n_1}>x_{n_2}>x_{n_3}\dots$. The
subsequence $x_{n_1},~x_{n_2},~x_{n_3}\dots$ is monotone decreasing, and using
Prop.~\ref{propMonotone}, it is convergent.
\item If $|N|=t$ with $t\in\N-\{0\}$, then 
$N$ contains a finite sequence of indices
$n_{1}<n_{2}<n_{3}<\dots n_t $ such that $x_{n_1}>x_{n_2}>x_{n_3}\dots >x_{n_t}$.
The set $N_{>n_t}=\left\{i \in
\left\{n_t+1,n_t+2,n_t+3,\dots\right\}:~x_j<x_i,\forall j>i\right\}$ is empty.
We can thus apply the argument of case 1 and obtain that the infinite sequence
$x_{n_t+1}$, $x_{n_t+2}$, $x_{n_t+3}$, $\dots$ contains a convergent sub-sequence.
\end{enumerate}
\vspace{-2em}
\end{proof}
We now generalize the result for any $n>1$. Considering the first position of
the sequence $\{\x_i\}$, the above lemma shows that $\{\x_i\}$ contains a
sub-sequence $\{\x^1_i\}$ whose first position converges to some $y_1$. We now
consider the second position of $\{\x^1_i\}$. Using the lemma again, we obtain 
that $\{\x^1_i\}$ contains a sub-sequence $\{\x^2_i\}$ whose second position
converges to some $y_2$. We observe that the first position of $\{\x^2_i\}$
converges to $y_1$ and the second to $y_2$. The argument can be repeated to find
sub-sequences $\{\x^1_i\}$, $\{\x^2_i\}$, $\dots $, $\{\x^n_i\}$ such that
$\{\x^n_i\}$ converges to $[y_1~y_2~\dots y_n]^\top$.
\end{proof}

\noindent \line(1,0){100}
\section*{References}
References are provided throughout the document as footnote citations. This is because I wanted
to make each reference readily available to the reader. 
I acknowledge again that
I mentioned throughout the document the work of the following people 
(lecture notes and papers only, 
excluding web-sites and responses on mathematical on-line forums), in the
order of apparition:
Maur\' icio de Oliveira,
Christoph Helmberg,
Robert Freund,
David Williamson,
Roger Horn,
Charles Johnson
H.~Ikramov,
Stephen Boyd,
Lieven Vandenberghe,
Anupam Gupta,
L\' aszl\' o Lov\' asz,
Michael Overton,
Henry Wolkowicz,
Michel Goemans,
Neboj\originalv{s}a Gvozdenovi\'c,
Donald Knuth,
Monique Laurent,
Immanuel Bomze, 
Mirjam D{\"u}r,
Chung-Piaw Teo,
Peter Dickinson, 
Luuk Gijben,
Etienne de Klerk,
Dmitri Pasechnik,
Pablo Parrilo,
Fr\'ed\'eric~Roupin,
Alain Billionnet, 
Sourour Elloumi,
Marie-Christine Plateau,
Alain Faye, 
Am\' elie Lambert,
Peter Norman,
Subhash Khot, Guy
Kindler, Elchanan
Mossel,
Ryan O'Donnell
and
Richard Dudley.
\end{document}